\crefname{equation}{equation}{equations}
\Crefname{equation}{Equation}{Equations}
\crefname{figure}{Figure}{Figures}
\Crefname{figure}{Figure}{Figures}
\numberwithin{equation}{section}
\renewcommand{\arraystretch}{1.2}
\newcommand{\para}[1]{ \medskip {\textbf {#1.}}}
\title[Friedman--Ramanujan functions in random hyperbolic
geometry II]{Friedman--Ramanujan functions \protect\\ in random hyperbolic geometry
  \protect\\ and application to spectral gaps II}
\author{Nalini Anantharaman\textsuperscript{1} and Laura Monk\textsuperscript{2}}
\address[1]{Coll\`ege de France, 11 place Marcelin Berthelot, 75005 Paris / IRMA, 7 rue Ren\'e Descartes, 67084 Strasbourg Cedex, France} 
\address[2]{School of Mathematics, University of Bristol, Bristol BS8 1UG, U.K.}
\email{nalini.anantharaman@college-de-france.fr}
\email{laura.monk@bristol.ac.uk}
\subjclass[2020]{Primary 58J50, 32G15; Secondary 05C80, 11F72}
\keywords{Random hyperbolic surfaces, Weil--Petersson form, moduli space,
  spectral gap, closed geodesic, Selberg trace formula.}
\date{\today}
\theoremstyle{plain}
\newtheorem{thm}{Theorem}[section]
\newtheorem{prp}[thm]{Proposition}
\newtheorem{cor}[thm]{Corollary}
\newtheorem{lem}[thm]{Lemma}
\theoremstyle{definition}
\newtheorem{defa}[thm]{Definition}
\newtheorem{rem}[thm]{Remark}
\newtheorem{exa}[thm]{Example}
\newtheorem{nota}[thm]{Notation}
\renewcommand{\d}{\, \mathrm{d}}
\newcommand*{\ov}[1]{%
  $\m@th\overline{\mbox{#1}}$%
}
\newcommand*{\ovA}[1]{%
  $\m@th\overline{\mbox{#1}\raisebox{3mm}{}}$%
}
\newcommand*{\ovB}[1]{%
  $\m@th\overline{\mbox{#1\rule{0pt}{3mm}}}$%
}
\newcommand*{\ovC}[1]{%
  $\m@th\overline{\mbox{#1\strut}}$%
}
\newcommand*{\ovD}[1]{%
  $\m@th\overline{\mbox{#1\vphantom{\"A}}}$%
}
\newcommand*{\ovE}[1]{%
  $\m@th\overline{\raisebox{0pt}[1.2\height]{#1}}$%
}
\newcommand*{\ovF}[1]{%
  $\m@th\overline{\raisebox{0pt}[\dimexpr\height+0.3mm\relax]{#1}}$%
  % Package `calc' can be used as alternative for `\dimexpr'.
}
\newcommand*{\ovG}[1]{%
  $\m@th\overline{\raisebox{0pt}[\dimexpr\height+1mm\relax]{#1\vphantom{A}}}$%
}
\newcommand\runderset[2][\sim]{\mathrel{\ensurestackMath{%
  \stackengine{-.2pt}{\scriptscriptstyle#2}{#1}{O}{c}{F}{F}{S}}}}
\newcommand{\N}{\mathbb{N}}
\newcommand{\Z}{\mathbb{Z}}
\newcommand{\R}{\mathbb{R}}
\newcommand{\bD}{\mathbf{D}}
\newcommand{\C}{\mathbb{C}}
\newcommand{\D}{\mathcal{D}}
\DeclareMathOperator{\dist}{dist}
\DeclareMathOperator{\Cell}{Cell}
\DeclareMathOperator{\Junc}{Junc}
\DeclareMathOperator{\Br}{Br}
\DeclareMathOperator{\Id}{id}
\DeclareMathOperator{\hyp}{hyp}
\DeclareMathOperator{\id}{id}
\DeclareMathOperator\argch{argcosh}
\DeclareMathOperator\argcosh{argcosh}
\DeclareMathOperator\argsh{argsinh}
\DeclareSymbolFont{extraup}{U}{zavm}{m}{n}
\DeclareMathSymbol{\varheart}{\mathalpha}{extraup}{86}
\DeclareMathSymbol{\vardiamond}{\mathalpha}{extraup}{87}
\newcommand{\nwc}{\newcommand}
\nwc{\mf}{\mathbf} %Latex (as in \bf not tilted math letters)
\nwc{\blds}{\boldsymbol} %Latex 
\nwc{\ml}{\mathcal} %Latex
\nwc{\lam}{\lambda}
\nwc{\del}{\delta}
\nwc{\Del}{\Delta}
\nwc{\Lam}{\Lambda}
\nwc{\elll}{\ell}
\newcommand{\av}[2][\mathrm{all}]{\langle #2 \rangle_g^{{#1}}}
\newcommand{\avN}[2][\mathrm{all}]{\langle #2 \rangle_{g, Q}^{{#1}}}
\newcommand{\avTbtf}[2][\mathrm{all}]{\langle #2 \rangle_{g, \kappa, \tfL}^{{\type}}} 
\newcommand{\avb}[2][\mathrm{all}]{\left\langle #2 \right\rangle_g^{{#1}}}
\newcommand{\pavbtf}[2][]{\left\langle #2 \right\rangle_{g, \kappa, \tfL}^{{#1}}}
\newcommand{\avbtf}[2][]{\left\langle #2 \right\rangle_{g, \kappa, \tfL, \chic}^{{#1}}}
\newcommand{\tfL}{R}
\nwc{\IA}{\mathbb{A}} %algebraic
\nwc{\rA}{\mathrm{A}} 
\nwc{\Jn}{\mathrm{Jn}} 
\nwc{\IB}{\mathbb{B}} %ball
\nwc{\IC}{\mathbb{C}} %complex
\nwc{\ID}{\mathbb{D}} %Dedekind
\nwc{\IE}{\mathbb{E}} %Euklides
\nwc{\IF}{\mathbb{F}} %finite field
\nwc{\IG}{\mathbb{G}} %Gauss
\nwc{\IH}{\mathbb{H}} %Hilbert\N-subgroup
\nwc{\IN}{\mathbb{N}} %natural
\nwc{\IP}{\mathbb{P}} %prime
\nwc{\IQ}{\mathbb{Q}} %rational
\nwc{\IR}{\mathbb{R}} %real
\nwc{\IS}{\mathbb{S}} %sphere
\nwc{\IT}{\mathbb{T}} %torus
\nwc{\IZ}{\mathbb{Z}} %integers
\def\bbbone{{\mathchoice {1\mskip-4mu {\rm{l}}} {1\mskip-4mu {\rm{l}}}
{ 1\mskip-4.5mu {\rm{l}}} { 1\mskip-5mu {\rm{l}}}}}
\def\bbleft{{\mathchoice {[\mskip-3mu {[}} {[\mskip-3mu {[}}{[\mskip-4mu {[}}{[\mskip-5mu {[}}}}
\def\bbright{{\mathchoice {]\mskip-3mu {]}} {]\mskip-3mu {]}}{]\mskip-4mu {]}}{]\mskip-5mu {]}}}}
\nwc{\setK}{\bbleft 1,K \bbright}
\nwc{\setN}{\bbleft 1,\cN \bbright}
 \newcommand{\Lim}{\mathop{\longrightarrow}\limits}
\nwc{\va}{{\bf a}}
\nwc{\vb}{{\bf b}}
\nwc{\vc}{{\bf c}}
\nwc{\vd}{{\bf d}}
\nwc{\ve}{{\bf e}}
\nwc{\vf}{{\bf f}}
\nwc{\vg}{{\bf g}}
\nwc{\vh}{{\bf h}}
\nwc{\vI}{{\bf I}}
\nwc{\vj}{{\bf j}}
\nwc{\vk}{{\bf k}}
\nwc{\vl}{{\bf l}}
\nwc{\vm}{{\bf m}}
\nwc{\vM}{{\bf M}}
\nwc{\vn}{{\bf n}}
\nwc{\vo}{{\it o}}
\nwc{\vp}{{\bf p}}
\nwc{\vq}{{\bf q}}
\nwc{\vr}{{\bf r}}
\nwc{\vs}{{\bf s}}
\nwc{\rz}{{\mathrm z}}
\nwc{\vt}{{\bf t}}
\nwc{\vu}{{\bf u}}
\nwc{\vv}{{\bf v}}
\nwc{\vw}{{\bf w}}
\nwc{\vx}{{\bf x}}
\nwc{\vy}{{\bf y}}
\nwc{\vz}{{\bf z}}
\nwc{\bal}{\blds{\alpha}}
\nwc{\bep}{\blds{\epsilon}}
\nwc{\barbep}{\overline{\blds{\epsilon}}}
\nwc{\bnu}{\blds{\nu}}
\nwc{\bmu}{\blds{\mu}}
\nwc{\bet}{\blds{\eta}}
\nwc{\bk}{\blds{k}}
\nwc{\bm}{\blds{m}}
\nwc{\bM}{\blds{M}}
\nwc{\bp}{\blds{p}}
\nwc{\bq}{\blds{q}}
\nwc{\bn}{\blds{n}}
\nwc{\bv}{\blds{v}}
\nwc{\bw}{\blds{w}}
\nwc{\bx}{\blds{x}}
\nwc{\bxi}{\blds{\xi}}
\nwc{\by}{\blds{y}}
\nwc{\bz}{\blds{z}}
\nwc{\cA}{\ml{A}}
\nwc{\cB}{\ml{B}}
\nwc{\cC}{\ml{C}}
\nwc{\cD}{\ml{D}}
\nwc{\cE}{\ml{E}}
\nwc{\cF}{\ml{F}}
\nwc{\cG}{\ml{G}}
\nwc{\cH}{\ml{H}}
\nwc{\cI}{\ml{I}}
\nwc{\cJ}{\ml{J}}
\nwc{\cK}{\ml{K}}
\nwc{\cL}{\ml{L}}
\nwc{\cM}{\ml{M}}
\nwc{\cN}{\ml{N}}
\nwc{\cO}{\ml{O}}
\nwc{\cP}{\ml{P}}
\nwc{\cQ}{\ml{Q}}
\nwc{\cR}{\ml{R}}
\nwc{\cS}{\ml{S}}
\nwc{\cT}{\ml{T}}
\nwc{\cU}{\ml{U}}
\nwc{\cV}{\ml{V}}
\nwc{\cW}{\ml{W}}
\nwc{\cX}{\mlf{X}}
\nwc{\cY}{\ml{Y}}
\nwc{\cZ}{\ml{Z}}
\nwc{\rb}{\mathrm{b}}
\nwc{\rl}{\mathrm{l}}
\nwc{\rr}{\mathrm{r}}
\nwc{\ru}{\mathrm{u}}
\nwc{\rt}{\mathrm{t}}
\nwc{\rK}{\mathrm{K}}
\nwc{\rM}{\mathrm{M}}
\nwc{\rN}{\mathrm{N}}
\newcommand{\eqc}[1]{[ #1 ]_{\mathrm{loc}}}
\newcommand{\eq}{\, \raisebox{-1mm}{$\runderset{\mathrm{loc}}$} \,}
\newcommand{\MC}{\mathrm{MC}}
\nwc{\fA}{\mathfrak{a}}
\nwc{\fB}{\mathfrak{b}}
\nwc{\fC}{\mathfrak{c}}
\nwc{\fD}{\mathfrak{d}}
\nwc{\fE}{\mathfrak{e}}
\nwc{\fF}{\mathfrak{f}}
\nwc{\fG}{\mathfrak{g}}
\nwc{\fH}{\mathfrak{h}}
\nwc{\fI}{\mathfrak{i}}
\nwc{\fJ}{\mathfrak{j}}
\nwc{\fK}{\mathfrak{k}}
\nwc{\fL}{\mathfrak{l}}
\nwc{\fM}{\mathfrak{m}}
\nwc{\fN}{\mathfrak{n}}
\nwc{\fO}{\mathfrak{o}}
\nwc{\fP}{\mathfrak{p}}
\nwc{\fQ}{\mathfrak{q}}
\nwc{\fR}{\mathfrak{R}}
\nwc{\fS}{\mathfrak{s}}
\nwc{\fT}{\mathfrak{t}}
\nwc{\fU}{\mathfrak{u}}
\nwc{\fV}{\mathfrak{v}}
\nwc{\fW}{\mathfrak{w}}
\nwc{\fX}{\mathfrak{x}}
\nwc{\fY}{\mathfrak{y}}
\nwc{\fZ}{\mathfrak{z}}
\nwc{\tA}{\widetilde{A}}
\nwc{\tB}{\widetilde{B}}
\nwc{\tE}{E^{\vareps}}
\nwc{\tk}{\tilde k}
\nwc{\tN}{\tilde N}
\nwc{\bS}{\mathbf{S}}
\nwc{\bP}{\mathbf{P}}
\nwc{\bI}{\mathbf{I}}
\nwc{\fn}{\mathrm{fn}}
\nwc{\Term}{\mathrm{Term}}
\nwc{\TermBC}{\mathrm{Term}_{\mathrm{BC}}(\mathbf{\Gamma},\mathbf{d})}
\nwc{\n}{n_{\mathbf{S}}}
\nwc{\g}{g_{\mathbf{S}}}
\nwc{\nn}{n_{\mathbf{S}'}}
\nwc{\Gg}{g_{\mathbf{S}'}}
\nwc{\tP}{\widetilde{P}}
\nwc{\tQ}{\widetilde{Q}}
\nwc{\tR}{\widetilde{R}}
\nwc{\tV}{\widetilde{V}}
\nwc{\tW}{\widetilde{W}}
\nwc{\ty}{\tilde y}
\nwc{\teta}{\tilde \eta}
\nwc{\tdelta}{\tilde \delta}
\nwc{\tlambda}{\tilde \lambda}
\nwc{\ttheta}{\tilde \theta}
\nwc{\tvartheta}{\tilde \vartheta}
\nwc{\tPhi}{\widetilde \Phi}
\nwc{\tpsi}{\tilde \psi}
\nwc{\tmu}{\tilde \mu}
\nwc{\To}{\longrightarrow} %limits
\nwc{\ad}{\rm ad}
\nwc{\eps}{\epsilon}
\nwc{\ep}{\epsilon}
\nwc{\vareps}{\varepsilon}
\def\ep{\epsilon}
\def\Tr{{\rm Tr}}
\def\sq2{\sqrt{2}}
\def\t2{{\mathbb T}^2}
\def\s2{{\mathbb S}^2}
\def\N{\mathbb{N}}
\def\R{\mathbb{R}}
\def\Z{\mathbb{Z}}
\def\C{\mathbb{C}}
\def\O{\mathcal{O}}
\nwc{\lap}{\bigtriangleup}
\nwc{\rest}{\restriction}
\nwc{\Diff}{\operatorname{Diff}}
\nwc{\diam}{\operatorname{diam}}
\nwc{\Res}{\operatorname{Res}}
\nwc{\Spec}{\operatorname{Spec}}
\nwc{\Vol}{\operatorname{Vol}}
\nwc{\Op}{\operatorname{Op}}
\nwc{\supp}{\operatorname{supp}}
\nwc{\Span}{\operatorname{span}}
\nwc{\dia}{\varepsilon}
\nwc{\cut}{f}
\nwc{\qm}{u_\hbar}
\def\hto0{\xrightarrow{\hbar\to 0}}
\def\rto0{\xrightarrow{r\to 0}}
\providecommand{\norm}[1]{\lVert#1\rVert}
\nwc{\la}{\langle}
\nwc{\ra}{\rangle}
\nwc{\lp}{\left(}
\nwc{\rp}{\right)}
\nwc{\bequ}{\begin{equation}}
\nwc{\be}{\begin{equation}}
\nwc{\ben}{\begin{equation*}}
\nwc{\bea}{\begin{eqnarray}}
\nwc{\bean}{\begin{eqnarray*}}
\nwc{\bit}{\begin{itemize}}
\nwc{\bver}{\begin{verbatim}}
\newcommand{\cop}{{\mathbf{c}}^{\mathrm{op}}}
\nwc{\eequ}{\end{equation}}
\nwc{\ee}{\end{equation}}
\nwc{\een}{\end{equation*}}
\nwc{\eea}{\end{eqnarray}}
\nwc{\eean}{\end{eqnarray*}}
\nwc{\eit}{\end{itemize}}
\nwc{\ever}{\end{verbatim}}
\newlength{\temp@wc@width}
\newlength{\temp@wc@height}
\newcommand{\widecheck}[1]{%
  \setlength{\temp@wc@width}{\widthof{$#1$}}%
  \setlength{\temp@wc@height}{\heightof{$#1$}}%
  #1\hspace{-\temp@wc@width}%
  \raisebox{\temp@wc@height+2pt}[\heightof{$\widehat{#1}$}]%
     {\rotatebox[origin=c]{180}{\vbox to 0pt{\hbox{$\widehat{\hphantom{#1}}$}}}}%
}
\newcommand{\Volwp}[1][g]{\mathrm{Vol}_{#1}^{\mathrm{\scriptsize{WP}}}}
\newcommand{\Pwpo}{\mathbb{P}_g^{\mathrm{\scriptsize{WP}}}}
\newcommand{\Ewpo}{\mathbb{E}_g^{\mathrm{\scriptsize{WP}}}}
\newcommand{\Pwp}[1]{\Pwpo \left( #1 \right)}
\newcommand{\Ewp}[2][g]{\mathbb{E}_{#1}^{\mathrm{\scriptsize{WP}}} \Bigg[ #2 \Bigg]}
\DeclarePairedDelimiter{\paren}{(}{)}
\DeclarePairedDelimiter{\abso}{|}{|}
\DeclarePairedDelimiter{\brac}{[}{]}
\let\div\relax
\newcommand{\div}[1]{\paren*{\frac{#1}{2}}}
\DeclareFontFamily{OMX}{yhex}{}
\DeclareFontShape{OMX}{yhex}{m}{n}{<->yhcmex10}{}
\DeclareSymbolFont{yhlargesymbols}{OMX}{yhex}{m}{n}
\DeclareMathAccent{\wideparen}{\mathord}{yhlargesymbols}{"F3}
\newcommand{\Dist}{\vec{\mathrm{d}}}
\renewcommand{\O}[2][ ]{\mathcal{O}_{#1} \left( #2 \right)}
\newcommand{\smallbullet}{} % for safety
\DeclareRobustCommand\smallbullet{%
  \mathord{\mathpalette\smallbullet@{0.5}}%
}
\newcommand{\smallbullet@}[2]{%
  \, \vcenter{\hbox{\scalebox{#2}{$\m@th#1\bullet$}}} \,%
}
\newcommand{\1}[1]{\mathds{1}_{#1}}
\DeclareMathOperator{\sign}{Sign}
\DeclareMathOperator{\comp}{Comp}
\newcommand{\Lambdabeta}{\Lambda^\beta}
\newcommand{\LambdaFN}{\Lambda^{\Gamma}}
\newcommand{\LambdaBC}{\Lambda_{\mathrm{BC}}}
\newcommand{\LambdaBCbeta}{\Lambda_{\mathrm{BC}}^\beta}
\newcommand{\LambdaBCFN}{\Lambda_{\mathrm{BC}}^{\Gamma}}
\newcommand{\Lambdain}{\Lambda_{\mathrm{in}}}
\newcommand{\LambdainFN}{\Lambda_{\mathrm{in}}^{\Gamma}}
\newcommand{\Lambdainbeta}{\Lambda_{\mathrm{in}}^{\beta}}
\newcommand{\Wbeta}{W^\beta}
\newcommand{\WBCFN}{W^\Gamma}
\newcommand{\Vbeta}{V^\beta}
\newcommand{\VFN}{V^{\Gamma}}
\newcommand{\FR}{\mathcal{F}}
\newcommand{\FRrem}{\mathcal{R}}
\newcommand{\x}{\mathbf{x}}
\newcommand{\z}{\mathbf{z}}
\newcommand{\domain}{\mathfrak{D}}
\newcommand{\LocTFlc}{\mathrm{Loc}_{\Sf,\chic, \chitau}^{ \kappa, \tfL, L}}
\newcommand{\geod}{\mathcal{G}}
\newcommand{\Atf}{\mathrm{TF}_g^{\kappa,\tfL}}
\newcommand{\Sf}{\mathbf{S}}
\newcommand{\type}{\mathbf{T}}
\newcommand{\ftype}{\mathfrak{T}}
\newcommand{\curve}{\mathbf{c}}
\newcommand{\curv}{\mathbf{\zeta}}
\newcommand{\chic}{{\chi_+}}
\newcommand{\chitau}{\chi_+'}
\newcommand{\chictau}{{\chi_+''}}
\newcommand{\thetaAc}{\vec{\theta}_{\mathrm{ac}}}
\newcommand{\ThetaAc}{\Theta_{\mathrm{ac}}}
\newcommand{\thetaNe}{\vec{\theta}_{\mathrm{nt}}}
\newcommand{\ThetaNe}{\Theta_{\mathrm{nt}}}
\newcommand{\nac}{n_{\mathrm{ac}}}
\newcommand{\tausurv}{\vec{\tau}_{\mathrm{surv}}}
\newcommand{\Thetasurv}{\Theta_{\mathrm{surv}}}
\DeclareMathOperator{\step}{step}
\newcommand{\vi}{i_j}
\newcommand{\cc}{\mathfrak{n}}
\newcommand{\fz}{\chi_{\textrm{z}}}
\newcommand{\fcr}[1]{\chi_{\mathcal{C},#1}}
\newcommand{\fcell}[1]{\chi_{\Gamma,#1}}
\newcommand{\bfcell}[1]{\bar\chi_{\Gamma,#1}}
\newcommand{\CE}{l_0}
\newcommand{\Pol}{\mathrm{Pol}}
\newcommand{\dopt}{\bD^{\mathrm{opt}}}
\newcommand{\Sinn}{\Sf_{\mathrm{in}}}
\newcommand{\Sb}{\Sf_{\partial}}
\newcommand{\ord}{N}
\begin{document}

\maketitle

\begin{abstract}
  The core focus of this series of two articles is the study of the distribution of the length
  spectrum of closed hyperbolic surfaces of genus $g$, sampled randomly with respect to the
  Weil--Petersson probability measure. In the first article \cite{Ours1}, we introduced a notion of
  {\em{local topological type}}~${\mathbf{T}}$, and established the existence of a \emph{density
    function} $V_g^{\mathbf{T}}(\ell)$ describing the distribution of the lengths of all closed
  geodesics of type $\mathbf{T}$ in a genus $g$ hyperbolic surface. We proved that
  $V_g^{\mathbf{T}}(\ell)$ admits an asymptotic expansion in powers of $1/g$. We introduced a new
  class of functions, called \emph{Friedman--Ramanujan functions}, and related it to the study of
  the spectral gap $\lambda_1$ of the Laplacian.

  In this second part, we provide a variety of new tools allowing to compute and estimate the volume
  functions $V_g^{\mathbf{T}}(\ell)$. Notably, we construct new sets of coordinates on Teichm\"uller
  spaces, distinct from Fenchel--Nielsen coordinates, in which the Weil--Petersson volume has a
  simple form. These coordinates are tailored to the geodesics we study, and we can therefore prove
  nice formulae for their lengths. We use these new ideas, together with a notion of
  pseudo-convolutions, to prove that the coefficients of the expansion of $V_g^{\mathbf{T}}(\ell)$
  in powers of $1/g$ are Friedman--Ramanujan functions, for any local topological type
  $\mathbf{T}$. We then exploit this result to prove that, for any $\epsilon>0$,
  $\lambda_1 \geq \frac14 - \epsilon$ with probability going to one as $g \rightarrow + \infty$, or,
  in other words, typical hyperbolic surfaces have an asymptotically optimal spectral gap.
 \end{abstract}

\tableofcontents
 
\section{Introduction}
\label{s:intro}

\subsection{Main results and motivations}

For an integer $g \geq 2$, let $S_g$ be a fixed compact oriented surface of genus $g$ with no
boundary. This is the second part of a two-part article, studying the distribution of the length
spectrum of a random hyperbolic metric $X$ on $S_g$. Our probability space is the moduli space
$\cM_g$ of hyperbolic structures on $S_g$, endowed with the Weil--Petersson measure $\Pwpo$
normalized to be a probability measure.

\subsubsection{Local topological type and associated density}
In the first part \cite{Ours1}, we defined a notion of {\em{local topological type}} for a loop
\cite[\S 4]{Ours1}. Intuitively, a local type $\mathbf{T}$ corresponds to the topological datum of a
loop $\mathbf{c}$ filling a surface $\mathbf{S}$, so that we know the topology of $\mathbf{c}$ and
its regular neighbourhood, but not the way it is embedded in the ambient surface of genus
$g$. Examples of local topological types include the type $\mathbf{s}$ ``simple'' (loops with no
self-intersections), or the figure-eight (with exactly one self-intersection). We developed new
tools allowing to study the following averages.

\begin{defa}
  \label{def:av_type}
  Let ${\mathbf{T}}= \eqc{{\mathbf{S, c}}}$ be a local topological type. For any measurable function
  $F : \R_{\geq 0} \rightarrow \C$, bounded and compactly supported, we define the
  \emph{${\mathbf{T}}$-average} of $F$ to be
  \begin{equation} \label{e:avT}
    \av[{\mathbf{T}}]{F} := \Ewp{\sum_{\substack{\gamma \in  
  \geod(X) \\ \gamma\in {\mathbf{T}}}} F(\ell_X(\gamma))}
  \end{equation}
  where $\Ewpo$ denotes the expectation on the moduli space $\cM_g$ when the hyperbolic structure
  $X\in \cM_g$ is chosen randomly according to the normalized Weil--Petersson measure $\Pwpo$, and
  \begin{itemize}
  \item $\geod(X)$ is the set of primitive oriented closed geodesics for the metric $X$;
  \item for any $\gamma\in\geod(X)$ , $\ell_X(\gamma)$ is the length of $\gamma$ for
    the hyperbolic metric $X$;
%\item $F : \R_{\geq 0} \rightarrow \R$ is a measurable function with compact support.
\item $\gamma\in {\mathbf{T}}$ means that the sum is restricted to closed geodesics of local
  topological type ${\mathbf{T}}$.
 \end{itemize}
\end{defa}

Taking $F$ to be the indicator function $\bbbone_{[a, b]}$ of an interval $[a,b]$, the average
$\av[{\mathbf{T}}]{F}$ is the expected number of closed primitive geodesics of local topological
type ${\mathbf{T}}$ and length in the interval $[a, b]$.  In the case of the local topological type
``simple'', denoted as $\mathbf{s}$, the condition $\gamma \in \mathbf{s}$ means that the sum is
restricted to all simple primitive closed geodesics.

We proved in {\cite[Prop. 5.11]{Ours1}} that the averages $\av[\mathbf{T}]{F}$ can be written in an
integral form, against a density function $V_g^{\mathbf{T}}(\ell)$ which is, informally, the
``proportion'' of surfaces $X \in \cM_g$ containing a primitive closed geodesic of length $\ell$ and
local topological type $\mathbf{T}$. The formal definition is as follows.
\begin{prp}
  \label{prp:existence_density}
  For any local topological type ${\mathbf{T}}$, there exists a unique locally integrable function
  $V_g^{\mathbf{T}}: \R_{> 0} \rightarrow \R_{\geq 0}$ such that, for any measurable function $F$
  bounded with compact support,
  \begin{equation*}
    \av[{\mathbf{T}}]{F} = \frac{1}{V_g} \int_{0}^{+\infty} F(\ell) \,V_g^{{\mathbf{T}}}(\ell) \d \ell.
  \end{equation*}
\end{prp}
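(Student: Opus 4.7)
The plan is to appeal to the Mirzakhani-type integration formula already established in Proposition 5.9 of \cite{Ours1} and extract the density by a coarea/Fubini argument.

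First, I would unfold the sum defining $\av[{\mathbf{T}}]{F}$ into an integral over a moduli space of marked data. Concretely, a closed geodesic of type ${\mathbf{T}}=\eqc{{\mathbf{S,c}}}$ on $X \in \cM_g$ corresponds (up to the stabilizer of ${\mathbf{c}}$) to an isotopy class of embeddings ${\mathbf{S}} \hookrightarrow S_g$ together with a hyperbolic structure on the complement. The mapping class group orbit sum therefore unfolds to give
\begin{equation*}
  V_g \cdot \av[{\mathbf{T}}]{F}
  \;=\; \int_{\cM_{g}^{{\mathbf{T}}}} F\bigl(\ell_X({\mathbf{c}})\bigr)\, d\mu_{{\mathbf{T}}}(X),
\end{equation*}
where $\cM_g^{{\mathbf{T}}}$ is the moduli space of pairs $(X, \iota)$ with $\iota$ an isotopy class of marked embeddings of type ${\mathbf{T}}$, and $\mu_{{\mathbf{T}}}$ is the measure induced on it by the WP volume on $\cM_g$ (this is the content of \cite[Prop.~5.9]{Ours1}).

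Second, I would push the measure $\mu_{{\mathbf{T}}}$ forward under the length map $\ell_X({\mathbf{c}}) : \cM_g^{{\mathbf{T}}} \to \R_{>0}$. The length of ${\mathbf{c}}$ is a real-analytic function of the Fenchel--Nielsen (or alternative) length and twist coordinates whose differential is almost everywhere non-zero. Hence, by the smooth coarea formula, the pushforward $(\ell_X({\mathbf{c}}))_* \mu_{{\mathbf{T}}}$ is absolutely continuous with respect to Lebesgue measure on $\R_{>0}$. Calling $V_g^{{\mathbf{T}}}(\ell)$ its Radon--Nikodym derivative, Fubini yields
\begin{equation*}
  \int_{\cM_{g}^{{\mathbf{T}}}} F\bigl(\ell_X({\mathbf{c}})\bigr)\, d\mu_{{\mathbf{T}}}
  \;=\; \int_{0}^{+\infty} F(\ell)\, V_g^{{\mathbf{T}}}(\ell)\, d\ell,
\end{equation*}
which is the desired identity after dividing by $V_g$. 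Non-negativity of $V_g^{{\mathbf{T}}}$ is immediate from the construction.

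Finally, uniqueness follows from a standard density argument: if two non-negative measurable functions $V, V'$ satisfy $\int F(\ell)(V-V')(\ell)\, d\ell=0$ for every bounded, compactly supported measurable $F$, then $V=V'$ almost everywhere on $\R_{>0}$.

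The main subtlety is not the abstract existence step itself but making precise the unfolding in the first paragraph: one must verify that the mapping class group orbits of loops of type ${\mathbf{T}}$ are in bijection with isotopy classes of topological embeddings ${\mathbf{S}} \hookrightarrow S_g$, and that the stabilizers cancel against the WP volume of a marked moduli space. This is exactly what is carried out in \cite[\S 4--5]{Ours1}, so for the present proposition it is legitimate to invoke that formula and reduce the proof to the coarea/Fubini extraction described above.
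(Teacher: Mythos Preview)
Your proposal is correct and matches the paper's approach: the proposition is quoted from \cite[Prop.~5.9]{Ours1}, and the argument there is precisely the unfolding of the $\mathbf{T}$-average into an integral over $\mathcal{T}^*_{g_{\mathbf{S}},n_{\mathbf{S}}}$ (with the explicit weight $\Phi_g^{\mathbf{T}}$ of \eqref{e:enumeration1}) followed by disintegration of the Weil--Petersson measure along the level sets of the non-constant real-analytic length function $Y\mapsto \ell_Y(\mathbf{c})$, exactly as you describe. Your coarea/Radon--Nikodym formulation and the paper's level-set disintegration (\S\ref{s:reduction_int}) are the same mechanism.
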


The functions $V_g^{{\mathbf{T}}}(\ell)$ are fundamental objects for the study of geodesics in
random hyperbolic surface.  However they have been little studied, except in the case of the local
topological type ``simple''. For simple closed geodesics, Mirzakhani showed in \cite{mirzakhani2007}
that the functions $V_g^{\mathbf{T}}(\ell)$ are polynomial in $\ell$ (hence they are called
\emph{volume polynomials}), and that they can be computed explicitly for all genus~$g$ by
topological recursion. Otherwise, we call $\ell \mapsto V_g^{{\mathbf{T}}}(\ell)$ the \emph{volume
  function} associated with local topological type~${\mathbf{T}}$ on surfaces of genus $g$.

The precise nature of the functions $V_g^{\mathbf{T}}$ is not known. One can deduce from
\cite[Theorem 8.1]{mirzakhani2016} that $V_g^{{\mathbf{T}}}(\ell)$ behaves like a power of $\ell$ to
leading order as $\ell\rightarrow +\infty$ with $g$ fixed. We believe that, for local topological
type other than ``simple'', the function $\ell \mapsto V_g^{\mathbf{T}}(\ell)$ is not polynomial,
but is exponentially close to one as $\ell\rightarrow + \infty$ with $g$ fixed.

\subsubsection{Large-genus asymptotic expansion}

The core focus of this paper is the large genus behavior of the volume functions
$V_g^{\mathbf{T}}$. We proved in the first article~\cite{Ours1} that they admit expansions to any
order in powers of $g^{-1}$, in the following sense.

 \begin{thm}[{\cite[Theorem 1.15]{Ours1}}]
  \label{thm:exist_asympt_type_intro}
  For any local topological type ${\mathbf{T}} = \eqc{{\mathbf{S, c}}}$, there exists a unique
  family of locally integrable functions $(f_k^{{\mathbf{T}}})_{k \geq 0}$ such that, for any integers
  $\ord \geq 0$ and $g \geq 2$, any $\epsilon>0$,
  \begin{equation}
    \label{eq:exist_asympt_type_intro}
    \Big|\av[{\mathbf{T}}]{F} - \sum_{k=0}^\ord
    \frac{1}{g^k} \int_0^{+ \infty} F(\ell) \,f_k^{{\mathbf{T}}}(\ell) \d \ell \Big| 
    \leq C_{\ord,\chi(\mathbf{S}),\epsilon}
    \frac{\sup_{\ell \geq 0} \{|F(\ell)| e^{(1+\epsilon)\ell}\}}{g^{\ord+1}}
  \end{equation}
  for any measurable function $F : \R_{\geq 0} \rightarrow \R$ bounded with compact support.
\end{thm}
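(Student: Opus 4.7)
\medskip
\textbf{Proof proposal.}

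The plan is to combine \cref{prp:existence_density} with a Mirzakhani-type integration formula adapted to local topological types, together with the large-genus asymptotic expansion of Weil--Petersson volume polynomials. By \cref{prp:existence_density}, it suffices to expand the density $V_g^{\mathbf{T}}(\ell)/V_g$ in powers of $1/g$, uniformly in $\ell$ with a remainder of the allowed shape.

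The first step is to unfold the mapping-class-group sum defining $\av[\mathbf{T}]{F}$ in \cref{def:av_type}. A primitive closed geodesic of local topological type $\mathbf{T}=\eqc{\mathbf{S},\mathbf{c}}$ on $X$ corresponds to an isotopy class of embedding of $\mathbf{S}$ into $S_g$ together with a realisation of $\mathbf{c}$ in $\mathbf{S}$. Cutting $S_g$ along the image of $\partial\mathbf{S}$ and using the tensorisation of the Weil--Petersson form in Fenchel--Nielsen coordinates adapted to $\partial\mathbf{S}$ should yield an integration formula of the shape
\begin{equation*}
  \av[\mathbf{T}]{F} = \frac{1}{V_g\,|\Stab(\mathbf{T})|}\int_{\R_+^{n}} \Big( \int_{\mathcal{M}(\mathbf{S},L)} F(\ell_Y(\mathbf{c}))\, d\mu_{\mathrm{WP}}(Y)\Big)\, V^{\mathrm{comp}}(L)\, \prod_{i=1}^{n} L_i\, dL_i,
\end{equation*}
where $n = n_{\mathbf{S}}$ and $V^{\mathrm{comp}}(L)$ is the product of Mirzakhani volume polynomials of the connected components of $S_g\setminus\mathbf{S}$ with prescribed boundary lengths $L$. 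Such a formula generalises Mirzakhani's simple-multicurve integration to the filling case, and I would expect it to form the main technical content built up in the earlier sections of the present article.

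The second step is to substitute Mirzakhani's (and Mirzakhani--Zograf's) large-genus expansion
\begin{equation*}
  \frac{V^{\mathrm{comp}}(L)}{V_g} = \sum_{k=0}^{K} \frac{Q_{k}(L)}{g^{k}} + \O[K,\mathbf{S}]{\frac{E_K(L)}{g^{K+1}}},
\end{equation*}
with explicit even polynomial coefficients $Q_k$ and polynomial growth $E_K$. Injecting this into the formula above, exchanging the sum and the integral, and identifying $f_k^{\mathbf{T}}(\ell)$ as the measurable function obtained by integrating $Q_k(L)\prod_i L_i$ against the disintegration of $d\mu_{\mathrm{WP}}$ on $\mathcal{M}(\mathbf{S},L)$ with respect to $Y\mapsto\ell_Y(\mathbf{c})$ produces the coefficients of the expansion. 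Uniqueness follows immediately from the uniqueness of asymptotic expansions of integrable functions.

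The hard part is the uniform remainder estimate \eqref{eq:exist_asympt_type_intro}. The polynomial factor $(\ell+1)^{\alpha_K^{\mathbf{S}}}$ arises because $\mathbf{c}$ fills $\mathbf{S}$: the total boundary length satisfies $\sum_{i} L_i \leq C_{\mathbf{S}}\,\ell_Y(\mathbf{c})$, so that the effective integration range is $|L|\lesssim \ell$, and the polynomial growth of $E_K$ then contributes only a power of $\ell$. The factor $e^{\ell}$ absorbs the exponential growth, in $\ell$, of the number of isotopy classes of realisations of $\mathbf{c}$ in $\mathbf{S}$ of length at most $\ell$ (a prime-geodesic-theorem-type bound on the moduli space of $\mathbf{S}$), which controls the inner integral uniformly in $L$. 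Combining these two contributions with the polynomial remainder on $V^{\mathrm{comp}}(L)/V_g$ yields the stated bound.
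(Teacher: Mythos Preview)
Your high-level architecture is correct and matches the approach of \cite{Ours1}: write $\av[\mathbf{T}]{F}$ via an integration formula over the Teichm\"uller space of $\mathbf{S}$ with a weight $\Phi_g^{\mathbf{T}}(\x)$ coming from the Weil--Petersson volumes of the complement (this is exactly \eqref{e:disint_phi}--\eqref{e:enumeration1}), then expand that weight in powers of $1/g$. However, two of your ingredients are misstated in a way that would derail the remainder estimate.

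First, the expansion of $V^{\mathrm{comp}}(\x)/V_g$ does \emph{not} have polynomial coefficients $Q_k$ nor a polynomially bounded error $E_K$. The actual expansion (from \cite{anantharaman2022}, recalled here as Theorem~\ref{theo:volume_asympt_exp} and equation~\eqref{e:each_term}) has coefficients that are linear combinations of products $\prod x_i^{k_i}\cosh(x_i/2)$, $\prod x_i^{k_i}\sinh(x_i/2)$ and Dirac masses $x_i\,\delta(x_i-x_j)$, and an error of size
\[
  \mathcal{O}_{K,\mathbf{S}}\!\left(\frac{(\|\x\|+1)^{\alpha}}{g^{K+1}}\,\exp\Big(\tfrac12\textstyle\sum_i x_i\Big)\right).
\]
There is also a preliminary step you omit: the sum over realizations $\mathfrak{R}\in R_g(\mathbf{S})$ in \eqref{e:enumeration1} has cardinality growing with $g$, so one must first truncate to a bounded set $R_g(\mathbf{S},K)$ at the cost of another error of the same shape (see \S\ref{s:reduction_int} and the outline in \S1.3.4).

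Second, and as a direct consequence, your explanation of the factor $e^{\ell}$ is incorrect. There is no prime-geodesic-theorem count here: the loop $\mathbf{c}$ is fixed in $\mathbf{S}$, and the inner integral is over a finite-volume region of Teichm\"uller space (in fact $\mathrm{Vol}\{Y:\ell_Y(\mathbf{c})\leq\ell\}$ grows only polynomially in $\ell$, see Corollary~\ref{c:pvolume}). The $e^{\ell}$ arises because the exponential error $e^{\frac12\sum_i x_i}$ above is fed into the filling bound $\sum_i x_i\leq 2\,\ell_Y(\mathbf{c})$ (Proposition~\ref{p:comparison1}), yielding $e^{\frac12\cdot 2\ell}=e^{\ell}$. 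The polynomial prefactor $(\ell+1)^{\alpha_K^{\mathbf{S}}}$ then collects the power of $\|\x\|$ in the error together with the polynomial volume bound on the level set.
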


As the notation indicates, the constant $C_{\ord,\chi(\mathbf{S}),\epsilon}$ depends on the order
$\ord$ of the expansion, the Euler characteristic of the filled surface ${\mathbf{S}}$ and the small
parameter $\epsilon$, but not on the specific loop ${\mathbf{c}}$ filling $\mathbf{S}$.  In
forthcoming applications of our results to the spectral gap of the Laplacian, this uniform behavior
with respect to all loops filling the same surface is important.

 \subsubsection{Statement of the main results}

 The main technical result of this paper may be summarised as follows, answering positively the
 question asked in  \cite{Ours1}.

\begin{thm}
  \label{con:FR_type}
  Let ${\mathbf{T}}$ be a local topological type other than ``simple''. For any $k \geq 0$, the
  function $\ell \mapsto f_k^{{\mathbf{T}}}(\ell)$ is a Friedman--Ramanujan function in the weak
  sense.
\end{thm}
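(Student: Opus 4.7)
The plan is to start from the Mirzakhani-type integration formula specialised to local topological types, as developed in the first part of the series, which rewrites $\av[{\mathbf{T}}]{F}$ as an integral of $F(\ell_X(\mathbf{c}))$ over the Teichm\"uller space of the filled surface $\mathbf{S}$, against a weight given by the Weil--Petersson volume of the complementary (genus $g - g_{\mathbf{S}}$, many-boundary) moduli space. Expanding this complement volume in powers of $1/g$ (using Mirzakhani's volume polynomials and their known asymptotics) converts \eqref{eq:exist_asympt_type_intro} into an expression in which each $f_k^{\mathbf{T}}(\ell)$ is, by uniqueness of the expansion, an integral over $\mathcal{T}(\mathbf{S})$ of a polynomial function of the boundary lengths times $\delta(\ell - \ell_X(\mathbf{c}))$, with a sum over mapping class group orbits coming from the unfolding step.

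The next step is to abandon Fenchel--Nielsen coordinates in favour of the new coordinate systems on $\mathcal{T}(\mathbf{S})$ constructed in this paper, which are tailored to $\mathbf{c}$. The point is twofold: (i) in these coordinates, $\ell_X(\mathbf{c})$ has an explicit formula that decomposes cleanly along the arcs and junction/cell pieces of the regular neighbourhood of $\mathbf{c}$, essentially as a sum of elementary hyperbolic-trigonometric quantities; (ii) the Weil--Petersson form has a simple expression compatible with this decomposition. Consequently, after performing the angular integrations in the cells and junctions, the integral defining $f_k^{\mathbf{T}}$ factors into a pseudo-convolution, in the sense of this paper, of elementary building blocks attached to each piece of the decomposition of $\mathbf{c}$, multiplied by polynomial factors in the arc lengths coming from both the polynomial weight and the Jacobian of the coordinate change.

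Each elementary building block is then an explicit function (involving $\sinh$, $\cosh$, $\argcosh$ and similar) of one variable, which one can verify by hand to be a Friedman--Ramanujan function in the weak sense. Invoking the stability results of this paper --- namely that the class of Friedman--Ramanujan functions in the weak sense is preserved under multiplication by polynomials and under pseudo-convolutions of the type arising here --- we conclude that $f_k^{\mathbf{T}}$ itself is Friedman--Ramanujan in the weak sense. The hypothesis that $\mathbf{T}$ is not the simple type will enter to guarantee that $\mathbf{c}$ genuinely fills $\mathbf{S}$ with at least one self-intersection, so the decomposition into arcs, junctions and cells is nondegenerate and the new coordinates are well defined.

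The main obstacle is the pseudo-convolution step: one must check that the precise combinatorial form of the length formula for $\ell_X(\mathbf{c})$ --- which is only additive away from the junctions, and involves nonlinear trigonometric corrections at self-intersection points --- can still be packaged as a pseudo-convolution in the class-theoretic sense required to apply the closure theorems. A secondary difficulty is to control the polynomial degrees and constants uniformly in $k$ and in the combinatorial complexity of $\mathbf{c}$, so that the Ramanujan-type exponential bounds survive the iterated convolution and integration. The uniformity of the constants $C_{K, \mathbf{S}}$ and exponents $\alpha_K^{\mathbf{S}}$ stated in Theorem~\ref{thm:exist_asympt_type_intro}, which depend only on $\mathbf{S}$ and not on $\mathbf{c}$, will be reused here and is what makes this uniform control possible.
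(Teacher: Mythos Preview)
Your high-level strategy matches the paper's: express $f_k^{\mathbf{T}}$ as a level-set integral over $\cT^*_{\g,\n}$ against a product of the functions \eqref{e:each_term}, change to the new coordinates $(\vec{L},\vec{\theta})$, recognise a pseudo-convolution, and invoke the stability of FR functions. However, your proposal stops precisely where the real difficulties begin, and several of the missing steps are not routine.

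First, the stability theorems for pseudo-convolutions (Theorem \ref{t:intermediate} and its variants in \S\ref{sec:vari-theor-reft}) are conditional: they require the level function $h(\thetaAc)=\ell_Y(\mathbf{c})$ to lie in the class $\cE^{(a)}_n$ and the weight $\varphi$ to satisfy specific derivative/comparison estimates. Neither holds automatically. The length formula \eqref{e:oldc} puts $h$ in $\cE^{(a)}$ only when every $\theta_q\ge a$; crossing parameters $\theta_q$ (Definition \ref{def:crossing}) can be negative, and on that region $h$ fails the hypothesis. The paper resolves this with the \emph{erasing procedure} of \S\ref{s:lengthc}, which constructs a partition of Teichm\"uller space and, on each piece, a new representative of $\mathbf{c}$ using only positive parameters $\tilde\theta_q$. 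This step is not mentioned in your plan and is not a technicality.

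Second, the weight $\varphi$ involves the functions $\log F_{m(\lambda)}(\vec{L}^\lambda,\vec{\tau}^\lambda)$ coming from the boundary-length formula (Theorem \ref{prp:length_g_lambda}). Because $F_m$ is an alternating sum \eqref{e:Ff0}, its logarithmic derivatives are not controlled everywhere; one needs a favourable-region analysis and a \emph{neutralization procedure} for cells of small height (\S\ref{s:lengthboundary}), producing a second partition of unity $(\Psi_{\mathbf{\Gamma},\mathbf{d}})$. Your claim that the building blocks ``can be verified by hand'' to be FR does not address this: the weight is not a product of one-variable FR functions.

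Third, the argument hinges on a geometric \emph{comparison estimate} (Proposition \ref{p:comparison}, proved in \S\ref{s:horrible}) relating $\sum_\lambda \ell_Y(\Gamma_\lambda)$, $\ell_Y(\mathbf{c})$, the active $\tilde\theta_q$, and the heights $Z_{\mathbf{\Gamma},\mathbf{d}}(\cK_q)$. This is what feeds the hypothesis ($\varphi$-CE-form) in Notation \ref{nota:CEform}; without it the pseudo-convolution stability theorem gives nothing. You do not identify this ingredient.

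Finally, a minor point: the reason ``simple'' is excluded is not that the coordinate construction degenerates, but that $f_k^{\mathbf{s}}$ has a pole of order one at $\ell=0$ (see the remark following Theorem \ref{con:FR_type}); the simple case is handled separately in \cite{anantharaman2022}.
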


The notion of Friedman--Ramanujan function was introduced in \cite[\S 3]{Ours1}. A locally
integrable function $f : \R_{\geq 0} \rightarrow \R$ is said to be a Friedman--Ramanujan function if
there exists a polynomial function $p$ such that $f(\ell) - p(\ell) e^{\ell}$ grows at most like a
polynomial multiple of $e^{\ell/2}$. Precise definitions follow in \S \ref{s:full_FR}, together with
a quantitative version of our main result (see Theorem~\ref{thm:FR_type}).

\begin{rem}
  The case of the local topological type $\mathbf{s}$ is excluded because, in this case, the
  function $f_k^{\mathbf{s}}$ have pole of order one at $0$. We prove that
  $\ell \mapsto \ell f_k^{\mathbf{s}}(\ell)$ are Friedman--Ramanujan functions
  in~\cite{anantharaman2022} (see \cite[Proposition 3.4]{Ours1} for more details).
\end{rem}

Theorem \ref{con:FR_type} has already been obtained when $\mathbf{T}$ is a figure-eight, or more
generally when the filled surface $\mathbf{S}$ has Euler characteristic $-1$, in~\cite{Ours1}. For
this relatively simple topological situation, we could use almost explicit expressions of the
functions $(f_k^{{\mathbf{T}}})_{k \geq 0}$.

The present paper deals with general local topological types and uses entirely new methods,
involving finding explicit expressions for the Weil--Petersson measure in coordinates other than the
Fenchel--Nielsen ones, and for the lengths of non-simple geodesics in those coordinates.

This work is largely motivated by the question of estimating the gap at the bottom of the spectrum
of the Laplacian, for typical hyperbolic surfaces, as detailed in \cite{Ours1,Expo}. In the first
part of this article \cite{Ours1}, we used the result when ${\mathbf{S}}$ has Euler characteristic
$-1$ to prove that typical surfaces have spectral gap at least $2/9-\eps$.  More generally, our
theorem (together with the variant presented in \S \ref{s:variantLC}) implies the optimal lower
bound $1/4-\eps$ on the spectral gap.

\begin{thm}\label{t:dream} Denote by $\lambda_1(X)$ the first non-trivial eigenvalue of Laplacian on
  the hyperbolic surface~$X$. Then, for any $\epsilon >0$,
\begin{align} \label{e:dream} \Pwp{\lambda_1(X) \geq \frac{1}{4}  - \epsilon} \Lim_{g\rightarrow +\infty} 1.
\end{align}
\end{thm}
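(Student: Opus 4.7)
The plan is to prove Theorem~\ref{t:dream} by a first-moment argument applied to the number of Laplace eigenvalues in $[0, 1/4 - \epsilon]$, using the Selberg trace formula as the bridge between the spectral side (which we want to bound) and the geometric side (over which we have analytic control via Theorems~\ref{thm:exist_asympt_type_intro} and~\ref{con:FR_type}). Concretely, I would fix a non-negative even test function $h$ on $\R$, with rapidly-decaying Paley--Wiener Fourier transform supported in $[-L, L]$, such that $h(is) \geq 1$ for all $s \in [\sqrt{\epsilon}, 1/2]$. Writing $\lambda_j(X) = 1/4 + r_j(X)^2$, we then have
\begin{equation*}
\#\{j : \lambda_j(X) \leq 1/4 - \epsilon\} \;\leq\; \sum_j h(r_j(X)),
\end{equation*}
and by Markov's inequality it suffices to prove that the Weil--Petersson expectation of the right-hand side tends to zero as $g \rightarrow +\infty$. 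The length cutoff parameter $L$ will be taken to infinity with $g$ at a rate to be tuned.

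Next, I would apply the Selberg trace formula on $X \in \cM_g$, which recasts $\sum_j h(r_j(X))$ as an identity contribution (proportional to $g$) plus a geometric sum $\sum_{\gamma \in \geod(X)} G(\ell_X(\gamma))$ over primitive closed geodesics and their iterates, where $G$ is the Selberg transform of $h$. Taking expectation and grouping primitive geodesics by local topological type $\mathbf{T}$, one obtains an expression of the form $\sum_{\mathbf{T}} \av[\mathbf{T}]{G} + (\text{iterates}) + (\text{identity contribution})$. Using Proposition~\ref{prp:existence_density}, each $\av[\mathbf{T}]{G}$ is $V_g^{-1} \int_0^\infty G(\ell) V_g^{\mathbf{T}}(\ell) \, d\ell$. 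For the simple type~$\mathbf{s}$ one uses Mirzakhani's polynomial formulas; after dividing by $V_g$ and letting $g\to\infty$, these polynomial contributions (together with the identity term) match the spectral side's contribution from the trivial eigenvalue $\lambda_0 = 0$ and cancel to leading order, leaving only the non-simple contributions as the object of study.

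For each non-simple local topological type $\mathbf{T}$, Theorem~\ref{thm:exist_asympt_type_intro} gives the expansion $V_g^{\mathbf{T}}(\ell)/V_g = \sum_{k=0}^K g^{-k} f_k^{\mathbf{T}}(\ell) + O(g^{-K-1})$ (with explicit error growth in $\ell$), and Theorem~\ref{con:FR_type} tells us that each $f_k^{\mathbf{T}}$ admits a decomposition
\begin{equation*}
f_k^{\mathbf{T}}(\ell) \;=\; p_k^{\mathbf{T}}(\ell)\, e^\ell \;+\; R_k^{\mathbf{T}}(\ell), \qquad |R_k^{\mathbf{T}}(\ell)| \leq C(\ell+1)^{N}\, e^{\ell/2}.
\end{equation*}
The polynomial-$\times\, e^\ell$ parts $p_k^{\mathbf{T}}(\ell) e^\ell$ must sum (over $\mathbf{T}$) to produce precisely the Huber/prime-geodesic main density on a hyperbolic surface, which once again matches the $\lambda=0$ spectral contribution and is absorbed. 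The remainders $R_k^{\mathbf{T}}$, integrated against $G$ (whose Selberg inverse $h$ vanishes on $\{r : \Im r < 1/2\}$ in a quantitative sense), yield a contribution bounded by $\poly(L)\, e^{L/2}\cdot e^{-L/2}$ from the rapid decay of $G$ paired with the $e^{\ell/2}$ growth of $R_k^{\mathbf{T}}$; this is the key cancellation that encodes the $\frac14$ spectral threshold.

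The main obstacle is uniformity. Theorem~\ref{thm:exist_asympt_type_intro} contains constants $C_{K,\mathbf{S}}$ and exponents $\alpha_K^{\mathbf{S}}$ depending on the filled surface $\mathbf{S}$, and the number of topological types $\mathbf{T}$ filling a surface of bounded complexity grows combinatorially; moreover, as $L \to \infty$ the range of relevant complexities grows as well. I expect the hard technical core of the proof to consist in: (i) combinatorial bounds on the number of local topological types $\mathbf{T} = \eqc{\mathbf{S}, \mathbf{c}}$ as a function of the length budget $L$, (ii) quantitative control of $C_{K,\mathbf{S}}$ and $\alpha_K^{\mathbf{S}}$ in terms of $|\chi(\mathbf{S})|$, and (iii) a joint choice of $(L, K)$ depending on $g$ that balances all error terms. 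The variant of the Friedman--Ramanujan statement announced in \S\ref{s:variantLC} is designed precisely to yield this uniform control, at which point the three error budgets (polynomial-$\times\,e^\ell$ cancellations, $e^{\ell/2}$-remainder estimates, and $g^{-K-1}$ tail) can be balanced to conclude that $\mathbb{E}_g^{\mathrm{WP}}\bigl[\#\{j : \lambda_j \leq 1/4 - \epsilon\}\bigr] \to 0$, which, together with the fact that $\lambda_0 = 0$ is the unique trivial eigenvalue, proves~\eqref{e:dream}.
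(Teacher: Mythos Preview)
Your outline captures the broad architecture (trace formula, grouping by local type, asymptotic expansion, Friedman--Ramanujan decomposition), but it has two genuine gaps that the paper's proof addresses in an essentially different way.

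\textbf{The cancellation mechanism.} You assert that the principal terms $p_k^{\mathbf{T}}(\ell)e^{\ell}$, summed over $\mathbf{T}$, reproduce the Huber main density and therefore cancel against the identity term and $\lambda_0$. This is neither proved nor, as far as one can see, true in any useful sense; certainly it is not how the paper proceeds. Instead, the paper chooses a specific test function
\[
F_{m,L}(\ell)=\ell\,e^{-\ell/2}\,\cD^{m}H_L(\ell),\qquad \cD=\tfrac14-\partial^{2},
\]
and exploits the algebraic fact that $\cD$ annihilates $e^{\ell/2}$. After integration by parts, the pairing of $F_{m,L}$ against a Friedman--Ramanujan principal term $p(\ell)e^{\ell}$ becomes $\int H_L(\ell)\,\cD^{m}\bigl(\ell\,p(\ell)e^{\ell/2}\bigr)\,d\ell$, and for $m$ large enough (depending only on $\rK$) the operator $\cD^{m}$ kills the polynomial factor entirely. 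This is the mechanism behind Propositions~\ref{cor:FR_implies_small} and~\ref{cor:FR_implies_small_final}: the principal terms are annihilated type by type by the test function itself, not by a global cancellation across types. Your proposed route would require an additional identity that is nowhere available.

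\textbf{Control on the number of types and the tangle-free restriction.} You correctly flag uniformity as the hard part, but underestimate it. Without further hypotheses, the number of local types $\mathbf{T}$ realisable by geodesics of length $\leq L$ grows faster than any polynomial in $L$, and no ``combinatorial bound'' of the kind you hope for exists. The paper's solution is to restrict the Weil--Petersson measure to the set $\Atf$ of $(\kappa,\tfL)$-tangle-free surfaces (of probability $1-O(\kappa^{2})$), on which the number of relevant types is polylogarithmic in $g$ (equation~\eqref{eq:bound_number_type_TF}). The indicator $\mathbf{1}_{\Atf}$ is then removed via the Moebius inversion formula~\eqref{e:inversionformula}, which replaces the original averages by the lc-surface averages $\avN[\ftype]{\mu_{\kappa,\tfL}\otimes F}$; this is precisely what the variant in \S\ref{s:variantLC} (Theorem~\ref{thm:leviathan_lc}) is designed to handle. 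The conclusion then reads $\limsup_g\Pwp{\lambda_1\leq 1/4-\alpha^2-\epsilon}\leq C_{\mathrm{tf}}\kappa^{2}$, and only afterwards does one send $\kappa\to 0$; the expectation of the small-eigenvalue count does not itself tend to zero in the way you suggest.
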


\subsection{Full statement of the Friedman--Ramanujan property}
\label{s:full_FR}

Let us introduce a few definitions, and present our main result more precisely.

\subsubsection{Definitions}

We recall from \cite[\S 3]{Ours1} the \emph{strong} and \emph{weak} definitions of the class of
Friedman--Ramanujan functions, directly inspired by J. Friedman's seminal work on random graphs
\cite[Definition 2.1]{friedman2003} where he proves the Alon conjecture. 
\begin{defa}\label{def:FR}
  Let $\rN\geq 1$. We define the class of functions $\cR^\rN$ as the set of locally integrable
  functions $f : \R_{\geq 0} \rightarrow \C$ such that there exists $c>0$ satisfying
 \begin{equation}\label{e:strong}
    \forall \ell \geq 0, \quad
    \abso{f(\ell)} \leq c  (\ell + 1)^{\rN-1}  e^{\frac \ell 2}.
  \end{equation}
  We let $\cR=\bigcup_{\rN\geq 1} \cR^\rN$.
  A locally integrable function $f : \R_{\geq 0} \rightarrow \C$ is said to be a \emph{Friedman--Ramanujan
    function} if there exists a polynomial function $p$ such that
  \begin{equation*}
    f(\ell) - p(\ell) e^\ell \in \cR.
  \end{equation*}
  We denote as $\cF$ the set of Friedman--Ramanujan function.  For $f \in \cF$, the function
  $\ell\mapsto p(\ell) e^\ell $ is uniquely defined, we call it the \emph{principal term} of
  $f$.
 %, and we define the \emph{degree} of $f$ as a Friedman--Ramanujan function as the degree of the polynomial $p$. We denote as $\FR$ the class of Friedman--Ramanujan functions. 
\end{defa}

\begin{nota}
  For an integer $\rK \geq 0$, we denote as $\C_{\rK-1}[\ell]$ the space of polynomials of degree
  strictly less than $\rK$, and by $\C_{\rK-1}[\ell] \, e^\ell$ the class of functions of the form
  $p(\ell) \, e^\ell$ where $p\in \C_{\rK-1}[\ell]$. Note that these sets are reduced to the
  constant function equal to $0$ if $\rK=0$.
\end{nota}

\begin{defa}
  For $\rK\geq 0$, we denote as $\FR^\rK$ the subset of Friedman--Ramanujan functions of principal
  term in $\C_{\rK-1}[\ell] \, e^\ell$, and for $\rN \geq 1$,
  $\cF^{\rK,\rN} := \C_{\rK-1}[\ell] \, e^\ell \oplus \cR^\rN$.
\end{defa}

Note that the space $\FRrem$ can be identified with the subset $\cF^0$ of Friedman--Ramanujan
functions with principal term $p=0$, and similarily $\cR^\rN = \cF^{0,\rN}$.

In our proof of Theorem \ref{con:FR_type} we will rather use the following weak version of the
classes $\FRrem$ and~$\FR$. Remark that in the work of Friedman, where the functions are defined
on $\Z_{> 0}$ instead of $\R_{\geq 0}$, this distinction between weak and strong definition does not
exist.

 \begin{defa}\label{def:weakFR}
   Let $\rN \geq 1$. We define the class of functions $\FRrem_w^\rN$ as the set of locally
   integrable functions $f : \R_{\geq 0} \rightarrow \C$ such that there exists $c>0$ satisfying
   \begin{align}\label{e:weak}
     \forall \ell \geq 0, \quad
     \int_0^{\ell} |f(s)| \d s \leq c (\ell+1)^{\rN-1} e^{\frac \ell 2}.
   \end{align}
  We let $\cR_w=\bigcup_{\rN \geq 1} \cR_w^\rN$.
  A locally integrable function $f : \R_{\geq 0} \rightarrow \C$ is said to be a
  \emph{Friedman--Ramanujan function in the weak sense} if there exists a polynomial function $p$
  such that
  \begin{equation*}
    f(\ell) - p(\ell) e^\ell \in \cR_w.
  \end{equation*}
  For $\rK\geq 0$, we write $\FR_w^\rK := \C_{\rK-1}[\ell] \, e^\ell \oplus \cR_w$ and for
  $\rN \geq 1$, $\cF^{\rK,\rN}_w := \C_{\rK-1}[\ell] \, e^\ell \oplus \cR^\rN_w$.
\end{defa}

  As the name suggests, the strong definition
implies the weak one.

% \begin{rem}
%   The property $f\in\FRrem_w$ is equivalent to the definition given in \cite{Ours1}, that is,
%   the existence of $a>0$ such that
%    \begin{equation}
%    \label{eq:def_FR_w}
%     \int_{0}^{+ \infty} \frac{|f(\ell)|}{(\ell+1)^{a}
%       e^{\frac \ell 2}}
%     \d \ell < +\infty. 
%   \end{equation}
%   More precisely, if $f\in\FRrem^\rN_w$, then \eqref{eq:def_FR_w} holds for any exponent
%   $a>\rN$. Conversely if \eqref{eq:def_FR_w} holds, then $f\in\FRrem^{a+1}_w$.
%   \end{rem}

  \subsubsection{Norms on the set of Friedman--Ramanujan functions}

  Let us define a norm on the spaces $\cF^{\rK,\rN}$ and $\cF^{\rK,\rN}_w$, which captures the size
  of the principal term and the remainder of the Friedman--Ramanujan function.

  \begin{nota}
    For a polynomial function $p \in \C[\ell]$, we define $\|p\|_{\ell^\infty}$ to be the maximum modulus
    of the coefficients of $p$.
  \end{nota}

  \begin{defa} \label{d:normFR}
  For $\rN \geq 1$, we define a norm on $\cR^\rN$ and $\cR_w^\rN$ by
   \begin{align*}
  \norm{f}_{\cR^\rN}:=\sup_{\ell \geq 0}\frac{ |f(\ell)|}{  (\ell+1)^{\rN-1} e^{\frac \ell 2}}
     \quad \text{and} \quad
    \norm{f}^w_{\cR^\rN}:= \sup_{\ell \geq 0}\frac{\int_0^{\ell} |f(s)| \d s}{  (\ell+1)^{\rN-1}\, e^{\frac \ell 2}}.
 \end{align*} 
 If now $\rK \geq 0$, we equip $\cF^{\rK,\rN}$  with the norm
 \begin{equation*}
   \| f \|_{\FR^{\rK,\rN}} := \| p \|_{\ell^\infty} + \|f(\ell) - p(\ell) e^\ell\|_{\cR^\rN}
 \end{equation*}
 where $p(\ell) \, e^\ell$ is the principal term of $f$. The norm $\|f\|_{\cF^{\rK,\rN}}^w$ is defined
 the same way replacing the last norm by its weak version.
\end{defa}
  
It will be convenient to adopt the following convention.
 \begin{nota} \label{n:fn} We denote as $\fn ((u_i)_{i\in \mathcal{V}})$ any quantity that is a
   function solely of some set of variables $(u_i)_{i\in \mathcal{V}}$ (but for which we do not need
   to write an explicit expression).
\end{nota}

  \begin{rem} \label{rem:boundsFR}
    By definition, if $f\in \cR^{\rN}$, then for all $\ell \geq 0$,
    \begin{align}\label{e:normbound2}
      |f ( \ell)|\leq \norm{f}_{ \cR^{\rN}}\, (1+ \ell)^{\rN-1} e^{\ell/2}.\end{align}
    If we rather assume that $f\in \cF^{\rK,\rN}$, then  for $\ell \geq 0$,
    \begin{align}\label{e:normbound1}
      | f ( \ell)|
      \leq \fn(\rK,\rN) \norm{f}_{\cF^{\rK,\rN}}\, (1+\ell)^{\rK-1} e^{ \ell}\end{align}
    for some constant $\fn(\rK,\rN):=1+ \sup_{\ell \geq 0} (1+\ell)^{\rN-\rK} e^{- \ell/2}$.
  \end{rem} 
 
 \subsubsection{Precise statement}
 
The main technical result of this paper is the following precised version of Theorem \ref{con:FR_type}.
\begin{thm}\label{thm:FR_type}
  Let ${\mathbf{T}} = \eqc{{\mathbf{S, c}}}$ be a local type other than ``simple''. Then, for any
  $k \geq 0$, there exist $\rK, \rN$ and $C$, depending on $k$ and ${\mathbf{S}}$ but not on
  ${\mathbf{c}}$, such that $f_k^{{\mathbf{T}}}\in \cF_w^{\rK, \rN}$ and
    $\norm{f_k^{{\mathbf{T}}}}_{\cF^{\rK, \rN}}^w \leq C$.
\end{thm}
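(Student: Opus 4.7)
The plan is to follow the strategy already successfully carried out in \cite{Ours1} for $\chi(\mathbf{S}) = -1$, but with new ingredients which allow to treat a filled surface $\mathbf{S}$ of arbitrary topology. Starting from the Mirzakhani-type integration formula proved in Part I, the average $\av[\mathbf{T}]{F}$ can be rewritten as an integral over $\mathcal{T}(\mathbf{S})$ (parametrising the germ of the filled neighbourhood) against Weil--Petersson volume polynomials $V_{g',n'}(\mathbf{x})$ of the complement, where $\mathbf{x}$ collects the lengths of the boundary components of $\mathbf{S}$. Using Mirzakhani's asymptotic expansion of $V_{g',n'}$ in $1/g$, each coefficient $f_k^{\mathbf{T}}(\ell)$ is then expressed as a finite sum of integrals
\begin{equation*}
  f_k^{\mathbf{T}}(\ell) = \sum_\alpha \int_{\mathcal{T}(\mathbf{S})} P_{k,\alpha}(\mathbf{x}(\tau)) \, \delta(\ell_\mathbf{c}(\tau) - \ell) \, \d \mu_{\mathrm{WP}}(\tau),
\end{equation*}
where each $P_{k,\alpha}$ is a polynomial in the boundary lengths. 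The central task is to prove that each of these integrals belongs to $\cF^{\rK,\rN}_w$ for suitable $\rK,\rN$ depending only on $\mathbf{S}$ and $k$.

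The first main ingredient I would use is the new system of coordinates on $\mathcal{T}(\mathbf{S})$ promised in the introduction, tailored to the curve $\mathbf{c}$. These should play the role that Fenchel--Nielsen coordinates played in the Euler characteristic $-1$ case, but with the key feature that (i) the Weil--Petersson form has a tractable expression (e.g.\ a product of elementary factors) and (ii) the length $\ell_\mathbf{c}$ is an \emph{explicit}, piecewise-elementary function of the coordinates, typically built from $\argcosh$ and sums of the length-type variables. I would decompose $\mathbf{S}$ into its canonical pieces along the self-intersections of $\mathbf{c}$, compute $\ell_\mathbf{c}$ as an explicit hyperbolic-trigonometric function of the lengths of these pieces, and express $\d\mu_{\mathrm{WP}}$ as a Jacobian-times-Lebesgue measure in the new variables.

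Next, I would set up the Friedman--Ramanujan bookkeeping as a stability statement: the class $\cF^{\rK,\rN}_w$ is stable under (a) multiplication by polynomials, (b) the pseudo-convolution operation defined in the paper, and (c) integration against the explicit length function $\ell_\mathbf{c}$ when the latter is controlled by a sum of variables on which exponential weights exist. Concretely, writing $\ell_\mathbf{c}$ as a sum (modulo $\argcosh$ corrections coming from junctions) of internal-edge lengths, the integral defining $f_k^{\mathbf{T}}$ becomes a (pseudo-)convolution of functions each of which can be proved Friedman--Ramanujan by a direct calculation on one elementary piece. The inductive step on the complexity of $\mathbf{S}$ then produces the FR class of the full integrand. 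The principal term's polynomial degree $\rK$ and remainder degree $\rN$ can be tracked through each step, so the quantitative bound on $\|f_k^{\mathbf{T}}\|^w_{\cF^{\rK,\rN}}$ follows mechanically.

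The main obstacle, as I see it, is precisely step (c): controlling the integral over coordinates when $\ell_\mathbf{c}$ is given by a non-linear (hyperbolic-trigonometric) function of the coordinates coming from the junctions where $\mathbf{c}$ crosses itself. The exponential weight $e^{\ell/2}$ that defines the FR class is exactly tight with the Jacobian of the $\argcosh$ terms, which is why the weak definition (integrated form) is used rather than the strong one: one must absorb logarithmic divergences and polynomial factors arising when several branches of $\argcosh$ nearly degenerate. Handling these ``junction integrals'' uniformly in the topology of $\mathbf{S}$ (but with constants depending on $\mathbf{S}$) and showing that the pseudo-convolution of weak-FR functions remains weak-FR with controlled norm is where the bulk of the technical work of the paper will be spent. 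Once this is achieved, the conclusion of Theorem~\ref{thm:FR_type} follows by summing the finitely many contributions indexed by $\alpha$ and by the decomposition of $\mathbf{S}$.
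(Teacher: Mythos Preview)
Your broad plan---new coordinates, explicit length formula, pseudo-convolution interpretation---matches the paper's. But two things are miscalibrated. First, the weight is not a polynomial: the functions defining $f_k^{\mathbf{T}}$ are linear combinations of products like $\prod_{i\in V_+} x_i^{k_i}\cosh(x_i/2)\prod_{i\in V_-} x_i^{k_i}\sinh(x_i/2)$ (with Dirac factors), so they grow like $e^{\sum_j x_j/2}$. Since filling only gives $\sum_j x_j\le 2\ell_Y(\mathbf{c})$, this is exactly $e^\ell$, the size of an FR principal term---the tightness is here, not in ``the Jacobian of $\argcosh$.'' Second, you are missing the mechanism that converts this into a proof: the characterization $f\in\cF_w^{\rK}\Leftrightarrow \cL^{\rK}f\in\cR_w$ for $\cL=\mathrm{id}-\int_0^x$, together with a near-identity $\cL^{\rK}(f_1\star\cdots\star f_n|_\varphi^h)\approx (\cL^{\rK_1}f_1)\star\cdots\star(\cL^{\rK_n}f_n)|_\varphi^h$ (Theorem~\ref{t:thebigone}), valid when $h$ lies in a class $\cE_n^{(a)}$ (derivatives of $h-\sum_j x_j$ decay like $e^{-x_j}$) and $\varphi$ has controlled derivatives. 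A generic ``stability under pseudo-convolution'' does not encode this, and there is no induction on the complexity of $\mathbf{S}$ for the FR step.

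The actual obstacles are then three concrete ones, none of which appears in your outline: (i) crossing variables $\theta_q$ can be negative, taking $h$ out of $\cE_n^{(a)}$---fixed by an \emph{erasing procedure} (\S\ref{s:lengthc}) constructing a new representative of $\mathbf{c}$ with only positive variables; (ii) the boundary-length expressions $\ell_Y(\Gamma_\lambda)$ have alternating signs, so $\varphi$ is not in $E_n^{(a)}$---fixed by a \emph{neutralization procedure} on polygonal curves (\S\ref{s:lengthboundary}); (iii) after applying $\cL^{\rK}$, one needs a refined \emph{comparison estimate} (Proposition~\ref{p:comparison}) bounding $\sum_\lambda \ell_Y(\Gamma_\lambda)$ by $\ell_Y(\mathbf{c})$ plus terms involving the heights of surviving cells. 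The number $\rho[\mathbf{T}]$ of unshielded simple portions governs $\rK$ precisely through this estimate.
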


\begin{nota}
  \label{nota:S}
  We denote as $(\g,\n)$ the signature of $\mathbf{S}$, and $\chi(\mathbf{S}) := 2\g-2+\n$ its
  absolute Euler characteristic. We denote as $\partial \mathbf{S}$ the set of boundary components
  of $\mathbf{S}$, which is identified with $\{1, \ldots, \n\}$ through the labelling of the
  boundary of $\mathbf{S}$.
\end{nota}

 \begin{rem}\label{r:firstKN}
   The proof gives a seemingly optimal estimate on $\rK$, and a non-optimal estimate on $\rN$ and
   $C$: we may take $\rK= 2k + 2\chi(\Sf)$ and $\rN= \n (a_{k+1}+1)$, where $a_{k+1}$ is an explicit
   sequence coming from~\cite{anantharaman2022} which is a priori not optimal either.  Remark
   \ref{rem:improve} shows how we may optimise the value of $\rK$ further, depending on the topology
   of the multi-loop $\curve$.
 \end{rem}

\subsection{Outline of the proof}

Let ${\mathbf{T}} = \eqc{{\mathbf{S, c}}}$ be a local topological type, i.e. the topological datum
of a loop $\mathbf{c}$ filling a surface $\mathbf{S}$. We present prior results on the volume
function $V_g^{\mathbf{T}}$ and the coefficients $(f_k^{\mathbf{T}})_{k \geq 0}$ of its asymptotic
expansion in powers of $1/g$. We then explain the key ideas behind the proof of the main theorem.

\subsubsection{Teichm\"uller spaces and the Weil--Petersson measure}
\label{s:teich}

For  $\x =(x_1, \ldots, x_{\n}) \in \R_{>0}^{\n}$, we denote as
$\mathcal{T}_{\g,\n}(\x)$ the Teichm\"uller space of hyperbolic surfaces of genus $\g$ with $\n$
labelled geodesic boundary components of respective lengths $x_1, \ldots, x_{\n}$.  This space is a
real-analytic manifold of dimension $6\g-6+2\n$. It is equipped with the \emph{Weil--Petersson
  symplectic form}, which induces a volume that we call the \emph{Weil--Petersson measure} and
denote as $ \mathrm{Vol}^{\mathrm{WP}}_{\g,\n,\x}$.

 In this article, it will be more interesting to view the boundary lengths as free variables, and
 hence introduce the Teichm\"uller space 
 $$\mathcal{T}_{\g,\n}^*=\{(\x, Y) \, : \x \in \R_{>0}^{\n}, Y\in \mathcal{T}_{\g,\n}(\mathbf{x})\},$$
 which is the space of compact hyperbolic metrics on the surface $\mathbf{S}$.  The space
 $\mathcal{T}_{\g,\n}^*$ is a real-analytic manifold of dimension $6\g-6+3\n=3\chi(\mathbf{S})$. It
 may be endowed with the positive measure
 $\d\mathrm{Vol}^{\mathrm{WP}}_{\g,\n}(\x, Y):= \d \x \d\mathrm{Vol}^{\mathrm{WP}}_{\g,\n}(Y)$,
 where $\d \x$ is the Lebesgue measure on $\R_{>0}^{\n}$, which we also call \emph{Weil--Petersson
   measure}. We will sometimes omit the mention of $\x$ and simply write $Y \in \cT_{\g,\n}^*$ when it
 is not useful to emphasize the boundary length of $Y$.

 \subsubsection{Level-set desintegration} 

 The map $\mathcal{T}_{g_{\mathbf{S}},n_{\mathbf{S}}}^* \ni Y \mapsto \ell_Y({\mathbf{c}})$ is a
 non-constant analytic function on the Teichm\"uller space $\cT_{\g,\n}^*$. We can therefore
 desintegrate the Weil--Petersson measure $ \d\mathrm{Vol}^{\mathrm{WP}}_{\g,\n}$ on its
 level-sets. This yields a measure $\d\mathrm{Vol}^{\mathrm{WP}}_{\g,\n}(Y)/\d\ell$ defined by the
 property that, for any integrable function $F : \cT_{\g,\n}^* \rightarrow \R_{\geq 0}$, we have
\begin{align}\nonumber
\int_{\mathcal{T}_{g_{\mathbf{S}},n_{\mathbf{S}}}^*}  F(Y)  \d\mathrm{Vol}^{\mathrm{WP}}_{\g,\n}(Y) =
  \int_{0}^{+\infty}\Big( \int_{ \substack{ \ell_Y({\mathbf{c}})=\ell}} 
  F(Y)\frac{  \d\mathrm{Vol}^{\mathrm{WP}}_{\g,\n}(Y) }{\d\ell} \Big) \d \ell.
\end{align}
In particular, if $F(\x, Y)= \tilde F( \ell_Y({\mathbf{c}}))\phi(\x)$ is a multiplicative function
of $\ell_Y({\mathbf{c}})$ and $\x$, this equation takes the special form
\begin{align}\nonumber
  \int_{\mathcal{T}_{g_{\mathbf{S}},n_{\mathbf{S}}}^*}
 \tilde F( \ell_Y({\mathbf{c}})) \phi(\x) \d\mathrm{Vol}^{\mathrm{WP}}_{\g,\n}(\x, Y) =
\int_{0}^{+\infty}\tilde F(\ell)\Big( \int_{\ell_Y({\mathbf{c}})=\ell }\phi(\x)\frac{  \d\mathrm{Vol}^{\mathrm{WP}}_{g,n}(\x, Y) }{\d\ell} \Big) \d \ell.
\end{align}

\subsubsection{Description of the volume function and coefficients}

We have proven in \cite[\S 5]{Ours1} that the volume function $V_g^{{\mathbf{T}}}(\ell)$ and the
coefficients $f_k^{{\mathbf{T}}}(\ell)$ are all of the form
\begin{align}\label{e:disint_phi}
  \ell \mapsto \int_{\ell_Y({\mathbf{c}})=\ell }
  \phi(\x)\frac{  \d\mathrm{Vol}^{\mathrm{WP}}_{\g,\n}(\x, Y) }{\d\ell},\end{align}
with particular choices of $\phi(\x)$.

More precisely, the volume function $V_g^{{\mathbf{T}}}(\ell)$ is obtained by replacing $\phi$ with
the following explicit polynomial function:
\begin{align}\label{e:enumeration1}
  \Phi_g^{\mathbf{T}}(\x)
  :=\frac{1}{n(\mathbf{T})}\frac{x_1 \ldots x_{\n}}{V_g} \sum_{\mathfrak{R} \in R_g({\mathbf{S}})}
  V_{\mathfrak{R}}(\x),
\end{align}
which enumerates all possible embeddings of $\mathbf{S}$ in a surface of genus $g$ (the coefficient
$n(\mathbf{T})$ is a combinatorial factor, and $V_{\mathfrak{R}}(\x)$ are products of volumes of
moduli spaces, enumerated by configurations ${\mathfrak{R}} \in R_g(\mathbf{S})$).

From this expression, the functions $f_k^{{\mathbf{T}}}(\ell)$ are then obtained by performing the
asymptotic expansion of $ \Phi_g^{\mathbf{T}}(\x)$ in powers of $1/g$, using the results of
\cite{anantharaman2022, Ours1}, see \cite[Proposition 5.22]{Ours1}.  It follows that
$f_k^{{\mathbf{T}}}(\ell)$ has the form \eqref{e:disint_phi} for a $\phi(\x)$ which is a linear
combination of functions
 \begin{equation}\label{e:each_term}
     \prod_{i \in V_0} x_i^{k_i}
    \prod_{i \in V_+} x_i^{k_i} \cosh \div{x_i}
    \prod_{i \in V_-} x_i^{k_i} \sinh \div{x_i}  \prod_{j=1}^k x_{\vi} \delta(x_{\vi} - x_{\vi'}),
  \end{equation}
  where:
  \begin{itemize}
  \item $V_0$, $V_+$, $V_-$ are three disjoint subsets of $\partial \mathbf{S}$;
  \item $\mathrm{m} = \bigsqcup_{j=1}^k \{\vi, \vi'\}$ is a perfect matching of
    $\partial \mathbf{S} \setminus (V_0\sqcup V_+\sqcup V_-)$;
  \item $\delta$ is the usual Dirac mass.
  \end{itemize}
  The degree $k_i$ is odd if $i\in V_0\cup V_+$ and even if $i\in V_-$. In \cite{anantharaman2022}
  we proved\footnote{It is shown in \cite[\S 5.4]{anantharaman2022} that
    $\sum_{i\in V_+\cup V_-} k_i\leq 2k$. There is an imprecision in \cite[\S 5.4]{anantharaman2022}
    in the estimate of $\sum_{i\in V_+\cup V_-} m_i$ because $m_i$ is either $k_i$ or $k_i-1$
    depending on the parity of $k_i$, but this has no impact on the original estimate of
    $\sum_{i\in V_+\cup V_-} k_i$.}  that $\sum_{i\in V_+\cup V_-} k_i\leq 2k$, and that for
  $i\in V_0$, we have $k_i\leq a_{k+1}$ for a certain sequence $a_{k+1}$ (which is explicit but a
  priori not optimal).
 
  \subsubsection{Brief reminder about the method followed in \cite{anantharaman2022, Ours1}}

  It is worth now recalling the main steps in the proof of the asymptotic expansion of
  $\Phi_g^{\mathbf{T}}(\x)$.

\begin{nota}
  Throughout this article, we shall use the Landau notation $\cO$, i.e. we will write $A = \O{B}$ if
  there exists a constant $c>0$ such that, for any choice of parameters, $|A| \leq c B$. If the
  constant $c$ depends on some parameters, we shall write them as a subscript of the $\cO$.
\end{nota}
 
\begin{itemize}
\item First, we note that, although the cardinality of $R_g({\mathbf{S}}) $ in
  \eqref{e:enumeration1} grows with $g$, we can identify a set of embeddings $R_g({\mathbf{S}}, \ord)$
  of bounded cardinality, such that
  \begin{align*}
    \Phi_g^{\mathbf{T}}(\x)=\frac{1}{n(\mathbf{T})}\frac{x_1 \ldots x_{\n}}{V_g} \sum_{{\mathfrak{R}} \in R_g({\mathbf{S}}, \ord)} V_{\mathfrak{R}}(\x)  +  \O[\ord, \n]{\frac{(\norm{\x} +1)^{\alpha_\ord^{\Sf}} e^{\frac{x_1+\ldots+x_{\n}}2} }{g^{\ord+1}}}
  \end{align*}
  in a weak sense.
  This is done in \cite[Proposition 5.22]{Ours1}.
\item Then, for each realisation ${\mathfrak{R}}\in R_g({\mathbf{S}}, \ord)$, we observe that there exist integers
  $n_1\leq \n$ and $m\leq (\ord+\n)/2$ such that the ratio $V_{\mathfrak{R}}(\x)/V_{\mathfrak{R}}(0)$ coincides (modulo
  permutation of the variables) with $V_{g-m, n_1}(x_1, \ldots, x_{n_1})/V_{g-m, n_1}$
  multiplied by a fixed polynomial in $(x_{n_1+1}, \ldots, x_{\n})$. We recall that, for any integers
  $g,n$ with $2g-2+n>0$, $V_{g,n}$ denotes the constant coefficient of the volume polynomial
  $V_{g,n}(\cdot)$, i.e. its value at $(0, \ldots, 0)$. 
\item 
  The main result of \cite{anantharaman2022} says that
  $V_{g-m, n_1}(x_1, \ldots, x_{n_1})/V_{g-m, n_1}$ coincides, modulo a remainder in 
  $\O[\ord, n_1]{(\norm{\x} +1)^{\alpha_\ord^{n_1}} e^{\frac{x_1+\ldots+x_{n_1}}2}/g^{\ord+1}}$, with
  a linear combination of the functions \eqref{e:each_term}, with degree $k_i\leq a_{\ord+1}$, the
  coefficients being of the form
$$ A_{V_0, V_+, V_-}(n_1, m, \ord, k_1, \ldots, k_{n_1})\,\,\frac{c_{g-m, n_1}(\alpha)}{V_{g-m, n_1}}$$
and $\|\alpha\|_\infty \leq 2\ord+a_{\ord+1}$. Here $a_{\ord+1}$ is a sequence introduced in
\cite{anantharaman2022}, and $c_{g-m, n_1}(\alpha)$ is our notation for the coefficients of volume
polynomials.
\item
At the end we inject the result of Mirzakhani--Zograf \cite[Theorem 4.1]{mirzakhani2015}, showing
the existence of an asymptotic expansion in powers of $g^{-1}$ for
$c_{g-m, n_1}(\alpha)/V_{g-m, n_1}$ and for $V_{\mathfrak{R}}(0)/V_g$, and hence for
$c_{g-m, n_1}(\alpha)/V_{g}$.
\end{itemize}
 
 \begin{rem} The last step is only necessary if we insist on obtaining an expansion $V_g^{{\mathbf{T}}}(\ell)$ in powers of $g^{-1}$. The three first steps, together with the fact that $\frac{c_{g-m, n_1}(\alpha)}{V_{g-m, n_1}}$ and $\frac{V_{\mathfrak{R}}(0)}{V_g}$ are bounded functions of $g$, are actually sufficient to obtain the spectral gap (Theorem \ref{t:dream}).
 \end{rem}
 
 Until \S \ref{s:variantLC}, the surface $\mathbf{S}$ is fixed and we do not need to know how the
 quantity $A_{V_0, V_+, V_-}(n_1, m, \ord, k_1, \ldots, k_{n_1})$ depends on $n_1$. In \S
 \ref{s:variantLC}, we will need to come back to the proof of the asymptotic expansion, in order to
 have a very rough control of the dependency on $n_1, \n$ of those coefficients, as well as the
 constants involved in $\cO_{\ord, \n}{}$ and $\cO_{\ord, n_1}{}$.

 \subsubsection{Unshielded simple portions}
\label{sec:unsh-simple-port}

The following topological quantity, associated to a local topological type, will appear in our
abstract statement. 

\begin{defa}\label{d:double_fill}
  Let $\mathbf{T} = \eqc{\Sf,\mathbf{c}}$ be a local topological type.
  \begin{enumerate}
  \item A \emph{simple portion} of $\mathbf{c}$ is a maximal open subsegment of $\mathbf{c}$ which
    does not contain any self-intersection point of $\mathbf{c}$.
  \item A simple portion is said to be \emph{shielded} if it belongs to the boundary of a contractible
    component of $\mathbf{S}\setminus \mathbf{c}$, and \emph{unshielded} otherwise.
  \item We say $\mathbf{c}$ is  \emph{double-filling} if all simple portions are
    shielded.
  \end{enumerate}
  We denote by $\rho[{\mathbf{T}}]$ the number of unshielded simple portions of $\type$.
\end{defa}

The notion of simple portion and double-filling multi-loop appeared in our previous paper
\cite[Definition 8.3]{Ours1}.  We have the inequality $\rho[{\mathbf{T}}]\leq 2\chi(\Sf)$, which means
that the number of unshielded simple portions is bounded uniformly in $\curve$.

 \subsubsection{Main abstract statement} 
 \label{s:reduction_int}

 We can use \eqref{e:each_term} to reduce the proof of Theorem~\ref{thm:FR_type} to a general
 abstract statement, which we shall now state and comment.  Because our main theorem requires quite
 a few notation, we group them below.

\begin{nota}[Assumptions for main theorem]
  \label{nota:mt} \quad
  \begin{itemize}
  \item Let $\mathbf{T} = \eqc{\mathbf{S},\mathbf{c}}$ be a local type of signature $(\g,\n)$. We
    denote as $\rho[{\mathbf{T}}]$ be the number of unshielded simple portions of ${\mathbf{T}}$.
  \item Let $V$ be a subset of $\partial \mathbf{S}$ such that $\n- \# V=2k$ is even, and $\mathrm{m}$
    be a perfect matching of $\partial \mathbf{S}\setminus V$, that is, a partition
    $\{1, \ldots, \n\}\setminus V=\bigsqcup_{j=1}^k \{\vi, \vi'\}$.
  \item We fix a smooth function $\fz$ that vanishes at order $1$ at $x=0$ and is constant equal
    to~$1$ on $[1, +\infty)$, and such that
  \begin{equation}
   \Big|\frac{\fz(y)^2}{(1-e^{-y})^2}-1\Big|\leq 8 e^{-y}.\label{eq:condition_mu}
 \end{equation}
\item For $j \in V$, let $f_j$ be a function such that
  $\tilde g_j (x):= f_j(x) \sinh\div{x} / \fz(x)^2$ is continuous and lies in $\cF^{\rK_j, \rN_j}$
  for some integers $\rK_j$ and $\rN_j$. We assume w.l.o.g.  that $\rK_j\leq \rN_j$.  We set
    \begin{equation*}
      \rK= \sum_{j\in V} (\rK_j -1) + \rho[\mathbf{T}]
     \qquad  \text{and} \qquad 
      \rN=\sum_{j \in V}\rN_j + 3 \chi(\Sf)+1.
    \end{equation*}
  \end{itemize}
\end{nota}

\begin{thm}\label{t:main}
  With the assumptions from Notation \ref{nota:mt}, the function
 \begin{align}\label{e:mainint}
   \cJ = \cJ_{\mathbf{T}, V, \mathrm{m}, f}: \ell \mapsto
   \int_{\substack{(\x, Y)\in\mathcal{T}^*_{g_{\mathbf{S}},n_{\mathbf{S}}} \\ \ell_Y({\mathbf{c}})=\ell} }\, \,
\prod_{j\in V}  f_j(x_j)  \prod_{j=1}^k x_{\vi} \delta(x_{\vi} - x_{\vi'}) \frac{ \d \mathrm{Vol}^{\mathrm{WP}}_{g_{\mathbf{S}}, n_{\mathbf{S}}}(\x, Y) }{\d\ell}
\end{align}
is a Friedman--Ramanujan function in the weak sense. 
More precisely,   $\cJ \in \cF_w^{\rK, \rN}$ and
\begin{align}\label{e:multi_bound_intro}
  \norm{\cJ}^w_{\cF^{\rK, \rN}} \leq \fn(\g,\n,(\rK_j, \rN_j)_{j \in V})\,
  \prod_{j\in V} \norm{\tilde g_j}_{\cF^{\rK_j, \rN_j}}.
\end{align}
\end{thm}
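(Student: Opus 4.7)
The plan is to introduce coordinates on $\mathcal{T}^*_{g_{\mathbf{S}},n_{\mathbf{S}}}$ tailored to the multi-loop $\mathbf{c}$, in which the length $\ell_Y(\mathbf{c})$ splits additively over the simple portions of $\mathbf{c}$ while the Weil--Petersson measure takes a product form. The disintegration integral defining $\mathcal{J}(\ell)$ then becomes a pseudo-convolution over the lengths of the simple portions, and the Friedman--Ramanujan conclusion follows from the stability of $\mathcal{F}_w$ under convolution.

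\textbf{Step 1: Adapted coordinates.} First I would decompose $\mathbf{S}$ along a tubular neighbourhood of $\mathbf{c}$ into three kinds of building blocks: junctions near the self-intersection points of $\mathbf{c}$, rectangular strips along each simple portion, and the connected components of the complement $\mathbf{S}\setminus\mathbf{c}$. For each simple portion $i$ I introduce a length parameter $u_i$ and a conjugate twist $\theta_i$; each junction is described by a bounded number of geometric parameters; each contractible complementary component contributes only its collar data; each non-contractible component carries Teichm\"uller coordinates of strictly smaller complexity. The key structural lemma to prove is that, in these coordinates,
\begin{equation*}
  \mathrm{d}\mathrm{Vol}^{\mathrm{WP}}_{g_{\mathbf{S}}, n_{\mathbf{S}}} \;=\; \prod_{i} \mathrm{d}u_i \wedge \mathrm{d}\theta_i \,\wedge\, \mathrm{d}\nu,
\end{equation*}
where $\mathrm{d}\nu$ is a Weil--Petersson-type measure on the remaining finite-dimensional parameters, and $\ell_Y(\mathbf{c}) = \sum_i u_i + \mathcal{E}$, with $\mathcal{E}$ depending only on the junction and collar parameters. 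Each boundary length $x_j$, $j\in V$, is likewise an affine combination of the $u_i$ along the simple portions abutting $\partial_j \mathbf{S}$ plus bounded junction corrections; the matching constraints imposed by the Dirac factors $\delta(x_{i_j}-x_{i_j'})$ become linear relations among the $u_i$.

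\textbf{Step 2: Convolution reduction.} After integrating over the twists, junction parameters and remaining moduli, whose contribution is bounded by a quantity depending only on $\mathbf{S}$ (this is where the exponent $3\chi(\mathbf{S})+1$ in $\rN$ appears, matching the dimension of $\mathcal{T}^*_{g_{\mathbf{S}}, n_{\mathbf{S}}}$), I reduce $\mathcal{J}(\ell)$ to an integral of the shape
\begin{equation*}
  \mathcal{J}(\ell) \;=\; \int_{\sum_i u_i\, =\, \ell - \mathcal{E}} \prod_{j\in V} f_j\bigl(L_j(u)\bigr)\, \omega(\mathcal{E}, u)\, \mathrm{d}u\, \mathrm{d}\mathcal{E},
\end{equation*}
where the $L_j$ are linear forms in the $u_i$ and $\omega$ is a bounded weight. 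Using the hypothesis $f_j(x) = \fz(x)^2\, \tilde g_j(x)/\sinh\div{x}$ together with the comparison \eqref{eq:condition_mu} between $\fz(x)^2$ and $(1-e^{-x})^2$, the $\sinh$ denominators cancel against the measure factors produced in the collars, leaving a genuine pseudo-convolution of the one-variable Friedman--Ramanujan ingredients $\tilde g_j$ associated with the simple portions.

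\textbf{Step 3: Stability under convolution and main obstacle.} The class $\mathcal{F}_w$ is quantitatively stable under convolution: convolving elements of $\mathcal{F}_w^{\rK_1, \rN_1}$ and $\mathcal{F}_w^{\rK_2, \rN_2}$ produces an element of $\mathcal{F}_w^{\rK_1+\rK_2+1, \rN_1+\rN_2}$ up to an explicit combinatorial constant, as one checks by splitting the convolution at $\ell/2$ and bounding each half using \eqref{e:strong}--\eqref{e:weak}. Applying this inductively along the simple portions, each \emph{unshielded} simple portion raises the polynomial degree of the principal term by one, which accounts for the $\rho[\mathbf{T}]$ summand in $\rK$; each \emph{shielded} simple portion lies in a contractible component of $\mathbf{S}\setminus\mathbf{c}$ and is absorbed into the bounded weight $\omega$, consistently with Definition~\ref{d:double_fill}. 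Combined with the trivial bounds on the junction and collar integrations, this yields \eqref{e:multi_bound_intro}. The main obstacle is Step~1: Fenchel--Nielsen coordinates are useless here because the length of a non-simple loop has no closed form in them, so new coordinates must be constructed by hand, the junction blocks analysed explicitly, and uniform-in-combinatorics bounds on the resulting Jacobians established.
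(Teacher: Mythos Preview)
Your high-level strategy matches the paper's: new coordinates adapted to $\mathbf{c}$, a convolution-like structure for $\mathcal{J}$, and stability of the Friedman--Ramanujan class. But the proposal underestimates the obstacles at every step, and two of your structural claims are false.

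\textbf{The length is not additive.} You assert $\ell_Y(\mathbf{c})=\sum_i u_i+\mathcal{E}$ with $\mathcal{E}$ depending only on junction and collar parameters. This is the graph (curvature $-\infty$) picture and fails for hyperbolic surfaces. In the paper's coordinates $(\vec{L},\vec{\theta})$ one has (Theorem~\ref{prop:form_l_gamma_try})
\[
\cosh\tfrac{\ell_Y(\mathbf{c}_i)}{2}=\sum_{\delta}\hyp_\delta\!\div{\vec\theta}\cosh\div{\rho^\delta\cdot\epsilon\vec L},
\]
so $\ell_Y(\mathbf{c})-\sum_q\theta_q$ depends on \emph{all} variables, not just the bar-lengths. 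What is true is subtler: this difference lies in the class $\cE_n^{(a)}$ of Definition~\ref{def:cE}, meaning its partial derivatives decay like $e^{-\theta_q}$. Exploiting this requires the full pseudo-convolution calculus of \S\ref{s:GC}, in particular Theorem~\ref{t:thebigone} which tracks how $\cL^{\rK}$ interacts with a level function $h$ that is only \emph{asymptotically} linear. A genuine convolution argument as in your Step~3 is not available.

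\textbf{The boundary lengths are not affine either.} Your claim that each $x_j$ is an affine combination of the $u_i$ plus bounded corrections is also false for $\lambda\in\LambdaBCFN$: Theorem~\ref{prp:length_g_lambda} gives $\ell_Y(\Gamma_\lambda)=Q_m(\vec L^\lambda,\vec\tau^\lambda)$ with alternating signs, and the analysis of the resulting weight $\varphi$ (notably the factors $\log F_m$) requires the neutralization and partition-of-unity machinery of \S\ref{s:lengthboundary} to control derivatives.

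\textbf{Two further gaps you have not seen.} First, the coordinates $\theta_q$ may be \emph{negative} (crossing variables, \S\ref{s:def_crossing}), which takes you outside the domain where the $\cE_n^{(a)}$ estimates hold; the paper resolves this via the erasing procedure and staircase inequality of \S\ref{s:lengthc}. Second, the comparison estimate relating $\sum_\lambda\ell_Y(\Gamma_\lambda)$ to $\ell_Y(\mathbf{c})$ plus decay terms (Proposition~\ref{p:comparison}) is not a soft consequence of the coordinate description; its proof occupies all of \S\ref{s:horrible} and is the analytic core that makes the unshielded-count $\rho[\mathbf{T}]$ appear.
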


\begin{rem}
  Since $\rho[{\mathbf{T}}]\leq 2\chi(\Sf)$,  our Friedman--Ramanujan exponents $\rK$ and $\rN$ can
  be made uniform amongst all loops filling $\mathbf{S}$.
\end{rem}

\begin{rem}
  \label{rem:improve}
  In the proof of Theorem \ref{t:main}, we have tried to optimise the value of $\rK$, but not the
  value of $\rN$.  One can further improve the value of $\rK$ depending on the loop $\mathbf{c}$,
  and prove that it can be taken to be $\sum_{j \in V'} (\rK_j -1)+\rho[\mathbf{T}]$, where
  $V' \subseteq V$ is the set of boundary components containing unshielded simple portions of
  $\mathbf{c}$. See details in Remark~\ref{r:opti_K}.

  We notice that in the special case when the loop $\mathbf{c}$ is double-filling, we have
  $V' = \emptyset$ and $\rho[\mathbf{T}]=0$. In other words, the function $\cJ$ is actually an
  element of $\cR_w$ as soon as $\mathbf{c}$ is double-filling.
\end{rem}

Let us now show how to go from Theorem \ref{t:main} to Theorem~\ref{thm:FR_type}.
 
\begin{proof}[Proof of Theorem \ref{thm:FR_type}]
  \label{rem:apply_main_to_vg}
  We simply use the expression
  \eqref{e:each_term} for the functions $f_k^{\mathbf{T}}$, and apply our main technical result to $
  V= V_0\sqcup V_+\sqcup V_-$ and the functions
  \begin{equation*}
    f_j(x) =
    \begin{cases}
      x_j^{k_j} & \text{if } j \in V_0 \\
      x_j^{k_j} \cosh \div{x_j} & \text{if } j \in V_+ \\
      x_j^{k_j} \sinh \div{x_j} & \text{if } j \in V_-
    \end{cases}
  \end{equation*}
  which clearly satisfy the hypotheses with
  \begin{equation*}
    \begin{cases}
      \rK_j=\rN_j=k_j+1 & \text{if } j\in V_+\cup V_- \\
      \rK_j=0, \rN_j=a_{k+1}+1 & \text{if } j\in V_0,
    \end{cases}
  \end{equation*}
  using the parity condition on the integers $k_j$ to obtain the needed
  cancellation at $0$.  
\end{proof}

\begin{rem}
  The proof above allows us to see that, in Theorem \ref{thm:FR_type}, we may  take the
  Friedman--Ramanujan parameters to be
  \begin{align*}
    \rK & = \sum_{j\in V_+\cup V_-} k_j+ \rho[{\mathbf{T}}]\leq 2k + \rho[{\mathbf{T}}]
    \leq 2k+2 \chi(\mathbf{S})\\
    \rN & =\sum_{j \in  V_0 \cup V_+ \cup V_- }\rN_j + 3 \chi(\mathbf{S})+1\leq \n (a_{k+1}+1)+3 \chi(\Sf)+1.
  \end{align*}
\end{rem}

The proof of Theorem \ref{t:dream} about the spectral gap of random surfaces requires variants of Theorem \ref{t:main}.
A first straightforward variant is that Theorem \ref{t:main} also applies in the case where $\curve$ is a multi-loop, defined as a collection of loops (\S \ref{s:definitions}).
Another variant will be given in Theorem \ref{t:main-kappa} (where some of the loops in $\curve$ are constrained to have bounded lengths).
The final variant needed for the spectral gap is described in \S \ref{s:variantLC}.

\subsubsection{Friedman--Ramanujan functions as solutions of an integral equation} 
\label{s:cL}
A first ingredient in the proof of Theorem \ref{t:main} is the fact that Friedman--Ramanujan
functions are fully characterized as solutions (modulo $\cR$) of a certain integral equation.  We
define two operators acting on locally integrable functions, $\cP=\cP_x=\int_0^x$ (taking the
primitive vanishing at $0$) and $\cL=\cL_x=\Id-\cP_x$ (where $\Id$ stands for the identity
operator). It is trivial, nevertheless useful, to note that $\cP$ and $\cL$ preserve
$\cF, \cF_w, \cR, \cR_w$. More precisely, $\cL$ is a continuous operator from $\cF^\rK$ to
$\cF^{\rK-1}$ and from $\cR$ to itself (and this is also true for the weak spaces).
  
  We will make use of the following characterization of Friedman--Ramanujan functions:
  \begin{prp}\label{p:charFR} For $\rK\geq 1$, any locally integrable function $f$,
    \begin{equation*}
      \begin{cases}
        f\in \FR^\rK \Leftrightarrow \cL^{\rK} f\in \cR \\
        f\in \FR^\rK_w \Leftrightarrow \cL^{\rK} f\in \cR_w.
      \end{cases}
    \end{equation*}
Furthermore, if  $\rN \geq 1$ is so that $\cL^{\rK} f\in \cR^{\rN}_w$, then
\begin{align}\label{e:FRnorm_estimate}
\norm{f}_{ \FR^{\rK, \rN}}^w\leq \fn(\rK, \rN)\, \norm{\cL^{\rK} f}^w_{\cR^\rN}.
\end{align}
\end{prp}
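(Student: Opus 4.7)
The plan is to prove both equivalences together by combining a forward step (showing $\cL$ reduces the Friedman--Ramanujan degree by one) with a backward step (inverting $\cL$ on locally integrable functions via an explicit formula).

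For the \emph{forward direction}, successive integration by parts gives $\cP(\ell^j e^\ell) = \ell^j e^\ell - j \cP(\ell^{j-1} e^\ell)$, from which induction yields $\cP(\ell^j e^\ell) - \ell^j e^\ell \in \C_{j-1}[\ell] e^\ell \oplus \C$, so that $\cL(\ell^j e^\ell) \in \C_{j-1}[\ell] e^\ell \oplus \C$. Since polynomials lie in $\cR \cap \cR_w$ and $\cL$ preserves both $\cR$ and $\cR_w$, this shows $\cL : \cF^\rK \to \cF^{\rK-1}$ and $\cL : \cF_w^\rK \to \cF_w^{\rK-1}$ continuously. Iterating $\rK$ times then gives $\cL^\rK f \in \cF^0 = \cR$ (resp.\ $\cR_w$), which settles the ``$\Rightarrow$'' implications.

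For the \emph{backward direction}, the essential point is that $\cL$ is injective on locally integrable functions: if $\cL f = 0$ then $f = \cP f$ is absolutely continuous with $f(0) = 0$ and $f' = f$ almost everywhere, so $f \equiv 0$. Formally summing the Neumann series $\sum_{k \geq 0} \cP^k$ together with the identity $\cP^k g(\ell) = \int_0^\ell \frac{(\ell-s)^{k-1}}{(k-1)!} g(s) \d s$ suggests
\begin{equation*}
  \cL^{-1} g(\ell) := g(\ell) + \int_0^\ell e^{\ell-s} g(s) \d s,
\end{equation*}
which one checks is a two-sided inverse of $\cL$ on locally integrable functions (the tail $\cP^{k+1} f$ tends to zero pointwise). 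For $g \in \cR^\rN$, the constant $L := \int_0^\infty e^{-s} g(s) \d s$ is absolutely convergent, and the decomposition
\begin{equation*}
  \cL^{-1} g(\ell) = L e^\ell + \Big( g(\ell) - e^\ell \int_\ell^\infty e^{-s} g(s) \d s \Big)
\end{equation*}
isolates a principal term $L e^\ell$ with remainder in $\cR^\rN$, by the elementary estimate $e^\ell \int_\ell^\infty (s+1)^{\rN-1} e^{-s/2} \d s \leq C_\rN (\ell+1)^{\rN-1} e^{\ell/2}$. A direct calculation $\cL^{-1}(\ell^j e^\ell) = e^\ell (\ell^j + \frac{\ell^{j+1}}{j+1})$ moreover shows $\cL^{-1}$ raises the principal degree by one, whence $\cL^{-\rK} : \cR \to \cF^\rK$, and the injectivity of $\cL^\rK$ yields $f = \cL^{-\rK}(\cL^\rK f) \in \cF^\rK$.

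For the \emph{weak case and the norm estimate \eqref{e:FRnorm_estimate}}, the only additional difficulty is that $g \in \cR_w^\rN$ need not be pointwise bounded. We handle this via the integration-by-parts identity $\int_0^\ell e^{-s} g(s) \d s = e^{-\ell} G(\ell) + \int_0^\ell e^{-s} G(s) \d s$ with $G(s) := \int_0^s g$; the weak bound $|G(s)| \leq \norm{g}_{\cR^\rN}^w (s+1)^{\rN-1} e^{s/2}$ makes $L$ well-defined and controls it by a multiple of $\norm{g}_{\cR^\rN}^w$. A parallel computation on $\int_0^\ell |\cL^{-1} g - L e^\ell|$ then establishes $\cL^{-1} : \cR_w^\rN \to \cF_w^{1,\rN}$ continuously, and iterating $\rK$ times while tracking constants yields \eqref{e:FRnorm_estimate} with $\fn(\rK, \rN)$ polynomial in both parameters. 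The main technical point to verify is that the kernel $e^{\ell-s}$ is exactly matched to the weight $e^{\ell/2}$ defining $\cR_w$, so that no exponential accumulation of constants occurs across the $\rK$ iterations.
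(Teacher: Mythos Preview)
Your proof is correct and follows essentially the same route as the paper: both invert $\cL$ via the integral kernel $e^{\ell-s}$ (the paper phrases this as solving the ODE $F'-F=\cL f$ for $F=\cP f$, you write it as $\cL^{-1}g=g+\int_0^\ell e^{\ell-s}g\,\d s$), split off the constant $L=\int_0^\infty e^{-s}g\,\d s$ as the new principal term, and handle the weak case by integrating by parts against the primitive $G=\int_0^\cdot g$. The only cosmetic difference is that the paper packages the induction step as a single lemma treating $\cL f=b+c$ (principal plus remainder) at once, whereas you treat $\cL^{-1}$ on $\cR$ and on $\C_{\rK-1}[\ell]e^\ell$ separately before iterating.
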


The ``only if'' part is a straightforward consequence of the fact that $\cL$ sends $\cR_w$ to
itself, $\cR$ to itself, and that $\cL^{\rK}$ sends $\C_{\rK-1}[t]e^t$ to
$\C_{\rK-1}[t]\subset \cR$.  The ``if'' part is proven by induction on~$\rK$, iterating the
following lemma:
\begin{lem} Let $\rK \geq 1$ be an integer. If $\cL f= b+c$, with $b\in \C_{\rK-1}[t]e^t$ and
  $c\in \cR$ or $\cR_w$, then there exists a constant $C$ such that
  \begin{equation}\label{e:solution}f(t)=C e^t +  b(t) + c(t)
    +\int_0^t b(s)e^{t-s}\d s
    + \int_{t}^{+\infty} c(s)e^{t-s}\d s.\end{equation}
As a consequence, if $c\in \cR^{\rN}_w$, then $f$ belongs in $\cF_w^{\rK, \rN}$ and
\begin{align} \label{e:supnorm}
\norm{f}_{ \FR^{\rK, \rN}_w}\leq \fn(\rK, \rN)\,(\norm{b}_\FR+ \norm{c}_{\cR_w^\rN}).
\end{align}
The same holds with the strong forms.
\end{lem}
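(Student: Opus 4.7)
The plan is to treat $\cL f = b+c$ as a Volterra integral equation and solve it in closed form, and then identify principal and remainder parts in the resulting expression. Writing $g:=b+c$, the identity $f-\cP f=g$ reads $f(t)=g(t)+\int_0^t f(s)\d s$; differentiating (justified since $g$ is continuous on $[0,+\infty)$) yields the linear ODE $f'(t)-f(t)=g'(t)$, with the initial condition $f(0)=g(0)$ forced automatically. Integrating against $e^{-t}$, or equivalently iterating the Volterra kernel and applying Fubini, gives the explicit solution $f(t)=g(t)+\int_0^t g(s)\,e^{t-s}\d s$.

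Next I would split $g=b+c$ and treat the two pieces separately. For the $b$-contribution, writing $b(t)=p(t)e^t$ with $\deg p<\rK$ and computing $\int_0^t b(s)e^{t-s}\d s=e^t\int_0^t p(s)\d s$ produces a polynomial-times-$e^t$ contribution that feeds directly into the principal part of $f$. For the $c$-contribution, the key observation is that $e^{-s}c(s)$ is integrable on $[0,+\infty)$ in both the strong and the weak senses: in the strong case this is immediate from~\eqref{e:strong}, and in the weak case it follows from an Abel-type integration by parts against the cumulative function $H(s):=\int_0^s |c(r)|\d r$, whose bound~\eqref{e:weak} gives exactly the needed integrability. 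Setting $C:=\int_0^{+\infty}e^{-s}c(s)\d s$, the elementary identity
\begin{equation*}
  \int_0^t c(s)\,e^{t-s}\d s \;=\; Ce^t \;-\; \int_t^{+\infty}c(s)\,e^{t-s}\d s
\end{equation*}
then recasts the $c$-part of $f$ as a principal term $Ce^t$ plus the tail integral, yielding the formula~\eqref{e:solution} for this specific choice of $C$.

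The main technical step is to show that the tail term $t\mapsto \int_t^{+\infty}c(s)e^{t-s}\d s$ lies in $\cR^\rN$ (resp.\ $\cR^\rN_w$) with norm controlled by $\|c\|_{\cR^\rN}$ (resp.\ $\|c\|^w_{\cR^\rN}$). In the strong case it is a one-line incomplete-Gamma estimate applied to $e^t\int_t^{+\infty}(s+1)^{\rN-1}e^{-s/2}\d s$. The weak case, which I expect to be the main obstacle, cannot invoke any pointwise control on $|c|$; I would instead bound $\int_0^\ell \bigl|\int_t^{+\infty}c(s)e^{t-s}\d s\bigr|\d t$ by Fubini, split the inner $s$-integral at $s=\ell$, and apply a second integration by parts against $H$ on each piece, producing a bound of the form $\fn(\rN)\|c\|^w_{\cR^\rN}(\ell+1)^{\rN-1}e^{\ell/2}$. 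Once this tail estimate is in place, \eqref{e:supnorm} is assembled term by term: the principal part $Ce^t+b(t)+\int_0^t b(s)e^{t-s}\d s$ is a polynomial-times-$e^t$ whose coefficients are controlled by $\|b\|_{\FR}+\fn(\rN)\|c\|^w_{\cR^\rN}$, while the remainder $c(t)-\int_t^{+\infty}c(s)e^{t-s}\d s$ is controlled by $\|c\|^w_{\cR^\rN}$ together with the tail bound. The strong version is proved identically, with pointwise estimates replacing the cumulative ones throughout.
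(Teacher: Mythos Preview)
Your approach is correct and essentially the same as the paper's: both solve the first-order linear equation by variation of constants and then split off the tail integral for $c$ using integrability of $e^{-s}c(s)$. The paper works with $F=\cP f$ (which is absolutely continuous) and its ODE $F'-F=b+c$, then recovers $f=F'$; this cleanly sidesteps the issue that $g=b+c$ need not be continuous (only locally integrable), though your Volterra/Fubini alternative handles that just as well.
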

\begin{proof}  
  The lemma is proven by the usual method for solving linear ODEs: let $F= \cP f$, then $F$ solves
  $F'-F=b+c$, so there exists a constant $C$ such that
  $$F(t)=C e^t
  +\int_0^t b(s)e^{t-s}\d s +\int_{t}^{+\infty} c(s)e^{t-s}\d s.$$ Here, because of the different
  behaviors of $b$ and $c$ at infinity, we have chosen different boundary conditions for the
  primitives of $b(s)e^{-s}$ and $c(s)e^{-s}.$ Since $f$ is the derivative of $F$, this
  implies~\eqref{e:solution}.  Assume that $c\in \cR^{\rN}_w$: then
  $\int_0^t b(s)e^{t-s}\d s\in \C_{\rK-1}[t]e^t$, $\int_{t}^{+\infty} c(s)e^{t-s}\d s\in \cR^{\rN}$,
  so $f \in \cF_w^{\rK, \rN}$, and
\begin{align*} 
\norm{f}^w_{ \FR^{\rK, \rN}_w}\leq \fn(\rK, \rN)\,\Big( \norm{c}^w_{\cR^\rN}  + |C|+ \norm{b}_\FR\Big).
\end{align*}

 Evaluation at $t=0$ yields $C= -\int_{0}^{+\infty} c(s)e^{-s} \d s $ so 
$$|C|\leq  \left( \int_{0}^{+\infty} (1+s)^{\rN-1}e^{-s/2}\d s \right)  \norm{c}^w_{\cR^{\rN}}.$$
Inequality \eqref{e:supnorm} follows.
\end{proof}

\begin{rem}
  \label{rem:boundsFRL}
  We can extend Remark \ref{rem:boundsFR} to the images of a Friedman--Ramanujan function $f \in
  \cF^{\rK,\rN}$ by $\cL$ and obtain a constant $M_{\rK,\rN}$ such that, for all $\ell \in \R_{\geq 0}$
  and any $0 \leq m < \rK$,
  \begin{align*}
    & | \cL^{m} f ( \ell)|\leq M_{\rK,\rN} \|f\|_{\cF^{\rK,\rN}} (1+ \ell)^{\rK-1} e^{ \ell} \\
    & | \cL^{\rK} f ( \ell)|\leq M_{\rK,\rN}  \|f\|_{\cF^{\rK,\rN}} (1+ \ell)^{\rN -1} e^{ \frac{\ell}2}.
  \end{align*}
  Here
  $M_{\rK, \rN}= \max_{m\leq \rK} ( \fn(\rK-m, \rN) \norm{\cL^m}_{\cF^{\rK, \rN}\rightarrow
    \cF^{\rK-m, \rN} })$ with $\fn(\rK-m, \rN)$ the constant in \eqref{e:normbound1}.
\end{rem}
 
\subsubsection{Stability under convolution}
\label{s:convolution}
Another idea behind our proof is the stability of the classes $\FR, \FR_w, \cR, \cR_w$ under convolution.
This was proven in \cite[Proposition 3.6]{Ours1} by direct algebraic calculation, following \cite[Theorem 7.2]{friedman2003}. Another way to understand this is to refer to the usual property of convolution (classically known for differential operators, but valid also for the operator~$\cL$): given two functions $f_1, f_2$ and integers $\rK_1, \rK_2$, we have
\begin{align}\label{e:easyconvo}
\cL^{\rK_1+\rK_2} (f_1 * f_2)= \cL^{\rK_1} f_1 * \cL^{\rK_2} f_2.
\end{align}
If $f_i\in \cF^{\rK_i}$, then $\cL^{\rK_i} f_i \in \cR$. Thus, using the stability
of~$\cR$ under convolution (a straightforward upper bound), we obtain that $\cL^{\rK_1+\rK_2} (f_1 * f_2)\in \cR$, implying
$f_1 * f_2\in \cF^{\rK_1+\rK_2}$. The same method holds with $\cF_w$ instead of $\cF$.

%$\cL^{m_1+m_2} (f_1 * f_2)\in \cR$, i.e.  $f_1 * f_2\in \cF^{m_1+m_2}$.

In the proof of Theorem \ref{t:main}, we show that the function $\cJ$ defined by the integral
\eqref{e:mainint} can be expressed in terms of ``convolutions'' of Friedman--Ramanujan functions. This
requires to express the measure  $\d \mathrm{Vol}^{\mathrm{WP}}_{g_{\mathbf{S}}, n_{\mathbf{S}}}(
Y) $ in a well chosen set of coordinates on
$\mathcal{T}^*_{g_{\mathbf{S}},n_{\mathbf{S}}}$, adapted to the loop~${\mathbf{c}}$. These
coordinates are not the well known Fenchel--Nielsen coordinates: their description will be the focus
of \S \ref{s:nc}.

The reason why \eqref{e:mainint} looks roughly like a convolution is that the loop ${\mathbf{c}}$
can be decomposed into a union of simple curves $\beta = (\beta_1, \ldots, \beta_n)$, by ``opening''
all the intersections (this operation is described in \S \ref{s:opening_int}).  In the case of
curvature $-\infty$ (i.e. graphs) we would have
$$\ell_Y(\mathbf{c})= \ell_Y(\beta_1)+ \ldots + \ell_Y(\beta_n),$$ and \eqref{e:mainint} would
exactly be a convolution. But we are in finite negative curvature, and $\ell_Y(\mathbf{c})$ differs
from $\ell_Y(\beta_1)+ \ldots + \ell_Y(\beta_n)$~: the expression \eqref{e:mainint} is not exactly a
convolution. We will nevertheless see that $\ell_Y(\mathbf{c})$ is close to
$\ell_Y(\beta_1)+ \ldots + \ell_Y(\beta_n)$ in a certain topology. In \S \ref{s:GC}, we define a
notion of {\em{pseudo convolutions}} and we develop a sophisticated variant of \eqref{e:easyconvo},
first in an abstract setting -- with the aim of later applying it to the study of the integral
\eqref{e:mainint}.

\subsubsection{Geometric comparison estimates}

A key ingredient of the proof of Theorem \ref{t:main} is the use of geometric comparison estimates,
which we shall now comment on.

The following straightforward bound, comparing the length of a loop $\mathbf{c}$ and the boundary of
the surface $\Sf$ it fills, is a simple case of a geometric comparison estimate.
\begin{prp} \label{p:comparison1}
  Let $\mathbf{T}=\eqc{\mathbf{S},\mathbf{c}}$ be a local type. For any $(\x,Y) \in
  \mathcal{T}_{\g,\n}^*$,
  \begin{equation}
    \label{eq:comparison1}
    \sum_{i=1}^{\n} x_i
    \leq \ell(\mathbf{c}) +\sum_{\substack{\cI \text{ unshielded} \\ \text{simple
          portion}}}\ell(\cI)
  \leq 2 \ell(\mathbf{c}).
  \end{equation}
\end{prp}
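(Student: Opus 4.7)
The plan is to exploit the topology of the decomposition of $\mathbf{S}$ along $\mathbf{c}$ together with the shortest-in-homotopy-class property of hyperbolic geodesic boundaries. The second inequality is immediate: since unshielded simple portions are disjoint subsegments of $\mathbf{c}$, the sum $\sum_{\cI \text{ unshielded}} \ell(\cI)$ is bounded by $\ell(\mathbf{c})$, and the inequality follows. So the whole content of the proposition is the first inequality.

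First I would cut $\mathbf{S}$ along $\mathbf{c}$. Because $\mathbf{c}$ fills $\mathbf{S}$, every component of $\mathbf{S}\setminus \mathbf{c}$ is either a topological disk or an annulus containing exactly one boundary component $\partial_i\mathbf{S}$ of $\mathbf{S}$. Let $A_i$ denote the annular component containing $\partial_i\mathbf{S}$. The boundary of $A_i$ consists of $\partial_i\mathbf{S}$ together with a closed polygonal curve $\gamma_i$ made of sides coming from simple portions of $\mathbf{c}$. Since $A_i$ is an annulus, $\gamma_i$ is freely homotopic to $\partial_i \mathbf{S}$ in $A_i$. The curve $\partial_i\mathbf{S}$ is a closed geodesic in the hyperbolic surface $Y$, hence minimizes length in its free homotopy class, so
\begin{equation*}
  x_i = \ell(\partial_i \mathbf{S}) \leq \ell(\gamma_i).
\end{equation*}

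Next I would bookkeep how each simple portion $\cI$ of $\mathbf{c}$ contributes to the $\gamma_i$. Each simple portion has exactly two sides in $\mathbf{S}$, and each side borders either a disk component or an annular component. By Definition \ref{d:double_fill}, $\cI$ is unshielded precisely when neither side borders a disk, i.e.\ both sides contribute to some $\gamma_i$; whereas a shielded portion has at least one side on a disk, so contributes at most one side in total to $\bigsqcup_i \gamma_i$. Summing over $i$ and grouping by simple portions,
\begin{equation*}
  \sum_{i=1}^{\n} \ell(\gamma_i) = 2 \sum_{\cI \text{ unshielded}} \ell(\cI) + \sum_{\cI \text{ shielded}} m_{\cI}\, \ell(\cI),
\end{equation*}
with $m_{\cI} \in \{0,1\}$ when $\cI$ is shielded. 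Bounding $m_{\cI} \leq 1$ and using that the simple portions partition $\mathbf{c}$ (up to the finite self-intersection set), so $\sum_{\cI \text{ shielded}} \ell(\cI) + \sum_{\cI \text{ unshielded}} \ell(\cI) = \ell(\mathbf{c})$, gives
\begin{equation*}
  \sum_{i=1}^{\n} x_i \leq \sum_{i=1}^{\n} \ell(\gamma_i) \leq \sum_{\cI \text{ unshielded}} \ell(\cI) + \ell(\mathbf{c}),
\end{equation*}
which is the desired first inequality.

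The only real point of care is the sidedness bookkeeping — making sure the interpretation of ``shielded'' (at least one side in a disk component) aligns with the definition, and handling the case where a single simple portion appears on $\gamma_i$ for two different annular components or has two sides on the same $\gamma_i$ (both subsumed by counting sides rather than portions). Once that is set up, both inequalities are purely combinatorial consequences of the single geometric input, namely the minimality of $\partial_i\mathbf{S}$ in its homotopy class inside the annulus $A_i$.
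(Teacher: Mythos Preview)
Your proof is correct and follows essentially the same approach the paper points to: the paper does not spell out the argument here, instead noting that the inequality $\sum x_i \leq 2\ell(\mathbf{c})$ is the classical filling estimate (referencing \cite[Lemma 4.4]{Ours1}) and that Proposition~\ref{p:comparison1} is a straightforward generalisation of \cite[Lemma 8.7]{Ours1}. Your side-counting argument — bounding each $x_i$ by the length of the inner boundary of its annular component and then tallying how many sides of each simple portion face annuli versus disks — is exactly that generalisation.
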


The estimate $\sum_{i=1}^{\n}x_i \leq 2 \ell(\mathbf{c})$ is a classical consequence of the fact
that $\mathbf{c}$ is assumed to fill the surface $\mathbf{S}$, see e.g. \cite[Lemma 4.4]
{Ours1}. In the special case of double-filling loops, Proposition \ref{p:comparison1} tells us that
this can be improved by dropping the factor of $2$, which was the crucial observation made in
\cite[Lemma 8.6]{Ours1}. Proposition \ref{p:comparison1} is a straightforward generalization of
this lemma.

Comparison estimates are a crucial step in the proof of our main results: they give upper bounds for
terms such as $\prod_{j\in V} f_j(x_j)$, appearing in the definition of $\cJ(\ell)$ in Theorem
\ref{t:main}.  Proposition \ref{p:comparison1} also holds for multi-loops, which will be defined as
collections of loops (\S \ref{s:definitions}).

The advantage of gaining a factor of $2$ in the comparison estimate can clearly be seen from the
statement of Theorem \ref{t:main}: the function $\prod_{j\in V} f_j(x_j)$ grows like
$\exp(\frac12\sum_{j \in V} x_j)$ (up to some polynomial factors, allowed in the definition of
Friedman--Ramanujan functions), so the bound $\sum_{j=1}^{\n}x_j \leq 2 \ell(\mathbf{c})$ implies
the upper bound
\begin{align}\label{e:ineq1}\prod_{j \in V} | f_j(x_j)|\leq C (1+\ell(\mathbf{c}))^{\alpha} e^{\ell(\mathbf{c})}\end{align}
for some $C,\alpha\geq 0$. This ensures that $|\cJ(\ell)|\leq C' (1+\ell)^{\alpha'} e^{\ell}$ in a
weak sense (see \eqref{e:weak-FR} for a precise bound). However, the Friedman--Ramanujan property
requires an \emph{exact expansion} rather than an exponential upper bound. Therefore, we cannot
directly deduce the Friedman--Ramanujan property from the trivial inequality
$\sum_{j=1}^{\n}x_j \leq 2 \ell(\mathbf{c})$.
The fundamental challenge adressed in this article consists in refining the approach to study
$\cJ(\ell)$, hence providing an exact expansion instead of the straightforward upper bound above.

In the special case when the loop $\mathbf{c}$ is {double-filling}, then \eqref{e:ineq1} may be
replaced by
\begin{align}\label{e:ineq2}\prod_{j\in V} | f_j(x_j)|\leq  (1+\ell(\mathbf{c}))^{\alpha} e^{\frac{\ell(\mathbf{c})}2},\end{align}
which is enough to ensure that $\cJ(\ell)$ is in the class $\cR_w$, and thus in particular is a weak
Friedman--Ramanujan function (see Corollary \ref{c:double_fills}).  In other words, Theorem
\ref{t:main} can be obtained by a straightforward upper bound in the case of double-filling
loops. In general, the presence of shielded segments should imply less work to prove Theorem
\ref{t:main}, thanks to Proposition \ref{p:comparison1}. On the other end of the spectrum, the worse
possible scenario should be the case when none of the simple portions are shielded: we call such
loops \emph{generalized eights}.  Most of the paper is dedicated to proving Theorem \ref{t:main} in
the case of generalized eights. For other multi-loops we will indicate how to modify our discussion
at the end of the paper (\S \ref{s:othercases}).

\subsubsection{Plan of the paper}

In \S \ref{s:loops} we ``open'' all the intersections of $\mathbf{c}$ to obtain a new
representative~$\cop$ of the same homotopy class, decomposed into simple curves, joined by ``bars''
replacing the intersections.  The whole discussion applies to multi-loops, defined as finite
collections of loops.  We define a class of multi-loops called {\em{generalized eights}}. They admit
a rather simple geometric description but offer the highest degree of difficulty, as far as the
proof of Theorem~\ref{t:main} is concerned.

In \S \ref{s:nc} we define a coordinate system on $\cT^*_{\g, \n}$, adapted to the study of the
function $Y\mapsto \ell_Y(\mathbf{c})$, and we express the Weil--Petersson measure in those new
coordinates. There are few results expressing the Weil--Petersson measure in coordinates other than
the Fenchel--Nielsen ones, so we believe that Theorem \ref{t:maindet} has an intrinsic interest, and
probably deserves further investigation.

In \S \ref{s:lengthf}, we use the representative $\cop$ to find an algebraic expression for
$\ell_Y({\mathbf{c}})$, as well as the lengths of all boundary curves of $\mathbf{S}$, in the new
coordinates. After this change of coordinates, it becomes apparent that the integral
\eqref{e:mainint} looks like a convolution of Friedman--Ramanujan functions, see \S
\ref{s:noncross}.  As already mentioned, the fact that we do not have exactly a convolution is
related to the fact that the curvature is finite.  As a result, we need to develop a theory of
{\em{pseudo-convolutions}} in an abstract setting, which is the focus of \S \ref{s:GC}.

In order to apply our theoretical results on pseudo-convolutions to our particular problem,
additional geometric and analytic estimates on the function $Y\mapsto \ell_Y(\mathbf{c})$ are
required. They are provided in \S \ref{s:lengthc} and \S \ref{s:lengthboundary}. Most of \S
\ref{s:lengthc} can be skipped in the purely non-crossing case (defined in \S \ref{s:def_crossing}).
The proof is finished in \S \ref{s:proof}, except for the ``Crucial comparison estimate'' which is a
variant of Proposition \ref{p:comparison1}, proven in \S \ref{s:horrible}. We indicate in \S
\ref{s:othercases} how to treat arbitrary topologies (i.e. multi-loops which are not generalized
eights). In \S \ref{s:variantLC} we state a variant of Theorem \ref{t:main}, and use it in \S \ref{s:wrapup}
to deduce Theorem~\ref{t:dream} about the optimal spectral gap.

\para{Acknowledgements} Discussions with J. Friedman were of invaluable help to surmount several
obstacles in this project.

This research has received funding from the European Research Council
(ERC) under the European Union’s Horizon 2020 research and innovation programme (Grant agreement
No. 101096550), the EPSRC Standard Grant EP/W007010/1 and the Royal Society Dorothy Hodgkin
Fellowship.

%%%Local Variables: 
%%% mode: latex
%%% TeX-master: "main"
%%% End: 

\section{Loops and generalized eights}
\label{s:loops}
This section starts by setting a number of conventions related to loops on surfaces, segments,
orientations and geodesics on the hyperbolic plane. We then construct in \S \ref{s:opening_int} a
procedure allowing to open the intersections of a loop, and following on this some associated
concepts (bars, simple portions, set $\Theta$, diagrams, cyclic orderings, u-turns and
crossings). Importantly, in \S \ref{s:gene_eight}, we introduce a class of loops called
\emph{generalized eights}, which will be the focus of the bulk of the paper (\S
\ref{s:nc}-\ref{s:horrible}).

\subsection{Definitions}
\label{s:definitions1}

Let ${\mathbf{S}}$ be a smooth surface (possibly with boundary).  Throughout this article, we will
assume that ${\mathbf{S}}$ is connected and has negative Euler characteristic. Indeed, the case
where~${\mathbf{S}}$ is a cylinder has already been addressed in \cite[Proposition 3.4]{Ours1}. In
applications, the surface $\mathbf{S}$ we apply Theorem \ref{t:main} to will often not be connected;
however, the proof for disconnected ${\mathbf{S}}$ only requires minor modifications from the proof
in the connected case.

\subsubsection{Multi-loops} \label{s:definitions}

Let us clarify what we mean by a loop or a multi-loop $\mathbf{c}$.

\begin{defa}
  A {\em{loop}} is a piecewise smooth map ${\mathbf{c}}:\R/\Z\rightarrow{\mathbf{S}}$ with nowhere
    vanishing derivative (we allow loops to be contractible).  A loop is called a {\em{curve}}
    if it is non-contractible and has no self-intersection.
\end{defa}

From now on we extend the discussion to {\em{multi-loops}}. This is not just for the sake of maximal
generality, but also because, when we later ``remove'' some self-intersections from ${\mathbf{c}}$,
this operation will inevitably turn a loop into a multi-loop.
  
\begin{defa} \label{def:mult_loop} A {\em{multi-loop}} is a collection of loops
  ${\mathbf{c}}=({\mathbf{c}}_1, \ldots, {\mathbf{c}}_{\cc}):\R/\Z\rightarrow {\mathbf{S}}^{\cc}$.  The
  loops ${\mathbf{c}}_i$ are called the {\em{components}} of ${\mathbf{ c}}$.
  \begin{enumerate}
  \item A multi-loop is said to be \emph{in minimal position} if it has only transverse
    self-intersections, and minimal number of self intersections in its free homotopy class.
  \item A multi-loop is said to be \emph{simple} if it has no self-intersections.
  \item A multi-loop is called a {\em{multi-curve}} if it is simple, its components are
    non-contractible and, for $i\not=j$, ${\mathbf{c}}_i$ is not freely homotopic to
    ${\mathbf{c}}_j$ nor ${\mathbf{c}}^{-1}_j$.
  \end{enumerate}
 \end{defa}
 
  \begin{rem}
    Although the multi-loop ${\mathbf{c}}$ is originally assumed to be in minimal position, the
    reader will notice that we will work with representatives of its homotopy class that do not have
    this property, but are $\cC^0$-limits of multi-loops in minimal position.
 \end{rem}  
 
 We will always assume that the multi-loop ${\mathbf{ c}}$ fills the surface ${\mathbf{S}}$, that is
 to say, if $\cN$ is a regular neighbourhood of ${\mathbf{ c}}$, then all the connected components
 of ${\mathbf{S}}\setminus \cN$ are disks or annular regions bordered by a boundary component of
 ${\mathbf{S}}$ (see \cite[Definition 2.3]{Ours1}).

 Multi-loops are parameterized and oriented, but often, we will only be interested in their
 geometric image as subsets of ${\mathbf{S}}$. Importantly, when we say that two multi-loops
 ${\mathbf{ c}}=({\mathbf{c}}_1, \ldots, {\mathbf{c}}_{\cc})$ and
 ${\mathbf{ c}}'=({\mathbf{c}}'_1, \ldots, {\mathbf{c}}'_{\cc'})$ are freely homotopic, we take the
 numberings and the orientations into account: we mean that they have the same number of components
 ($\cc=\cc'$) and that ${\mathbf{c}}_i$ is freely homotopic to ${\mathbf{c}}'_i$ for every
 $i$. Homotopies must respect orientation, and in particular a loop is not a priori homotopic to its
 inverse.

\subsubsection{Segments and concatenation} \label{s:nota_paths}

Throughout this article, we will decompose our loops in successions of segments.

\begin{defa}
  We call $J\subset {\mathbf{S}}$ an {\em{(oriented) segment}} if it is of the form $J=p([0, T])$,
  where $ p : [0, T] \rightarrow {\mathbf{S}}$ is a smooth oriented path with nowhere vanishing
  derivative.
\end{defa}

\begin{nota}[Origin and terminus] \label{n:origin}
  Let $J=p([0, T])$ an oriented segment.
  The \emph{origin} of $J$ is the point $o(J) := p(0)$, and its \emph{terminus} $t(J) := p(T)$.
\end{nota}
 
 %This terminology ``right and left'' extends to continuous piecewise smooth paths $p$ provided that 

\begin{nota}[Concatenation of paths] \label{n:concat} If $p : [0, T] \mapsto {\mathbf{S}}$ and
  $q : [0, T'] \mapsto {\mathbf{S}}$ are two piecewise smooth paths such that $p(T)=q(0)$, we define
  a piecewise smooth path $p\smallbullet q :[0, T+T']\rightarrow {\mathbf{S}}$ by
 \begin{align*}
 p\smallbullet q (t) :=
 \begin{cases}
   p(t) \mbox{ if } t\in [0, T] \\
   q(t-T) \mbox{ if } t\in [T, T+T'] .
   \end{cases}
 \end{align*}
 \end{nota}
 This is an associative and non-commutative operation. Most of the time, we shall only be interested
 in the geometric images of the paths, and not in their actual parametrizations.

 {
 \begin{defa}
   \label{defa:glide}
   Let $\beta$ be a multi-loop on $\mathbf{S}$, and $J=p([0,T])$ be an oriented segment with
   endpoints on $\beta$. We say an oriented segment $K=q([0,T])$ is \emph{homotopic to $J$ with
     gliding endpoints} if there exists a continuous function
   $h:[0,T] \times [0,1] \rightarrow \mathbf{S}$ such that $h(\cdot, 0)=p$, $h(\cdot, 1)=q$, and the
   endpoints $h(0,t)$ and $h(T,t)$ lie on $\beta$ for any $t \in [0,1]$.
 \end{defa}

 \begin{rem}
   \label{rem:beta_geod}
   In the following, $\beta$ will (for the most part) be a multi-curve. In this case, when we equip
   the surface $\mathbf{S}$ with a metric $Y$, we will systematically do so by picking a
   representative in the Teichm\"uller space $\cT_{\g,\n}^*$ so that $\beta$ is a simple
   multi-geodesic. In this case, any non-trivial homotopy class of segments $J$ with endpoints
   gliding on $\beta$ contains a uniquely defined length-minimizing element $\bar{J}$, which is
   an orthogeodesic (a geodesic segment orthogonal to $\beta$ at its endpoints). In pictures, in an
   effort to simplify the usual notation, we will often mark the endpoints of orthogeodesics with a
   black dot to emphasise the fact that there is a right angle there.
 \end{rem}}

 \subsubsection{Left and right}

 We take the following conventions in terms of the orientation of segments.

 \begin{defa} Let $J=p([0, T])$ an oriented segment on $\mathbf{S}$.
   \begin{itemize}
   \item A tangent vector $v\in T_x \mathbf{S}$ with $x=p(t)\in J$ is said to be \emph{on the left
       (resp. right) of $J$} if the angle from $\dot p(t)$ to $v$ belongs to $[0,\pi]$
     (resp. $[-\pi,0]$).
   \item Let $K=q([0, T'])$ be an oriented segment. We say that $K$ \emph{leaves $J$ from the left
       (resp. right)} if the origin of $K$ lies in $J$ and if the tangent vector $\dot q(0)$ is on
     the left (resp. right) of~$J$.  Similarly, we say that $K$ \emph{arrives on $J$ from the left
       (resp. right)} if the terminus of~$K$ lies on $J$ and $\dot{q}(T')$ is on the right
     (resp. left) of $J$.
   \end{itemize}
 \end{defa}

\subsubsection{Infinite geodesics on $\IH^2$}
 
 We now briefly focus on infinite geodesics in the hyperbolic plane $\IH^2$. For two infinite
 geodesics in the hyperbolic plane, with endpoints at infinity $(a, b)$ and $(c, d)$, the
 cross-ratio $(a :b:c:d)$ can be used as a convenient way of describing their respective positions:
 \begin{itemize}
 \item A cross-ratio $(a :b:c:d)\in (0, 1)$ means that the two geodesics intersect.
   \item Whenever $(a :b:c:d)\not\in (0, 1)$ we shall speak of \emph{parallel geodesics}.
     \begin{itemize}
     \item  A cross-ratio in $[1, +\infty)$ means that the two geodesics are parallel and
       oriented ``head-to-tail''.
     \item On the opposite, $(a :b:c:d)\leq 0$ means that the two geodesics are parallel and are
       oriented ``in the same direction''. We will then say that there are \emph{aligned}.
     \end{itemize}
   \end{itemize}
 
 Since the hyperbolic plane is oriented, an infinite oriented geodesic determines two half-spaces,
 one on its left and one on its right.

 \begin{defa}[Algebraic distance $\Dist$] \label{def:alg_d}
   If two points $x, y$ lie on an oriented geodesic $\gamma$, we will say \emph{$y$ is on the right
     of $x$ along $\gamma$} to mean that $y$ is after $x$ along $\gamma$.  We denote by $\Dist(x, y)$
   the signed distance between $x$ and $y$ along $\gamma$, measured positively if $y$ is on the right
   of $x$ along $\gamma$ and negatively otherwise.
 \end{defa}
 This notion is named according to the tradition of orienting lines from left to right.  Although it
 would be more rigourous to denote this distance by $\Dist_{\gamma}(x, y)$, the notation will always
 be used in the universal cover $\IH^2$ and the reference geodesic will always be clear from the
 context.

 The following key observation, illustrated in Figure \ref{fig:useful}, will be used on multiple
 occasions throughout this article.

 \begin{rem}[The Useful Remark]
   \label{r:useful}
      \begin{figure}[h!]
     \centering
     \includegraphics{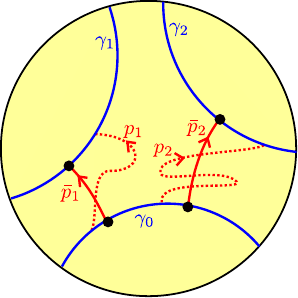}
     \caption{Illustration of the Useful Remark.}
     \label{fig:useful}
   \end{figure}

   Take $3$ distinct non-intersecting geodesics $(\gamma_i)_{0 \leq i \leq 2}$ in the hyperbolic
   plane $\IH^2$. Assume that for $i=1, 2$ we have a piecewise smooth oriented path $p_i$ going from
   $\gamma_0$ to $\gamma_{i}$, intersecting $\gamma_0$ and $\gamma_{i}$ only at its
   endpoints. Therefore, the complement $\IH^2 \setminus(\gamma_0\cup \gamma_i\cup p_i)$ has four
   connected components, that we respectively call north of $\gamma_i$, south of~$\gamma_0$ and
   east/west of $p_i$, following the compass.  Assume in addition that $p_{2}$ only visits the east
   of $p_1$, and $p_{1}$ only visits the west of $p_2$. Then:
   \begin{enumerate}
   \item the oriented orthogeodesics $\bar p_i$ from $\gamma_0$ to $\gamma_i$ are disjoint;
   \item $\bar p_{2}$ entirely lies east of $\bar p_1$;
   \item $\bar p_{1}$ entirely lies west of $\bar p_2$.
   \end{enumerate}
 \end{rem}

\subsubsection{Length of a path or a geodesic}
 \label{nota:length}
 
   If $\gamma$ is a 1-dimensional Riemannian manifold, we will denote by $\ell(\gamma)$ its
   length. If now $\gamma$ is a smooth path in a Riemannian manifold $Y$, we will denote by
   $\ell(\gamma)$ its length in $Y$ (which coincides with its length for the metric on $\gamma$
   induced by $Y$). If $\gamma$ is a loop in a hyperbolic surface $Y$, we will denote by
   $\ell_Y(\gamma)$ the minimal length of a representative of the free homotopy class of~$\gamma$
   (if $\gamma$ is contractible, $\ell_Y(\gamma)=0$, otherwise it is the length of the unique
   geodesic representative of the free homotopy class of $\gamma$). In general, $\ell(\gamma)$ and
   $\ell_Y(\gamma)$ are different, but they are of course equal if $\gamma$ is a closed geodesic in
   $Y$.
   % We try, as much as possible, to stick to the following notation : $\ell$ is the length just
   % defined, and $\ell_Z$ is the length of a preferred geodesic representative in a homotopy class
   % (periodic geodesic, orthogeodesic,... according to context).

If ${\mathbf{S}}$ is endowed with a hyperbolic structure $Y$, then each component ${\mathbf{c}}_i$
of $\mathbf{c}$ is freely homotopic to a periodic geodesic, which is unique if ${\mathbf{c}}_i$ is
non-contractible, but may be reduced to a point otherwise.  We denote
$\ell_Y({\mathbf{c}})=\sum_{i=1}^{\cc} \ell_Y({\mathbf{c}}_i)$ the total length of a geodesic
representative.

\subsection{Opening the intersections of ${\mathbf{ c}}$}
\label{s:opening_int}

In this paragraph, we explain how to \emph{open the self-intersections of ${\mathbf{ c}}$ to replace
  them by bars}. Our construction yields a new picture which consists of a simple multi-loop $\beta$
and a collection of bars $B$ with endpoints on $\beta$, replacing the intersections.

\begin{nota}
  \label{n:r}
  Throughout the paper, the letter $r$ stands for the number of self-intersection points of the
  multi-loop ${\mathbf{ c}}$. We identify the set of intersection points of the multi-loop
  $\mathbf{c}$ with the set $\{1, \ldots, r\}$ by picking an arbitrary numbering.
\end{nota}

\subsubsection{The construction}
\label{sec:open-inters}

If $r=0$ then ${\mathbf{ c}}$ is a simple loop and we simply take $\beta={\mathbf{ c}}$, and no
bars.  Now assume $r\geq 1$. Let $a_1, \ldots, a_r$ be the intersection points of $\mathbf{c}$ and
$D_1, \ldots, D_r$ be disjoint open disks around them. We can take the disks small enough
so that, for each $j$, ${\mathbf{ c}}\cap D_j$ is the union of two oriented segments
$I_j, I'_j$ intersecting transversally at $a_j$.  We open the intersection at each~$a_j$ and replace
the two intersecting segments $I_j, I'_j$ by two smooth disjoint segments, joined by a transversal
segment $B_j$ traversed twice, replacing the intersection.
      % leaving ${\mathbf{ c}}$ from the left and arriving at ${\mathbf{ c}}$ on the right, and by the same segment $B_j^-$ oriented to leave ${\mathbf{ c}}$ from the right and arrive at ${\mathbf{ c}}$ on its left
There are two distinct ways to perform this operation, represented in Figure \ref{fig:open_first} and
\ref{fig:open_second} respectively. 

\begin{figure}[h!]
  \centering
  \begin{subfigure}[b]{0.3\textwidth}
    \centering
    \includegraphics{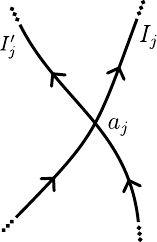}
    \caption{The initial intersection.}
    \label{fig:open_0}
  \end{subfigure}%
  \begin{subfigure}[b]{0.3\textwidth}
    \centering
    \includegraphics{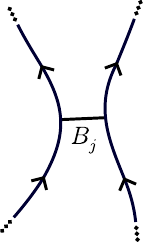}
    \caption{Opening the first way.}
    \label{fig:open_first}
  \end{subfigure}%
  \begin{subfigure}[b]{0.3\textwidth}
    \centering
    \includegraphics{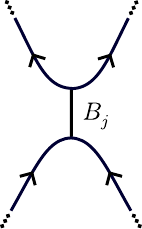}
    \caption{Opening the second way.}
    \label{fig:open_second}
  \end{subfigure}%
  \caption{Opening an intersection to replace it by a bar, two ways. % The orientation of the
    % bars $B_j^\pm$ depends on the type of opening: in the first way, $B_j^+$ and $B_j^-$ are
    % oriented in opposite ways whilst $B_j^+=B_j^-$ in the second way.
  }
  \label{fig:opening_int}
\end{figure}%
 
Opening all of the intersections (having made an arbitrary choice between the two ways for each
intersection) yields a new representative $\cop$ of the homotopy class of ${\mathbf{ c}}$,
represented in Figure \ref{fig:cop}. The operation of going from ${\mathbf{ c}}$ to the new
representative $\cop$ will be referred to as {\emph{opening the intersections of ${\mathbf{c}}$ (to
    replace them by bars)}}.

 \begin{figure}[h!]
   \centering
   \begin{subfigure}[b]{0.5\textwidth}
    \centering
   \includegraphics{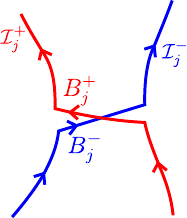}
    \caption{The first way.}
    \label{fig:cop_1}
  \end{subfigure}%
   \begin{subfigure}[b]{0.5\textwidth}
    \centering
   \includegraphics{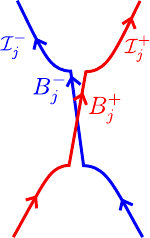}
    \caption{The second way.}
    \label{fig:cop_2}
  \end{subfigure}%
   \caption{A close deformation of the multi-loop $\cop$. Notice that each bar is
     traversed twice (in the directions we will later denote $B_j^+$ and $B_j^-$).}
   \label{fig:cop}
 \end{figure}

By convention, the bars $B_j$ are closed, i.e. contain their endpoints.  If $\dot{B_j}$ is the open
segment, i.e. the segment without its endpoints, we call $\beta$ the connected components of
$\cop \setminus ( \bigcup_{j=1}^r\dot{B_j})$.  We observe that $\beta$ is a simple multi-loop.  The
geometric image of $\cop$ is the simple multi-loop~$\beta$ joined by the collection of bars
$B=(B_1, \ldots, B_r)$.

Note that $\beta$ is not necessarily a multi-curve -- see Figure \ref{fig:ex_op_2} for an
example where one of the components of $\beta$ is contractible.  When we later specialise our
discussion to the class of multi-loops called \emph{generalized eights} in \S \ref{s:gene_eight},
the family $\beta$ will actually be a multi-curve (see Lemma~\ref{r:GE}).
% will later be called a \emph{diagram}.

\begin{figure}[h!]
  \centering
   \begin{subfigure}[b]{0.4\textwidth}
    \centering
     \includegraphics[]{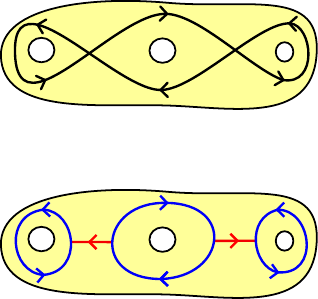}
    \caption{A generalized eight.}
    \label{fig:ex_op_1}
  \end{subfigure}%
  \begin{subfigure}[b]{0.6\textwidth}
    \centering
     \includegraphics[]{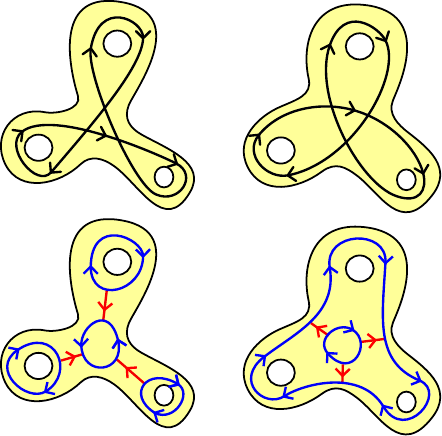}
     \caption{Two non-isotopic representatives of a loop.}
    \label{fig:ex_op_2}
  \end{subfigure}%  
  \caption{An example of the construction for three  loops filling a surface of signature
    $(0,4)$. Here, we opened all the intersections the first way.}
  \label{fig:ex_op}
\end{figure}

\begin{rem}
  \label{rem:lambda_beta}
  The bars are naturally numbered as $(B_1, \ldots, B_r)$ thanks to our choice of numbering of the
  intersection points of~$\mathbf{c}$. However, there is no canonical way to number the components
  of $\beta$ based on our information so far, and because picking a numbering is not very useful for
  our purposes, we do not do so. We will denote as $\Lambdabeta$ the set of components of $\beta$
  (this notation will make more sense after  \S \ref{s:ppdecomponumb}), so that $\beta =
  (\beta_\lambda)_{\lambda \in \Lambdabeta}$.
\end{rem}

\subsubsection{Orientation of the bars}
\label{s:orientation_bars}

Let us define orientations $B_j^\pm$ on the bars $B_j$ for $1 \leq j \leq r$. The convention will
depend on the type of opening.

We first observe that the endpoints of the bars $B_1, \ldots, B_r$ partition the multi-loop $\beta$
into~$2r$ segments, reflecting the partition of ${\mathbf{ c}}$ into the simple portions defined in
Definition~\ref{d:double_fill}. We will call \emph{simple portions} of $\cop$ these subsegments of
$\beta$.  Each simple portion is naturally oriented by the original orientation of $\cop$. Each bar
$B_j$ touches four (non necessarily distinct) simple portions: two at each endpoint.

We then settle on the following convention, illustrated in Figure~\ref{fig:cop}.
 \begin{enumerate}
 \item In the \emph{first way}, we choose the orientation $B_j^+$ of $B_j$ such that $B_j^+$ arrives
   on the right of the two simple portions touching its terminus $t(B_j^+)$: one of these simple
   portions originates in $t(B_j^+)$ and the other one terminates in $t(B_j^+)$. Similarly, $B_j^+$
   leaves from the left of the two simple portions touching its origin $o(B_j^+)$; one of them
   originates in $o(B_j^+)$ and the other one terminates in $o(B_j^+)$.  In this case we denote by
   $B_j^-$ the opposite orientation of $B_j$.
 \item In the \emph{second way}, we choose the orientation $B_j^+$ of $B_j$ such that the two simple
   portions touching its terminus $t(B_j^+)$ originate in $t(B_j^+)$, and the two simple portions
   touching its origin $o(B_j^+)$ terminate in $o(B_j^+)$.  In this case, we define the orientation
   $B_j^-$ to be identical to $B_j^+$.
 \end{enumerate}
%By default, we decide to choose the \emph{first way}. This choice is arbitrary and plays no major role in most of the paper: the discussions of  \S \ref{s:nc}, \ref{s:lengthf}, \ref{s:noncross},
%\ref{s:proof}, \ref{s:horrible} would remain valid had we opened the intersections the second way.

%Opening intersections the second way will mostly be useful in \S \ref{s:skeleton} to construct the so-called ``skeleton'', and in the discussion of \S \ref{s:othercases}.
 
%We denote $B_j$ the geometric image of $B_j^\eps$ for $\eps=\pm$: that is to say, we write $B_j$ when we forget about the parametrisation and the orientation of $B_j^\eps$. 

 \subsubsection{Labelling of simple portions} \label{s:labelling_msp}

 Let us label the simple portions and define a few useful notations. Our most important set of
 indices will be the set
 \begin{equation}
   \label{eq:1}
   \Theta=\{1, \ldots, r\}\times \{+, -\},
 \end{equation}
 which will naturally label the simple portions.  Indeed, for $(j, \eps)\in \Theta$, let
 $\cI_j^\eps$ be the oriented simple portion originating from the terminus of $B_j^\eps$, so that
 $B^{+}_j$ arrives on~$\cI_j^+$ from the right at the point $o(\cI_j^+)=t(B_j^+)$ and $B^{-}_j$
 arrives on $\cI_j^-$ from the left at the point~$o(\cI_j^-)=t(B_j^-)$, as represented in Figure
 \ref{fig:cop}.

\begin{rem}
  Note that, out of the four simple portions touching the bar $B_j$, only two are labelled
  $\mathcal{I}_j^\pm$ by the above convention. The other two simple portions are denoted as
  $\mathcal{I}_{j'}^\pm$ for another (possibly equal) integer $j'$.
\end{rem}

We shall most often refer to the elements of $\Theta$ by the letter $q$, so that if
$q = (j, \epsilon)$ is an elements of $\Theta$, then the bar $B_q$ stands for $B_j^\eps$, $\cI_q$
for $\cI_j^\eps$, etc.

The signs in $\Theta$ will play an important role throughout the paper.

\begin{nota} \label{def:sign}
  We define the sign function on $\Theta$ by:
  \begin{align} \label{e:defsign} \forall q = (j, \epsilon) \in \Theta, \quad \sign(q)= \eps.
  \end{align}
  We denote as $\iota : \Theta \rightarrow \Theta$ the involution defined by $\iota(j,\eps)=(j,-\eps)$.
\end{nota}

It will be helpful to relate the elements of $\Theta$ and the components of the multi-loop $\beta$.

\begin{nota}
  \label{nota:ind}
  For $q \in \Theta$, let $\lambda(q) \in \Lambdabeta$ denote the component of $\beta$ on which
  $B_q$ terminates, i.e. such that the terminus $t(B_q)$ belongs to the loop $\beta_{\lambda(q)}$.

  For $\lambda \in \Lambdabeta$ (i.e. for a component $\beta_\lambda$ of the multi-loop $\beta$), we
  define the set of indices
  \begin{equation} \label{eq:def_ind_j}
    \Theta_t(\lambda):=\{q \in \Theta : t(B_q) \in \beta_\lambda\} \subseteq \Theta.
  \end{equation}
  In other words, this is the set of indices $q \in \Theta$ such that $\lambda(q) = \lambda$, or,
  equivalently, such that the bar $B_q$ terminates on $\beta_\lambda$.  We extend this notation to
  any subset $W \subseteq \Lambdabeta$ by letting
  \begin{equation}
    \label{eq:def_ind_W}
    \Theta_t(W) := \bigcup_{\lambda \in W} \Theta_t(\lambda)
    = \{q \in \Theta : \lambda(q) \in W\}.
  \end{equation}
\end{nota}

\begin{rem}
  \label{rem:theta_disj}
  We make the key observation that, since the multi-loop $\beta$ is decomposed in the simple
  portions $\cI_q$ with $q \in \Theta$, the sets $(\Theta_t(\lambda))_{\lambda \in \Lambdabeta}$
  generate a partition of $\Theta$:
  $$\Theta = \bigsqcup_{\lambda \in \Lambdabeta}\Theta_t(\lambda).$$
\end{rem}

\begin{rem} \label{r:init} It will be useful to keep in mind from the construction that each
  component $\beta_\lambda$ of $\beta$ is freely homotopic to an ``initial'' piecewise smooth curve
  $\beta_{\lambda,\mathrm{init}}$, included in the original multi-loop ${\mathbf{c}}$, made of
  smooth segments joining some of the intersection points of $\mathbf{c}$. The partition of
  $\beta_\lambda$ into the union of a collection of segments $(\cI_q)_{q \in \Theta_t(\lambda)}$ is
  inherited from a partition of $\beta_{\lambda,\mathrm{init}}$ into segments
  $(\cI_{q,\mathrm{init}})_{q \in \Theta_t(\lambda)}$ which are the simple portions of Definition
  \ref{d:double_fill}.
   \end{rem}

\subsubsection{Diagrams}
\label{sec:diagrams}

The result of opening all of the intersections of the multi-loop $\mathbf{c}$ is what we call a
diagram.

\begin{defa}
  \label{defa:diagram}
   A \emph{diagram} $\mathbf{D}$ in $\mathbf{S}$ is the data (modulo isotopy) of a simple multi-loop
   $\beta$, together with a collection of disjoint bars $B=(B_1, \ldots, B_r)$ defined as closed,
   mutually disjoint segments, having their endpoints in $\beta$ and disjoint from $\beta$
   otherwise.
\end{defa} 

 Starting with a multi-loop $\mathbf{c}$ and opening all intersections (either way), we obtain a
 diagram~$\mathbf{D}$, which is said to \emph{represent} $\mathbf{c}$.  Conversely, the multi-loop
 $\mathbf{c}$ is said to \emph{stem from} $\mathbf{D}$ if $\mathbf{D}$ {represents}~$\mathbf{c}$.

Different representatives of the same homotopy class may give rise to non-isotopic diagrams: see
Figure \ref{fig:ex_op_2}.  The paper by Graaf and Schrijver \cite{graaf1997} precisely
describes when two multi-loops can be homotopic without being isotopic: this may happen if and only
if one can be sent to the other by an isotopy followed by a finite succession of Reidemeister moves
of type III.

\subsubsection{First v.s. second way}
\label{sec:first-v.s.-second}

Throughout this article, we will be in particular interested in the case when we open all of the
intersections the first way.  We denote by $\mathbf{D}^{(1)}$ the diagram obtained by opening all
intersections the first way.  One possible reason for preferring the first way is the fact that the
multi-loop $\beta$ then inherits a global orientation that fits with the orientation of the simple
portions $(\cI_q)_{q \in \Theta}$, in other words with the initial orientation of ${\mathbf{c}}$. If
we open the intersections the second way, the orientations of the simple portions
$(\cI_q)_{q \in \Theta}$ do not fit into a global orientation of $\beta$, which is not a problem,
but can sometimes be slightly cumbersome in terms of notations.

Unless explicitely stated, for the sake of simplicity, we invite the reader to consider that the
construction above has been done by \emph{opening every intersection the first way}. However, many
of our results, notably our change of variable presented in \S \ref{s:nc}, hold for whichever choice
of opening is made -- different openings yielding different interesting results. We will be careful
to point out the few places where the choice of opening actually does matter.

\subsection{Generalized eights} \label{s:gene_eight} We introduce a class of multi-loops, called
\emph{generalized eights}, on which we focus attention for most of the paper. Their name comes from
the fact that they are generalizations of the basic figure-eight loop, the loop with exactly one
self-intersection ($r=1$).
 
\begin{defa} \label{d:GE} Let $ \cN$ be a regular neighbourhood of ${\mathbf{ c}}$ in
  ${\mathbf{S}}$. We say that ${\mathbf{ c}}$ is a \emph{generalized eight} if no connected
  components of $\partial \cN$ is contractible in ${\mathbf{S}}$.
\end{defa}

\begin{rem} We recall that we have assumed at the beginning of \S \ref{s:definitions1} that
    $\Sf$ is a connected surface, and that it is not a cylinder. In particular, simple loops are not
    generalized eights. The definition above still holds verbatim in the case where $\Sf$ has
    several connected components (provided none of them is a cylinder).
\end{rem}

 This definition has the following consequence on the surface $\mathbf{S}$ and the multi-loop $\beta$.

 \begin{lem}
   \label{r:GE}
   If $\mathbf{c}$ is a generalized eight, then $\chi(\mathbf{S})=r$ and $\beta$ is a multi-curve.
 \end{lem}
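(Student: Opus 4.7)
For the first identity $\chi(\mathbf{S}) = r$, I would use a direct Euler-characteristic computation. The regular neighbourhood $\mathcal{N}$ of $\mathbf{c}$ deformation retracts onto $\mathbf{c}$ viewed as a graph with $r$ four-valent vertices (the self-intersections) and $2r$ edges (the simple portions), so $\chi_{\mathrm{top}}(\mathcal{N}) = -r$. The generalized-eight hypothesis forbids contractible components of $\partial \mathcal{N}$, so together with the assumption that $\mathbf{c}$ fills $\mathbf{S}$, every connected component of $\mathbf{S} \setminus \mathcal{N}$ is an annular collar of some boundary component of $\mathbf{S}$, each of topological Euler characteristic $0$. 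Gluing along circles preserves $\chi_{\mathrm{top}}$, so $\chi_{\mathrm{top}}(\mathbf{S}) = -r$, which in the paper's sign convention reads $\chi(\mathbf{S}) = r$.

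For the claim that $\beta$ is a multi-curve, note first that $\beta$ is simple by construction, so only two properties remain to check: no component of $\beta$ is contractible in $\mathbf{S}$, and no two components are freely homotopic (even allowing orientation reversal). The key tool is that, under the generalized-eight hypothesis, retracting each collar annulus onto $\partial \mathcal{N}$ shows that $\mathbf{S}$ deformation retracts onto $\mathcal{N}$, which in turn deformation retracts onto its spine $\cop = \beta \cup B$. Because $\mathbf{S}$ is connected, the preceding retraction forces $\beta \cup B$ to be connected as well, and thus $\pi_1(\mathbf{S}) \cong \pi_1(\beta \cup B)$ is a free group, being the fundamental group of a finite connected graph. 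Each component $\beta_\lambda$ of $\beta$ is a cycle in $\beta \cup B$ using only simple-portion edges, so it must contain at least one edge outside any spanning tree (a tree contains no cycles). This forces the corresponding element in $\pi_1(\beta \cup B)$ to be non-trivial, yielding non-contractibility of each $\beta_\lambda$ in $\mathbf{S}$.

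To rule out free homotopy between distinct components, I would select a spanning tree $T$ of $\beta \cup B$ consisting of all $r$ bars together with $r-1$ simple portions glueing the $r$ disjoint bar-edges into a tree; such a choice exists because the bars are mutually disjoint and $\beta \cup B$ is connected. The remaining $r+1$ simple portions freely generate $\pi_1(\beta \cup B)$. With this choice, each $\beta_\lambda$ reads as a cyclic word involving only its own non-tree simple portions, and this word is cyclically reduced because $\beta_\lambda$ is a simple cycle so no edge is traversed twice. Since distinct components of $\beta$ use disjoint sets of simple portions, their cyclic words lie in disjoint letter alphabets. Conjugation in a free group preserves the set of letters appearing in the cyclic reduction of a cyclically reduced word, so no $\beta_{\lambda_1}$ can be conjugate to $\beta_{\lambda_2}$ or to $\beta_{\lambda_2}^{-1}$ for $\lambda_1 \neq \lambda_2$. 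The mild technical point in the argument is the choice of the spanning tree above, but this follows directly from the disjointness of the bars and the connectedness of $\beta \cup B$ already established.
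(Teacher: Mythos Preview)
Your proof is correct. The Euler-characteristic computation is essentially identical to the paper's. For the multi-curve claim, however, you take a genuinely different route: the paper argues topologically, observing that a contractible component of $\beta$ would force (via an innermost-disk argument) a disk component in $\mathbf{S}\setminus(\beta\cup B)$ and hence a contractible component of $\partial\mathcal{N}$, while two homotopic components $\beta_i,\beta_j$ would bound a cylinder in $\mathbf{S}$ whose bars again cut out disks, contradicting the generalized-eight hypothesis. You instead exploit the deformation retraction $\mathbf{S}\simeq\beta\cup B$ to work purely in the free group $\pi_1(\beta\cup B)$, choosing a spanning tree containing all bars so that each $\beta_\lambda$ becomes a non-trivial cyclically reduced word in a private alphabet of non-tree simple portions, ruling out conjugacy between distinct components. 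Your approach is more algebraic and arguably more systematic (the spanning-tree bookkeeping handles both non-contractibility and non-homotopy in one stroke); the paper's is more direct and keeps the connection to the defining hypothesis visible at each step. One small remark: in your non-contractibility paragraph you assert that containing a non-tree edge ``forces the corresponding element in $\pi_1$ to be non-trivial''; this is only immediate once you know the word is cyclically reduced, which you establish in the following paragraph --- you may want to reorder, or simply note that an embedded cycle in a graph never represents the trivial element (e.g.\ by looking at its image in $H_1$).
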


 \begin{proof}
   {
   Let $\cN$ be a regular neighbourhood of $\mathbf{c}$ in $\mathbf{S}$. Since $\mathbf{c}$ fills
   $\mathbf{S}$ and no boundary component of $\cN$ is contractible, $\mathbf{S} \setminus \cN$ is a
   union of cylinders, and hence $\chi(\mathbf{S}) = \chi(\cN)$. We then conclude by observing that
   the regular neighbourhood $\cN$ retracts on the $4$-regular graph with vertices the $r$
   intersections points of $\mathbf{c}$, and edges its $2r$ simple portions, of Euler
   characteristic $r-2r=-r$.}
   
   The definition of generalized eight directly implies that none of the components of $\beta$ is
   contractible. To prove that $\beta_i$ cannot be freely homotopic to $\beta_j$ or its inverse for
   $i \neq j$ two components of $\beta$, we remark that if $\beta_i$ is freely homotopic to
   $\beta_j$ or its inverse, then there is a cylinder in ${\mathbf{S}}$ bordered by $\beta_i$ and
   $\beta_j$. This cylinder may contain in its interior other components $\beta_k$ homotopic to
   $\beta_i^{\pm 1}$. We pick $k$ such that the cylinder $C$ bordered by $\beta_i$ and
   $\beta_k$ does not contain in its interior any other component of the multi-loop $\beta$. Because
   ${\mathbf{ c}}$ fills ${\mathbf{S}}$, the cylinder $C$ must contain bars $B_l$ going from
   $\beta_i$ to $\beta_k$. These bars, together with $\beta_i$ and $\beta_k$, cut $C$ into
   connected components homeomorphic to disks, contradicting the fact that $\partial \cN$ does not
   have contractible components.
 \end{proof}

 \begin{rem}
   If $\mathbf{c}$ is a generalized eight, Reidemeister moves of type III are not possible and hence
   any multi-loop homotopic to $\mathbf{c}$ is also isotopic. As a consequence, the diagram
   $\mathbf{D}^{(1)}$ obtained by opening the intersections the first way is uniquely defined up to
   isotopy.
 \end{rem}

\subsection{Removing bars and intersections}\label{s:removing}

We define two ways of \emph{removing} a bar from a diagram: either by just deleting it (we call this
the \emph{first way} of removing a bar), or the \emph{second way}, shown on 
Figure~\ref{fig:removing_bar}.

\begin{figure}[h!]
    \begin{subfigure}[b]{0.5\textwidth}
    \centering
    \includegraphics{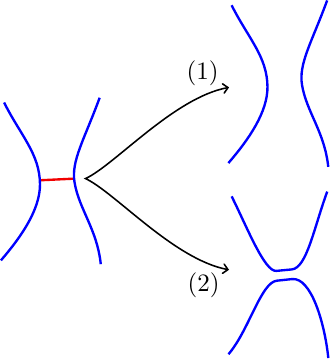} 
    \caption{A bar in a diagram.}
    \label{fig:removing_bar}
  \end{subfigure}%
    \begin{subfigure}[b]{0.5\textwidth}
    \centering
    \includegraphics{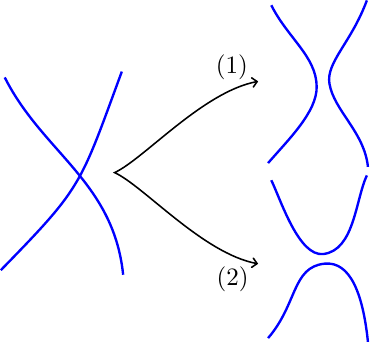} 
    \caption{An intersection on a multi-loop.}
    \label{fig:removing_int}
  \end{subfigure}%
  \caption{The two ways of removing a bar on a diagram or an intersection point on a multi-loop.}
    \label{fig:removing}
  \end{figure}%

  If the diagram was obtained by opening all intersections of a multi-loop ${\mathbf{ c}}$, removing
  a bar corresponds to removing an intersection from ${\mathbf{ c}}$.  Correspondingly, there are
  two ways to remove an intersection from ${\mathbf{ c}}$ (the first and the second way), shown on
  Figure~\ref{fig:removing_int}.

  We make the following straightforward observation, which will enable us to argue by induction on $r$ in the proof of Theorem \ref{t:maindet}.

\begin{lem}
  If ${\mathbf{ c}}$ is a generalized eight, then the multi-loop ${\mathbf{ c}}'$ obtained by
  removing an intersection point is still a generalized eight. It fills a surface of absolute Euler
  characteristic $r-1$.
\end{lem}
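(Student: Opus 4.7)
The lemma has two assertions about $\mathbf{c}'$: that it is a generalized eight, and that the surface $\mathbf{S}'$ it fills has $\vert \chi(\mathbf{S}')\vert = r-1$. I would deduce the second assertion from the first by applying Lemma~\ref{r:GE} to $\mathbf{c}'$, which reduces the problem to verifying the generalized eight property for $\mathbf{c}'$. The Euler-characteristic bookkeeping is immediate: since $\mathbf{c}'$ has exactly $r-1$ self-intersections by construction, the retraction argument from the proof of Lemma~\ref{r:GE} applies verbatim and shows that a regular neighbourhood $\cN'$ of $\mathbf{c}'$ retracts onto the $4$-regular graph with $r-1$ vertices (the remaining intersections) and $2(r-1)$ edges (the simple portions of $\mathbf{c}'$), giving $\chi(\cN') = -(r-1)$.

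For the generalized eight property, I would show that no boundary component of $\partial \cN'$ is contractible in the filling surface $\mathbf{S}'$. The key topological observation is that $\cN$ and $\cN'$ differ only inside a small disk $D_a$ around the removed intersection $a$; outside $D_a$ the two regular neighbourhoods coincide. Consequently, any boundary component of $\partial \cN'$ disjoint from $D_a$ is simultaneously a component of $\partial \cN$, and hence non-contractible in $\mathbf{S}$ by the generalized eight hypothesis on $\mathbf{c}$. For components that traverse $D_a$, the local surgery rewires the four inside arcs of $\partial \cN \cap D_a$ by exchanging two pairs of endpoints on $\partial D_a$, the pairing being determined by whether we remove the intersection the first or second way. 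I would treat these components by a case analysis on the choice of resolution and on how the four affected endpoints are connected through $\partial \cN \setminus D_a$: the argument exploits that neither of the two possible local rewirings can produce a null-homotopic simple closed curve without simultaneously producing a contractible boundary component of $\partial \cN$, which would contradict the hypothesis on $\mathbf{c}$.

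Once the generalized eight property is established, applying Lemma~\ref{r:GE} to $\mathbf{c}'$ yields $\vert \chi(\mathbf{S}')\vert = r-1$ as claimed. The main obstacle is the case analysis for boundary components that cross $D_a$; I would expect the two smoothing types to be treatable uniformly, since in each case the modification is purely local and affects only the pairing structure of the ribbon graph at the vertex corresponding to the removed intersection. The heart of the matter is to verify that no local rewiring can trap a disk against $\partial \cN'$ that was not already bounded by $\partial \cN$ before the surgery, which follows from the fact that the rewiring is reversible and preserves the endpoints on $\partial D_a$.
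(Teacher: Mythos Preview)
Your strategy is sound, and the paper, treating the lemma as a ``straightforward observation,'' gives no proof to compare against. The reduction to showing that no boundary component of $\partial\mathcal{N}'$ is contractible in $\mathbf{S}$ is the right move, and your treatment of the components disjoint from $D_a$ is correct.

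The one place where your argument is not yet rigorous is the justification for the components that meet $D_a$. The ``reversibility'' heuristic you invoke does not obviously work: reversing the local surgery turns $\partial\mathcal{N}'$ back into $\partial\mathcal{N}$, but a putative disk $D'$ bounded by a component of $\partial\mathcal{N}'$ does not automatically become a disk bounded by a component of $\partial\mathcal{N}$ after the reversal, since inside $D_a$ the curve $\partial D'$ may now run through the interior of $\mathcal{N}$. The case analysis you propose would succeed, but is more laborious than necessary.

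A cleaner route is to track the complementary regions rather than the boundary curves. For a generalized eight, every component of $\mathbf{S}\setminus\mathcal{N}$ is an annulus (this is exactly how Lemma~\ref{r:GE} deduces $\chi(\mathbf{S})=r$). The smoothing at $a$ merges exactly two diagonally opposite local quadrants through a rectangular band; globally this either joins two distinct complementary annuli along that band, producing a region of Euler characteristic $0+0+1-2=-1$, or self-joins a single annulus, again giving Euler characteristic $-1$. The remaining complementary regions are untouched. Hence no component of $\mathbf{S}\setminus\mathcal{N}'$ is a disk, which is equivalent to the statement that no component of $\partial\mathcal{N}'$ is contractible in $\mathbf{S}$. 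This replaces your case analysis by a single Euler-characteristic computation and closes the gap.
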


Note that the number of components of the multi-loop ${\mathbf{ c}}'$ is, a priori, different from
the number of components of $\mathbf{c}$; this is one of the motivations for studying multi-loops
instead of mere loops.

\subsection{Reconstitution of $\cop$ and relabelling of $\Theta$}
\label{s:cop_relabel}
 %  The construction of  \S \ref{s:coord} will give us a representative of the homotopy class of ${\mathbf{ c}}$, expressed as a concatenation of pairwise orthogonal geodesic segments. This will prove especially
 %  convenient to find an explicit expression $\ell({\mathbf{ c}})$ in the new coordinate system on Teichm\"uller space. 
Let us explain how to retrieve the multi-loop $\mathbf{c}$ from the diagram, and define some
convenient notations to express the representant $\cop$ of the homotopy class of $\mathbf{c}$ in
terms of a succession of bars and simple portions.

\subsubsection{Cycle-decomposition and rewriting of $\cop$}
\label{sec:cycle-decomp-rewr}

For $q \in \Theta$, we define
\begin{align}\label{e:psegment}p(q):=B_q \smallbullet \cI_q\end{align}
to be the concatenation of the oriented segments $B_q$ and $\cI_q$ (which do, by definition,
connect). The representative $\cop$ of the homotopy class of ${\mathbf{c}}$ is the concatenation of
the $2r$ paths $p(q)$ for $q\in \Theta$.  More precisely, there is a unique permutation $\sigma$ of
the set $\Theta$, decomposed into $\cc$ disjoint cycles
$\sigma = \prod_{i=1}^{\cc} (q_1^i \, q_2^i \, \ldots \, q_{n_i}^i)$ such that, for all
$i \in \{1, \ldots, \cc\}$, the $i$th component $\cop_i$ of $\cop$ coincides with the concatenation
\begin{align}\label{e:decompo-i} \cop_i=p(q_1^{i})\smallbullet  p(q_2^{i})\smallbullet\ldots \smallbullet p(q_{n_i}^{i})\end{align}
(or a circular permutation thereof).

The length of the $i$-th cycle is denoted as $n_i$, and we have $\sum_{i=1}^{\cc} n_i=2r$. Because
of the invariance of the free homotopy class of \eqref{e:decompo-i} under circular permutation, it
is convenient to consider that the elements $q_k^{i} = (j^i_k,\eps^i_k) \in \Theta$ are numbered by
$k\in \Z_{n_i}$ where $\Z_{n_i} := \Z \diagup n_i \Z$.

\begin{rem}
  By construction, we have the following orientation rules in \eqref{e:decompo-i}, see Figure
  \ref{fig:cop}:
  \begin{itemize}
  \item if $\eps^i_k=+$ then $B_{q^i_{k}}$ arrives at $\cI_{q^i_{k}}$ from the right and leaves
    from $\cI_{q^i_{k-1}}$ from the left;
  \item otherwise, $B_{q^i_{k}}$ arrives at $\cI_{q^i_{k}}$ from the left and leaves from
    $\cI_{q^i_{k-1}}$ from the right.
  \end{itemize}
\end{rem}

 \subsubsection{Re-labelling of $\Theta$} \label{s:relabel_Theta}

 For $q \in \Theta$, let $\comp(q)=i \in \{1, \ldots, \cc\}$ be the label of the
 component~${\mathbf{c}}_i$ of $\mathbf{c}$ in which $q$ appears in the decomposition
 \eqref{e:decompo-i}. We set
  $$\Theta^i=\{q\in \Theta, \comp(q)=i\}.$$
  Then, for any $q \in \Theta^i$, we can write using the permutation $\sigma$:
  \begin{align}\label{e:decompo-ibis} \cop_i= p(q) \smallbullet p(\sigma q)\smallbullet p(\sigma^2 q)\smallbullet \ldots \smallbullet p(\sigma^{n_i-1} q).
  \end{align}

  We now consider the set
  $$\Theta'=\bigsqcup_{i=1}^{\cc} \Z_{n_i}=\{(i, k), i \in \{1, \ldots, \cc \}, k\in \Z_{n_i}\}.$$
  The map
  \begin{align}\label{e:phi}
    \phi:
    \begin{cases}
      \Theta' & \rightarrow \Theta \\
      (i, k) & \mapsto q^i_{k} = (j^i_k,\eps^i_k)
    \end{cases}
 \end{align}
 is a bijection, and the cyclic ordering of $\Z_{n_i}$ is compatible with the cycling ordering of
 $q^i_{k}$ in the path \eqref{e:decompo-i}.  We shall identify $\Theta$ with $\Theta'$ through the
 map $\phi$, thus writing $(i, k)$ for $1 \leq i \leq \cc$, $k\in \Z_{n_i}$ to enumerate the
 elements of $\Theta$. Doing so, $\Theta^i$ can be identified with $\{(i, k), k\in \Z_{n_i}\}$, that
 is to say, with $\Z_{n_i}$.  After this identification, the permutation $\sigma$ is given by $ (i, k)\mapsto (i, k+1)$.

 \subsubsection{Periodic extensions} \label{s:periodic_Theta}
 Similarly define
 $${\mathbf{\Theta}}=\bigsqcup_{i=1}^{\cc} \Z=\{(i, k), i\in\{1, \ldots, \cc\}, k\in
 \Z\}=\bigsqcup_{i=1}^{\cc} {\mathbf{\Theta}}^i,$$ where ${\mathbf{\Theta}}^i=\{(i, k), k\in \Z\}$.
 When needed, the sequence $(i,k)\mapsto q^i_k$ can also be defined for
 $(i, k)\in {\mathbf{\Theta}}$ by pre-composing with the projection map $\Z\rightarrow \Z_{n_i}$.
 In accordance with the previous discussion, we will denote
 $\sigma: {\mathbf{\Theta}}\rightarrow {\mathbf{\Theta}}$ the shift map $ (i, k)\mapsto (i, k+1)$.

 \subsection{U-turns and crossings} \label{s:def_crossing}

 The following definition will be useful to our analysis.
 \begin{defa} \label{def:crossing} For $q \in \Theta$, we say that $q$ is a \emph{crossing index} if
   $\sign(q) \sign(\sigma q) = +1$, and a \emph{U-turn index} otherwise.  We denote by
   $\cC\subseteq \Theta$ the set of crossing indices and by $\cU \subseteq \Theta$ the set of U-turn
   indices. If $\Theta=\cU$, we say that we are in the {\emph{purely non-crossing case}}.
 \end{defa}

\begin{figure}[h!]
  \centering
   \begin{subfigure}[b]{0.5\textwidth}
    \centering
     \includegraphics{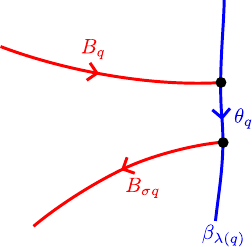}
    \caption{U-turn.}
    \label{fig:bars}
  \end{subfigure}%
  \begin{subfigure}[b]{0.5\textwidth}
    \centering
     \includegraphics{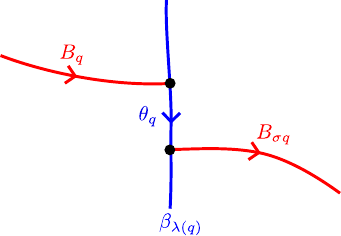}
    \caption{Crossing.}
    \label{fig:bars}
  \end{subfigure}%
  \caption{A U-turn v.s. crossing index (here $\sign(q)=+$).}
  \label{fig:crossing}
\end{figure}
The terminology is motivated by the fact that if $q$ is a crossing index, then to go from $B_{q}$ to
$B_{\sigma q}$ we have to cross the component $\beta_{\lambda(q)}$ of $\beta$, while in the other case,
we make a U-turn when arriving at $\beta_{\lambda(q)}$. See Figure \ref{fig:crossing}.

%%%Local Variables: 
%%% mode: latex
%%% TeX-master: "main"
%%% End: 

\section{Coordinate system adapted to a generalized eight}
\label{s:nc}
\textbf{ We assume until \S \ref{s:othercases} that the multi-loop ${\mathbf{ c}}$ is a generalized
  eight filling a surface~${\mathbf{S}}$.}  We open the intersections of ${\mathbf{ c}}$ by the
procedure defined in the previous section (in this section, we allow every intersection to be opened
in the first or second way). By hypothesis, the simple multi-loop $\beta$ is a multi-curve. Let us
show how the bars $B_1, \ldots, B_r$ and the simple portions $(\cI_q)_{q\in \Theta}$ can be used to
\emph{define new coordinates} on the Teichm\"uller space $\cT^*_{\g, \n}$ of marked hyperbolic
metrics on $\mathbf{S}$, well-adapted to expressing the length function of ${\mathbf{ c}}$.

First, we notice that the dimension of $\cT_{\g,\n}^*$ is equal to $6\g-6+3\n = 3
\chi(\mathbf{S})$. We proved in Lemma \ref{r:GE} that, for a generalized eight, $\chi(\mathbf{S})
= r$. As a consequence, the dimension of the Teichm\"uller space $\cT^*_{\g, \n}$ is $3r$, so this
is the number of coordinates we expect to need to describe a point on $\cT_{\g,\n}^*$.

\subsection{Coordinates $\vec{L}=(L_1, \ldots, L_r)$ and
   $\vec{\theta}=(\theta_1^{\pm}, \ldots, \theta_r^{\pm})$ on the Teichm\"uller
   space}\label{s:coord}

 Consider a point $Y\in \cT^*_{\g, \n}$, i.e. a compact hyperbolic metric on $\mathbf{S}$.  As
 explained in Remark~\ref{rem:beta_geod}, we take a representative of $Y$ in the Teichmüller space
 such that the multi-curve $\beta$ is a simple multi-geodesic.

 For $q \in \Theta$, we replace the bar $B_q$ by the representative $\overline{B}_q$ of minimal
 length in its homotopy class with endpoints gliding on $\beta$.  While the endpoints $t(B_q)$ glide
 to the new endpoints $t(\overline{B}_q)$ along $\beta$, it is important to note that the segments
 $\cI_q$ glide to new segments $\overline{\cI}_q$ of $\beta_{\lambda(q)}$.  Although the original
 segments $\cI_q$ were positively oriented along $\beta_{\lambda(q)}$, it can happen that
 $\overline{\cI}_q$ goes in the reverse direction, or that $\overline{\cI}_q$ wraps an arbitrary
 number of times around $\beta_{\lambda(q)}$.

We are now ready to define our new coordinates on the Teichm\"uller space $\cT_{\g,\n}^*$.
 
 \begin{defa}[Coordinates $(\vec{L},\vec{\theta})$] Let $Y \in \cT_{\g,\n}^*$. 
   \begin{enumerate}
   \item For $j\in\{1, \ldots, r\}$, we denote by $L_j = L_j(Y)$ the (positive) length of the
     orthogeodesic~$\overline{B}_j$.  We write $\vec{L}=(L_1, \ldots, L_r)$.
   \item For $q \in \Theta$, we denote by $\theta_q = \theta_q(Y)$ the {\em algebraic} length of
     $\overline{\cI}_q$: positive if it goes in the same direction as $\cI_{q}$, negative
     otherwise. We write $\vec{\theta}=(\theta_q)_{q\in \Theta}$. 
   \end{enumerate}
 \end{defa}

\begin{figure}[h]
  \centering
  \includegraphics{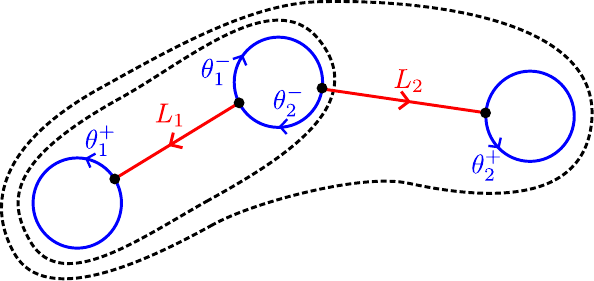}
  \caption{An example of coordinates $(\vec{L}, \vec{\theta})$ in a case when $r=2$. We have
    additionally represented the pair of pants decomposition constructed in \S \ref{s:ppdecompo} as
    dashed curves (here, $\# \Lambdabeta = 3$ and $\# \LambdaFN=2$).}
  \label{fig:expl_pop}
\end{figure}

 \begin{rem}[Crossing and U-turn variables]
   If $q\in \cC$ is a crossing index, then we say that $\theta_q$ is a \emph{crossing variable}; if
   $q\in \cU$ is a U-turn variable, then we say that $\theta_q$ is a \emph{U-turn variable}.  By the
   Useful Remark (Remark \ref{r:useful}), U-turn variables are always positive: if $q \in \cU$, then
   $\theta_q \geq 0$. Thus, the discussion concerning U-turn variables is a little simpler than for
   crossing variables.
 \end{rem}

   \begin{nota} \label{nota:signL} It will often be convenient to use the notation $L(q)=L_j$ for
     $q=(j, \eps) \in \Theta$ (note that $L(q)$ does not depend on the sign component of $q$). We
     will also benefit from introducing $\epsilon L(q) := \sign(q) L(q)$. The vector
     $(\epsilon L(q))_{q \in \Theta}$ then contains each length $L_j$ exactly twice, once positively
     and once negatively. This corresponds to the fact that each bar is explored exactly twice when
     going along $\cop$, following its two orientations, as illustrated in Figure \ref{fig:cop}.
 \end{nota}
 
 We can already note that $(\vec{L}, \vec{\theta})$ belongs to
 $\R_{> 0}^r\times \R^{\Theta}\subset \R^{3r}$, a space of same dimension as the Teichm\"uller space
 $\cT^*_{\g, \n}$. The remainder of this section is devoted to proving that
 $(\vec{L}, \vec{\theta})$ is a coordinate system on $\cT^*_{\g, \n}$, finding the expression of the
 Weil--Petersson measure in these coordinates, and describing its range.

 We do not start from scratch, but use the fact that the expression for the Weil--Petersson measure
 is already known in Fenchel--Nielsen coordinates, thanks to Wolpert's theorem
 \cite{wolpert1985}. In order to apply Wolpert's result, we use an auxiliary decomposition of
 ${\mathbf{S}}$ into pairs of pants, determined by the multi-curve $\beta$ and the bars
 $B_1, \ldots, B_r$. We find relations between the Fenchel--Nielsen coordinates and the parameters
 $(\vec{L}, \vec{\theta})$, and calculate the determinant.
 \begin{rem}
   This approach seems a bit artificial, and it would be interesting to study directly the
   Weil--Petersson measure in coordinates $(\vec{L}, \vec{\theta})$, without resorting to an
   auxiliary system of Fenchel--Nielsen coordinates.
 \end{rem}

\subsection{Decomposition into pairs of pants}
\label{s:ppdecompo}

Using the multi-curve $\beta$ and the bars $B_1, \ldots, B_r$, we define a decomposition of
${\mathbf{S}}$ into pairs of pants (3-holed spheres) that will serve as an auxiliary tool in several
places.

\subsubsection{Pair of pants determined by two simple curves joined by a segment}

Let $(C_+, C_-)$ be a multi-curve on $\mathbf{S}$, and $B$ be a simple segment going from $C_-$ to
$C_+$, not intersecting $C_-\cup C_+$ except at its endpoints. The topological pair of pants
determined by $C_-$, $C_+$ and $B$ is defined as a one-sided regular neighbourhood of
$C_- \cup C_+ \cup B$; that is to say, a regular neighbourhood $\mathcal{N}$ of
$C_- \cup C_+ \cup B$, from which we remove the cylinders bordered by one of the curves $C_\pm$ and
one component of $\partial \mathcal{N}$.

This also applies when $C = C_+=C_-$ as soon as $B$ is not homotopically trivial with gliding
endpoints along~$C$. We have either a pair of pants if $B$ leaves and arrives on the same side
of~$C$, or a once-holed torus otherwise.

The three cases are shown in Figure \ref{fig:ppC}.
  \begin{figure}[h!]
    \includegraphics[height=3.5cm]{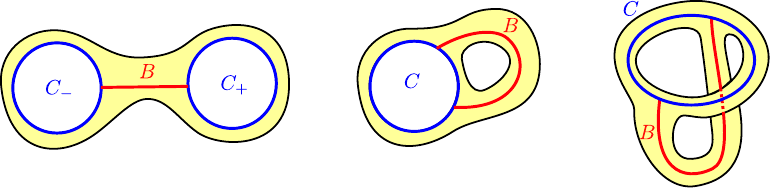}
    \caption{The three possible pairs of pants (or once-holed tori) determined by two simple curves
      joined by a segment.}
     \label{fig:ppC}
  \end{figure}

  \subsubsection{Construction of the pair of pants decomposition}

  The (arbitrary) numbering of $B_1, \ldots, B_r$ determines a decomposition
  $({\bP}_1, \ldots, {\bP}_r)$ of ${\mathbf{S}}$ into connected surfaces of Euler characteristic
  $-1$, defined inductively below. This decomposition is non-canonical, as it depends on the
  numbering of the bars $B_1, \ldots, B_r$.
  
  The construction  provides us with an increasing family of multi-curves
  $\Sigma_0\subset \Sigma_1\subset \ldots \subset \Sigma_r$, and an increasing family of surfaces
  ${\mathbf{S}}_{j}= {\mathbf{S}}_{j-1}\cup {\bP}_{j}$ for $1\leq j\leq r$. It will also
  produce an auxiliary family of sets $\widetilde{{\mathbf{S}}}_j$.

\begin{itemize}
\item To initiate, we let ${\mathbf{S}}_0=\emptyset$ and $\widetilde{{\mathbf{S}}}_0=\Sigma_0$ be
  the multi-curve $\beta$.
\item Let $j\in \{ 1, \ldots, r \}$. The intersection of the $j$-th bar $B_{j}$ with
  $\widetilde{{\mathbf{S}}}_{j-1}$ is the union of two non-empty closed segments $I_+$, $I_-$
  (possibly reduced to single points) containing the endpoints of~$B_{j}$.  The complement of
  $I_+\cup I_-$ in $B_{j}$ is a non-empty sub-segment $B'_{j}$ of $B_{j}$ which meets one or two
  components of $\partial \widetilde{{\mathbf{S}}}_{j-1}$ at its endpoints; call $C_+, C_-$ these
  components.  We let ${\bP}_{j}$ be the pair of pants determined by $B'_{j}$, $C_+$ and $C_-$.
\item We then let $\Sigma_{j}=\Sigma_{j-1}\cup \partial {\bP}_{j}$ (it is a multi-curve),
  $\widetilde{{\mathbf{S}}}_{j} =\widetilde{{\mathbf{S}}}_{j-1}\cup {\bP}_{j}$ (the disjoint union
  of a surface and a multi-curve) and
  ${\mathbf{S}}_{j}= {\mathbf{S}}_{j-1}\cup {\bP}_{j}={\bP}_1\cup\ldots \cup {\bP}_{j}$ (a bona-fide
  surface). They are related by
  $\widetilde{{\mathbf{S}}}_{j}={\mathbf{S}}_{j} \cup \beta$.
\end{itemize}
By an argument similar to Remark \ref{r:GE}, because $\mathbf{c}$ is a generalized eight, the
boundary of each pair of pants~$\mathbf{P}_j$ is a multi-curve in $\mathbf{S}$. The construction
finishes for $j=r$, and we have $\widetilde{{\mathbf{S}}}_{r}={{\mathbf{S}}}_{r}={\mathbf{S}}$,
because the multi-loop~$\mathbf{c}$ fills the surface~$\mathbf{S}$.

\begin{rem}
  \label{rem:remove_int_eight}
 For an integer $1 \leq j \leq r$, one can consider the multi-loop $\curve'_0$ obtained by
    removing the intersections $j, \ldots, r$ from $\mathbf{c}$ according to the procedure
    described in \S \ref{s:removing} (here we remove each intersection the same way as we did when
    replacing them by bars). This will be particularly useful for reasoning by induction, in the
    case $j=r$. Note that $\curve'_0$ is a disjoint union of a (possibly disconnected) generalized
    eight $\curve'$ and a (possibly empty) multi-curve. The generalized eight $\curve'$ fills the
    surface $\Sf_j$.  The pairs of pants decomposition $\mathbf{P}_1, \ldots, \mathbf{P}_j$
  associated to the multi-loops $\mathbf{c}$ and $\mathbf{c}'$ are the same.
\end{rem}

\subsubsection{Reduction to Case (a) and (b)}
\label{sec:case-a-b}

We shall assume (without loss of generality) that the numbering of $B_1, \ldots, B_r$ is such that
the first bars $B_1, \ldots, B_{r_0}$ are in the third situation of Figure \ref{fig:ppC} (that is,
depart and arrive on one unique component of $\beta$, on opposite sides) and the remaining
$B_{r_0+1}, \ldots, B_r$, either join two different components of $\beta$, or depart and arrive on
one component of $\beta$ on the same side, or leave from a component of $\beta$ already touched by
$B_1, \ldots, B_{r_0}$.  Then, ${\bP}_1, \ldots, {\bP}_{r_0}$ are disjoint once-holed tori, and
${\bP}_{r_0+1}, \ldots, {\bP}_r$ are \emph{bona fide} pairs of pants falling into one of the first
two cases of Figure \ref{fig:ppC}.

As a consequence, for $j > r_0$, there are only two possible scenarios that can happen at the $j$-th
step in the construction of the pair of pants decomposition, which we shall call Case (a) and Case
(b); they are represented in Figure \ref{fig:C1234b}. 
  \begin{enumerate}
  \item[\textbf{(a)}] $C_-$ and $C_+$ are distinct (i.e. the bar $B_{j}'$ has endpoints on different
    components of~$\partial \tilde{\mathbf{S}}_j$), and hence $\mathbf{S}_{j}$ has one more genus
    and one less boundary component than $\mathbf{S}_{j-1}$.
  \item[\textbf{(b)}] $C_- = C_+ =: C$ (i.e. the bar $B_j'$ has endpoints on one unique component of
    $\partial \tilde{\mathbf{S}}_j$), and hence $\mathbf{S}_j$ has one more boundary component than
    $\mathbf{S}_{j-1}$, and the same genus.
  \end{enumerate}
  We will refer to these two cases in proofs by induction.

  \begin{figure}[h!]
    \begin{subfigure}[b]{0.5\textwidth}
    \centering
     \includegraphics[height=4cm]{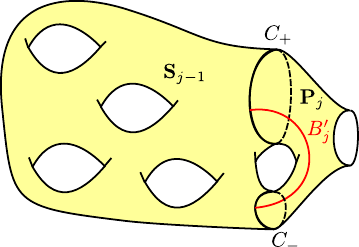}
    \caption{Case (a).}
    \label{fig:casea}
  \end{subfigure}%
  \begin{subfigure}[b]{0.5\textwidth}
    \centering
    \includegraphics[height=4cm]{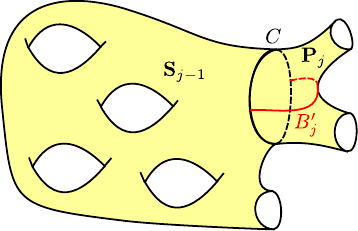}
    \caption{Case (b).}
    \label{fig:caseb}
  \end{subfigure}%  \begin{subfigure}[b]{0.33\textwidth}
  \caption{The two possible scenarios when constructing $\mathbf{P}_j$.}
    \label{fig:C1234b}
  \end{figure}

  \subsection{Set of boundary components of the pair of pants decomposition}
  \label{s:ppdecomponumb}

  \subsubsection{Definition} \label{sec:Lambda_def}

  Let us denote as $\Gamma = (\Gamma_\lambda)_{\lambda \in \Lambda}$ the multi-curve
  $\Sigma_r$ in the construction above, i.e. the multi-curve obtained by taking the boundary of the
  pair of pants decomposition $\textbf{P}_1, \ldots, \mathbf{P}_r$ of $\mathbf{S}$. It is indexed by
  a set~$\Lambda$ with $3\g-3+2\n$ elements.

  \begin{rem}
    \label{rem:pop_poly}
    The construction above provides a special representative of the homotopy class of each boundary
    component $\Gamma_\lambda$, as a concatenation of bars $\overline{B}_q$ and simple portions of
    $\beta$ (see the forthcoming Figure \ref{fig:Glambda} for an example). More precisely, we will
    show later that the $\Gamma_\lambda$ are what we call \emph{polygonal curves}, as defined in
    Definition \ref{d:pc}. This property will play a key role in \S \ref{s:bound} and \S
    \ref{s:lengthboundary}.
 \end{rem}

 \begin{rem} \label{r:initlambda} Extending Remark \ref{r:init}, it will also be convenient to keep
   in mind that each boundary component $\Gamma_\lambda$ is freely homotopic to a piecewise smooth
   curve $\Gamma_{\lambda,\mathrm{init}}$, included in the original ${\mathbf{ c}}$, made of smooth
   segments joining some of the intersection points of $\mathbf{c}$.
 \end{rem}  

 \subsubsection{Partitions of $\Lambda$} \label{sec:part_lambda}
  Some of the elements of $\Lambda$ are the boundary components of $\mathbf{S}$ -- we denote as
  $\LambdaBC$ this set, which is in natural bijection with
  $\partial \mathbf{S} = \{1, \ldots, \n\}$. The other elements are inner curves, and we denote this
  set as $\Lambdain := \Lambda \setminus \LambdaBC$. We have $\# \Lambdain= 3\g - 3 +\n$.
  
  We notice that the multi-curve $\beta$ is part of this pair of pants decomposition (because
  $\beta = \Sigma_0 \subset \Sigma_r$). To reflect this, we define $\Lambdabeta \subseteq \Lambda$
  to be the set of components of $\Gamma$ which are also a component of the multi-curve
  $\beta$. This motivates the writing of the multi-curve $\beta$ as
  $(\beta_\lambda)_{\lambda \in \Lambdabeta}$ introduced before, where
  $\Gamma_\lambda = \beta_\lambda$ for $\lambda \in \Lambdabeta$.
  
  The components of $\Gamma$ which are \emph{not} part of the multi-loop $\beta$, and have therefore
  been created in the construction above , will be denoted as
  $\LambdaFN := \Lambda \setminus \Lambdabeta$. The partition $\Lambda = \LambdaBC \sqcup \Lambdain$
  between boundary/inner components naturally induces two partitions of $\Lambdabeta$ and
  $\LambdaFN$, defined by
  \begin{align*}
   & \LambdaBCbeta := \LambdaBC \cap \Lambdabeta 
   & \LambdaBCFN := \LambdaBC \cap \LambdaFN \\
   & \Lambdainbeta := \Lambdain\cap \Lambdabeta 
   & \LambdainFN := \Lambdain \cap \LambdaFN.
  \end{align*}

  \subsubsection{Order on $\Lambda$} 
  
  We do not fix a numbering of the set $\Lambda$. However, it will be convenient to equip $\Lambda$
  with an order relation in the following way.
  \begin{defa}\label{def:order_Lambda}
    For $\lambda \in \Lambda$, we denote as $\mathrm{step}(\lambda)$ the step of the induction when
    the component $\Gamma_\lambda$ is created, i.e.
    \begin{equation*}
      \mathrm{step}(\lambda) := \min \{ j \in \{1, \ldots, r \}  : \,
      \Gamma_\lambda \text{ is a boundary component of } \mathbf{P}_j\}.
    \end{equation*}
    This induces an order on $\Lambda$, defined by writing, for $\lambda, \lambda' \in \Lambda$,
    \begin{equation*}
      \lambda < \lambda'  \quad \Leftrightarrow \quad
      \mathrm{step}(\lambda) < \mathrm{step}(\lambda').    
    \end{equation*}
  \end{defa}
  The relation $\lambda < \lambda'$ simply means that $\Gamma_\lambda$ appeared earlier than
  $\Gamma_{\lambda'}$ in our recursive construction of the pants decomposition.
  \begin{rem}
    This order is strongly connected (for any $\lambda$, $\lambda'$, either $\lambda \leq \lambda'$
    or $\lambda' \leq \lambda$) but a priori not anti-symmetric (there would typically be distinct
    elements $\lambda \neq \lambda'$ with $\mathrm{step}(\lambda)=\mathrm{step}(\lambda')$, since we
    often create several boundary components at a given step of the induction).
  \end{rem}

  \begin{rem} \label{r:convenient} In the proof by induction of \S \ref{sec:proof-theor-reft:m}, it
    will be convenient to assume that the arbitrary choice of the numbering $\{1, \ldots, \n\}$ of
    the boundary of $\mathbf{S}$ has been made so that the function
    $\lambda \mapsto \mathrm{step}(\lambda)$ is non-decreasing as a function of
    $\lambda \in \LambdaBC = \{1, \ldots, \n\}$, i.e. for every $1 \leq i < j \leq \n$, the $j$-th
    boundary component of $\partial \mathbf{S}$ appears after (or at the same time as) the $i$-th
    boundary component in the construction of the pair of pants decomposition.  In particular, the
    new components appearing in the last step, when we add the final pair of pants $\mathbf{P}_r$,
    are labelled $\n$ in Case (a), or $\n$ and $\n-1$ in Case (b).
 \end{rem}

\subsection{Expression of the Weil--Petersson measure} \label{s:FNc}

We are now ready to state and prove our new expression for the Weil--Petersson measure in the
coordinates $(\vec{L},\vec{\theta})$.

\subsubsection{Expression in Fenchel--Nielsen coordinates}

For given boundary lengths $\x$, once a decomposition of $\mathbf{S}$ into pairs of pants
${\bP}_1, \ldots, {\bP}_r$ is given, it is standard to consider Fenchel--Nielsen coordinates
$(y_\lambda, \alpha_\lambda)_{\lambda\in \Lambda_{\mathrm{in}}}$ on
$\cT_{g_{\mathbf{S}}, n_{\mathbf{S}}}(\x)$. The coordinates $y_\lambda$ are the lengths
$\ell_Y(\Gamma_\lambda)$ of the closed geodesics homotopic to $\Gamma_\lambda$ for the metric
$Y$. The coordinates $\alpha_\lambda$ are twist parameters, defined up to translation. We will
explain in the proof of Theorem \ref{t:maindet} how to adequately choose the origin of twist
parameters in relation with the multi-loop ${\mathbf{c}}$ that we want to study.  By work of
Wolpert~\cite{wolpert1985}, we know that
\begin{equation}\label{e:WPFN}\d\mathrm{Vol}^{\mathrm{WP}}_{g_{\mathbf{S}}, n_{\mathbf{S}}, \x}(Y) =\prod_{\lambda\in \Lambda_{\mathrm{in}}} \d y_\lambda \d \alpha_\lambda.
\end{equation}
To obtain coordinates on $\cT^*_{g_{\mathbf{S}}, n_{\mathbf{S}}}$, we simply multiply this by the
Lebesgue measure for the length parameters $\x=(x_1, \ldots, x_{\n})$, in other words the collection
of $y_\lambda$ for $\lambda\in \LambdaBC$. Since $\Lambda = \Lambda_{\mathrm{in}} \cup \LambdaBC$,
we obtain the expression
\begin{equation}
  \label{eq:WPFN2}
  \d\mathrm{Vol}^{\mathrm{WP}}_{g_{\mathbf{S}}, n_{\mathbf{S}}}(\x,Y) =\prod_{\lambda\in \Lambda} \d y_\lambda \prod_{\lambda\in \Lambda_{\mathrm{in}}}\d \alpha_\lambda.
\end{equation}

\subsubsection{Main statement}
 We shall prove the following.
\begin{thm}\label{t:maindet}
  Let $\mathbf{c}$ be a generalized eight. Then, the map
  \begin{equation}
    \label{eq:ch_var}
    \begin{cases}
      \cT^*_{\g, \n} & \rightarrow \R_{>0}^{r} \times \R^\Theta \\
      (\x, Y) & \mapsto (\vec{L}, \vec{\theta})
    \end{cases}
  \end{equation}
  is a $\mathcal{C}^1$-diffeomorphism onto its image $\domain$,
  and
  \begin{align}\label{e:maindet}
    2^{\n} \prod_{j=1}^{\n} \sinh\div{x_j}  \d \mathrm{Vol}^{\mathrm{WP}}_{\g, \n}(\x,Y)
    = 2^{2 |\Lambdabeta|} \prod_{\lambda \in \Lambda^\beta} \sinh^2\div{y_\lambda}
    \prod_{i=1}^r \sinh(L_i) \d^r \vec{L} \d^{2r}
    \vec{\theta}
  \end{align}
  where $y_\lambda = \ell_Y(\beta_\lambda)$, $\d^r \vec{L}=\prod_{i=1}^r \d L_i$ and
  $\d^{2r} \vec{\theta}=\prod_{i=1}^r \d \theta_i^+ \d\theta_i^-$.
\end{thm}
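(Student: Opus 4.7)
The plan is to combine Wolpert's formula \eqref{eq:WPFN2} for $d\mathrm{Vol}^{\mathrm{WP}}_{\g,\n}$ in the Fenchel--Nielsen coordinates $(y_\lambda, \alpha_\lambda)$ associated with the pair of pants decomposition $(\mathbf{P}_1,\ldots,\mathbf{P}_r)$ of \S\ref{s:ppdecompo}, with an explicit Jacobian computation for the change of variables to $(\vec L, \vec\theta)$. Both coordinate systems are $3r$-dimensional, and the key structural observation is that $y_\lambda$ for $\lambda\in\Lambdabeta$ is not a free parameter but is recovered from $\vec\theta$ through the decomposition of $\beta_\lambda$ into the arcs $\bar{\mathcal{I}}_q$ for $q\in\Theta_t(\lambda)$, while the pair $(y_\lambda, \alpha_\lambda)$ for $\lambda\in\LambdaFN$ is traded against some $L_j$ and the remaining components of $\vec\theta$.

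The computation proceeds by induction on $r$ along the filtration $\mathbf{S}_0\subset\mathbf{S}_1\subset\ldots\subset\mathbf{S}_r=\mathbf{S}$, attaching one piece $\mathbf{P}_j$ at a time. Within $\mathbf{P}_j$, the bar $B_j$ of length $L_j$ is an orthogeodesic between two boundary components of $\mathbf{P}_j$. The classical right-angled hexagon identity for a pair of pants with boundary lengths $(y_a, y_b, y_c)$, relating $L_j$ to $y_a, y_b, y_c$, yields after differentiation at fixed $y_a, y_b$
\begin{equation*}
\sinh\tfrac{y_c}{2}\, dy_c \,=\, 2\,\sinh L_j\,\sinh\tfrac{y_a}{2}\,\sinh\tfrac{y_b}{2}\, dL_j,
\end{equation*}
which is the source of both the factor $\sinh L_j$ and the constant~$2$ responsible for the $2^r$. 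Simultaneously, the twist coordinate $\alpha_\lambda$ along any newly-created boundary must be identified, via a geometric description of how consecutive pairs of pants glue together, with the algebraic displacement of the foot of an orthogeodesic $\bar B_q$ along $\beta$, yielding a Jacobian of absolute value $1$ between the $\alpha$'s and the corresponding $\theta_q$'s once the origin of twists is chosen compatibly with the diagram $\mathbf{D}$. Additional case analysis covers the once-holed tori $\mathbf{P}_1,\ldots,\mathbf{P}_{r_0}$ and the dichotomy between Case~(a) and Case~(b) of \S\ref{sec:case-a-b} at each inductive step.

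Assembling the local contributions, each curve $\Gamma_\lambda$ collects a factor $\sinh(y_\lambda/2)$ for every pair of pants of which it is a boundary --- twice if $\lambda\in\Lambdain$ and once if $\lambda\in\LambdaBC$ --- together with a factor $1/\sinh(y_\lambda/2)$ at the step $\mathrm{step}(\lambda)$ where it first appears as a ``new'' boundary. For $\lambda\in\LambdainFN$ these contributions combine with the $dy_\lambda$ from Wolpert to produce the desired $dL_j$ differential with no leftover factor; for $\lambda\in\Lambdainbeta$ the net $\sinh^2(y_\lambda/2)$ survives since $y_\lambda$ is not a free coordinate but is determined by $\vec\theta$; and for boundary components $\lambda\in\LambdaBC$ a single leftover $\sinh(y_\lambda/2)$ coincides with the $\sinh(x_j/2)$ appearing on the left-hand side of \eqref{e:maindet}. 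The diffeomorphism claim and the description of $\domain$ follow from an inverse construction, reconstructing each $\mathbf{P}_j$ from its three boundary lengths (themselves computable from $(\vec L,\vec\theta)$ via the hexagon identity) and gluing along $\Gamma$ with the twists read off from $\vec\theta$. The main obstacle is the precise geometric identification of twist parameters with algebraic lengths and the careful bookkeeping of sign conventions, in particular distinguishing U-turn from crossing indices in $\Theta$ and handling openings of the second way, where the orientation rules for the bars and simple portions differ.
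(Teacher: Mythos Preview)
Your overall strategy --- induction on $r$ along the filtration $\mathbf{S}_0\subset\cdots\subset\mathbf{S}_r$, combining Wolpert's formula with the hexagon identity at each step --- is exactly the paper's approach. However, there is a real gap at the heart of the Jacobian computation.

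You write that ``within $\mathbf{P}_j$, the bar $B_j$ of length $L_j$ is an orthogeodesic between two boundary components of $\mathbf{P}_j$'', and then apply the hexagon identity to produce the factor $\sinh L_j$ directly. This is incorrect in general: the endpoints of $\bar B_j$ lie on components of the multi-curve $\beta$, whereas the boundary components of $\mathbf{P}_j$ are curves $\Gamma_{\lambda_\pm}$ that may lie strictly inside $\mathbf{S}_{j-1}$. The orthogeodesic inside $\mathbf{P}_j$ between $\Gamma_{\lambda_+}$ and $\Gamma_{\lambda_-}$ has some length $T$, and the hexagon identity only gives you $\sinh(y_c/2)\,dy_c = 2\sinh T\,\sinh(y_{\lambda_+}/2)\sinh(y_{\lambda_-}/2)\,dT$. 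The bar $\bar B_j$ is a longer orthogeodesic, homotopic to a five-segment staircase $t_r^-\smallbullet\alpha_r^-\smallbullet T\smallbullet\alpha_r^+\smallbullet t_r^+$ passing through $\mathbf{S}_{j-1}$, and $L_j\neq T$.

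The paper handles this via an explicit Jacobian computation (its Lemma~\ref{l:goodformula1}, proved in \S\ref{s:goodformula1}): for fixed $\mathbf{S}_{j-1}$, the map $(\alpha_r^-,\alpha_r^+,T)\mapsto(\eps_r^-,\eps_r^+,L_r)$ satisfies $\sinh T\,dT\,d\alpha_r^-\,d\alpha_r^+ = \sinh L_r\,dL_r\,d\eps_r^-\,d\eps_r^+$, where $\eps_r^\pm$ are signed displacements along $\beta$ that differ from $\theta_r^\pm$ by functions of $Y_{\mathbf{S}_{j-1}}$. This is the mechanism by which the factors $\sinh(y_\lambda/2)$ for $\lambda\in\LambdainFN$ genuinely disappear and $\sinh L_j$ appears; your assertion of ``Jacobian of absolute value 1 between the $\alpha$'s and the corresponding $\theta_q$'s'' is not what happens, and the bookkeeping you describe for $\lambda\in\LambdainFN$ does not go through without this lemma.
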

We insist on the fact that this formula holds for any coordinate system $(\vec{L},\vec{\theta})$
obtained by opening the intersections of $\mathbf{c}$, meaning that intersection can be opened in the
first or second way. This yields different new sets of coordinates.
We postpone to \S \ref{s:domain} the description of the image~$\domain$ of the change of variable.

\begin{rem}
  Earlier in the paper we assumed that $\Sf$ was connected, but since the formula is multiplicative,
  it will also hold for disconnected $\Sf$.
\end{rem}

\subsubsection{Dirac variant}
\label{sec:dirac-variant}

The integral that we want to evaluate, \eqref{e:mainint}, actually relates to the measures 
$$ \prod_{j=1}^k x_{\vi} \delta(x_{\vi} - x_{\vi'}) { \d \mathrm{Vol}^{\mathrm{WP}}_{g_{\mathbf{S}}, n_{\mathbf{S}}}(\x, Y)},$$
where $\mathrm{m} = \bigsqcup_{j=1}^k \{\vi, \vi'\}$ is a perfect matching of
$\partial \mathbf{S} \setminus V$ for a subset $V$ of $\partial \mathbf{S} = \{1, \ldots, \n\}$.
Note that, by our conventions, $\partial \mathbf{S}$ is identified with $\LambdaBC$, so that $V$ can
be viewed as a subset of $\LambdaBC$.  Theorem~\ref{t:maindet} allows us to write these measures in
the new coordinates.

\begin{cor} \label{c:maindet-dirac} For any set $V \subseteq \partial \mathbf{S}$ and any perfect
  matching $\mathrm{m} = \bigsqcup_{j=1}^k \{\vi, \vi'\}$ of $\partial \mathbf{S} \setminus V$,
  \begin{equation}
    \label{e:changemeasure}
    \begin{split}
      & \prod_{j=1}^k x_{\vi} \delta(x_{\vi} - x_{\vi'})
        \d \mathrm{Vol}^{\mathrm{WP}}_{\g, \n}(\x, Y) \\
      & = 2^{2 |\Lambdabeta|-\n} \frac{\prod_{\lambda \in \Vbeta} \sinh \div{y_\lambda}
        \prod_{\lambda \in \Lambdainbeta} \sinh^2 \div{y_\lambda} }
        {\prod_{\lambda\in \VFN}\sinh \div{y_\lambda}}
        T_{V,\mathrm{m}}(\vec{L}, \vec{\theta}) \prod_{i=1}^r \sinh(L_i) \d^r \vec{L}\, \d^{2r} \vec{\theta}
    \end{split}
  \end{equation}
  where $y_\lambda = \ell_Y(\Gamma_\lambda)$ for $\lambda \in \Lambda$,
  $\Vbeta = V \cap \LambdaBCbeta$, $V^\Gamma = V \cap \LambdaBCFN$, and
  $T_{V,\mathrm{m}}(\vec{L}, \vec{\theta})$ is the expression of the distribution
 $$\frac{\prod_{\lambda \in \LambdaBCbeta \setminus \Vbeta} \sinh \div{y_\lambda}}
        {\prod_{\lambda\in \LambdaBCFN \setminus\VFN}\sinh \div{y_\lambda}}\prod_{j=1}^k  x_{\vi} \delta(x_{\vi} - x_{\vi'})$$
 in the new coordinates. 
\end{cor}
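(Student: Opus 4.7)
The corollary is obtained by direct algebraic manipulation of the formula from Theorem~\ref{t:maindet}, so my plan is to carefully track the $\sinh$ factors through the partitioning of the index set $\Lambda$, and then package the leftover factors into the distribution $T_{V,\mathrm{m}}$ via push-forward under the diffeomorphism~\eqref{eq:ch_var}.

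First I would isolate $\d \mathrm{Vol}^{\mathrm{WP}}_{\g, \n}(\x,Y)$ from \eqref{e:maindet}. The denominator $\prod_{j=1}^{\n} \sinh\div{x_j}$ is indexed by $\partial \mathbf{S}$, which under our identification corresponds to $\LambdaBC$; since each boundary curve either is or is not part of the multi-curve $\beta$, the partition $\LambdaBC = \LambdaBCbeta \sqcup \LambdaBCFN$ yields
\begin{equation*}
  \prod_{j=1}^{\n} \sinh\div{x_j} = \prod_{\lambda \in \LambdaBCbeta} \sinh\div{y_\lambda} \cdot \prod_{\lambda \in \LambdaBCFN} \sinh\div{y_\lambda}.
\end{equation*}
Symmetrically, the numerator factor $\prod_{\lambda \in \Lambdabeta}\sinh^2\div{y_\lambda}$ splits along $\Lambdabeta = \LambdaBCbeta \sqcup \Lambdainbeta$. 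Combining, one of the two copies of $\sinh\div{y_\lambda}$ for $\lambda \in \LambdaBCbeta$ cancels with the denominator, leaving
\begin{equation*}
  \d \mathrm{Vol}^{\mathrm{WP}}_{\g, \n}(\x,Y) = 2^r \frac{\prod_{\lambda \in \LambdaBCbeta}\sinh\div{y_\lambda}\, \prod_{\lambda \in \Lambdainbeta}\sinh^2\div{y_\lambda}}{\prod_{\lambda \in \LambdaBCFN}\sinh\div{y_\lambda}} \prod_{i=1}^r \sinh(L_i)\, \d^r \vec{L}\, \d^{2r} \vec{\theta}.
\end{equation*}

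Next I would split each of $\LambdaBCbeta$ and $\LambdaBCFN$ along $V$, i.e.\ write $\LambdaBCbeta = \Vbeta \sqcup (\LambdaBCbeta \setminus \Vbeta)$ and $\LambdaBCFN = \VFN \sqcup (\LambdaBCFN \setminus \VFN)$. The factors associated to $\Vbeta$ and $\VFN$ then assemble into the prefactor displayed in \eqref{e:changemeasure}, while the factors associated to $\LambdaBCbeta \setminus \Vbeta$ and $\LambdaBCFN \setminus \VFN$ are precisely those grouped with the Dirac masses in the definition of $T_{V,\mathrm{m}}$.

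Finally, I would deal with the Dirac masses. Multiplying both sides by $\prod_{j=1}^k x_{\vi}\delta(x_{\vi}-x_{\vi'})$ (a distribution supported on the smooth hypersurface $\{x_{\vi}=x_{\vi'}\}$ of $\cT_{\g,\n}^*$), the $\mathcal{C}^1$-diffeomorphism from Theorem~\ref{t:maindet} pushes this distribution forward to a well-defined distribution in the coordinates $(\vec{L}, \vec{\theta})$. By definition, this push-forward, together with the residual factor
\begin{equation*}
  \frac{\prod_{\lambda \in \LambdaBCbeta \setminus \Vbeta}\sinh\div{y_\lambda}}{\prod_{\lambda \in \LambdaBCFN \setminus \VFN}\sinh\div{y_\lambda}},
\end{equation*}
is exactly $T_{V,\mathrm{m}}(\vec{L}, \vec{\theta})$, yielding \eqref{e:changemeasure}. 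Since the proof is a purely bookkeeping rearrangement of \eqref{e:maindet}, I do not anticipate any serious obstacle; the only mildly delicate point is to verify that the push-forward of the Dirac mass through \eqref{eq:ch_var} is well-defined, which follows from the fact that the map is a $\mathcal{C}^1$-diffeomorphism and the hypersurfaces $\{x_{\vi}=x_{\vi'}\}$ are transverse to the fibers of the $(\vec{L},\vec\theta)$-coordinates.
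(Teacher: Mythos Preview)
Your proposal is correct and follows essentially the same approach as the paper: divide \eqref{e:maindet} by $\prod_{j=1}^{\n}\sinh\div{x_j}=\prod_{\lambda\in\LambdaBC}\sinh\div{y_\lambda}$, cancel the $\LambdaBCbeta$ factors against part of $\prod_{\lambda\in\Lambdabeta}\sinh^2\div{y_\lambda}$, and absorb the residual pieces into the definition of $T_{V,\mathrm{m}}$. Your treatment is in fact more detailed than the paper's two-line proof; the extra remark about the Dirac push-forward is harmless but not strictly needed, since $T_{V,\mathrm{m}}$ is \emph{defined} as the expression of that distribution in the new coordinates.
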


\begin{proof}
  We simply divide the previous expression by
  $\prod_{j=1}^{\n} \sinh \div{x_j} = \prod_{\lambda \in \LambdaBC} \sinh \div{y_\lambda}$, and
  simplify the factors $\prod_{\lambda \in \LambdaBCbeta} \sinh \div{y_\lambda}$ which appear
  simultanously at the numerator and denominator of the resulting fraction.
\end{proof}

We shall never need to develop the  full explicit expression of $T_{V,\mathrm{m}}(\vec{L}, \vec{\theta})$.

Note that, in the expression above, the lengths $y_\lambda$ for
$\lambda \in \LambdaBC \cup \Lambdabeta$ should all be expressed in terms of the variables
$(\vec{L}, \vec{\theta})$. Obtaining explicit expressions will be the goal of \S
\ref{s:lengthf}. The expression of $y_\lambda$ for $\lambda \in \Lambdabeta$ is just a sum of the
variables $\theta_q$, but the expressions for $y_\lambda$ when $\lambda \in \LambdaFN$ will require more
work.

Whilst the lengths $y_\lambda$ for $\lambda \in \LambdainFN$ do not appear explicitely in the expression
\eqref{e:changemeasure}, they are implicitely present in the domain of definition $\domain$ of the
change of variable, as we will see in Proposition~\ref{p:super}.

\subsubsection{Proof of Theorem \ref{t:maindet}}
\label{sec:proof-theor-reft:m}

 The proof goes by induction on the number of intersections $r$.
  
 \begin{proof}[The case $r=1$] The case $r=1$ corresponds to three possible situations,
     represented in \cref{fig:jacobian_1}: ${\mathbf{c}}$ is either a figure-eight, opened the first
     or second way, or ${\mathbf{c}}=(c_1, c_2)$ where $c_1$ and $c_2$ are two curves intersecting
     once, filling a once-holed torus.

        \begin{figure}[h!]
     \begin{subfigure}[b]{0.33\textwidth}
    \centering
    \includegraphics[scale=0.9]{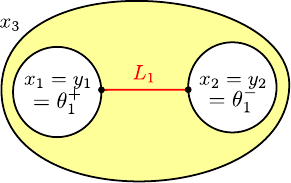}
    \caption{Figure-eight ($1$st way).}
    \label{fig:pp}
  \end{subfigure}%
     \begin{subfigure}[b]{0.33\textwidth}
    \centering
    \includegraphics[scale=0.9]{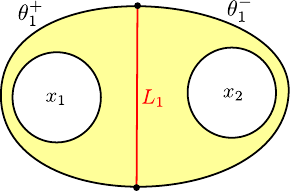}
    \caption{Figure-eight ($2$nd way).}
    \label{fig:pp2}
  \end{subfigure}%
     \begin{subfigure}[b]{0.33\textwidth}
    \centering
    \includegraphics[scale=0.9]{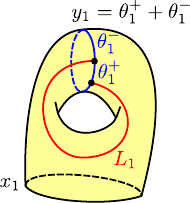}
    \caption{Once-holed torus.}
    \label{fig:jacobian_oht}
  \end{subfigure}%
    \caption{The three cases where $r=1$. }
    \label{fig:jacobian_1}
  \end{figure}
 
  The first case, of a figure eight opened the first way, is represented in Figure \ref{fig:pp}.
  The set of boundary components of the pair of pants decomposition is
  $\Lambda = \partial \mathbf{S} = \{1, 2, 3\}$, and $\Lambdabeta = \{1, 2\}$ so that
  $x_1=y_1=\theta_1^+$ and $x_2=y_2=\theta_1^-$. Noting that by cutting the pair of pants along its
  orthogeodesic $B_1$ we obtain a convex right-angled hexagon, the classic trigonometric formula
  \cite[Theorem 2.4.1 (i)]{buser1992} yields
  $$\cosh (L_1)=\frac{\cosh \div{x_1} \cosh \div{x_2}+\cosh \div{x_3}}{\sinh \div{x_1}\sinh \div{x_2}},$$
  hence $\sinh \div{x_3} \d x_3 =2  \sinh \div{x_1}\sinh \div{x_2} \sinh(L_1) \d L_1$, or equivalently
  \begin{align*}
   & 2^3 \sinh \div{x_1} \sinh \div{x_2}\sinh \div{x_3} \d x_1\d x_2 \d x_3 \\
   & = 2^{4} \sinh^2 \div{y_1}\sinh^2 \div{y_2} \sinh(L_1) \d L_1 \d\theta_1^{-} \d\theta_1^{+}
  \end{align*}
  as announced.

  The second case, the figure-eight opened the second way, is represented in Figure~\ref{fig:pp}. In
  this situation, $\Lambda = \{1,2,3\}$ once again, but now $\Lambdabeta = \{3\}$, and in particular
  $y_\lambda = x_3 = \theta_1^++\theta_1^-$. Cutting the two halves of the pair of pants in
  right-angled pentagons using orthogeodesics, we obtain
  \begin{equation}
    \label{eq:lengths_pop_second_way}
    \begin{cases}
      \cosh \div{x_1} = \sinh \div{\theta_1^+} \sinh \div{L_1} \\
      \cosh \div{x_2} = \sinh \div{x_3-\theta_1^+} \sinh \div{L_1}.
    \end{cases}
  \end{equation}
  We then compute the Jacobian of the change of variable $(x_1, x_2) \mapsto (\theta_1^+,L_1)$,
  where $x_3$ is fixed, by calculating a $2\times 2$ determinant. We obtain
  \begin{equation}
    \label{eq:det2_pop}
    2 \sinh \div{x_{1}} \sinh \div{x_{2}} \d x_{1} \d x_{2}
     = \sinh \div{x_3} \sinh(L_1) \d\theta_1^+ \d L_1
   \end{equation}
   which leads to the claim.

  In the last case, represented in Figure \ref{fig:jacobian_oht}, the surface $\mathbf{S}$ is a
  once-holed torus, $\beta$ is a curve on $\mathbf{S}$, and $\Lambda$ has two components, $\beta$
  and $\partial \mathbf{S}$. The length of $\beta$ is $y_1 = \theta_1^+ + \theta_1^-$, and we notice
  that the parameter $\theta_1^+$ can be chosen to be a twist parameter. The Weil--Petersson measure
  for a once-holed torus of boundary length $x_1$ can then be written
 $$\d \mathrm{Vol}^{\mathrm{WP}}_{1, 1, x_1}(Y)= \d y_1 \d\theta_1^{+}
 = \d\theta_1^{-} \d\theta_1^{+}.$$ We can conclude by observing that
 $\cosh \div{x_1}=\cosh^2 \div{y_1}-\sinh^2 \div{y_1} \cosh(L_1)$, as in the pair of pants case
 above, and hence
 $$\sinh \div{x_1} \d x_1 = 2 \sinh^2 \div{y_1} \sinh(L_1)\d L_1.$$
\end{proof}

Let us now perform the induction.

\begin{proof}[Proof of the induction of Theorem \ref{t:maindet}]
  Let $r \geq 2$, and $\mathbf{c}$ be a generalized eight with $r$ self-intersections filling a
  surface $\mathbf{S}$, with a choice of opening for each intersection point. Let ${\mathbf{c}}'_0$
  denote the multi-loop obtained by removing the last intersection point from ${\mathbf{c}}$ (as
  described in \cref{rem:remove_int_eight}). Then, the intersection $\mathbf{c}'$ of $\curve_0'$ with
    $\Sf_{r-1}$ is a generalized eight with $r-1$ intersections, filling the
    surface~$\mathbf{S}_{r-1}$. We assume that the proposition is true for the surface
  $\mathbf{S}_{r-1}$ and the multi-loop $\mathbf{c}'$ (with intersections opened as they are for
  $\mathbf{c}$), and want to prove it for $\mathbf{S}=\mathbf{S}_{r}$ by attaching the pair of pants
  $\mathbf{P}_{r}$.

  If $\mathbf{S}_{r}$ is a disjoint union of once-holed tori, then the property is a direct
  consequence of the result for $r=1$, thanks to its multiplicativity. We hence assume that it
  is not the case. We therefore have two cases to consider, Case (a) and Case (b).

  Let $\vec{L}'$ and $\vec{\theta}'=(\theta_1'^{\pm}, \ldots, \theta_{r-1}'^{\pm})$ be the
  parameters associated to the surface $\mathbf{S}_{r-1}$ and to ${\mathbf{c}}'$, as constructed in
  \S \ref{s:coord}.  We observe that $\vec{L}'=(L_1, \ldots, L_{r-1})$ are the lengths of
  $\overline{B}_1, \ldots, \overline{B}_{r-1}$, and in particular $\vec{L}'$ is the same as
  $\vec{L}$ with the last component removed.

  The parameters $(\theta_q')_{q \in \{1, \ldots, r-1\} \times \{\pm\}}$, are defined as the lengths
  of simple portions $\overline{\cI}_q'$ contained in $\beta$ and delimited by the endpoints of the
  bars $\overline{B}_1, \ldots, \overline{B}_{r-1}$.  We can therefore easily relate $\vec{\theta}'$
  and $\vec{\theta}$: among the $2(r-1)$ segments $\overline{\cI}_q'$, two of them contain the
  endpoints of $\overline{B}_r$. All the other $\overline{\cI}_{q}'$ are also intervals
  $\overline{\cI}_q$.  As a consequence, for $k\in\{1, \ldots, r-1\}$, $\eps=\pm$, we have
  $\theta_k^{\eps}=\theta_k'^{\eps}$ for all except two values of $(k, \eps)$, namely
  $(n_+, \eps_+), (n_-, \eps_-)$, for which we have the simple relations
  \begin{align}\label{e:sumtheta}\theta_{n_+}'^{\eps_+}= \theta_{n_+}^{\eps_+} + \theta_{r}^{+}
    \qquad \text{and}
    \qquad \theta_{n_-}'^{\eps_-}= \theta_{n_-}^{\eps_-} + \theta_{r}^{-}.\end{align}

  We now proceed differently depending on the case to consider, (a) or (b).
  
  \begin{proof}[Case (a)]
    In this case, $\mathbf{S}_{r}$ is obtained from $\mathbf{S}_{r-1}$ by gluing $\mathbf{P}_r$ to
    two of its boundary components. Let $\lambda_\pm \in \Lambda$ denote the indices of the two
    boundary components of $\mathbf{S}_{r-1}$ on which we glue the pair of pants $\mathbf{P}_r$,
    which satisfy $\mathrm{step}(\lambda_\pm)< r$. By Remark \ref{r:convenient}, these two boundary
    components of $\mathbf{S}_{r-1}$ are replaced by the $\n$-th boundary component of
    $\mathbf{S} = \mathbf{S}_{r}$ when ${\bP}_{r}$ is attached to $\mathbf{S}_{r-1}$.

    Because the endpoints of the orthogeodesic $\overline{B}^+_r$ are on the multi-loop $\beta$,
    there exist $i_\pm$ such that the bar $\overline{B}_r^+$ goes from the component $\beta_{i_-}$
    of $\beta$ to the component $\beta_{i_+}$ of $\beta$. It is homotopic (with endpoints gliding
    along $\beta$) to a succession of $5$ orthogonal segments of respective lengths
    $t_r^-, \alpha_r^-, T, \alpha_r^+, t_r^+$, as shown on Figure \ref{fig:jacobian_a}. More precisely,
    we can lift the four geodesics $\beta_{i_-}$, $\Gamma_{\lambda_-}$, $\Gamma_{\lambda_+}$ and
    $\beta_{i_+}$ to the hyperbolic plane, and orient them so that they are aligned and the bar
    $\overline{B}^+_r$ arrives on the right of $\beta_{i_+}$. Then, we define:
    \begin{itemize}
    \item $H^\pm$ to be the orthogeodesic between $\beta_{i_\pm}$ and $\Gamma_{\lambda_\pm}$, and
      $H$ between $\Gamma_{\lambda_-}$ and $\Gamma_{\lambda_+}$;
    \item $t_r^\pm$ and $T$ to be the respective (positive) lengths of $H^\pm$ and $H$;
    \item the quantities $\alpha_r^+$ and $\alpha_r^-$ to denote the algebraic distance between the
      endpoints of $H^+$ and $H$ along $\Gamma_{\lambda_+}$ and between the endpoints of $H$ and
      $H^-$ along $\Gamma_{\lambda_-}$ respectively.
    \end{itemize}
    It is important in our discussion to remark that $H^\pm$ are contained in $\mathbf{S}_{r-1}$, so
    that their lengths depend only on $Y_{\mathbf{S}_{r-1}}$, the restriction of the metric $Y$ to
    $\mathbf{S}_{r-1}$.
    \begin{figure}[h!]
      \includegraphics{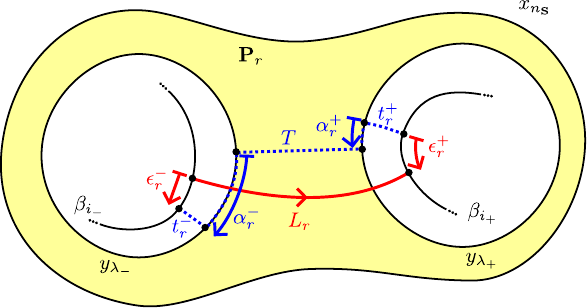}
      \caption{The geometric construction in Case (a). The black dots symbolize the right-angles
        between the different segments. }
      \label{fig:jacobian_a}
    \end{figure}

    In order to express the Weil--Petersson measure for $\mathbf{S}_r$, and relate it to the
    Weil--Petersson measure for $\mathbf{S}_{r-1}$, we need to define twist parameters along the
    glued components $\Gamma_{\lambda_\pm}$.  We choose them to be the algebraic lengths
    $\alpha_r^\pm$.  As a consequence, the expression of the Weil--Petersson measure in
    Fenchel--Nielsen coordinates recalled in \S \ref{s:FNc} implies the factorization
    \begin{align}\label{e:newold}
      \d \mathrm{Vol}^{\mathrm{WP}}_{\g, \n, \x}(Y) =
      \d\alpha_r^+\d\alpha_r^- \d y_{\lambda_+} \d y_{\lambda_-}
      \d \mathrm{Vol}^{\mathrm{WP}}_{\g-1, \n+1, \x'}(Y_{\mathbf{S}_{r-1}}),
    \end{align}
    where $\x = (x_1, \ldots, x_{\n})$ and
    $\x' = (x_1, \ldots, x_{\n-1}, y_{\lambda_+}, y_{\lambda_-})$ correspond to the respective
    length-vectors of $\mathbf{S}_r$ and $\mathbf{S}_{r-1}$.

    % To compare both sides of \eqref{e:newold}, the surface $\mathbf{S}_{r-1}$ has signature
    % $(g_{\mathbf{S}_{r-1}}, n_{\mathbf{S}_{r-1}})$, and the boundary lengths
    % $(x_{j})_{j=1,\ldots, \n}$ on the left-hand side have been replaced by
    % $(x_{j})_{j=1,\ldots, \n-1}$ and the two new components $y_{j, k}, y_{j', k'}$ on the
    % right-hand side.

    We can now, as in the case $r=1$, use the classic trigonometric formula for convex right-angled
    hexagons \cite[Theorem 2.4.1 (i)]{buser1992}, which yields
    \begin{align}\label{e:smile} \cosh(T)
      =\frac{\cosh \div{y_{\lambda_+}}\cosh \div{y_{\lambda_-}}+\cosh \div{x_{\n}}}
      {\sinh \div{y_{\lambda_+}}\sinh \div{y_{\lambda_-}}},
    \end{align}
    which implies that the map $x_{\n}\mapsto T$ is a $\mathcal{C}^1$-diffeomorphism onto its image,
    and
    \begin{equation}
      \label{eq:ind_WP_a_1}
      \sinh \div{x_{\n}} \d x_{\n} = 2 \sinh\div{y_{\lambda_+}}\sinh\div{y_{\lambda_-}} \sinh(T) \d T.
    \end{equation}
    It follows that
    $2^{\n}\prod_{j=1}^{\n} \sinh \div{x_j} \d \mathrm{Vol}^{\mathrm{WP}}_{\g, \n}(\x, Y) $ is equal to
    \begin{align*}
       \sinh(T) \d T \d\alpha_r^+\d\alpha_r^- \,
      2^{\n+1} \prod_{j=1} ^{\n -1} \sinh \div{x_j} \sinh \div{y_{\lambda_+}} \sinh \div{y_{\lambda_-}} 
      \d \mathrm{Vol}^{\mathrm{WP}}_{\g-1, \n+1}(\x', Y_{\mathbf{S}_{r-1}})
    \end{align*}
    which, using the induction hypothesis, implies that
    \begin{equation}
      \begin{split}
        & 2^{\n}\prod_{j=1}^{\n} \sinh \div{x_j}
          \d \mathrm{Vol}^{\mathrm{WP}}_{\g, \n}(\x, Y) \\
        & =\label{e:last}
          {2^{2 |\Lambdabeta|}} \sinh(T) \d T \d \alpha_r^+\d \alpha_r^-
          \prod_{\lambda \in \Lambdabeta} \sinh^2 \div{y_\lambda}
          \prod_{i=1}^{r-1} \sinh(L_i)
          \d L_i
          \prod_{k=1}^{r-1}
          \d\theta_k'^{+} \d\theta_k'^{-}
      \end{split}
    \end{equation}
    where we use the observation that the length-vector $\vec{L}'$ is equal to
    $(L_1, \ldots, L_{r-1})$.

    Let us now denote by $\eps_r^+$ and $\eps^-_r$ the algebraic distance from the endpoint of $H^+$
    to the endpoint of $\overline{B}_r$ along $\beta_{i_+}$, and from the endpoint of
    $\overline{B}_r$ to the endpoint of $H^-$ along $\beta_{i_-}$ respectively, as represented in
    Figure \ref{fig:jacobian_a}. Let us temporarily admit the following formula (which will be
    checked in \S \ref{s:goodformula1}):
    \begin{lem} \label{l:goodformula1} Given a fixed metric $Y_{\mathbf{S}_{r-1}}$, the map
      $ (\alpha_r^-, \alpha_r^+, T)\mapsto(\eps_r^-, \eps_r^+, L_r)$ is a
      $\mathcal{C}^1$-diffeomorphism onto its image, and
      \begin{align}\label{e:epsalpha}
        \sinh(T) \d T  \d\alpha_r^-\d\alpha_r^+ =\sinh(L_r) \d L_r
        \d\eps_r^-
        \d\eps_r^+.
      \end{align}
    \end{lem}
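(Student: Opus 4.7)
\emph{Proof plan.} The plan is to lift the configuration to the universal cover $\mathbb{H}^2$ and verify the identity by hyperbolic trigonometry. We fix the lift of $\beta_{i_-}$ once and for all; since $Y_{\mathbf{S}_{r-1}}$ is fixed, this induces canonical lifts of $\Gamma_{\lambda_-}$ at perpendicular distance $t_r^-$ and of the foot $P_2$ of $H^-$. The triple $(\alpha_r^-, \alpha_r^+, T)$ then determines the remaining data step by step: $P_3$ sits at algebraic distance $\alpha_r^-$ from $P_2$ along $\Gamma_{\lambda_-}$; the perpendicular of length $T$ from $P_3$ gives $P_4$ and the lift of $\Gamma_{\lambda_+}$; $P_5$ sits at algebraic distance $\alpha_r^+$ from $P_4$ along $\Gamma_{\lambda_+}$; and the perpendicular of length $t_r^+$ from $P_5$ yields $P_6$ and the lift of $\beta_{i_+}$. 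The common orthogeodesic of the aligned disjoint geodesics $\beta_{i_-}$ and $\beta_{i_+}$ then produces $(\overline{B}_r, Q_1, Q_2)$, and hence the triple $(L_r, \eps_r^-, \eps_r^+)$. The construction is invertible by reversing the steps: from $(L_r, \eps_r^-)$ one reconstructs the lift of $\beta_{i_+}$, then $P_6$ via $\eps_r^+$, and backpropagates through the fixed lengths $t_r^\pm$ and $y_{\lambda_\pm}$. This shows the map is a $\mathcal{C}^1$-diffeomorphism onto an open subset $\mathcal{D}$ of $\R^3$.

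For the Jacobian identity, I propose to compute $\det\partial(\eps_r^-, \eps_r^+, L_r)/\partial(\alpha_r^-, \alpha_r^+, T)$ via a chain rule, exploiting the fact that varying $\alpha_r^\pm$ amounts to rigid hyperbolic translations. Specifically, increasing $\alpha_r^-$ by an infinitesimal $\delta$ applies a translation of magnitude $\delta$ along $\Gamma_{\lambda_-}$ to the entire downstream portion of the configuration (everything constructed after $P_3$); likewise for $\alpha_r^+$ and $\Gamma_{\lambda_+}$. Since these translations are isometries of $\mathbb{H}^2$ preserving $\beta_{i_-}$, the variations of $\eps_r^\pm$ and $L_r$ are computable in terms of the angles that the orthogeodesic $\overline{B}_r$ makes with $\Gamma_{\lambda_\pm}$ via the standard Lambert-quadrilateral identities for the projection of a segment onto an aligned geodesic. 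The variation with respect to $T$ (at fixed $\alpha_r^\pm$) moves $\Gamma_{\lambda_+}$ and everything downstream perpendicularly away from $\Gamma_{\lambda_-}$; the induced change in $L_r$ is governed by the right-angled polygon identities for four aligned geodesics connected by orthogeodesics, which produce the factor $\sinh(T)/\sinh(L_r)$ up to cancellations with the angular factors from the $\alpha$-variations. The whole computation then collapses to the identity \eqref{e:epsalpha}.

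The main technical obstacle will be the detailed bookkeeping of signs, angles, and orientations along the eight-sided broken path formed by the concatenation $H^-, \Gamma_{\lambda_-}, H, \Gamma_{\lambda_+}, H^+, \beta_{i_+}, \overline{B}_r, \beta_{i_-}$, all the more so as the identity must hold regardless of the choice (first or second way) made when opening the last intersection. Conceptually, however, both sides of \eqref{e:epsalpha} should be viewed as the same canonical volume form on the three-dimensional space of ``aligned disjoint geodesic pairs endowed with a marked point'', parameterized through distinct reference orthogeodesics ($H$ versus $\overline{B}_r$); the identity then reflects the invariance of this volume form under change of parameterization, explaining the remarkably clean form of the final expression despite the intricacy of the intermediate trigonometric manipulations.
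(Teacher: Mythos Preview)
Your plan is plausible but significantly more laborious than the paper's actual proof, and you have correctly identified its weak point without resolving it. The paper avoids the direct $3\times 3$ Jacobian computation entirely by a \emph{factorization through an intermediate three-geodesic configuration}. Concretely, it first proves a clean two-variable identity (Lemma~\ref{p:jacob}): for three aligned geodesics with fixed middle-to-top distance $t$, the map $(\alpha,T)\mapsto(\eps,L)$ satisfies $\sinh(T)\,\d T\,\d\alpha=\sinh(L)\,\d L\,\d\eps$, established by direct manipulation of the right-angled hexagon formulae \eqref{e:cosinus}--\eqref{e:sinus}. It then obtains the four-geodesic statement (Lemma~\ref{c:det2}, which is exactly Lemma~\ref{l:goodformula1}) by applying this three-geodesic lemma twice---once to $(\beta_{i_-},\Gamma_{\lambda_+},\beta_{i_+})$ and once to $(\Gamma_{\lambda_+},\Gamma_{\lambda_-},\beta_{i_-})$---composing through the intermediate orthogeodesic length $\tilde T^+=\ell(H_{13})$. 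The two $\sinh(\tilde T^+)$ factors cancel and the identity drops out with no angular bookkeeping at all.

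Your rigid-translation idea for the $\alpha$-derivatives is sound, and the volume-form interpretation at the end is morally the right explanation for why the answer is so clean; but turning that into an actual computation of the $3\times 3$ determinant would require precisely the sign-and-angle bookkeeping you flag as the main obstacle. The paper's two-step decomposition is the device that makes this obstacle disappear: each step is a $2\times 2$ determinant with one fixed length parameter, and the intermediate variable is chosen so that the composition is transparent. I would recommend you rework your argument along these lines rather than pursue the direct computation.
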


    We can now remark that
    \begin{align}\label{e:alphatheta} \theta_r^+ + \eps_r^+ = \fn(Y_{\mathbf{S}_{r-1}}) \qquad
      \mbox{and} \qquad \theta_r^- - \eps_r^- = \fn(Y_{\mathbf{S}_{r-1}}).\end{align}
    Indeed, for instance, the first quantity is the signed
    distance (measured along $\beta_{i_+}$) between the extremity of $H^+$ lying in $\beta_{i_+}$, and
    one of the bars $\overline{B}_{m_+}$ with $m_+<r$. See Figure \ref{fig:theta_casea}.

  \begin{figure}[h]
    \centering \includegraphics{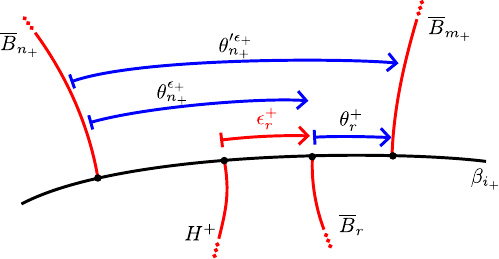}
    \caption{The relations between the algebraic lengths along~$\beta_{i_+}$.}
    \label{fig:theta_casea}
  \end{figure}

  The relation \eqref{e:alphatheta} implies that in \eqref{e:epsalpha}, we can change variables
  $(\eps_r^-, \eps_r^+)\mapsto (\theta_r^-, \theta_r^+)$ and replace
  $\d\eps_r^- \d\eps_r^+$ by $\d\theta_r^- \d\theta_r^+ $.  Our expression \eqref{e:last} then
  becomes
  $$ 2^r \sinh(L_r) \d L_r
  \d\theta_r^- \d\theta_r^+ \prod_{\lambda \in \Lambdabeta} \sinh^2 \div{y_\lambda}
  \prod_{i=1}^{r-1} \sinh(L_i) \d L_i \prod_{k=1}^{r-1} \d\theta_k'^{+} \d\theta_k'^{-}.$$

  Now recall the relations between $\theta_k^\epsilon$ and $\theta_k'^\epsilon$. For $k \leq r-1$
  and $\epsilon = \pm$, those parameters are equal, except from two sets of indices, for which these
  values differ by a translation by \eqref{e:sumtheta}. Hence, our final expression for the measure
  is the announced one.
\end{proof}

  \begin{rem}\label{e:special}
    Part of this discussion is irrelevant when $\beta_{i_-}= \Gamma_{\lambda_-}$, because in this
    case, $H^-$ and $\alpha_r^-$ are ill-defined. The previous discussion has to be modified as
    follows.
    \begin{itemize}
    \item If $\beta_{i_-}$ is a boundary curve, then there is no twist parameter attached to it and
      there is no bar $\overline{B}_k$ arriving at $\beta_{i_-}$ on the other side of
      $\overline{B}_r$: we have $\theta_r^-=x_{i_-}=y_{i_-}$.
    \item If $\beta_{i_-}$ is not a boundary curve, then we can take directly $\theta_r^-$ as twist
      parameter, instead of the ill-defined $\alpha_r^-$.
    \end{itemize}
    In all cases, the final outcome of the calculation is the same, with less intermediate steps.
    Similar remarks can be made when $\beta_{i_+}= \Gamma_{\lambda_+}$.  We skip details specific to
    these cases, similar and simpler than the one we treated.
  \end{rem}

  \begin{proof}[Case (b)] In this case, we attach $\mathbf{P}_r$ to one boundary component
    $\Gamma_{\lambda}$ of $\mathbf{S}_{r-1}$ (with $\mathrm{step}(\lambda)<r$), and get the two new
    boundary components of $\mathbf{S}_{r}$ labelled $\n$ and $\n-1$.

  \begin{figure}[h]
    \centering \includegraphics{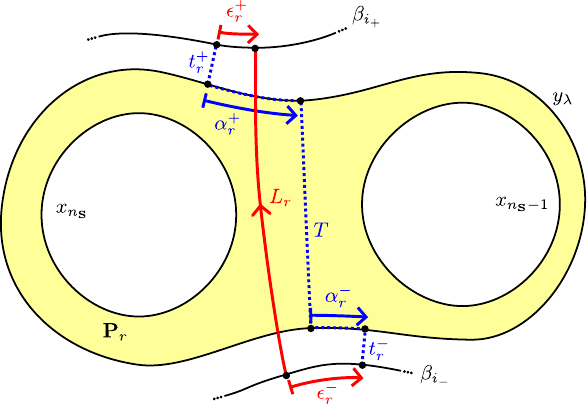}
    \caption{The geometric construction in Case (b).}
    \label{fig:jacobian_b}
  \end{figure}
 
  The situation is quite similar to Case (a) with a small difference, namely the fact that
  $\lambda_+=\lambda_-$; we therefore denote them $\lambda$. There exist $i_\pm$ such that the
  orthogeodesic $\overline{B}_r^+$ goes from $\beta_{i_-}$ to $\beta_{i_+}$. It is homotopic to a
  succession of $5$ orthogonal segments of respective lengths $t_r^-, \alpha_r^-, T, \alpha_r^+, t_r^+$,
  as shown on Figure \ref{fig:jacobian_b}.  Note that, when lifting Figure \ref{fig:jacobian_a} and
  \ref{fig:jacobian_b} in the universal cover, we obtain the same picture, the only difference being
  that the lifts of $\Gamma_{\lambda_\pm}$ now are two different lifts of the same periodic geodesic
  $\Gamma_{\lambda}$. We therefore define the orthogeodesics $H^\pm$ and $H$, as well as the lengths
  $t_r^\pm$, $\alpha_r^\pm$ and $T$ as in  Case (a).

  We choose the twist parameter to be $\alpha_r^+$.  As a consequence, the expression of the
  Weil--Petersson measure in Fenchel--Nielsen coordinates implies the factorization
  \begin{align}
    \d \mathrm{Vol}^{\mathrm{WP}}_{\g, \n, \x}(Y)
    =  \d\alpha_r^+  \d y_{\lambda}
    \d \mathrm{Vol}^{\mathrm{WP}}_{\g, \n -1, \x'}(Y_{\mathbf{S}_{r-1}}) 
  \end{align}
  where $\x' = (x_1, \ldots, x_{\n-2}, y_{\lambda})$.   The rest of the proof goes along the
    same lines as in Case (a), now using the formulae \eqref{eq:lengths_pop_second_way} and the
    determinant calculation \eqref{eq:det2_pop}.  
  
  % We can now use the following formulas:
  % \begin{align}
  %   \label{e:alpha1}
  %   & \cosh \div{x_{\n}}=\sinh \div{\alpha} \sinh \div{T} \\
  %   \label{e:alpha2}
  %   & \cosh \div{x_{\n-1}}=\sinh \div{y_{\lambda}-\alpha} \sinh \div{T},
  % \end{align}
  % where $\alpha$ is the distance between the two endpoints of the orthogeodesic $H$ measured along
  % $\Gamma_{\lambda}$, as shown on Figure \ref{fig:alpha}.
   
  %  \begin{figure}[h!]
  %    \includegraphics{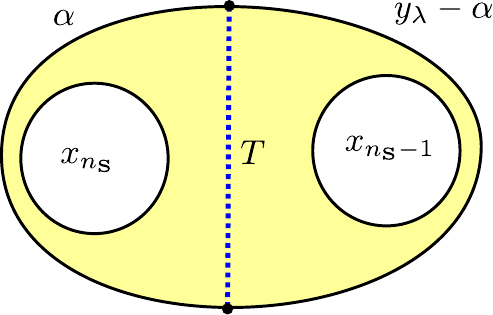}
  %    \caption{The length $\alpha$ in the pair of pants $\mathbf{P}_r$.}
  %    \label{fig:alpha}
  %  \end{figure}

 \end{proof}

 Finally, to conclude, in all cases, the fact that the map
 $(\x, Y)\in \cT^*_{\g, \n}\mapsto (\vec{L}, \vec{\theta})$ is a $\mathcal{C}^1$-diffeomorphism onto
 its image can also obtained by induction, since at each step we checked that the maps were
 $\mathcal{C}^1$-diffeomorphisms onto their images.
\end{proof}

\subsubsection{Proof of Lemma \ref{l:goodformula1}} \label{s:goodformula1} To finish this section,
there remains to prove Lemma \ref{l:goodformula1}. We also start investigating the range of
definition of $\vec{L}$ and $\vec{\theta}$.

Let us consider a family of $M$ aligned geodesics $\gamma_1, \ldots, \gamma_M$ on the hyperbolic
plane, such that, for all $i$, $\gamma_{i+1}$ is on the left of $\gamma_i$ (for now, we will only
take $M=3$ and $4$, but more general situations will arise in \S \ref{s:lengthc}). For indices
$i \neq j$, let $H_{ij}$ denote the orthogeodesic between $\gamma_i$ and $\gamma_j$ and $z_{ij}$ the
endpoint of $H_{ij}$ lying on $\gamma_j$.
Let us first prove one intermediate lemma in a three-geodesic configuration, illustrated in Figure
\ref{fig:3geod}.

\begin{lem}\label{p:jacob}
  Let $M=3$. We denote as $L$, $t$, $T$ the following orthogeodesic lengths
  \begin{equation*}
    L = \ell(H_{13})
    \qquad t = \ell(H_{23})
    \qquad T = \ell(H_{12}) 
  \end{equation*}
  and $\eps$, $\alpha$ the algebraic distances
  \begin{equation*}
    \eps = \Dist(z_{23},z_{13}) \qquad \qquad \alpha = \Dist(z_{32},z_{12}).
  \end{equation*}
  Then, for fixed $t>0$, the map
  \begin{align*}
    \psi_{t}:
    \begin{cases}
      \R \times \R_{>0} & \rightarrow \R \times \R_{>0} \\
      (\alpha, T) & \mapsto(\eps, L)
    \end{cases}
  \end{align*}
  is a $\mathcal{C}^1$-diffeomorphism onto the set of real numbers
  $(\eps, L)$ such that
  \begin{equation}
    \label{eq:def_var_3}
    \begin{cases}
      \cosh(t) \cosh(L)-\cosh(\eps) \sinh(t) \sinh(L)> 1 \\
      L> t,
    \end{cases}
  \end{equation}
  and its Jacobian can be expressed as
  \begin{align}\label{e:det}\sinh(T) \d T
    \d\alpha = \sinh(L) \d L \d\eps.
  \end{align}
\end{lem}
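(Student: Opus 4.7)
The plan is to recognize the configuration as a right-angled hexagon in the hyperbolic plane and exploit its trigonometric identities. First, I would observe that the three aligned geodesics $\gamma_1,\gamma_2,\gamma_3$ together with the three orthogeodesics $H_{12}, H_{23}, H_{13}$ bound a right-angled hexagon $\mathcal{H} \subset \IH^2$ whose six consecutive (signed) side lengths read $\delta, T, \alpha, t, \eps, L$, where $\delta := \Dist(z_{21}, z_{31})$ is the algebraic distance along $\gamma_1$ between the endpoints of $H_{12}$ and $H_{13}$ lying on $\gamma_1$, with opposite pairs $(\delta, t)$, $(T, \eps)$, $(\alpha, L)$. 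The standard right-angled hexagon trigonometric identities \cite[Theorem~2.4.1]{buser1992}, extended by analytic continuation to signed values of $\alpha, \eps, \delta$, then provide smooth algebraic relations between these six quantities; in particular, one obtains $\cosh(\delta) = \cosh(t)\cosh(L) - \cosh(\eps)\sinh(t)\sinh(L)$, together with a companion identity expressing $\cosh(\alpha)$ in terms of $(t, T, \delta)$.

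From these identities I would deduce that $\psi_t$ is a $\mathcal{C}^1$-diffeomorphism onto the set described in \eqref{eq:def_var_3}. For fixed $t$, the identities invert (either by the implicit function theorem or by explicit algebra) to give $(\eps, L)$ smoothly from $(\alpha, T)$ and vice versa. The two conditions admit clean geometric interpretations: $L > t$ encodes the fact that $\gamma_2$ lies strictly between $\gamma_1$ and $\gamma_3$ in the aligned triple, which forces the outer common perpendicular to exceed the inner one; and the inequality $\cosh(t)\cosh(L) - \cosh(\eps)\sinh(t)\sinh(L) > 1$ is exactly the requirement $\cosh(\delta) > 1$, i.e.\ that the hexagon is non-degenerate (equivalently $T > 0$). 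Conversely, any $(\eps, L)$ satisfying both conditions corresponds to a valid aligned configuration.

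For the Jacobian identity $\sinh(T)\, \d T\, \d\alpha = \sinh(L)\, \d L\, \d\eps$, the cleanest approach is via invariance of measures on the space of geodesics. The set of oriented geodesics $\gamma_1 \subset \IH^2$ forming an aligned triple with the fixed pair $(\gamma_2,\gamma_3)$, with $\gamma_2$ strictly between, is a $2$-dimensional manifold carrying a canonical $\mathrm{PSL}_2(\R)$-invariant measure, unique up to a multiplicative constant. Expressed in coordinates attached to $\gamma_3$ this measure takes the form $\sinh(L)\,\d L\,\d\eps$, while in coordinates attached to $\gamma_2$ it takes the form $\sinh(T)\,\d T\,\d\alpha$. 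A normalization check in the limit $T \to +\infty$ (in which $L - T \to t$) pins the multiplicative constant down to exactly $1$, yielding the desired equality. A purely computational alternative is to differentiate the hexagon identity for $\cosh(\delta)$ implicitly with respect to both $(\alpha, T)$ and $(\eps, L)$ at fixed $t$, and to simplify using the companion identities.

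The hardest part will be the careful bookkeeping of signs in the hexagon identities: \cite[Theorem~2.4.1]{buser1992} is classically stated for convex right-angled hexagons with positive side lengths, and extending it to the signed case requires both analytic continuation and a clear sign convention for the orientation of $\mathcal{H}$ (especially when $\delta$, $\alpha$ or $\eps$ vanishes or changes sign). Once this is pinned down, the diffeomorphism property is purely algebraic, and the Jacobian formula follows either from the invariant-measure interpretation or from implicit differentiation.
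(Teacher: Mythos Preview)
Your approach is largely correct and broadly parallel to the paper's, but with one interesting divergence and one slip worth flagging.

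First, the slip: in your hexagon, the quantity $\cosh(t)\cosh(L)-\cosh(\eps)\sinh(t)\sinh(L)$ equals $\cosh(T)$, not $\cosh(\delta)$; the side opposite $\eps$ is $T$, with adjacent sides $t$ and $L$. Your subsequent reasoning (``$>1$ iff $T>0$'') is actually correct for $\cosh(T)$, so the error is purely notational, but it is exactly the kind of sign/labeling issue you yourself warn about, and it propagates if you then try to use the ``companion identity for $\cosh(\alpha)$ in terms of $(t,T,\delta)$''. The paper avoids this by writing down the two needed identities directly, namely $\cosh T=\cosh t\cosh L-\cosh\eps\sinh t\sinh L$ and $\cosh L=\cosh t\cosh T+\cosh\alpha\sinh t\sinh T$, together with the sine rule $\sinh\eps/\sinh T=\sinh\alpha/\sinh L$, and inverts them explicitly; the condition $L>t$ is used precisely to select the correct branch when solving the resulting quadratic, which is a sharper role than the ``$\gamma_2$ strictly between'' heuristic you give.

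Second, the genuine difference: for the Jacobian, the paper computes directly from the two identities above (differentiating $\cosh L$ and $\sinh\eps$ with respect to $(\alpha,T)$ and simplifying via the sine rule). Your invariant-measure argument is a valid and more conceptual alternative: the Liouville measure on the space of geodesics disjoint from a fixed geodesic, written in foot-and-perpendicular-distance coordinates $(s,\ell)$, is indeed $\tfrac12\sinh(\ell)\,\d\ell\,\d s$, and applying this once relative to $\gamma_2$ and once relative to $\gamma_3$ (with $\eps$ and $\alpha$ being translates of the respective foot coordinates $s$) gives the identity immediately, with the universal $\tfrac12$ cancelling. Your proposed normalization check ``$L-T\to t$ as $T\to\infty$'' is not literally correct (from $\cosh L=\cosh t\cosh T+\cosh\alpha\sinh t\sinh T$ one gets $L-T\to\log(\cosh t+\cosh\alpha\sinh t)$), so either verify the $\tfrac12\sinh(\ell)\,\d\ell\,\d s$ formula once by a direct change of variables, or fall back on the paper's explicit computation.
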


\begin{figure}[h!]
  \includegraphics{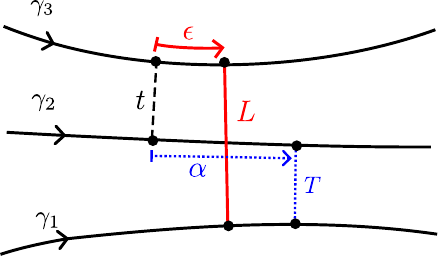}
  \caption{The three geodesics and the lengths we will study. The dots mark the points $z_{ij}$,
    where the geodesics meet perpendicularly.}
  \label{fig:3geod}
\end{figure}%

\begin{proof}
  The fact that the image of $\{\alpha\in\IR, T>0\}$ is included in the set of $(\eps, L)$
  satisfying the conditions \eqref{eq:def_var_3} can be read from standard formulas for right-angled
  hexagons in hyperbolic geometry, see e.g. \cite[Ch. 2 \S 4]{buser1992}:
  \begin{align}
    & \cosh (T) =\cosh (t)  \cosh (L)-\cosh (\eps) \sinh (t) \sinh (L) \label{e:cosinus} \\
     & \cosh (L) =\cosh (t) \cosh (T) + \cosh (\alpha) \sinh (t) \sinh (T). \label{e:hexagon}
  \end{align}

  Conversely, if \eqref{eq:def_var_3} holds for parameters $(L,\eps)$, then there exists $T\not= 0$
  such that \eqref{e:cosinus} holds. This defines $T$ up to the sign, and we choose the positive
  solution.  Another classical hyperbolic trigonometry formula states that, for this geometric
  situation to hold, we must have
  \begin{equation}
    \frac{\sinh (\eps)}{ \sinh(T)}=\frac{\sinh(\alpha)}{\sinh(L)}. \label{e:sinus}      \end{equation}
  One can then define $\alpha$ so that the relation \eqref{e:sinus} holds. Note that $\alpha$ has the
  same sign as~$\eps$.
  By using $\cosh^2 (L)=1 + \sinh^2 (L)$ and the two relations \eqref{e:cosinus} and \eqref{e:sinus},
  one then deduces that $$(\cosh (L)-\cosh (t) \cosh (T))^2= (\cosh (\alpha) \sinh (t) \sinh (T))^2$$ or,
  in other words, $$\cosh (L)=\cosh (t) \cosh (T)\pm \cosh (\alpha) \sinh (t) \sinh (T).$$ Because we
  assumed that $L>t$, we actually obtain that \eqref{e:hexagon} holds,
  which means that $L$ is indeed the length of the orthogeodesic $H_{13}$ on Figure
  \ref{fig:3geod}.

   The determinant calculation \eqref{e:det} follows from \eqref{e:hexagon} and \eqref{e:sinus}, but
   it is worth writing some details. Start writing the jacobian of the change of variable as the
   determinant of the matrix of partial derivatives:
   \begin{align*}
     \cosh (\eps) \, \frac{  \d L \d\eps}{ \d T \d\alpha}
     = \frac{\partial L}{\partial T} \frac{\partial (\sinh (\eps))}{\partial \alpha}
     -\frac{\partial L}{\partial \alpha} \frac{\partial (\sinh (\eps))}{\partial T}.
   \end{align*} 
Because of the relation \eqref{e:sinus}, we can rewrite this as
\begin{align*}
  \cosh (\eps) \, \frac{  \d L \d\eps}{ \d T \d\alpha}
  =
  \frac{\partial L}{\partial T} \frac{ \cosh (\alpha)  \sinh (T)}{  \sinh (L)}
  -\frac{\partial L}{\partial \alpha} \frac{\sinh (\alpha) \cosh (T)}{  \sinh (L)}.
 \end{align*} 
 An elementary calculation of $\partial L/\partial T$ and $\partial L/\partial \alpha$ from
 \eqref{e:hexagon} yields that
\begin{align*}
  \cosh (\eps) \, \frac{  \d L \d\eps}{ \d T \d\alpha}
  =
\frac{ \sinh (T)}{  \sinh^2 (L)} (\sinh (t) \cosh (T)+ \cosh (t) \sinh (T)  \cosh (\alpha)).
\end{align*} 
Using another classical trigonometric formula, we recognize the quantity in the bracket to be
$\cosh (\eps) \sinh (L)$, and hence this simplifies to
\begin{align*}
\frac{  \d L \d\eps}{ \d T \d\alpha}= \frac{ \sinh (T)}{  \sinh (L)} 
 \end{align*} 
 which is the desired relation.
 \end{proof}

 \begin{rem}
   We actually have $L\geq t+T$, so for any given $a>0$, the set $\{\alpha\in\IR, T> a\}$ is
   sent to a subset of the set of the set of values $(\eps, L)$ such that
   $$
   \begin{cases}
     \cosh (t) \cosh(L)-\cosh(\eps) \sinh(t) \sinh(L)\geq \cosh (a) \\
     L\geq t+ a.
   \end{cases} $$
 \end{rem}

 \begin{nota}
   This result provides us with a map $\phi_t : (\eps, L) \mapsto \Dist(z_{31},z_{21})$, which
   corresponds to the algebraic distance along the bottom geodesic.
 \end{nota}
 
 % \begin{nota}
 %   \label{nota:gamma}
 %   We denote $(\eps, t_2)=\phi_{t_1}(\alpha, L)$ the inverse map of
 %   $\psi_{t_1}$; we will also write $\Gamma=\Gamma(t_1, \alpha, L)$,
 %   $t_2=\cT(t_1, \alpha, L)$ when we want to write $\Gamma$ or $t_2$ as
 %   functions of $t_1, \alpha, L$, and $\Gamma=\gamma(t_1, \eps, t_2)$ when we
 %   want to write $\Gamma$ as function of $ t_1, \eps, t_2$.
 % \end{nota}

 In order to prove Lemma \ref{l:goodformula1}, we simply apply Lemma \ref{p:jacob} twice in a
 four-geodesic configuration, as done below. See also Figure \ref{fig:4geod}.

 \begin{figure}[h!]
     \includegraphics[height=5cm]{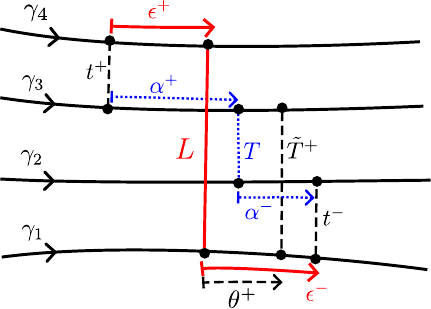}
     \caption{The four-geodesic configuration we consider. We have represented the quantities
       $\tilde{T}^+$ and $\theta^+$ appearing in the statement.}
    \label{fig:4geod}
  \end{figure}

 \begin{lem} \label{c:det2}
   Let $M=4$. We denote as $L$, $t^\pm$ and $T$ the orthogeodesic lengths
   \begin{equation*}
     L = \ell(H_{14})
     \qquad t^+ = \ell(H_{34})
     \qquad t^- = \ell(H_{12})
     \qquad T = \ell(H_{23})
   \end{equation*}
   and $\eps^\pm$, $\alpha^\pm$ the algebraic lengths
   \begin{equation*}
     \eps^+ = \Dist(z_{34},z_{14})
     \qquad \alpha^+= \Dist(z_{43},z_{23})
     \qquad \alpha^-= \Dist(z_{32},z_{12})
     \qquad \eps^- = \Dist(z_{41},z_{21}).
   \end{equation*}
   For fixed $t^-, t^+>0$, the map
   \begin{equation*}
     \begin{cases}
       \R^2 \times \R_{>0} & \rightarrow \R^2 \times \R_{>0} \\
       (\alpha^-, \alpha^+, T) & \mapsto (\eps^-, \eps^+, L)
     \end{cases}
   \end{equation*}
   is a $\mathcal{C}^1$-diffeomorphism onto the set of real numbers
 $(\eps^-, \eps^+, L)$ such that
 \begin{align}\label{e:Dloc}
   \begin{cases}
     \cosh(t^+)  \cosh(L)-\cosh(\eps^+) \sinh(t^+) \sinh(L)> 1 \\
     \cosh(t^-) \cosh(\tilde T^+)-\cosh(\eps^- -\theta^+) \sinh(t^-)\sinh(\tilde T^+)> 1 \\
     L> t^+ \quad \text{and} \quad
     \tilde T^+>t^-,
   \end{cases}
 \end{align}
 where $\tilde{T}^+$ is the second component of $\psi_{t^+}^{-1}(\eps^+,L)$ and
 $\theta^+ := \phi_{t^+}(\eps^+,L)$.  Its Jacobian can be expressed as
\begin{align}\label{e:det'}
  \sinh(T) \d T \d\alpha^- \d\alpha^+ =  \sinh(L) \d L \d\eps^- \d\eps^+.
 \end{align}
\end{lem}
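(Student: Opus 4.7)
The plan is to apply Lemma \ref{p:jacob} twice, once to each of two overlapping triples of the four aligned geodesics, and then to glue the two two-dimensional statements with two elementary translations of variables. I would first apply Lemma \ref{p:jacob} to the triple $(\gamma_1, \gamma_2, \gamma_3)$ with $t^-$ playing the role of the fixed parameter $t$; since in the lemma $t = \ell(H_{23}^{\text{lem}})$, this forces the role assignment $\gamma_1^{\text{lem}} = \gamma_3$, $\gamma_2^{\text{lem}} = \gamma_2$, $\gamma_3^{\text{lem}} = \gamma_1$. The conclusion is a $\cC^1$-diffeomorphism $(\alpha^-, T) \mapsto (\eps^{(2)}, \tilde T^+)$, where $\tilde T^+ = \ell(H_{13})$ and $\eps^{(2)} := \Dist(z_{21}, z_{31})$, with Jacobian $\sinh(T)\,dT\,d\alpha^- = \sinh(\tilde T^+)\,d\tilde T^+\,d\eps^{(2)}$. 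I would then apply Lemma \ref{p:jacob} to the triple $(\gamma_1, \gamma_3, \gamma_4)$ with $t^+$ as the fixed parameter and the natural role matching $\gamma_1^{\text{lem}} = \gamma_1$, $\gamma_2^{\text{lem}} = \gamma_3$, $\gamma_3^{\text{lem}} = \gamma_4$, obtaining a $\cC^1$-diffeomorphism $(\alpha_{(1)}, \tilde T^+) \mapsto (\eps^+, L)$ with $\alpha_{(1)} := \Dist(z_{43}, z_{13})$ and Jacobian $\sinh(\tilde T^+)\,d\tilde T^+\,d\alpha_{(1)} = \sinh(L)\,dL\,d\eps^+$.

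The two applications are then glued by two elementary identities coming from the additivity of the signed distance $\Dist$ along an oriented geodesic. First, $\alpha_{(1)} = \alpha^+ + \Dist(z_{23}, z_{13})$, and applying the auxiliary map $\phi_{t^-}$ from Lemma \ref{p:jacob} to the first triple yields $\Dist(z_{23}, z_{13}) = -\phi_{t^-}(\eps^{(2)}, \tilde T^+)$; hence, at fixed $(\eps^{(2)}, \tilde T^+)$, the map $\alpha^+ \mapsto \alpha_{(1)}$ is an affine translation and $d\alpha_{(1)} = d\alpha^+$. Second, $\eps^- = \Dist(z_{41}, z_{21}) = \Dist(z_{41}, z_{31}) + \Dist(z_{31}, z_{21}) = \theta^+ - \eps^{(2)}$, so at fixed $(\eps^+, L)$ (which fixes $\theta^+ = \phi_{t^+}(\eps^+, L)$), the map $\eps^{(2)} \mapsto \eps^-$ is an affine translation of slope $-1$ and $|d\eps^{(2)}| = |d\eps^-|$.

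Composing the four maps in the order ``apply $\psi_{t^-}$, translate $\alpha^+ \to \alpha_{(1)}$, apply $\psi_{t^+}$, translate $\eps^{(2)} \to \eps^-$'' yields the required diffeomorphism $(\alpha^-, \alpha^+, T) \mapsto (\eps^-, \eps^+, L)$, and multiplying Jacobians gives
\[
\sinh(T)\,dT\,d\alpha^-\,d\alpha^+ = \sinh(\tilde T^+)\,d\tilde T^+\,d\eps^{(2)}\,d\alpha_{(1)} = \sinh(L)\,dL\,d\eps^+\,d\eps^{(2)} = \sinh(L)\,dL\,d\eps^+\,d\eps^-,
\]
which is exactly \eqref{e:det'}. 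The range \eqref{e:Dloc} is obtained as the intersection of the two image sets from Lemma \ref{p:jacob}: the first application contributes the conditions $L > t^+$ and $\cosh(t^+)\cosh(L) - \cosh(\eps^+)\sinh(t^+)\sinh(L) > 1$, while the second application contributes $\tilde T^+ > t^-$ and $\cosh(t^-)\cosh(\tilde T^+) - \cosh(\eps^{(2)})\sinh(t^-)\sinh(\tilde T^+) > 1$; substituting $\cosh(\eps^{(2)}) = \cosh(\eps^- - \theta^+)$ matches the stated conditions.

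I expect the main obstacle to be purely bookkeeping: for each of the two applications of Lemma \ref{p:jacob}, one must carefully identify which of the three geodesics in the chosen triple plays the role of $\gamma_1^{\text{lem}}$, $\gamma_2^{\text{lem}}$ or $\gamma_3^{\text{lem}}$, then translate the corresponding $z_{ij}^{\text{lem}}$ back into the four-geodesic notation, and verify the signs in the algebraic distance identities above. Once these role-assignments are pinned down, the Jacobian computation and the image description are immediate.
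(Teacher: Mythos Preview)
Your proposal is correct and follows essentially the same approach as the paper: the paper's proof is the one-liner ``apply the previous result twice, once to the three geodesics $\gamma_1,\gamma_3,\gamma_4$, and another time to $\gamma_3,\gamma_2,\gamma_1$ with reverse orientation,'' and you have spelled out exactly these two applications (in the opposite order) together with the affine gluing identities and sign bookkeeping that the paper leaves implicit.
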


\begin{proof}We simply apply the previous result twice, once to the three geodesics $\gamma_1,
  \gamma_3, \gamma_4$, and another time to $\gamma_3, \gamma_2, \gamma_1$ with reverse orientation.
\end{proof}
 
\begin{rem} \label{r:rka}We always have $L\geq t^++T+t^-$ and $\tilde{T}^+\geq t^-+T$. Hence, for
  any $a>0$, the set $\{\alpha^-, \alpha^+\in \IR, T>a\}$ corresponds to a subset of the set of real
  numbers $(\eps^-, \eps^+, L)$ such that
  $$
  \begin{cases}
    \cosh (t^+) \cosh (L)-\cosh(\eps^+) \sinh(t^+) \sinh(L)\geq \cosh(a) \\
    \cosh (t^-) \cosh (\tilde{T}^+)-\cosh(\eps^- - \theta^+)
    \sinh(t^-)\sinh(\tilde{T}^+) \geq \cosh(a) \\
    L> t^++a \quad \text{and} \quad
    \tilde{T}^+>t^-+a.
  \end{cases}$$
 \end{rem}

 \subsection{Domain of definition of the new coordinates}
  \label{s:domain}
  In this section, we discuss the range of definition $\domain$ of the coordinates
  $(\vec{L}, \vec{\theta})$, which will be the domain of the integral \eqref{e:mainint} in the new
  coordinates.  We prove the following statement, which describes the boundary of $\domain$.

\begin{prp}\label{p:super}
  The boundary of $\domain$ is contained in 
  $\bigcup_{\lambda\in \Lambda} \{ y_\lambda=0\}$.
\end{prp}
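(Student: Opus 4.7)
The approach is to prove the contrapositive by induction on $r$, mirroring the inductive construction in the proof of Theorem \ref{t:maindet}. Concretely, I would show: if a sequence $(\vec{L}^{(n)}, \vec{\theta}^{(n)}) \in \domain$ converges to a point $(\vec{L}^*, \vec{\theta}^*) \in \R_{>0}^r \times \R^\Theta$ and the corresponding metrics $Y^{(n)} \in \cT^*_{\g,\n}$ satisfy $y_\lambda(Y^{(n)}) \geq \epsilon > 0$ uniformly in $\lambda \in \Lambda$ and in $n$, then $Y^{(n)}$ stays in a compact subset of $\cT^*_{\g,\n}$. By the $\mathcal{C}^1$-diffeomorphism property stated in Theorem \ref{t:maindet}, this forces $(\vec{L}^*, \vec{\theta}^*) \in \domain$. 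The base case $r = 1$ follows directly from the pair-of-pants and once-holed torus formulas written out in the proof of Theorem \ref{t:maindet}: in the figure-eight case one has $x_1 = \theta_1^+$ and $x_2 = \theta_1^-$, while the cosine rule for right-angled hexagons expresses $x_3$ as a continuous function of bounded quantities that remains bounded above as long as $x_1, x_2 > 0$ stay bounded away from $0$.

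For the inductive step with $r \geq 2$, I would restrict $Y^{(n)}$ to the subsurface $\mathbf{S}_{r-1}$ obtained by removing the $r$-th self-intersection the first way. The associated coordinates $(\vec{L}', \vec{\theta}')^{(n)}$ for $\mathbf{c}'$ on $\mathbf{S}_{r-1}$ also converge, since $\vec{L}' = (L_1, \ldots, L_{r-1})$ and $\vec{\theta}'$ differs from $\vec{\theta}^{(n)}$ only by the affine relations \eqref{e:sumtheta}. Each element of $\Lambda(\mathbf{S}_{r-1})$ corresponds to a curve of $\Lambda(\mathbf{S}_r)$, with the curves $\lambda_\pm$ (in Case (a)) or $\lambda$ (in Case (b)) being reclassified from boundary to inner when $\mathbf{P}_r$ is attached, so the uniform lower bound on $y_\lambda$ transfers unchanged. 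The induction hypothesis then ensures $Y^{(n)}_{\mathbf{S}_{r-1}}$ remains in a compact subset of $\cT^*(\mathbf{S}_{r-1})$, and in particular the lengths $y_{\lambda_\pm}^{(n)}$ and the auxiliary orthogeodesic lengths $t_r^{\pm(n)}$ are bounded both above and below.

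The remaining task is to bound the Fenchel--Nielsen data attached to the last pair of pants $\mathbf{P}_r$: the boundary length $x_\n^{(n)}$ (and $x_{\n-1}^{(n)}$ in Case (b)) and the twist parameters $\alpha_r^{\pm(n)}$. In Case (a), the identity
\begin{equation*}
\cosh(T) - 1 = \frac{\cosh((y_{\lambda_+} - y_{\lambda_-})/2) + \cosh(x_\n/2)}{\sinh(y_{\lambda_+}/2)\sinh(y_{\lambda_-}/2)},
\end{equation*}
obtained by simplifying the numerator of \eqref{e:smile} via the relation $\cosh(a)\cosh(b) - \sinh(a)\sinh(b) = \cosh(a-b)$, combined with the geometric bound $T \leq L_r$, shows that $T^{(n)}$ is both bounded above and bounded away from zero. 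This in turn pinches $x_\n^{(n)}$ into a compact subset of $(0, +\infty)$ via \eqref{e:smile} itself. The relations \eqref{e:alphatheta} then bound $\eps_r^{\pm(n)}$ in terms of $\theta_r^{\pm(n)}$ and continuous functions of the now-convergent $Y^{(n)}_{\mathbf{S}_{r-1}}$, and the hexagon identity \eqref{e:hexagon} converts the resulting bounds on $(t_r^\pm, T, L_r, \eps_r^\pm)$ into bounds on $\alpha_r^\pm$. Case (b) is handled identically using \eqref{e:alpha1}--\eqref{e:alpha2}, with $\alpha$ confined to the bounded interval $(0, y_\lambda)$. The main obstacle I expect is managing the degenerate sub-cases of Remark \ref{e:special} where $\beta_{i_\pm} = \Gamma_{\lambda_\pm}$ and some auxiliary segments collapse: in each such situation a direct identification of the relevant twist parameter with either a boundary length or with $\theta_r^\pm$ bypasses the need for the hexagon identity, so the argument simplifies rather than becoming harder.
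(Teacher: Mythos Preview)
Your approach is correct and follows the same inductive framework as the paper's proof, but organizes the argument differently. The paper proceeds by fixing $Y_{\mathbf{S}_{r-1}}$ and directly identifying the constraints on the three new coordinates as the conjunction of a \emph{local constraint} (the triple $(\eps_r^-,\eps_r^+,L_r)$ lies in the image of the diffeomorphism of Lemma~\ref{c:det2}, equivalently $T>0$) and a \emph{global constraint} ($x_{\n}>0$ in Case~(a), or $x_{\n},x_{\n-1}>0$ in Case~(b)). The key observation is that the global constraint is strictly stronger: $x_{\n}>0$ is equivalent to $T>a$ for some $a>0$ depending only on $y_{\lambda_\pm}$, and by Remark~\ref{r:rka} this places $(\eps_r^\pm,L_r)$ strictly inside the local domain. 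Hence the fibre boundary is exactly $\{x_{\n}=0\}$, and no further bounding of individual Fenchel--Nielsen parameters is needed.

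Your compactness reformulation reaches the same conclusion but does more work: you separately bound $T$ above (via $T\leq L_r$) and below (via the rearranged identity for $\cosh T-1$), then propagate these bounds through \eqref{e:smile}, \eqref{e:alphatheta}, and the hexagon relations to control $x_{\n}$, $\eps_r^\pm$, and $\alpha_r^\pm$. This is correct, though the step ``\eqref{e:hexagon} converts the resulting bounds into bounds on $\alpha_r^\pm$'' hides the need to also control the intermediate length $\tilde T^+$ from Lemma~\ref{c:det2} (it is bounded below by $t_r^-+T$ via Remark~\ref{r:rka}, and above by $L_r-t_r^+$). The paper's route avoids these intermediate estimates entirely by recognizing that once $T>a>0$ is established, the diffeomorphism of Lemma~\ref{c:det2} does the rest.
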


\begin{proof}
  We once again proceed by induction, closely following the proof of Theorem \ref{t:maindet} but
  making the domain of definition of the variables more explicit.

  The result is trivial for $r=1$. Let us fix a metric
  $Y_{\mathbf{S}_{r-1}} \in \cT_{g_{\mathbf{S}_{r-1}},n_{\mathbf{S}_{r-1}}}^*$ and describe the
  domain of definition of the three new coordinates $(\theta_r^-, \theta_r^+, L_r)$ defined when
  attaching the pair of pants~${\bP}_r$. We note that there exists a unique hyperbolic metric
  associated to any set of Fenchel--Nielsen coordinates. Hence, the constraints on the new variables
  are:
  \begin{itemize}
  \item $(\eps_r^+,\eps_r^-,L_r)$ need to belong to the image of the
    diffeomorphism in Lemma \ref{c:det2} -- we call this the \emph{local constraint};
  \item the new boundary components of the pair of pants need to be of positive
    length, i.e. $x_{\n}>0$ in Case (a) and $x_{\n}, x_{\n-1}>0$ in Case (b) -- we call this the
    \emph{global constraint}.
  \end{itemize}
  We prove Proposition \ref{p:super} by proving that the global constraints are stronger than the
  local constraints.
  Indeed, in Case (a), because of \eqref{e:smile}, the global constraint is equivalent to
  $$\cosh(T)> \frac{\cosh \div{y_{\lambda_+}}\cosh \div{y_{\lambda_-}}+1}
  {\sinh \div{y_{\lambda_+}}\sinh \div{y_{\lambda_-}}}.$$
  Note that the quantity
  $$a:=\argcosh
  \Big(\frac{\cosh \div{y_{\lambda_+}}\cosh \div{y_{\lambda_-}}+1}{\sinh \div{y_{\lambda_+}}\sinh \div{y_{\lambda_-}}}\Big)
  >0,$$
  only depends $y_{\lambda_+}$ and $y_{\lambda_-}$, i.e. on the metric $Y_{\mathbf{S}_{r-1}}$ in
  $\cT_{\g-1,\n+1}^*$ that we are considering to be fixed. Hence, the global constraint implies the
  $T>a$, where $a$ is fixed.  By Remark~\ref{r:rka}, this implies strictly stronger conditions than
  the original local constraints, which is what we wanted to check.

In Case (b), we have 
 $$\cosh(T)= \frac{\cosh^2 \div{\alpha}+ \cosh (x_{\n})}{\sinh^2 \div{\alpha}}
 \quad \mbox{and} \quad
 \cosh({{T}})= \frac{\cosh^2 \div{y_{\lambda}-\alpha}
   + \cosh (x_{\n-1})}{\sinh^2 \div{y_{\lambda}-\alpha}}$$
 where $\alpha$ is the length of the portion of $\beta_{\lambda}$ delimited by the orthogeodesic
 $H$, on the side of $x_{\n}$.
 Thus the new global constraints are equivalent to
 $$\cosh({{T}})>
 \frac{\cosh^2 \div{\alpha}+ 1}{\sinh^2 \div{\alpha}}
 =1 +\frac{2}{\sinh^2 \div{\alpha}}
 \quad \mbox{and} \quad
 \cosh({{T}})> 1 +\frac{2}{\sinh^2 \div{y_{\lambda}-\alpha}} .$$
   Define $a:=\argch(1+2/\sinh^2 \div{y_{\lambda}})$.  Since
   $0<\alpha< y_{\lambda}$, the two global constraints imply that $T>a>0$, which allows to
   conclude as in Case (a).
\end{proof}

   \subsection{A priori bounds on Fenchel--Nielsen parameters}
   \label{s:apriori} 
   
   We shall need some rough a priori bounds on the parameters $y_\lambda=\ell_Y(\Gamma_\lambda)$ for
   $\lambda \in \Lambda$ and $\theta_q$ for $q \in \Theta$, that grow at most linearly with respect
   to $\ell_Y({\mathbf{c}})$.
   
   \begin{prp}\label{p:apriori1} On the level set
     $\{ Y \in \mathcal{T}_{\g,\n}^* \, : \, \ell_Y({\mathbf{c}})=\ell\}$, we have
     \begin{align}
       \forall \lambda \in \Lambda, & \quad 0\leq y_\lambda \leq \ell \\
       \forall q \in \Theta, & \quad |\theta_q| \leq 2\ell
     \end{align}
     and the twist parameters $(\alpha_\lambda)_{\lambda \in \Lambdain}$ constructed in the proof of
     Theorem \ref{t:maindet} satisfy
     \begin{align}\label{e:twist_bound}
       \forall \lambda \in \Lambdain, \quad
       |\alpha_\lambda|\leq 3\ell.
    \end{align}    
   \end{prp}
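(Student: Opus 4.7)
The plan is to establish each of the three bounds separately, in increasing order of difficulty.

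The first bound $y_\lambda \leq \ell$ follows almost immediately from Remark \ref{r:initlambda}. Each boundary component $\Gamma_\lambda$ of the pair-of-pants decomposition is freely homotopic to a piecewise smooth simple closed curve $\Gamma_{\lambda,\mathrm{init}}$ obtained as a concatenation of simple portions of the geodesic representative of $\mathbf{c}$ in $Y$. Since $\Gamma_{\lambda,\mathrm{init}}$ is itself simple, each simple portion appears in it at most once, and hence its length in $Y$ is bounded by the total length $\ell_Y(\mathbf{c})=\ell$. Taking the infimum over representatives of the free homotopy class gives $y_\lambda = \ell_Y(\Gamma_\lambda) \leq \ell$.

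The bound $|\theta_q| \leq 2\ell$ is the core of the proposition. I would work in the universal cover $\IH^2$, lifting $\beta_{\lambda(q),Y}$ to a geodesic line $\tilde\beta$ and $\overline{B}_q, \overline{B}_{\sigma q}$ to orthogonal segments meeting $\tilde\beta$ at points $\tilde z_1, \tilde z_2$, so that $|\theta_q| = |\Dist(\tilde z_1, \tilde z_2)|$ by the very definition of $\theta_q$. On the other hand, the original simple portion $\cI_{q,\mathrm{init}}\subset \mathbf{c}$ is a geodesic arc of length at most $\ell$, lifting to an arc in $\IH^2$ joining points $\tilde p_1, \tilde p_2$ which sit at distance at most $L(q)$ and $L(\sigma q)$ from $\tilde z_1, \tilde z_2$ respectively, up to adjustments reflecting the orthogeodesic correction of the bars. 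Since orthogonal projection onto a geodesic is a contraction in hyperbolic geometry, this yields $|\theta_q| \leq \ell(\cI_{q,\mathrm{init}}) + L(q) + L(\sigma q)$. A short independent argument, exhibiting a representative of $\overline{B}_q$'s homotopy class (with gliding endpoints on $\beta$) that uses a sub-path of $\mathbf{c}$, bounds each $L(q)$ by $\ell$; a careful combination of these estimates (using the fact that the contribution of any single simple portion of $\mathbf{c}$ is counted at most twice in the total) yields the factor $2$.

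The twist parameter bound $|\alpha_\lambda| \leq 3\ell$ for $\lambda \in \Lambdain$ follows by tracing through the inductive proof of Theorem \ref{t:maindet}. Each $\alpha_\lambda$ is defined in Case (a) or Case (b) as the algebraic length $\alpha_r^\pm$ of a specific arc on $\Gamma_\lambda$ between the feet of two orthogeodesics. Combining the relations \eqref{e:alphatheta} and \eqref{e:sumtheta} with the jacobian diffeomorphism of Lemma \ref{p:jacob}, the quantity $\alpha_r^\pm$ can be expressed as a sum of an appropriate $\theta$-variable (bounded by $2\ell$) and an algebraic length along $\beta$ of the smaller surface $Y_{\mathbf{S}_{r-1}}$ (bounded by $y_\lambda \leq \ell$), giving the claimed $|\alpha_\lambda| \leq 3\ell$. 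The main obstacle throughout is the estimate on $|\theta_q|$: making the contraction argument rigorous requires a careful description of the canonical homotopy with gliding endpoints in the universal cover, and in particular of the potential winding of $\overline{\cI}_q$ around $\beta_{\lambda(q),Y}$, which must be shown to be trapped by the length of $\cI_{q,\mathrm{init}}$ and the two adjacent bar lengths.
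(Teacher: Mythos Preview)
Your argument for $y_\lambda \leq \ell$ is fine and matches the paper.

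The core argument for $|\theta_q|\leq 2\ell$, however, has a genuine gap: the claim that each bar length $L(q)$ is bounded by $\ell$ is false. Take the basic figure-eight in a pair of pants with boundary lengths $x_1,x_2,x_3$; here $\beta=(\beta_1,\beta_2)$ are the boundary curves of lengths $x_1,x_2$, the unique bar $\overline B_1$ is the orthogeodesic between them, and
\[
\cosh(L_1)=\frac{\cosh(x_1/2)\cosh(x_2/2)+\cosh(x_3/2)}{\sinh(x_1/2)\sinh(x_2/2)},\qquad
\cosh(\ell/2)=2\cosh(x_1/2)\cosh(x_2/2)+\cosh(x_3/2).
\]
Sending $x_1,x_2\to 0$ with $x_3$ fixed gives $\ell$ bounded while $L_1\to\infty$. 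So there is no representative of the homotopy class of $\overline B_q$ (with gliding endpoints on $\beta$) built from a sub-path of $\mathbf c$ of length $\leq \ell$; the bar is transverse to $\mathbf c$ and its length is controlled by the collar of $\beta$, not by $\ell_Y(\mathbf c)$. Consequently the contraction estimate $|\theta_q|\leq \ell(\cI_{q,\mathrm{init}})+L(q)+L(\sigma q)$, even if made precise, does not yield a bound in terms of $\ell$.

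The paper avoids this by a completely different mechanism. For U-turn indices one has trivially $0\leq\theta_q\leq \ell(\beta_{\lambda(q)})\leq \ell$. For crossing indices the paper builds an auxiliary figure-eight $\gamma$ going once around $\beta_{\lambda(\sigma^{-1}q)}$ and once around $\beta_{\lambda(\sigma q)}$, connected by the orthogeodesic homotopic to $\overline B_q\smallbullet\overline{\cI}_q\smallbullet\overline B_{\sigma q}$; a trigonometric computation combined with the collar-type inequality $\sinh(\ell(\beta)/2)\sinh(\ell(H))\geq 1$ (Lemma~\ref{l:basic_ortho}) gives $\cosh(\ell(\gamma)/2)\geq \cosh(\theta_q)$ (Lemma~\ref{l:niceeight}). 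The point is that the possibly huge bar lengths are \emph{absorbed} by the collar inequality rather than bounded. One then checks that $\gamma$ is homotopic to a path traced on $\mathbf c$ (using $\beta_{\lambda(\sigma^{-1}q),\mathrm{init}}$, $\beta_{\lambda(\sigma q),\mathrm{init}}$ and $\cI_{q,\mathrm{init}}$ twice), hence $\ell(\gamma)\leq 2\ell$.

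Your sketch for the twist bound inherits the same problem and also overstates what \eqref{e:alphatheta}, \eqref{e:sumtheta} and Lemma~\ref{p:jacob} give: the map $(\alpha,T)\mapsto(\eps,L)$ is nonlinear, so $\alpha_r^\pm$ is not a sum of a $\theta$-variable and an arc-length along $\beta$. The paper instead repeats the figure-eight construction, producing for each inner $\Gamma_\lambda$ a figure-eight $\gamma$ with $|\alpha_\lambda|\leq \ell(\gamma)$ and $\gamma$ homotopic to a path on $\mathbf c$ of length $\leq 3\ell$ (the extra factor coming from repeated portions in Case~(b)).
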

   
   The proof relies on elementary hyperbolic trigonometry, and in particular the following lemma.
   
   \begin{lem}\label{l:basic_ortho}
     Let $Y$ be a compact hyperbolic surface with two simple closed geodesics $\gamma_1,
     \gamma_2$. We assume that $\gamma_1$ and $\gamma_2$ are either disjoint or equal. Let $H$ be a
     simple orthogeodesic segment going from $\gamma_1$ to~$\gamma_2$, not intersecting $\gamma_1$
     and $\gamma_2$ in its interior. Then, $\sinh (\ell(\gamma_1)/2) \sinh(\ell(H))\geq 1$.
   \end{lem}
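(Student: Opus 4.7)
\medskip

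\textbf{Proof plan for Lemma \ref{l:basic_ortho}.} The plan is to reduce the statement to the classical collar lemma applied to $\gamma_1$. Recall that the collar lemma asserts that the tubular neighbourhood $C(\gamma_1) = \{x \in Y \, : \, d(x,\gamma_1) < w\}$, with half-width $w$ defined by $\sinh(w) \sinh(\ell(\gamma_1)/2) = 1$, is embedded in $Y$; moreover, if $\gamma_1$ and $\gamma_2$ are distinct disjoint simple closed geodesics, then $C(\gamma_1) \cap C(\gamma_2) = \emptyset$. The desired inequality $\sinh(\ell(H))\sinh(\ell(\gamma_1)/2) \geq 1$ is equivalent to $\ell(H) \geq w$, so the lemma follows once we show that every admissible $H$ must cross $C(\gamma_1)$ radially at least once.

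In the case $\gamma_1 \neq \gamma_2$, I would argue as follows: by the collar lemma, $\gamma_2$ lies entirely in $Y \setminus C(\gamma_1)$. Since $H$ has one endpoint on $\gamma_1 \subset \overline{C(\gamma_1)}$ and the other on $\gamma_2$ outside $C(\gamma_1)$, continuity forces $H$ to cross the boundary $\partial C(\gamma_1)$ at some interior point, and the sub-segment of $H$ from its starting endpoint to that crossing has length at least $\mathrm{dist}(\gamma_1, \partial C(\gamma_1)) = w$.

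The case $\gamma_1 = \gamma_2$ requires an extra observation. Here $H$ starts and ends on $\gamma_1$, perpendicularly at each endpoint, and is disjoint from $\gamma_1$ in its interior. I would work in the universal cover (or equivalently in the lift of the collar), noting that $C(\gamma_1)$ is isometric to the hyperbolic cylinder $(-w,w) \times \R/\ell(\gamma_1)\Z$ with metric $\d r^2 + \cosh^2(r)\d\theta^2$, in which any geodesic orthogonal to $\{r=0\}$ is of the form $\theta = \mathrm{const}$ and escapes without returning. Consequently $H$ cannot be contained in $C(\gamma_1)$: it must exit $C(\gamma_1)$ after distance $w$ near its first endpoint, and symmetrically travel distance $\geq w$ through $C(\gamma_1)$ when returning to its second endpoint. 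This yields $\ell(H) \geq 2w \geq w$, which is stronger than required.

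The argument contains no real obstacle; the only subtle point is the case $\gamma_1 = \gamma_2$, which is handled by the explicit description of geodesics perpendicular to the core curve of a hyperbolic collar.
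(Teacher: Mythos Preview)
Your proof is correct. For the case $\gamma_1=\gamma_2$ you argue exactly as the paper does, via the collar lemma (the paper simply cites Buser's Theorem~4.1.1 without spelling out the radial-geodesic picture). For the disjoint case you take a slightly different route: you invoke the disjointness-of-collars part of the collar lemma to force $H$ to cross $\partial C(\gamma_1)$, whereas the paper observes that $\gamma_1,\gamma_2,H$ fill a pair of pants, cuts it into a right-angled hexagon and then a right-angled pentagon with consecutive sides $\ell(\gamma_1)/2$ and $\ell(H)$, and reads off $\sinh(\ell(\gamma_1)/2)\sinh(\ell(H))=\cosh(\cdot)\geq 1$ from the pentagon formula (Buser, Lemma~2.3.5). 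The two arguments are essentially equivalent in content---the pentagon identity is what underlies the collar width---but yours is more uniform across the two cases, while the paper's gives the exact trigonometric identity rather than just the inequality.
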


   {
   \begin{proof}
     If $\gamma_1=\gamma_2$, this is a direct consequence of the collar lemma \cite[Theorem
     4.1.1]{buser1992}.  If the two geodesics are disjoint, then the surface filled by $\gamma_1$,
     $\gamma_2$ and $H$ is a pair of pants. Cutting this pair of pants along its orthogeodesics, we
     obtain a convex right-angled hexagon with consecutive sides of length $\ell(\gamma_1)/2$,
     $\ell(H)$ and $\ell(\gamma_2)/2$. We can further cut this hexagon and obtain a right-angled
     pentagon with consecutive side-lengths $\ell(\gamma_1)/2$ and $\ell(H)$, which implies the
     claim by \cite[Lemma 2.3.5]{buser1992}.
   \end{proof}}
   
   We use this lemma to prove the following.
   \begin{lem}\label{l:niceeight}
     Let $Y$ be a compact hyperbolic surface with:
     \begin{itemize}
     \item three simple oriented closed geodesics $\gamma_1, \gamma_2, \gamma_3$;
     \item an orthogeodesic $H_{12}$ leaving from the right of $\gamma_1$ and arriving at the left
       of $\gamma_2$;
     \item an orthogeodesic $H_{23}$ leaving from the right of $\gamma_2$ and arriving at the left
       of $\gamma_3$.
     \end{itemize}
     Let $J\subset \gamma_2$ be the segment joining $t(H_{12})$
     to $o(H_{23})$ and $H$ be the orthogeodesic from $\gamma_1$ to $\gamma_3$, homotopic with
     gliding endpoints to $H_{12}\smallbullet J \smallbullet H_{23}$. Then, if $\gamma$ denotes the
     geodesic representative of the homotopy class
     $\gamma_1 \smallbullet H \smallbullet \gamma_3\smallbullet H^{-1}$, we have
     \begin{align}
       \cosh \div{\ell(\gamma)}\geq \cosh(\ell(J)).
     \end{align}
\end{lem}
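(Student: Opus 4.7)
The plan is to lift to the universal cover $\mathbb{H}^2$ and express $\ell(\gamma)$ as the translation length of the product $AB \in \pi_1(Y)$ of two hyperbolic isometries with axes $\tilde\gamma_1, \tilde\gamma_3$ and common perpendicular $\tilde H$, then combine the resulting trigonometric identity with Lemma \ref{p:jacob} and Lemma \ref{l:basic_ortho}. Specifically, I pick a lift $\tilde\gamma_1$ of $\gamma_1$, a lift $\tilde H$ of $H$ starting on $\tilde\gamma_1$, and let $\tilde\gamma_3$ be the lift of $\gamma_3$ through its other endpoint. A standard covering-space computation identifies the free homotopy class of $\gamma_1 \smallbullet H \smallbullet \gamma_3 \smallbullet H^{-1}$ with the conjugacy class of $AB$, where $A$ (resp.\ $B$) is the hyperbolic isometry of axis $\tilde\gamma_1$ (resp.\ $\tilde\gamma_3$) and translation length $\ell_1 := \ell(\gamma_1)$ (resp.\ $\ell_3 := \ell(\gamma_3)$); hence $\ell(\gamma)$ is the translation length of $AB$.

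A direct $\mathrm{SL}_2(\mathbb{R})$ computation, placing $\tilde H$ on the imaginary axis so that $\tilde\gamma_1, \tilde\gamma_3$ become the geodesics perpendicular to it at its two endpoints, yields
\begin{equation*}
  \cosh(\ell(\gamma)/2) = \cosh(\ell_1/2)\cosh(\ell_3/2) + \cosh(\ell(H))\sinh(\ell_1/2)\sinh(\ell_3/2).
\end{equation*}
The hypotheses that $H_{12}$ leaves from the right of $\gamma_1$ and $H_{23}$ arrives at the left of $\gamma_3$ carry over to $H$ through the gliding-endpoint homotopy defining $H$, and force $A$ and $B$ to translate their respective axes in the same direction relative to $\tilde H$, which is exactly what produces the $+$ sign in the identity above. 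Getting the correct sign is the main subtlety of the argument; the opposite sign would require a slightly more involved case analysis but lead to the same conclusion, since Lemma \ref{l:basic_ortho} implies $\cosh(\ell(H))\sinh(\ell_1/2)\sinh(\ell_3/2) \geq \cosh(\ell_1/2)\cosh(\ell_3/2)$.

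Finally, applying Lemma \ref{p:jacob} to the three geodesics $\gamma_1, \gamma_2, \gamma_3$ (which fit the alignment convention of that lemma up to orientation reversal, since the right/left hypotheses place $\gamma_1$ and $\gamma_3$ on opposite sides of $\gamma_2$) gives
\begin{equation*}
  \cosh(\ell(H)) = \cosh(\ell(H_{12}))\cosh(\ell(H_{23})) + \cosh(\ell(J))\sinh(\ell(H_{12}))\sinh(\ell(H_{23})).
\end{equation*}
Substituting this into the previous identity and dropping the two nonnegative summands $\cosh(\ell_1/2)\cosh(\ell_3/2)$ and $\cosh(\ell(H_{12}))\cosh(\ell(H_{23}))\sinh(\ell_1/2)\sinh(\ell_3/2)$ leaves
\begin{equation*}
  \cosh(\ell(\gamma)/2) \geq \cosh(\ell(J)) \, [\sinh(\ell(H_{12}))\sinh(\ell_1/2)] \, [\sinh(\ell(H_{23}))\sinh(\ell_3/2)],
\end{equation*}
and each of the two brackets is at least $1$ by Lemma \ref{l:basic_ortho} applied respectively to the orthogeodesic $H_{12}$ between $\gamma_1$ and $\gamma_2$, and to $H_{23}$ between $\gamma_2$ and $\gamma_3$. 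This gives the desired inequality $\cosh(\ell(\gamma)/2) \geq \cosh(\ell(J))$.
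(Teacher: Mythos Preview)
Your proof is correct and follows essentially the same route as the paper: both derive the figure-eight length identity $\cosh(\ell(\gamma)/2) = \cosh(\ell_1/2)\cosh(\ell_3/2) + \cosh(\ell(H))\sinh(\ell_1/2)\sinh(\ell_3/2)$, expand $\cosh(\ell(H))$ via the right-angled hexagon formula, drop the two positive summands, and finish with two applications of Lemma~\ref{l:basic_ortho}. The paper simply states the two trigonometric identities as standard facts about figure-eights and hexagons, whereas you justify them by an explicit $\mathrm{SL}_2(\mathbb{R})$ computation and by invoking Lemma~\ref{p:jacob} (whose equation \eqref{e:hexagon} is exactly the hexagon identity you need), and you spend more care on the sign of the cross term; these are presentational differences only.
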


Of course, the orientations of $\gamma_1$, $\gamma_2$ and $\gamma_3$ are irrelevant in the length
estimate, but serve to describe the isotopy class of the picture
$\gamma_1\cup\gamma_2\cup \gamma_2\cup H_{12}\cup H_{23}$.

\begin{proof}
  The geodesic $\gamma$ is a figure eight, going around the two closed geodesics $\gamma_1$ and
  $\gamma_3$, connected by the orthogeodesic $H$. It then follows that
  \begin{align*}
\cosh \div{\ell(\gamma)}&= \cosh \div{\ell(\gamma_1)}\cosh \div{\ell(\gamma_3)}+
\cosh(H) \sinh \div{\ell(\gamma_1)}\sinh \div{\ell(\gamma_3)}\\
&\geq \cosh(H) \sinh \div{\ell(\gamma_1)}\sinh \div{\ell(\gamma_3)}.
  \end{align*}
  Expressing the length of $H$ in terms of the lengths of $J$ and $H_{12}$, $H_{23}$, we obtain 
  \begin{align*}
    \cosh(H)
    & = \cosh(\ell(H_{12})) \cosh(\ell(H_{23})) + \cosh(\ell(J))\sinh(\ell(H_{12}))
      \sinh(\ell(H_{23})) \\
    & \geq \cosh(\ell(J))\sinh(\ell(H_{12}))
      \sinh(\ell(H_{23})).
  \end{align*}
  By Lemma \ref{l:basic_ortho}, $\sinh (\ell(\gamma_1)/2)\sinh(\ell(H_{12}))\geq 1$ and $\sinh
  (\ell(\gamma_3)/2)  \sinh(\ell(H_{23}))\geq 1$, which
  leads to our claim.
\end{proof}

We are now ready to proceed to the proofs of the bounds.

{
\begin{proof}[Proof of Proposition \ref{p:apriori1}]
  The upper bound on $y_\lambda$ is a direct consequence of Remark \ref{r:initlambda}, which states
  that $\Gamma_\lambda$ is freely homotopic to a path $\Gamma_{\lambda, \mathrm{init}}$ on
  $\mathbf{c}$, and hence
  $$y_\lambda=\ell_Y(\Gamma_{\lambda}) \leq \ell(\Gamma_{\lambda, \mathrm{init}})
  \leq  \ell({\mathbf{c}})=\ell.$$

  Let us now prove the bound on $\theta_q$ for an index $q \in \Theta$.
  \begin{itemize}
  \item If $q$ is a U-turn parameter, then we simply observe that
    $0\leq \theta_q\leq \ell(\beta_{\lambda(q)})\leq \ell$.
  \item If $q$ is a crossing parameter, we apply Lemma \ref{l:niceeight} to the simple closed
    geodesics $\gamma_1=\beta_{\lambda( \sigma^{-1} q)}$, $\gamma_2=\beta_{\lambda( q)}$ and
    $\gamma_3= \beta_{\lambda( \sigma q)}$, with the orthogeodesics $H_{12}=\overline{B}_q$ and
    $H_{23}=\overline{B}_{\sigma q}$, so that $J= \overline{\cI}_q$. We check that the figure-eight
    $\gamma$ produced by the lemma is freely homotopic to a loop $\gamma_{\mathrm{init}}$ drawn on
    $\mathbf{c}$, going along $\beta_{\lambda(\sigma^{-1}q),\mathrm{init}}$,
    $\beta_{\lambda(\sigma q), \mathrm{init}}$ and twice $\cI_{q, \mathrm{init}}$. We conclude that
    \begin{equation*}
      \cosh(\theta_q)\leq  \cosh(\ell(\gamma)) \leq \cosh(2 \ell).
    \end{equation*}
  \end{itemize}

  The bounds on the twist parameters are obtained the same way, constructing for each case (Case (a)
  and (b)) a figure-eight $\gamma$ such that $|\alpha_\lambda| \leq \ell(\gamma) \leq 3 \ell$ (the
  factor of $3$ being there for Case (b), due to repeated portions of $\mathbf{c}$ used to describe
  the homotopy class of $\gamma$). 
\end{proof}}
 
\subsection{Polynomial bounds on volumes and the case of double-filling loops}

One can straightforwardly deduce from the apriori bound on Fenchel--Nielsen parameters,
Proposition~\ref{p:apriori1}, a rough polynomial upper bound on the volume of the set of hyperbolic
surfaces $Y \in\cT^*_{\g, \n}$ such that $\ell_Y({\mathbf{c}})\leq \ell$.
A similar statement was obtained when $\mathbf{S}$ is a once-holed torus in
\cite[eq. (8.2)]{Ours1}.  

\begin{cor}\label{c:pvolume} For any family of distinct indices $(\vi, \vi')_{1 \leq j \leq k}$ in
  $\partial \mathbf{S}$, any $\ell \geq 0$,
  \begin{align} 
    \int_{\ell_Y({\mathbf{c}})\leq \ell}    
    \prod_{j=1}^k  \delta(x_{\vi} - x_{\vi'})
    \d \mathrm{Vol}^{\mathrm{WP}}_{\g, \n}(\x, Y)  
    \leq (3\ell)^{3\chi(\mathbf{S})}. \label{e:pol_bound}
  \end{align} 
\end{cor}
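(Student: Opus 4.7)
The plan is to reduce everything to a crude Lebesgue estimate by working in Fenchel--Nielsen coordinates based on the pair of pants decomposition and twist parameters constructed in the proof of Theorem \ref{t:maindet}, and then to apply the a priori bounds of Proposition \ref{p:apriori1}. The main virtue of these particular Fenchel--Nielsen coordinates is that Wolpert's formula \eqref{eq:WPFN2} reads simply
\begin{equation*}
\d \mathrm{Vol}^{\mathrm{WP}}_{\g, \n}(\x, Y) = \prod_{\lambda \in \Lambda} \d y_\lambda \prod_{\lambda \in \Lambdain} \d \alpha_\lambda,
\end{equation*}
and that the boundary lengths $x_j$ are exactly the length parameters $y_\lambda$ for $\lambda \in \LambdaBC$ under the identification $\LambdaBC = \{1, \ldots, \n\} = \partial \mathbf{S}$. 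The total number of coordinates is $\#\Lambda + \#\Lambdain = (3\g-3+2\n) + (3\g-3+\n) = 3\chi(\mathbf{S})$, which matches the exponent in the stated bound.

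The second step is to apply Proposition \ref{p:apriori1}: on the sublevel set $\{Y \in \cT_{\g,\n}^* : \ell_Y(\mathbf{c}) \leq \ell\}$, every length parameter satisfies $y_\lambda \in [0, \ell]$ and every twist parameter satisfies $|\alpha_\lambda| \leq 3\ell$. Therefore each coordinate ranges over an interval whose length is bounded by a fixed multiple of $\ell$. Integrating the Weil--Petersson measure against the indicator of $\{\ell_Y(\mathbf{c}) \leq \ell\}$ alone already produces a polynomial bound of the form $C \ell^{3\chi(\mathbf{S})}$.

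The third step handles the Dirac factors $\prod_{j=1}^k \delta(x_{\vi} - x_{\vi'})$. Since each $x_{\vi}$ and $x_{\vi'}$ is one of the Fenchel--Nielsen length coordinates $y_\lambda$ with $\lambda \in \LambdaBC$, and the indices $(\vi, \vi')_{j}$ are distinct pairs, each Dirac collapses one pair of independent length integrations over $[0,\ell]\times [0,\ell]$ into a single integration over the diagonal $\{x=x'\}\cap [0,\ell]^2$, which has length $\ell$. Each Dirac therefore removes exactly one factor of $\ell$ (and one integration variable), so after integrating out the $k$ Diracs we are left with an ordinary Lebesgue integral over $3\chi(\mathbf{S})-k$ coordinates, each living in an interval of length bounded by a constant times $\ell$. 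Multiplying the sizes of these intervals yields the desired polynomial upper bound.

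The main (minor) obstacle is bookkeeping the constants: the length variables range over intervals of length $\ell$ while the twists range over intervals of length $6\ell$, so the raw product bound comes out as $6^{\#\Lambdain}\,\ell^{3\chi(\mathbf{S})-k}$. Absorbing the numeric factor $6^{\#\Lambdain}$ into the base and using $\ell^{-k} \leq 1$ in the regime of interest (or equivalently absorbing the discrepancy between $\ell$ and $6\ell$ into the common base $3\ell$) yields the stated bound $(3\ell)^{3\chi(\mathbf{S})}$. The substantive geometric input, namely the linear-in-$\ell$ control of twist parameters in \eqref{e:twist_bound}, has already been done in Proposition \ref{p:apriori1}, so no further geometric work is needed.
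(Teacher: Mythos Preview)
Your proposal is correct and follows essentially the same approach as the paper: write the Weil--Petersson measure in the Fenchel--Nielsen coordinates from \eqref{eq:WPFN2}, then use Proposition~\ref{p:apriori1} to confine every length and twist parameter to an interval of size $\cO(\ell)$, with each Dirac mass collapsing one boundary-length integration. The paper's proof is equally terse on the constant bookkeeping, so your mild hand-waving about absorbing $6^{\#\Lambdain}$ into the base $(3\ell)$ is in keeping with the intended level of precision.
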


\begin{proof}
  We simply write the expression of
  $ \d \mathrm{Vol}^{\mathrm{WP}}_{g_{\mathbf{S}}, n_{\mathbf{S}}}(\x,Y) $ in Fenchel--Nielsen
  coordinates as in~\eqref{e:WPFN}, and use the bounds given by Proposition \ref{p:apriori1} on the
  lengths and twist angles.
\end{proof}

\begin{rem}
  We have assumed at the beginning of \S \ref{s:nc} that the loop $\mathbf{c}$ is a generalized
  eight, and therefore, as of now, we have only proven Corollary \ref{c:pvolume} for generalized
  eights.  For the sake of the following application, it will be useful to observe that this result
  can be extended to any multi-loop $\mathbf{c}$. This proof relies on definitions from Section
  \ref{s:othercases} and can be omitted at first read; it is only presented here for completeness
  and to allow to state the interesting Lemma \ref{c:double_fills} below.

  \begin{proof}[Proof of Corollary \ref{c:pvolume} for general loops]
    Let $\mathbf{c}'$ be a skeleton of $\mathbf{c}$, and $\mathbf{S}'$ be the surface filled by
    $\mathbf{c}'$. If $\mathbf{S}'=\mathbf{S}$, then by Proposition \ref{p:remove},
    $\ell_Y({\mathbf{ c}}')\leq \ell_Y({\mathbf{ c}})$, from which we deduce the claim. Otherwise,
    $\mathbf{S}$ is obtained from $\mathbf{S}'$ by gluing a finite number of cylinders and adding
    cylindrical bars. We add the corresponding twist parameters to our coordinate system and remove
    the $\theta_q$ that have become redundant. Corollary \ref{c:pvolume} persists because we can
    choose those twist parameters so that they still satisfy~\eqref{e:twist_bound}, by the same proof.
  \end{proof}
\end{rem}

As in \cite[\S 8.1]{Ours1}, we can use polynomial volume bounds to prove the following special case
of our main result, Theorem \ref{t:main}, for double-filling loops (defined in Definition
\ref{d:double_fill}).

\begin{lem}\label{c:double_fills}
  \label{lem:df}
  If $\mathbf{c}$ is double-filling, then $\cJ \in \cR_w^{\rN}$ and
  $$\norm{\cJ}_{\cR_w^{\rN}} \leq \fn(\g,\n,\rK,\rN) \prod_{j \in
    V}\|\tilde{g}_j\|_{\cF^{\rK_j,\rN_j}}.$$
\end{lem}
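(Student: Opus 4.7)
The plan is to bound $\int_0^\ell |\cJ(s)|\,ds$ directly, using three ingredients already in place: the sharp comparison estimate of Proposition~\ref{p:comparison1} for double-filling multi-loops, a pointwise estimate on each $f_j$, and the crude polynomial volume bound of Corollary~\ref{c:pvolume}. By the definition of $\cJ$ and Fubini,
\[
\int_0^\ell |\cJ(s)|\,ds
\leq \int_{\substack{(\x,Y)\in\cT^*_{\g,\n} \\ \ell_Y(\mathbf{c})\leq \ell}}
\prod_{j\in V}|f_j(x_j)|\, \prod_{j=1}^k x_{\vi}\,\delta(x_{\vi}-x_{\vi'}) \,\d\mathrm{Vol}^{\mathrm{WP}}_{\g,\n}(\x,Y),
\]
so it suffices to control the integrand pointwise and multiply by the volume of the level set.

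For the pointwise bound, Remark~\ref{rem:boundsFR} applied to $\tilde g_j\in \cF^{\rK_j,\rN_j}$ gives $|\tilde g_j(x)|\leq \fn(\rK_j,\rN_j)\,\|\tilde g_j\|_{\cF^{\rK_j,\rN_j}}(1+x)^{\rK_j-1}e^x$, and an elementary calculation from condition~\eqref{eq:condition_mu} shows that $\fz(x)^2/\sinh(x/2)\leq M\,e^{-x/2}$ for a numerical constant $M$. Hence
\[
|f_j(x)| = |\tilde g_j(x)|\,\fz(x)^2/\sinh(x/2) \leq M\,\fn(\rK_j,\rN_j)\,\|\tilde g_j\|_{\cF^{\rK_j,\rN_j}}(1+x)^{\rK_j-1}e^{x/2}.
\]
Because $\mathbf{c}$ is double-filling, $\rho[\mathbf{T}]=0$ and Proposition~\ref{p:comparison1} yields the sharp inequality $\sum_{i=1}^{\n}x_i\leq \ell_Y(\mathbf{c})$, which after the Dirac identifications reads $\sum_{j\in V}x_j + 2\sum_{j=1}^k x_{\vi}\leq \ell$ on the integration domain. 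Multiplying the pointwise bounds,
\[
\prod_{j\in V}|f_j(x_j)|\cdot\prod_{j=1}^k x_{\vi} \leq C_0\,(1+\ell)^{\sum_{j\in V}(\rK_j-1)}\,e^{\ell/2}\,\prod_{j=1}^k x_{\vi}\,e^{-x_{\vi}},
\]
with $C_0 = \fn(\g,\n,\rK,\rN)\prod_{j\in V}\|\tilde g_j\|_{\cF^{\rK_j,\rN_j}}$; the trivial inequality $x e^{-x}\leq 1$ then absorbs the remaining Dirac prefactor.

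Inserting this into the integral and applying the polynomial volume bound of Corollary~\ref{c:pvolume},
\[
\int_0^\ell |\cJ(s)|\,ds \leq C_0\,(3\ell)^{3\chi(\Sf)}\,(1+\ell)^{\sum_{j\in V}(\rK_j-1)}\,e^{\ell/2} \leq C_0'\,(1+\ell)^{\rN-1}\,e^{\ell/2},
\]
since $\rK_j\leq \rN_j$ forces $\sum_{j\in V}(\rK_j-1)+3\chi(\Sf)\leq \sum_{j\in V}\rN_j+3\chi(\Sf)=\rN-1$. This is exactly the weak $\cR^{\rN}$-norm estimate required.

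The argument involves no real obstacle: the content of the lemma is the conceptual observation that \emph{double-filling drops the exponential growth rate of the integrand from $e^\ell$ down to $e^{\ell/2}$}, hitting precisely the threshold encoded in the definition of $\cR_w$. For loops with unshielded simple portions, Proposition~\ref{p:comparison1} only yields $\sum x_i\leq 2\ell$, giving an $e^\ell$ bound that is not a weak Friedman--Ramanujan function, and the property must instead be extracted via the pseudo-convolution machinery developed in the remainder of the paper.
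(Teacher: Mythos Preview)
Your proof is correct and follows essentially the same approach as the paper: bound $\int_0^\ell|\cJ|$ by the integral over $\{\ell_Y(\mathbf c)\leq\ell\}$, use Remark~\ref{rem:boundsFR} to estimate each $f_j$, invoke Proposition~\ref{p:comparison1} in its sharp double-filling form $\sum x_i\leq\ell$ to get the $e^{\ell/2}$ growth, and finish with the polynomial volume bound of Corollary~\ref{c:pvolume}. The only cosmetic difference is in the treatment of the Dirac prefactors: the paper uses $x_{\vi}\leq e^{x_{\vi}/2}$ and lets the full sum $\sum_i x_i$ absorb it, while you route the extra $e^{-x_{\vi}}$ back from the comparison estimate and kill it with $xe^{-x}\leq 1$; both are equivalent.
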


\begin{proof}[Proof of Lemma \ref{lem:df}]
  Let us write
  \begin{align*} \int_0^\ell| \cJ(s)| \d s \leq \int_{ \ell_Y({\mathbf{c}})\leq \ell} \, \prod_{j\in
      V} | f_j(x_j)| \prod_{j=1}^k x_{\vi} \delta(x_{\vi} - x_{\vi'}) \d
    \mathrm{Vol}^{\mathrm{WP}}_{g_{\mathbf{S}}, n_{\mathbf{S}}}(\x, Y).
  \end{align*} 
  By hypothesis on the functions $f_j$ and using equation \eqref{e:normbound1} on $\tilde{g}_j$, we
  have for each $j \in V$,
  $$| f_j(x_j) |\leq \fn(\rK,\rN) \|\tilde{g}_j\|_{\cF^{\rK_j,\rN_j}} (1+x_j)^{\rK_j-1} e^{x_j/2}$$
  and for each $1 \leq j \leq k$, $x_{\vi} \leq e^{x_{\vi}/2}$.
  By Proposition \ref{p:comparison1}, if $\ell_Y({\mathbf{c}})\leq \ell$ and $\mathbf{c}$ is
  double-filling, then $\sum_{j=1}^{\n} x_j \leq \ell$ and hence
  \begin{align*} \prod_{j\in V}  |f_j(x_j)|     \prod_{j=1}^k x_{\vi} 
    \leq \fn(\rK,\rN) \prod_{j \in V}\|\tilde{g}_j\|_{\cF^{\rK_j,\rN_j}}
    (1+\ell)^{\sum_{j \in V}(\rK_j-1)} e^{\ell/2}.
  \end{align*}   
  With Corollary \ref{c:pvolume}, using the hypothesis $\rK_j \leq \rN_j$ and the definition
  $\rN=\sum_{j \in V} \rN_j+3\chi(\Sf)+1$, this implies that
  $$\int_0^\ell |\cJ(s) | \d s \leq
  \fn(\g,\n,\rK,\rN) \prod_{j \in V}\|\tilde{g}_j\|_{\cF^{\rK_j,\rN_j}}
  (1+\ell)^{\rN-1} e^{\ell/2},$$
  which allows us to conclude and to obtain the claimed bound.
\end{proof}
        
\begin{rem} Had we not assumed that $\mathbf{c}$ was double-filling, Proposition \ref{p:comparison1}
  would only have implied that
  \begin{align}\label{e:weak-FR} \int_0^\ell | \cJ(s)| \d s
    \leq   \fn(\g,\n,\rK,\rN) \prod_{j \in V}\|\tilde{g}_j\|_{\cF^{\rK_j,\rN_j}}
    (1+\ell)^{\rN-1} 
    e^{\ell},\end{align}
  i.e. a similar upper bound but without the factor $1/2$ in the exponential, which is not enough to
  conclude to Theorem~\ref{t:main}. 
\end{rem}

%%%Local Variables: 
%%% mode: latex
%%% TeX-master: "main"
%%% End: 

\section{Expression of length functions in the new coordinate system}
\label{s:lengthf}
The aim of this section is to give explicit expressions in the coordinates $(\vec{L}, \vec{\theta})$
for the length $\ell_Y({\mathbf{c}})$, but also for all the lengths $y_\lambda$ of the multi-curve
$(\Gamma_\lambda)_{\lambda\in \Lambda}$ defined in \S \ref{s:ppdecompo}.

\subsection{Trajectories in $T^1 {\mathbf{S}}$ and multiplication in $\mathrm{SL}(2, \R)$}
\label{s:sl2}
 
It is a classical fact that $\cT^*_{\g, \n}$ may be seen as the space of discrete faithful
representations of the fundamental group $\pi_1(\mathbf{S})$ into $\mathrm{PSL}(2, \R)$ (modulo
conjugacy). Moreover, if $Y\in \cT^*_{\g, \n}$ corresponds to the representation
$\rho : \pi_1(\mathbf{S}) \To \mathrm{PSL}(2, \R)$, we can express all lengths thanks to the trace
of $\rho$:
\begin{align} \label{e:trace_cosh}2 \cosh \div{\ell_Y(\gamma)}= |\Tr\, \rho(\gamma)|\end{align}
(where $\rho(\gamma)\in \mathrm{PSL}(2, \R)$ is the image under $\rho$ of the homotopy class of
$\gamma$).  Our method to express lengths in the new coordinates consists in providing an explicit
expression for $\rho(\gamma)$ in~\eqref{e:trace_cosh}, by writing a representative of the homotopy
class of $\gamma$ as a succession of orthogonal segments.

We recall that the unit tangent bundle $T^1 \IH^2$ itself can be identified with
$\mathrm{PSL}(2, \R)$. We can then identify different geometric actions on $T^1\IH^2$ with the
right-multiplication by a specific matrix: $a^t$ is the action of the geodesic flow at time $t$,
$w^t$ the conjugate of the geodesic flow by a rotation of angle $\pi/ 2$, and $k^\theta$ the
rotation of angle $\theta$ by $k^\theta$, where
\begin{align} \label{e:matmult}
  a^t =
  \begin{pmatrix}
    e^{\frac{ t}{2}} & 0 \\ 0 & e^{-\frac{t}{2}}
  \end{pmatrix}
  \qquad
   w^t= 
  \begin{pmatrix}
    \cosh \div{t} & \sinh \div{t} \\ \sinh \div{t} & \cosh \div{t}
      \end{pmatrix}
  \qquad
 k^\theta= 
  \begin{pmatrix}
    \cos \div{\theta} & \sin \div{\theta} \\- \sin \div{\theta} & \cos \div{\theta}
      \end{pmatrix}.
\end{align}
      These moves are represented on Figure \ref{fig:flows}.

 \begin{figure}[h!]
     \includegraphics{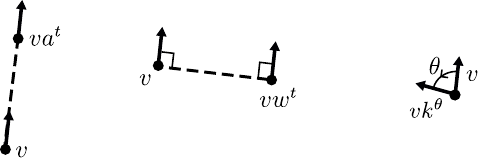}
     \caption{The moves corresponding to the matrices $a^t$, $w^t$ and $k^\theta$.}
     \label{fig:flows}
   \end{figure}

It is important to note that, while $ a^t$ always has non-negative entries, this is true of $w^t$ only if $t\geq 0$. In what follows, a lot of effort
is devoted to using $w^t$ only for $t\geq 0$.

\subsection{Expression of $\ell_Y({\mathbf{c}})$ in the new coordinates\label{s:expell}}

Let us now focus on finding an expression for $\ell_Y(\mathbf{c})$ in terms of
$(\vec{L}, \vec{\theta})$.  We shall do so component by component, i.e. provide an expression for
the $i$th component $\mathbf{c}_i$ of $\mathbf{c}$ for $i \in \{1, \ldots, \cc\}$. Oftenwise, we
will focus on one component $\mathbf{c}_i$ at a time, in which case we will drop the mention of $i$
to lighten notations. This shall always be mentioned, to avoid any confusion.

\subsubsection{Piecewise geodesic representative of ${\mathbf{ c}_i}$ \label{s:straightening2}}

First, we recall that we saw in \eqref{e:decompo-ibis} that the homotopy class of $\cop_i$ can be
written, for any index $q \in \Theta^i$, as
\begin{equation*}
  \cop_i = p(q) \smallbullet p(\sigma q) \smallbullet \ldots \smallbullet p(\sigma^{n_i-1}q)
\end{equation*}
where, for any $q'$, $p(q')$ is the concatenation of the bar $B_{q' }$ and the simple portion
$\cI_{q'}$.

Consider a point $Y\in \cT^*_{\g, \n}$. As suggested in Remark \ref{rem:beta_geod}, we can pick our
representative so that $\beta$ is a multi-geodesic. We then consider the representative of the
homotopy class of ${\mathbf{c}}_i$ given by a cyclic succession of orthogonal geodesics segments:
 \begin{align}\label{e:decompo-ig}\bar p(q)\smallbullet \bar p(\sigma(q)) \smallbullet \ldots
   \smallbullet \bar p(\sigma^{n_i-1}q)\end{align}
 for a $q\in \Theta^i$, where $\bar p(q):=\overline{B}_q \smallbullet \overline{\cI}_q.$ Each bar
 $ \overline{B}_q$ is now an orthogeodesic from $\beta_{\lambda(\sigma^{-1}q)}$ to
 $\beta_{\lambda(q)}$.

 For the rest of \S \ref{s:expell}, it will be convenient to use the relabelling of $\Theta$
 introduced in \S \ref{s:relabel_Theta}, which allows us to identify $\Theta^i$ with the cyclic set
 $\Z_{n_i} = \Z \diagup n_i \Z$. Through this identification, going through the elements
 $k \in \Z_{n_i}$ corresponds to going through the elements $\sigma^k q$ in \eqref{e:decompo-ig}.
 We will therefore write $\overline{B}(i,k)=\overline{B}(\sigma^kq)$,
 $\overline{\cI}(i,k) = \overline{\cI}(\sigma^kq)$, $\theta(i,k) = \theta(\sigma^kq)$ etc. Following
 Notation~\ref{nota:signL}, we will also write $L(i,k) = L(\sigma^k q)$ and
 $\epsilon L(i,k) = \epsilon L(\sigma^kq)$. As a consequence, the consecutive lengths of all the
 segments encountered in \eqref{e:decompo-ig} are (up to cyclic permutation)
$$L(i,1), \theta(i,1), \ldots, L(i,n_i), \theta(i,n_i).$$
The sign of $\epsilon L(i,k)$ tells us in which direction the bar $\overline{B}(i,k)$ is taken at the
step $k$.

\subsubsection{Lift of ${\mathbf{ c}}$ to the universal cover}
\label{s:lift}

 We now consider a lift of the closed path \eqref{e:decompo-ig} to the universal cover of $\Sf$,
 identified with a subset of the hyperbolic plane. For each index $q\in {\mathbf{\Theta}}^i$, let
 $\tilde \beta_q \subset \IH^2$ be a lift of the geodesic $\beta_{\lambda(q)}$ such that, if
 $\tilde B_{\sigma^{-1}q, q}$ denotes the oriented orthogeodesic going from
 $\tilde \beta_{\sigma^{-1}q}$ to $\tilde \beta_q$, then $\tilde B_{\sigma^{-1}q, q}$ projects
 down to $ \overline{B}_q$ on the surface.

 {Whilst there was a priori no natural orientation for the multi-curve $\beta$ on
   $\mathbf{S}$, we can now orient the lifts $\tilde{\beta}_q$ so that the bar
   $\tilde B_{\sigma^{-1}q, q}$ arrives on the right of $\tilde \beta_q$ for each
   $q \in \mathbf{\Theta}^i$ with $\sign(q)=+$ and on the left otherwise. We then make the following
   crucial observation.
   \begin{lem}
     \label{lem:beta_al}
   The geodesics $(\tilde \beta_q)_{q \in \mathbf{\Theta}^i}$ are disjoint and, for any
   $q \in \mathbf{\Theta}^i$, $\tilde{\beta}_q$ and $\tilde{\beta}_{\sigma q}$ are
   aligned. $\tilde{\beta}_{\sigma q}$ is on the left of $\tilde{\beta}_{q}$ if and only if $\sign(q)=+$.
 \end{lem}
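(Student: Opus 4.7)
I will prove Lemma \ref{lem:beta_al} in three steps: first a local analysis of two consecutive lifts, then the alignment direction, and finally the global disjointness statement.

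The plan is to start from the fact that $\tilde B_{\sigma^{-1}q, q}$ is by construction a common orthogeodesic of $\tilde\beta_{\sigma^{-1}q}$ and $\tilde\beta_q$ of length $L(q) > 0$. In $\mathbb{H}^2$, two distinct geodesics admit a common orthogonal of positive length if and only if they are ultra-parallel (disjoint and non-asymptotic), and in that case the orthogonal is unique. Since the bar lengths are strictly positive on $\cT^*_{\g,\n}$, this shows immediately that $\tilde\beta_{\sigma^{-1}q}$ and $\tilde\beta_q$ are distinct, disjoint and non-asymptotic, so in particular the cross-ratio of their endpoints at infinity is not in $(0,1)$.

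For the alignment direction, I would do a case analysis depending on the sign $\sign(q)$ and on whether the intersection between $\sigma^{-1}q$ and $q$ was opened the first or the second way. By the orientation convention adopted above the lemma, the bar $\tilde B_{\sigma^{-1}q, q}$ arrives on the right of $\tilde\beta_q$ if and only if $\sign(q)=+$. Symmetrically, reading off the rules in \S \ref{s:orientation_bars} for the two openings (in the first way $B^+_j$ arrives on the right of the simple portions at its terminus and leaves from the left of those at its origin; in the second way both signed bars have the same orientation, and the outgoing/incoming sides are determined by whether the two simple portions terminate or originate at the bar endpoint), one obtains from the orientation convention of $\tilde\beta_{\sigma^{-1}q}$ that the bar leaves that lift from a side controlled by $\sign(\sigma^{-1}q)$. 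Combining the two perpendicular incidences pins down the relative position: $\tilde\beta_{\sigma q}$ lies in the left half-plane of $\tilde\beta_q$ if and only if $\sign(q)=+$, and in particular the cross-ratio is $\leq 0$, which is the definition of alignment. This is a finite case check but contains no real difficulty beyond careful bookkeeping of sides.

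The global disjointness is the main obstacle. Here I would use that $\beta$ is a simple geodesic multi-curve on $\mathbf{S}$, so any two distinct lifts of components of $\beta$ to the simply connected space $\mathbb{H}^2$ are automatically disjoint; it therefore suffices to prove that the map $q \mapsto \tilde\beta_q$ is injective on $\mathbf{\Theta}^i$. I would establish this by induction along the path, iterating the Useful Remark (Remark \ref{r:useful}): applied to the configuration consisting of $\tilde\beta_{\sigma q}$, $\tilde\beta_q$, $\tilde\beta_{\sigma^2 q}$ and the two bars $\tilde B_{q,\sigma q}$, $\tilde B_{\sigma q,\sigma^2 q}$, it yields that consecutive orthogeodesics do not cross and that the next lift stays in the appropriate half-plane; iterating, the sequence $(\tilde\beta_{\sigma^k q})_{k\in\Z}$ moves monotonically along a spine in $\mathbb{H}^2$. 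The delicate case is when $\sign(\sigma q)$ equals $\sign(q)$, because then $\tilde\beta_q$ and $\tilde\beta_{\sigma^2 q}$ lie on the same side of $\tilde\beta_{\sigma q}$ and one cannot conclude disjointness from a half-plane separation argument alone; one must use the Useful Remark to extract that the two orthogeodesics starting from $\tilde\beta_{\sigma q}$ go to distinct east/west sectors, ensuring the successive lifts drift away. Combined with the periodicity under the deck transformation generated by $\mathbf{c}_i$, which is a primitive hyperbolic isometry since $\mathbf{c}$ is a generalized eight and hence no $\mathbf{c}_i$ is a proper power, one rules out the scenario where the chain returns to a previous lift, concluding the injectivity and therefore the pairwise disjointness of the family $(\tilde\beta_q)_{q\in\mathbf{\Theta}^i}$.
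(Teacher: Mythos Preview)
The paper states this lemma as a ``crucial observation'' without proof, so there is no paper argument to compare against; I evaluate your proposal on its own merits.

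Your Steps 1 and 2 are fine: the existence of a common perpendicular of positive length forces consecutive lifts to be ultra-parallel, and the left/right alignment claim is indeed a finite bookkeeping check of the orientation rules from \S\ref{s:orientation_bars}.

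Step 3 has a genuine gap. You correctly reduce to showing that $q\mapsto\tilde\beta_q$ is injective (simplicity of $\beta$ then gives disjointness for free). But your inductive step is circular: Remark~\ref{r:useful} takes as \emph{hypothesis} that the three geodesics $\gamma_0,\gamma_1,\gamma_2$ are distinct and non-intersecting, so you cannot invoke it with $\gamma_1=\tilde\beta_q$ and $\gamma_2=\tilde\beta_{\sigma^2 q}$ in order to conclude that these two are different. If in fact $\tilde\beta_q=\tilde\beta_{\sigma^2 q}$, then the two orthogeodesics from $\tilde\beta_{\sigma q}$ to this common geodesic coincide by uniqueness of the common perpendicular, and the Useful Remark says nothing. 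Separately, your assertion that ``no $\mathbf{c}_i$ is a proper power since $\mathbf{c}$ is a generalized eight'' is unsupported: Definition~\ref{d:GE} only constrains contractibility of components of $\partial\cN$, and says nothing about primitivity of the $\mathbf{c}_i$.

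What is missing is an input that breaks the circularity. The natural one is the \emph{embedded topological diagram}: the bars $B_1,\ldots,B_r$ have interiors disjoint from $\beta$ and from one another, so every lifted bar lies in a single complementary component of $\IH^2\setminus\bigcup\tilde\beta$ and meets no lift of $\beta$ except at its two endpoints. This lets you run a genuine separation induction (each lifted bar stays in one half-plane of every $\tilde\beta_{q'}$ it does not touch), or equivalently argue in the Bass--Serre tree dual to the lifts of $\beta$. The generalized-eight hypothesis (no component of $\mathbf{S}\setminus\bD$ is a disc, cf.\ Remark~\ref{p:D8}) is then what rules out a putative return $\tilde\beta_q=\tilde\beta_{\sigma^m q}$: the lifted topological subpath between them, closed up along $\tilde\beta_q$, would project to a disc bounded by arcs of $\beta$ and bars. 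That is the piece of geometry your argument needs, rather than primitivity or the Useful Remark.
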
}

We denote by $\tilde{\cI}_q \subset \tilde{\beta}_q$ the segment between the endpoint of
$\tilde B_{\sigma^{-1}q, q}$ and the origin of $\tilde B_{q, \sigma q}$. Then the segment
$\tilde{\cI}_q$ projects down to the segment $\overline{\cI}_q$ on the surface.  If we call
$\tilde p(q)=\tilde B_{\sigma^{-1}q, q} \smallbullet \tilde{\cI}(q)$, then the infinite
concatenation $\smallbullet_{q\in {\mathbf{\Theta}}^i}\; \tilde p(q)$ of orthogonal geodesics
segments in the hyperbolic plane projects down to \eqref{e:decompo-ig} on the surface. We shall
denote by $\tilde{\mathbf{c}}_i$ the infinite geodesic homotopic to
$\smallbullet_{q\in {\mathbf{\Theta}}^i}\; \tilde p(q)$ in the hyperbolic plane.

\begin{nota}
  The notation $\tilde B_{mn}$, for the moment only defined for $n=\sigma m$, may be
  extended to arbitrary $m, n\in {\mathbf{\Theta}}^i$. We call $\tilde B_{mn} \subset \IH^2$ the
  oriented orthogeodesic segment from $\tilde \beta_m$ to~$\tilde \beta_n$.  We denote by
  $L_{mn}>0$ its length and $z_{mn} \in \tilde\beta_n$ its terminus.
\end{nota}

\subsubsection{Expression of the length of $\mathbf{c}_i$  as a trace}
\label{sec:expression-length-as}

Starting from \eqref{e:decompo-ig} we now find a formula expressing each
$\ell_Y({\mathbf{c}}_i)$ as the trace of a product of matrices.

\begin{lem}\label{p:trace1} For $i \in \{1, \ldots, \cc\}$, the length of the component
  ${\mathbf{c}}_i$ is given by the formula
\begin{equation}
  \label{eq:l_gamma_product_trace}
  \cosh \div{\ell_Y({\mathbf{c}}_i)} = \frac{1}{2} \,  \Tr \paren*{a_1^ib_1^i \ldots a_{n_i}^ib_{n_i}^i}
\end{equation}
where the matrices $a_k^i, b_k^i$ are defined by
\begin{equation} \label{e:ajbj}
  a_k^i = a^{\epsilon L(i,k)}=
  \begin{pmatrix}
    e^{\frac{\epsilon L(i,k)}{2}} & 0 \\ 0 & e^{-\frac{\epsilon L(i,k)}{2}}
  \end{pmatrix}
  \qquad 
 b_k^i = w^{\theta(i,k)}=
  \begin{pmatrix}
    \cosh \div{\theta(i,k)} & \sinh \div{\theta(i,k)} \\ \sinh \div{\theta(i,k)} & \cosh \div{ \theta(i,k)}
  \end{pmatrix}.
\end{equation}
\end{lem}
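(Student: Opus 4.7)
The plan is to use the standard identification of $\cT^*_{\g,\n}$ with the space of discrete faithful representations $\rho : \pi_1(\mathbf{S}) \to \mathrm{PSL}(2,\R)$, through which \eqref{e:trace_cosh} reduces the claim to the matrix identity
\[
\rho({\mathbf{c}}_i) = a_1^i b_1^i \cdots a_{n_i}^i b_{n_i}^i \qquad \text{in } \mathrm{PSL}(2,\R).
\]
I will establish this by tracking a frame through one period of the piecewise-geodesic representative \eqref{e:decompo-ig}. Recall that under $T^1 \IH^2 \cong \mathrm{PSL}(2,\R)$, right-multiplication by $a^t$ is the geodesic flow at signed time $t$, whilst right-multiplication by $w^t = k^{-\pi/2} a^t k^{\pi/2}$ is the perpendicular translation of signed length $t$, which preserves the tangent direction of the frame by parallel transport in constant curvature.

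Lift ${\mathbf{c}}_i$ to the universal cover as in \S\ref{s:lift}, and fix a reference frame, identified with $e \in \mathrm{PSL}(2,\R)$, based at the origin of $\tilde B_{(i,0),(i,1)}$ with tangent vector pointing along this bar. I claim inductively that, for each $k = 1, \ldots, n_i$, right-multiplication by the partial product $a_1^i b_1^i \cdots a_k^i b_k^i$ transports the frame to the origin of the next bar $\tilde B_{(i,k),(i,k+1)}$, pointing along it. Indeed, $a_k^i = a^{\epsilon L(i,k)}$ flows the frame along $\tilde B_{(i,k-1),(i,k)}$ to the terminus $z_{(i,k-1),(i,k)} \in \tilde\beta_{(i,k)}$, still perpendicular to $\tilde\beta_{(i,k)}$ (with the sign of $\epsilon L(i,k)$ fixing the direction of traversal consistently with the ``left-of'' convention of Lemma \ref{lem:beta_al}). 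Next, $b_k^i = w^{\theta(i,k)}$ translates the base point along $\tilde\beta_{(i,k)}$ (the perpendicular direction to the current frame) by algebraic length $\theta(i,k)$, without altering the frame direction. Finally, by Lemma \ref{lem:beta_al}, $\tilde\beta_{(i,k)}$ and $\tilde\beta_{(i,k+1)}$ are aligned, so the perpendicular direction to $\tilde\beta_{(i,k)}$ at the new base point agrees with the direction of $\tilde B_{(i,k),(i,k+1)}$, closing the induction. After $n_i$ iterations, the frame has been transported by exactly one period of ${\mathbf{c}}_i$, hence by the deck transformation $\rho({\mathbf{c}}_i)$, yielding the matrix identity above. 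The identity \eqref{eq:l_gamma_product_trace} then follows from \eqref{e:trace_cosh}, the sign ambiguity being resolved by a coherent lift to $\mathrm{SL}(2,\R)$, or by continuity from the small-parameter regime in which every factor is near the identity and the trace is close to $+2$.

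The main obstacle is the bookkeeping of signs: one must verify that the encoding $\epsilon L(i,k) = \sign(q^i_k) L_{j^i_k}$ is compatible with the direction conventions underlying $a^t$ and $w^t$ when $t < 0$ (where the off-diagonal entries of $w^t$ become negative), and that this matches the ``aligned, left-of'' conventions of Lemma \ref{lem:beta_al}. Once these conventions are pinned down consistently with the choice of initial frame, the iteration is mechanical and reduces to the matrix multiplications read off from \eqref{e:matmult}.
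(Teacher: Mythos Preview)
Your frame-tracking argument, establishing that $a_1^i b_1^i \cdots a_{n_i}^i b_{n_i}^i$ is conjugate (via the initial frame $\tilde v$) to $\rho(\mathbf{c}_i)$ in $\mathrm{PSL}(2,\R)$, is correct and is exactly the paper's approach, recorded there more tersely as \eqref{e:conjugacy}. This yields $2\cosh(\ell_Y(\mathbf{c}_i)/2) = \lvert\Tr(a_1^i b_1^i \cdots a_{n_i}^i b_{n_i}^i)\rvert$.

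The gap is in removing the absolute value, and neither of your suggestions suffices. Invoking a ``coherent lift to $\mathrm{SL}(2,\R)$'' is circular: the product already \emph{is} a specific element of $\mathrm{SL}(2,\R)$, and the content of the lemma is precisely to identify which of the two preimages of the hyperbolic element $\rho(\mathbf{c}_i)$ it equals. The continuity argument fails because the small-parameter regime does not lie in the closure of the geometric domain $\domain$: if all $\theta_q \to 0$ then every $y_\lambda = \ell_Y(\beta_\lambda) \to 0$, and the collar lemma then forces the orthogeodesic bar-lengths $L_j \to \infty$, so one cannot approach the origin from within $\domain$; along an arbitrary path in $\R^{3r}$ from the origin to a point of $\domain$ the product may become elliptic and its trace may change sign. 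The paper's resolution is a homotopy in $T^1\mathbf{S}$: it realises the product as the endpoint of an explicit path in $\mathrm{SL}(2,\R)$ built by concatenating the one-parameter flows $\tilde a_k$, $\tilde b_k$, observes that the projection of this path to $T^1\mathbf{S}$ is a lift of $\cop_i$ in which the vector always points to the \emph{left} of the traced curve, and then invokes Lemma~\ref{lem:nicelift} to homotope it within $T^1\mathbf{S}$ to the left-normal lift $t \mapsto v_0 w^t$ of the closed geodesic. The latter has $\mathrm{SL}(2,\R)$ holonomy $w^{\ell_Y(\mathbf{c}_i)}$ of trace $2\cosh(\ell_Y(\mathbf{c}_i)/2) > 0$, and since the holonomy remains hyperbolic throughout the homotopy the sign cannot jump.
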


\begin{proof}
  For the proof, let us fix a value of $i$. As explained at the beginning of this section, the
  elements of $\Theta^i$ are labelled $(i, k)$ with $k\in \Z_{n_i}$.  To lighten notations, we shall
  omit the mention of $i$, for instance writing $n_i=n$, $L_k = L(i,k)$, $\theta(i, k) =\theta_k$,
  etc.

  Let $v$ denote the unit vector tangent to the bar $\overline{B}_1 = \overline{B}(i,1)$ based at
  its origin. We consider a lift $\tilde v$ of $v$ in $T^1 \IH^2$ identified with
  $\mathrm{PSL}(2, \R)$.  The decomposition \eqref{e:decompo-ig} into a succession of orthogonal
  geodesic segments precisely tells us that
  $\tilde v a_1b_1 \ldots a_{n}b_{n}=\rho({\mathbf{c}}) \tilde v$.  This implies that
\begin{align}\label{e:conjugacy}\rho({\mathbf{c}})=\tilde v a_1b_1 \ldots a_{n}b_{n}\tilde
  v^{-1}.\end{align}
 In light of \eqref{e:trace_cosh},  we obtain
$\cosh (\ell_Y({\mathbf{c}})/2) = \frac{1}{2} \, |\Tr \paren*{a_1b_1 \ldots a_{n}b_{n}}|$, which
is our claim up to the presence of the absolute value.
  
  To prove the proposition, we need to argue that we can remove the absolute
  value. In order to do so, we realise the matrix $a_1b_1 \ldots a_{n}b_{n}$ as
  the endpoint of a continuous path
  $$\tilde{a}_1 \diamond \tilde{b}_1 \diamond \ldots \diamond \tilde{a}_n
  \diamond \tilde{b}_n$$ in $\mathrm{SL}(2, \R)$ starting at the identity
  matrix.  More precisely, for any $k$, let us define the paths \begin{align*}
    \tilde a_k : [0,L_k] &\To \mathrm{SL}(2, \R)
    & \tilde b_k : [0, \theta_k] &\To \mathrm{SL}(2, \R)\\
    t &
        \longmapsto
        a^{\sign(k) t} & t &
                  \longmapsto
                  w^t,
\end{align*}
which respectively correspond to the geodesic motion along the bar $\overline{B}_k$ (in positive or
negative time depending on $\sign(k)$), and the parallel transport of a vector orthogonal to
$\overline{\cI}_k$.  Note that the endpoint of $\tilde{a}_k$ and $\tilde{b}_k$ are the matrices
$a_k$ and $b_k$. We then define the path
$\tilde{a}_1 \diamond \tilde{b}_1 \diamond \ldots \diamond \tilde{a}_n \diamond \tilde{b}_n$ as the
concatenation of the successive paths with matching endpoints $t \mapsto \tilde{a}_1(t)$,
$t \mapsto a_1 \tilde{b}_1(t)$, $t \mapsto a_1 b_1 \tilde{a}_2(t)$,
$t \mapsto a_1 b_1 a_2 \tilde{b}_2(t)$, $\ldots$, and $t \mapsto a_1 b_1 \ldots a_n \tilde{b}_n(t)$.

Thanks to our prescription of orientation rules, the path $\cop_i$ traced in $\Sf$ may be lifted to
a path ${\mathbf{c}}^{\mathrm{nl}}_i$ traced in the unit tangent bundle $T^1 {\mathbf{ S}}$,
projecting down to $\cop_i$, satisfying the following: ${\mathbf{c}}^{\mathrm{nl}}_i$ is made of
unit vectors that point to the left of each oriented segment $\overline{\cI}_k$, and are tangent to
the bars $\overline{B}_k$.  We call this a \emph{nice lift} of $\cop_i$.  To
${\mathbf{c}}^{\mathrm{nl}}_i$, we may apply the following statement.

\begin{lem}
  \label{lem:nicelift}
  Suppose we have a piecewise smooth closed path $(\gamma, v)$ drawn in $T^1 {\mathbf{S}}$, where
  the path $t \mapsto \gamma(t)\in {\mathbf{S}}$ is non-contractible, in minimal position, and
  $v(t)\in T_{\gamma(t)}^1 {\mathbf{S}}$ is always on the left of $\gamma$. Then there is a homotopy
  from $(\gamma, v)$ to $(\gamma_{\mathrm{g}}, v_{\mathrm{n}})$, where $\gamma_{\mathrm{g}}$ is the
  closed geodesic homotopic to $\gamma$ and $v_{\mathrm{n}}(t)$ is the left-normal vector to
  $\gamma_g(t)$.
 \end{lem}
 % The latter is a periodic trajectory of the flow $w^t$ followed in {positive} time.  

 We can then conclude that the path $c_i^{\mathrm{nl}}$ in $T^1\mathbf{S}$ can be continuously deformed to
\begin{align*}
[0, \ell_Y(\mathbf{c}_i)]&\rightarrow T^1 {\mathbf{S}}\\
t &\mapsto  v_0 w^t
\end{align*} where $v_0$ is normal to the geodesic representative of $\mathbf{c}_i$ and lies on its left.
As a consequence, the path $v \tilde{a}_1 \diamond \tilde{b}_1 \diamond \ldots \diamond \tilde{a}_n
  \diamond \tilde{b}_n$ can also be continuously deformed
to $t \mapsto  v_0 w^t$.
Each path in this continuous family leads to the same value of $\Tr\, \rho({\mathbf{c}}_i)$.
Since  
the trace of $w^{\ell(\mathbf{c}_i)}$ is positive, we must have $\Tr (a_1b_1 \ldots a_{n}b_{n})>0$ as well, proving the proposition.
\end{proof}

\subsubsection{The formula for the length of $\mathbf{c}_i$}

  Expanding \eqref{eq:l_gamma_product_trace}, we can deduce the following expression for the length
  of ${\mathbf{c}}_i$. 
  We shall use a shorthand notation for hyperbolic trigonometric functions.
  \begin{nota}
    \label{nota:hypdelta}
    We denote $\hyp_+ := \cosh$, $\hyp_- := \sinh$.
    For $\delta=(\delta_1, \ldots, \delta_n)\in \{ \pm 1 \}^{n} $ and
    $t=(t_1, \ldots, t_n)\in \R^n$, we set
    \begin{align}\label{e:hyp_eps}
      \hyp_\delta(t) :=\prod_{i=1}^n \hyp_{\delta_i}(t_i).
    \end{align}
  \end{nota}

We prove the following.

\begin{thm}
  \label{prop:form_l_gamma_try}
   For any $Y \in \mathcal{T}_{\g,\n}^*$ of coordinates
  $(\vec{L},\vec{\theta})$, any $i \in \{1, \ldots, \cc\}$,
  \begin{align*}
    \cosh \div{\ell_Y({\mathbf{c}}_i)}
    & = \sum_{\substack{\delta \in \{ \pm 1 \}^{n_i} \\ \delta_1 \ldots \delta_{n_i} = +1}}
    \hyp_{\delta} \div \theta \cosh
    \div{\rho^\delta \cdot \eps L} 
  \end{align*}
  where $\rho^\delta_k = \prod_{1 \leq j<k} \delta_{j}$ (for $k=1$ this empty product is equal to
  $1$), $\eps L$ is the vector $(\eps L(i,1), \ldots, \eps L(i,n_i))$, and $\cdot$ denotes the
  scalar product, so that $\rho^\delta \cdot \eps L=\sum_{k=1}^{n_i} \eps_k L_k \rho^\delta_k$.
\end{thm}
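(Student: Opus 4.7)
The starting point is Lemma \ref{p:trace1}, which expresses $\cosh \div{\ell_Y(\mathbf{c}_i)}$ as $\tfrac{1}{2}\Tr(a_1^i b_1^i \cdots a_{n_i}^i b_{n_i}^i)$ with $a_k^i$ and $b_k^i$ as in \eqref{e:ajbj}. Fix $i$ and drop it from the notation. The plan is to expand the product of matrices using the additive decomposition of each $b_k$, and then to reduce every term to a diagonal matrix whose trace can be read off immediately.

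Write $b_k = \cosh\div{\theta_k}\, I + \sinh\div{\theta_k}\, J$ where $J=\bigl(\begin{smallmatrix}0&1\\1&0\end{smallmatrix}\bigr)$. Expanding the product $a_1 b_1\cdots a_{n}b_{n}$ distributively yields a sum over sign sequences $\delta \in \{\pm1\}^{n}$:
\begin{equation*}
a_1 b_1 \cdots a_n b_n = \sum_{\delta\in\{\pm1\}^n} \hyp_\delta\div\theta\;\; a_1 J^{\mu(\delta_1)} a_2 J^{\mu(\delta_2)} \cdots a_n J^{\mu(\delta_n)},
\end{equation*}
where $\mu(+1)=0$ and $\mu(-1)=1$, following Notation \ref{nota:hypdelta}. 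The key algebraic fact is the anti-commutation $J a_k J = a_k^{-1}$, which follows from $a_k = \exp(\eps L_k H/2)$ with $H=\mathrm{diag}(1,-1)$ and $JHJ=-H$. Iterating this relation to move every $J$ factor to the right of all the $a_k$'s, one produces the identity
\begin{equation*}
a_1 J^{\mu(\delta_1)} a_2 J^{\mu(\delta_2)} \cdots a_n J^{\mu(\delta_n)} \;=\; a_1^{\rho^\delta_1} a_2^{\rho^\delta_2}\cdots a_n^{\rho^\delta_n}\cdot J^{\mu(\delta_1)+\cdots+\mu(\delta_n)},
\end{equation*}
with $\rho^\delta_k=\delta_1\cdots\delta_{k-1}$ (and $\rho^\delta_1=1$), which is straightforward by induction on $n$.

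Now the product of the diagonal factors is the diagonal matrix $\mathrm{diag}\bigl(e^{\frac12\rho^\delta\cdot\eps L},\,e^{-\frac12\rho^\delta\cdot\eps L}\bigr)$, while the trailing $J$-power is $I$ when $\mu(\delta_1)+\cdots+\mu(\delta_n)$ is even (equivalently $\delta_1\cdots\delta_n=+1$) and $J$ otherwise. Since $\Tr(\mathrm{diag}(\alpha,\alpha^{-1})\cdot J)=0$, only sequences with $\prod_k \delta_k=+1$ contribute, and for those
\begin{equation*}
\tfrac{1}{2}\Tr\bigl(\mathrm{diag}(e^{\frac12\rho^\delta\cdot\eps L},e^{-\frac12\rho^\delta\cdot\eps L})\bigr)=\cosh\div{\rho^\delta\cdot\eps L}.
\end{equation*}
Summing over admissible $\delta$ and invoking Lemma \ref{p:trace1} yields the claimed formula.

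There is essentially no obstacle here beyond bookkeeping: the only nontrivial step is the commutation identity $J a_k J=a_k^{-1}$, which is elementary, and the care needed in tracking the product $\rho^\delta_k$ as the $J$'s are shuffled past the diagonal matrices. The vanishing of the contributions with $\prod_k\delta_k=-1$ is automatic from the off-diagonal structure of $J$, and is what produces the parity constraint in the statement.
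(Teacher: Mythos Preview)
Your proof is correct and follows essentially the same approach as the paper. The paper groups $a_k b_k$ together and writes it as $\hyp_+\div{\theta_k}E_{+1}(\eps_k L_k/2)+\hyp_-\div{\theta_k}E_{-1}(\eps_k L_k/2)$ with $E_{+1}(s)=\mathrm{diag}(e^s,e^{-s})$ and $E_{-1}(s)=E_{+1}(s)J$, then works out the multiplication rules for the $E_{\pm}$; your version keeps $a_k$ and $b_k$ separate and makes the underlying anti-commutation $Ja_k=a_k^{-1}J$ explicit, which is exactly what drives those rules. The bookkeeping and the conclusion are identical.
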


Note that this result holds for any choice of openings of the intersections of $\mathbf{c}$, and
provides different expressions for the length of $\mathbf{c}$ in different coordinate systems
depending on that choice.

\begin{proof}
  As in the previous proof, we momentarily drop all references to the index $i$.
  By equation~\eqref{eq:l_gamma_product_trace}, the desired quantity can be expressed as
  $\frac 1 2 \, \Tr \paren*{A_1 \ldots A_{n}}$ where for any $k$, $A_k=a_k b_k$, and $a_k , b_k$ are defined in \eqref{e:ajbj}. Calculating the product, we find
  \begin{align*}
    A_k
    & = a_k b_k
      =  \begin{pmatrix} \cosh \div{\theta_k} e^{\eps_k L_k/2} & \sinh \div{\theta_k} e^{\eps_k L_k/2} \\
      \sinh \div{\theta_k} e^{-\eps_k L_k/2} & \cosh \div{\theta_k} e^{-\eps_k L_k/2} \end{pmatrix} \\
    &  = \hyp_+ \div{\theta_k} E_{+1} \paren*{\frac{\eps_k L_k}{2}}
    + \hyp_- \div{\theta_k} E_{-1} \paren*{\frac{\eps_k L_k}{2}}
  \end{align*}
  with the matrices $E_{\pm 1}$ defined by
  \begin{equation}
    \label{e:epm}
    E_{+1}(s) = \begin{pmatrix} e^{s} & 0 \\  0 & e^{-s} \end{pmatrix}
    \qquad \qquad
    E_{-1}(s) = \begin{pmatrix} 0 & e^{s} \\ e^{-s} & 0\end{pmatrix}.
  \end{equation}
We need to understand products of these matrices. Using the fact that
\begin{align*}
  E_{+1}(s) E_{+1}(s') = E_{+1}(s+s') \qquad \qquad
  E_{+1}(s) E_{-1}(s') = E_{-1}(s+s') \\
  E_{-1}(s) E_{+1}(s') = E_{-1}(s-s') \qquad \qquad
  E_{-1}(s) E_{-1}(s') = E_{+1}(s-s')
\end{align*}
one proves by a straightforward induction that for any $s \in \R^{n}$ and $\delta \in \{ \pm 1 \}^{n}$,
\begin{equation*}
  E_{\delta_1}(s_1) \ldots E_{\delta_{n}}(s_{n}) = E_{\delta_1 \ldots \delta_{n}}(\rho^\delta \cdot s)
\end{equation*}
where $\rho^\delta_k$ is defined in the statement of the Lemma. Therefore, expanding the product of
the matrices $A_k$, we obtain
\begin{equation} \label{e:lauras}
  \cosh \div{\ell_Y({\mathbf{c}}_i)}
  = \frac{1}{2} \, 
    \sum_{\delta \in \{\pm 1\}^{n}} \hyp_\delta \div{\theta} \Tr \paren*{E_{\delta_1 \ldots \delta_{n}} \div{\rho^\delta \cdot \eps L}}
\end{equation}
which allows us to conclude, since $\Tr (E_{+1}(s)) = 2 \cosh (s)$ and $\Tr(E_{-1}(s)) = 0$.

 \end{proof}  

Motivated by Proposition \ref{prop:form_l_gamma_try}, we introduce the following
function $M_n$.
\begin{nota}
  \label{nota:Mn}
    For an integer $n \geq 1$, $x = (x_1, \ldots, x_n)$ and $t = (t_1, \ldots,
    t_n) \in \R^n$, we define
  \begin{align}\label{e:Mn}
    M_n(x,t)   & =2\argcosh  \sum_{\substack{\delta \in \{ \pm 1 \}^{n} \\ \delta_1 \ldots \delta_{n} = +1}}  \hyp_{\delta} \div t \cosh
    \div{\rho^\delta \cdot x}.
  \end{align}
\end{nota}

Then, Theorem \ref{prop:form_l_gamma_try} tells us that
\begin{align} \label{e:oldc} \ell_Y({\mathbf{c}}_i)= M_{n_i}\left( \epsilon L(i, 1), \ldots,
  \epsilon L(i, n_i), \theta(i, 1), \ldots,\theta(i, n_i) \right)
\end{align}
where we recall that $\epsilon L(i, k)$ is short-hand for $\epsilon(i, k) L(i, k)$.  Adding the
lengths of all components of ${\mathbf{c}}$, we obtain
\begin{align} \label{e:outcome2}
   \ell_Y({\mathbf{c}})= \sum_{i=1}^{\cc}M_{n_i}\left( \epsilon L(i, 1), \ldots,\epsilon L(i, n_i), \theta(i, 1), \ldots,\theta(i, n_i)    \right).
\end{align}

{
\subsection{Parameters $\tau$}\label{s:tau}

In the following section, we shall apply similar techniques to find the expression of the lengths
$(\ell_Y(\Gamma_\lambda))_{\lambda\in \Lambda}$ of the pair of pants decomposition constructed in \S
\ref{s:ppdecompo}. As in the case of $\mathbf{c}$, we can realise the path $\Gamma_\lambda$ by going
along a succession of bars $B_q$ and simple portions~$\mathcal{I}_q$, now with different orientation
rules (this was already observed in Remark \ref{rem:pop_poly}).  This motivates the following
definitions.

\textbf{
For the rest of this section (and hence until the end of \S \ref{s:horrible}), we shall assume that
\emph{all intersections are opened the first way}.} As a consequence, the multi-curve $\beta$ is
oriented. This convention will lighten notations for the expressions of $\ell_Y(\Gamma_\lambda)$;
however, the methods we develop can be adapted to other situations.

\subsubsection{Definition}

  Let $q, q'\in \Theta$ be two indices such that $\lambda(q)=\lambda(q')$ and
  $\sign(q) = \sign(q') =: \epsilon$, i.e. both bars $B_q$ and
  $B_{q'}$ arrive on the same component of $\beta$ and on the same side.  We denote as
  $\cK_{qq'}$ the oriented segment on $\beta$ going from the terminus point
  $t(B_q)$ to the terminus point $t(B_{q'})$, following the orientation of $\beta$ if $\epsilon =
  +$, and its reverse orientation otherwise.}

  Once the bars ${B}_q, {B}_{q'}$ have glided to the orthogeodesics
  $\overline{B}_q, \overline{B}_{q'}$, the segment $\cK_{qq'}$ glides to a new segment
  $\overline{\cK}_{qq'}$, and we denote by $\tau_{q q'}$ its length.
 Thanks to the Useful Remark (Remark \ref{r:useful}), $\tau_{q q'}$ is always strictly positive.

 \begin{figure}[h!]
     \begin{subfigure}[b]{0.5\textwidth}
    \centering
  \includegraphics{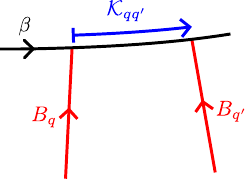}
    \caption{$\eps = +$.}
    \label{fig:tau+}
  \end{subfigure}%
     \begin{subfigure}[b]{0.5\textwidth}
    \centering
  \includegraphics{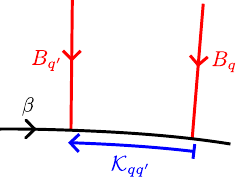}
    \caption{$\eps = -$.}
    \label{fig:tau-}
  \end{subfigure}%
  \label{fig:tauij}
  \caption{The segment $\mathcal{K}_{qq'}$.}
\end{figure}

In the special case when a segment $\cK_{qq'}$ does not contain any other point $t(B_{q''})$ in its
interior, with $B_{q''}$ also arriving on the same side as $B_q$ and $B_{q'}$, then we simply denote
by $\cK_q$ this segment. This notation (where $q'$ has been removed) is justified by the remark that
there is exactly one~$q'$ with this property. We then denote $\tau_q:=\tau_{q q'}$ (or, when
convenient, $\tau(q)$).

\subsubsection{Relation between $\tau$ and $\theta$} 
Each parameters $\tau_{q q'}$ is a sum of a certain number of parameters $\theta_{q''}$: there
exists a (uniquely defined) set of indices $\Theta_\tau(q q')\subseteq \Theta$ such that
\begin{align} \label{e:tautheta}\tau_{q q'}=\sum_{q''\in \Theta_\tau(q q')}\theta_{q''},
 \end{align} 
 reflecting a partition of $\cK_{q q'}$ into
 $\cK_{q q'} =\bigsqcup_{q''\in \Theta_\tau(q q')} \cI_{q''}$.
 
 In the case of $\tau_q$, we adopt as slightly simpler notation, writing that there exists a set of
 indices $\Theta_\tau(q)\subseteq \Theta$ such that
 \begin{align} \label{e:tautheta2}\tau_q=\sum_{q'\in \Theta_\tau(q)}\theta_{q'}.
 \end{align} 
 For a set $W \subseteq \Theta$, we let $\Theta_\tau(W) := \bigcup_{q \in W} \Theta_\tau(q)$.
  Note that this union is not necessarily disjoint.
  
%The following lemma will be important in \S \ref{e:?}.
%\begin{lem}
%For any $q\in\Theta$,
%\begin{itemize}
%\item either there are two distinct $q_1, q_2\in \Theta$, $\sign(q_1)\sign(q_2)=-1$, such that $q\in \Ind(q_1)\cap \Ind(q_2)$,
%\item or there exists $q_1\in \Theta$ and $n\in \Lb 1, N_o\Rb$  (i.e. $\beta_n$ is a boundary curve) such that $q\in \Ind(q_1)\cap \ind(n)$.
%\end{itemize}

% \end{lem}
%This expresses the fact that each segment $\cI_q$ has two sides in the surface $\mathbf{S}$, and on each side there is a boundary component of ${\mathbf{S}}$.
% The first item corresponds to the case where none of the boundary components adjacent to $\cI_q$ is in the family $(\beta_n)_{1\leq n\leq N}$. When one of the boundary components is a curve $\beta_n$, we fall into the second item.

 \subsection{Expression of the boundary lengths in the new coordinates\label{s:bound}}

 We are now ready to find the expression of the length of the curves $\Gamma_\lambda$ for
 $\lambda\in \Lambda$. We recall that we assumed, for the sake of simplicity, that the intersections
 have been opened the first way (see Remark \ref{rem:lambda_both} for the general case).

 \subsubsection{Case where $\Gamma_\lambda$ is a component of $\beta$}
 First, we note that in the special case where the curve $\Gamma_\lambda$ is a component of the
 multi-curve $\beta$, i.e. when $\lambda \in \Lambdabeta$, then we can simply write
 $y_\lambda=\ell_Y(\beta_\lambda)$, which has the simple expression in terms of the parameters
 $\vec{\theta}$:
 \begin{align}\label{e:lnsum}y_\lambda=\sum_{q: \lambda(q) = \lambda} \theta_q
   = \sum_{q\in \Theta_t(\lambda)}  \theta_q,
\end{align}
where we recall the definition of the set $\Theta_t(\lambda)$ from Notation \ref{nota:ind}. 

\subsubsection{Polygonal curve associated to $\Gamma_\lambda$}
\label{s:poly_curve_glambda}

{
Let us now fix $\lambda\in\LambdaFN = \Lambda \setminus \Lambdabeta$ (i.e. $\Gamma_\lambda$ is not a
component of $\beta$). The length of $\Gamma_\lambda$ does not depend on its orientation; we orient
it so that, if $j :=
\step(\lambda)$ is the step of the construction of the pair of pants decomposition at which
$\Gamma_\lambda$ appears, then $\mathbf{P}_j$ lies on the \emph{left} of $\Gamma_\lambda$.

\begin{figure}[h!]
  \centering \includegraphics{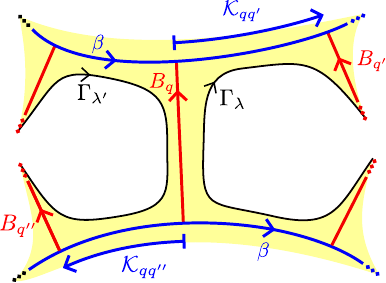}
  \caption{Following two (non-necessarily distinct) boundary components $\Gamma_\lambda$ and
    $\Gamma_{\lambda'}$ of $\mathbf{S}$ near a bar $B_q$ with $\sign(q)=+$.}
  \label{fig:N_pop}
\end{figure}

Let us first assume that $\lambda \in \LambdaBC$, i.e. $\lambda$ is a boundary component of
$\textbf{S}$. Examining the local picture at each intersection and following the boundary of the
regular neighbourhood of the loop~$\mathbf{c}$ (as represented in Figure \ref{fig:N_pop}), we obtain
that we can write the boundary component $\Gamma_\lambda$ as an alternating succession of
bars $B_q$ and segments $\cK_{q}$ defined in \S \ref{s:tau}, i.e. as
 \begin{equation}
  \bar p_1^\lambda \smallbullet \ldots \smallbullet \bar p_{m(\lambda)}^\lambda
  \label{e:decompo-j}
\end{equation}
for some integer $m(\lambda)$, where each piece $\bar p_k^\lambda$ is a concatenation
$\overline{B}_{q} \smallbullet \overline{\cK}_{q}$ for some index $q=q_{k}^\lambda$ in $\Theta$.

\begin{rem}
  We also notice that $\sign q_{k}^\lambda =(-1)^k$, and in particular the integer $m(\lambda)$ is
  even, but this will not play any role in the discussion. 
\end{rem}

More generally, if $\lambda \in \Lambdain$, then we realise $\Gamma_\lambda$ as a boundary component
of the surface $\mathbf{S}_j$ explored at the step $j$ of the construction of the pair of pants
decomposition. We then obtain that $\Gamma_\lambda$ can, too, be written under the form
\eqref{e:decompo-j}, but now with bars
$\bar p_k^\lambda = \overline{B}_{q} \smallbullet \overline{\cK}_{qq'}$ for some $q = q_k^\lambda$
and $q' = {q_k'^\lambda}\in \Theta$. See Figure \ref{fig:Glambda} for an example.

\begin{figure}[h!]
  \includegraphics{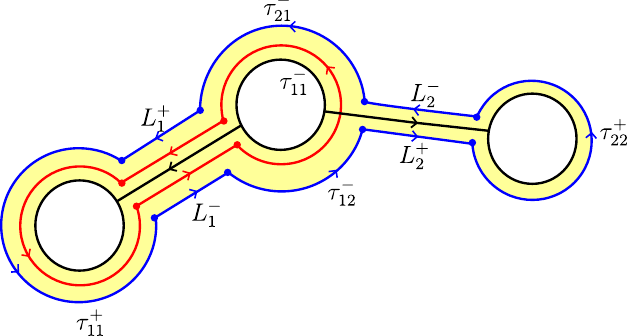}
  \caption{An example of the two new boundary components $\Gamma_\lambda$ for a case where $r=2$,
    and their description in terms of a succession of bars and segments~$\cK$. We use the shorthand
    notation $\tau_{ij}^\epsilon := \tau_{(i,\epsilon),(j,\epsilon)}$.}
  \label{fig:Glambda}
\end{figure}

This expression for the homotopy class of $\Gamma_\lambda$ is the equivalent of \eqref{e:decompo-ig}
for the curve $\mathbf{c}_i$ and will, as before, be our starting point to compute the length of
$\Gamma_\lambda$.  The lengths appearing in this decomposition are the lengths of the bars
$\overline{B}_{q}$, which are the same lengths $L_q$ as before, now together with the lengths
$\tau_{q q'}$ of the segments $\overline{\cK}_{q q'}$.

The important observation here is that, as we go along the curve $\Gamma_\lambda$, we follow a
succession of geodesic segments, only making \emph{right turns} (as seen on Figure \ref{fig:N_pop},
and in the orientation convention of $\tau_{qq'}$ in \S \ref{s:tau}).  Such a path will be called a
{\em{polygonal curve}} in Definition \ref{d:pc}.

\begin{rem}
  \label{rem:lambda_both}
  If we open the intersections both ways, we can still obtain a description of $\Gamma_\lambda$ as a
  polygonal curve (a succession of orthogonal geodesic segments making only right turns), but the
  notations for the orientations of the bars and the parameters $\tau$ need to be adapted. In
  particular, we no longer have alternate signs for the bars, and $m(\lambda)$ is not necessarily
  even (see for instance a figure-eight opened the second way).
\end{rem}}

\subsubsection{Re-labelling}
  
Let us now relabel the segments and lengths appearing in the decomposition~\eqref{e:decompo-j}
following the order in which they occur in $\Gamma_\lambda$. We shall therefore denote
$\overline{B}^\lambda_k := \overline{B}_{q_{k}^\lambda}$ and
$\cK^\lambda_k := \cK_{q_{k}^\lambda q_{k}'^{\lambda}}$, and call $L^\lambda_k$, $\tau^\lambda_k$
their respective lengths. Note that both these lengths are always positive quantities, by definition
for $L^\lambda_k$, and in virtue of the Useful Remark for $\tau^\lambda_k$.
We denote as $\vec{L}^\lambda$ and $\vec{\tau}^\lambda$ the vectors
$\vec{L}^\lambda = (L_1^\lambda, \ldots, L^\lambda_{m(\lambda)})$ and
$\vec{\tau}^\lambda = (\tau^\lambda_1, \ldots, \tau^\lambda_{m(\lambda)})$.
 
%For each $q\in \Theta$, there is exactly one $(\lambda, n)$ with $\lambda\in \LambdaBC$, such that $\overline{B}^\lambda(n)= \overline{B}_q $. This translates the fact that each oriented bar ${B}_q$
% has exactly one boundary component on its right.

% To alleviate notation in the following calculation, we fix $\lambda\in\Lambda$ and remove it temporarily from the labelling. We write $\Gamma$ for $\Gamma_\lambda$, and $m(\lambda)=m$, $L(n)$ for $L^\lambda(n)$, and so on (we shall restore the dependence on $\lambda$ when needed).
%  Thus we have represented the homotopy class of $\Gamma$ by an alternance  $ J_1\smallbullet K_1\smallbullet \ldots J_{m} \smallbullet K_{m} $, where $J_i$ corresponds to a geodesic segment of length $L(i)$, and $K_i$ to a geodesic segment length $\tau(i)$, each segment forming an angle $-\pi/2$ with the previous one. 

 \subsubsection{Expression of the length}
 We now follow quite closely the discussion of \S \ref{s:sl2}.

\begin{lem} \label{p:trace2}
For any $Y \in \cT_{\g,\n}^*$, any $\lambda \in \LambdaFN$, we have
\begin{equation}
  \label{eq:l_beta_product_trace}
  \cosh \div{\ell_Y(\Gamma_\lambda)}
  = \frac{1}{2} \,  {\Tr \paren*{a_1^\lambda b_1^\lambda \ldots a_{m(\lambda)}^\lambda b_{m(\lambda)}^\lambda}}
\end{equation}
where, for any index $k$, $a_k^\lambda := A(L_k^\lambda)$ and $b_k^\lambda := A(\tau_k^\lambda)$ with
\begin{equation} \label{e:ajbj_beta}
  A(s) :=  w^s k^{-\frac \pi 2}
  = \frac{\sqrt 2}{2}
  \begin{pmatrix}
  e^{\frac s2}  & -e^{-\frac s2} \\ e^{\frac s2} & e^{-\frac s2}
  \end{pmatrix}.
\end{equation}
\end{lem}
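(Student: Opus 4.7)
The proof closely parallels that of Lemma \ref{p:trace1}, with the key adaptation that $\Gamma_\lambda$ is a \emph{polygonal} curve (right-angle turns at each vertex) rather than a concatenation with transverse intersections. The plan is to run exactly the same trace argument, choosing the ``nice lift'' of the polygon so that the unit tangent vector is always the \emph{left-normal} to the direction of travel, which is why the matrix $A(s) = w^s k^{-\pi/2}$ -- built from the sideways flow $w^s$ and a right-angle rotation -- naturally appears in place of the pair $(a^t, w^\theta)$ of Lemma \ref{p:trace1}.

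First, I would start from the decomposition \eqref{e:decompo-j} of $\Gamma_\lambda$ as an alternating concatenation of bars $\overline B_k^\lambda$ of lengths $L_k^\lambda$ and segments $\overline\cK_k^\lambda$ of lengths $\tau_k^\lambda$, all consecutive pieces meeting at right angles (cf.\ \S \ref{s:poly_curve_glambda}). Choose a unit vector $v_0$ based at $o(\overline B_1^\lambda)$, perpendicular to $\overline B_1^\lambda$ and lying on the left of $\Gamma_\lambda$, and lift it to $\tilde v_0 \in T^1\IH^2 \simeq \mathrm{PSL}(2,\R)$. Right-multiplication by $w^{L_k^\lambda}$ translates the base-point along the current segment while parallel-transporting the left-normal vector, and right-multiplication by $k^{-\pi/2}$ then rotates it so that it becomes the left-normal vector to the next segment: this is exactly compatible with a right-angle turn at the vertex. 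Iterating once around the polygon yields
\begin{equation*}
  \tilde v_0 \cdot a_1^\lambda b_1^\lambda \cdots a_{m(\lambda)}^\lambda b_{m(\lambda)}^\lambda
  = \rho(\Gamma_\lambda) \cdot \tilde v_0,
\end{equation*}
so that $\rho(\Gamma_\lambda)$ and the product $M := a_1^\lambda b_1^\lambda \cdots a_{m(\lambda)}^\lambda b_{m(\lambda)}^\lambda$ are conjugate in $\mathrm{PSL}(2,\R)$; combined with \eqref{e:trace_cosh}, this gives $|\Tr M| = 2\cosh\div{\ell_Y(\Gamma_\lambda)}$.

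The remaining (and most delicate) point is to remove the absolute value, and here I would replicate verbatim the continuous-deformation trick from the end of the proof of Lemma \ref{p:trace1}. Interpolate each factor $A(s)$ by a continuous path from the identity to $A(s)$ (increasing simultaneously the parameter of $w^s$ from $0$ to $s$ and the rotation angle from $0$ to $-\pi/2$), obtaining a continuous path in $\mathrm{SL}(2,\R)$ from $I$ to $M$. Projected to $T^1\mathbf{S}$ via $\tilde v_0$, this produces a nice lift of the polygonal curve $\Gamma_\lambda$ whose vector stays on the left of $\Gamma_\lambda$ throughout. Since $\Gamma_\lambda$ is a simple closed geodesic in the pair-of-pants decomposition, it is non-contractible and in minimal position, so Lemma \ref{lem:nicelift} applies and deforms the nice lift to the standard nice lift $t \mapsto v_0 \cdot w^t$ of the closed geodesic representative; the latter has positive trace $2\cosh\div{\ell_Y(\Gamma_\lambda)}$, and by continuity of the trace along the deformation, $\Tr M > 0$, yielding \eqref{eq:l_beta_product_trace} without absolute value. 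The main obstacle is just checking rigorously that the polygonal representative constructed in \S \ref{s:poly_curve_glambda} is indeed in minimal position so that Lemma \ref{lem:nicelift} applies -- this follows from the fact that $\beta$ is a multi-geodesic and each bar $\overline{B}_q$ is an orthogeodesic, hence the polygonal representative can be $\cC^0$-approximated by smooth curves in minimal position.
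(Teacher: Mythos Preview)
Your proof is correct and follows essentially the same approach as the paper, which simply says ``the proof is the same as the proof of Lemma \ref{p:trace1}'' and points to a figure showing the nice lift. You have spelled out in more detail exactly what the paper leaves implicit: the interpretation of $A(s)=w^s k^{-\pi/2}$ as parallel-transporting the left-normal along a segment followed by the right-angle rotation at each vertex, and the use of Lemma \ref{lem:nicelift} to remove the absolute value. Your closing worry about minimal position is unnecessary, since a polygonal curve is by definition simple (Definition \ref{d:pc}), hence automatically in minimal position.
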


\begin{proof}
  The proof is the same as the proof of Lemma \ref{p:trace1}. The new path
  in $T^1 \mathbf{S}$ that we consider is represented on
  Figure \ref{fig:deformation}.
 \begin{figure}[h!]
   \includegraphics{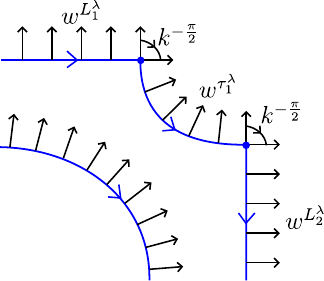}
   \caption{The path in $T^1\mathbf{S}$ followed in the proof of Lemma
     \ref{p:trace2}.}
      \label{fig:deformation}
  \end{figure}
  The key argument to remove the absolute value is that this path has the
  property required by Lemma \ref{lem:nicelift}, namely that the vectors always
  point to its left.
\end{proof}
 
Expanding the trace, we can deduce the following formula giving the lengths of the
curves~$\Gamma_\lambda$.

\begin{thm}
  \label{prp:length_g_lambda}
For any $Y \in \cT_{\g,\n}^*$, any $\lambda \in \LambdaFN$, we have
    \begin{align*}
    \cosh \div{\ell_Y(\Gamma_\lambda)}
      = \frac12 \sum_{\alpha \in \{ \pm 1 \}^{m(\lambda)} }
      {\alpha_1\alpha_2\ldots \alpha_{m(\lambda)}}  \hyp_{-\nabla \alpha} \div{L} \exp\div{\alpha\cdot \tau}
  \end{align*}
  where %  $\rho^{\delta}$ was introduced in Proposition
  % \ref{prop:form_l_gamma_try} and
  $(\nabla \alpha)_k:= \alpha_k/{\alpha_{k-1}}= \alpha_k{\alpha_{k-1}}$ with the
  convention $\alpha_{0}=\alpha_{m(\lambda)}$.
\end{thm}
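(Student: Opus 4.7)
The plan is to derive the formula as a direct consequence of Lemma \ref{p:trace2}, by expanding the trace of the matrix product
$$\Tr\bigl(A(L_1^\lambda) A(\tau_1^\lambda)\cdots A(L_{m(\lambda)}^\lambda) A(\tau_{m(\lambda)}^\lambda)\bigr)$$
using a rank-one decomposition of $A(s)$. Inspecting the explicit formula \eqref{e:ajbj_beta} for $A(s)$, one immediately sees the decomposition
$$A(s) = \tfrac{1}{\sqrt{2}}\bigl(e^{s/2} P + e^{-s/2} Q\bigr), \qquad P = \begin{pmatrix} 1 & 0 \\ 1 & 0 \end{pmatrix},\qquad Q = \begin{pmatrix} 0 & -1 \\ 0 & 1 \end{pmatrix},$$
where $P = u_+ v_+^T$ and $Q = u_- v_-^T$ are rank-one with $u_\pm = (\mp 1, 1)^T$ (with signs such that $u_+=(1,1)^T$) and $v_+=(1,0)^T$, $v_-=(0,1)^T$. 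This is the key algebraic input.

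For brevity write $m = m(\lambda)$. First I would introduce a pair of sign variables $(\eta_k, \alpha_k) \in \{\pm\}^2$ for each $k$, corresponding respectively to the expansion of $A(L_k^\lambda)$ and $A(\tau_k^\lambda)$, so that
$$\tfrac{1}{2}\Tr\bigl(\textstyle\prod_k A(L_k^\lambda) A(\tau_k^\lambda)\bigr) = \tfrac{1}{2^{m+1}}\sum_{\alpha, \eta \in \{\pm\}^m} \exp\tfrac{1}{2}\bigl(\textstyle\sum_k \eta_k L_k^\lambda + \alpha_k \tau_k^\lambda\bigr)\, T(\eta,\alpha),$$
where $T(\eta,\alpha) = \Tr(R_{\eta_1} R_{\alpha_1} \cdots R_{\eta_m} R_{\alpha_m})$ with $R_+=P, R_-=Q$. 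The rank-one structure collapses the trace of any such product to a cyclic product of inner products: $T(\eta,\alpha) = \prod_i \langle v_{\epsilon_i}, u_{\epsilon_{i+1}}\rangle$ where $\epsilon$ is the interlaced sequence $(\eta_1, \alpha_1, \ldots, \eta_m, \alpha_m)$ read cyclically. A direct computation gives $\langle v_\sigma, u_\tau\rangle = 1$ in all four cases except $\langle v_+, u_-\rangle = -1$, so $T(\eta,\alpha) = (-1)^N$ where $N$ is the number of $+ \to -$ transitions in the cyclic sequence $\epsilon$.

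The bookkeeping step—which I expect to be the main obstacle—is to rewrite this cyclic sign in factorized form adapted to the desired expression. The clean way to do this is: using $\eta_k^2 = \alpha_k^2 = 1$ and writing $\bar\eta_k := \tfrac{1-\eta_k}{2}\in\{0,1\}$, one verifies the identity
$$(-1)^{N(\eta,\alpha)} \;=\; \Bigl(\prod_k \eta_k\Bigr)\Bigl(\prod_k \alpha_k\Bigr)\prod_{k:\,\eta_k=-} \alpha_{k-1}\alpha_k,$$
with indices read cyclically. Summing over $\eta$ first with $\alpha$ fixed, each factor of the rightmost product pairs $\eta_k = -$ with the weight $-\alpha_{k-1}\alpha_k e^{-L_k^\lambda/2}$ and $\eta_k = +$ with $e^{L_k^\lambda/2}$, so that
$$\tfrac{1}{2^m}\sum_{\eta\in\{\pm\}^m} e^{\frac{1}{2}\sum \eta_k L_k^\lambda}\Bigl(\prod_k \eta_k\Bigr)\prod_{k:\eta_k=-}\alpha_{k-1}\alpha_k \;=\; \prod_{k=1}^m \tfrac{1}{2}\bigl(e^{L_k^\lambda/2} - \alpha_{k-1}\alpha_k\, e^{-L_k^\lambda/2}\bigr) = \hyp_{-\nabla\alpha}\div{L^\lambda},$$
using $\hyp_\zeta(x) = \tfrac{1}{2}(e^x + \zeta e^{-x})$. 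Combining with the remaining $\prod_k \alpha_k = \alpha_1\cdots \alpha_m$ and the exponential $\exp\tfrac{1}{2}\alpha\cdot\tau^\lambda$ yields exactly the claimed formula.

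Finally I would sanity check the identity by hand in the figure-eight case ($m=2$): the four terms $\alpha\in\{\pm\}^2$ recombine into $\sinh^2\!\div{L}\cosh\div{\tau_1+\tau_2} - \cosh^2\!\div{L}\cosh\div{\tau_1-\tau_2}$, which, using $\tau_1=y_1, \tau_2=y_2$, reduces after standard hyperbolic identities to $\cosh(L)\sinh\div{y_1}\sinh\div{y_2} - \cosh\div{y_1}\cosh\div{y_2}$, the classical right-angled hexagon formula for the third boundary length. This confirms the sign conventions and the combinatorial identity.
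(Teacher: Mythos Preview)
Your argument is correct and reaches the same formula, but by a genuinely different route than the paper's proof. The paper pairs the factors two at a time, writing $A(L)A(\tau) = \cosh\div{L}\,e_{-1}(\tau/2) + \sinh\div{L}\,e_{+1}(\tau/2)$ for auxiliary $2\times2$ matrices $e_{\pm1}$, then works out multiplication rules relating the $e_{\pm1}$ to the matrices $E_{\pm1}$ already used for $\ell_Y(\mathbf{c}_i)$, and obtains the product formula by induction on the number of pairs; a final change of variables $\alpha_{k-1}\alpha_k = \delta_k$ produces the stated expression. Your approach instead decomposes each individual $A(s)$ as a sum of two rank-one matrices, collapses the trace to a cyclic product of four scalar inner products, and identifies the resulting sign $(-1)^N$ via the combinatorial identity $(-1)^N = (\prod_k\eta_k)(\prod_k\alpha_k)\prod_{k:\eta_k=-}\alpha_{k-1}\alpha_k$; summing over $\eta$ then directly produces the $\hyp_{-\nabla\alpha}$ factors without any induction. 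The paper's route is more systematic in that it reuses the $E_{\pm1}$ calculus from the proof of Theorem~\ref{prop:form_l_gamma_try}, while yours is shorter and more combinatorial. The one step you leave implicit is the verification of the sign identity; it does hold (e.g.\ via the observation $\langle v_\sigma, u_\tau\rangle = \tau^{[\sigma=+]}$ and a short manipulation), but in a final write-up you would want to include that one-line check rather than simply asserting it. Your $m=2$ sanity check against the hexagon formula is correct and a nice confirmation of the sign conventions.
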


\begin{rem}\label{r:better}
  For $m=1$ this reduces to
  \begin{align*}
    \cosh \div{\ell_Y(\Gamma)}
    =   \sinh \div{L_1}  \sinh \div{\tau_1}.
  \end{align*} 
\end{rem}

\begin{proof}
  To simplify notations, we shall remove the dependency on $\lambda$ from the
  name of the matrices and lengths.
  By equation \eqref{eq:l_beta_product_trace}, the announced quantity can be
  expressed as
  $$\frac 1 2 \, {\Tr \paren*{A (L_1) A (\tau_1) \ldots A(L_m) A(\tau_m)}}$$
  where $A(s)$ is the matrix introduced in \eqref{e:ajbj_beta}. Calculating the
  product, we find
  \begin{align*}
    A (L) A(\tau)
    =
    \begin{pmatrix}
      e^{\frac \tau 2} \sinh \div L & -e^{-\frac \tau 2} \cosh \div L \\
      e^{\frac \tau 2} \cosh \div L & -e^{-\frac \tau 2} \sinh \div L
    \end{pmatrix}
      = \cosh \div{L} e_{-1} \paren*{\frac{\tau}{2}}
    + \sinh \div{L} e_{+1} \paren*{\frac{\tau}{2}}
  \end{align*}
  with the matrices $e_{\pm 1}$ defined by
  \begin{equation*}
    e_{+1}(s) =\begin{pmatrix} e^{s} & 0 \\  0 & -e^{-s} \end{pmatrix}
    \qquad \qquad
    e_{-1}(s) = \begin{pmatrix} 0 & -e^{-s} \\ e^{s} & 0\end{pmatrix}.
  \end{equation*}
We can use once again the matrices $E_{-1}, E_{+1}$ introduced in equation \eqref{e:epm}, and note the relations
\begin{align*}
  e_{+1}(s) e_{+1}(s') = E_{+1}(s+s') \qquad \qquad
  e_{+1}(s) e_{-1}(s') = -E_{-1}(s-s') \\
  e_{-1}(s) e_{+1}(s') = E_{-1}(-s-s') \qquad \qquad
  e_{-1}(s) e_{-1}(s') = -E_{+1}(s'-s)
\end{align*}
and
\begin{align*}
 & E_{+1}(s) e_{+1}(s') = E_{-1}(s) e_{-1}(s') = e_{+1}(s+s') \\
 & E_{+1}(s) e_{-1}(s') = E_{-1}(s) e_{+1}(s') = e_{-1}(s'-s).
\end{align*}
A straightforward induction then shows that for any $s \in \R^{2k}$ and
$\delta \in \{ \pm 1 \}^{2k}$,
\begin{equation*}
  e_{\delta_1}(s_1)e_{\delta_2}(s_2) \ldots e_{\delta_{2k}}(s_{2k})
  = \delta_2\delta_4 \ldots\delta_{2k} \, E_{\pi(\delta)}(\alpha^\delta \cdot s),
\end{equation*}
where $\pi(\delta) = \prod_k \delta_k$ and  $\alpha^\delta_k := \prod_{j \leq k} \delta_j$,
 and that for any $s \in \R^{2k+1}$ and $\delta \in \{ \pm 1 \}^{2k+1}$,
 \begin{equation*}
   e_{\delta_1}(s_1)e_{\delta_2}(s_2) \ldots e_{\delta_{2k+1}}(s_{2k+1})
   = \delta_2\delta_4 \ldots\delta_{2k} \, e_{\pi(\delta)}(\pi(\delta) \, \alpha^\delta \cdot s).
\end{equation*}
 
Therefore, in the case $m=2k$,
\begin{align*}
  \cosh \div{\ell(\Gamma)}
  = \frac{1}{2} \,  
  \sum_{\delta  \in \{\pm 1\}^{2k}} \delta_2\delta_4\ldots\delta_{m}
  \hyp_{-\delta} \div{L} \Tr \paren*{E_{\pi(\delta)} \div{\alpha^\delta \cdot \tau}}
\end{align*}
which implies that
\begin{align*}
    \cosh \div{\ell(\Gamma)}
    & =  \sum_{\substack{\delta \in \{ \pm 1 \}^{m} \\ \pi(\delta) = +1}}
  \delta_2\delta_4\ldots\delta_{m} \hyp_{-\delta} \div{L} \cosh\div{{\alpha^{\delta} \cdot \tau}}
  \end{align*}
since $\Tr (E_{+1}(s)) = 2 \cosh (s)$ and $\Tr(E_{-1}(s)) = 0$.
By expanding $\cosh\div{{\alpha^{\delta} \cdot \tau}}$ into exponential functions, and letting
 $\alpha_{k-1}\alpha_k = \delta_k =\alpha_k / \alpha_{k-1}$, we obtain the
 claimed formula.

 In the case $m=2k+1$, we conclude similarly, now using
 \begin{align*}
    \cosh \div{\ell(\Gamma)}
    & =  \sum_{\substack{\delta \in \{ \pm 1 \}^{m} \\ \pi(\delta) = +1}}  \delta_2\delta_4 \ldots\delta_{m-1} \hyp_{-\delta} \div{L}\sinh\div{{ \alpha^{\delta} \cdot \tau}}.
  \end{align*} 
\end{proof}

\subsubsection{Reformulation and final outcome}
Motivated by Theorem \ref{prp:length_g_lambda}, we now introduce the
  following notation.

  \begin{nota} \label{nota:QFm}
    For an integer $m \geq 1$ and $x = (x_1, \ldots, x_m)$, $t = (t_1, \ldots,
    t_m)$, we define
    \begin{align}\label{e:Bn}
      Q_m(x, t) =
      2\argcosh \frac12 \sum_{\alpha \in \{ \pm 1 \}^{m} }
        {\alpha_1\alpha_2\ldots \alpha_{m}}
        \hyp_{-\nabla \alpha} \div{x} \exp\div{\alpha\cdot t}
    \end{align}
    and 
\begin{align} \label{e:Fn}
  F_m(x, t)
  & =  \frac{\cosh\Big(\frac{1}{2}Q_m(x, t) \Big)}{\exp \Big(\frac{1}{2}\sum_{n=1}^m
    t_n \Big)}
    = \frac12 \sum_{\alpha  \in \{0, 1\}^{m}} (-1)^{\alpha_1+\ldots+\alpha_m}
    \hyp_{-\partial \alpha} \div{x} \exp(-\alpha \cdot t)
  \end{align}
 with $(\partial \alpha)_n:=1$ if $\alpha_{n-1}=\alpha_n$ and $-1$ if
 $\alpha_{n-1}\not=\alpha_n$. 
\end{nota}
\begin{rem}
  Note that we go from \eqref{e:Bn} to \eqref{e:Fn} by the change of indices
  $(\alpha_n)\rightsquigarrow (\frac{\alpha_n+1}2)$.
\end{rem}

We have proved that
\begin{align}\label{e:outcome}
  \ell_Y(\Gamma_{\lambda})
  = Q_{m(\lambda)}(\vec{L}^\lambda, \vec{\tau}^\lambda).
\end{align}
We observe that the relation between the functions $Q_m$ and $F_m$ can be rewritten
 \begin{align}\label{e:BFn}
 U( Q_m(x, t))=
 \sum_{n=1}^m t_n    + 2\log F_m(x, t),
 \end{align}
 for the function
 \begin{align}\label{e:function_U}
   U(x)=2\log  \cosh \div{x}=  x -2 \log 2 +\cO(e^{-x}).
  \end{align}
The final outcome of the calculation then is
  \begin{align}\label{e:outcome3}  \ell_Y(\Gamma_{\lambda})=
    \sum_{n=1}^{m(\lambda)} \tau^\lambda_n -2\log 2 +2\log
    F_{m(\lambda)}(\vec{L}^\lambda, \vec{\tau}^\lambda)
   +\cO(e^{- \ell_Y(\Gamma_{\lambda})}).
  \end{align}

  \subsubsection{Set of indices associated to $\lambda$}
  Recall from \eqref{e:tautheta2} (the labelling by $q$ being replaced by $(\lambda, k)$) that the
  lengths $\tau^\lambda_k$ can be expressed by
  \begin{align} \label{e:indlambdan} \tau^\lambda_k
    =\sum_{q\in \Theta_\tau(\lambda, k)} \theta_q,\end{align} for some uniquely defined set of indices
  $\Theta_\tau(\lambda, k)\subseteq \Theta$.
  Now, we define the following index notation.
  \begin{nota} \label{nota:indlambda}
    For $\lambda \in \LambdaFN$, let 
    \begin{equation}
      \label{Slambda}
      \Theta_{\ell}(\lambda) := \bigcup_{k=1}^{m(\lambda)} \Theta_\tau(\lambda,k) \subseteq \Theta.
    \end{equation}
    We extend the notation $\Theta_\ell(\lambda)$ to the indices $\lambda \in \Lambdabeta$ by
    letting $\Theta_\ell(\lambda) := \Theta_t(\lambda)$ the set of indices introduced in
    Notation~\ref{nota:ind}.  If $W$ is a subset of $\Lambda$ we denote
    \begin{align}\label{e:indW}\Theta_\ell(W)
      :=\bigcup_{\lambda\in  W} \Theta_\ell(\lambda).
    \end{align}
  \end{nota}
  For any $\lambda \in \Lambda$, $\Theta_\ell(\lambda)$ denotes the set of indices $q\in \Theta$
  necessary to express the length $\ell_Y(\Gamma_\lambda)$.
  \begin{rem}
    Keep in mind that the sets $\Theta_\ell(\lambda)$ are not disjoint: in fact, each $q\in \Theta$
    may belong to two sets $\Theta_\ell(\lambda)$ with $\lambda\in \LambdaBC$, expressing the fact
    that each simple portion $\cI_q$ has a boundary component of $\mathbf{S}$ on its left and one on
    its right. As a consequence, one has an inclusion $(\Theta_\ell(W))^c\subseteq \Theta_\ell(W^c)$
    (where $V^c$ denotes the complement of the set $V$), which in general is strict.
  \end{rem}

%%%Local Variables: 
%%% mode: latex
%%% TeX-master: "main"
%%% End: 

\section{Expression of the volume functions as pseudo-convolutions of FR functions}
\label{s:noncross}
  Let us now, in this brief section, come back to our initial objective,
  i.e. proving Theorem~\ref{t:main}, and reformulate it using
  the new notions and formulae presented in \S \ref{s:loops} to
  \ref{s:lengthf}.   
  More precisely, we shall explain how to compute the volume functions defined
  in \eqref{e:mainint} by
  \begin{equation*}
    \cJ: \ell \mapsto
    \int_{\ell_Y({\mathbf{c}})=\ell}
\prod_{j\in V}  f_j(x_j)  \prod_{j=1}^k x_{\vi} \delta(x_{\vi} - x_{\vi'}) \frac{ \d \mathrm{Vol}^{\mathrm{WP}}_{g_{\mathbf{S}}, n_{\mathbf{S}}}(\x, Y) }{\d\ell}
  \end{equation*}
  as \emph{pseudo-convolutions} of functions in the class $\cF$ -- a notion that will be formalized
  in~\S \ref{s:GC}. 

   We recall that, in the integral above, $V$ is a subset of $\partial \mathbf{S}$. By our
  conventions introduced in \S \ref{s:ppdecomponumb}, we identify $\partial \mathbf{S}$ with
  $\LambdaBC$, and hence $V$ with a subset of $\LambdaBC \subseteq \Lambda$.
  We write $\Vbeta := V \cap \Lambdabeta$ and $\VFN := V \cap \LambdaFN$, so that $\Vbeta$ are the
  components of $V$ that are components of the multi-curve~$\beta$, and $\VFN$ those that are not.  

  \subsection{Change of variables}
  \label{s:ch_var_applied}
 
  Let us change variables using our Jacobian computation, and more precisely its Dirac variant,
  Corollary \ref{c:maindet-dirac}. We obtain that:
 \begin{multline*}
  \cJ(\ell) = 2^{2|\Lambdabeta|-\n} \int_{\ell_Y({\mathbf{c}})=\ell} 
  \bbbone_{\domain} (\vec{L}, \vec{\theta})
  \prod_{\lambda \in V} f_\lambda(y_\lambda)
  \frac{\prod_{\lambda \in \Vbeta}\sinh \div{y_\lambda}
    \prod_{\lambda \in \Lambdainbeta} \sinh^2 \div{y_\lambda}}
  {\prod_{\lambda\in \VFN}\sinh \div{y_\lambda}} \\
     T_{V,\mathrm{m}}(\vec{L}, \vec{\theta})
\prod_{i=1}^r \sinh(L_i)\frac{ \d^r \vec{L} \d^{2r}\vec{\theta}}{\d \ell}
\end{multline*}
where $\domain$ is the domain of variation of the new variables $(\vec{L}, \vec{\theta})$, studied
in \S \ref{s:domain}, and $T_{V,\mathrm{m}}$ is an explicit distribution which we shall leave as
such.  The integral $\cJ(\ell)$ can be rewritten under the simpler form
 \begin{equation}
   \label{e:THEintegral2}
    2^{2|\Lambdabeta|-\n} \int_{\ell_Y({\mathbf{c}})=\ell}  \bbbone_{\domain} (\vec{L}, \vec{\theta})
\prod_{\substack{\lambda \in \Vbeta_+}} g_\lambda(y_\lambda)  
\prod_{\lambda\in \VFN} u_\lambda(y_\lambda) \, T_{V,\mathrm{m}}(\vec{L}, \vec{\theta})
\prod_{i=1}^r \sinh(L_i) \frac{ \d^r \vec{L} \d^{2r}\vec{\theta}}{\d \ell},
\end{equation}
by setting $\Vbeta_+:= \Vbeta \cup \Lambdainbeta$ and, for $\lambda \in \Lambda$,
 \begin{align*}
 g_\lambda(y):=f_\lambda(y) \sinh \div{y} \qquad \text{and} \qquad
 u_\lambda(y):=\frac{f_\lambda(y)}{\sinh \div{y}}
 \end{align*}
 where $f_\lambda$ is already defined for $\lambda \in V \subseteq \LambdaBC$, and is extended to
 $\lambda \in \Lambda$ by letting
 \begin{equation*}
   \begin{cases}
     f_\lambda(y):=1 & \text{for } \lambda \in \LambdaBC \setminus V \\
     f_\lambda(y) := \sinh \div{y} & \text{for } \lambda \in \Lambdain.
   \end{cases}
 \end{equation*}

\subsection{Expansion of the product in the integral}

 Let us now focus on the factor
 \begin{equation}
   \label{eq:factor_J}
   \prod_{\substack{\lambda \in \Vbeta_+}} g_\lambda(y_\lambda)
   \prod_{\VFN} u_\lambda(y_\lambda)
 \end{equation}
 appearing in the integral $\mathcal{J}$.

 \subsubsection{The functions $g_\lambda$}

 For technical considerations related to cancellations at zero in the integral, let us first rewrite
 for each $\lambda \in \Vbeta_+$
 $$g_\lambda(y) := \tilde{g}_\lambda(y) \fz(y)^2,$$
 where $\fz$ is the function introduced in Notation \ref{nota:mt}.  We  notice that we can apply
 the Friedman--Ramanujan hypothesis to every term $\tilde{g}_\lambda$ appearing in the product
 above.  Indeed, for $\lambda \in \Vbeta$, by the hypothesis on the functions
 $(f_\lambda)_{\lambda \in V}$ in Notation \ref{nota:mt}, we have that
 $\tilde{g}_\lambda \in \cF^{\rK_\lambda, \rN_\lambda}$ and, if $p_\lambda$ denotes its principal
 term, 
  \begin{equation}
   \label{eq:replace_gj_tilde}
   |\tilde{g}_\lambda(y) - p_\lambda(y) \, e^{y}|
   \leq \|\tilde{g}_\lambda\|_{\mathcal{F}^{\rK_\lambda,\rN_\lambda}} (1+y)^{\rN_\lambda -1 } e^{y/2}
 \end{equation}
 where $p_\lambda$ is a polynomial of degree $\leq \rK_\lambda-1$.
 For $\lambda \in \Lambdainbeta$, we simply notice that
 $\tilde{g}_\lambda(y) = \sinh^2 (y/2)/\fz(y)^2$ which belongs to $\cF^{\rK_\lambda, \rN_\lambda}$
 for the integers $(\rK_\lambda,\rN_\lambda):=(1,1)$, thanks to the fact that $\sinh^2(y/2)$ has a
 zero of order $2$ at~$0$. Its principal term is $p_\lambda(y) e^{y}$ with $p_\lambda(y):=1/4$. We
 can write the Friedman--Ramanujan hypothesis explicitely for these terms and obtain  
 \begin{align}
   \label{eq:replace_gj_tildein}
   |\tilde{g}_\lambda(y) - p_\lambda(y) \, e^{y}|
   \leq \|\tilde{g}_\lambda\|_{\cF^{\rK_\lambda,\rN_\lambda}} e^{y/2}
 \end{align}
 where $\|\tilde{g}_\lambda\|_{\cF^{\rK_\lambda,\rN_\lambda}}$ is a fixed universal constant. 
 
 We pick a set of indices $\Wbeta \subseteq \Vbeta_+$ for which we take the remainder term in the
 approximations \eqref{eq:replace_gj_tilde} and \eqref{eq:replace_gj_tildein}. This allows us to expand the product as a sum of terms of the form
 \begin{equation*}
   \prod_{\lambda \in \Vbeta_+} \fz(y_\lambda)^2
   \prod_{\lambda \in \Vbeta_+ \setminus \Wbeta} p_\lambda(y_\lambda) e^{y_\lambda}
   \prod_{\lambda \in \Wbeta} \|\tilde{g}_\lambda\|_{\cF^{\rK_\lambda,\rN_\lambda}} \, r_\lambda(y_\lambda)
 \end{equation*}
 where for all $\lambda \in \Wbeta$,
 $|r_\lambda(y_\lambda)|\leq (1+y_\lambda)^{\rN_\lambda -1 } e^{y_\lambda/2}$.

 We then use the fact that, for $\lambda \in \Lambdabeta$, $y_\lambda$ is a linear combination of
 the components of the vector $\vec{\theta}$, given by the simple expression \eqref{e:lnsum}:
 \begin{equation}
   \label{eq:lnsumbis}
   y_\lambda=\sum_{q\in \Theta_\ell(\lambda)}\theta_q.
 \end{equation}
 Note that the sets $\Theta_\ell(\lambda)$ for $\lambda \in \Vbeta_+ \setminus \Wbeta$ are disjoint and
 their union is $\Theta_\ell(\Vbeta_+ \setminus \Wbeta)$ (note that this is not a priori the case for
 the sets $\Theta_\ell(\lambda)$ for $\lambda \in \Lambda$, and we here use the fact that
 $\lambda \in \Lambdabeta$). We obtain that we can further expand the product as a sum of
 contribution of the form
\begin{equation}
   \label{e:prod_g}
   \fn((\tilde{g}_\lambda)_{\lambda \in \Vbeta})
    \prod_{\substack{\lambda \in \Vbeta_+}}\fz(y_\lambda)^2
  \prod_{\lambda \in \Vbeta_+\setminus \Wbeta}  \prod_{q \in \Theta_\ell(\lambda)}
  \theta_q^{\rK_q^{\lambda}}
  \prod_{q \in \Theta_\ell(\Vbeta_+ \setminus \Wbeta)}e^{\theta_q}
   \prod_{\lambda \in \Wbeta} r_\lambda(y_\lambda)
 \end{equation}
 where for $\lambda \in \Vbeta_+ \setminus \Wbeta$,
 $\sum_{q \in \Theta_\ell(\lambda)}\rK_q^{\lambda} \leq \rK_{\lambda}-1$. In particular, for
 $\lambda\in \Lambdainbeta$, $\rK_q^{\lambda}=0$.
 We also note that the constant in front can be bounded by $\prod_{\lambda \in \Vbeta} \|\tilde{g}_\lambda\|_{\cF^{\rK_\lambda,\rN_\lambda}}$.
 
\subsubsection{The functions $u_\lambda$}
 
Similarly, applying the Friedman--Ramanujan hypothesis to the functions
$\tilde{g}_\lambda(y) = f_\lambda(y) \sinh (y/2)/\fz(y)^2 \in \cF^{\rK_\lambda,\rN_\lambda}$ for
$\lambda \in \VFN$ allows us to replace the functions $u_\lambda$ by an approximation,
this time by the polynomial function $p_\lambda$ coming from the principal term of
$\tilde{g}_\lambda$, up to an exponentially decaying error. More precisely, using the relation
\begin{equation*}
  u_\lambda(y) = \frac{4 \fz(y)^2}{(1-e^{-y})^2} e^{-y} \tilde{g}_\lambda(y)
\end{equation*}
together with the hypothesis \eqref{eq:condition_mu} on the function $\fz$, we see that
\begin{equation*}
  |u_\lambda(y) - p_\lambda(y)| \leq 32
  \|\tilde{g}_\lambda\|_{\cF^{\rK_\lambda,\rN_\lambda}}(1+y)^{\rN_\lambda-1} e^{-y/2}.
\end{equation*}

We now replace each term in the product by this expansion, and obtain it can be rewritten as a sum
of terms of the form
\begin{equation*}
  \prod_{\lambda \in \VFN \setminus \WBCFN_0} p_\lambda(y_\lambda)
  \prod_{\lambda \in \WBCFN_0} 32\|\tilde{g}_\lambda\|_{\cF^{\rK_\lambda,\rN_\lambda}}
  \, r_\lambda^0(y_\lambda)
\end{equation*}
where $\WBCFN_0 \subseteq \VFN$ and $|r_\lambda^{0}(y)| \leq (1+y)^{\rN_\lambda-1} e^{-y/2}$ for all
$\lambda \in \WBCFN_0$.

We now express $p_\lambda(y_\lambda)$ as a linear combination of monomials, and use the expression
for $y_\lambda$ in terms of $(\vec{L},\vec{\theta})$ obtained in \S \ref{s:bound}. Note that, for
these terms which are not components of $\beta$, the expression is more complex, and not a simple
linear combination. More precisely, by \eqref{e:outcome3},
\begin{equation}
  \label{eq:ylambdaapp}
  y_\lambda
  = \sum_{n=1}^{m(\lambda)} \tau_n^\lambda - 2 \log 2 + 2 \log
  F_{m(\lambda)}(\vec{L}^\lambda,\vec{\tau}^\lambda) + \O{e^{-y_\lambda}}
\end{equation}
where $\vec{L}^\lambda = (L_1^\lambda, \ldots, L_{m(\lambda)}^\lambda)$ and
$\vec{\tau}^\lambda = (\tau_1^\lambda, \ldots, \tau_{m(\lambda)}^\lambda)$ are the lengths of the
bars and segments~$\cK$ appearing in the description of $\Gamma_\lambda$ as a polygonal curve, and
$F_{m(\lambda)}$ is the function defined in~\eqref{e:Fn}. We can further express the parameters
$\tau$ appearing in the sum in \eqref{eq:ylambdaapp} in terms of $\vec{\theta}$ using
\eqref{e:indlambdan} which states that
\begin{equation*}
  \tau_n^{\lambda} = \sum_{q \in \Theta_\tau(\lambda, n)} \theta_q.
\end{equation*}
The set of indices involved in writing $\vec{\tau}^\lambda$ is
$\Theta_\ell(\lambda)=\bigcup_{n=1}^{m(\lambda)} \Theta_\tau(\lambda,n)$.
Altogether, we obtain that the product of the $u_\lambda$ terms can be expressed as a sum of terms
of the form 
 \begin{equation}
   \label{e:prod_g3}
   \fn((\tilde{g}_\lambda)_{\lambda \in \VFN})
   \prod_{\lambda\in \VFN \setminus \WBCFN}
   \Big[(\log F_{m(\lambda)}(\vec{L}^\lambda, \vec{\tau}^\lambda))^{d_\lambda}
   \prod_{q \in \Theta_\ell(\lambda)} \theta_q^{\rK_q^{\lambda}} \Big]
   \prod_{\lambda \in \WBCFN} r_\lambda(\vec{L},\vec{\theta}^\lambda)
 \end{equation}
 where:
 \begin{itemize}
 \item $\WBCFN \subseteq \VFN$ is the set of indices with exponential decay (it
   contains the set $\WBCFN_0$ above, but can contain more elements due to the exponentially
   decaying term in \eqref{eq:ylambdaapp});
 \item for $\lambda \in \WBCFN$, $\vec{\theta}^\lambda = (\theta_q)_{q \in \Theta_\ell(\lambda)}$ is
   the set of $\theta$ parameters required to express $y_\lambda$
   and
   \begin{equation}
     |r_\lambda(\vec{L},\vec{\theta}^\lambda)|
     \leq (1+y_\lambda)^{\rN_\lambda-1}e^{-y_\lambda/2};
   \end{equation}
 \item for $\lambda \in \VFN \setminus \WBCFN$, 
   $\sum_{q \in \Theta_\ell(\lambda)} \rK_q^{\lambda} +\d_\lambda \leq \rK_{\lambda}-1$;
 \item the constant factor is bounded by $\prod_{\lambda \in \VFN}
   \|\tilde{g}_\lambda\|_{\cF^{\rK_\lambda,\rN_\lambda}}$.
 \end{itemize}

 \subsubsection{Conclusion and substitution in the integral}
 \label{s:conc_subtit_all}
 
 Taking equations \eqref{e:THEintegral2}, \eqref{e:prod_g} and \eqref{e:prod_g3}, we obtain that the
 integral $\cJ(\ell)$ can be rewritten as a fixed sum of terms of the form
 \begin{equation}
   \label{e:developing_factor}
   \begin{split}
     &\fn((\tilde{g}_\lambda)_{\lambda \in V}, r)
     \int_{\ell_Y(\mathbf{c})=\ell}
      \bbbone_{\domain} (\vec{L}, \vec{\theta})
       \prod_{\lambda \in \Vbeta_+} \fz(y_\lambda)^2
       \prod_{q \in \Theta_\ell(V_+ \setminus W)} \theta_q^{\rK_q}
       \prod_{q \in \Theta_\ell(\Vbeta_+ \setminus \Wbeta)} e^{\theta_q} \\
     &\prod_{\lambda \in \VFN  \setminus \WBCFN} (\log F_{m(\lambda)}(\vec{L}^\lambda,\vec{\tau}^\lambda))^{d_\lambda}
       \prod_{\lambda \in W} r_\lambda(\vec{L},\vec{\theta}^\lambda)
       \, T_{V,\mathrm{m}}(\vec{L}, \vec{\theta})
       \prod_{i=1}^r \sinh(L_i) \frac{ \d^r \vec{L} \d^{2r}\vec{\theta}}{\d \ell},
   \end{split}
 \end{equation}
 where:
 \begin{itemize}
 \item $V_+ := V \cup \Lambdainbeta$ and $W := \Wbeta \cup \WBCFN \subseteq V_+$;
 \item $\rK_q := \sum_{\lambda \in V_+\setminus W, \Theta_\ell(\lambda) \ni q} \rK_q^\lambda$ (with
   $\rK_q^\lambda$ defined to be $0$ if non already defined); 
 \item $|r_\lambda(\vec{L},\vec{\theta}^\lambda)| \leq (1+y_\lambda)^{\rN_\lambda-1}e^{\alpha_\lambda
     y_\lambda/2}$ with $\alpha_\lambda=1$ if $\lambda \in \Wbeta$ and $-1$ if
   $\lambda \in \WBCFN$
 \item the constant factor is bounded by $\fn(r) \prod_{\lambda \in V}
   \|\tilde{g}_\lambda\|_{\cF^{\rK_\lambda,\rN_\lambda}}$.
 \end{itemize}
 Note that we have
 \begin{equation}
   \label{eq:sum_K}
   \sum_{q \in \Theta_\ell(V_+ \setminus W)} \rK_q + \sum_{\lambda \in V^\Gamma \setminus W^\Gamma}
   d_\lambda
   \leq \sum_{\lambda \in V} (\rK_\lambda-1).
 \end{equation}
 
\subsection{Active and neutral parameters} \label{s:act_ntr}
 
Our final aim is to prove that the integral $\cJ$ is a Friedman--Ramanujan function following the
method sketched in \S \ref{s:convolution} to prove that a convolution $f_1 \ast f_2$ is a
Friedman--Ramanujan function. We recall that, in this case, the idea is to observe that
$\cL^{\rK_1+\rK_2}(f_1 \ast f_2) = \cL^{\rK_1} f_1 \ast \cL^{\rK_2}f_2$. Hence, if we choose the
indices $\rK_1$ and $\rK_2$ so that $\cL^{\rK_i} f_i$ are elements of $\cR$ (which we can do if
$f_i \in \cF$), then we can easily deduce by an upper bound that
$\cL^{\rK_1+\rK_2}(f_1 \ast f_2) \in \cR$, which in turn implies that $f_1 \ast f_2 \in \cF$.

Now, if a function $f_i$ is already an element of $\cR$, we do not need to cancel its principal
term; in other words, we do not need to apply the operator $\cL$ to $f_i$ in order to establish the
Friedman--Ramanujan property for $f_1 \ast f_2$ (we can take $\rK_i=0$). In our language, this means
that the variable $x_i$ corresponding to $f_i$ in the convolution
$f_1 \ast f_2(x) = \int_{x_1+x_2=x}f_1(x_1)f_2(x_2) \frac{\d x_1 \d x_2}{\d x}$ is a \emph{neutral
  parameter}. To the contrary, a variable which appears with a principal term in the integral (and
will therefore need to be cancelled by applying the operator $\cL$) will be called an \emph{active
  parameter}.

\begin{nota}
  \label{nota:ac_ne}
   We partition the set of $\Theta$ into sets of \emph{active and neutral parameters}
   $\Theta = \ThetaAc \sqcup \ThetaNe$ where
   \begin{align}\label{e:cV}
     \ThetaAc := \{ q \in \Theta_\ell(\Vbeta_+) \, : \, q \notin \Theta_\ell(W)\}.
   \end{align}
   We denote as $\thetaAc = (\theta_q)_{q \in \ThetaAc}$ and
   $\thetaNe = (\theta_q)_{q \in \ThetaNe}$ the components of
   $\vec{\theta} = (\theta_q)_{q \in \Theta}$ in those two sets of indices. We will call
   \emph{neutral parameters} the components of $\thetaNe$ together with the vector
   $\vec{L}=(L_1, \ldots, L_r)$, and \emph{active parameters} the components of $\thetaAc$.
 \end{nota}
 In the following, the convolution argument will concern the variables $\thetaAc$, while $\vec{L}$ and
 $\thetaNe$ are kept fixed. We will not need to apply the operator $\cL$ with respect to the neutral
 variables, because their contribution does not yield a principal term in the product \eqref{e:developing_factor}.  

 \begin{rem}
   The reason why we do not need to work on the neutral parameters is that they will appear without
   a factor $2$ in our comparison estimate, Proposition \ref{p:comparison}. We have already
   commented on the importance of factors of $2$ in Proposition~\ref{p:comparison1}, a simple
   comparison estimate. We saw in Lemma \ref{lem:df} how double-filling loops (for which all
   parameters are neutral) can easily be proven to be Friedman--Ramanujan remainders, by direct
   inequalities, without any need to apply the operator $\cL$.
 \end{rem}
 
 We now set aside the neutral parameters in \eqref{e:developing_factor} and consider them as
 constants. The focus of the remainder of the paper is thus on integrals of the form
\begin{align} \label{e:int_ell}
  \mathbf{Int}(\ell)=
  \int_{h(\thetaAc)=\ell}
  \varphi_0(\thetaAc)
  \prod_{q \in \ThetaAc} \theta_q^{\rK_q} e^{\theta_q}
  \,  \frac{  \d \thetaAc}{\d \ell}
\end{align}
where:
\begin{itemize}
\item all neutral variables $\vec{L}, \thetaNe$ are fixed;
\item we have the upper bound
  \begin{equation}
    \label{eq:boundKlambda}
    \sum_{q\in \ThetaAc}\rK_q
    \leq \sum_{\substack{\lambda \in \Vbeta_{+}\\ \Theta_\ell(\lambda) \not\subseteq\Theta_\ell(W)}}( \rK_\lambda-1)
    \leq \sum_{\lambda \in \Vbeta_{+}\setminus W} (\rK_\lambda-1);
  \end{equation}
\item the weight function $\varphi_0$ is defined as
  \begin{align} \label{e:spec_phi}
    \varphi_0(\thetaAc)
    = \bbbone_{\domain}(\vec{L}, \vec{\theta})
    \prod_{\lambda \in \Vbeta_+} \fz(y_\lambda)^2
    \prod_{\lambda\in \VFN  \setminus \WBCFN}
    (\log F_{m(\lambda)}(\vec{L}^\lambda, \vec{\tau}^\lambda))^{d_\lambda}\end{align}
  and, for each $\lambda \in \VFN \setminus \WBCFN$, $d_\lambda \leq \rK_\lambda-1$;
\item the level function $h$ is the length function of $\mathbf{c}$,
\begin{align} \label{e:spec_h}
h(\thetaAc)=\ell_Y({\mathbf{c}})=\sum_{i=1}^{\cc}  \ell_Y({\mathbf{c}}_i).
\end{align}
\end{itemize}

It will be useful in the following to keep track of the quantities we have taken out of the integral
due to depending only on neutral variables:
\begin{equation}
  \label{e:takenout}
  \fn((\tilde{g}_\lambda)_{\lambda \in V}, r)
  \prod_{q \in \Theta_\ell(V_+) \cap \ThetaNe} \theta_q^{\rK_q}
  \prod_{\lambda \in W} r_\lambda(\vec{L},\thetaNe)
  \, T_{V,\mathrm{m}}(\vec{L}, \thetaNe)
  \prod_{i=1}^r \sinh(L_i).
\end{equation}

Comparing with the forthcoming definition \eqref{e:defconv}, this appears as the
$(h, \varphi_0)$-convolution of the Friedman--Ramanujan functions
$(\theta_q^{\rK_q}e^{\theta_q})_{q\in \ThetaAc}$.  The reason why we use the word \emph{convolution}
is that the function $ h(\thetaAc)$ behaves in many respects like the sum
$\sum_{q\in \ThetaAc} \theta_q$ (plus a function of the neutral variables), and the function
$\varphi_0$ will be shown to have ``small'' derivatives in some sense, so it can be thought of as
close to being constant.  The purpose of \S \ref{s:GC} is to make these notions precise.

%%%Local Variables: 
%%% mode: latex
%%% TeX-master: "main"
%%% End: 

\section{Pseudo-convolutions and stability of the FR hypothesis}
\label{s:GC}
In this section, we introduce a notion of \emph{pseudo-convolution}
$f_1\star \ldots \star f_n|^h_\varphi$ of functions $(f_i)_{1 \leq i \leq n}$ with the level
function $h$ and the weight $\varphi$, which generalizes the usual convolution
$f_1 \ast \ldots \ast f_n$, and is motivated by the form of \eqref{e:int_ell}. We then study the way
the operator $\cL = \id - \cP$ acts on pseudo-convolutions in \S
\ref{s:action_L_pseudo-convolution}. This will provide statements of the following kind: if
$f_1, \ldots, f_n$ are Friedman--Ramanujan functions, and if $(h, \varphi)$ satisfy adequate
assumptions, then $f_1\star \ldots \star f_n|^h_\varphi$ is also a Friedman--Ramanujan function; see
Theorem \ref{t:intermediate}. Roughly speaking, the function $h(x_1, \ldots, x_n)$ is assumed to be close to
$x_1+\ldots+x_n+C$ and $\varphi$ to a constant (in a certain topology defined in \S \ref{s:Evarphi}).

\subsection{Definition of the notion of pseudo-convolution}

We make the following definition.
 \begin{defa}
   Take $n \geq 1$. Let $\varphi :\IR^n\rightarrow \IR$ be a locally bounded, measurable function,
   and $h$ be a real-valued mesurable function defined on a set containing the support
   $\supp(\varphi)$ of $\varphi$. We assume that for every compact subset $K\subset \IR$, the set
   $h^{-1}(K) \cap \supp (\varphi)$ has finite measure.

   We call \emph{$(h, \varphi)$-convolution} of $n$ locally integrable functions
   $f_1, \ldots, f_n: \IR \rightarrow \IR$ the pushforward of the product measure
   $\varphi(x_1, \ldots, x_n)\prod_{i=1}^n f_i(x_i) \d x_i$ by the map $h$. In other words, it is
   the distribution $\nu$ on $\IR_{\geq 0}$ defined by the fact that, for every continuous function
   $F:\R_{\geq 0} \rightarrow \R$ with compact support,
 \begin{align*}
   \int_{ \IR} F(x)\nu(\mathrm{d} x)
   = \int_{\IR^n}F(h(x_1, \ldots, x_n)) \, \varphi(x_1, \ldots, x_n) \prod_{i=1}^n f_i(x_i) \d x_i.
 \end{align*}
 \end{defa}
 
 In our applications, the functions $h$ and $\varphi$ will always be much more than measurable. More
 precisely, throughout this article, we suppose the following.

 \begin{nota}[Assumptions on $(\varphi,h)$] \label{as:h0}
   Take $a > 0$. We assume the following.
   \begin{itemize}
   \item $\varphi : \R^n \rightarrow \R$ is a $\mathcal{C}^1$ function of support included in
     $(a, +\infty)^n$.
   \item $h: [a, +\infty)^n \rightarrow \R$ is a $\cC^1$ function.
   \item For any index $1 \leq i \leq n$ and any fixed
     $\hat{\x}_i = (x_j)_{j\not=i}\in [a, + \infty)^{n-1}$, the function
   \begin{align*}h_i^{\hat{\x}_i}: \,x_i\mapsto h(x_1, \ldots, x_n)
   \end{align*}
   is a $\cC^1$-diffeomorphism from $[a, +\infty)$ onto its image; we denote
   $\ell\mapsto h_i^{-1}(\ell, \hat{\x}_i)$ its inverse.
 \item  The derivative
   $(\ell, \hat{\x}_i) \mapsto \partial_{\ell}h_i^{-1}(\ell, \hat{\x}_i)$ is bounded on
   its domain of definition.
   \end{itemize}
 \end{nota}

The following proposition follows directly from the change of variable formula:
 
 \begin{prp} 
   Under the hypotheses of Notation \ref{as:h0}, the $(h, \varphi)$-convolution~$\nu$ of any~$n$
   locally integrable functions $f_1, \ldots, f_n$ admits a density with respect to the Lebesgue
   measure, with the following expression
\begin{align} \label{e:hconv}
  \frac{d\nu}{\d\ell}(\ell)= \int
  \varphi \big(x_1, \ldots, h_i^{-1}(\ell, \hat{\x}_i), \ldots, x_n\big)
  f_i(h_i^{-1}(\ell,  \hat{\x}_i))
        \prod_{j\not=i} f_j( x_j) \,
       \partial_{\ell}h_i^{-1}(\ell, \hat{\x}_i)
 \d \hat{\x}_i
\end{align}
where $\d \hat{\x}_i := \prod_{j\not=i} \d x_j$. In particular this is independent of the choice of
the index $i$. 
\end{prp}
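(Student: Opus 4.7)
The plan is essentially a standard Fubini-plus-change-of-variable argument; there is no real obstacle, but one should be careful about integrability in order to justify interchanging the order of integration.

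First I would start from the defining property of $\nu$: for any continuous test function $F$ with compact support in $\R$,
\begin{equation*}
  \int_{\R} F(\ell)\, \nu(\mathrm{d}\ell)
  = \int_{\R^n} F\bigl(h(x_1,\ldots,x_n)\bigr)\, \varphi(x_1,\ldots,x_n) \prod_{j=1}^n f_j(x_j)\,\mathrm{d}x_j.
\end{equation*}
Because $\varphi$ is compactly supported and the functions $f_j$ are locally integrable, the integrand is absolutely integrable on $\R^n$, so Fubini's theorem applies. Fix any index $i \in \{1,\ldots,n\}$ and integrate first with respect to $x_i$, keeping $\hat{\x}_i = (x_j)_{j\neq i}$ fixed.

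For fixed $\hat{\x}_i \in [a,+\infty)^{n-1}$, the assumption in Notation~\ref{as:h0} says that $x_i \mapsto h_i^{\hat{\x}_i}(x_i)$ is a $\mathcal{C}^1$-diffeomorphism from $[a,+\infty)$ onto its image, with inverse $\ell\mapsto h_i^{-1}(\ell,\hat{\x}_i)$ of bounded derivative. I would then perform the change of variable $\ell = h_i^{\hat{\x}_i}(x_i)$, whose Jacobian is $\mathrm{d}x_i = \partial_\ell h_i^{-1}(\ell,\hat{\x}_i)\,\mathrm{d}\ell$. This turns the inner integral into
\begin{equation*}
  \int F(\ell)\, \varphi\bigl(x_1,\ldots,h_i^{-1}(\ell,\hat{\x}_i),\ldots,x_n\bigr) f_i\bigl(h_i^{-1}(\ell,\hat{\x}_i)\bigr)\, \partial_\ell h_i^{-1}(\ell,\hat{\x}_i)\,\mathrm{d}\ell.
\end{equation*}
Multiplying by $\prod_{j\neq i} f_j(x_j)\,\mathrm{d}x_j$ and invoking Fubini again (which is legitimate since, on $h^{-1}(\mathrm{supp}\,F)\cap\mathrm{supp}\,\varphi$, the integrand remains absolutely integrable by the hypotheses that $\varphi$ is locally bounded and compactly supported, that $h^{-1}(K)\cap\mathrm{supp}\,\varphi$ has finite measure for compact $K$, and that $\partial_\ell h_i^{-1}$ is bounded), I would swap the order of integration and put the integral over $\ell$ on the outside. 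Comparing the resulting expression with $\int F(\ell)\,\nu(\mathrm{d}\ell)$ and using that $F$ is an arbitrary continuous compactly supported function, the Radon-Nikodym / Riesz representation conclusion is that $\nu$ is absolutely continuous with respect to Lebesgue measure with density given by the claimed formula \eqref{e:hconv}.

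The independence of the expression with respect to the index $i$ is then automatic: the left-hand side $\mathrm{d}\nu/\mathrm{d}\ell$ does not depend on any choice, so two formulas obtained from two different indices $i,i'$ must coincide almost everywhere, and in fact everywhere if one picks continuous representatives. The only point one must be careful about is to verify the integrability needed for the two applications of Fubini; this is precisely where the three assumptions of Notation~\ref{as:h0} (compact support of $\varphi$, finite measure of level sets of $h$ on $\mathrm{supp}\,\varphi$, boundedness of $\partial_\ell h_i^{-1}$) are used. No deeper input is required.
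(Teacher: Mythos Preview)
Your approach is correct and is exactly what the paper does: it simply states that the proposition ``follows directly from the change of variable formula,'' which is precisely your Fubini-plus-change-of-variable argument. One small slip: you twice assert that $\varphi$ is compactly supported, but Notation~\ref{as:h0} only gives $\supp\varphi\subset(a,+\infty)^n$; the integrability you need comes instead from the hypothesis that $h^{-1}(K)\cap\supp\varphi$ has finite measure for compact $K$ (together with local boundedness of $\varphi$ and boundedness of $\partial_\ell h_i^{-1}$), which you do cite correctly later in the argument.
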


\begin{nota}
  \label{nota:pc}
  The expression \eqref{e:hconv} will be denoted $f_1\star \ldots \star f_n|^h_\varphi$.
\end{nota}

If $x = ( x_1, \ldots, x_n)$ are related by the implicit relation $h(x)=\ell$, we will denote
$\frac{\d x_1\ldots \d x_n}{\d\ell} = \frac{\d x}{\d \ell}$ any of the equivalent expressions of the
measure
\begin{align}\label{e:density}\frac{\d x_1\ldots \d x_n}{\d\ell}
  = \frac{\d x}{\d \ell}
  := \partial_{\ell}h_i^{-1}(\ell, \hat{\x}_i)
\d \hat{\x}_i.
\end{align}
This corresponds to the
disintegration of the Lebesgue measure $\d x = \d x_1\ldots \d x_n$ on the level-sets of~$h$.  Thus,
we can write
\begin{align}\label{e:defconv}
  f_1\star \ldots \star f_n|^h_\varphi (\ell)
  =&\int_{h(x)=\ell} 
     \varphi(x)
     \prod_{i=1}^n f_i(x_i) \, \frac{\d x}{\d\ell}.
\end{align}

\begin{exa}
  The usual convolution corresponds to $h(x)= x_1+\ldots+ x_n$, and either $\varphi\equiv 1$, or
  $\varphi(x)=\bbbone_{ x_1\geq 0, \ldots, x_n \geq 0}$ (in the case of convolutions of functions
  defined on $\R_{\geq 0}$).
\end{exa}

\begin{rem}[Associativity] The associativity of the usual convolution follows from associativity of
  the addition.  For a general function $h$, the associativity rule may be replaced by formulae such
  as
  \begin{align*}
    f_1\star \ldots \star f_n|^h_\varphi(\ell)
    = \int f_2\star \ldots \star f_n|^{h_{ x_1}}_{\varphi_{ x_1}}(\ell)  f_1( x_1) \d x_1
  \end{align*}
  where $\varphi_{ x_1}$ (and similarly $h_{ x_1}$) is the function of $n-1$ variables defined by
  freezing the first variable to $ x_1$:
$$\varphi_{ x_1}( x_2,  x_3, \ldots,  x_n):=\varphi( x_1, x_2,  x_3, \ldots,  x_n).$$
This simply expresses the fact that we can do the convolution
$f_2\star \ldots \star f_n|^{h_{ x_1}}_{\varphi_{ x_1}} $ with respect to $x_2, \ldots, x_n$ (while
$x_1$ is kept fixed) and then integrate with respect to $f_1( x_1) \d x_1$, to obtain the
pseudo-convolution $f_1\star \ldots \star f_n |^h_\varphi$.
\end{rem}
In the rest of the article, we will need to use this associativity rule for multiple variables and
pairs $(h,\varphi)$. In order to avoid cumbersome notations, we will often omit the index $x_1$
in the freezed functions $h_{x_1}$ and $\varphi_{x_1}$ when the context allows to avoid any confusion.

\subsection{Action of the operator $\cL_\ell$ on pseudo-convolution}
\label{s:action_L_pseudo-convolution}

In \S \ref{s:cL} we defined two operators acting on locally integrable functions, $\cP_x=\int_0^x$ (taking a primitive) and $\cL_x= \id-\cP_x$. Since we are dealing with functions of several variables $(x_1, \ldots, x_n)$, the subscript
$x_i$ will serve to indicate that the primitive is taken with respect to the variable $x_i$. 
 
The goal of this section is to compare
$\cL^{\rK_1+\ldots+\rK_n}_\ell(f_1\star \ldots \star f_n|^h_\varphi)$ with
$(\cL_{ x_1}^{\rK_1}f_1)\star \ldots \star (\cL_{ x_n}^{\rK_n }f_n)|^h_\varphi$. With the usual
convolution on functions defined on $\R$, we have the exact identity
\begin{equation}
\cL^{\rK_1+\ldots+\rK_n}_\ell(f_1* \ldots * f_n) = (\cL_{ x_1}^{\rK_1}f_1)*\ldots * (\cL_{
  x_n}^{\rK_n }f_n).\label{eq:L_convolution}
\end{equation}
Note that, in the case of functions defined on $\IR_{\geq 0}$ only, there are additional boundary
terms coming from $x_i=0$ in this relation (we leave this observation as an exercise to the reader).

For general functions $h$ and $\varphi$, the two expressions
$\cL^{\rK_1+\ldots+\rK_n}_\ell(f_1\star \ldots \star f_n|^h_\varphi)$ and
$(\cL_{ x_1}^{\rK_1}f_1)\star \ldots \star (\cL_{ x_n}^{\rK_n }f_n)|^h_\varphi$ will not be
equal. We provide in Theorem \ref{t:thebigone} a detailed expression for the error term produced by
replacing one by the other.

We saw in \S \ref{s:convolution} how the identity \eqref{eq:L_convolution} can be exploited to prove
the stability of the Friedman--Ramanujan hypothesis by convolution. In \S \ref{s:technicalsection},
we will use the results of this section to provide a similar stability result, under certain
hypotheses on the level function $h$ and the weight $\varphi$.

\subsubsection{Basic calculus in one variable} We gather in the following lemmas the behavior of $\cP$ and $\cL$ under change of variables and multiplication. The change of variable formula yields:
\begin{lem} \label{l:Icompo} Let $h: (0, + \infty) \rightarrow (h_{\inf}, + \infty)$ be a
$\cC^1$-diffeomorphism with $h_{\inf}\geq 0$, and $h^{-1}$ be its inverse.  Then, for any continuous
function $G:\R_{\geq 0} \rightarrow \R$, 
 \begin{align*} \cP_\ell (G\circ h^{-1}) = (\cP_{x}  G) \circ h^{-1} + R_x G \circ h^{-1}   
  \end{align*}
  where $R_x G(x) := \cP_{x} (G (\partial_x h-1))$.
\end{lem}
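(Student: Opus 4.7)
The plan is direct: the identity is essentially the change of variables formula applied to the definition of $\cP_\ell$. First I would unravel the definition
\[
\cP_\ell (G\circ h^{-1})(\ell) = \int_0^\ell G(h^{-1}(s))\, \d s,
\]
and substitute $s = h(y)$, $\d s = h'(y)\,\d y$. Setting $x := h^{-1}(\ell)$, the change of variables converts the integral into
\[
\int_0^x G(y)\, h'(y)\, \d y.
\]

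Next I would split $h'(y) = 1 + (h'(y)-1)$, which gives
\[
\int_0^x G(y)\, \d y + \int_0^x G(y)(h'(y)-1)\, \d y = (\cP_x G)(x) + (R_x G)(x),
\]
by the very definitions of $\cP_x$ and $R_x$ stated in the lemma. Substituting $x = h^{-1}(\ell)$ yields exactly the claimed identity.

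The only point deserving care is the lower endpoint after the change of variables, since $h^{-1}$ is a priori only defined on $(h_{\inf}, +\infty)$. When $h_{\inf}>0$, I would clarify that $G\circ h^{-1}$ is to be extended by zero on $[0,h_{\inf}]$, so that $\int_0^\ell = \int_{h_{\inf}}^\ell$; by the diffeomorphism property, the $s$-domain $(h_{\inf},\ell)$ maps precisely to $(0,x)$ in the $y$-variable, and the computation above goes through verbatim. (When $h_{\inf}=0$, the issue is vacuous.) There is no genuine obstacle here; the lemma is really a restatement of the change of variables formula packaged in the notation of $\cP$ and $R_x$, and will serve as a building block for the more substantive calculations on pseudo-convolutions in the next subsection.
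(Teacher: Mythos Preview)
Your proof is correct and follows exactly the same approach as the paper: change variables $s=h(y)$ to turn $\int_{h_{\inf}}^\ell G(h^{-1}(s))\,\d s$ into $\int_0^{h^{-1}(\ell)} G(y)h'(y)\,\d y$, split $h'=1+(h'-1)$, and handle the lower endpoint by extending $G\circ h^{-1}$ by zero on $[0,h_{\inf}]$. There is nothing to add.
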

\begin{proof}
This is just a way of writing
$$\int_{h_{\inf}}^{\ell} G(h^{-1}(u)) \d u
= \int_0^{h^{-1}(\ell)} G(t)\d t
+\int_0^{h^{-1}(\ell)} G(t) (h'(t)-1) \d t.$$
 It is important to note that $\ell \mapsto G(h^{-1}(\ell))$ is only defined on $(h_{\inf}, +\infty)$, so the operator $ \cP_\ell$ should, a priori, be defined as $\int_{h_{\inf}}^\ell$. However, if we extend $G(h^{-1}(\ell))$ to be $0$ if $\ell\leq h_{\inf}$, the formula remains true taking $ \cP_\ell=\int_0^\ell$.
 \end{proof}
  
Integration by parts can be written in the following form:  
    \begin{lem}
      \label{l:product}
      For any continuous function $G : \R_{\geq 0} \rightarrow \R$, any $\mathcal{C}^1$ function
      $\varphi : \R_{\geq 0} \rightarrow \R$,
   \begin{align*} 
  \cP_{x}(\varphi G) =\varphi\, \cP_{x}G   -\cP_x \big(\partial_x{\varphi} \, \cP_x G\big).
   \end{align*}
       \end{lem}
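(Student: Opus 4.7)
The statement is a one-variable identity that, despite its formal appearance, is just integration by parts applied to the primitive $F := \cP_x G$. My plan is the following.

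First I would introduce $F(x) := \cP_x G(x) = \int_0^x G(t)\, \d t$, which by definition satisfies $F(0)=0$ and $F'(x) = G(x)$ (almost everywhere, since $G$ is continuous, in fact everywhere). Then $\cP_x(\varphi G)$ evaluated at some point $x$ is literally $\int_0^x \varphi(t)\, F'(t)\, \d t$, and standard integration by parts gives
\begin{equation*}
  \int_0^x \varphi(t)\, F'(t)\, \d t
  = \varphi(x) F(x) - \varphi(0)F(0) - \int_0^x \partial_x \varphi(t)\, F(t)\, \d t.
\end{equation*}
Because $F(0) = 0$, the boundary term at $0$ drops, and recognising $F = \cP_x G$ in the two remaining terms gives exactly the claimed formula
\begin{equation*}
  \cP_x(\varphi G) = \varphi \cdot \cP_x G - \cP_x\bigl(\partial_x \varphi \cdot \cP_x G\bigr).
\end{equation*}

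There is essentially no obstacle here: the only regularity issue is justifying the integration by parts, which follows immediately from the standing assumption that $\varphi \in \mathcal{C}^1$ and $G$ is continuous (so $F \in \mathcal{C}^1$), which makes the product $\varphi F$ absolutely continuous on any compact subinterval of $[0,\infty)$. Since the identity is pointwise in $x \geq 0$, no further limiting argument is needed. This lemma is best viewed as the $\cP$-version of the Leibniz rule, dual to the trivial product rule $\partial_x(\varphi F) = \partial_x \varphi \cdot F + \varphi \cdot \partial_x F$, and it will serve in the sequel to move factors of $\varphi$ across the operator $\cP_x$ (and hence across $\cL_x = \id - \cP_x$) when analysing how $\cL_\ell$ acts on pseudo-convolutions.
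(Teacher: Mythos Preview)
Your proof is correct and follows exactly the same approach as the paper: both reduce the identity to integration by parts applied to $\int_0^x \varphi(t) G(t)\,\d t$ with $F=\cP_x G$, using $F(0)=0$ to eliminate the boundary term.
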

\begin{proof}
This just says that
$$ \int_0^x \varphi(t) G(t) \d t=\varphi(x) \int_0^x  G(t) \d t
-\int_0^x \varphi'(t)\Big(\int_0^t G(s)\d s \Big) \d t$$
which is indeed true by integration by parts.
\end{proof}
  
By combining both previous formulas and iterating them, we obtain:
       
\begin{prp}\label{p:LK}
  With the notations of Lemma \ref{l:Icompo} and \ref{l:product}, for any $\rK\geq 1$,
  \begin{align*} 
    \cL^{\rK}_\ell((\varphi G)\circ h^{-1}) = \,
    &  
      (\varphi \, \cL^{\rK}_{x}G ) \circ h^{-1} 
    \\
 &  -\sum_{t=0}^{\rK-1}\cL_\ell^{\rK-1-t} \big((\varphi \, R_x  ( \cL_{x}^t  G) )\circ h^{-1} \big)\\
    &  +\sum_{t=0}^{\rK-1}\cL_\ell^{\rK-1-t}
      \cP_\ell \big(\partial_\ell (\varphi\circ h^{-1})\,   \cP_\ell (\cL_x^{t} G \circ h^{-1}) \big).
            \end{align*}            
          \end{prp}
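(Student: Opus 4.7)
The plan is to argue by induction on $\rK$, with the case $\rK=1$ serving both as the base case and as the key algebraic identity that, when iterated, produces the full formula.

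For the base case, I would start from the definition $\cL_\ell = \Id - \cP_\ell$ and apply Lemma \ref{l:Icompo} to rewrite $\cP_\ell((\varphi G)\circ h^{-1}) = (\cP_x(\varphi G))\circ h^{-1} + R_x(\varphi G) \circ h^{-1}$. Lemma \ref{l:product} applied to $\varphi G$ then produces $\cP_x(\varphi G) = \varphi \cP_x G - \cP_x(\partial_x \varphi \cdot \cP_x G)$, which upon collecting terms yields $(\varphi G) \circ h^{-1} - (\cP_x(\varphi G))\circ h^{-1} = (\varphi \cL_x G)\circ h^{-1} + \cP_x(\partial_x \varphi \cdot \cP_x G) \circ h^{-1}$. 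The remaining task is to identify the two extra terms, namely $\cP_x(\partial_x \varphi \cdot \cP_x G)\circ h^{-1} - R_x(\varphi G) \circ h^{-1}$, with the right-hand side $-(\varphi R_x G)\circ h^{-1} + \cP_\ell(\partial_\ell(\varphi \circ h^{-1}) \cdot \cP_\ell(G \circ h^{-1}))$ of the $\rK=1$ formula.

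To do that identification, I would apply Lemma \ref{l:product} a second time to $R_x(\varphi G) = \cP_x(\varphi \cdot G(\partial_x h - 1))$, giving $R_x(\varphi G) = \varphi R_x G - \cP_x(\partial_x \varphi \cdot R_x G)$. On the other side, the change of variable $u = h(t)$ combined with the identity $(h^{-1})'(h(t)) h'(t) = 1$ transforms $\cP_\ell(\partial_\ell(\varphi \circ h^{-1}) \cdot \cP_\ell(G\circ h^{-1}))$ into $\cP_x(\partial_x \varphi \cdot \cP_x(G \partial_x h)) \circ h^{-1}$, and since $\cP_x(G \partial_x h) = \cP_x G + R_x G$, the two sides match. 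This is the only step where genuine calculation is required; the rest is bookkeeping.

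The inductive step then consists of writing $\cL^{\rK+1}_\ell = \cL_\ell \circ \cL^{\rK}_\ell$, applying the induction hypothesis, and observing that $\cL_\ell$ commutes with itself so that the two summations indexed by $t \in \{0,\dots,\rK-1\}$ become summations with $\cL_\ell^{\rK-t}$ in front, matching exactly the terms indexed by $t \in \{0,\ldots,\rK-1\}$ in the $(\rK+1)$-version of the formula. The remaining contribution is $\cL_\ell((\varphi \cL^{\rK}_x G)\circ h^{-1})$, to which one applies the $\rK=1$ identity (just proven) with $G$ replaced by $\cL^{\rK}_x G$; this produces exactly the principal term $(\varphi \cL^{\rK+1}_x G)\circ h^{-1}$ together with the missing $t=\rK$ contributions in the two sums.

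The main obstacle is not conceptual but notational: one must carefully distinguish between the operators $\cP_x$, $\cP_\ell$, $R_x$ and their interaction with composition by $h^{-1}$, and keep track of the fact that $G\circ h^{-1}$ is implicitly extended by $0$ on $[0,h_{\inf}]$ so that the various primitives $\cP_\ell = \int_0^\ell$ and $\int_{h_{\inf}}^\ell$ agree. Once the $\rK=1$ identity is established cleanly, the induction itself is mechanical and produces no new analytic difficulty.
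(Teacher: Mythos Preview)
Your proposal is correct. The main difference from the paper's argument is in the organization of the $\rK=1$ step and in how the passage to general $\rK$ is phrased.

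For $\rK=1$, the paper applies Lemma~\ref{l:product} \emph{in the variable $\ell$} first, writing $\cP_\ell((\varphi G)\circ h^{-1}) = (\varphi\circ h^{-1})\cP_\ell(G\circ h^{-1}) - \cP_\ell(\partial_\ell(\varphi\circ h^{-1})\,\cP_\ell(G\circ h^{-1}))$, so the term $\cP_\ell(\partial_\ell(\varphi\circ h^{-1})\,\cP_\ell(G\circ h^{-1}))$ appears immediately and only $\cP_\ell(G\circ h^{-1})$ needs Lemma~\ref{l:Icompo}. This avoids entirely your second application of Lemma~\ref{l:product} to $R_x(\varphi G)$ and the subsequent change-of-variable identification. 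Your route is correct but longer; the paper's order of applying the two lemmas is the more economical one.

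For general $\rK$, the paper observes that the $\rK=1$ identity computes the commutator $[\cL_\ell,T]$ where $TG=(\varphi G)\circ h^{-1}$, and then invokes the algebraic identity $L^\rK T = TL^\rK + \sum_{t=0}^{\rK-1}L^{\rK-1-t}[L,T]L^t$ in one line. Your explicit induction is exactly the unfolding of this identity, so the two are equivalent; the commutator formulation just makes the structure more transparent.
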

   \begin{proof}  
     For $\rK=1$, a combination of Lemmas \ref{l:Icompo} and \ref{l:product} (w.r.t. the variable
     $\ell$) yields
   \begin{align*} 
     \cL_\ell((\varphi G)\circ h^{-1}) 
     % & = (\varphi G)\circ h^{-1} - \cP_\ell\Big(\varphi \circ h^{-1} \, G \circ h^{-1}\Big) \\
& = (\varphi G)\circ h^{-1} - \cP_\ell(\varphi \circ h^{-1} \cdot G \circ h^{-1}) \\     
& \overset{\ref{l:product}}{=} 
       (\varphi G)\circ h^{-1} - \varphi \circ h^{-1} \cdot \cP_\ell ( G \circ h^{-1})
       +  \cP_\ell\big( \partial_\ell (\varphi \circ h^{-1}) \,   \cP_\ell (G\circ h^{-1})\big) \\
     & \overset{\ref{l:Icompo}}{=}   (\varphi \, \cL_{x}G) \circ h^{-1}
       - (\varphi \, R_xG) \circ h^{-1} 
       +  \cP_\ell\big( \partial_\ell (\varphi \circ h^{-1}) \,   \cP_\ell (G\circ h^{-1})\big)
         \end{align*}     
         which gives the expression of the commutator $ [L, T] $ for   
 with $L=\cL$ and $TG= (\varphi G) \circ h^{-1}$:
  \begin{align*}   
    [L, T] G=  
    -  ( \varphi R_x G   )\circ h^{-1}
  +  \cP_\ell \big(\partial_\ell (\varphi\circ h^{-1})\,  \cP_\ell (G\circ h^{-1}) \big).
  \end{align*}            
   We then use the algebraic identity
  \begin{align*}
  L^{\rK}T=TL^{\rK}+ \sum_{t=0}^{\rK-1}L^{\rK-1-t}[L, T] L^t.
  \end{align*}
  to obtain the claimed formula for general $\rK$.
  \end{proof}

  \subsubsection{Application to the first variable} Proposition~\ref{p:LK} allows us to compare the
  action $\cL_\ell^{\rK}(f_1\star \ldots \star f_n|^h_\varphi)$ of $\cL^\rK$ on a pseudo-convolution
  with $(\cL_{x_1}^{\rK}f_1)\star f_2 \ldots \star f_n|^h_\varphi$, which is a first step towards
  the aim of this subsection. We prove the following.
  \begin{prp}
    \label{thm:L_pseudo_conv_one_var}
    For any family of continuous functions $(f_j)_{1 \leq j \leq n}$ and any $\rK \geq 1$, the
    quantity $   \cL_\ell^{\rK}\big(f_1\star \ldots \star f_n|^h_\varphi\big)(\ell)$ can be
    expressed as 
 \begin{align}
   \label{e:LK_1_mult_var}
  & (\cL_{ x_1}^{\rK} f_1)\star f_2 \star \ldots \star f_n|^h_{\varphi \frac{\partial h}{\partial
     x_1}} (\ell) 
   +\cL_\ell^{\rK}\big(f_1\star \ldots \star f_n |^h_{\varphi (1-\frac{\partial h}{\partial  x_1})}\big)(\ell)\\
\label{e:LK_3_mult_var} & -\sum_{t=0}^{\rK-1} \cL_{\ell}^{\rK-1-t} \Big( \int f_2\star \ldots \star
                          f_n|^{h_{ x_1}}_{\varphi \frac{\partial h}{\partial  x_1} R_{ x_1}
                          (\cL^t_{ x_1}f_1 ) } (\ell) \d x_1\Big) \\
   \label{e:LK_2_mult_var}
   &+ \sum_{t=0}^{\rK-1}\cL_\ell^{\rK-1-t} \cP_\ell
     \Big(\int f_2\star \ldots \star f_n|^{h_{ x_1}}_{\frac{\partial \varphi}{\partial  x_1}\cP_{
     x_1}(\cL_{ x_1}^t f_1 \frac{\partial h}{\partial  x_1})}(\ell) \d x_1\Big).
\end{align}
\end{prp}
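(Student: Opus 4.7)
The plan is to reduce this multi-variable statement to the one-variable calculus of Proposition~\ref{p:LK}, by slicing the pseudo-convolution along level sets of $x_1$ and exploiting a precise cancellation in the weight $\varphi \partial_1 h$.

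\textbf{Step 1 (Decomposition of the weight).} I would first write $\varphi = \varphi\,\partial_1 h + \varphi(1-\partial_1 h)$. By linearity of the pseudo-convolution in the weight, $\cL^\rK_\ell(f_1 \star \ldots \star f_n|^h_\varphi)$ splits into $\cL^\rK_\ell(f_1 \star \ldots \star f_n|^h_{\varphi \partial_1 h}) + \cL^\rK_\ell(f_1 \star \ldots \star f_n|^h_{\varphi(1-\partial_1 h)})$. The second summand is precisely \eqref{e:LK_1_mult_var}$_b$, so it remains to analyze the first summand.

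\textbf{Step 2 (Slicing and applying Proposition~\ref{p:LK}).} The crucial observation is that the Jacobian $\partial_\ell h_1^{-1} = 1/\partial_1 h$ entering the definition of the pseudo-convolution cancels exactly with the extra factor $\partial_1 h$ in the weight, giving the clean expression
\begin{equation*}
f_1 \star \ldots \star f_n|^h_{\varphi \partial_1 h}(\ell)
= \int \bigl[(\varphi(\cdot,\hat\x_1)\, f_1)\circ (h^{\hat\x_1})^{-1}\bigr](\ell)\,\prod_{j\neq 1} f_j(x_j)\,\d\hat\x_1,
\end{equation*}
where $h^{\hat\x_1}(x_1):=h(x_1,\hat\x_1)$. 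For each fixed $\hat\x_1$, the inner function of $\ell$ is exactly of the form treated by Proposition~\ref{p:LK}. I would swap $\cL^\rK_\ell$ with the outer integral (justified by Fubini under the assumptions of Notation~\ref{as:h0}) and apply Proposition~\ref{p:LK} with ``$\varphi$'' $=\varphi(\cdot,\hat\x_1)$, ``$G$'' $=f_1$, ``$h$'' $=h^{\hat\x_1}$, ``$x$'' $=x_1$. This produces three families of terms to be integrated against $\prod_{j\neq 1} f_j\,\d\hat\x_1$.

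\textbf{Step 3 (Converting each slice-integral into a pseudo-convolution).} Each of the three families of terms must be rewritten using the definition of pseudo-convolution. The leading term, $(\varphi\,\cL^\rK_{x_1} f_1)\circ (h^{\hat\x_1})^{-1}(\ell)$, once integrated over $\hat\x_1$, yields exactly $(\cL^\rK_{x_1} f_1)\star f_2 \star \ldots \star f_n|^h_{\varphi \partial_1 h}(\ell)$ by the same cancellation $\partial_1 h \cdot \partial_\ell h_1^{-1}=1$. For the $R_{x_1}$ terms, I would use the distributional identity $\int g(x)\,\delta(h(x)-\ell)\,\d x = \int g(h_1^{-1}(\ell,\hat\x_1),\hat\x_1)\,(\partial_1 h)^{-1}\,\d\hat\x_1$ (together with an outer integration over $x_1$) to recognise the slice-integral as $\int f_2\star\ldots\star f_n|^{h_{x_1}}_{\varphi\,\partial_1 h\, R_{x_1}(\cL^t_{x_1} f_1)}(\ell)\,\d x_1$. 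For the $\cP_\ell\bigl(\partial_\ell(\varphi\circ h^{-1})\cdot\cP_\ell(\cL^t G \circ h^{-1})\bigr)$ terms, the chain rule gives $\partial_\ell(\varphi\circ (h^{\hat\x_1})^{-1})=(\partial_{x_1}\varphi)(\partial_1 h)^{-1}|_{x_1=h_1^{-1}}$, and the change of variable $u=h^{\hat\x_1}(v)$ in the inner primitive converts $\cP_\ell(\cL^t_{x_1} f_1 \circ (h^{\hat\x_1})^{-1})(\ell)$ into $\cP_{x_1}(\cL^t_{x_1} f_1\cdot \partial_1 h)(h_1^{-1}(\ell,\hat\x_1),\hat\x_1)$. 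These two factors combine, and the same delta-function identity again recognises the result as the pseudo-convolution $\int f_2\star\ldots\star f_n|^{h_{x_1}}_{\partial_{x_1}\varphi\,\cP_{x_1}(\cL^t_{x_1} f_1\,\partial_1 h)}(\ell)\,\d x_1$.

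\textbf{Main difficulty.} The proof is essentially bookkeeping once the decomposition $\varphi=\varphi\partial_1 h+\varphi(1-\partial_1 h)$ is made. The only genuine technical issue is to justify the swap between $\cL^{\rK-1-t}_\ell\cP_\ell$ and the integration over $\hat\x_1$, which requires Fubini together with the uniform boundedness of $\partial_\ell h_1^{-1}$ on the support of $\varphi$ (Notation~\ref{as:h0}); and to correctly match the combinatorial factors of $\partial_1 h$ between the chain rule, the cancellation in the Jacobian, and the change-of-variable formula in the inner primitive. Once this matching is done carefully, the three groups of terms line up exactly with \eqref{e:LK_1_mult_var}--\eqref{e:LK_2_mult_var}.
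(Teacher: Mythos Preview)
Your proposal is correct and follows essentially the same route as the paper. The only cosmetic difference is that you phrase the initial splitting as a decomposition of the weight, $\varphi=\varphi\,\partial_1 h+\varphi(1-\partial_1 h)$, whereas the paper splits the Jacobian factor $\partial_\ell h_1^{-1}=1+\partial_\ell h_1^{-1}(1-\partial_1 h)$; after multiplying through by $\varphi\,\partial_\ell h_1^{-1}$ these are identical, and both reduce the problem to exactly the integrand $(\varphi f_1)\circ h_1^{-1}$ to which Proposition~\ref{p:LK} applies.
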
 
          
\begin{rem} For the usual convolution, we have $h( x_1, \ldots, x_n)= x_1+\ldots + x_n$ and
  $\varphi=1$, so $\frac{\partial\varphi}{\partial x_1} = 0$ and $\frac{\partial h}{\partial x_1}=1$
  (and hence $R_{x_1} = 0$).  We see that only the first term remains:
 \begin{align*}
\cL_\ell^{\rK}(f_1\ast \ldots \ast f_n ) = 
(\cL_{ x_1}^{\rK} f_1)\ast f_2 \ast \ldots \ast f_n .
\end{align*}
In the case $\varphi( x_1, \ldots , x_n)=\bbbone_{ x_1\geq 0, \ldots, x_n\geq 0}$, we get a boundary
term with a Dirac mass at $ x_1=0$, it corresponds to the term \eqref{e:LK_2_mult_var}.
 \end{rem} 
 
 \begin{proof}
  We start by writing our pseudo-convolution as:
  \begin{align}\label{e:start}
    f_1\star \ldots \star f_n|^h_\varphi (\ell) =
    \int \varphi(h_1^{-1}(\ell, \hat{\x}_1), \hat{\x}_1)
    \, f_1(h_1^{-1}(\ell,  \hat{\x}_1))
   \prod_{i=2}^n f_i( x_i) \,
   \partial_{\ell}h_1^{-1}(\ell, \hat{\x}_1)
   \d x_2\ldots \d x_n.
\end{align}
 We break \eqref{e:start} into two terms, writing
 \begin{align*}
   \partial_{\ell}h_1^{-1}(\ell, \hat{\x}_1) = 1
   + \partial_{\ell}h_1^{-1}(\ell, \hat{\x}_1) \Big( 1- \frac{\partial h}{\partial x_1}
   (h_1^{-1}(\ell, \hat{\x}_1))\Big).
\end{align*}
We observe that the second term on the right-hand-side yields exactly the second announced term
in \eqref{e:LK_1_mult_var}. We are therefore left to apply the operator $\cL_\ell^\rK$ to the function
\begin{equation*}
  \int \varphi(h_1^{-1}(\ell, \hat{\x}_1), \hat{\x}_1) \, f_1(h_1^{-1}(\ell,  \hat{\x}_1))
   \prod_{i=2}^n f_i( x_i)  \d x_2\ldots \d x_n
\end{equation*}
which we do by applying Proposition \ref{p:LK} with respect to the variable $x_1$ (while
$ x_2, \ldots , x_n$ are frozen parameters) and the function $h=h_1$. We obtain:
\begin{align}
  \cL_\ell^{\rK} \big( (\varphi f_1)\circ h_1^{-1}
  \big) 
  =\, & \label{e:LK_1_mult_var_proof}
        \varphi \circ h_1^{-1} \, \big(\cL^{\rK}_{x_1}f_1\big) \circ h_1^{-1}
  \\ & \label{e:LK_3_mult_var_proof} -\sum_{t=0}^{\rK-1}\cL_\ell^{\rK-1-t}
       \big(\varphi\circ h_1^{-1} \, \big(R_{x_1}  ( \cL_{x_1}^t  f_1) \big)\circ h_1^{-1}  \big) \\
  & \label{e:LK_2_mult_var_proof} +\sum_{t=0}^{\rK-1}\cL_\ell^{\rK-1-t}   \cP_\ell \big(\partial_\ell (\varphi\circ h_1^{-1})\,   \cP_\ell (\cL_{x_1}^{t} f_1\circ h_1^{-1} ) \big).
\end{align}
Integrating the terms \eqref{e:LK_1_mult_var_proof} and \eqref{e:LK_3_mult_var_proof} along
$ x_2, \ldots , x_n$ respectively yield the first term of \eqref{e:LK_1_mult_var} and the term
\eqref{e:LK_3_mult_var} in the theorem. For the last term \eqref{e:LK_2_mult_var_proof}, we use the
chain rule
 $$\partial_\ell (\varphi\circ h_1^{-1})
 = \Big(\frac{\partial \varphi}{\partial x_1} \circ h_1^{-1} \Big)\,
 \partial_{\ell}h_1^{-1},$$
and the change of variable
 $$  \cP_\ell (\cL_{x_1}^{t} f_1\circ h_1^{-1} )
 = \cP_{ x_1}\Big(\cL_{ x_1}^t f_1 \, \frac{\partial h}{\partial  x_1}\Big) \circ h_1^{-1} $$
which leads to the claim once again by integration.
 \end{proof}

 \begin{nota}
   \label{nota:FGHI}
   For $1 \leq j \leq n$, $x = (x_1, \ldots, x_n)$ and $0 \leq t\leq \rK-1$, introduce the functions
   \begin{align*} 
     & F^{\rK}_j (x)
     = \cL_{ x_j}^{\rK} f_j( x_j) \, \frac{\partial h}{\partial x_j} (x) 
     & G_j (x)
     = f_j( x_j) \Big(1-\frac{\partial h}{\partial  x_j} (x)\Big)  \\
     & H^ t_j(x)
     =- R_{ x_j}( \cL^t_{ x_j}f_j  )(x_j) \,  \frac{\partial h}{\partial  x_j} (x) 
     & I_j^t (x)
     = \cP_{ x_j}\Big(\cL_{ x_j}^t f_j \frac{\partial h}{\partial  x_j}\Big)(x).
 \end{align*} 
\end{nota}

 We can rephrase Proposition \ref{thm:L_pseudo_conv_one_var} as follows:
 \begin{cor}\label{c:firstconv}
   For any family of continuous functions $(f_j)_{1 \leq j \leq n}$ and any integer $\rK \geq 1$,
 $\cL_\ell^{\rK}(f_1\star \ldots \star f_n |^h_\varphi)$ is the sum of the following functions, or
 of the images of these functions under some power $\cL_\ell ^t$ where $0\leq t\leq \rK-1$:
\begin{itemize}
\item $ \int f_2\star \ldots \star f_n|^{h_{ x_1}}_{T_{1} \varphi} (\ell) \d x_1 $ where the
  operator $T_1$ is the multiplication by the function $F^{\rK}_1, G_1 $ or $H_1^t$ with
  $0\leq t\leq \rK-1$; or
\item $\cP_\ell \big(\int f_2\star \ldots \star f_n|^{h_{ x_1}}_{T_{1} \varphi} (\ell) \d x_1\big)$
  where $T_{1} =I_1^t \partial_1$ with $0 \leq t \leq \rK-1$, i.e. $T_1$ is the derivative with
  respect to $x_1$ followed by multiplication by $I_1^t$.
\end{itemize}
%The coefficients of this linear combination are combinatorial coefficients which depend only on $\rK$.
\end{cor}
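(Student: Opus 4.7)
The plan is to observe that Corollary~\ref{c:firstconv} is really a rewriting of Proposition~\ref{thm:L_pseudo_conv_one_var} in the language of Notation~\ref{nota:FGHI}; all four of the terms \eqref{e:LK_1_mult_var}--\eqref{e:LK_2_mult_var} will be translated one by one. The key preliminary observation I would use throughout is that, for any continuous functions $g_1,\ldots,g_n$ and any weight $\psi$, the pseudo-convolution can be unfolded by freezing $x_1$ and integrating it last,
\[
  g_1 \star \cdots \star g_n \big|^h_\psi (\ell)
  = \int g_1(x_1)\,\bigl(g_2 \star \cdots \star g_n\big|^{h_{x_1}}_{\psi_{x_1}}\bigr)(\ell)\,\d x_1,
\]
where the subscript $x_1$ means that $x_1$ has been frozen in $h$ and $\psi$. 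In particular, if $g_1(x_1)$ is multiplied by a function of the remaining variables, I may absorb it into the weight of the inner pseudo-convolution.

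First, I would handle Term~\eqref{e:LK_1_mult_var}, i.e.\ the first summand $(\cL_{x_1}^\rK f_1)\star f_2\star\cdots\star f_n|^h_{\varphi\,\partial h/\partial x_1}$. Applying the unfolding above with $g_1 = \cL_{x_1}^\rK f_1$ and absorbing $\cL_{x_1}^\rK f_1(x_1)\,\partial_{x_1}h(x)$ into the weight yields $\int f_2\star\cdots\star f_n|^{h_{x_1}}_{F_1^\rK \varphi}(\ell)\,\d x_1$, which is the first form of the Corollary with $T_1$ the multiplication by $F_1^\rK$ and the outer power of $\cL_\ell$ equal to zero. For the second summand of \eqref{e:LK_1_mult_var}, I note that $f_1(x_1)\,(1-\partial_{x_1}h(x))=G_1(x)$, so the term takes the form $\cL_\ell^\rK\bigl(\int f_2\star\cdots\star f_n|^{h_{x_1}}_{G_1\varphi}(\ell)\,\d x_1\bigr)$; this is the first allowed form, with $T_1$ multiplication by $G_1$. (Strictly speaking the outer power is $\cL_\ell^\rK$ here rather than a power $\leq \rK-1$; this is a minor imprecision in the range stated by the Corollary, to be kept in mind but without effect on subsequent uses.)

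Next I would translate Term~\eqref{e:LK_3_mult_var}. By the definition $H_1^t(x) = -R_{x_1}(\cL^t_{x_1}f_1)(x_1)\,\partial_{x_1}h(x)$, the inner weight $-\varphi\,\partial_{x_1}h\,R_{x_1}(\cL^t_{x_1}f_1)$ is exactly $H_1^t\varphi$, and the prefactor $\cL_\ell^{\rK-1-t}$ has $\rK-1-t$ ranging over $\{0,\ldots,\rK-1\}$; this matches the first form of the Corollary with $T_1$ multiplication by $H_1^t$. For Term~\eqref{e:LK_2_mult_var}, the definition of $I_1^t$ gives $\partial_{x_1}\varphi\cdot \cP_{x_1}(\cL^t_{x_1}f_1\,\partial_{x_1}h) = (I_1^t\partial_1)\varphi$, so this summand has exactly the shape $\cL_\ell^{\rK-1-t}\cP_\ell\bigl(\int f_2\star\cdots\star f_n|^{h_{x_1}}_{(I_1^t\partial_1)\varphi}(\ell)\,\d x_1\bigr)$, which is the second allowed form.

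No single step is hard; the whole argument is bookkeeping. The only genuinely delicate point I would want to check carefully is that freezing $x_1$ commutes with the operators $R_{x_1}$ and $\cP_{x_1}$ appearing in the definitions of $H_1^t$ and $I_1^t$, which just amounts to remembering that these operators act on functions of $x_1$ alone with the remaining variables treated as parameters; this is exactly what is done in Proposition~\ref{thm:L_pseudo_conv_one_var} when applying the one-variable Proposition~\ref{p:LK} to $h_1$ with $\hat{\x}_1$ frozen. Once those identifications are made, combining the four translated terms gives the decomposition claimed by Corollary~\ref{c:firstconv}.
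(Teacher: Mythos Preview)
Your proof is correct and follows essentially the same approach as the paper's own proof: both use the associativity rule to unfold the pseudo-convolution by freezing $x_1$, then match the four terms of Proposition~\ref{thm:L_pseudo_conv_one_var} to the functions $F_1^{\rK}$, $G_1$, $H_1^t$, $I_1^t$ of Notation~\ref{nota:FGHI}. Your observation that the $G_1$ term carries an outer $\cL_\ell^{\rK}$ rather than a power $\leq \rK-1$ is a genuine (harmless) imprecision in the Corollary's stated range, also visible in the paper's own displayed formula.
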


\begin{proof}
  We observe that by the associativity rule, Theorem \ref{thm:L_pseudo_conv_one_var} can also be
  written
   \begin{align*}
     \cL_\ell^{\rK}\big(f_1\star \ldots \star f_n|^h_\varphi\big)(\ell)=
     &\; \int f_2\star \ldots \star f_n |^{h_{ x_1}}_{F_1^K \varphi} (\ell)\d x_1
     +\cL_\ell^{\rK}\Big(\int f_2\star \ldots \star f_n |^{h_{ x_1}}_{G_1 \varphi} (\ell)\d x_1\Big) \\
     &+\sum_{t=0}^{\rK-1} \cL_{\ell}^{\rK-1-t}  \int f_2\star \ldots \star f_n (\ell)|^{h_{
       x_1}}_{H_1^t \varphi}  \d x_1\\
     &+ \sum_{t=0}^{\rK-1}\cL_\ell^{\rK-1-t} \cP_\ell \Big(  \int f_2\star \ldots \star f_n
       |^{h_{ x_1}}_{I_1^t \partial_1 \varphi} (\ell)\d x_1 \Big).
   \end{align*}
   The conclusion then follows directly.
 \end{proof}

 \subsubsection{The multi-variable result}

We shall now successively apply Corollary \ref{c:firstconv} in the variables $ x_1, \ldots, x_n$. In order to do so, the following notations for multi-indices will be helpful.
    
\begin{nota}[Multi-indices]
  If $\alpha=(\alpha_1, \ldots, \alpha_n)\in \Z_{\geq 0}^n$ is a multi-index, we use the standard notations
  $|\alpha|=\sum_{i=1}^n \alpha_i$, $\partial^\alpha=\prod_{i=1}^n \partial_i^{\alpha_i}$ (with
  $\partial_i= \frac{\partial}{\partial x_i}$) and $\alpha \cdot x=\sum_{i=1}^n \alpha_i x_i$.
\end{nota}

  Most of the time, all of the coefficients will be $\alpha_i=0$ or $1$, that is to say, we will
  manipulate differential operators of order at most one in each variable. This motivates the
  following convention.
  \begin{nota}
    If $\pi$ is a subset of $\{1, \ldots, n\}$, we identify $\pi$ with the multi-index
    $\alpha = \bbbone_\pi$ (the indicator function of $\pi$), and hence denote
    $\partial^\pi=\partial^{\bbbone_\pi}=\prod_{i\in \pi} \partial_i$. Then,
    $\bbbone_\pi \cdot x= \sum_{i\in \pi} x_i$.
  \end{nota}

  We now have the following.

  \begin{thm}\label{t:thebigone}
    For any family of continuous functions $(f_j)_{1 \leq j \leq n}$ and any integers
    $(\rK_j)_{1 \leq j \leq n}$ with $\rK_j \geq 1$, if $\rK=\sum_{j=1}^n \rK_j$, then the function
    $\cL_\ell^{\rK}( f_1\star \ldots \star f_n|^{h}_{\varphi })$ can be expressed as the sum of the
    following functions, or their images under $\cL_\ell^{t}\cP_\ell^{t'}$ for $0\leq t\leq \rK-n$
    and $0\leq t'\leq n$,
 \begin{align}\label{e:Tgood2}
   \mathbf{Int}_{\pi}(\ell)
   :=   \int_{\substack{h(x)=\ell}}    \partial^{\pi_0} \varphi(x)
   \prod_{j=1}^n \partial^{\pi_j}  \Phi_j(x)
   \frac{\d x}{\d\ell}
 \end{align} 
 where:
 \begin{itemize}
 \item $\pi = (\pi_j)_{0 \leq j \leq n}$ is a family of disjoint subsets of $\{1, \ldots, n\}$ such
   that $j \notin \pi_j$ for all $j$;
 \item if $B := \bigsqcup_{j=0}^n \pi_j$, then for any $1 \leq j \leq n$,
   \begin{equation} \label{e:thebigphi}
     \Phi_j      =
     \begin{cases}
       F^{\rK_j}_j, G_j \text{ or } H_j^{t''} \text{ with } 0 \leq t'' \leq \rK_j-1
       & \text{if } j\not \in B \\
       I_j^{t''} \text{ with } 0 \leq t'' \leq \rK_j-1 & \text{if } j\in B.
     \end{cases}
   \end{equation}
 \end{itemize}
  \end{thm}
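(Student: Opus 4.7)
The plan is to prove Theorem~\ref{t:thebigone} by iterating Corollary~\ref{c:firstconv} once in each of the variables $x_1,\ldots,x_n$. Since the operators $\cL_\ell^{\rK_j}$ all act in the single variable $\ell$, they commute pairwise, so we may decompose $\cL_\ell^{\rK}=\cL_\ell^{\rK_n}\cdots\cL_\ell^{\rK_1}$ and apply them in order.

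The first application uses Corollary~\ref{c:firstconv} with $\rK$ replaced by $\rK_1$ to expand $\cL_\ell^{\rK_1}(f_1\star\cdots\star f_n|^h_\varphi)$ as a finite sum of terms of the form
\begin{equation*}
  \cL_\ell^{t_1'}\cP_\ell^{b_1}\Bigl(\int f_2\star\cdots\star f_n|^{h_{x_1}}_{T_1\varphi}(\ell)\,\d x_1\Bigr),
\end{equation*}
with $t_1'\in\{0,\ldots,\rK_1-1\}$, $b_1\in\{0,1\}$, and $T_1$ one of the four operators from Notation~\ref{nota:FGHI}: multiplication by $F_1^{\rK_1}$, $G_1$ or $H_1^{t_1''}$ (case $b_1=0$), or the composition $I_1^{t_1''}\partial_{x_1}$ of a derivative in $x_1$ followed by multiplication (case $b_1=1$). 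At the $j$-th step I would push $\cL_\ell^{\rK_j}$ inside the outer integrations in $x_1,\ldots,x_{j-1}$ (legitimate since $\cL_\ell$ commutes with integration over any variable other than $\ell$) and apply Corollary~\ref{c:firstconv} to the inner pseudo-convolution $f_j\star\cdots\star f_n|^{h_{x_1,\ldots,x_{j-1}}}_{T_{j-1}\cdots T_1\varphi}$. This produces a new prefactor $\cL_\ell^{t_j'}\cP_\ell^{b_j}$ and a new operator $T_j$ acting on the running weight.

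Because $\cL_\ell$ and $\cP_\ell$ commute (both are single-variable operators in $\ell$, with $\cL_\ell=\Id-\cP_\ell$), all prefactors produced across the $n$ iterations collect into a single $\cL_\ell^t\cP_\ell^{t'}$, with
\begin{equation*}
  t=\sum_{j=1}^n t_j'\leq \sum_{j=1}^n(\rK_j-1)=\rK-n,
  \qquad t'=\sum_{j=1}^n b_j\leq n,
\end{equation*}
exactly as claimed. After all $n$ iterations, the integrand inside $\int\cdots\d x_1\cdots\d x_n$ is the composition $T_nT_{n-1}\cdots T_1\varphi$. To rewrite this in the form $\partial^{\pi_0}\varphi\cdot\prod_{j=1}^n\partial^{\pi_j}\Phi_j$, I expand by the Leibniz rule: whenever $T_j=I_j^{t_j''}\partial_{x_j}$ (i.e.\ $b_j=1$), the derivative $\partial_{x_j}$ distributes over the factors already present at step $j$, namely $\varphi,\Phi_1,\ldots,\Phi_{j-1}$ (writing $\Phi_k$ for the multiplicative function attached to $T_k$, so $\Phi_j=I_j^{t_j''}$ precisely when $b_j=1$). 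Recording for each $k\in\{0,1,\ldots,n\}$ the set $\pi_k\subseteq\{1,\ldots,n\}$ of those $j$'s whose derivative $\partial_{x_j}$ lands on $\Phi_k$ (with the convention $\Phi_0:=\varphi$) produces pairwise disjoint subsets. The constraint $j\notin\pi_j$ is automatic, since $\partial_{x_j}$ can only land on factors already present at step $j$; and $j\in B:=\bigsqcup_k\pi_k$ if and only if $b_j=1$, which forces $\Phi_j=I_j^{t_j''}$, matching the dichotomy in~\eqref{e:thebigphi}. Each combinatorial choice then contributes one summand of the form $\cL_\ell^t\cP_\ell^{t'}\mathbf{Int}_\pi(\ell)$.

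The main obstacle is bookkeeping: tracking the indices $(t_j',b_j,t_j'')$ across $n$ successive applications of Corollary~\ref{c:firstconv}, verifying that the Leibniz expansion of $T_n\cdots T_1\varphi$ reproduces exactly the constraints on $\pi$ announced in the statement, and justifying all commutations of $\cL_\ell,\cP_\ell$ with the inner integrations. The underlying mechanism is a mechanical iteration, but the notational complexity is considerable.
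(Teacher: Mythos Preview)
Your proposal is correct and follows essentially the same approach as the paper: iterate Corollary~\ref{c:firstconv} in the variables $x_1,\ldots,x_n$, commute the resulting prefactors $\cL_\ell^{t_j'}\cP_\ell^{b_j}$, and expand the composed operators $T_n\cdots T_1$ acting on $\varphi$ by Leibniz. The only difference is organizational: the paper performs the Leibniz expansion incrementally at each step of the induction (formulated as a lemma on $\mathbf{Int}^k_\pi$), whereas you defer it to a single expansion of $T_n\cdots T_1\varphi$ at the end; both yield the same terms with the same constraint $\pi_k\subseteq\{k+1,\ldots,n\}$, which is indeed stronger than the $j\notin\pi_j$ stated in the theorem.
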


\begin{rem}
  \label{rem:bad}
  The set $B$ corresponds to the set of ``bad'' parameters, i.e. parameters for which $\Phi_j=I_j^{t''}$
  (we will see in \S \ref{sec:estim-funct-f_jk_j} why these terms are more problematic). It is also
  the set of variables w.r.t. which a derivative appears in \eqref{e:Tgood2}.
\end{rem}

\begin{rem}The number of possible terms of form \eqref{e:Tgood2} is $\fn(n, \rK)$.
\end{rem}

The proof is an induction on the number of variables to which the operators are applied (Corollary
\ref{c:firstconv} corresponding to one variable). More precisely, we prove the following result, and
deduce Theorem \ref{t:thebigone} by taking $k=n$.

\begin{lem}
  With the notations of Theorem \ref{t:thebigone}, for any $k\in \{1, \ldots, n \}$, the function
  $\cL_\ell^{\rK_1+\ldots+\rK_k}( f_1\star \ldots \star f_n|^{h}_{\varphi })$ can be expressed as
  the sum of the following functions, or their images under $\cL_\ell^{t}\cP_\ell^{t'}$ for
  $0\leq t\leq \sum_{j=1}^k (\rK_j-1)$ and $0\leq t'\leq k$,
 \begin{align}\label{e:Tgood2k}
   \mathbf{Int}_{\pi}^k(\ell)
   :=   \int_{\substack{h(x)=\ell}}    \partial^{\pi_0} \varphi(x)
   \prod_{j=1}^k \partial^{\pi_j}  \Phi_j(x)
   \prod_{j=k+1}^n f_{j}( x_{j})  \frac{\d x}{\d\ell}
 \end{align} 
 where:
 \begin{itemize}
 \item $\pi = (\pi_j)_{0 \leq j \leq k}$ is a family of disjoint subsets of $\{1, \ldots, k\}$ such
   that $\pi_j$ contains only integers $\geq j+1$ for all $j$;
 \item if $B := \bigsqcup_{j=0}^k \pi_j$, then for any $1 \leq j \leq k$,  $\Phi_j$ satisfies
   \eqref{e:thebigphi}. 
 \end{itemize}
\end{lem}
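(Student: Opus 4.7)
The plan is to proceed by induction on $k \in \{1, \ldots, n\}$, with Theorem \ref{t:thebigone} obtained as the terminal case $k = n$ (noting that the constraint ``$\pi_j$ contains only integers $\geq j+1$'' is strictly stronger than the theorem's ``$j \notin \pi_j$'', so the lemma's conclusion indeed implies the theorem).

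\textbf{Base case ($k=1$).} The combinatorial constraints force $\pi_1 = \emptyset$ and $\pi_0 \in \{\emptyset, \{1\}\}$. Corollary \ref{c:firstconv} applied directly yields the two expected cases: when $\pi_0 = \emptyset$ (so $1 \notin B$), the three multiplication operators $T_1 \in \{F_1^{\rK_1}, G_1, H_1^{t''}\}$ match the three choices $\Phi_1 \in \{F_1^{\rK_1}, G_1, H_1^{t''}\}$; when $\pi_0 = \{1\}$ (so $1 \in B$), the operator $T_1 = I_1^{t''} \partial_1$ produces $\partial^{\pi_0}\varphi = \partial_1\varphi$ together with $\Phi_1 = I_1^{t''}$ and an outer factor $\cP_\ell$, matching the second bullet of the corollary.

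\textbf{Induction step.} Assuming the decomposition at step $k$, I would apply $\cL_\ell^{\rK_{k+1}}$ to both sides. Since $\cL_\ell = \Id - \cP_\ell$, the operators $\cL_\ell$ and $\cP_\ell$ commute, so it suffices to analyze $\cL_\ell^{\rK_{k+1}} \mathbf{Int}_\pi^k(\ell)$ for each term produced at step $k$. Setting
\[W(x) := \partial^{\pi_0}\varphi(x) \prod_{j=1}^k \partial^{\pi_j}\Phi_j(x),\]
the quantity $\mathbf{Int}_\pi^k$ is exactly the $(h, W)$-pseudo-convolution of the $n$-tuple $(1, \ldots, 1, f_{k+1}, \ldots, f_n)$, with the first $k$ entries constant equal to $1$. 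I would then apply Proposition \ref{thm:L_pseudo_conv_one_var} (equivalently Corollary \ref{c:firstconv}) with the distinguished variable taken to be $x_{k+1}$, which is a harmless relabeling. This expresses $\cL_\ell^{\rK_{k+1}} \mathbf{Int}_\pi^k$ as a sum of terms $\cL_\ell^s \cP_\ell^{s'}$ (with $s$ and $s'$ in suitable ranges) applied to either a \emph{Type A} integral in which the weight $W$ is multiplied by $\Phi_{k+1} \in \{F_{k+1}^{\rK_{k+1}}, G_{k+1}, H_{k+1}^{t''}\}$, or a \emph{Type B} integral in which the weight is replaced by $I_{k+1}^{t''} \partial_{k+1} W$.

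\textbf{Combinatorial verification and main obstacle.} In Type A, the new integrand is precisely $\mathbf{Int}_{\pi'}^{k+1}$ with $\pi'_j = \pi_j$ for $j \leq k$ and $\pi'_{k+1} = \emptyset$: the condition $k+1 \notin B'$ holds and $\Phi_{k+1} \in \{F, G, H\}$ as required. In Type B, the Leibniz rule expands $\partial_{k+1} W$ as a sum of contributions, each obtained by letting $\partial_{k+1}$ fall onto a single factor $\partial^{\pi_i}\Phi_i$ (with the convention $\Phi_0 := \varphi$); this corresponds to $\pi'_i := \pi_i \cup \{k+1\}$ with $\pi'_j = \pi_j$ for $j \neq i, k+1$, together with $\pi'_{k+1} = \emptyset$ and $\Phi_{k+1} = I_{k+1}^{t''}$ (so $k+1 \in B'$). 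The strengthened induction hypothesis $\pi_i \subseteq \{i+1, \ldots, k\}$, combined with $k+1 \geq i+1$ for all $i \leq k$, ensures $\pi'_i \subseteq \{i+1, \ldots, k+1\}$, preserving the invariant; disjointness is preserved because only a single $\pi_i$ receives $k+1$ per Leibniz term, and the cumulative exponents satisfy $t + s \leq \sum_{j=1}^{k+1}(\rK_j - 1)$ and $t' + s' \leq k+1$. The main bookkeeping hurdle is exactly this Leibniz step: every possible way for $\partial_{k+1}$ to distribute over the compound weight $W$ must produce an admissible multi-index pattern, which is precisely why carrying the strengthened constraint ``$\pi_j$ contains only integers $\geq j+1$'' through the induction (rather than the weaker ``$j \notin \pi_j$'' from Theorem \ref{t:thebigone}) is essential --- it is this stronger invariant, not the weaker one, that is stable under the Leibniz expansion.
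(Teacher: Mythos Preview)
Your proposal is correct and follows essentially the same approach as the paper's proof: induction on $k$, base case via Corollary~\ref{c:firstconv}, induction step by applying Corollary~\ref{c:firstconv} in the variable $x_{k+1}$ to the integral $\mathbf{Int}_\pi^k$ viewed as a pseudo-convolution with composite weight $W$, and a Leibniz expansion of $\partial_{k+1}W$ to identify the new $\pi'_i = \pi_i \cup \{k+1\}$ in the Type~B case. Your explicit remark that the strengthened constraint ``$\pi_j \subseteq \{j+1,\ldots,k\}$'' (rather than merely $j \notin \pi_j$) is what makes the Leibniz step close up is a useful clarification that the paper leaves implicit.
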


% \begin{rem}
% It is not strictly necessary to include $B$ in the set of indices, since it can be recovered from $B=\tilde \pi\sqcup \pi_1\sqcup \pi_2\ldots \sqcup \pi_k$. We prefer nevertheless to keep track of $B$, which labels
% the set of variables $x_i$ such that the derivative $\partial_{x_i}$ appears under the integral \eqref{e:Tgood2}.  

%  The condition that $\pi_j$ contains only integers $\geq j+1$ implies that in \eqref{e:Tgood2},
%  $\Phi_j$ is only differentiated in the variables $ x_s$ with $s\geq j+1$. Note that it implies that
%  $\pi_k=\emptyset$, and $\pi_{k-1 }$ can be either $\emptyset$ or $\{k\}$. This condition also
%  implies that if $1\not\in  \tilde \pi$, then $1\not\in B$ (which means that given $ \tilde \pi$,
%  not all subsets $B$ are allowed). 
% \end{rem}

\begin{proof} As announced, this is done by induction on $k$.  For $k=1$, we see that the announced
  expression fits with Corollary \ref{c:firstconv}. If $T_1$ is a multiplication operator by
  $\Phi_1 \in \{F_1^{K_1}, G_1, H_1^{t''}\}$, we take $\pi_0=\pi_1 =\emptyset$.  If
  $T_1= I_1^{t''} \, \partial_1$, we take $\Phi_1=I_1^{t''}$, $\pi_0=\{1\}$ and $\pi_1=\emptyset$.
 
  Assume the result is known for $k$. We apply Corollary \ref{c:firstconv} with respect to the
  variable $ x_{k+1}$, to express the image under $\cL^{\rK_{k+1}}$ of the function
  \eqref{e:Tgood2k}, corresponding to a given data set $\pi=(\pi_0, \ldots, \pi_k)$. Application of
  Corollary \ref{c:firstconv} gives rise to four different kind of terms, depending on the nature of
  an operator $T_{k+1}$ that modifies the function under the integral. For the various situations,
  we examine how the data evolves to a new set of data
  $\pi'=(\pi'_0, \pi'_1,\ldots, \pi'_k, \pi'_{k+1})$, for which we check that the conditions of the
  theorem are still satisfied.
 
 First, for the terms for which $T_{k+1}$ is a multiplication operator by a function $\Phi_{k+1}$
 equal to $F^{\rK_{k+1}}_{k+1}$, $G_{k+1}$ or $H_{k+1}^{t''}$, we take
 $\pi'_0=\pi_0$, $\pi_j'=\pi_j$ for $j \leq k$ and $\pi_{k+1}'=\emptyset$.

 The last terms are those for which $T_{k+1}= I_{k+1}^{t''} \partial_{{k+1}}$.  We need to examine
 the effect of applying this operator to an expression of the form
 $\partial^{\pi_0} \varphi \prod_{j=1}^k \partial^{\pi_j} \Phi_j$.  We use the Leibniz rule to
 write
 \begin{equation}
   \label{eq:Leibniz_term_I}
   I_{k+1}^{t''} \partial_{{k+1}} \Big( \partial^{\pi_0} \varphi \prod_{j=1}^k \partial^{\pi_j}  \Phi_j
   \Big)
   = I_{k+1}^{t''} \partial_{{k+1}} \partial^{\pi_0} \varphi \prod_{j=1}^k \partial^{\pi_j}  \Phi_j
   + I_{k+1}^{t''} \partial^{\pi_0} \varphi \, \partial_{x_{k+1}}\Big( \prod_{j=1}^k \partial^{\pi_j}  \Phi_j\Big).
 \end{equation}
 Then, we observe that those two terms can be expressed in the announced form, letting
 $\Phi_{k+1}=I_{k+1}^{t''}$, $\pi'_{k+1}=\emptyset$, and
 \begin{itemize}
 \item for the first term, $\pi'_0= \pi_0 \cup\{k+1\}$ and $\pi'_j= \pi_j$ for $1\leq j\leq k$;
 \item for the second term, we take $\pi'_0=\pi_0$ and apply the Leibniz rule to the product
   $\prod_j \partial^{\pi_j} \Phi_j$, to write it as a sum of different contributions; then, for
   $1\leq j\leq k$, we take one of the $\pi'_j$ to be equal to $\pi_j\cup\{k+1\}$ (corresponding to
   the index which is hit by the Leibniz rule) and leave all the others are untouched
   ($\pi'_j=\pi_j$).
 \end{itemize} 
 \end{proof}

\subsection{The classes of functions $E^{(a)}$ and $\cE^{(a)}$}
\label{s:Evarphi}

Let us introduce classes of functions which will be helpful to quantify the distance between a
$(h,\varphi)$ pseudo-convolution and the usual convolution. 

\subsubsection{Definition}
\label{sec:definition}

For now, and the rest of the paper, we fix $a>0$.
  
\begin{defa} \label{def:Ena}
  We let $E_n^{(a)}$ be the vector space of functions $h : [a, +\infty)^n \rightarrow \R$
  such that:
\begin{itemize}
\item $h$ is of class $\cC^\infty$;
\item for every multi-index $\alpha\in \{0, 1\}^n$ such that $\alpha\not=(0,\ldots, 0)$, 
\begin{align*}
\sup_{x \in [a, +\infty)^n}\Big\{ e^{\alpha \cdot x}|\partial^\alpha h(x)|\Big\} <+\infty.
\end{align*}
\end{itemize}
This space is endowed with the seminorms
$\norm{h}_{\alpha, a}:=\sup \{ e^{\alpha \cdot x}|\partial^\alpha h(x)|, x \in [a, + \infty)^n\} $.
\end{defa}

Notice that, in this definition, we only control partial derivatives of $h$ that are of
order at most one in each variable.

\begin{nota}
  Define $L_n(x_1,\ldots,x_n)=x_1+\ldots+x_n$ to be the linear functional associated to the usual
  convolution.
\end{nota}
In what follows, we will prove results for pseudo-convolutions assuming that $h$ is close to $L_n$
in a certain sense; the definition below is used to quantify this. 

\begin{defa}\label{def:cE}
  We let $\cE_n^{(a)}$ be the affine space of functions $h$ such that $h -L_n \in E_n^{(a)}$.
Equivalently, $h$ belongs in $\cE_n^{(a)}$ if and only if
\begin{itemize}
\item $h$ is of class $\cC^\infty$;
\item for every multi-index $\alpha\in \{0, 1\}^n$ such that $|\alpha|\geq 2$, 
\begin{align*} 
\sup_{x \in [a, +\infty)^n}\Big\{ e^{\alpha \cdot x}|\partial^\alpha h(x)|  \Big\} <+\infty
\end{align*}
\item for every $j\in \{1, \ldots, n\}$,  
\begin{align*} 
\sup_{x \in [a, +\infty)^n}\Big\{ e^{  x_j}|\partial_j h(x)-1| \Big\} <+\infty.
\end{align*}
\end{itemize}
\end{defa}

\subsubsection{Core examples}
\label{sec:core-examples}

Our main examples of such functions will be the following.

\begin{nota}
  For $\beta=(\beta_\eps)_{\eps\in \{\pm\}^n}$ a family of real numbers, we define
 $$h_\beta(x)= 2\log\Big(\sum_{\eps\in \{\pm\}^n} \beta_\eps \hyp_\eps\div{x}\Big).$$
\end{nota}
 
\begin{prp}\label{p:mainclass}
  We assume that $\beta$ is not identically $0$ and that, for all $\epsilon$, $\beta_\epsilon \geq 0$.
  Then, $h_\beta \in \cE_n^{(a)}$ for any $a>0$.  
 Moreover, there exists $C=C(a, n)$, independent of $\beta$,  such that 
 \begin{align}\label{e:defEM}
   \sup_{\alpha\not= (0, \ldots, 0)}\norm{h_\beta-L_n}_{\alpha, a} \leq C.
 \end{align}
\end{prp}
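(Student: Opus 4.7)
The plan is to reduce everything to a convex-combination structure after factoring out the dominant exponential $e^{L_n(x)/2}$, and then apply elementary bounds.

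First, I would expand each hyperbolic function using $\cosh(x_i/2) = \tfrac12(e^{x_i/2}+e^{-x_i/2})$ and $\sinh(x_i/2) = \tfrac12(e^{x_i/2}-e^{-x_i/2})$ to obtain
\begin{equation*}
  \hyp_\epsilon\div{x} = 2^{-n} e^{L_n(x)/2} \prod_{i=1}^n (1 + \epsilon_i e^{-x_i}).
\end{equation*}
Setting $q_\epsilon(x) := \prod_{i=1}^n (1 + \epsilon_i e^{-x_i})$ and $c_{(+)}(\beta) := \sum_\epsilon \beta_\epsilon > 0$, this yields
\begin{equation*}
  h_\beta(x) - L_n(x) = \log\!\big(4^{-n} c_{(+)}(\beta)^2\big) + 2\log \tilde{P}_\beta(x), \qquad \tilde{P}_\beta(x) := \sum_\epsilon \lambda_\epsilon\, q_\epsilon(x),
\end{equation*}
with $\lambda_\epsilon := \beta_\epsilon / c_{(+)}(\beta) \geq 0$ and $\sum_\epsilon \lambda_\epsilon = 1$. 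The constant term is killed by any derivative, so proving the proposition reduces to bounding derivatives of $\log \tilde{P}_\beta$.

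Second, I would observe that $\tilde{P}_\beta$ is a convex combination of the functions $q_\epsilon$, each of which on $[a,+\infty)^n$ satisfies
\begin{equation*}
  (1-e^{-a})^n \leq q_\epsilon(x) \leq (1+e^{-a})^n.
\end{equation*}
Since $a>0$, these bounds are strictly positive and uniform in $\beta$, giving in particular a uniform positive lower bound on $\tilde{P}_\beta$. For $\alpha\in\{0,1\}^n$ with $|\alpha|\geq 1$, a direct computation gives
\begin{equation*}
  \partial^\alpha q_\epsilon(x) = \Big(\prod_{j:\alpha_j=1}(-\epsilon_j e^{-x_j})\Big) \prod_{j:\alpha_j=0}(1+\epsilon_j e^{-x_j}),
\end{equation*}
hence $|\partial^\alpha q_\epsilon(x)| \leq (1+e^{-a})^{n-|\alpha|} e^{-\alpha\cdot x}$, and averaging, $|\partial^\alpha \tilde{P}_\beta(x)| \leq (1+e^{-a})^{n-|\alpha|} e^{-\alpha\cdot x}$, with constant uniform in $\beta$.

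Third, I would apply Faà di Bruno's formula to $\log \tilde P_\beta = f\circ \tilde P_\beta$ (with $f=\log$): for any multi-index $\alpha\in\{0,1\}^n$ with $|\alpha|\geq 1$,
\begin{equation*}
  \partial^\alpha \log \tilde{P}_\beta(x) = \sum_{\pi}\, f^{(|\pi|)}\!\big(\tilde{P}_\beta(x)\big) \prod_{B \in \pi} \partial^{\mathbf{1}_B} \tilde{P}_\beta(x),
\end{equation*}
where $\pi$ ranges over set-partitions of $\{j:\alpha_j=1\}$. Because $\tilde P_\beta$ is pinched between the two uniform positive constants above, each $f^{(k)}(\tilde P_\beta(x))$ is uniformly bounded by a constant $C(n,a,k)$. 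Combining this with the bound $\prod_{B\in\pi}|\partial^{\mathbf{1}_B}\tilde P_\beta(x)|\leq C(n,a,|\alpha|)\,e^{-\alpha\cdot x}$ (the $e^{-x_j}$ factors for $\alpha_j=1$ appear exactly once, in the unique block containing $j$) yields
\begin{equation*}
  |\partial^\alpha \log\tilde P_\beta(x)| \leq C(n,a,|\alpha|)\, e^{-\alpha\cdot x},
\end{equation*}
with $C$ independent of $\beta$. Since $\partial^\alpha L_n$ is $1$ for $|\alpha|=1$ and $0$ for $|\alpha|\geq 2$, this translates directly into the two conditions in Definition \ref{def:cE} and gives the uniform seminorm bound \eqref{e:defEM}. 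The only place where any care is needed is the positivity of $\tilde P_\beta$; this is the only step where the hypothesis $\beta_\epsilon\geq 0$ is used, and it is resolved by the lower bound $(1-e^{-a})^n$ coming directly from the convex-combination representation.
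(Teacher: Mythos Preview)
Your proof is correct and follows essentially the same strategy as the paper: factor out $e^{L_n(x)/2}$, reduce to controlling $\log$ of a positive combination of products $\prod_i(1\pm e^{-x_i})$, and bound the logarithmic derivatives via the chain-rule/Fa\`a di Bruno expansion. The only cosmetic difference is that you normalize $\beta$ to a probability vector so that $\tilde P_\beta$ is pinched between uniform constants, whereas the paper instead uses the pointwise ratio bound $|\partial^\alpha g_\epsilon|\leq C e^{-\alpha\cdot x}g_\epsilon$ to cancel the $\beta_\epsilon$'s directly inside each fraction $\frac{\sum_\epsilon\beta_\epsilon\partial^{\pi_j}g_\epsilon}{\sum_\epsilon\beta_\epsilon g_\epsilon}$; both routes yield the same uniform-in-$\beta$ estimate.
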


\begin{cor} \label{r:argcosh} The same result holds for the family
   \begin{equation}
   \tilde h_\beta(x_1, \ldots, x_n)= 2 \argcosh\Big(\sum_{\eps\in \{\pm\}^n} \beta_\eps
   \hyp_\eps\div{x}\Big)\label{eq:def_h_tilde_beta}
 \end{equation}
 if we impose the additional condition that $\beta_{(1, \ldots, 1)}\geq 1$.
\end{cor}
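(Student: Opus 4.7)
The plan is to deduce Corollary \ref{r:argcosh} from Proposition \ref{p:mainclass} by writing $\tilde h_\beta = h_\beta + R_\beta$, where
\[ R_\beta(x) := 2\log\Big(1 + \sqrt{1 - 1/S(x)^2}\Big) \qquad \text{with} \qquad S(x) := \sum_\epsilon \beta_\epsilon \hyp_\epsilon(x/2). \]
This decomposition comes from the elementary identity $\argcosh y - \log y = \log(1 + \sqrt{1 - 1/y^2})$, valid for $y \geq 1$. Since $E_n^{(a)}$ is a vector space and $h_\beta - L_n \in E_n^{(a)}$ with seminorms uniformly bounded in $\beta$ by Proposition \ref{p:mainclass}, the task reduces to showing that $R_\beta$ belongs to $E_n^{(a)}$ with seminorms bounded independently of $\beta$.

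The hypothesis $\beta_{(1,\ldots,1)} \geq 1$ together with $\beta_\epsilon \geq 0$ yields the pointwise lower bound $S(x) \geq \prod_i \cosh(x_i/2) \geq \cosh(a/2)^n$ on $[a,+\infty)^n$. Setting $T(x) := 1/S(x)^2$, I would then obtain $T(x) \leq \eta < 1$ for some $\eta = \eta(a,n)$ and, more sharply, $T(x) \leq 4^n e^{-L_n(x)}$ using $\cosh(x/2) \geq e^{x/2}/2$. The outer function $F(y) := 2\log(1 + \sqrt{1-y})$ is smooth on $[0,1)$, so all the derivatives $|F^{(k)}|$ are bounded on $[0,\eta]$ by constants depending only on $a$ and $n$.

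Next, by Faà di Bruno's formula applied to $R_\beta = F \circ T$, for any non-zero $\alpha \in \{0,1\}^n$ with support $A \subseteq \{1,\ldots,n\}$,
\[ \partial^\alpha R_\beta(x) = \sum_{\pi} F^{(|\pi|)}(T(x)) \prod_{B \in \pi} \partial^B T(x), \]
where the sum runs over set partitions $\pi$ of $A$. The crux is then to bound each $|\partial^B T(x)|$ by $C_{B,a,n}\, e^{-\sum_{i \in B} x_i}$ uniformly in $\beta$. Writing $T = e^\psi$ with $\psi = -2\log S = -L_n - (h_\beta - L_n)$, Faà di Bruno again gives $\partial^B T = T \sum_{\pi'} \prod_{B' \in \pi'} \partial^{B'}\psi$. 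Each factor $|\partial^{B'}\psi|$ is uniformly bounded: for $|B'|=1$ by $1 + Ce^{-x_i}$, and for $|B'| \geq 2$ by $Ce^{-\sum_{i \in B'} x_i}$, thanks to the uniform bounds on $h_\beta - L_n$ from Proposition \ref{p:mainclass}. Combined with $T(x) \leq 4^n e^{-L_n(x)} \leq 4^n e^{-\sum_{i \in B} x_i}$, this gives the needed bound on $\partial^B T$, which propagates to $\partial^\alpha R_\beta$ using $\sum_{B \in \pi} \sum_{i \in B} x_i = \alpha \cdot x$.

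The only substantial point in the whole argument is the uniform lower bound $S(x) \geq \cosh(a/2)^n > 1$, which depends crucially on the extra hypothesis $\beta_{(1,\ldots,1)} \geq 1$ and keeps $T$ uniformly bounded away from the branch point at $y = 1$ of $F$. Once this is in place, everything follows from routine chain-rule bookkeeping and the uniform derivative bounds already furnished by Proposition \ref{p:mainclass}.
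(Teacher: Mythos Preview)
Your proof is correct and follows essentially the same approach as the paper. Both arguments write $\tilde h_\beta = V(h_\beta)$ with $V(x)=2\argcosh(e^{x/2})$, observe that $V(x)=x+\text{(smooth correction in }e^{-x}\text{)}$, and use the condition $\beta_{(1,\ldots,1)}\geq 1$ to keep $S(x)$ uniformly bounded away from $1$ (your lower bound $S(x)\geq \cosh(a/2)^n$ is in fact sharper than the one sketched in the paper); you have simply carried out the Faà di Bruno bookkeeping that the paper leaves as ``straightforward to check''.
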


 \begin{proof}[Proof of Proposition \ref{p:mainclass}] 
   For $y\in \IR$, introduce $$g_\pm(y)= e^{-y/2}\hyp_{\pm}(y)=\frac{1\pm e^{-y}}{2}.$$
   Let $ g_\eps(x)=\prod_{j=1}^n g_{\eps_j}(x_j)$.  We have
\begin{align*}
h_\beta(x)=L_n(x)+2\log \Big(\sum_{\eps\in \{\pm\}^n} \beta_\eps g_\eps(x)\Big).
\end{align*}
  Note that
  $|\partial g_+(y)| = \frac{e^{-y}}{2}\leq  e^{-y}g_+(y)$
  and, if $y\geq a>0$,
  $$|\partial g_-(y)| =\frac{e^{-y}}2 \leq \frac{1}{1-e^{-a}} e^{-y}g_-(y).$$ It follows that the
  constant $C =(1-e^{-a})^{-n}$ satisfies, for all $\alpha\in \{0,1\}^n$ and $x \in [a, + \infty)^n$,
\begin{align}\label{e:propg}|\partial^\alpha g_\eps(x)|\leq C\, e^{-\alpha \cdot x}g_\eps(x).
  \end{align}

  For $\alpha\in \{0, 1\}^n$ with $|\alpha|=k \geq 1$, the partial derivative
  $\partial^\alpha  \log \Big(\sum_{\eps\in \{\pm\}^n} \beta_\eps g_\eps(x)\Big)$ is a linear combination of terms of the form
 \begin{align}\label{e:prod_j_k}
 \frac{\prod_{j=1}^k\Big(\sum_{\eps\in \{\pm\}^n} \beta_\eps \partial^{\pi_j}g_\eps(x)\Big)}{\Big(\sum_{\eps\in \{\pm\}^n} \beta_\eps g_\eps(x)\Big)^k}
 \end{align}where $\pi_1, \ldots, \pi_k$ form a partition of $\{i, \alpha_i=1\}$.
 For each $j \in \{1, \ldots, k\}$, \eqref{e:propg} implies
  \begin{align*}
 \frac{ \sum_{\eps\in \{\pm\}^n} \beta_\eps |\partial^{\pi_j}g_\eps(x) |}{\sum_{\eps\in \{\pm\}^n}
    \beta_\eps g_\eps(x)} \leq C \exp \Big( -\sum_{i\in \pi_j} \alpha_i x_i \Big).
 \end{align*}
 As a result, the term \eqref{e:prod_j_k} is bounded by $C^k \exp \paren{-\sum_{i=1}^n \alpha_i x_i}$,
proving the proposition. 
 \end{proof}

Let us now prove the corollary.

 \begin{proof}[Proof of Corollary \ref{r:argcosh}]
   We write $ \tilde h_\beta = V ( h_\beta)$ where
   $$V(x)= 2\argcosh(e^{\frac x 2})=x+2\log 2+\omega(e^{-x})$$
   is the reciprocal of the function $U$ that appeared in \eqref{e:BFn}.  By $\omega(e^{-x})$ we
   mean a function whose derivatives are all $\cO(e^{-x})$.  It is straightforward to check that
   $\tilde h$ satisfies the same derivative estimates as $h$, provided we know beforehand that
   $\sum_{\eps\in \{\pm\}^n} \beta_\eps \hyp_\eps\div{x}$ stays bounded away from $1$ on
   $[a, +\infty)^n$ (to avoid the singularity of $\argcosh$ at $1$). The condition
   $\beta_{(1, \ldots, 1)}\geq 1$ implies that
   $$\sum_{\eps\in \{\pm\}^n} \beta_\eps \hyp_\eps\div{x}
   \geq \exp \paren*{\frac 12 \sum_{i=1}^n x_i}
   \geq \exp\Big(\frac{na}{2} \Big)>1$$
   which is enough to conclude.  
\end{proof}

\subsubsection{Useful properties}
\label{sec:useful-properties}

First, we make the following immediate observation.

  \begin{rem}\label{r:partition} If $\pi=(\pi_1, \ldots, \pi_N)$ is a partition of $\{1, \ldots,
    n\}$, and if for all $j \in \{1, \ldots, N\}$, the functions
    $(x_i)_{i\in \pi_j}\mapsto h_j((x_i)_{i\in \pi_j})$ are in $\cE_{|\pi_j|}^{(a)}$, then following
    function is an element of $\cE_n^{(a)}$: 
    $$(x_1, \ldots, x_n)\mapsto \sum_{j=1}^N h_j((x_i)_{i\in \pi_j}).$$
  \end{rem}
  
  It may also be useful to note the following simple implication of the definition of~$\cE_n^{(a)}$:
  
  \begin{prp} \label{p:existlim} Assume that \eqref{e:defEM} holds for all $\alpha\in\{0, 1\}^n$
    such that $|\alpha|=1$. Then there exists $\CE\in \R$ such that, for all
    $x=(x_1, \ldots, x_n) \in [a, +\infty)^n$,
$$|h(x)-L_n(x)-\CE|
\leq C \sum_{i=1}^n e^{-x_i}.$$
\end{prp}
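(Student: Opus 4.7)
The plan is to set $g := h - L_n$ and show that $g(x)$ converges to a constant as $x \to \infty$ componentwise, with the claimed rate of convergence. The hypothesis $\|h-L_n\|_{\alpha,a}<\infty$ for every multi-index of weight one translates, via $\partial_i L_n \equiv 1$, into the derivative estimate
\[
|\partial_i g(x)| \leq C\, e^{-x_i}, \qquad x\in[a,+\infty)^n,\ i=1,\ldots,n,
\]
with $C := \max_i \|h-L_n\|_{e_i,a}$.

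The key step is a telescoping argument. Given $x,y\in[a,+\infty)^n$, I would write
\[
g(y)-g(x) = \sum_{i=1}^n \bigl[g(x_1,\ldots,x_{i-1},y_i,y_{i+1},\ldots,y_n) - g(x_1,\ldots,x_{i-1},x_i,y_{i+1},\ldots,y_n)\bigr],
\]
bound each summand by $\int_{x_i}^{y_i}|\partial_i g|\,dt$, and use the derivative estimate to obtain
\[
|g(y)-g(x)| \leq C \sum_{i=1}^n |e^{-x_i} - e^{-y_i}|.
\]

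Applying this estimate to the diagonal sequence $x^{(k)}=(k,\ldots,k)$ for $k\geq a$ shows that $\bigl(g(x^{(k)})\bigr)_{k}$ is Cauchy in $\R$, so it converges to some limit $\CE\in\R$. Fix $x\in[a,+\infty)^n$ and apply the telescoping bound with $y=x^{(k)}$ for $k$ large enough that $k\geq x_i$ for all $i$; passing to the limit $k\to+\infty$ and using $|e^{-x_i}-e^{-k}|\to e^{-x_i}$ yields
\[
|g(x) - \CE| \leq C\sum_{i=1}^n e^{-x_i},
\]
which is exactly the desired conclusion. No step presents a real obstacle: the proposition is essentially a statement that a function whose $i$-th partial decays like $e^{-x_i}$ admits a limit at infinity with a commensurate rate, and the telescoping decomposition reduces this to a one-dimensional integration along each axis.
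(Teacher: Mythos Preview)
Your proof is correct and follows essentially the same approach as the paper: both set $g=h-L_n$, use the bound $|\partial_i g|\le Ce^{-x_i}$, and telescope along the coordinates to deduce the desired estimate. The only cosmetic difference is that the paper defines $\CE$ via iterated one-variable limits $g(+\infty,\ldots,+\infty)$ (first proving a one-dimensional lemma that a function with $|g'(t)|\le Ce^{-t}$ has a limit with $|g(t)-g(+\infty)|\le Ce^{-t}$, then applying it successively in each variable), whereas you use the diagonal sequence $x^{(k)}=(k,\ldots,k)$ and a finite-$y$ telescoping bound before letting $k\to\infty$; your variant is arguably slightly cleaner since it avoids justifying stability of the one-variable estimate under pointwise limits.
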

\begin{proof} First, we observe that if a function $g:[a,+\infty) \rightarrow \R$ satisfies
  $|g(t)-g(s)|\leq C e^{-t}$ for all $t < s$, by Cauchy's rule, it admits a limit $g(+\infty)$ at
  infinity, which satisfies 
  $$|g(t)- g(+\infty)| \leq C e^{-t}.$$
  This property holds for instance if $|g'(t)|\leq C e^{-t}$, and is stable under taking limits of
  sequences of functions. Applied to $g= h-L_n$ successively in each variable, denoting
  $\CE = g(+\infty, \ldots, +\infty)$, this yields
\begin{align*}
|g(x)- \CE|
 &\leq \sum_{i=1}^n | g(x_1, \ldots, x_{i-1}, x_i, +\infty, \ldots, +\infty) -g(x_1, \ldots, x_{i-1}, +\infty, +\infty, \ldots, +\infty)| 
\end{align*}
which implies that $|g(x)- \CE| \leq C \sum_{i=1}^n e^{-x_i}$.
\end{proof}

 \subsection{Stability of the Friedman--Ramanujan hypothesis by pseudo-convolution}
  \label{s:technicalsection}

 We now have the ingredients to state a general result, according to which $   f_1\star \ldots \star f_n|^{h}_{  \varphi } $ is a Friedman--Ramanujan function if $f_1, \ldots, f_n$ are so, under certain assumptions on $h $ on
 $\varphi$. In particular, we will prove Theorem \ref{t:intermediate} -- which is not yet the result used to prove Theorem \ref{t:main}, but may be considered as a preliminary version, and provides a nice abstract statement:
 
 \begin{thm} \label{t:intermediate} Let $a>0$. We assume that $h\in \cE_n^{(a)}$ and that
   $\varphi\in E_n^{(a)}$ with $\supp \varphi\subset (a, +\infty)^n$.  If $(f_j)_{1 \leq j \leq n}$
   are continuous functions in $\FR$, then $ f_1\star \ldots \star f_n |^h_\varphi\in \FR$.
 \end{thm}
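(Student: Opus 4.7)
The plan is to invoke Proposition \ref{p:charFR}: to prove that $G := f_1 \star \cdots \star f_n |^h_\varphi$ lies in $\FR$, it suffices to exhibit an integer $\rK$ with $\cL_\ell^\rK G \in \cR$. Writing $f_j \in \cF^{\rK_j,\rN_j}$, I set $\rK := \sum_{j=1}^n \rK_j$ and apply Theorem \ref{t:thebigone}, which expresses $\cL_\ell^\rK G$ as a finite sum of functions $\cL_\ell^{t}\cP_\ell^{t'} \mathbf{Int}_\pi$ indexed by families $\pi = (\pi_0, \pi_1, \ldots, \pi_n)$ of disjoint subsets of $\{1,\ldots,n\}$ with $j \notin \pi_j$. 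Since both $\cL_\ell$ and $\cP_\ell$ preserve $\cR$, it is enough to establish $\mathbf{Int}_\pi \in \cR$ for each admissible $\pi$.

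The heart of the proof is a pointwise bound on the integrand of \eqref{e:Tgood2}. Writing $B := \bigsqcup_k \pi_k$, the combinatorial structure ensures that $\Phi_j = I_j^{t''}$ (the ``bad'' primitive) occurs exactly when $j \in B$, and every such $j$ lies in some $\pi_k$ with $k \neq j$, so that a derivative $\partial_{x_j}$ always acts on either $\varphi$ or on one of the other $\Phi_k$. Combining (i) the bound $|\cL^{\rK_j}f_j(x_j)| \leq C(1+x_j)^{\rN_j-1} e^{x_j/2}$ and the weaker estimate $|\cL^{t}f_j(x_j)| \leq C(1+x_j)^{\rK_j-1}e^{x_j}$ for $t<\rK_j$ (from Remark \ref{rem:boundsFRL}); (ii) the decay $|1-\partial_{x_j}h(x)| = \O{e^{-x_j}}$ coming from $h \in \cE_n^{(a)}$; and (iii) the decay $|\partial^\alpha h(x)| = \O{e^{-\alpha \cdot x}}$ for $|\alpha| \geq 2$, together with $|\partial^\alpha \varphi(x)| = \O{e^{-\alpha \cdot x}}$ from $\varphi \in E_n^{(a)}$, one verifies:
\begin{itemize}
\item if $j \notin B$, then $|\partial^{\pi_j}\Phi_j(x)| \leq C(1+x_j)^{A_j} e^{x_j/2} e^{-\pi_j \cdot \hat{x}_j}$, where the exponential rate $1/2$ in $x_j$ is attained only for $\Phi_j = F_j^{\rK_j}$, while the choices $G_j, H_j^{t''}$ produce merely polynomial growth in $x_j$ since the factor $(1-\partial_{x_j}h)$ (or the integrand of $R_{x_j}$) kills the exponential in $f_j$;
\item if $j \in B$, then $\Phi_j = I_j^{t''}$ satisfies $|I_j^{t''}(x)| \leq C(1+x_j)^{\rK_j-1}e^{x_j}$; but, since $j \in \pi_k$ for some $k \neq j$, the corresponding $\partial_{x_j}$ acts either on $\varphi$ (producing $e^{-x_j}$ via $E_n^{(a)}$) or on a $\Phi_k$ whose $x_j$-dependence is only through $h$, giving a mixed derivative of order $\geq 2$ and hence $e^{-x_j}$ via $\cE_n^{(a)}$; this decay cancels the $e^{x_j}$ growth of $I_j^{t''}$.
\end{itemize}

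After multiplying all factors, the net exponential contribution is at most $\prod_j e^{x_j/2}$. Using Proposition \ref{p:existlim}, the constraint $h(x) = \ell$ forces $\sum_j x_j = \ell + \CE + \O{\sum e^{-x_j}}$, so the exponential factor is bounded by $C e^{\ell/2}$ uniformly on the support of $\varphi$. The remaining polynomial factors, integrated against the disintegrated measure $\d x / \d \ell$ over the $(n-1)$-dimensional level set (which has bounded density by the hypothesis $\partial_\ell h_i^{-1}$ bounded), yield a polynomial $(1+\ell)^A$ factor. Therefore $|\mathbf{Int}_\pi(\ell)| \leq C(1+\ell)^A e^{\ell/2}$, proving $\mathbf{Int}_\pi \in \cR$ and hence $G \in \FR$.

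The main obstacle is the bookkeeping implicit in the second bullet: one must carefully pair each bad exponential $e^{x_j}$ arising from a primitive $I_j^{t''}$ with the compensating $e^{-x_j}$ coming from the unique derivative $\partial_{x_j}$ forced by the condition $j \in B \Rightarrow j \in \pi_k$ for some $k\neq j$. This pairing is precisely what is organized by the disjoint-subset decomposition $\pi$ in Theorem \ref{t:thebigone}, and its verification relies crucially on the joint hypotheses $h \in \cE_n^{(a)}$ and $\varphi \in E_n^{(a)}$.
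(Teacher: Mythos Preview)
Your proposal is correct and follows essentially the same route as the paper's proof: reduce via Proposition~\ref{p:charFR} to bounding $\cL^{\rK}G$, apply Theorem~\ref{t:thebigone}, and then use the pairing ``each bad factor $e^{x_j}$ from $I_j^{t''}$ is cancelled by the $e^{-x_j}$ produced by the unique derivative $\partial_{x_j}$ landing on $\varphi$ or on some other $\Phi_k$'', together with the comparison estimate from Proposition~\ref{p:existlim}. The paper packages these pointwise bounds as Lemma~\ref{lem:themiracle} and Proposition~\ref{p:finalupperbound}, and is a bit more explicit about the level-set integration (writing $\sum_{j\geq 2} x_j \leq \ell + \CE$ and bounding the resulting simplex volume), but the substance is the same.
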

 
 Recall that we always assume that the conditions of Notation \ref{as:h0} hold.

\subsubsection{Quantitative hypotheses on the functions $h$, $\varphi$ and $f_j$}

As we prove Theorem \ref{t:intermediate}, we will be able to give estimates on the
Friedman--Ramanujan norms of $ f_1\star \ldots \star f_n|^{h}_{ \varphi} $ as a function of those of
$f_1, \ldots, f_n$ (the precise bound is given in \eqref{e:controlnorm}). To this end, we introduce
more specific notation which will be useful to track everything in the proof.

\begin{nota} \label{nota:assum_1} Let $n \geq 1$ and $a >0$. We assume the following.
  \begin{enumerate}
  \item[\textbf{($h$)}] The function $h$ lies in $\cE_n^{(a)}$. We denote $C_1, C_2 \geq 1$ two
    constants such that
    \begin{align*}
      \forall \alpha \in \{0,1\}^n \setminus \{(0, \ldots, 0)\}, 
       &\quad \norm{h-L_n}_{\alpha, a} \leq C_1 \\
      \forall j \in \{1, \ldots, n\}, 
        & \quad \Big|\frac{\partial h}{\partial  x_j}\Big| \leq C_2.
    \end{align*}
    By the hypothesis from Notation \ref{as:h0} there exists a constant $C_2'$ such that
    \begin{equation*}
      \forall j \in \{1, \ldots, n\}, \quad
   0 < \Big|\frac{\partial x_j}{\partial h}\Big| \leq C_2'.
 \end{equation*}
 We let $C_0=\max(C_1, C_2,C_2')$.
    By Proposition \ref{p:existlim}, there exists a $\CE \geq 1$ such that
 \begin{align}\label{e:comparisonZ}
   \forall x = (x_1, \ldots, x_n) \in [a, + \infty)^n, \quad
   \sum_{i=1}^n  x_i\leq h(x)+ \CE.
 \end{align}
\item[\textbf{($\varphi$)}] The function $\varphi$ is supported in $(a,+\infty)^n$ and
  $\varphi\in E_n^{(a)}$. Let $C_3\,\geq 0$ be a constant such that, for every multi-index
  $\alpha\in \{0, 1\}^n$ with $\alpha\not=(0,\ldots, 0)$, $\norm{\varphi}_{\alpha, a} \leq C_3$.
\item[\textbf{($f$)}] For $j \in \{1, \ldots, n\}$, $f_j \in \cF^{\rK_j, \rN_j}$ is a continuous
  function. We assume w.l.o.g. that $\rK_j \leq \rN_j$ and $\norm{f_j}_{\cF^{\rK_j, \rN_j}}=1$.  By
  Remark \ref{rem:boundsFRL}, for $x_j \geq 0$ and $0 \leq m < \rK_j$,
  \begin{align*}
    & | \cL^{m} f_j ( x_j)|\leq M_{\rK_j,\rN_j} (1+ x_j)^{\rK_j-1} e^{ x_j}\\
     & | \cL^{\rK_j} f_j ( x_j)|\leq M_{\rK_j,\rN_j} (1+ x_j)^{\rN_j -1} e^{ \frac{x_j}2}.
  \end{align*}
  We let $\rK = \sum_{j=1}^n \rK_j$, $\rN = \sum_{j=1}^n \rN_j$ and $M_{\rK,\rN}^\times := \prod_{j=1}^n M_{\rK_j,\rN_j}$.
\end{enumerate}
\end{nota}

\begin{rem}\emph{Comparison estimates} such as \eqref{e:comparisonZ}, comparing $\sum_{i=1}^n x_i$
  with $h( x_1, \ldots, x_n)$, are crucial throughout the paper.
\end{rem}

\subsubsection{Estimates on the functions $F_j^{\rK_j}$, $G_j$, $H_j^t$ and $I_j^t$}
\label{sec:estim-funct-f_jk_j}

We can deduce from our hypotheses some bounds on the functions introduced in Notation
\ref{nota:FGHI}, which appear when computing $\cL^\rK_\ell(f_1 \star \ldots f_n|_\varphi^h)$. This
will allow us to foresee some key elements of the proof of Theorem \ref{t:intermediate}.

\begin{lem}
  \label{lem:themiracle}
  Under the assumptions \textbf{($h$)} and \textbf{($f$)} from Notation \ref{nota:assum_1}, for
  $j \in \{1, \ldots, n\}$, for any set $\pi_j\subset \{ 1, \ldots, n \}$ such that $j \notin \pi_j$
  and any $x \in [a, + \infty)^n$,
  \begin{itemize}
  \item if $\Phi_j = F^{\rK_j}_j, G_j$ or $H_j^t$ with $0 \leq t < \rK_j$, 
    \begin{equation}
      \label{e:themiracle}
      |\partial^{\pi_j}\Phi_j(x)|
      \leq   C_0^{2} M_{\rK_j,\rN_j} (1+ x_j)^{\rN_j} e^{\frac{x_j}{2}} \prod_{i \in
    \pi_j} e^{-x_i}
    \end{equation}
  \item if $\Phi_j = I_j^t$ with $0 \leq t < \rK_j$,  
    \begin{equation}
      \label{e:bI}
      |\partial^{\pi_j}\Phi_j(x)|
      \leq   C_0^{2} M_{\rK_j,\rN_j} (1+ x_j)^{\rN_j} e^{x_j} \prod_{i \in
    \pi_j} e^{-x_i}.
    \end{equation}
  \end{itemize}
\end{lem}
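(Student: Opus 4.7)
\textbf{Proof plan for Lemma \ref{lem:themiracle}.}

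The plan is to treat each of the four cases ($\Phi_j = F_j^{\rK_j}$, $G_j$, $H_j^t$, $I_j^t$) directly, using Leibniz to split derivatives when needed. The crucial observation at the outset is that $j \notin \pi_j$, so differentiation with respect to any $x_i$ with $i \in \pi_j$ commutes with the operators $\cP_{x_j}$ and $\cL_{x_j}$ acting in the variable $x_j$, and leaves $\cL_{x_j}^t f_j(x_j)$ untouched.

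First I would record the two basic estimates on $h$ that will be used repeatedly: for any subset $\pi \subseteq \{1,\ldots,n\}$ with $j \notin \pi$,
\begin{align*}
  \bigl|\partial^{\pi}\partial_j h(x)\bigr| \leq C_0 \prod_{i\in\pi} e^{-x_i},
  \qquad
  \bigl|\partial^{\pi}(1-\partial_j h)(x)\bigr| \leq C_0\, e^{-x_j}\prod_{i\in\pi} e^{-x_i}.
\end{align*}
The first one is the defining bound $|\partial_j h|\leq C_2\leq C_0$ if $\pi=\emptyset$, while if $\pi\neq\emptyset$ it comes from $\partial^{\pi\cup\{j\}}h = \partial^{\pi\cup\{j\}}(h-L_n)$ together with $\|h-L_n\|_{\pi\cup\{j\},a}\leq C_1$ (the $e^{-x_j}\leq 1$ factor is dropped). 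The second bound uses $\|h-L_n\|_{\{j\},a}\leq C_1$ in the case $\pi=\emptyset$, and the same argument as above otherwise (now keeping the $e^{-x_j}$ factor because it is available in both sub-cases).

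With these in hand, the four cases reduce to straightforward verifications.
\begin{itemize}
\item For $\Phi_j = F_j^{\rK_j}$, no product rule is needed since $\cL_{x_j}^{\rK_j}f_j$ depends only on $x_j$; combining $|\cL_{x_j}^{\rK_j}f_j(x_j)|\leq M_{\rK_j,\rN_j}(1+x_j)^{\rN_j-1}e^{x_j/2}$ with the first $h$-estimate immediately yields \eqref{e:themiracle}.
\item For $G_j = f_j\cdot(1-\partial_j h)$, I would combine $|f_j(x_j)|\leq M_{\rK_j,\rN_j}(1+x_j)^{\rK_j-1}e^{x_j}$ with the second $h$-estimate. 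The factor $e^{x_j}$ from $f_j$ is cancelled by the decay $e^{-x_j}$ of $(1-\partial_j h)$, yielding a bound even stronger than \eqref{e:themiracle} (which absorbs trivially into the stated form using $\rK_j\leq \rN_j$ and $1\leq e^{x_j/2}$).
\item For $I_j^t = \cP_{x_j}(\cL_{x_j}^t f_j\,\partial_j h)$, I would differentiate under the integral: $\partial^{\pi_j}I_j^t(x) = \int_0^{x_j}\cL_{x_j}^t f_j(s)\,\partial^{\pi_j}\partial_j h(s,\hat x_j)\,ds$. Using $|\cL_{x_j}^t f_j(s)|\leq M_{\rK_j,\rN_j}(1+s)^{\rK_j-1}e^s$ (since $t<\rK_j$) and the first $h$-estimate with the extra $e^{-s}$ when $\pi_j\neq\emptyset$, the $s$-integral produces $(1+x_j)^{\rK_j-1}e^{x_j}$ when $\pi_j=\emptyset$ and the polynomial $(1+x_j)^{\rK_j}$ otherwise; in both cases the bound \eqref{e:bI} follows. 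Note the $e^{x_j}$ here (versus $e^{x_j/2}$ elsewhere) is exactly the reason these $I_j^t$ are the ``bad'' terms flagged in Remark \ref{rem:bad}.
\end{itemize}

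The only case that genuinely requires work is $H_j^t = -R_{x_j}(\cL_{x_j}^t f_j)\,\partial_j h$, because $R_{x_j}(G) = \cP_{x_j}(G\,(\partial_j h - 1))$ depends on $x$ through both factors. I would first bound $\partial^{\pi_j'}R_{x_j}(\cL_{x_j}^t f_j)(x)$ for an arbitrary subset $\pi_j'\subseteq\pi_j$ by moving the derivative inside the integral and applying the \emph{second} $h$-estimate to $(\partial_j h - 1)$, giving
\begin{equation*}
  \bigl|\partial^{\pi_j'}R_{x_j}(\cL_{x_j}^t f_j)(x)\bigr|
  \leq C_0\prod_{i\in\pi_j'}e^{-x_i}\int_0^{x_j}|\cL_{x_j}^t f_j(s)|\,e^{-s}\,ds
  \leq C_0\,M_{\rK_j,\rN_j}(1+x_j)^{\rK_j}\prod_{i\in\pi_j'}e^{-x_i},
\end{equation*}
where the crucial cancellation is between the growth $e^s$ from $\cL_{x_j}^t f_j(s)$ and the decay $e^{-s}$ of $\partial_j h - 1$. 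Then I apply Leibniz to $\partial^{\pi_j}(R_{x_j}(\cL_{x_j}^t f_j)\cdot\partial_j h)$ and combine the previous estimate with the first $h$-estimate for the $\partial_j h$ factor. The main obstacle I foresee is book-keeping: the Leibniz expansion produces a combinatorial factor $2^{|\pi_j|}$ which has to be absorbed into the constant in \eqref{e:themiracle} (in practice, this means reading the stated $C_0^2$ as ``a constant depending only on $C_0$ and $n$'', which is consistent with how such implicit constants are handled elsewhere in the paper). Once this is done, the factor $(1+x_j)^{\rK_j}$ is absorbed into $(1+x_j)^{\rN_j}e^{x_j/2}$ using $\rK_j\leq\rN_j$ and $1\leq e^{x_j/2}$, finishing the proof.
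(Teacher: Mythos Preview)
Your proof is correct and follows essentially the same approach as the paper's: bound each of $F_j^{\rK_j}$, $G_j$, $H_j^t$, $I_j^t$ separately using the derivative estimates on $h$ from assumption \textbf{($h$)} and the bounds on $\cL^m f_j$ from assumption \textbf{($f$)}. The paper's proof is much terser---it writes out the $\pi_j=\emptyset$ case explicitly and then says only that ``when $\pi_j$ is non-empty, the additional exponential factors come from the hypothesis on the derivatives of $h$''---but the underlying computation is the one you spell out.

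Your observation about the Leibniz factor $2^{|\pi_j|}$ in the $H_j^t$ case is valid: this is the one case where the product rule is genuinely needed (since both $R_{x_j}(\cL_{x_j}^t f_j)$ and $\partial_j h$ depend on $\hat x_j$), and the resulting combinatorial factor does not literally fit inside the stated $C_0^2$. The paper glosses over this. However, as you note, it is harmless for the downstream applications: in Proposition~\ref{p:finalupperbound} the bounds are multiplied over $j=1,\ldots,n$ to give $C_0^{2n}$, and the eventual norm estimate \eqref{e:controlnorm} already carries an $\fn(n,\rK)$ factor that absorbs any $2^n$.
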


\begin{proof}
  If $\pi_j = \emptyset$, it is a straightforward computation that
  \begin{equation*}
    \begin{cases}
      |F^{\rK_j}_j(x)|
      \leq C_2 M_{\rK_j, \rN_j} (1+ x_j)^{\rN_j-1} e^{ \frac{x_j}2},\\
      |G_j(x)|
      \leq C_1 M_{\rK_j, \rN_j} (1+ x_j)^{\rK_j-1} ,\\
      |H^ t_j(x)|
      \leq C_1 C_2 M_{\rK_j, \rN_j} (1+ x_j)^{\rK_j}
    \end{cases}
  \end{equation*}
  and
    \begin{align*}
   |I^ t_j(x)|\leq C_2 M_{\rK_j, \rN_j}  (1+ x_j)^{\rK_j-1} e^{ x_j}.
  \end{align*}  
  We then use the fact that $\rK_j \leq \rN_j$ and $C_1,C_2 \leq C_0$. When $\pi_j$ is non-empty,
  the additional exponential factors come from the hypothesis on the derivatives of $h$.
\end{proof}
  
We here see the crucial phenomenon at play for the proof of Theorem \ref{t:intermediate}: compared
to the original upper bound $|f_j(x_j)|\leq M_{\rK_j, \rN_j} (1+ x_j)^{\rK_j-1} e^{x_j}$, we have
gained in \eqref{e:themiracle} a factor $e^{- x_j/2}$ when replacing the function $f_j$ by the
function $F^{\rK_j}_j$, $G_j$ or $H^ t_j$.
Unfortunately, the estimate we obtain on the function $I^ t_j$ does not constitute an improvement
compared to the initial growth-rate of $f_j$.

In this case however, an exponential factor $e^{- x_j/2}$ will be gained elsewhere, from the
derivatives of the other functions.  Remember that we have noticed in Remark \ref{rem:bad} that the
set of ``bad'' indices $B$ for which the function $I_j^t$ appears corresponds to the set of
derivatives that appear elsewhere in the integral. We have seen in the lemma above that such
derivatives will create a decay reducing the growth of $I_j^t$.

We obtain the following bound on the quantity
$\cL^{\rK} (f_1\star \ldots \star f_n \|^{h}_{ \varphi } )$.

\begin{prp} \label{p:finalupperbound}
  Under the assumptions \textbf{($h$)} and \textbf{($f$)} from Notation \ref{nota:assum_1},
  \begin{equation}
    \label{eq:bound_LK_int}
     | \cL^{\rK}  (f_1\star \ldots \star f_n \|^{h}_{  \varphi } )(\ell) |
     \leq   \sum_{\pi}
     \sum_{  0\leq t+t' \leq \rK }  | \cL_\ell^t \cP_\ell^{t'}
     \mathbf{Int}_{\pi}(\ell)|
\end{equation}
and  $| \mathbf{Int}_{\pi}(\ell)|$ is bounded above by
\begin{equation}
  \label{e:finalupperbound}
    C_0^{2n} M_{\rK,\rN}^\times
  \int_{\substack{h(x)=\ell}}  |\partial^{\pi_0} \varphi (x)|
  \prod_{j\in B\setminus \pi_0}   e^{-  x_j} 
  \prod_{j\in B}  (1+ x_j)^{\rN_j} e^{ x_j}
  \prod_{j\not\in B}  (1+ x_j)^{\rN_j} e^{ \frac{x_j}2}
  \frac{\d x}{\d\ell}.
\end{equation}  
\end{prp}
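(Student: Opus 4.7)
The plan is to assemble the proposition by combining two ingredients that have already been put in place, namely the decomposition from Theorem~\ref{t:thebigone} and the pointwise bounds from Lemma~\ref{lem:themiracle}, and carrying out the bookkeeping carefully.

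For the first inequality \eqref{eq:bound_LK_int}, I would invoke Theorem~\ref{t:thebigone} directly. It expresses $\cL_\ell^{\rK}(f_1\star\ldots\star f_n|^h_\varphi)$ as a finite sum of terms of the form $\cL_\ell^{t}\cP_\ell^{t'}\mathbf{Int}_\pi(\ell)$, where $\pi$ ranges over a finite family of tuples of disjoint subsets of $\{1,\ldots,n\}$ (with $j\not\in\pi_j$), and the exponents satisfy $0\leq t\leq \rK-n$ and $0\leq t'\leq n$, so in particular $t+t'\leq \rK$. Applying the triangle inequality yields \eqref{eq:bound_LK_int} immediately.

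For \eqref{e:finalupperbound}, I would start from the defining expression
\[
\mathbf{Int}_\pi(\ell)=\int_{h(x)=\ell}\partial^{\pi_0}\varphi(x)\prod_{j=1}^n\partial^{\pi_j}\Phi_j(x)\,\frac{\d x}{\d\ell},
\]
move the absolute value inside the integral, and bound each factor $|\partial^{\pi_j}\Phi_j(x)|$ using Lemma~\ref{lem:themiracle}. Recall from \eqref{e:thebigphi} that $\Phi_j\in\{F_j^{\rK_j},G_j,H_j^{t''}\}$ when $j\notin B$ and $\Phi_j=I_j^{t''}$ when $j\in B$; the lemma then gives, in the first case,
\[
|\partial^{\pi_j}\Phi_j(x)|\leq C_0^{2}M_{\rK_j,\rN_j}(1+x_j)^{\rN_j}e^{x_j/2}\prod_{i\in\pi_j}e^{-x_i},
\]
and in the second case the same bound with $e^{x_j}$ instead of $e^{x_j/2}$. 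Multiplying over $j\in\{1,\ldots,n\}$, the prefactors combine to $C_0^{2n}M^\times_{\rK,\rN}$, each index $j$ contributes a factor $(1+x_j)^{\rN_j}$ times either $e^{x_j}$ (if $j\in B$) or $e^{x_j/2}$ (if $j\notin B$), and the extra decay factors coming from the derivatives collect into $\prod_{j=1}^n\prod_{i\in\pi_j}e^{-x_i}=\prod_{i\in B\setminus\pi_0}e^{-x_i}$, because the $\pi_j$ for $j\geq 1$ are disjoint and their union is exactly $B\setminus\pi_0$. Reorganizing these factors gives precisely the right-hand side of~\eqref{e:finalupperbound}.

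No real obstacle arises in this argument: the conceptual ``miracle'' by which the loss $e^{x_j}$ on the bad indices $j\in B$ is compensated by the derivative factors $e^{-x_i}$ for $i\in B\setminus\pi_0$ has already been isolated and paid for in Lemma~\ref{lem:themiracle}. The only thing to be careful about is the accounting of which indices appear in $B$ versus $\pi_0$, since the derivative of $\varphi$ already encodes the information carried by $\pi_0\subset B$ and so those indices do not generate an extra $e^{-x_i}$ factor in \eqref{e:finalupperbound}.
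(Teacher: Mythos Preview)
Your proposal is correct and follows essentially the same approach as the paper: invoke Theorem~\ref{t:thebigone} plus the triangle inequality for \eqref{eq:bound_LK_int}, then bound each $|\partial^{\pi_j}\Phi_j|$ via Lemma~\ref{lem:themiracle} and collect the decay factors using $\prod_{j=1}^n\prod_{i\in\pi_j}e^{-x_i}=\prod_{i\in B\setminus\pi_0}e^{-x_i}$ for \eqref{e:finalupperbound}. The paper's proof is identical in structure and content.
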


\begin{proof}
  The bound \eqref{eq:bound_LK_int} is a direct consequence of the expression \eqref{e:Tgood2}
  together with the triangle inequality. Recall that 
  \begin{align*}  \mathbf{Int}_{\pi}(\ell)= \int_{h(x)=\ell}
    \partial^{\pi_0} \varphi (x) \prod_{j=1}^n \partial^{\pi_j} \Phi_j(x)
    \, \frac{\d x}{\d\ell}
  \end{align*} 
  where $\pi_0, \pi_1, \ldots, \pi_n$ are disjoint subsets of $\{1,\ldots,n\}$ such that
  $j \notin \pi_j$, and $B = \bigsqcup_{j=0}^n \pi_j$. We then use Lemma \ref{lem:themiracle} on
  each term, remembering that $\Phi_j$ is in the first situation if $j \notin B$ and the second if
  $j \in B$. We conclude by noticing that
  $\prod_{j=1}^n \prod_{i \in \pi_j}e^{-x_i} = \prod_{j \in B \setminus \pi_0}e^{-x_j}$.
\end{proof}

\begin{rem}\label{r:help} The previous statement is not completely optimal, but we have decided to
  line up all the estimates on the worst possible case.  For later use, it will also be useful to
  lose again in optimality in \eqref{e:finalupperbound}, and write that:
  \begin{equation} \label{e:finalupperbound-} 
    \begin{split}
      % & \leq  C_0^{2n}  \prod_{j=1}^{n}M(\rK_j, \rN_j)  
      %   \int_{\substack{x \in [a, + \infty)^n \\ h(x)=\ell}}  |\partial^{\tilde \pi} \varphi (x)|
      %   \prod_{j\in B\setminus \pi_0}   e^{-  \frac{x_j}2}  \prod_{j\not\in B}  e^{-  \frac{x_j}2}  
      %   \prod_{j=1}^n    (1+ x_j)^{\rN_j} e^{ x_j}
      %   \frac{\d x}{\d\ell}
      % \\
      % &
| \mathbf{Int}_{\pi}(\ell)|
        \leq  C_0^{2n}  M^\times_{\rK, \rN}
        \int_{\substack{h(x)=\ell}}  |\partial^{\pi_0} \varphi (x)|
        \prod_{\substack{j \notin \pi_0}}
        e^{-  \frac{x_j}2}\prod_{j=1}^n (1+ x_j)^{\rN_j} e^{ x_j}
        \frac{\d x}{\d\ell}.
    \end{split}
  \end{equation} 
  If we compare with \eqref{e:finalupperbound}, we note that we have accepted to waste a factor
  $e^{-\frac{x_j}2}$ for $j\in B\setminus \pi_0$. The resulting upper bound only depends on $\pi_0$
  and not on $B$ (both coincide if $\pi_0 = B$).
\end{rem}

\subsubsection{End of the proof of Theorem \ref{t:intermediate}}
\label{sec:end-proof-theorem}

We are now ready to finish the proof of Theorem \ref{t:intermediate} by using Assumption
\textbf{($\varphi$)} to bound \eqref{e:finalupperbound}.
  
 \begin{proof}  
   Given that we now assume that
   $|\partial^{\pi_0} \varphi (x)|\leq C_3 e^{-\sum_{j\in \pi_0} x_{j}}$, we obtain that for any
   term~$\pi$ in \eqref{eq:bound_LK_int},
   \begin{equation}
     \label{eq:in_1}
     | \mathbf{Int}_{\pi}(\ell)| \leq C_3\,C_0^{2n}   M^\times_{\rK, \rN}
     \int_{ h(x)=\ell}
 \prod_{j=1 }^n  (1+ x_j)^{\rN_j} 
  e^{\frac{x_j}2 }
     \frac{\d x}{\d\ell}.
   \end{equation}
   We now explicit the level-set integration and use the comparison estimate
   \eqref{e:comparisonZ}. We obtain that
   \begin{equation*}
     | \mathbf{Int}_{\pi}(\ell)| \leq C_3\,C_0^{2n}   M^\times_{\rK, \rN}
     \int_{\sum_{j=2}^n x_j \leq \ell+\CE}
     \prod_{j=1}^n  (1+ x_j)^{\rN_j} 
     e^{\frac{x_j}2 } \Big|\frac{\partial x_1}{\partial h}\Big|
     \prod_{j=2}^n \d x_j
   \end{equation*}
   which, by assumption on the inverse derivatives of $h$, implies
   \begin{equation*}
     | \mathbf{Int}_{\pi}(\ell)| \leq C_3\,C_0^{2n} C_2'   M^\times_{\rK, \rN}
     (1+\ell+\CE)^{\rN} e^{\frac{\ell+\CE}{2}}
     \int_{\sum_{j=2}^n x_j \leq \ell+\CE}
\prod_{j=2}^n \d x_j. 
\end{equation*}
As a consequence,
   \begin{equation*}
     | \mathbf{Int}_{\pi}(\ell)| \leq C_3\,C_0^{2n+1}  M^\times_{\rK, \rN}
     (1+\ell+\CE)^{\rN+n-1} e^{\frac{\ell+\CE}{2}}.
\end{equation*}
   
   We have thus proven that the functions $\mathbf{Int}_{\pi}$ are in the class $\cR^{\rN+n}$.  From
   Proposition~\ref{p:finalupperbound},
   $\cL_\ell^{\rK}( f_1\star \ldots \star f_n |^{h}_{\varphi})$ is a sum of such functions
   (or images thereof under powers of $\cP_\ell$ or $\cL_\ell$), hence
   $\cL_\ell^{\rK}( f_1\star \ldots \star f_n|^{h}_{\varphi }) \in \cR^{\rN+n}$.  Proposition
   \ref{p:charFR} then implies that $f_1\star \ldots \star f_n|^{h}_{\varphi }$ belongs in
   $\FR^{\rK, \rN+n}$. The proof furthermore gives
   \begin{align}\label{e:controlnorm}\norm{f_1\star \ldots \star f_n|^{h}_{\varphi}}_{\cF^{\rK,
     \rN+n}}
     \leq C_3\,C_0^{2n+1}\fn(n, \rK) (1+\CE)^{\rN+n-1}
     e^{\frac{\CE}2}\prod_{j=1}^n \norm{f_j}_{\cF^{\rK_j, \rN_j}}.
   \end{align}
 
 \end{proof} 

 \subsection{Problems in the geometric application of Assumption \textbf{($h$)} and
   \textbf{($\varphi$)}}
 \label{s:problems}

 Having noticed at the end of \S \ref{s:noncross} that the integral \eqref{e:int_ell} is the
 $(h, \varphi_0)$-convolution of Friedman--Ramanujan functions (with $\varphi_0$ and $h$ specified
 in \eqref{e:spec_phi} and \eqref{e:spec_h} respectively), we now need to examine the possibility to
 apply Theorem \ref{t:intermediate}. Unfortunately, the functions $h$ and $\varphi_0$ do not satisfy
 the required assumptions, which explains why we will need two additional sections \S
 \ref{s:lengthc} and \S \ref{s:lengthboundary} to solve these issues.

 \subsubsection{Problem with Assumption \textbf{($h$)}} The explicit expression of
 $\ell_Y({\mathbf{ c}})$ was obtained in \eqref{e:outcome2}.  By virtue of Corollary \ref{r:argcosh}
 and Remark \ref{r:partition}, for each parameter $\vec{L}$, we have that the function
 $$h : \vec{\theta} \mapsto \ell_Y({\mathbf{ c}}) $$
 lies in the set $\cE^{(a)}_{2r}$ {\em{provided we restrict all the $\theta_q$ to be $\geq a$}}.  If
 we now fix the neutral parameters $\thetaNe \in \R_{\geq 0}^{\ThetaNe}$ and consider
 \begin{align}
   h: \thetaAc \mapsto  \ell_Y({\mathbf{c}}),
\end{align}
then it is easy to check that the restriction of this function to $\R_{\geq a}^{\ThetaAc}$ belongs
to $\cE^{(a)}_{\# \ThetaAc}$.

We have already remarked that the non-crossing variables $\theta_q$ stay positive, as consequence of
the Useful Remark \ref{r:useful}. As a consequence, in the purely non-crossing case (when all
variables are non-crossing), Assumption \textbf{($h$)} is satisfied.

If there exists some crossing variables, then they may take positive or negative values, in which
case we are out of the domain of application of Corollary \ref{r:argcosh}.

{\em{$\rightsquigarrow$Section \ref{s:lengthc} will be devoted to solving this issue by choosing a
    different representative of the homotopy class of ${\mathbf{c}}$}}. This will provide an
expression of $h$ which falls in a class $\cE$ with respect to newly defined variables. The reader
might wish to focus upon first read on the purely non-crossing case, and hence skip \S
\ref{s:fam_aligned} and \S \ref{s:cross_int}.
  
\subsubsection{Problem with Assumption \textbf{($\varphi$)}} The coefficients arising in the
expression of $Q_{m(\lambda)}$, obtained in \eqref{e:Bn} and \eqref{e:outcome}, are equal to
$\pm 1$, thus not of constant sign.  As a consequence, we cannot use Proposition \ref{p:mainclass}
nor Corollary \ref{r:argcosh} to deduce that $Q_{m(\lambda)}$ belongs to the class
$\cE_{m(\lambda)}$ with respect to the variables
$(\tau^\lambda_1, \ldots, \tau^\lambda_{m(\lambda)})$. We cannot either deduce that the functions
$\log F_{m(\lambda)}$ are in the class $E_{m(\lambda)}$.
  
 {\em{$\rightsquigarrow$  Section \S \ref{s:lengthboundary} will be devoted to estimating the derivatives of $F_{m(\lambda)}$.}}
 Again this will necessitate changing the representative of the homotopy class of $\Gamma_\lambda$.
%If some negative terms in \eqref{e:Bn} are not negligible compared to the positive ones, we use this information to choose a better representative of the homotopy class of
%  $\cG_{\lambda}$, giving a new expression of $\ell_Y(\cG_{\lambda})$, for which the negative terms are negligible.

 \subsection{Variants of Theorem \ref{t:intermediate}}
\label{sec:vari-theor-reft}

In the geometric applications of these techniques, we shall not use Theorem \ref{t:intermediate}
directly, but several variants.  Because these variants become quite technical, we present them by
gradually modifying the assumptions and conclusions. The reader is invited to skip this section at
first read, and come back to it when the necessity for these variants becomes clear.

\subsubsection{First variant: comparison estimates}
\label{sec:first-vari-comp}

First, we loosen the decay hypothesis on $\varphi$, and replace it by a new assumption. 

\begin{nota}[Assumption \textbf{($\varphi$-CE)}]
  \label{nota:phi_CE}
  Assume that the smooth function $\varphi$ is supported on $(a, + \infty)^n$ and there exists
  positive functions $Z_j : \R_{>0} \rightarrow \R_{>0}$, $1 \leq j \leq n$, such that:
  \begin{itemize}
  \item for all $\pi_0\subseteq \{1, \ldots, n\}$, all $x = (x_1, \ldots, x_n) \in (a, + \infty)^n$,
    $$ |\partial^{\pi_0} \varphi (x)| \leq C_3\, \prod_{j\in \pi_0} \frac{1}{Z_j^{1/2}(x_j)};$$
  \item there exists $\CE>0$ satisfying \eqref{e:comparisonZ} and such that, for any
    $\pi_0\subseteq \{1, \ldots, n\}$,
    \begin{align} \label{e:comparisonbaby}2\sum_{j=1}^n x_j\leq h(x) +\sum_{j\not \in \pi_0} x_j
      +\sum_{j\in \pi_0} \log Z_j +\CE.\end{align}
  \end{itemize}
  \end{nota}

  The important point, here, is that the functions $(Z_j)_{1 \leq j \leq n}$ controlling the decay
  of the derivatives of $\varphi$ can be incorporated in a comparison estimate for the function $h$.

  \begin{rem}
    The initial result, Theorem \ref{t:intermediate}, may be recovered by taking
    $Z_j=e^{ x_j}$, in which case the comparison estimate \eqref{e:comparisonbaby} coincides with \eqref{e:comparisonZ}.
  \end{rem}
  
  \begin{thm} \label{p:yet} Let $a>0$.  We assume that the assumptions \textbf{($h$)},
    \textbf{($f$)} and \textbf{($\varphi$-CE)} hold. Then,
    $ f_1\star \ldots \star f_n |^h_\varphi\in \FR^{\rK, \rN+n}$, and the estimate
    \eqref{e:controlnorm} holds.
\end{thm}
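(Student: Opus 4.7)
The plan is to follow the same structure as the proof of Theorem \ref{t:intermediate}, with the main modification concerning how we bound $|\partial^{\pi_0}\varphi|$ and how we couple this bound to a comparison estimate for $h$. The starting point is still Proposition \ref{p:finalupperbound}: the quantity $\cL^{\rK}(f_1 \star \ldots \star f_n|^h_\varphi)$ is a sum of at most $\fn(n, \rK)$ terms of the form $\cL_\ell^t\cP_\ell^{t'}\mathbf{Int}_\pi(\ell)$ with $t+t' \leq \rK$. Since the classes $\cR$ (and their weak variants) are stable under $\cL_\ell, \cP_\ell$, and by Proposition \ref{p:charFR}, it suffices to show that each $\mathbf{Int}_\pi(\ell)$ belongs to $\cR^{\rN+n}$ with a norm controlled by the right-hand side of \eqref{e:controlnorm}.

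The key step is to use the less sharp bound \eqref{e:finalupperbound-} from Remark \ref{r:help}, which has the advantage of depending only on $\pi_0$ (and not on the bad set $B$). Plugging in the new bound $|\partial^{\pi_0}\varphi(x)|\leq C_3\prod_{j\in \pi_0} Z_j(x_j)^{-1/2}$ from Assumption \textbf{($\varphi$-CE)}, we obtain
\begin{align*}
|\mathbf{Int}_\pi(\ell)|\leq C_3 C_0^{2n} M^\times_{\rK,\rN}\int_{h(x)=\ell}\prod_{j\in \pi_0}\frac{1}{Z_j(x_j)^{1/2}}\prod_{j\notin \pi_0}e^{-x_j/2}\prod_{j=1}^n(1+x_j)^{\rN_j}e^{x_j}\,\frac{\d x}{\d\ell}.
\end{align*}
The crucial algebraic observation is that the product of exponential and $Z_j$-factors can be rewritten as
\begin{align*}
\exp\Bigl(\sum_{j=1}^n x_j -\tfrac12\bigl(\sum_{j\in\pi_0}\log Z_j(x_j)+\sum_{j\notin\pi_0}x_j\bigr)\Bigr),
\end{align*}
and the comparison estimate \eqref{e:comparisonbaby} says precisely that the parenthesized quantity is $\geq 2\sum_j x_j - h(x)-\CE$. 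Substituting and using $h(x)=\ell$ on the level set, the whole exponential factor is bounded by $e^{(\ell+\CE)/2}$.

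Having handled the exponential factors, the polynomial $\prod_j(1+x_j)^{\rN_j}$ is controlled using \eqref{e:comparisonZ}, which gives $\prod_j(1+x_j)^{\rN_j}\leq (1+\ell+\CE)^{\rN}$ on the level set. It remains to bound $\int_{h(x)=\ell}\d x/\d\ell$: parametrising by $(x_2,\ldots,x_n)$ and using the uniform bound $|\partial x_1/\partial h|\leq C_2'\leq C_0$, together with \eqref{e:comparisonZ} to confine $(x_2,\ldots,x_n)$ to a simplex of measure $\O{(1+\ell+\CE)^{n-1}}$, yields a factor $C_0(1+\ell+\CE)^{n-1}$. Combining everything gives
\begin{align*}
|\mathbf{Int}_\pi(\ell)|\leq C_3 C_0^{2n+1}M^\times_{\rK,\rN}(1+\ell+\CE)^{\rN+n-1}e^{(\ell+\CE)/2},
\end{align*}
so $\mathbf{Int}_\pi\in\cR^{\rN+n}$ with the expected norm bound. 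Summing over the finitely many terms $\pi$ and applying Proposition \ref{p:charFR} then yields $f_1\star\ldots\star f_n|^h_\varphi\in \cF^{\rK,\rN+n}$ along with the estimate \eqref{e:controlnorm}. The main (and essentially only) conceptual difficulty is the algebraic rearrangement leading to the comparison estimate \eqref{e:comparisonbaby}: the role of this new assumption is precisely to let the ``lost'' decay in derivatives of $\varphi$ (the factors $Z_j^{-1/2}$) and the natural $e^{-x_j/2}$ gain from Lemma \ref{lem:themiracle} combine to exactly compensate the growth $e^{x_j}$ of the $f_j$, yielding the desired $e^{\ell/2}$ behaviour.
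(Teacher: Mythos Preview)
Your proof is correct and follows essentially the same approach as the paper, which simply says the argument is ``completely identical'' to that of Theorem \ref{t:intermediate} and highlights the exponential rewriting of the comparison estimate \eqref{e:comparisonbaby}. Your explicit use of the weaker bound \eqref{e:finalupperbound-} (which depends only on $\pi_0$) is exactly the right device to couple the new decay hypothesis on $\varphi$ with the comparison estimate.
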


\begin{proof}
  We do not reproduce the proof, completely identical to the proof of Theorem \ref{t:intermediate}.
  A crucial point is again the {{comparison estimate}} \eqref{e:comparisonbaby}, which may be
  rewritten on the level-set $\{h(x)=\ell\}$ in exponential form
  \begin{align*} \prod_{j=1}^n e^{x_j} \prod_{j\in \pi_0} \frac{1}{Z_j ^{1/2}} \prod_{j\not \in
      \pi_0} e^{-\frac{x_j}2}\leq e^{\frac{\CE}2}e^{\frac{\ell}2} .\end{align*} This means that the
  gain $ \prod_{j\in \pi_0} Z_j ^{-1/2} \prod_{j\not \in \pi_0} e^{-x_j/2}$ on the left-hand side
  results in a growth of size $e^{\ell/2}$ instead of the expected $e^\ell$ on the right-hand side,
  leading to the fact that the resulting function is a Friedman--Ramanujan remainder.
  % The analogous property in our geometric application is stated as Proposition \ref{p:comparison},
  % and its proof occupies the whole \S \ref{s:horrible}.
\end{proof}

\subsubsection{Second variant: Linear forms} \label{s:variant} In our geometric applications,
$\varphi(x_1, \ldots, x_n)$ will actually be split into a product involving linear forms, as
follows:
\begin{nota}
  Let $\varphi$ be of the form:
  \begin{align}\label{e:special}
    \varphi(x_1, \ldots, x_n)=\phi(\tau_1, \ldots, \tau_m) \, \psi(x_1, \ldots, x_n)
  \end{align}
  where, for all $1 \leq i \leq m$, $\tau_i$ is a linear form, and more precisely
  $\tau_i(x)=\sum_{j\in \Theta(i)} x_j$ for a subset $\Theta(i)\subseteq \{1, \ldots, n\}$. For
  $\tilde\pi \subseteq \{1, \ldots, m\}$, we denote
  $\Theta(\tilde\pi) := \bigcup_{i \in \tilde\pi} \Theta(i)$.
\end{nota}
  \begin{rem}
    The original result corresponds to the case where $m=n$, $\Theta(i)=\{i\}$ so that $\tau_i(x)=x_i$,
    and $\psi=1$.
  \end{rem}
 
  Let us adapt Theorem \ref{t:thebigone} to this new form of the density $\varphi$. The motivation
  for this upcoming variant may be roughly explained as follows: suppose that the derivatives
  $\partial^{ \tilde\pi} \phi ( \tau_1, \ldots, \tau_m)$ are bounded by
  \begin{align}\label{e:derphi}|\partial^{\tilde\pi} \phi ( \tau_1, \ldots,  \tau_m)|
    \leq C_3\, e^{-\sum_{i\in  \tilde\pi}  \tau_{i}} = C_3\,e^{-\sum_{i\in  \tilde\pi} \sum_{j\in \Theta(i)} x_j}\end{align}
  for any $\tilde\pi \subseteq \{1, \ldots, m\}$.
  Then, once the term $\partial^{\tilde\pi} \phi$ appears in the application of
  Corollary~\ref{c:firstconv}, there is no further need to gain exponential decay with respect to the variables $x_j$ with $j\in \Theta(\tilde\pi) $.
  We wish to avoid this extra work, not only to optimize the proof, but also because in the geometric application the higher order derivatives of $\phi$ will be difficult to calculate. Thus, we copy the proof of Theorem \ref{t:thebigone}, but once
  $\phi$ has been differentiated with respect to a variable $x_j$ (say, for $j\in \Theta(i)$), we
  stop applying our operators to the other variables $x_{j'}$ with $j'\in \Theta(i)$.
  This leads to the following statement.

  \begin{thm}\label{t:thebigone'}
    For any family of continuous functions $(f_j)_{1 \leq j \leq n}$ and any integers
    $(\rK_j)_{1 \leq j \leq n}$ with $\rK_j \geq 1$, if $\rK=\sum_{j=1}^n \rK_j$, then the function
    $\cL_\ell^{\rK}( f_1\star \ldots \star f_n|^{h}_{\varphi })$ can be expressed as the sum of the
    following functions, or their images under $\cL_\ell^{t}\cP_\ell^{t'}$ for $0\leq t\leq \rK-n$
    and $0\leq t'\leq n$,
 \begin{align}\label{e:Tgood4}
   \mathbf{Int}_{\tilde \pi_0, \tilde\pi,\cV, \pi}(\ell)
   :=   \int_{\substack{h(x)=\ell}} \partial^{\tilde \pi} \phi (\tau_1, \ldots, \tau_m) \,
   \partial^{\pi_0}\psi(x)
   \prod_{j \in \cV} \partial^{\pi_j}  \Phi_j(x)
   \prod_{j \notin \cV} f_j(x_j)
   \frac{\d x}{\d\ell}
 \end{align} 
 where:
 \begin{itemize}
 \item $\tilde \pi_0 \subseteq \{1, \ldots, n\}$ and $\tilde \pi \subseteq \{1, \ldots, m\}$ are two
   sets of same cardinal, with a bijection $\tilde\pi_0 \ni j \mapsto i_j \in \tilde\pi$ such that
   $j \in \Theta(i_j)$ for all $j \in \tilde\pi_0$;
 \item $\cV$ is a subset of $\{1, \ldots, n\}$ containing $\tilde{\pi}_0$, and so that
   $\cV \cup \Theta(\tilde\pi)=\{1, \ldots, n\}$;
 \item $(\pi_j)_{j \in \cV \cup \{ 0 \}}$ is a family of disjoint subsets of
   $\cV \setminus \tilde\pi_0$ such that $j \notin \pi_j$ for all $j$;
 \item if $B := \tilde\pi_0 \sqcup \pi_0 \sqcup \bigsqcup_{j\in \cV} \pi_j$, then for any $j \in
   \cV$,  $\Phi_j$ satisfies \eqref{e:thebigphi}.
 \end{itemize}
  \end{thm}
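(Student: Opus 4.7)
The plan is to adapt the inductive argument that proves Theorem \ref{t:thebigone}, modifying the induction step to exploit the factorization $\varphi = \phi(\tau_1, \ldots, \tau_m) \, \psi(x)$. As in that theorem, I would process the variables $x_1, x_2, \ldots, x_n$ sequentially and maintain at step $k$ an expansion of the form \eqref{e:Tgood4} restricted to $\cV \subseteq \{1, \ldots, k\}$, with the invariant $\{1, \ldots, k\} \subseteq \cV \cup \Theta(\tilde\pi)$ preserved at every step. The base case $k=0$ is trivial, with all sets $\tilde\pi_0, \tilde\pi, \cV, \pi$ empty.

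At step $k+1$, the dichotomy is the following. If $k+1 \in \Theta(\tilde\pi)$, meaning a linear form $\tau_i$ containing $x_{k+1}$ has already been differentiated at an earlier stage, then I would \emph{skip} this variable altogether: leave $f_{k+1}$ unchanged in the integrand, do not apply any further operator with respect to $x_{k+1}$, and simply re-incorporate $\cL_\ell^{\rK_{k+1}}$ into the outer power of $\cL_\ell$. The invariant $\{1, \ldots, k+1\} \subseteq \cV \cup \Theta(\tilde\pi)$ is then preserved for free. Otherwise, $k+1 \notin \Theta(\tilde\pi)$ and I apply Corollary \ref{c:firstconv} with respect to $x_{k+1}$ exactly as in the proof of Theorem \ref{t:thebigone}, producing four families of terms. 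The first three families (where $T_{k+1}$ is multiplication by $F_{k+1}^{\rK_{k+1}}$, $G_{k+1}$ or $H_{k+1}^{t''}$) update only $\cV$ (adding $k+1$) and $\pi_{k+1} = \emptyset$, leaving $\tilde\pi_0$, $\tilde\pi$, and $\pi_0$ untouched.

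The only genuine novelty concerns the fourth family, where $T_{k+1} = I_{k+1}^{t''} \, \partial_{k+1}$ differentiates the integrand. Since $\partial_{k+1}$ now hits $\varphi = \phi \cdot \psi$, I would use the product rule
\[
  \partial_{k+1} \varphi(x) = \Big(\sum_{i:\, k+1 \in \Theta(i)} \partial_i \phi(\tau)\Big) \psi(x) + \phi(\tau)\, \partial_{k+1} \psi(x),
\]
combined with the Leibniz rule applied to $\partial_{k+1}\bigl(\varphi \prod_{j\in \cV} \partial^{\pi_j} \Phi_j\bigr)$ as in the proof of Theorem \ref{t:thebigone}. Each term coming from $\partial_i \phi \cdot \psi$ (for some specific $i$ with $k+1 \in \Theta(i)$, which is necessarily outside $\tilde\pi$ by the Case A assumption) \emph{extends the bijection}: add $k+1$ to $\tilde\pi_0$, add $i$ to $\tilde\pi$ with $i_{k+1} := i$, set $\Phi_{k+1} := I_{k+1}^{t''}$ and $\pi_{k+1} := \emptyset$. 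The term $\phi \cdot \partial_{k+1}\psi$ adds $k+1$ to $\pi_0$ and sets $\Phi_{k+1} := I_{k+1}^{t''}$, $\pi_{k+1} := \emptyset$. The Leibniz contributions from differentiating the factors $\partial^{\pi_j}\Phi_j$ for $j \in \cV$ simply add $k+1$ to the relevant $\pi_j$, exactly as before. The outer operators $\cL_\ell^t \cP_\ell^{t'}$ are bookkept exactly as in the original argument.

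The main obstacle I anticipate is careful bookkeeping rather than any substantial new idea: one must verify at each step that the structural constraints of the theorem ($\tilde\pi_0 \subseteq \cV$, the bijection $\tilde\pi_0 \to \tilde\pi$ with $j \in \Theta(i_j)$, the disjointness and placement $\pi_j \subseteq \cV \setminus \tilde\pi_0$ with $j \notin \pi_j$, the rule $\Phi_j = I_j^{t''}$ whenever $j \in B$, and the covering property $\cV \cup \Theta(\tilde\pi) \supseteq \{1, \ldots, k\}$) are all preserved under the two possible moves described above. The range constraints $0 \leq t \leq \rK - n$ and $0 \leq t' \leq n$ for the outer powers can then be read off from the same count as in the proof of Theorem \ref{t:thebigone}, since skipping a variable in Case B only contributes extra identity operations $\cL_\ell^{\rK_{k+1}}$ that are absorbed into the outer $\cL_\ell^t$.
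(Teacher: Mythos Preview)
Your proposal is correct and follows essentially the same approach as the paper: both proceed by induction on $k$, with the key dichotomy being whether $k+1 \in \Theta(\tilde\pi)$ (in which case the variable is skipped and the outer $\cL_\ell^{\rK_{k+1}}$ is left untouched) or not (in which case Corollary~\ref{c:firstconv} is applied and the Leibniz rule splits $\partial_{k+1}$ across $\phi$, $\psi$, and the product $\prod_{j\in\cV}\partial^{\pi_j}\Phi_j$, yielding exactly the three update rules you describe). The paper's proof is organized via an explicit intermediate lemma at stage $k$, but the content and bookkeeping are identical to yours.
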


\begin{rem} 
  To go from Theorem \ref{t:thebigone'} to Theorem \ref{t:thebigone}, let
  $\tilde\pi_0=\tilde\pi=\emptyset$, $\cV = \{1, \ldots, n\}$.
\end{rem} 

We prove, once again, this result by proving the following (stronger) lemma for by induction on the
integer $0 \leq k \leq n$. This leads to the claim taking $k=n$.

  \begin{lem}
    For any $0 \leq k \leq n$, the function
    $\cL_\ell^{\rK}( f_1\star \ldots \star f_n|^{h}_{\varphi })$ can be expressed as the sum of the
    following functions, or their images under $\cL_\ell^{t}\cP_\ell^{t'}$ for $0\leq t\leq \rK-k$
    and $0\leq t'\leq k$,
 \begin{align}\label{e:Tgood4'}
   \mathbf{Int}_{\tilde \pi_0, \tilde\pi,\cV, \pi}^k(\ell)
   :=   \int_{\substack{h(x)=\ell}} \partial^{\tilde \pi} \phi (\tau_1, \ldots, \tau_m) \,
   \partial^{\pi_0}\psi(x)
   \prod_{j \in \cV} \partial^{\pi_j}  \Phi_j(x)
   \prod_{j \notin \cV} f_j(x_j)
   \frac{\d x}{\d\ell}
 \end{align} 
 where:
 \begin{itemize}
 \item $\tilde \pi_0 \subseteq \{1, \ldots, k\}$ and $\tilde \pi \subseteq \{1, \ldots, m\}$ are two
   sets of same cardinal, with a bijection $\tilde\pi_0 \ni j \mapsto i_j \in \tilde\pi$ such that
   $j \in \Theta(i_j)$ for all $j \in \tilde\pi_0$;
  \item $\cV$ is a subset of $\{1, \ldots, k\}$ containing $\tilde{\pi}_0$, and so that
   $\{1, \ldots, k\} \subseteq \cV \cup \Theta(\tilde\pi)$;
 \item $(\pi_j)_{j \in \cV \cup \{ 0 \}}$ is a family of disjoint subsets of
   $\cV \setminus \tilde\pi_0$ such that $\pi_j$ contains only integers $\geq j+1$ for all $j$;
 \item if $B := \tilde\pi_0 \sqcup \pi_0 \sqcup \bigsqcup_{j\in \cV} \pi_j$, then for any $j \in
   \cV$,  $\Phi_j$ satisfies \eqref{e:thebigphi}.
 \end{itemize}
  \end{lem}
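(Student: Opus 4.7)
The plan is to proceed by induction on $k$, in close parallel to the proof of Theorem \ref{t:thebigone}, with added bookkeeping tracking when the outer factor $\phi$ has been differentiated. The base case $k=0$ is immediate: with $\tilde\pi_0=\tilde\pi=\cV=\emptyset$ and $\pi_0=\emptyset$, the integrand in $\mathbf{Int}^0_{\emptyset,\emptyset,\emptyset,\emptyset}$ is $\phi(\tau)\psi(x)\prod_j f_j(x_j)=\varphi(x)\prod_j f_j(x_j)$, so $\mathbf{Int}^0=f_1\star\cdots\star f_n|^h_\varphi$, and taking $t=\rK$, $t'=0$ recovers $\cL_\ell^\rK(f_1\star\cdots\star f_n|^h_\varphi)$.

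For the inductive step, fix a summand $\cL_\ell^t\cP_\ell^{t'}\mathbf{Int}^k_{\tilde\pi_0,\tilde\pi,\cV,\pi}$. I would split on whether $k+1\in\Theta(\tilde\pi)$. If so (Case A), the constraint $\{1,\ldots,k+1\}\subseteq\cV\cup\Theta(\tilde\pi)$ is already satisfied and the same quadruple $(\tilde\pi_0,\tilde\pi,\cV,\pi)$ defines a valid level-$(k+1)$ summand without any modification, with $f_{k+1}$ still in the product. If not (Case B), I would use associativity of pseudo-convolutions to isolate the integration in $x_{k+1}$ and apply Corollary \ref{c:firstconv} to it. This produces the same four types of contributions as in the proof of Theorem \ref{t:thebigone}: multiplicative operators give $\Phi_{k+1}\in\{F_{k+1}^{\rK_{k+1}},G_{k+1},H_{k+1}^{t''}\}$ with $k+1$ added to $\cV$ and $\pi_{k+1}=\emptyset$, while the operator $I_{k+1}^{t''}\partial_{k+1}$ forces $\Phi_{k+1}=I_{k+1}^{t''}$ and spreads the derivative $\partial_{k+1}$ by Leibniz over the product $\partial^{\tilde\pi}\phi(\tau)\cdot\partial^{\pi_0}\psi\cdot\prod_{j\in\cV}\partial^{\pi_j}\Phi_j$ (the factor $f_{k+1}$ is absent at this stage because $k+1\notin\cV$, and $\partial_{k+1}f_j=0$ for $j\neq k+1$).

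The novelty compared to Theorem \ref{t:thebigone} appears in the branch where $\partial_{k+1}$ lands on $\phi$: by the chain rule,
\begin{equation*}
\partial_{k+1}\bigl(\partial^{\tilde\pi}\phi(\tau_1,\ldots,\tau_m)\bigr)=\sum_{i\,:\,k+1\in\Theta(i)}\partial^{\tilde\pi\cup\{i\}}\phi(\tau_1,\ldots,\tau_m).
\end{equation*}
For each $i$ in this sum one obtains a summand in which $\tilde\pi$ is enlarged to $\tilde\pi\cup\{i\}$ and $\tilde\pi_0$ to $\tilde\pi_0\cup\{k+1\}$, with the bijection extended by $k+1\mapsto i$. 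The Case B hypothesis $k+1\notin\Theta(\tilde\pi)$ guarantees that $i\notin\tilde\pi$, so the extended map remains a bijection. The branches where $\partial_{k+1}$ hits $\psi$ or some $\partial^{\pi_j}\Phi_j$ with $j\in\cV$ respectively append $k+1$ to $\pi_0$ or to $\pi_j$, exactly as in the proof of Theorem \ref{t:thebigone}. Every branch picks up an extra $\cL_\ell^{\rK_{k+1}-1-t''}$ (possibly pre-composed with $\cP_\ell$), so the exponents shift by at most $(\rK_{k+1}-1,1)$, which stays inside the ranges $0\leq t\leq\rK-(k+1)$ and $0\leq t'\leq k+1$.

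The main obstacle is purely combinatorial: one has to verify at each step that all constraints of the level-$(k+1)$ statement are preserved, namely disjointness of $\tilde\pi_0,\pi_0,(\pi_j)_{j\in\cV}$, the condition that each $\pi_j$ contains only indices $\geq j+1$ (automatic, since any appended index equals $k+1$), and the existence of the bijection $\tilde\pi_0\to\tilde\pi$ with $j\in\Theta(i_j)$ (secured by the Case A/B split). The analytic content is entirely the same as in Theorem \ref{t:thebigone}, the only new input being the chain-rule expansion above. Taking $k=n$ at the end yields Theorem \ref{t:thebigone'}.
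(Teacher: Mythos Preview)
Your proposal is correct and mirrors the paper's proof essentially line by line: induction on $k$ starting from $k=0$ with all data empty, the split into Case A ($k+1\in\Theta(\tilde\pi)$, leave everything unchanged) and Case B (apply Corollary \ref{c:firstconv} in the variable $x_{k+1}$), and in Case B the Leibniz/chain-rule expansion of the $I_{k+1}^{t''}\partial_{k+1}$ branch into the three sub-cases (derivative hits $\phi$, hits $\psi$, or hits some $\partial^{\pi_j}\Phi_j$). Your observation that $k+1\notin\Theta(\tilde\pi)$ forces $i\notin\tilde\pi$, so the bijection $\tilde\pi_0\to\tilde\pi$ extends, is exactly the point the paper makes.
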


  \begin{proof}
    For $k=0$, we take $\tilde\pi=\tilde\pi_0=\pi_0=\cV=\emptyset$. Now assume the formula known at
    rank $k$ and let us prove it for $k+1$, by applying $\cL^{\rK_{k+1}}$ to an expression of the
    form \eqref{e:Tgood4'}.

    First, let us observe that, if $k+1 \in \Theta(\tilde\pi)$, then we can simply apply
    $\cL^{\rK_{k+1}}$ to the integral \eqref{e:Tgood4} and leave it unchanged; the result will
    satisfy the requirements of the theorem at the rank $k+1$, with the same $\tilde{\pi}_0$,
    $\tilde\pi$, $\cV$, $(\pi_j)_{j \in \cV}$.

    Let us now assume that $k+1 \notin \Theta(\tilde\pi)$. As before, we apply Corollary
    \ref{c:firstconv} to \eqref{e:Tgood4'} with respect to the variable $x_{{k+1}}$. For each term,
    we explicit how the data $(\tilde \pi_0, \tilde \pi, \cV, (\pi_j)_{j \in \cV})$ evolves to a new
    set of data $(\tilde \pi_0', \tilde \pi', \cV', (\pi_j')_{j \in \cV'})$ which satifies the
    conditions at the next rank. Note that, here, we will always take $\cV' = \cV \cup \{k+1\}$ and
    $\pi_{k+1}'=\emptyset$.

    In the three first cases, $T_{k+1}$ is a multiplication operator by some function
    $\Phi_{k+1}$ equal to $F^{\rK_{{k+1}}}_{{k+1}}$, $G_{{k+1}}$ or $H_{{k+1}}^t$. We take  $\tilde
    \pi_0' = \tilde\pi_0$, $\tilde \pi' = \tilde \pi$, and $\pi_j'=\pi_j$ for
    $j \in \cV$.
    
    Otherwise, $T_{k+1}= I_{{k+1}}^t \partial_{{k+1}} $. In this case, we let
    $\Phi_{{{k+1}}}=I_{{k+1}}^t$.  When we apply $\partial_{k+1}$ to an expression of the form
    $ \partial^{\tilde \pi} \phi (\tau_1, \ldots, \tau_m) \, \partial^{\pi_0}\psi(x) \prod_{j \in
      \cV} \partial^{\pi_j} \Phi_{j}(x)$ and then multiply it by
    $I_{k+1}^t = \partial^{\pi_{k+1}'}\Phi_{k+1}$, by the Leibniz rule we obtain three types of
    terms.
    \begin{itemize}
    \item Terms of the form 
      \begin{align}\label{e:vk+1}
         \partial^{\tilde \pi} \partial_{{i}}\phi ( \tau_1, \ldots,  \tau_m) \,
        \partial^{\pi_0}\psi(x)
        \prod_{j \in \cV'} \partial^{\pi_j}  \Phi_{j}(x) 
      \end{align}
      for $i\in \{1, \ldots, m\}$ with $k+1 \in \Theta(i)$. We note that by hypothesis
      $i \notin \tilde\pi$. We take $\tilde\pi_0'=\tilde\pi_0\cup\{k+1\}$,
      $\tilde\pi'=\tilde\pi \cup \{i\}$, and leave the $(\pi_j)_{j \in \cV}$ unchanged.
    \item A term of the form
      \begin{align}\label{e:vk+2}
        \partial^{\tilde \pi} \phi ( \tau_1, \ldots,  \tau_m) \;
        \partial^{\pi_0}\partial_{{k+1}}\psi(x)
        \prod_{j \in \cV'} \partial^{\pi_j}  \Phi_{j} (x)
      \end{align}
      for which we set $\tilde\pi_0'=\tilde\pi_0$, $\tilde\pi'=\tilde\pi$, $\pi_0'=\pi_0\cup
      \{k+1\}$, and leave the $(\pi_j)_{j \in \cV}$ unchanged.
    \item Finally, a term of the form
      \begin{align}\label{e:vk+11}
        \partial^{\tilde \pi} \phi ( \tau_1, \ldots,  \tau_m) \;
        \partial^{\pi_0}\psi(x) \;
        \partial^{{k+1}} \Big(\prod_{j \in \cV} \partial^{\pi_j}  \Phi_{j}\Big)(x) \Phi_{k+1}(x) 
      \end{align}
      which is itself a sum of terms coming from re-applying the Leibniz rule. For each of these
      terms, we leave $\tilde\pi_0$, $\tilde\pi$ and $\pi_0$ unchanged, add $k+1$ to exactly one of
      the $\pi_j$ with $j \in \cV$ (corresponding to the index which is hit by the Leibniz rule),
      leaving the others unchanged.
    \end{itemize}
 \end{proof}

 We deduce from Theorem \ref{t:thebigone'} the following proposition replacing Proposition
 \ref{p:finalupperbound}:
 
\begin{lem}\label{p:finalupperbound'} 
  Under the assumptions \textbf{($h$)} and \textbf{($f$)}, 
  \begin{align*}
    |\cL^{\rK} (f_1\star \ldots \star f_n|^{h}_{  \varphi })(\ell)|
    \leq   \sum_{\tilde \pi_0, \tilde\pi,\cV, \pi}
    \sum_{  0\leq t+t' \leq \rK +n}  | \cL^t \cP^{t'}
\mathbf{Int}_{\tilde \pi_0, \tilde\pi,\cV, \pi}(\ell)|
\end{align*}
and $|\mathbf{Int}_{\tilde \pi_0, \tilde\pi,\cV, \pi}(\ell)|$ can be bounded above by
\begin{align*}
 C_0^{2n} M^\times_{\rK, \rN}
  \int_{\substack{h(x)=\ell}} |\partial^{\tilde \pi} \phi (\tau_1, \ldots, \tau_m) |
  |\partial^{\pi_0}\psi(x)|
  \prod_{j \in \cV \setminus (\tilde\pi_0\cup\pi_0)} e^{-\frac{x_j}{2}}
  \prod_{j =1}^n (1+x_j)^{\rN_j}e^{x_j}
        \frac{\d x}{\d\ell}. 
\end{align*}
\end{lem}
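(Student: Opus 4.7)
The plan is to proceed exactly as in Proposition \ref{p:finalupperbound}, using Theorem \ref{t:thebigone'} in place of Theorem \ref{t:thebigone}, and then applying Lemma \ref{lem:themiracle} termwise. The first inequality follows immediately from Theorem \ref{t:thebigone'} together with the triangle inequality: $\cL^{\rK}(f_1 \star \ldots \star f_n|_\varphi^h)$ is expressed as a finite sum (indexed by the allowed data $(\tilde\pi_0, \tilde\pi, \cV, \pi)$) of terms of the form $\cL^t \cP^{t'} \mathbf{Int}_{\tilde\pi_0, \tilde\pi, \cV, \pi}$ with $0 \leq t + t' \leq \rK + n$, so taking absolute values yields the announced bound.

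The main work lies in bounding a single integrand $\mathbf{Int}_{\tilde\pi_0, \tilde\pi, \cV, \pi}(\ell)$ pointwise. Recall that the integrand consists of the factors $\partial^{\tilde\pi}\phi(\tau_1, \ldots, \tau_m)$, $\partial^{\pi_0}\psi(x)$, $\prod_{j \in \cV} \partial^{\pi_j} \Phi_j(x)$, and $\prod_{j \notin \cV} f_j(x_j)$. For indices $j \notin \cV$, Assumption \textbf{($f$)} (via the bound $|f_j(x_j)| \leq M_{\rK_j, \rN_j}(1+x_j)^{\rK_j - 1} e^{x_j}$ and $\rK_j \leq \rN_j$) yields $|f_j(x_j)| \leq M_{\rK_j, \rN_j}(1+x_j)^{\rN_j} e^{x_j}$. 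For indices $j \in \cV$, Lemma \ref{lem:themiracle} applies: if $j \notin B$ then $\Phi_j$ is one of $F_j^{\rK_j}, G_j, H_j^t$ and we get a factor $e^{x_j/2}$, while if $j \in B$ then $\Phi_j = I_j^{t''}$ and we get a factor $e^{x_j}$; in both cases we also pick up $C_0^2 M_{\rK_j, \rN_j}(1+x_j)^{\rN_j}$ and the decay factor $\prod_{i \in \pi_j} e^{-x_i}$.

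The second step is the bookkeeping of exponential factors. Since $(\pi_j)_{j \in \cV}$ are disjoint subsets of $\cV \setminus \tilde\pi_0$ (and disjoint from $\pi_0$, which is itself contained in $\cV \setminus \tilde\pi_0$), we have
\begin{equation*}
\prod_{j \in \cV} \prod_{i \in \pi_j} e^{-x_i} = \prod_{i \in B \setminus (\tilde\pi_0 \cup \pi_0)} e^{-x_i}.
\end{equation*}
Combining with the exponentials produced above, the total exponential contribution of indices $j \in \cV$ is $e^{x_j}$ if $j \in \tilde\pi_0 \cup \pi_0$, the constant $1$ if $j \in B \setminus (\tilde\pi_0 \cup \pi_0)$, and $e^{x_j/2}$ if $j \in \cV \setminus B$; indices $j \notin \cV$ simply contribute $e^{x_j}$. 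As in Remark \ref{r:help}, we waste optimality by bounding $1 \leq e^{x_j/2}$ for $j \in B \setminus (\tilde\pi_0 \cup \pi_0)$, so that every $j \in \cV \setminus (\tilde\pi_0 \cup \pi_0)$ carries the factor $e^{x_j/2}$ and every other $j$ carries $e^{x_j}$. Multiplying by the polynomial factors and using $\prod_{j \in \cV} C_0^2 \leq C_0^{2n}$ and $\prod_{j=1}^n M_{\rK_j, \rN_j} = M^\times_{\rK, \rN}$ yields precisely the announced bound on $|\mathbf{Int}_{\tilde\pi_0, \tilde\pi, \cV, \pi}(\ell)|$.

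The only step requiring any care is the disjointness argument identifying $\bigsqcup_{j \in \cV} \pi_j = B \setminus (\tilde\pi_0 \cup \pi_0)$, but this is built into the statement of Theorem \ref{t:thebigone'}; everything else is the termwise application of Lemma \ref{lem:themiracle} and the Friedman--Ramanujan bound on the $f_j$.
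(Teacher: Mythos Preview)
Your proof is correct and follows precisely the approach the paper intends: it is the direct analogue of the proof of Proposition \ref{p:finalupperbound}, using Theorem \ref{t:thebigone'} in place of Theorem \ref{t:thebigone}, applying Lemma \ref{lem:themiracle} termwise, and then performing the same exponential bookkeeping (with the wasted factor as in Remark \ref{r:help}). The identification $\bigsqcup_{j\in\cV}\pi_j = B\setminus(\tilde\pi_0\cup\pi_0)$ is indeed immediate from the definition of $B$ in Theorem \ref{t:thebigone'}, and your case analysis of the net exponential contribution for each index is accurate.
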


The following proposition inputs an assumption about $\varphi$ to find upper bounds on the terms
appearing in Lemma \ref{p:finalupperbound'}.

\begin{nota}[Assumption \textbf{($\varphi$-CE-form)}] \label{nota:CEform}
  Assume that $\varphi(x) = \phi(\tau_1, \ldots, \tau_m)\psi(x)$ where:
  \begin{itemize}
  \item $\psi$ satisfies Assumption \textbf{($\varphi$)}, namely it is supported in $(a, +
    \infty)^n$ and there exists a constant $C_3$ such that, for any $\pi \subseteq \{1, \ldots, n\}$,
    $$\sup_{x \in (a, +\infty)^n} e^{\sum_{j \in \pi}x_j}|\partial^\pi \psi|  \leq C_3.$$
  \item $\phi$ satisfies Assumption \textbf{($\varphi$-CE)} w.r.t. its variables
    $(\tau_i)_{1 \leq i \leq m}$, i.e. there exists positive functions
    $Z_i : \R_{>0} \rightarrow \R_{>0}$, $1 \leq i\leq m$, such that:
  \begin{itemize}
  \item for all $\tilde \pi\subseteq \{1, \ldots, m\}$,
    \begin{align}\label{e:partial_phi} |\partial^{\tilde\pi} \phi (\tau_1, \ldots, \tau_m)| \leq
      C_4\,
      \prod_{i\in \tilde\pi} \frac{1}{Z_i^{1/2}(\tau_i)};
    \end{align}
  \item there exists a function $\cH(x)$ such that for all $\tilde \pi\subseteq \{1, \ldots, m\}$,
    we have the comparison estimate
    \begin{align} \label{e:comparison}2\cH(x)\leq h(x) 
      +\sum_{j\not \in \Theta(\tilde\pi)} x_j + \sum_{i\in \tilde \pi} \log Z_i(\tau_i).
    \end{align}
  \end{itemize}
  \end{itemize}
\end{nota}

\begin{rem}
  On the level-set $\{h(x)=\ell\}$, the comparison estimate can be rewritten under the
  multiplicative form
  \begin{align}
    \prod_{j\not \in \Theta(\tilde\pi)} e^{-\frac{x_j}{2}}
    \prod_{i\in \tilde \pi} \frac{1}{Z_i^{1/2}(\tau_i)}
    \leq e^{\frac \ell 2} e^{-\cH(x)}.      
    \end{align}
\end{rem}

\begin{prp} \label{l:yet2} Assume that \textbf{($f$)}, \textbf{($h$)} from Notation
  \ref{nota:assum_1}  and Assumption
  \textbf{($\varphi$-CE-form)} are satisfied. Then a term of the form \eqref{e:Tgood4} may be bounded by
  \begin{align*}  C_0^{2n}C_3C_4\, M^\times_{\rK, \rN}
    \prod_{j=1}^n \|f_j\|_{\cF^{\rK^j,\rN^j}}
    (1+ \ell+\CE)^{\rN} e^{\frac{\ell}2}
&  \int_{h(x)=\ell}  e^{-\cH(x)}
  e^{ \sum_{j=1}^n x_j}
      \frac{\d x}{\d\ell}.
       \end{align*}  
 \end{prp}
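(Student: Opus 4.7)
My plan is to start from the estimate in Lemma \ref{p:finalupperbound'} and insert the hypotheses from Assumption \textbf{($\varphi$-CE-form)}. Using $|\partial^{\pi_0}\psi(x)| \leq C_3 \prod_{j\in \pi_0} e^{-x_j}$ and \eqref{e:partial_phi}, a term of the form \eqref{e:Tgood4} is bounded (after dropping the normalization $\|f_j\|_{\cF^{\rK_j,\rN_j}}=1$, which reappears as a product factor) by
\begin{equation*}
 C_0^{2n}C_3 C_4 M^\times_{\rK,\rN} \prod_{j=1}^n \|f_j\|_{\cF^{\rK_j,\rN_j}}
 \int_{h(x)=\ell} \prod_{i\in \tilde\pi}\frac{1}{Z_i^{1/2}(\tau_i)}
 \prod_{j\in \pi_0} e^{-x_j}
 \prod_{j\in \cV\setminus(\tilde\pi_0\cup\pi_0)} e^{-\frac{x_j}{2}}
 \prod_{j=1}^n (1+x_j)^{\rN_j} e^{x_j}\frac{\d x}{\d\ell}.
\end{equation*}

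The key combinatorial observation is then that $\{1,\ldots,n\}\setminus\Theta(\tilde\pi)\subseteq \cV$ (from the constraint $\cV\cup\Theta(\tilde\pi)=\{1,\ldots,n\}$ in Theorem \ref{t:thebigone'}) and moreover $\tilde\pi_0\subseteq \Theta(\tilde\pi)$ (by the bijection $j\mapsto i_j$ with $j\in\Theta(i_j)$). Hence every $j\notin \Theta(\tilde\pi)$ lies in $\cV\setminus\tilde\pi_0$, i.e. either in $\pi_0$ or in $\cV\setminus(\tilde\pi_0\cup\pi_0)$. In either case we have, out of the decay factors written above, at least a contribution $e^{-x_j/2}$. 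We may therefore extract the subproduct
\begin{equation*}
 \prod_{i\in \tilde\pi}\frac{1}{Z_i^{1/2}(\tau_i)} \prod_{j\notin\Theta(\tilde\pi)} e^{-\frac{x_j}{2}},
\end{equation*}
leaving the remaining decay factors (which are harmless and can be discarded by the trivial bound $\leq 1$).

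On the level set $\{h(x)=\ell\}$, the comparison estimate \eqref{e:comparison} of Assumption \textbf{($\varphi$-CE-form)} reads multiplicatively as
\begin{equation*}
 \prod_{i\in \tilde\pi}\frac{1}{Z_i^{1/2}(\tau_i)} \prod_{j\notin\Theta(\tilde\pi)} e^{-\frac{x_j}{2}} \leq e^{\frac{\ell}{2}}\,e^{-\cH(x)},
\end{equation*}
so this block contributes $e^{\ell/2}e^{-\cH(x)}$. The residual polynomial factor is handled via the standard comparison estimate \eqref{e:comparisonZ} coming from hypothesis \textbf{($h$)}: since $x_j\geq 0$ and $\sum_j x_j\leq \ell+\CE$, we have $(1+x_j)^{\rN_j}\leq (1+\ell+\CE)^{\rN_j}$, hence $\prod_j(1+x_j)^{\rN_j}\leq (1+\ell+\CE)^{\rN}$. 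This factor is independent of $x$ and pulls out of the integral, leaving $\prod_j e^{x_j}$ inside against the level-set measure, exactly as announced.

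No genuine obstacle appears here: the proof is essentially a copy of the argument giving Theorem \ref{t:intermediate} via Proposition \ref{p:finalupperbound}, with the twist that the decay budget needed for the comparison estimate is now distributed between the $\phi$-derivatives (producing the $Z_i^{-1/2}$ factors) and the $\Phi_j$- or $\psi$-derivatives (producing the $e^{-x_j/2}$ factors). The only point requiring care is the bookkeeping verifying that the indices $j\notin\Theta(\tilde\pi)$ are precisely where the $e^{-x_j/2}$ decay has been deposited, which follows from the inclusions $\cV\supseteq\{1,\ldots,n\}\setminus\Theta(\tilde\pi)$ and $\tilde\pi_0\subseteq\Theta(\tilde\pi)$ built into the statement of Theorem \ref{t:thebigone'}.
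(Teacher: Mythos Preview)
Your proof is correct and follows precisely the approach implicit in the paper (which states Proposition~\ref{l:yet2} without an explicit proof, leaving it as an immediate consequence of Lemma~\ref{p:finalupperbound'} combined with Assumption \textbf{($\varphi$-CE-form)}). Your bookkeeping of the inclusions $\{1,\ldots,n\}\setminus\Theta(\tilde\pi)\subseteq \cV\setminus\tilde\pi_0$, which is the only nontrivial point, is accurate and more detailed than what the paper writes out.
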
  

As a consequence, in order to prove the Friedman-Ramanujan property, it will be enough to bound the
terms appearing in Proposition \ref{l:yet2}.

%%%Local Variables: 
%%% mode: latex
%%% TeX-master: "main"
%%% End: 

\section{Erasing of crossing parameters in the length of $\mathbf{c}$}
\label{s:lengthc}
In this section, a hyperbolic metric $Y$ on $\mathbf{S}$ is fixed.  We consider our multi-loop
${\mathbf{ c}} =({\mathbf{c}}_1, \ldots, {\mathbf{c}}_{\cc})$.  The section is devoted to solving
the first issue raised in \S \ref{s:problems}.
 
We recall that, for $q\in { {\Theta}}$, if $\sign(q)\sign(\sigma q)=-1$, we call $q$ a U-turn index,
and otherwise a crossing index. We denote as $\mathcal{C} \subseteq \Theta$ the set of crossing
indices. We shall solve issues related to situations when some crossing parameters $\theta_q$, with
$q \in \mathcal{C}$, assume negative values.

 \subsection{Idea and organization of this section}
 
 An example of a crossing variable is represented in Figure \ref{fig:crossing_ex}. In Figure
 \ref{fig:crossing_ex_pos}, the crossing parameter $\theta_q$ is positive, and the length-function
 $\ell_Y(\mathbf{c})$ belongs to an appropriate class $\mathcal{E}$, which means that the argument
 we have set up in \S \ref{s:GC} can be used to establish Theorem \ref{thm:FR_type}. This is not the
 case in Figure \ref{fig:crossing_ex_neg} unfortunately, when $\theta_q \leq 0$. 

 \begin{figure}[h!]
   \centering
   \begin{subfigure}[b]{0.5\textwidth}
    \centering
     \includegraphics[scale=0.9]{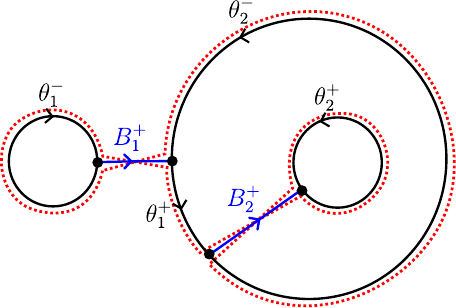}
    \caption{When $\theta_1^+ >0$.}
    \label{fig:crossing_ex_pos}
  \end{subfigure}%
   \centering
   \begin{subfigure}[b]{0.5\textwidth}
    \centering
     \includegraphics[scale=0.95]{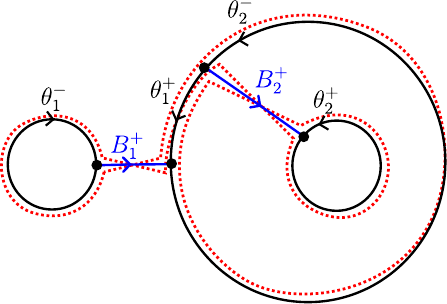}
    \caption{When $\theta_1^+ <0$.}
    \label{fig:crossing_ex_neg}
  \end{subfigure}%
  \\
  \caption{A simple example with a crossing parameter $\theta_1^+$. The dotted curve is the element
    of the homotopy class of $\mathbf{c}_i$ constructed in \S \ref{s:nc} to express
    $\ell_Y(\mathbf{c}_i)$.}
   \label{fig:crossing_ex}
 \end{figure}
 
 In order to solve this issue, depending on the metric $Y$, we shall find a new representative of
 the homotopy class of a component ${\mathbf{c}}_i$ of $\mathbf{c}$, and introduce new
 \emph{positive} variables $\tilde \theta_q$, such that the expression of $\ell_Y({\mathbf{c}}_i)$
 in those new variables now belongs to the desired class of functions $\cE$. The idea is that, when
 a crossing variable is negative, we can rectify the problem by a basic move: if
 $\sign(q)\sign(\sigma q)=+1$ and $\theta_q<0$, the \emph{basic move} consists in replacing
 $\overline{B}_{\sigma^{-1} q, q}\smallbullet \overline{\cI}_q \smallbullet \overline{B}_{q, \sigma
   q}$ by the homotopic orthogeodesic $\overline{B}_{ \sigma^{-1} q, \sigma q}$ from
 $\beta_{\sigma^{-1}q}$ to $\beta_{\sigma q}$. The segment $\overline{\cI}_q$ is then said to be
 \emph{erased}, and the variable $\theta_q$ is an erased variable.  On Figure \ref{fig:basicm} we
 show a basic move in the case $\sign(q)=\sign(\sigma q)=+1$.  We then perform these basic moves
 iteratively to get rid of all negative crossing variables.

 \begin{figure}[h!]
   \includegraphics{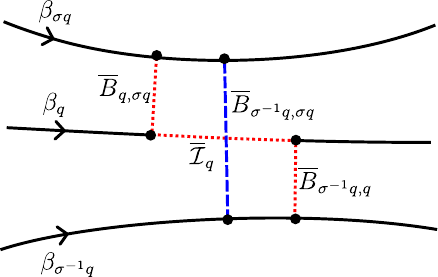}
   \caption{The basic move consists in replacing the dotted path with the dashed line in the
     homotopy class of $\mathbf{c}_i$.}
   \label{fig:basicm}
 \end{figure}%

 We start this section by studying families of aligned geodesics in $\IH^2$ and their
 orthogeodesics. We prove a key lemma called the \emph{staircase inequality} in \S
 \ref{s:staircase}. We then construct the erasing procedure in \S \ref{s:erasing}; see Proposition
 \ref{p:pluscross}. Once we lift our closed geodesic $\mathbf{c}_i$, loops $\beta$ and bars $B$ to
 the hyperbolic plane $\IH^2$ as done in \S \ref{s:lift}, we can apply Proposition~\ref{p:pluscross}
 to them.  This yields a partition of the Teichm\"uller space, as well as a new set of parameters
 $\tilde\theta_q$ on each element of this partition, as detailed in \S \ref{s:cross_int}. The new
 length $\ell_Y(\mathbf{c}_i)$ is expressed in these new parameters in \S \ref{s:newexp}. We then
 introduce a partition of unity adapted to this construction in \S \ref{s:cutoffQ}.

 For readers who decide to focus on the purely non-crossing case, it suffices to read \S
  \ref{s:purelync}-\ref{s:cutoffQ}.
 
\subsection{Families of aligned geodesics and their orthogeodesics}
\label{s:fam_aligned}

Let us group together a few key observations about families of aligned geodesics and their
orthogeodesics, which will be key to this section.
In this whole subsection, we shall consider the following set-up.  For an integer $M \geq 2$,
$\gamma_0, \ldots, \gamma_{M}$ are $M+1$ oriented geodesics in the hyperbolic plane. We assume
that they are pairwise aligned, and that $\gamma_{i+1}$ is on the left of $\gamma_i$. We define
$\overline{B}_{ij}$ to be the oriented orthogeodesics from $\gamma_i$ to $\gamma_j$, of length
denoted by $L_{ij}>0 $, and $z_{ij}\in \gamma_j$ (resp. $z_{ji}\in \gamma_i$) the endpoint
(resp. origin) of $\overline{B}_{ij}$.  

\subsubsection{The $3$-geodesic lemma}

First, we prove the following elementary observation.

\begin{lem}
  \label{lem:lemma_hexa}
  Let $j \in \{1, \ldots, M-1\}$. Then,
  \begin{equation}
    \label{eq:signs_hexa}
    \Dist(z_{j,j+1},z_{j-1,j+1}) \geq 0
    \quad \Leftrightarrow \quad
    \Dist(z_{j+1,j},z_{j-1,j}) \geq 0
    \quad \Leftrightarrow \quad
    \Dist(z_{j+1,j-1},z_{j,j-1}) \geq 0
  \end{equation}
  and, in terms of absolute distances,
  \begin{equation}
    \label{eq:ineq_hexa}
    |\Dist(z_{j+1,j},z_{j-1,j})|
    > |\Dist(z_{j+1,j-1},z_{j,j-1})| + |\Dist(z_{j,j+1},z_{j-1,j+1})|.
  \end{equation}
\end{lem}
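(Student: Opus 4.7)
The approach is to introduce the point $m := \gamma_j \cap \overline{B}_{j-1,j+1}$, which exists because the orthogeodesic $\overline{B}_{j-1,j+1}$ connects two geodesics lying on opposite sides of $\gamma_j$. Both claims of the lemma will then reduce to geometric properties of $m$ relative to the perpendicular feet on the three geodesics, and no hyperbolic trigonometric identity (right-angled hexagon, Lambert quadrilateral, etc.) will need to be invoked.

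First I would establish that $m$ lies strictly between $z_{j-1,j}$ and $z_{j+1,j}$ on $\gamma_j$. Since $\overline{B}_{j-1,j+1}$ realises the minimum distance $L_{j-1,j+1}$ between $\gamma_{j-1}$ and $\gamma_{j+1}$, and any path from $\gamma_{j-1}$ to $\gamma_{j+1}$ passing through a point $p \in \gamma_j$ has length at least $d(p,\gamma_{j-1}) + d(p,\gamma_{j+1})$, the point $m$ must be the unique minimiser on $\gamma_j$ of $f(p) := d(p,\gamma_{j-1}) + d(p,\gamma_{j+1})$; in particular the two halves of $\overline{B}_{j-1,j+1}$ cut at $m$ are the orthogonal segments realising $d(m,\gamma_{j-1})$ and $d(m,\gamma_{j+1})$ respectively. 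By strict convexity of the distance functions on $\gamma_j$, the minimiser of $f$ strictly separates the individual minima $z_{j-1,j}$ and $z_{j+1,j}$ of its two summands. This yields the length decomposition $|\Dist(z_{j+1,j}, z_{j-1,j})| = |\Dist(z_{j-1,j}, m)| + |\Dist(m, z_{j+1,j})|$ and the sign identity $\sign(\Dist(z_{j+1,j}, z_{j-1,j})) = \sign(\Dist(m, z_{j-1,j}))$.

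Next I would exploit the nearest-point projection $\pi : \IH^2 \to \gamma_{j-1}$. The previous step gives $\pi(m) = z_{j+1,j-1}$, since the perpendicular from $m$ to $\gamma_{j-1}$ is contained in $\overline{B}_{j-1,j+1}$, and of course $\pi(z_{j-1,j}) = z_{j,j-1}$. For two aligned geodesics, $\pi$ restricted to $\gamma_j$ is an orientation-preserving, strictly distance-decreasing diffeomorphism onto $\gamma_{j-1}$. Applied to the oriented segment from $z_{j-1,j}$ to $m$, this simultaneously provides the sign equivalence linking $\sign(\Dist(z_{j-1,j}, m))$ to $\sign(\Dist(z_{j,j-1}, z_{j+1,j-1})) = -\sign(\Dist(z_{j+1,j-1}, z_{j,j-1}))$, and the strict inequality $|\Dist(z_{j+1,j-1}, z_{j,j-1})| < |\Dist(z_{j-1,j}, m)|$. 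The symmetric argument using the projection onto $\gamma_{j+1}$ yields the analogous statements relating $\gamma_{j+1}$ to $\gamma_j$. Summing the two strict inequalities and combining with the length decomposition from step one proves~\eqref{eq:ineq_hexa}, while the three sign identities assemble into~\eqref{eq:signs_hexa}.

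The main obstacle is the sign book-keeping: verifying that $\pi|_{\gamma_j}$ is orientation-preserving (rather than orientation-reversing) requires invoking the hypothesis that the $\gamma_i$ are aligned in the sense of having compatible orientations — and not merely ultraparallel — and then tracking how the orientations of the three geodesics interact through the two projections $\pi$ and $\pi'$. Once that is settled, the remainder of the argument is entirely conceptual.
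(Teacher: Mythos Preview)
Your proof is correct and takes a genuinely different route from the paper. The paper's argument forms the right-angled hexagon with vertices $z_{j-1,j+1}, z_{j,j+1}, z_{j+1,j}, z_{j-1,j}, z_{j,j-1}, z_{j+1,j-1}$, observes that no hyperbolic hexagon can have five interior angles $\pi/2$ and one $3\pi/2$ (giving the sign equivalences), and then reads off the strict inequality directly from the trigonometric formula for self-intersecting right-angled hexagons \cite[Theorem~2.4.4]{buser1992}:
\[
\cosh(\ell_3) = \cosh(\ell_1)\cosh(\ell_5) + \cosh(\ell_6)\sinh(\ell_1)\sinh(\ell_5) > \cosh(\ell_1+\ell_5).
\]
Your approach instead introduces the intersection point $m = \gamma_j \cap \overline{B}_{j-1,j+1}$, locates it between the two perpendicular feet by convexity of the distance functionals, and then transports everything to $\gamma_{j\pm 1}$ via nearest-point projection. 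This avoids any explicit trigonometric identity: the strict inequality comes from the Lambert-quadrilateral contraction (which you use only in the special case where one endpoint is the perpendicular foot, so it is immediate), and the sign equivalences come from the projection being orientation-preserving for aligned geodesics. The trade-off is that the paper's proof is a one-line application of a cited formula, whereas yours is more self-contained and conceptual but, as you note, requires carefully verifying that ``aligned'' forces the projections to preserve orientation. Both arguments share the same harmless degeneracy when the three geodesics admit a common perpendicular, in which case all three signed distances vanish and the strict inequality becomes an equality.
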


\begin{proof}
  These results stem from the elementary study of the polygon of consecutive vertices $z_{j-1,j+1}$,
  $z_{j,j+1}$, $z_{j+1,j}$, $z_{j-1,j}$, $z_{j,j-1}$, $z_{j+1,j-1}$ represented in Figure
  \ref{fig:lemma_hexa}.
  \begin{figure}[h!]
    \centering
    \includegraphics{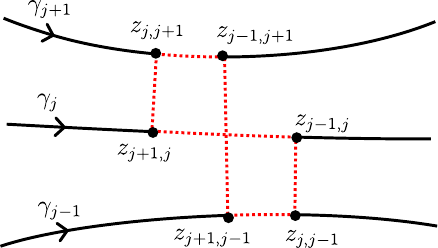}
    \caption{Illustration of the proof of Lemma \ref{lem:lemma_hexa}.}
    \label{fig:lemma_hexa}
  \end{figure}
  Equation~\eqref{eq:signs_hexa} is obtained by observing that there exists no
  hyperbolic hexagon with $5$ inner angles equal to $\pi/2$ and one equal to $3\pi/2$, and hence the
  polygon is a self-intersecting right-angled hexagon. Then, \eqref{eq:ineq_hexa} is a direct
  consequence of the classic trigonometric formula for self-intersecting right-angled hexagons of
  consecutive side-lengths $(\ell_i)_{1 \leq i \leq 6}$ (see e.g. \cite[Theorem 2.4.4]{buser1992}):
  \begin{align*}
    \cosh (\ell_3)
    =  \cosh (\ell_1) \cosh (\ell_5) + \cosh(\ell_6) \sinh (\ell_1) \sinh (\ell_5) 
     > \cosh (\ell_1+\ell_5).
  \end{align*}
\end{proof}

 \subsubsection{The staircase inequality}
 \label{s:staircase}
 
 Let us now prove an inequality which we will refer to as the staircase inequality,
 illustrated in Figure \ref{fig:magick}.

 \begin{figure}[h!]
   \includegraphics{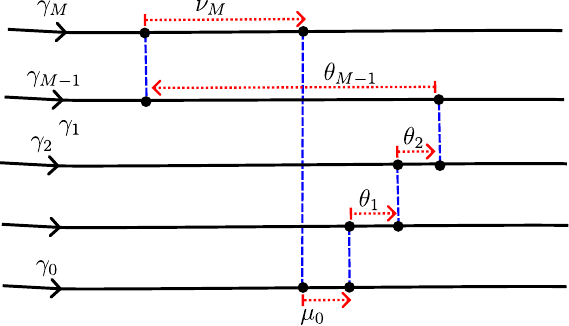}
   \caption{The geometric situation for the staircase inequality.}
    \label{fig:magick}
  \end{figure}
 
  \begin{lem} \label{l:magicstaircase} Let $M\geq 2$.
    We introduce the distances
  \begin{equation*}
    \theta_j=\Dist(z_{j-1, j}, z_{j+1, j}), \quad
    \mu_{j}=\Dist(z_{M, j}, z_{j+1, j}), \quad \nu_{M}=\Dist(z_{M-1, M}, z_{0, M}).
  \end{equation*}
  Assume that $\theta_{j}\geq 0$ for $1 \leq j \leq M-2$, that $\theta_{M-1}<0$, and that $\mu_{j}>0$
  for $0 \leq j \leq M-2$. Then,
    \begin{align}\label{e:magicineq}
      |\theta_{M-1}| >\sum_{j=1}^{M-2}\theta_{j} +\mu_{0} + \nu_{M}.
    \end{align}
\end{lem}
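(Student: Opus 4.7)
The strategy is to iterate the three‑geodesic lemma (Lemma \ref{lem:lemma_hexa}) along the triples $(\gamma_{j-1}, \gamma_j, \gamma_M)$ for $j=1,\ldots,M-1$: keeping $\gamma_M$ fixed in every triple is what will let the resulting inequalities telescope.

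I will introduce two auxiliary families of signed distances. On $\gamma_j$, set $A_j := \Dist(z_{M,j}, z_{j-1,j})$ for $j\in\{1,\ldots,M-1\}$; and on $\gamma_M$ set $D_j := \Dist(z_{M-1,M}, z_{j,M})$ for $j\in\{0,\ldots,M-1\}$. Then $D_0 = \nu_M$ and $D_{M-1}=0$, and a Chasles decomposition on $\gamma_j$ immediately yields $A_j = \mu_j - \theta_j$ for $j\le M-2$, while $A_{M-1} = -\theta_{M-1}$. Likewise, on $\gamma_M$ the quantity $\Dist(z_{j,M}, z_{j-1,M})$ appearing in Lemma \ref{lem:lemma_hexa} equals $D_{j-1} - D_j$.

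Applying Lemma \ref{lem:lemma_hexa} to the aligned triple $(\gamma_{j-1},\gamma_j,\gamma_M)$, the three sign equivalences \eqref{eq:signs_hexa} read as follows: the numbers $\mu_{j-1}$, $A_j$ and $D_{j-1}-D_j$ all share the same sign, and $|A_j| > |\mu_{j-1}| + |D_{j-1}-D_j|$. Because $\mu_{j-1}>0$ for every $j\in\{1,\ldots,M-1\}$ by hypothesis, this absolute‑value bound upgrades to the ordered inequality
\begin{equation*}
A_j \;>\; \mu_{j-1} + (D_{j-1}-D_j).
\end{equation*}
For $j\le M-2$, substituting $A_j=\mu_j-\theta_j$ and rearranging produces the telescoping bound
\begin{equation*}
\mu_j + D_j \;>\; \mu_{j-1} + D_{j-1} + \theta_j,
\end{equation*}
so summing from $j=1$ to $j=M-2$ gives $\mu_{M-2} + D_{M-2} \;>\; \mu_0 + \nu_M + \sum_{j=1}^{M-2}\theta_j$.

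Finally, I plug in the case $j=M-1$: since $D_{M-1}=0$, the same lemma delivers $|A_{M-1}| > \mu_{M-2} + D_{M-2}$, and the sign hypothesis $\theta_{M-1}<0$ identifies $|A_{M-1}|$ with $|\theta_{M-1}|$. Chaining the two inequalities yields \eqref{e:magicineq}. The only point demanding care is to verify at each step that the sign equivalences in Lemma \ref{lem:lemma_hexa} are correctly aligned with the assumption $\mu_{j-1}>0$, so that every strict absolute‑value bound can be read as a strict ordered inequality on positive quantities; beyond this bookkeeping, no further trigonometric computation is required.
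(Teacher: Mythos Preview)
Your proof is correct and follows essentially the same approach as the paper: both apply Lemma~\ref{lem:lemma_hexa} to the triples $(\gamma_{j-1},\gamma_j,\gamma_M)$ for $j=1,\ldots,M-1$, use the hypothesis $\mu_{j-1}>0$ to turn the absolute-value inequality into an ordered one, and then telescope. Your introduction of $D_j$ merely repackages the paper's identity $\nu_M = \sum_{j=1}^{M-1}\Dist(z_{j,M},z_{j-1,M})$ as a telescoping sum, and your $A_j$ is the paper's $u_j$.
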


\begin{rem}
  The inequality \eqref{e:magicineq} can also be written in the form
  \begin{align}\label{e:theta-sum}
    - \sum_{j=1}^{M-1} \theta_{j} >  \mu_{0} + \nu_{M}.
  \end{align}
  % or, with the notation $\Dist$,
  % \begin{align*}
  %   \Dist( z_{N, N-1}, z_{N-2, N-1}) > \sum_{j=1}^{N-3} \Dist(z_{j, j+1}, z_{j+2, j+1}) + \Dist(z_{N, 1}, z_{2, 1})+\Dist(z_{N-1, N}, z_{1, N}).
  % \end{align*}
\end{rem}

\begin{proof}
  We apply Lemma \ref{lem:lemma_hexa} to the geodesics $\gamma_{j-1}$, $\gamma_{j}$ and
  $\gamma_{M}$. We obtain that
  \begin{align}
    \label{e:ineq'}
    u_j := \Dist(z_{M, j}, z_{j-1, j} )
    > \Dist(z_{M, j-1}, z_{j, j-1}) + \Dist(z_{j, M}, z_{j-1, M}),
  \end{align}
  all of these quantities being positive thanks to the hypothesis $\mu_{j-1}>0$ and
  \eqref{eq:signs_hexa}.

  For $2\leq j\leq M$, going along $\gamma_{j-1}$, we have the identity
  \begin{align*}\Dist(z_{M, j-1},z_{j, j-1})
    = \Dist(z_{M, j-1}, z_{j-2, j-1})+ \Dist(z_{ j-2, j-1}, z_{j, j-1})
    = u_{j-1}+\theta_{j-1},
  \end{align*}
  and for $j=1$,
  $\Dist(z_{M, 0}, z_{1, 0})= \mu_{0}$.
  As a consequence, \eqref{e:ineq'} can be rewritten in terms of the sequence $(u_j)_j$ as:
  \begin{equation*}
    \begin{cases}
      u_j > u_{j-1} + \theta_{j-1} + \Dist(z_{j, M}, z_{j-1, M}) & (j \geq 2) \\
      u_1> \mu_{0}+ \Dist(z_{1, M}, z_{0, M}) & (j=1).
    \end{cases}
  \end{equation*}
  The result follows by induction, using that $u_{M-1}=|\theta_{M-1}|$ and
  $$\nu_{M}= \sum_{j=1}^{M-1} \Dist(z_{j, M}, z_{j-1, M}).$$
 \end{proof}

 \subsubsection{Erasing the negative crossings}
 \label{s:erasing}

 We are now ready to prove the main result of this section, the contruction of the erasing
 procedure. We denote as $\vec{\theta} \in \R^{M-1}$ the vector
 $\theta_j = \Dist(z_{j-1,j},z_{j+1,j})$ for all $1 \leq j \leq M-1$, and $\vec{L} \in \R_{>0}^{M}$
 the vector of components $L_{j,j+1}$, $0 \leq j < M$.  We note that the geometry of our family of
 aligned geodesics is entirely determined by the values of the parameters $\vec{L}, \vec{\theta}$
 (which belong in a subset of $\R_{>0}^{M} \times \R^{M-1}$).
 
 \begin{prp} \label{p:pluscross} Let $M \geq 2$ and $\vec{L} \in \R_{>0}^{M}$ be a fixed
   length-vector. There exists a partition $(P_\xi)_{\xi\in \Xi_M}$ of $\R^{M-1}$ and a family
   $(I(\xi))_{\xi \in \Xi_M}$ of subsets
   $$I(\xi) = \{ 0 = \varphi_{\xi}(0) < \varphi_\xi(1) < \ldots < \varphi_\xi(K_\xi+1) = M \}$$
   such that the following holds.  Let $\xi \in \Xi_M$. For any $\vec{\theta} \in P_\xi$,
   \begin{enumerate}
   \item[(a)] for any $1 \leq j \leq K_\xi$,
     $\Dist(z_{\varphi_\xi(j-1), \varphi_\xi(j)}, z_{\varphi_\xi(j+1), \varphi_\xi(j)}) \geq 0$;
   \item[(b)] for $0 \leq j \leq K_\xi$, if we set
     \begin{align*}
       & \mu_{\varphi_\xi(j)}  :=\Dist(z_{ \varphi_\xi(j+1), \varphi_\xi(j)}, z_{\varphi_\xi(j)+1, \varphi_\xi(j)}) \\
       & \nu_{\varphi_\xi(j+1)}:=\Dist(z_{\varphi_\xi(j+1)-1, \varphi_\xi(j+1)}, z_{\varphi_\xi(j), \varphi_\xi(j+1)})
     \end{align*}
     then both of these distances are positive;
   \item[(c)] for $0 \leq j \leq K_\xi$, 
     \begin{equation}
       \label{e:magick}
       -\sum_{\varphi_\xi(j)< k<\varphi_\xi(j+1)} \theta_{k} \geq  \mu_{\varphi_\xi(j)} +\nu_{\varphi_\xi(j+1)}.
     \end{equation}
   \end{enumerate}
 \end{prp}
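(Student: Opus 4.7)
The proof proceeds by induction on $M$. The base case $M=1$ is trivial: take $\Xi_1$ a singleton, $I=\{0,1\}$ with $K=0$, so the conditions (a)--(c) are vacuous.

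For $M\geq 2$, the plan is to define a canonical subset $I(\vec\theta)\subseteq \{0,\ldots,M\}$ via the following iterative erasing procedure. Start with $I^{(0)}=\{0,1,\ldots,M\}$. At each step, if the current set $I^{(n)}=\{i_0<\ldots<i_{K+1}\}$ contains an interior index $i_l$ such that $\Dist(z_{i_{l-1},i_l},z_{i_{l+1},i_l})<0$, pick the leftmost such $l$ and set $I^{(n+1)}=I^{(n)}\setminus\{i_l\}$; otherwise stop and set $I(\vec\theta)=I^{(n)}$. The partition $(P_\xi)_{\xi\in\Xi_M}$ is then defined as the collection of fibres of the map $\vec\theta\mapsto I(\vec\theta)$, indexed by the finitely many possible outputs.

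Condition (a) holds by the termination criterion. For condition (b), I would exploit Lemma~\ref{lem:lemma_hexa}: each time an interior index $i_l$ is erased because the effective distance is strictly negative, the equivalent conditions of that lemma yield strict positivity of the two algebraic distances that, at the end of the run, contribute to the relevant $\mu_{\varphi_\xi(j)}$ and $\nu_{\varphi_\xi(j+1)}$. A short induction on the number of erasures within a single gap then shows that these quantities only grow as further indices are removed, so they are strictly positive at termination. For condition (c), I would apply the staircase inequality (Lemma~\ref{l:magicstaircase}) inside each non-trivial gap $(\varphi_\xi(j),\varphi_\xi(j+1))$: within such a gap, the effective sequence of $\theta$'s encountered during successive erasures satisfies exactly the staircase hypotheses (non-negative effective $\theta$'s before each removal, with the triggering one being negative), and the conclusion is precisely \eqref{e:magick}.

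The main obstacle lies in verifying that the hypotheses of Lemma~\ref{l:magicstaircase}, in particular the positivity of all intermediate $\mu_j$'s within a gap, are genuinely met at termination. This is delicate because the algorithm removes indices based on \emph{effective} negative distances in the current reduced configuration, not on the original $\theta_j$'s; relating these effective inequalities to the original parameters inside a given gap is where most of the technical work lies. I would handle this by processing erasures strictly from left to right and unfolding each completed gap $(\varphi_\xi(j),\varphi_\xi(j+1))$ as an application of the staircase lemma with $M'=\varphi_\xi(j+1)-\varphi_\xi(j)$ geodesics, verifying inductively that the $\mu_j>0$ hypothesis is inherited from the previous step via Lemma~\ref{lem:lemma_hexa}. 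A secondary technicality is the confluence of the erasing procedure, i.e., that $I(\vec\theta)$ is well-defined independently of which leftmost convention is adopted; this I would address by a local diamond argument, since non-adjacent removals commute trivially and adjacent ones can be analyzed in a four-geodesic subconfiguration.
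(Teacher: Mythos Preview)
Your approach differs substantially from the paper's, and as stated it has a genuine gap in the verification of~(c).

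You propose to thin down from the full index set by removing one index at a time, and then, for each resulting gap $(\varphi_\xi(j),\varphi_\xi(j+1))$, to apply Lemma~\ref{l:magicstaircase} once to the $M'=\varphi_\xi(j+1)-\varphi_\xi(j)$ consecutive original geodesics. But the staircase lemma requires the original $\theta_k$ to be non-negative for all $k$ in the gap except the last, and this simply fails in general: several consecutive original $\theta_k$'s can be negative (take $\theta_1<0$ and $\theta_2<0$ with $\Dist(z_{0,2},z_{3,2})<0$; your algorithm produces the gap $(0,3)$, yet the staircase hypothesis $\theta_1\geq 0$ is violated). Your proposed fix addresses the positivity of the intermediate $\mu_j$'s but not this failure of the $\theta$-hypothesis. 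A single application of the staircase to the raw geodesics within a gap is therefore not available.

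The paper avoids this by building up rather than thinning down. It argues by induction on~$M$: having constructed $I(\xi)=\{\varphi(0),\ldots,\varphi(K+1)=M\}$ satisfying (a)--(c), it adds $\gamma_{M+1}$ and, if the new effective distance is negative, erases a terminal block $\varphi(K'+1),\ldots,\varphi(K),M$ at once. The key point is that the staircase lemma is then applied not to the original geodesics but to the \emph{reduced} family $\gamma_{\varphi(K')},\ldots,\gamma_{\varphi(K)},\gamma_M,\gamma_{M+1}$, where the effective $\theta$'s are non-negative precisely by the inductive property~(a). The resulting inequality is combined with the previously established instances of~(c) for the old gaps $(\varphi(j),\varphi(j+1))$, $K'\leq j\leq K$, via a telescoping identity (the analogue of~\eqref{e:chasles}) to obtain~(c) for the new merged gap $(\varphi(K'),M+1)$. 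This two-level structure, staircase on the reduced configuration plus recursion into the already-formed subgaps, is the missing ingredient in your argument.
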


 Intuitively, this statement uses a combination of basic moves, together with the staircase
 inequality, to extract a path homotopic to the initial path, but with only positive turns.  This is
 illustrated in Figure \ref{fig:example_varphi}. Moving forward, we will use Proposition
 \ref{p:pluscross} to \emph{erase} the variables $\theta_k$ with $k \notin I(\xi)$. This includes
 all of the negative $\theta_k$, but also the $\theta_k$ that are ``not positive enough'' to
 compensate for the existence of negative ones. Inequality \ref{e:magick} says that the sum of
 erased $\theta_k$ is negative, and gives a lower bound on its absolute value.
 
 \begin{figure}[h!]
   \includegraphics{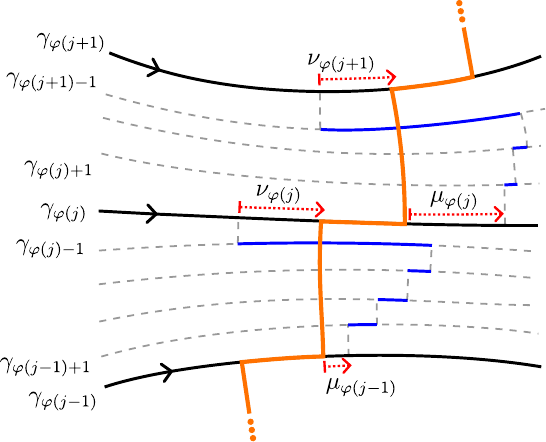}
   \caption{Illustration of the statement of Proposition \ref{p:pluscross}.}
   \label{fig:example_varphi}
  \end{figure}%

 \begin{rem}
   The partition is indexed by some set $\Xi_M$, the definition of which will be determined in the
   proof, but is of no particular interest. It is worth noting from the proof that the elements
   $P_\xi$ of the partition are products of intervals.
 \end{rem}

 \begin{rem}
   We have fixed the lengths $\vec{L}$ because the whole discussion depends implicitly on the value
   of $\vec{L}$, although not apparent in the notation.
   The dependency on $\vec{L}$ shall not be relevant to our purposes.
 \end{rem}
 
 \begin{proof}
   We shall prove the proposition by induction on the integer $M$, using a succession of basic
   moves.  If $M=2$, the partition of $\R$ is $\R_{<0} \sqcup \R_{\geq 0}$. More precisely, the sets
   $I$ are as follows.
   \begin{itemize}
   \item If $\theta_1 \geq 0$, there is nothing to be done, and we take $I = \{0,1,2\}$ (i.e.
     $K = M-1 = 1$ and $\varphi$ is the identity).
   \item If $\theta_{1} <0$, then we perform a basic move, that is, we take $I =
     \{0,2\}$. Inequality \eqref{eq:ineq_hexa} then implies the claim that
     $|\theta_{1}| > \Dist(z_{2, 0}, z_{1, 0}) + \Dist(z_{1, 2}, z_{0, 2}) = \mu_0+\nu_2$.
   \end{itemize}
   
   Assume now that the construction has been achieved at the rank $M$, and extend it to the next
   rank $M+1$, i.e. to a family of aligned geodesics $\gamma_0, \ldots, \gamma_{M+1}$. We apply the
   construction to the geodesics $\gamma_0, \ldots, \gamma_{M}$ and obtain a partition
   $(P_\xi)_{\xi \in \Xi_{M}}$ of $\R^{M-1}$ and a family of sets
   $$I(\xi) = \{0 = \varphi_\xi(0) < \varphi_\xi(1) < \ldots < \varphi_\xi(K_\xi+1) = M \}$$
   satisfying the properties (a), (b), (c). Let us now construct a partition
   $(P_{\xi'}')_{\xi' \in \Xi_{M+1}}$ of $\R^{M}$ and a family of sets $I'(\xi')$ (associated to a
   function $\varphi_{\xi'}'$ and an integer $K_{\xi'}'$) satisfying (a), (b), (c) at the rank
   $M+1$.
   
   We shall partition $$\R^M = \R^{M-1} \times \R = \bigsqcup_{\xi \in \Xi_{M}} (P_\xi \times \R)$$
   by partitionning each individual $P_\xi \times \R$.  Let $\xi \in \Xi_{M}$ be fixed -- to
   simplify notations, we drop the mention of $\xi$ in $\varphi_\xi$ and $K_\xi$. Let us cut the
   $\R$-component of $P_\xi \times \R$ (related to the value of~$\theta_M$) depending on the
   relative positions of the points $z_{\varphi(K), M}$ and $z_{M+1, M}$ along $\gamma_{M}$.
   \begin{itemize}
   \item If the segment $[z_{\varphi(K), M}, z_{M+1, M}]$ is positively oriented, there is
     nothing to be done, and we simply let $I' = I(\xi) \cup \{M+1\}$ (in which case $K'=K+1$).
   \item Otherwise, using \eqref{eq:signs_hexa}, we observe that for any $k \in \{1, \ldots, K\}$,
     \begin{align*}
       &\Dist(z_{M+1,\varphi(k)},z_{\varphi(k+1),\varphi(k)}) \geq 0
       \quad \Rightarrow \quad
       \forall j \geq k, \quad \Dist(z_{M+1,\varphi(j)},z_{\varphi(j-1),\varphi(j)}) > 0 \\
       &\Dist(z_{\varphi(k-1),\varphi(k)}, z_{M+1,\varphi(k)}) \geq 0
       \quad \Rightarrow \quad
       \forall j \leq k, \quad \Dist(z_{\varphi(j+1),\varphi(j)}, z_{M+1,\varphi(j)}) > 0.
     \end{align*}
     We use this to establish the existence of a uniquely defined $K' \in \{0, \ldots, K\}$,
     represented in Figure \ref{fig:erasing_constr}, such that the following two conditions hold:
     \begin{equation}
       \label{eq:cond_theta_step}
       \begin{split}
         \forall 0 \leq j \leq K, \quad
         \big(\Dist(z_{M+1,\varphi(j)},z_{\varphi(j+1),\varphi(j)}) \geq 0
         & \quad \Leftrightarrow \quad j \geq K'\big) \\
         \forall 1 \leq j \leq K, \quad
         \big(\Dist(z_{\varphi(j-1),\varphi(j)},z_{M+1,\varphi(j)}) > 0
         & \quad \Leftrightarrow \quad  j \leq K'\big).
       \end{split}
     \end{equation}
     \begin{figure}[h!]
       \includegraphics[scale=0.9]{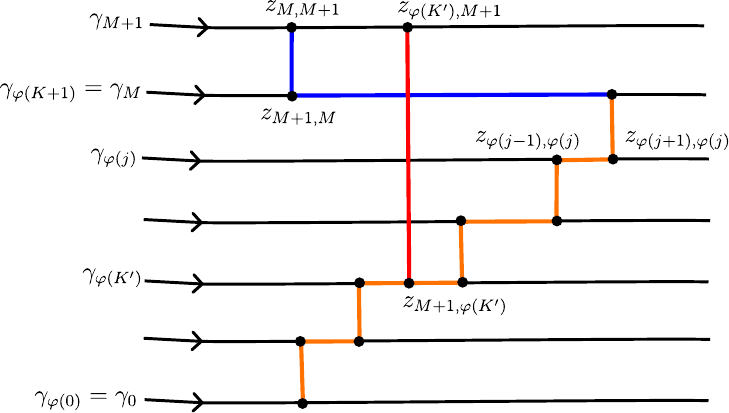}
       \caption{In order to highlight the construction of the integer $K'$, we have only represented
         the geodesics $\gamma_{\varphi(j)}$ for $0 \leq j \leq K+1$ as well as $\gamma_{M+1}$, and
         the relevant orthogeodesics.}
       \label{fig:erasing_constr}
     \end{figure}
     We then let $I' = \{\varphi(0), \ldots, \varphi(K'), M+1\}$, a set associated to
     the integers $K'$ and the function $\varphi'$ which coincides with $\varphi$ for $j \leq K'$
     and is equal to $M+1$ at $K'+1$.
   \end{itemize}

   Let us now construct the partition in more detail. We temporarily revert to notations depending
   on $\xi$ for clarity. For any $K' \in \{0, \ldots, K_\xi+1\}$, we denote as $E_{K'}$ the set of
   values of $\theta_M \in \R$ such that:
   \begin{itemize}
   \item $\Dist(z_{\varphi_\xi(K_\xi), M}, z_{M+1, M}) \geq 0$ if $K'=K_\xi+1$;
   \item the set of conditions \eqref{eq:cond_theta_step} holds if $K'$ is an integer in
     $\{0, \ldots, K_\xi\}$.
   \end{itemize}
We can then write
   $\R = \bigsqcup_{K'=0}^{K_{\xi}+1} E_{K'}$ for each individual $\xi \in \Xi_{M}$. This yields the
   partition
   $$\R^M = \bigsqcup_{\xi' \in \Xi_{M+1}} P'_{\xi'},
   \quad \text{where} \quad \Xi_{M+1} = \{ (\xi, K') \, : \, \xi \in \Xi_{M}, 0 \leq K' \leq
   K_\xi+1\}$$
   and $P'_{\xi'} := P_\xi \times E_{K'}$  for $\xi' = (\xi, K') \in \Xi_{M+1}$.
   
   Our choice of $K'$ straighforwardly implies that (a) and (b) are satisfied. The condition (c) is
   automatic in the case $K'=K+1$, and is the last thing that we need to check when $K' \leq K$.

   By the induction hypothesis and the construction of $I'$, we already know that \eqref{e:magick}
   holds for $j=1, \ldots, K'-1$, and we just need to check it for $j=K'$. That is, we need to prove
   that
   \begin{equation*}
     -\sum_{\varphi(K')< k<M+1} \theta_{k} \geq
     \Dist(z_{M+1, \varphi(K')}, z_{\varphi(K')+1, \varphi(K')}) 
     + \Dist(z_{M, M+1}, z_{\varphi(K'), M+1}).
   \end{equation*}
   In order to do so, we write
   \begin{equation}
     \label{eq:theta_decomp}
     \sum_{\varphi(K')< k<M+1} \theta_{k}
     = \sum_{j=K'+1}^{K+1} \theta_{\varphi(j)}
     + \sum_{j=K'}^K \sum_{\varphi(j) < k < \varphi(j+1)} \theta_k.
   \end{equation}

   In order to bound the first term, we apply the staircase inequality, Lemma \ref{l:magicstaircase}
   (under the form \eqref{e:theta-sum}), to the geodesics $\gamma_{\varphi(j)}$ for
   $K' \leq j \leq K$ and $\gamma_{M+1}$. The conditions (a) and (b) allow to verify the
   hypotheses. We obtain
  \begin{equation}\label{e:point0}
    \begin{split}
      & -\sum_{j=K'+1}^{K} \Dist(z_{\varphi(j-1), \varphi(j)}, z_{\varphi(j+1),\varphi(j)})
        - \Dist(z_{\varphi(K),M}, z_{M+1,M})
      \\ & >  \Dist(z_{M+1, \varphi(K')}, z_{\varphi(K'+1),\varphi(K')}) 
          + \Dist(z_{M, M+1}, z_{\varphi(K'), M+1}).
    \end{split}
  \end{equation}
  We then relate each of the terms on the left-hand-side to a $\theta_{\varphi(j)}$ by writing for
  $j \leq K$
    \begin{equation*}
    \begin{split}
      \theta_{ \varphi(j)}
      & = \Dist(z_{\varphi(j)-1, \varphi(j)}, z_{\varphi(j)+1, \varphi(j)} ) \\
      & = \Dist(z_{\varphi(j)-1, \varphi(j)}, z_{\varphi(j-1), \varphi(j)})
        + \Dist(z_{\varphi(j-1), \varphi(j)}, z_{\varphi(j+1), \varphi(j)} )
        + \Dist(z_{\varphi(j+1), \varphi(j)}, z_{\varphi(j)+1, \varphi(j)} ) \\
      & = \nu_{\varphi(j)}
        + \Dist(z_{\varphi(j-1), \varphi(j)}, z_{\varphi(j+1), \varphi(j)} )
        + \mu_{\varphi(j)}
    \end{split}
  \end{equation*}
  and similarly
  \begin{equation*}
       \theta_{\varphi(K+1)} = \theta_M
       = \Dist(z_{M-1, M}, z_{M+1, M} ) 
       = \nu_{\varphi(K+1)} + \Dist(z_{\varphi(K), M}, z_{M+1, M} ).
  \end{equation*}
  Equation \eqref{e:point0} then becomes
  \begin{equation}\label{e:point0'}
    \begin{split}
      & -\sum_{j=K'+1}^{K+1} \theta_{ \varphi(j)}
        +  \sum_{j=K'+1}^{K}\mu_{\varphi(j)} + \sum_{j=K'}^{K} \nu_{\varphi(j+1)} \\
       & >  \Dist(z_{M+1, \varphi(K')}, z_{\varphi(K'+1),\varphi(K')}) 
         + \Dist(z_{M, M+1}, z_{\varphi(K'), M+1}) .
    \end{split}
  \end{equation}

  Now, to bound the second sum in \eqref{eq:theta_decomp}, we apply the induction hypothesis to $K'
  \leq j \leq K$, which yields:
  \begin{align} \label{e:point1}
    - \sum_{\varphi(j)< k<\varphi(j+1)} \theta_{ k}
    >    \mu_{\varphi(j)} + \nu_{\varphi(j+1)}.
  \end{align}
  Summing \eqref{e:point0'} with \eqref{e:point1} for all $K' \leq j \leq K$ allows to obtain, due
  to \eqref{eq:theta_decomp}, that
  \begin{align*}
    -\sum_{\varphi(K')< k<M+1} \theta_{k}
    > \Dist(z_{M+1, \varphi(K')}, z_{\varphi(K'+1),\varphi(K')}) 
         + \Dist(z_{M, M+1}, z_{\varphi(K'), M+1})  
       + \mu_{\varphi(K')}.
  \end{align*}
  (we notice that all but one of the terms $\mu_{\varphi(j)}$ and $\nu_{\varphi(j+1)}$ vanish).  The
  conclusion then follows, observing that
  \begin{equation*}
    \Dist(z_{M+1, \varphi(K')}, z_{\varphi(K'+1),\varphi(K')})
    + \mu_{\varphi(K')}
    =     \Dist(z_{M+1, \varphi(K')},  z_{\varphi(K')+1, \varphi(K')} ).
  \end{equation*}
\end{proof}

\begin{rem}
  \label{rem:neg_cross}
  We can apply a similar construction to families of aligned geodesics ordered in the opposite
  order, i.e. such that $\gamma_{i+1}$ is now on the \emph{right} of $\gamma_i$. Importantly, in
  this case, we do {\em{not}} reverse the direction of the hypotheses (b) (otherwise, the
  construction would be exactly the same and of no additional interest). See Figure
  \ref{fig:straight}.
  \begin{figure}[h!]
    \includegraphics[scale=0.9]{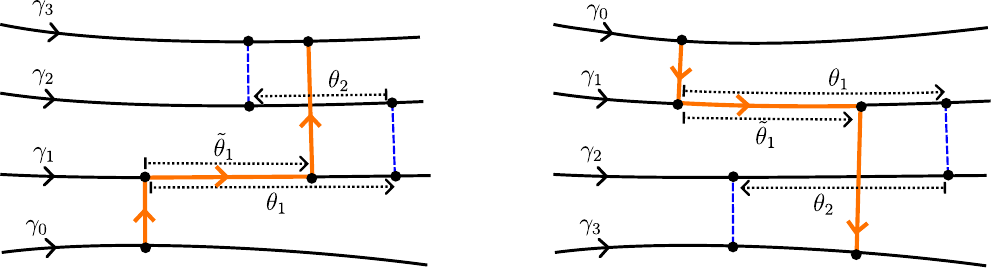}
    \caption{Two examples with $M=3$ and positive or negative
   crossings.}
    \label{fig:straight}
  \end{figure}
   \end{rem}

  \subsection{Crossing intervals and erasing of negative crossing parameters}
  \label{s:cross_int}

  In this section, we lift the geodesics $\beta$ to the hyperbolic plane, and apply Proposition
  \ref{p:pluscross} to erase all negative crossing variables.  This yields newly defined \emph{positive}
  variables $(\tilde \theta_q)_{q \in I(\xi)}$. We will rely on the periodic re-indexation of
  $\Theta$ introduced in \S \ref{s:cop_relabel} as well as the lifting from \S \ref{s:lift}.
  
  The forthcoming discussion, in the case of purely non-crossing eights, is essentially empty.

  \subsubsection{Decomposition of $\Theta^i$ in crossing intervals}
  \label{sec:decomp-thet-cross}

  Let $i \in \{1, \ldots, \cc\}$.
  We decompose the set $\Theta^i\sim \Z_{ n_i}$ according to the signs of its elements.
  
  \begin{nota} \label{nota:cross_int}
    We let $I_1^+, \ldots, I_{p}^+ \subseteq \Theta^i$ denote the connected components (maximal
    subintervals) of $\{q\in\Theta^i, \sign(q)=+\}$, and $I_1^-, \ldots, I_{p}^- \subseteq \Theta^i$
    the connected components of the complement.  We call the intervals $I_j^\delta$ \emph{crossing
      intervals} ($j=1, \ldots, p$, $\delta=\pm$).  \emph{Actual crossings} correspond to intervals
    $I_j^\delta$ of length $\geq 2$.
  \end{nota}
 %For the moment let us assume that $|I_j^+| <n_1$ and $|I_j^-| <n_1$ for all $j$, in other words we assume that $\sign(k)$ is not constant for $k\in \Theta^1$.
 %We will see in Remark \ref{e:allequal} that the case where the $\sign(k)$ are all equal is not
 %very different.

% We note that the previously defined $\theta(i, n)$ coincides with the signed distance
% $\Dist(z_{\sigma^{-1}n, n}, z_{\sigma n, n})$, and that the notations introduced above are
% consistent with the ones used in \S \ref{s:fam_aligned}, now studying the family of aligned
% geodesics $\tilde \beta_q$ for $q \in \mathbf{\Theta}^i$.

%Note as well that $L_{\sigma^{-1}n, n}$ coincides with $L(i, n) $.

  \begin{rem}
    \label{rem:crossing_const_sign}
    If the function $\sign$ is constant on $\Theta^i$, then $p=1$ (there is only one crossing
    interval), and we either have $I_1^+ = \emptyset$ or $I_1^- = \emptyset$. In all other cases,
    due to the cyclicity of $\Theta^i\sim \Z_{ n_i}$, the crossing intervals alternate in signs, and
    hence the number of positive and negative crossing intervals are equal.
  \end{rem}
  
  This decomposition of $\Theta^i$ in crossing intervals yields the partition
  $$\Theta^i=\bigsqcup_{j=1}^p \bigsqcup_{\delta=\pm} I_j^\delta,$$
  which  allows us to identify
  $$\R^{\Theta^i} \simeq \prod_{j=1}^p \prod_{\delta=\pm} \R^{I_j^\delta}.$$
 
  Note that crossing intervals $I_j^\delta$ ($j\in\Z$) can also be defined as subsets of
  ${\mathbf{\Theta}}^i$,
 %(invariant under translation by $n_1\Z$)
  after pre-composing with the projection map $\Z\longrightarrow \Z_{n_i}$. In that case we have
  $I_{j+p}^\delta=I_{j}^\delta+ n_i$, where $p$ is the number of positive crossing intervals
  (equivalently, of negative crossing intervals).

  \subsubsection{Application of the erasing procedure to one crossing interval}
  \label{sec:appl-eras-proc}

  Let $j \in \{1, \ldots, p\}$ be an actual crossing. We write
  $I_j^\delta = \{q_0 +1, \ldots, q_0 + M \}$, where $M = \#I_j^\delta \geq 2$. By Lemma
  \ref{lem:beta_al}, we can apply Proposition~\ref{p:pluscross} (and Remark \ref{rem:neg_cross} if
  $\delta = -$) to the family of $M+1$ aligned geodesics $\tilde{\beta}(q)$ with
  $q_0 \leq q \leq q_0+M$, and obtain:
    \begin{itemize}
    \item a partition $(P_\xi^o)_{\xi \in \Xi_M}$ of $\R^{\{q_0+1, \ldots, q_0+M-1\}}$;
    \item subsets $\{q_0 = \varphi_\xi(0) < \varphi_\xi(1) < \ldots < \varphi_\xi(K_\xi+1) = q_0+M\}$ of
      $I_j^\delta \cup \{q_0\}$
    \end{itemize}
    satisfying the properties (a), (b), (c).  We then define:
    \begin{itemize}
    \item $\Xi(I_j^\delta)$ as a copy of $\Xi_M$;
    \item for $\xi = \xi_j^\delta \in \Xi(I_j^\delta)$, $P_{\xi_j^\delta} = P_\xi^o \times \R$, so
      that $(P_{\xi_j^\delta})_{\xi_j^\delta \in \Xi(I_j^\delta)}$ is a partition of
      $\R^{I_j^\delta}$;
    \item for $\xi \in \Xi(I_j^\delta)$,
      $I_j^\delta(\xi) := \{\varphi_\xi(1) < \ldots < \varphi_\xi(K_\xi+1)\}$, which is a
      subinterval of $I_j^\delta$.
    \end{itemize}

    We extend this definition to the case where the crossing is not an actual crossing, by setting
    $(P_{\xi_j^\delta})_{\xi_j^\delta \in \Xi(I_j^\delta)}$ to be the trivial partition
    $\R^{I_j^\delta} = \R$ indexed by a singleton set $\Xi(I_j^\delta)$, and
    $I_j^\delta(\xi) = I_j^\delta$ in that case.

  \begin{rem}
    Keep in mind that by construction, for any $\xi\in \Xi(I_j^\delta)$, the interval
    $I_j^\delta(\xi)$ always contains the right endpoint $q_0+M$ of $I_j^\delta$, which is by
    definition a U-turn. The sets $P_{\xi_j^\delta} \subseteq \R^{I_j^\delta}$ all contain a full copy
    of $\R$ for the rightmost point of $I_j^\delta$, because the U-turn variables do not need to be
    controlled, by virtue of the Useful Remark, Remark \ref{r:useful}. Indeed, it is automatic that
    the segments $ [z_{q_0-1, q_0}, z_{\varphi(1), q_0}] $ and $[z_{\varphi(K), q_0+M}, z_{q_0+M+1, q_0+M}]$ are
    positively oriented.
  \end{rem}

  \begin{rem} \label{e:allequal1} In the case described in Remark \ref{rem:crossing_const_sign}, the
    construction is slightly different at the last step of the induction. If
    $[z_{\varphi(K), n_i}, z_{n_i+1, n_i}]$ is positively oriented, we proceed as normal, but if
    $[z_{\varphi(K), n_i}, z_{n_i+1, n_i}]$ is negatively oriented, then we simply take
    $I(\xi)=\emptyset$.  
\end{rem}

 \begin{rem} 
   The construction is done in the universal cover. Since the covering group acts by isometries, if
   we replace $I$ by $I+n_i$, then the picture is translated by a hyperbolic isometry, namely the
   translation of axis $\tilde{\mathbf{c}}_i$ and of translation length $\ell({\mathbf{c}}_i)$.
% There is a natural idenfication of $\R^{I}$ and $\R^{I+n_1}$ by translation of indices.
% We can take $\Xi(I)=\Xi(I+n_1)$, the same partition $(P_\xi)_{\xi\in\Xi(I)}$, and $(I+n_1)(\xi)=I(\xi)+n_1$.
   This remark implies that the construction done in the universal cover projects down to
   ${\mathbf{S}}$, and that the subsets $I_j^\delta(\xi_j^\delta)$ satisfy the invariance property
   $I_{j+p}^\delta(\xi_{j+p}^\delta)=I_j^\delta(\xi_j^\delta)+n_i$.
 \label{r:lift}
\end{rem}

\subsubsection{New representative of the homotopy class of $\mathbf{c}_i$}
Let us show how our construction will be used to resolve the issue caused by the crossing interval
$I_j^\delta$, raised in \S \ref{s:problems}.

\begin{nota}
  \label{nota:GD}
  Let $\xi \in \Xi(I_j^\delta)$.  For $q = \varphi_\xi(k) \in I_j^\delta(\xi)$, we consider the points
  $$G_{q} := z_{\varphi_\xi(k-1), \varphi_\xi(k)} \quad \text{and} \quad
  D_{q} := z_{\varphi_\xi(k+1), \varphi_\xi(k)}.$$ We denote as $\cI'_{q}$ the segment
  $[G_{q}, D_{q}]$ on $\tilde{\beta}(q)$.
\end{nota}

By property (a), the segment $\tilde{\cI}'_q$ is positively oriented for every
$q \in I_j^\delta(\xi)$.  We observe that the portion
$\tilde p_{q_0+1} \smallbullet \ldots \smallbullet \tilde p_{q_0+M}$ of the infinite path
$\smallbullet_{q\in {\mathbf{\Theta}}^i}\; \tilde p(q)$ is homotopic, with endpoints gliding along
$\tilde\beta(q_0)$ and $\tilde \beta(q_0+M)$, to the path
\begin{align}
  \label{eq:homotopy_one_cross}
   \tilde{B}_{q_0, \varphi_\xi(1)}
   \smallbullet \cI'_{\varphi_\xi(1)}
   \smallbullet
   \ldots
   \smallbullet
   \tilde{B}_{\varphi_\xi(k-1), \varphi_\xi(k)}
   \smallbullet \cI'_{\varphi_\xi(k)}
   \smallbullet
   \tilde{B}_{\varphi_\xi(j), \varphi_\xi(j+1)}
   \smallbullet
   \ldots
   \smallbullet \cI'_{\varphi_\xi(K)}
   \smallbullet
   \tilde{B}_{\varphi_\xi(K),q_0+M}.
 \end{align}
 This is the path represented in Figure \ref{fig:erasing_constr}.  We therefore see that Proposition
 \ref{p:pluscross} allows to replace our original representation of $\tilde {\mathbf{c}}_i$, where
 the segments $\tilde{\mathcal{I}}(q)$ could be oriented in any direction, to a new one where the
 segments $\cI'_{\varphi_\xi(k)}$ are positively oriented.

\subsubsection{Erasing of all crossing intervals in $\Theta^i$}
\label{sec:erasing-all-crossing}

Let us now apply the erasing procedure simultanously to all crossing intervals in $\Theta^i$.

 \begin{nota} \label{nota:erasing_i}
   We define for the component $\mathbf{c}_i$ the set
   \begin{equation}
     \label{eq:partition_crossing_i}
     \Xi^i:=\prod_{1 \leq j \leq p} \prod_{\delta=\pm}\Xi(I_j^\delta).
   \end{equation}
   We denote as $\xi^i=(\xi_j^\delta)_{\substack{1 \leq j \leq p \\ \delta=\pm}}$ the elements of
   $\Xi^i $, and for such a $\xi^i$, we let
   \begin{equation}
     P_{\xi^i}:=\prod_{j=1}^p \prod_{\delta=\pm}  P_{\xi_j^\delta} \subseteq \prod_{j=1}^p
     \prod_{\delta=\pm} \R^{I_j^\delta} =\R^{\Theta^i}\label{eq:partition_crossing_i_2}
   \end{equation}
   and
   \begin{equation}
     I(\xi^i)=  \bigsqcup_{j=1}^p \bigsqcup_{\delta=\pm} I_j^\delta(\xi_j^\delta),
     \quad \quad 
     \tilde I(\xi^i)=  \bigsqcup_{j\in \Z} \bigsqcup_{\delta=\pm} I_j^\delta(\xi_j^\delta).
     \label{eq:partition_crossing_i_3}
   \end{equation}
   We denote $n_{\xi^i} = \# I(\xi^i)$.
\end{nota}

The collection $(P_{\xi^i})_{\xi^i\in \Xi^i}$ then forms a partition of $\R^{\Theta^i}$.  The set
$I(\xi^i)$ is a subset of $\Theta^i$ while $\tilde I(\xi^i)$ is the corresponding $n_i\Z$-periodic
subset of ${\mathbf{\Theta}}^i$. Because $I_j^\delta(\xi_j^\delta)$ always has the same right
endpoint as $I_j^\delta$, $ I(\xi^i)$ contains all the U-turn variables in $\Theta^i$.

\subsubsection{New variables $\tilde{\theta}_q$ and crossing inequality}

For $\xi^i \in \Xi^i$, the sets $I(\xi^i)$ and $\tilde{I}(\xi^i)$ can be described by an increasing
function $(\varphi_k)_{k \in \Z}$ with values in $\mathbf{\Theta}^i$ such that
$\varphi_{k+ \# I(\xi^i)} = \varphi_k + n_i$. 

\begin{nota}
  \label{nota:erase_lengths}
  Let $\xi^i \in \Xi^i$.  For $q = \varphi_k \in I(\xi^i)$, we denote as:
  \begin{itemize}
  \item   $G_{q} := z_{\varphi_{k-1}, \varphi_k}$ and
  $D_{q} := z_{\varphi_{k+1}, \varphi_k}$;
  \item $\cI'_{q}$ the segment $[G_{q}, D_{q}]$ on $\tilde{\beta}(q)$, and $\tilde \theta_{q}$ its length;
  \item $\tilde L_q= L_{\varphi_{k-1}, \varphi_k}$ the length of the orthogeodesic
    $\tilde B_{\varphi_{k-1}, \varphi_k}$.
  \end{itemize}
\end{nota}
By property (a), the segment $\cI'_{q}$ is always positively oriented and hence $\tilde{\theta}_q
\geq 0$.

We can now adapt \eqref{eq:homotopy_one_cross} and now simultaneous erase all crossing
intervals. Indeed, the infinite concatenation
$\smallbullet_{k \in \Z}\big(\tilde B_{\varphi_{k-1}, \varphi_k}\smallbullet \cI'_{\varphi_k}\big)$
gives an infinite curve in the hyperbolic plane, which projects down to a curve homotopic to the
lift~$\tilde{\mathbf{c}}_i$ of $\mathbf{c}_i$ to $\IH$.  The fact that the new path does no longer
go along $\tilde{\cI}(q)$ for $q\in \tilde I(\xi^i)$ motivates the following terminology.
\begin{defa}The variables $\theta_q$ for $q \in \Theta^i \setminus \tilde I(\xi^i)$ are said to be
  \emph{erased}.
\end{defa}

We can then restate the property (c) from Proposition \ref{p:pluscross} in the following equivalent
way.

\begin{lem}
  For any component $i$, 
  \begin{align} \label{e:funnier} \sum_{q \in \Theta^i} \theta_q
    < \sum_{q\in I(\xi^i)} \tilde \theta_q.
\end{align}
\end{lem}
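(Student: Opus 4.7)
The plan is to prove the inequality one crossing interval at a time and sum cyclically. Partition $\Theta^i$ into crossing intervals $I_j^\delta$ ($1\leq j \leq p$, $\delta=\pm$), and within each $I_j^\delta=\{q_0+1,\ldots,q_0+M\}$ split indices into preserved ones $\{\varphi(1),\ldots,\varphi(K+1)=q_0+M\}$ (writing $\varphi$ for $\varphi_{\xi_j^\delta}$) and erased ones, so that
\[
\sum_{q \in I_j^\delta} \theta_q = \sum_{k=1}^{K+1} \theta_{\varphi(k)} + \sum_{k=0}^{K} \sum_{\varphi(k)<q<\varphi(k+1)} \theta_q.
\]
For each interior preserved index $\varphi(k)$ with $1\leq k \leq K$, I would apply the identity $\theta_{\varphi(k)} = \nu_{\varphi(k)} + \tilde\theta_{\varphi(k)} + \mu_{\varphi(k)}$ already derived inside the proof of Proposition \ref{p:pluscross}, by decomposing the segment $[z_{\varphi(k)-1,\varphi(k)}, z_{\varphi(k)+1,\varphi(k)}]$ through the intermediate points $z_{\varphi(k\pm 1), \varphi(k)}$ along $\tilde\beta_{\varphi(k)}$.

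For the U-turn right endpoint $q_0+M$, one needs to extend the identity by linking interval $j$ with the next crossing interval $I_{j+1}^{-\delta}$. If $\varphi'$ denotes the preserved sequence of interval $j+1$ (so $\varphi'(0)=q_0+M$), the goal is to show
\[
\theta_{q_0+M} = \nu_{\varphi(K+1)} + \tilde\theta_{q_0+M} + \mu^{(j+1)}_{\varphi'(0)},
\]
where $\mu^{(j+1)}_{\varphi'(0)}$ is the $\mu$-quantity produced by interval $j+1$'s construction at its left boundary. This amounts to verifying that the four points $z_{q_0+M-1,q_0+M}$, $z_{\varphi(K),q_0+M}$, $z_{\varphi'(1),q_0+M}$, $z_{q_0+M+1,q_0+M}$ appear in this order along $\tilde\beta_{q_0+M}$: property (b) applied to intervals $j$ and $j+1$ controls the outer pairs, while the Useful Remark (Remark \ref{r:useful}) gives positive orientation of the U-turn segment $[z_{q_0+M-1,q_0+M}, z_{q_0+M+1,q_0+M}]$.

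Combining these identities with the bound from property (c) of Proposition \ref{p:pluscross}, namely $\sum_{k=0}^K \sum_{\varphi(k)<q<\varphi(k+1)} \theta_q \leq -\sum_{k=0}^K (\mu_{\varphi(k)} + \nu_{\varphi(k+1)})$, the interior $\mu_{\varphi(k)}, \nu_{\varphi(k)}$ ($1\leq k \leq K$) cancel in the per-interval sum and only boundary contributions survive:
\[
\sum_{q \in I_j^\delta} \theta_q \leq \sum_{k=1}^{K+1} \tilde\theta_{\varphi(k)} + \mu^{(j+1)}_{\varphi'(0)} - \mu^{(j)}_{\varphi(0)}.
\]
Summing cyclically over $j$, the right-boundary term $\mu^{(j+1)}_{\varphi'(0)}$ of interval $j$ matches the left-boundary term $\mu^{(j+1)}_{\varphi(0)}$ appearing in interval $j+1$'s contribution, so the boundary corrections telescope to zero and one obtains $\sum_{q\in\Theta^i}\theta_q \leq \sum_{q\in I(\xi^i)} \tilde\theta_q$. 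The strict inequality then follows from the strictness in Lemma \ref{l:magicstaircase} whenever any crossing interval contains non-trivial erasing (the only non-trivial situation).

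The main obstacle is the boundary identity at the U-turn endpoint, used in the second paragraph. Property (b) applied to each of the two adjacent intervals only constrains $z_{\varphi(K),q_0+M}$ to one side of $z_{q_0+M-1,q_0+M}$ and $z_{\varphi'(1),q_0+M}$ to one side of $z_{q_0+M+1,q_0+M}$; the relative order of $z_{\varphi(K),q_0+M}$ and $z_{\varphi'(1),q_0+M}$ is not read off (b) alone, and must be deduced from the alignment of the family $(\tilde\beta_q)_{q\in \mathbf{\Theta}^i}$ (Lemma \ref{lem:beta_al}) together with the sign conventions fixed by the Useful Remark at the U-turn. Once this geometric ordering is established, all remaining steps (application of (c), internal cancellation of $\mu, \nu$, cyclic telescoping of boundary $\mu$'s) are routine bookkeeping.
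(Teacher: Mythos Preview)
Your approach is the same as the paper's: sum property~(c) of Proposition~\ref{p:pluscross} to bound the erased $\theta_q$, combine with the decomposition $\theta_{\varphi_k}=\nu_{\varphi_k}+\tilde\theta_{\varphi_k}+\mu_{\varphi_k}$ (the paper's \eqref{e:chasles}), and let the $\mu,\nu$ terms cancel. Your per-interval-then-telescope organization is equivalent to the paper's direct use of the global periodic enumeration $(\varphi_k)_k$ of $I(\xi^i)$.

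You overstate the difficulty of the boundary identity at the U-turn. Since $\theta_{\varphi_k}$, $\nu_{\varphi_k}$, $\mu_{\varphi_k}$ are all defined as \emph{signed} distances $\Dist$ along the oriented line $\tilde\beta_{\varphi_k}$, the relation
\[
\theta_{\varphi_k}=\Dist(z_{\varphi_k-1,\varphi_k},z_{\varphi_k+1,\varphi_k})
=\nu_{\varphi_k}+\Dist(G_{\varphi_k},D_{\varphi_k})+\mu_{\varphi_k}
\]
is immediate from additivity of $\Dist$, with nothing to verify about the order of the four points; this is what the paper means by ``by definition''. The ordering argument you sketch is relevant to the separate positivity claim $\tilde\theta_q\geq 0$ (used later, e.g.\ in \S\ref{s:newexp}--\ref{s:cutoffQ}), but it is not needed for this lemma: even if $\tilde\theta$ is read as an unsigned length, one has $\Dist(G,D)\leq\tilde\theta$ and the inequality still follows.
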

\begin{proof}
  We simply notice that property (c) states that
  \begin{align} \label{e:funny}
  \sum_{q\in \Theta^i \setminus I(\xi^i)} \theta_q
    < - \sum_{k=1}^{\# I(\xi^i)} (\nu_{\varphi_{k+1}} + 
    \mu_{\varphi_{k }}) 
  \end{align}
  with the notation $\mu, \nu$ from Proposition \ref{p:pluscross}, and observe that by definition,
  \begin{align} \label{e:chasles}
    \nu_{\varphi_k}  + \tilde \theta_{\varphi_k}+
    \mu_{\varphi_k} = \theta_{\varphi_k}.
  \end{align}
\end{proof}

\begin{rem}
 
  In the case described in Remarks \ref{rem:crossing_const_sign} and \ref{e:allequal1}, when the sign function is constant on
  $\Theta^i$, if $\xi$ is such that $I(\xi) = \emptyset$, then \eqref{e:funnier} means that
  \begin{align*} 
    \sum_{q \in \Theta^i} \theta_q <0.
  \end{align*}
  In this case it is sufficient to consider the geodesic representative of ${\mathbf{c}}_i$, and we
  won't need to express the length $\ell_Y({\mathbf{c}}_i)$ in coordinates.
\end{rem}

\subsubsection{Product over all components $\mathbf{c}_i$}
\label{sec:product-over-all}

Let us now group our constructions for the different components
$\mathbf{c}_1, \ldots, \mathbf{c}_{\cc}$ of the multiloop $\mathbf{c}$.  In order to do so, we let
$\Xi := \prod_{i=1}^{\cc}\Xi^i$. We shall consider the cartesian product of the partitions
$P_{\xi^i}$, that is, for $\xi=(\xi^1, \ldots, \xi^{\cc})\in \Xi$, we let
\begin{align}\label{e:pxi}P_\xi = \prod_{i=1}^{\cc} P_{\xi^i}
  \quad  \text{and} \quad
  I(\xi)=\bigsqcup_{i=1}^{\cc} I(\xi^i)\subseteq \Theta.
\end{align}
The sets $(P_\xi)_{\xi \in \Xi}$ then form a partition of
$\prod_{i=1}^{\cc}\R^{\Theta^i}=\R^\Theta$. Each $P_\xi $ is a product of intervals.

We shall say that the variables $\theta_q$ for $q\not\in I(\xi)$ are \emph{erased}. The set $I(\xi)$
contains all U-turn variables. As a consequence, in the purely non-crossing case, we have
$I(\xi)=\Theta$ for all $\xi$, and all the previous discussion is empty.

\subsection{New expression for the length} \label{s:newexp}

The erasing procedure immediately yields a new expression for $\ell_Y({\mathbf{c}})$, true for
$\vec{\theta} \in P_\xi$, which replaces \eqref{e:outcome2}.

\begin{prp}
  Let $\xi \in \Xi$. As soon as $\vec{\theta} \in P_\xi$, we have
  \begin{align}\label{e:newc}
    \ell_Y({\mathbf{c}})
    % &= M_n\Big(( \sign(\varphi_j)L_{\varphi_{j-1}, \varphi_j})_{j=1}^n, (\tilde \theta_{\varphi_j})_{j=1}^n\Big)\\
= \sum_{i=1}^{\cc} M_{n_{\xi^i}}( (\eps \tilde L_q)_{q\in I(\xi^i)}, (\tilde \theta_q)_{q\in I(\xi^i)} )
  \end{align}
  where $n_{\xi^i}= \# I(\xi^i)$, $\eps \tilde L_q := \sign(q) \tilde{L}_q$ and $M_n$ is defined in
  \eqref{e:Mn}.\end{prp}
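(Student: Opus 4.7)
The plan is to transpose the proof of Theorem \ref{prop:form_l_gamma_try} (equivalently \eqref{e:oldc}) to the new piecewise-orthogonal representative of $\mathbf{c}_i$ produced by the erasing procedure. I would fix $\xi \in \Xi$ and $\vec{\theta} \in P_\xi$, and treat each component $\mathbf{c}_i$ independently before summing.

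First, I would rely on the key geometric output of \S\ref{s:cross_int}: in the universal cover, the infinite concatenation
\[
   \smallbullet_{k \in \Z} \bigl( \tilde B_{\varphi_{k-1}, \varphi_k} \smallbullet \cI'_{\varphi_k} \bigr)
\]
projects down to a closed piecewise-geodesic curve freely homotopic to $\mathbf{c}_i$. Along this curve, the consecutive orthogonal segments have lengths $\tilde L_{\varphi_k}$ (bars) and $\tilde\theta_{\varphi_k} \geq 0$ (segments on $\tilde\beta(\varphi_k)$), the nonnegativity coming from property (a) of Proposition \ref{p:pluscross}. The total number of bars traversed in one period equals $n_{\xi^i} = \# I(\xi^i)$.

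Second, I would run the matrix-product argument of Lemma \ref{p:trace1} on this new representative. Lifting a unit tangent vector $\tilde v$ based at the origin of the first bar, the relation $\tilde v\, \tilde a_1 \tilde b_1 \cdots \tilde a_{n_{\xi^i}} \tilde b_{n_{\xi^i}} = \rho(\mathbf{c}_i)\,\tilde v$ still holds, now with
\[
  \tilde a_k = a^{\epsilon \tilde L_{\varphi_k}}, \qquad
  \tilde b_k = w^{\tilde \theta_{\varphi_k}},
\]
where $\epsilon \tilde L_{\varphi_k} = \sign(\varphi_k)\,\tilde L_{\varphi_k}$. The sign of the bar is the intrinsic sign $\sign(\varphi_k)$, which, combined with Lemma \ref{lem:beta_al} applied to the subfamily of aligned lifts $(\tilde\beta(\varphi_k))_k$, encodes whether $\tilde B_{\varphi_{k-1},\varphi_k}$ arrives on the right (if $\sign(\varphi_k)=+$) or on the left (if $\sign(\varphi_k)=-$) of $\tilde\beta(\varphi_k)$, exactly as in \S\ref{sec:expression-length-as}. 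Taking traces gives
\[
  2 \cosh\div{\ell_Y(\mathbf{c}_i)} = \bigl|\Tr(\tilde a_1 \tilde b_1 \cdots \tilde a_{n_{\xi^i}} \tilde b_{n_{\xi^i}})\bigr|.
\]

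Third, I would remove the absolute value via the nice-lift argument (Lemma \ref{lem:nicelift}). The piecewise-smooth path traced in $T^1\mathbf{S}$ with the prescribed orientation rules (vectors pointing to the left of each $\cI'_{\varphi_k}$ and tangent to each $\tilde B_{\varphi_{k-1},\varphi_k}$) is continuously deformable to $t \mapsto v_0 w^t$ for a normal vector $v_0$ on the left of the geodesic representative of $\mathbf{c}_i$; this deformation preserves the trace and forces its sign to be positive. Note that the deformation uses nothing beyond the minimal-position property and left-pointing condition, both of which survive the erasing construction (the new segments $\cI'_q$ are positively oriented, the bars arrive on the correct side by construction). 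This is the main subtlety I anticipate: carefully checking that the orientation conventions at the junctions between consecutive crossing intervals $I_j^\delta$ and $I_{j+1}^{-\delta}$ (i.e. across U-turns located at the shared endpoints by Notation \ref{nota:cross_int} and the remark that $I_j^\delta(\xi)$ always contains the rightmost U-turn of $I_j^\delta$) are compatible with a global nice lift.

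Finally, I would expand the trace exactly as in the proof of Theorem \ref{prop:form_l_gamma_try}: the matrices $\tilde a_k \tilde b_k$ admit the same decomposition $\hyp_+\div{\tilde\theta_{\varphi_k}}\,E_{+1}(\tfrac{\epsilon \tilde L_{\varphi_k}}{2}) + \hyp_-\div{\tilde\theta_{\varphi_k}}\,E_{-1}(\tfrac{\epsilon \tilde L_{\varphi_k}}{2})$, and the multiplicative rules for $E_{\pm 1}$ yield
\[
  \cosh\div{\ell_Y(\mathbf{c}_i)} = \sum_{\substack{\delta \in \{\pm 1\}^{n_{\xi^i}} \\ \delta_1 \cdots \delta_{n_{\xi^i}} = +1}} \hyp_\delta\div{\tilde\theta}\, \cosh\div{\rho^\delta \cdot \epsilon \tilde L},
\]
which by Notation \ref{nota:Mn} equals $\tfrac12 M_{n_{\xi^i}}((\epsilon \tilde L_q)_{q\in I(\xi^i)},(\tilde\theta_q)_{q \in I(\xi^i)})$ after applying $2\argcosh$. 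Summing over $i \in \{1,\dots,\cc\}$ yields \eqref{e:newc}. The degenerate situations of Remarks \ref{rem:crossing_const_sign} and \ref{e:allequal1} with $I(\xi^i)=\emptyset$ are handled separately, since \eqref{e:funnier} then forces $\sum_{q\in\Theta^i}\theta_q<0$ and $\mathbf{c}_i$ is already its own geodesic representative up to a trivial modification.
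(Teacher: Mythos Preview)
Your approach is correct and is essentially what the paper does: it simply invokes Theorem~\ref{prop:form_l_gamma_try} on the new piecewise-orthogonal representative $\smallbullet_{k\in\Z}(\tilde B_{\varphi_{k-1},\varphi_k}\smallbullet\cI'_{\varphi_k})$, whereas you re-run the matrix and nice-lift arguments explicitly. One small slip: after applying $2\argcosh$ to both sides you get $\ell_Y(\mathbf{c}_i) = M_{n_{\xi^i}}(\ldots)$ directly, not $\tfrac12 M_{n_{\xi^i}}(\ldots)$.
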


\begin{proof}
  For each $1 \leq i \leq \cc$, on the subset $P_{\xi^i}$ of $\R^{\Theta^i}$, we apply Proposition
  \ref{prop:form_l_gamma_try} to the  representative 
  $\smallbullet_{k \in \Z}\big(\tilde B_{\varphi_{k-1}, \varphi_k}\smallbullet
  \cI'_{\varphi_k}\big)$ of the homotopy class of $\tilde{\mathbf{c}}_i$.
\end{proof}

Note that, as long as $\vec{\theta} \in P_{\xi}$, each variable $\tilde \theta_{q}$ with
$q\in I(\xi)$ is positive. As a consequence, this new formula solves the first issue raised in \S
\ref{s:problems}.

When using formula \eqref{e:newc} later, we shall consider all the variables $\theta_q $ with
$q\not \in I(\xi)$ as additional neutral parameters. The following observation will then be useful.
\begin{lem}
  \label{lem:new_var_erase}
  Let $\xi \in \Xi$. On the set $P_\xi$, for any $q \in I(\xi)$,
  \begin{equation}
    \tilde{L}_q = \fn((\theta_{q'})_{q'\not \in  I(\xi)}, \vec{L})
    \quad \text{and} \quad
    \tilde \theta_q= \theta_q +\fn((\theta_{q'})_{q'\not \in I(\xi)},
    \vec{L}).
\end{equation}
\end{lem}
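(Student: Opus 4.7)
The plan is to observe that the isometry class of the finite configuration of aligned geodesics $\tilde{\beta}(q')$ for $\varphi_{k-1}\leq q'\leq \varphi_k$ in $\IH^2$ (where $(\varphi_j)_{j\in\Z}$ is the increasing enumeration of $\tilde I(\xi^i)$, $i$ being the component index containing $q$) is entirely determined by two finite pieces of data: the lengths $L_{q'-1,q'}$ for $\varphi_{k-1}<q'\leq\varphi_k$ of the orthogeodesics joining consecutive lifts, and the algebraic distances $\theta_{q'}$ for $\varphi_{k-1}<q'<\varphi_k$, which record the relative positions along $\tilde\beta(q')$ of the feet of the two orthogeodesics to the neighbouring geodesics. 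By the very construction of $I(\xi^i)$, the indices $\varphi_{k-1}$ and $\varphi_k$ are consecutive elements of $\tilde I(\xi^i)$, so every $q'$ strictly between them lies outside $\tilde I(\xi^i)$; in other words, the $\theta_{q'}$ appearing in the second list are all erased variables. Since the length $\tilde L_q=L_{\varphi_{k-1},\varphi_k}$ of the orthogeodesic between the two extreme geodesics of this configuration is an isometric invariant, it is a function of $\vec L$ and of these erased variables only, which proves the first identity.

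For the second identity, the plan is to invoke the Chasles-type relation \eqref{e:chasles}, which after rearrangement reads $\tilde\theta_q=\theta_q-\nu_q-\mu_q$, where $\mu_q$ and $\nu_q$ are the algebraic distances along $\tilde\beta(q)$ introduced in Proposition~\ref{p:pluscross}. By their very definition, $\mu_q$ is a difference of algebraic positions along $\tilde\beta(q)$ of the feet of $\tilde B_{q+1,q}$ and $\tilde B_{\varphi_{k+1},q}$, and so depends only on the isometry class of the configuration of $\tilde\beta(q'),\ \varphi_k\leq q'\leq\varphi_{k+1}$; symmetrically, $\nu_q$ depends only on the configuration of $\tilde\beta(q'),\ \varphi_{k-1}\leq q'\leq\varphi_k$. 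Applying the previous paragraph's argument to each of these two sub-configurations shows that each of $\mu_q$ and $\nu_q$ is a function of $\vec L$ and of the $\theta_{q'}$ with $q'\in(\varphi_{k-1},\varphi_k)\cup(\varphi_k,\varphi_{k+1})$, and by construction all these intermediate indices are erased. The announced expression for $\tilde\theta_q$ follows.

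I do not expect any substantive obstacle in carrying out this argument; the geometric content has already been packaged into the definition of $I(\xi)$ and into Proposition~\ref{p:pluscross}. The one bookkeeping point worth a short verification is that $\mu_q$ and $\nu_q$ depend genuinely only on the relevant sub-configurations of aligned geodesics, which reduces to the elementary fact that the common perpendicular between two disjoint geodesics in $\IH^2$ is unique, so its foot on any given $\tilde\beta(q')$ is a well-defined isometry-invariant function of the sub-configuration. Once this is noted, the proof reduces to the two pointwise statements above.
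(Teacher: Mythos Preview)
Your proposal is correct and follows essentially the same approach as the paper: first observe that $\tilde L_q=L_{\varphi_{k-1},\varphi_k}$ is determined by the configuration of consecutive lifts $\tilde\beta(q')$ for $\varphi_{k-1}\leq q'\leq\varphi_k$, hence by $\vec L$ and the intermediate (erased) $\theta_{q'}$; then use the Chasles relation \eqref{e:chasles} to write $\tilde\theta_q=\theta_q-\nu_q-\mu_q$ and argue identically for $\mu_q,\nu_q$. Your write-up is slightly more explicit about why the sub-configuration determines these quantities, but the argument is the same.
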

\begin{proof}
  Let $\xi \in \Xi$.  For $1 \leq i \leq \cc$, let us represent $\xi^i \in \Xi^i$ by a function
  $(\varphi_k)_{k \in \Z}$. For $q \in I(\xi^i)$, the length $\tilde{L}_q$ is the length of the
  orthogeodesic from $\tilde{\beta}(\varphi_{k-1})$ to $\tilde{\beta}(\varphi_k)$, and therefore can
  be expressed as a function of $\vec{L}$ and the $\theta_{q'}$ with $q' \in \Theta^i$ such that
  $\varphi_{k-1} < q' < \varphi_k$. This implies that $q' \notin I(\xi)$, which was our claim.

  Similarly, the quantities $\nu_{\varphi_{k}} , \mu_{\varphi_{k}}$ can be expressed solely in terms
  of the $\theta_{q'}$ with $q'\not \in I(\xi)$.  However, we have seen in \eqref{e:chasles} that
  $\theta_{\varphi_k} = \nu_{\varphi_k} + \tilde{\theta}_{\varphi_k} + \mu_{\varphi_k}$, which
  allows us to conclude.
\end{proof}

As a consequence, if we fix $(\theta_{q})_{q\not \in I(\xi^i)}$ as well as $\vec{L}$, and view the
expression \eqref{e:newc} as a function of $(\tilde \theta_q)_{q\in I(\xi^i)}$, then Proposition
\ref{p:mainclass} holds for this new length function.

\begin{rem}
  An intuition as to why the variables $\theta_q$ taking negative values yield additional neutral
  parameters can be found by examining Figure \ref{fig:crossing_ex}. Recall that we have motivated
  in \S\ref{s:act_ntr} the fact that the lengths $\vec{L}$ are neutral parameters by observing that,
  when we describe the path~$\mathbf{c}_i$ as the concatenation
  $\mathbf{c}_i^{\mathrm{op}} = \smallbullet_{q\in {{\Theta}}^i}\; p(q)$, each bar is visited twice, which mean
  that there is a factor of $2$ in front of each $L_j$ in our comparison estimates.

  We see in the example of Figure \ref{fig:crossing_ex} that, when $\theta_1^+ < 0$, the portion
  $\mathcal{I}_1^+$ is visited three times by the path $\mathbf{c}_i^{\mathrm{op}}$, instead of once
  when $\theta_1^+\geq0$. This means that we do not need to apply the operator $\cL$ to cancel
  exponential growth related to the parameter $\theta_1^+$, or, in other words, that it is a neutral
  parameter.
\end{rem}

%  The same construction applies to all components ${\mathbf{c}}_1, \ldots, {\mathbf{c}}_{\cc}$, with $\Theta^1, {\mathbf{\Theta}}^1$ replaced everywhere by $\Theta^i, {\mathbf{\Theta}}^i$ for each $i=1, \ldots, \cc$, $\Xi^1$ replaced by $\Xi^i$, etc. We obtain, for each $i=1, \ldots, \cc$, 
% a partition $(P_{\xi^i})_{\xi^i\in \Xi^i}$ of $\Theta^i$. The construction also gives us a subset of indices $ I(\xi^i) \subset \Theta^i $, as well as $ \tilde I(\xi^i) \subset   {\mathbf{\Theta}}^i\simeq\Z$.
% For each $k\in \tilde I(\xi^i)$, we have a positively oriented segment $\cI^{'}_k= [G_k, D_k]$ in $\tilde \beta_k$, of length $\tilde \theta_k$.
%  If $\varphi$ is an increasing enumeration of all elements of $\tilde I(\xi^i)$, then the concatenation of all $\tilde B_{\varphi_{j-1}, \varphi_j}\smallbullet \cI^{'}_{\varphi_j}$ gives a bi-infinite curve in the hyperbolic plane, which projects down to a curve homotopic to ${\mathbf{c}}_i$. For $k\in \tilde I(\xi^i)$, we denote by $\nu_k$ the distance between $z_{k-1, k}$ and $G_k$, and by $\mu_k$ the distance between $D_k$ and $z_{k+1, k}$.
% For $k=\varphi_j\in  I(\xi^i)$, we shall denote (in analogy with Lemma \ref{l:magicstaircase}) by 
%$\rr_k=\rr_{\varphi_j}$ the portion of $\tilde B_{\varphi_{j}, \varphi_{j+1}}$ lying between $\tilde \beta_{k}$ and $\tilde \beta_{{k+1}}$ (starting at the point $D_{k}$).

\subsection{The purely non-crossing case}
\label{s:purelync}
    
The reader only interested in the purely non-crossing case can read the rest of the paper,
considering that $\Xi$ is a set with one element denoted $\xi=(\xi^1, \ldots, \xi^{\cc})$, and that
in what follows we let $I(\xi)=\Theta$, $I(\xi^i)=\Theta^i$ for $i \in \{1, \ldots, \cc\}$. In this
case we have $\tilde \theta_q=\theta_q$ for all $q\in \Theta$, as well as
$\cI'_q=\overline{\cI}_q=[G_q, D_q]$.

\subsection{A partition of unity on Teichm\"uller space} \label{s:cutoffQ}

We recall that the space of interest in this article is the Teichm\"uller space
$\mathcal{T}_{\g,\n}^\ast$, which we have equipped with a new set of coordinates
$(\vec{L}, \vec{\theta}) \in \R_{>0}^r \times \R^\Theta$.  We have introduced a partition
$\R^\Theta = \bigsqcup_{\xi \in \Xi} P_\xi$ that we wish to use on the parameter
$\vec{\theta}$. Because our methods require some smoothness, it will be more convenient to partition
the values of $\vec{\theta}$ using a partition of unity, which we shall now construct.  Note that
the entire discussion above is dependent on the length vector $\vec{L}$, although this has not been
made explicit in the notation.

\begin{nota}
  \label{nota:fcr}
  Let $\fcr{0}$ and $\fcr{\infty}$ denote two smooth functions on $\R_{\geq 0}$ such that:
  \begin{itemize}
  \item $\fcr{0}+\fcr{\infty}\equiv 1$;
  \item $\fcr{0}$ is supported in $[0, 2\log 2]$ and identically equal to $1$ on $[0, \log 2]$.
  \end{itemize}
\end{nota}

\begin{defa}
  \label{defa:part_unity_cross}
  For $\xi\in \Xi$ and $\cQ\subseteq I(\xi)$, we define a cut-off function on Teichm\"uller space,
  \begin{align}\label{e:PsiQ}
    \Psi_{\xi, \cQ}(\vec{L}, \vec{\theta})
    = \bbbone_{P_\xi}(\vec{L}, \vec{\theta})
    \prod_{q\in \cQ} \fcr{\infty}(\tilde \theta_q) \prod_{q \not\in \cQ} \fcr{0}(\tilde \theta_q).
  \end{align}
\end{defa}

 By construction,
 \begin{align*}
   \sum_{\substack{\xi\in \Xi \\ \cQ\subseteq I(\xi)}} \Psi_{\xi, \cQ}\equiv 1.
 \end{align*}

 As announced in \S \ref{s:newexp}, we associate to each partition element some additional neutral
 variables.
 \begin{nota}[Neutral variables associated with $(\xi, \cQ)$]
   \label{nota:neutr_cr}
   For $\xi\in \Xi$ and $\cQ\subseteq I(\xi)$, we let 
   \begin{align}\label{e:neutralxiQ}
   \ThetaNe(\xi, \cQ)= \Theta\setminus \cQ.
    \end{align}  
 \end{nota}

 Note that, on the support of the cut-off function $\Psi_{\xi, \cQ}$, the variables
 $(\tilde\theta_q)_{q\in \Theta\setminus \cQ}$ stay in $[0, 2\log 2]$, so that we do not need to
 gain exponential decay in those variables.  They can therefore be considered to be neutral
 variables.

%%%Local Variables: 
%%% mode: latex
%%% TeX-master: "main"
%%% End: 

\section{The lengths of boundary curves}
\label{s:lengthboundary}
\subsection{Objective and plan of the section}

Recall that, in Proposition \ref{prp:length_g_lambda}, we have obtained expressions for the lengths
of the pair of pants decomposition $(\Gamma_\lambda)_{\lambda\in \Lambda}$ in our new coordinates
$(\vec{L}, \vec{\theta})$ on the Teichm\"uller space $\cT_{\g,\n}^*$. This new expression reads
$\ell_Y(\Gamma_{\lambda})= Q_{m}(\vec{L}^\lambda,\vec{\tau}^\lambda)$, where
$\vec{L}^\lambda = (L^\lambda_1, \ldots, L^\lambda_{m})$ and
$\vec{\tau}^\lambda = (\tau^\lambda_1, \ldots, \tau^\lambda_{m})$ are the parameters
described in \S \ref{s:poly_curve_glambda}, $m=m(\lambda)$, and the functions $F_m$, $Q_m$ are
defined in \eqref{e:Fn} by
\begin{align} \label{e:Ff0}
  F_m(\vec{L}, \vec{\tau})
  = \frac{\cosh (\frac 12 Q_m(\vec{L}, \vec{\tau}))}{\exp(\frac 12 \sum_{j=1}^m\tau_j)}
  = \frac12 \sum_{\alpha  \in \{0, 1\}^{m}}
  (-1)^{|\alpha|}
  \hyp_{-\partial \alpha} \div{\vec{L}} e^{ - \alpha \cdot \vec{\tau}}
  \end{align}
 with $(\partial \alpha)_j =1$ if $\alpha_{j}=\alpha_{j-1}$ and $-1$ if $\alpha_{j}\not=\alpha_{j-1}$.
 % $\=(|u_2-u_1|, |u_3-u_2|, \ldots, |u_{s}- u_{s-1}|, |u_1-u_s|)$.

 The aim of this section is to study the function $F_m$, and in particular its logarithmic
 derivatives, which will appear when we apply our key argument to the
 integral~\eqref{e:int_ell}. The formula~\eqref{e:Ff0} is problematic because of its alternating
 signs: it is not clear which terms dominate the sum, nor is it that the formula gives a positive
 function. In fact, the domain $\domain$ of $(\vec{L}, \vec{\theta})$ is precisely defined by the
 fact that the formula giving $\cosh(\ell_Y(\Gamma_\lambda)/2)$ is greater than $1$ for all
 $\lambda \in \Lambda$, as seen in Proposition \ref{p:super}.

We work around this problem introducing a \emph{favorable region}, i.e. a region of $\domain$ on
 which $F_m$ is well-approximated by its term with $\alpha = (0, \ldots, 0)$, equal to
 $\prod_{j=1}^{m} \sinh(L_j/2)$. This domain is defined in terms of the \emph{heights of cells}
 introduced in \S \ref{sec:height-cell}. We provide estimates on the derivatives of $F_m$ true
 inside the favorable region in Proposition \ref{p:derF}.  Outside the favorable region, the height
 of one (or several) cells must be bounded -- we \emph{neutralize} such cells. The neutralization
 procedure is to low-height cells what the erasing procedure was to negative $\theta$ parameters in
 \S \ref{s:lengthc}. The description of the neutralization procedure while dealing
 simultanously with all of the boundary components $(\Gamma_\lambda)_{\lambda \in \Lambda}$
 requires some technical considerations, addressed in \S \ref{s:simultanous}. In \S
 \ref{s:partition}, we finally obtain a partition of unity of the Teichm\"uller space so that we
 understand the length of $\Gamma_\lambda$ and its derivatives on every piece of the partition.

 \subsection{Polygonal curves, neutralization, derivation}
 In \S \ref{s:bound}, we have used a key observation on the structure of the curves $\Gamma_\lambda$
 to compute their lengths: the fact that they can be viewed as a succession of perpendicular
 geodesic segments of alternating lengths $L_j^\lambda, \tau_j^\lambda$, turning the same direction
 each time (see Figure \ref{fig:Glambda}). We shall introduce a notion of polygonal curve, which
 encapsulates this key property of $\Gamma_\lambda$.  Let us first provide this definition without
 reference to a hyperbolic metric. Subsequently in \S \ref{s:GR}, a hyperbolic metric will be given,
 and we will work with preferred ``geodesic representatives''.

\subsubsection{Definition of polygonal curves} 

In the following, we fix a base surface $\mathbf{S}$ and an oriented multi-curve~$\beta$ on
$\mathbf{S}$.

\begin{defa} \label{d:pc}A polygonal curve $\Gamma$ (on the surface ${\mathbf{S}}$ and based on
  $\beta$) is an oriented closed path without self-intersections, formed by a cyclic sequence
  $(J_j, K_j)_{j \in \Z_m}$ of oriented segments where, for all $j \in \Z_m $,
  \begin{itemize}
  \item $J_j$ meets the multi-curve $\beta$ at its endpoints and not in its interior, and is
    homotopically non-trivial with gliding endpoints along $\beta$;
  \item $K_j$ is a subsegment of $\beta$, possibly with reversed orientation;
  \item $J_j$ arrives at $K_j$ on the right at the point $t(J_j) = o(K_j)$, and $K_j$ arrives at
    $J_{j+1}$ on the right at the point $t(K_j) = o(J_{j+1})$.
  \end{itemize}
\end{defa}
  
Note that the multi-curve $\beta$ is essential to this definition.  The segments $J_j$ and $K_j$
play different roles and will therefore be named differently.
\begin{defa} \label{def:cellbr}
  The segments $K_j$ are called the \emph{cells} of the polygonal curve $\Gamma$, and the $J_j$
  its \emph{bridges}.  We denote 
  $\Cell(\Gamma)=\{K_j, j \in \Z_m\}$ and $\Br(\Gamma)=\{ {J}_{j}, j \in \Z_m\}$.

  For a cell $K = K_j \in \Cell(\Gamma)$, the bridges \emph{adjacent to $K$} refer to the bridges
  $J_j$ and $J_{j+1}$. We respectively call these bridges the \emph{bridge arriving on $K$} and the
  \emph{bridge coming out of $K$}.
\end{defa}

 \begin{nota} \label{nota:clg}
   We call $m$ the \emph{combinatorial length} of $\Gamma$, denoted $|\Gamma|$.
 \end{nota}
 By convention, for $m=0$, a polygonal curve of combinatorial length $0$ is a simple curve that does
 not intersect the family $\beta$ (but may be homotopic to a component of $\beta$). In particular,
 polygonal curves of combinatorial length $0$ do not have any cells.

 \begin{exa}\label{ex:pcgamma}
   We have seen in \S \ref{s:poly_curve_glambda} that each of the curves
   $(\Gamma_{\lambda})_{\lambda\in \Lambda}$ in the pair of pants decomposition defined in \S
   \ref{s:ppdecompo} is a polygonal curve based on $\beta$, where:
   \begin{itemize}
   \item the bridges $J_j$ are among the bars $B_q$;
   \item the cells $K_j$ are among the segments $\cK_{q q'}$ defined in \S \ref{s:tau}.
   \end{itemize}
   The family $(\Gamma_\lambda)_{\lambda \in \Lambda}$ will be the main example to which the whole
   discussion applies.  For $\lambda\in \LambdaBC$, we can even say that the corresponding cells are
   among the segments $(\cK_{q})_{q\in \Theta}$.

   Note that the components $(\beta_\lambda)_{\lambda \in \Lambdabeta}$ are polygonal curves of
   combinatorial length $0$.
 \end{exa}

\subsubsection{Neutralizing a cell} 
\label{s:neutralizing_one}

Let $\Gamma=(J_j, K_j)_{j \in \Z_m}$ be a polygonal curve on $\mathbf{S}$ of combinatorial length
$m\geq 2$. Let $j\in \Z_{m}$. We may construct a new polygonal curve $\Gamma'$ by replacing the
sequence $(J_{j}, K_j, J_{j+1})$ in $\Gamma$ by one single bridge
$\tilde{J}_{j}:=J_{j}\smallbullet K_j \smallbullet J_{j+1}$ (or a smoothened version thereof). This
operation is represented in Figure \ref{fig:neutralization_one}.  We observe that the derived
polygonal curve $\Gamma'$ we obtain is homotopic to $\Gamma$ and that its combinatorial length is
$m-1$.
\begin{figure}[h!]
     \centering
   \begin{subfigure}[b]{0.5\textwidth}
    \centering
     \includegraphics{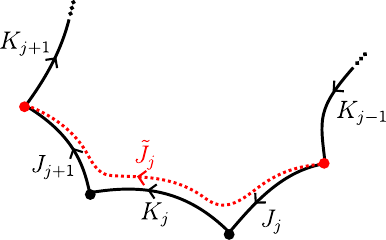}
    \caption{One cell.}
    \label{fig:neutralization_one}
  \end{subfigure}%
   \begin{subfigure}[b]{0.5\textwidth}
    \centering
     \includegraphics{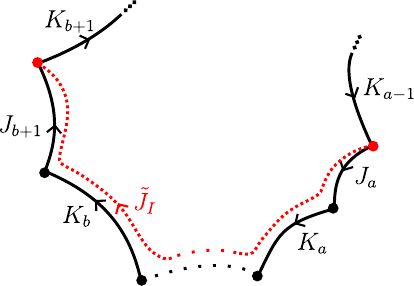}
    \caption{Several cells.}
    \label{fig:neutralization_several}
  \end{subfigure}%
     \caption{Neutralization of one or several cells.}
    \label{fig:neutralization}
  \end{figure}%

  \begin{defa}
    \label{d:der_neutr}
    The polygonal curve $\Gamma'$ is said to be {\em{derived from $\Gamma$ by neutralizing the
        cell~$K_j$}}.  We call $\tilde{J}_{j}$ the {\emph{roof over $K_j$}}.  We denote
    $\tilde{K}_{j-1}:= {K}_{j-1}$, $\tilde{K}_{j+1}:= {K}_{j+1}$ the {\em{new cells of $\Gamma'$}}.
\end{defa}
The motivation for the name ``new cells'' will become apparent when we take geodesic
representatives.  The new polygonal curve $\Gamma'$ satisfies
\begin{itemize}
\item $\Br(\Gamma')= \{\tilde{J}_{j}\} \cup (\Br(\Gamma)\setminus \{ J_{j}, J_{j+1}\} )$;
\item
  $\Cell(\Gamma')= \{\tilde{K}_{j-1}, \tilde{K}_{j+1} \} \cup (\Cell(\Gamma)\setminus \{ K_{j-1},
  K_j, K_{j+1}\})$.
\end{itemize}
  % \begin{figure}[h!]
  %   \includegraphics[height=3cm]{Figures/aJK1}
  %   \caption{The case $m=1$.}
  %   \label{fig:JK1}
  % \end{figure}%

If now $\Gamma$ is a polygonal curve of combinatorial length $1$, we define $\tilde{J}_{1}$ as the
simple closed curve $J_{1}\smallbullet K_1$ (or a smoothened verson thereof). Then
$\Gamma'=\tilde{J}_{1}$ is a polygonal curve of combinatorial length $0$ homotopic to $\Gamma$.

  \subsubsection{Simultaneous neutralization of several cells.}
  \label{s:neutralizing_I}
  
We now consider an interval $I=\{a, \ldots, b\}$ of $\Z_m$ of length $1 \leq \# I < m$.  Let us
define
$\tilde J_I := J_{a} \smallbullet K_{a} \smallbullet \ldots \smallbullet K_b \smallbullet J_{b+1} $
(or a smoothened version thereof) and call $\tilde J_I$ the \emph{roof over $(K_j)_{j\in I}$}.  A
new polygonal curve $\Gamma'$ may be obtained by replacing the sequence
$(J_a, K_a, \ldots, K_b, J_{b+1})$ by $\tilde J_I $, as represented in Figure
\ref{fig:neutralization_several}. We then say that $\Gamma'$ is \emph{derived from $\Gamma$ by
  neutralizing simultaneously all the $(K_j)_{j\in I}$}.

We denote $\tilde{K}_{a-1}:= {K}_{a-1}$, $\tilde{K}_{b+1}:= {K}_{b+1}$ and call them {\em{new
    cells}} of $\Gamma'$.  The new polygonal curve $\Gamma'$ then satisfies:
\begin{itemize}
\item
  $\Br(\Gamma')= \{\tilde{J}_{I}\} \cup (\Br(\Gamma)\setminus \{ J_{a}, J_{a+1}, \ldots, J_{b+1}\}
  )$;
\item $\Cell(\Gamma')= \{\tilde{K}_{a-1}, \tilde{K}_{b+1} \} \cup(\Cell(\Gamma)\setminus \{ K_{a-1},
  K_a, \ldots, K_{b+1}\})$.
\end{itemize}
We extend these definitions to the case $I = \Z_m$, now taking $\Gamma'$ to be a simple curve
freely homotopic to the concatenation
$J_1 \smallbullet K_1 \smallbullet \ldots \smallbullet J_m \smallbullet K_m$, so that it does not
intersect $\beta$.  Then, $\Gamma'$ is polygonal curve of combinatorial length $0$.

In all cases, $\Gamma'$ is homotopic to $\Gamma$, and its combinatorial length now is $m-\# I$. The
two notions of neutralization coincide when $I = \{j\}$, and the polygonal curve derived by
neutralizing simultanously all the $(K_j)_{j \in I}$ is the same as the one derived by successively
neutralizing the cells $K_a, \ldots, K_b$ (in whichever order).

\subsection{Geodesic representative and length inequalities}
\label{s:GR}
We now endow ${\mathbf{S}}$ with a hyperbolic metric $Y \in \cT_{\g,\n}^*$, and consider polygonal
curves as geometric objects.

\subsubsection{Geodesic representative} \label{s:geod_rep_pc}

As suggested in Remark \ref{rem:beta_geod}, we can assume by choosing the right representative in
the Teichm\"uller space of $\mathbf{S}$ that $\beta$ is a multi-geodesic, which then implies that
the cells $K_j$ are geodesic segments.  We replace each bridge $J_j$ by its orthogeodesic
representative~$\bar J_j$.  As the segments $J_j$ glide to their orthogeodesic representatives
$\bar J_j$, the segments $K_j$ glide to new segments $\bar K_j$ whose endpoints match those of the
family $\bar J_j$. We call $(\bar J_j, \bar K_j)_{j \in \Z_m}$ the {\em{piecewise geodesic
    representative}} of $\Gamma$.  This allows us to define the maps
\begin{align*}
  \cT_{\g,\n}^* & \rightarrow \R_{>0} &  \cT_{\g,\n}^* & \rightarrow \R_{>0} \\
  Y & \mapsto \ell_Y(J_j):=\ell(\bar J_j) &  Y & \mapsto \ell_Y(K_j):=\ell(\bar K_j).
\end{align*}
Note that now $\bar J_j$ and $\bar K_j$ form a right angle at their meeting point, and so do
$\bar K_j$ and $\bar J_{j+1}$.

\medskip

\textbf{In the following, unless specified otherwise, we always take geodesic representatives of
  polygonal curves, and we suppress all the bars $\bar{.}$ from the notation.}
  
  \medskip

  We notice that the discussion in \S \ref{s:bound} is valid for any polygonal curve, not only the
  family $(\Gamma_\lambda)_{\lambda \in \Lambda}$ of boundary components of our pair of pants
  decomposition. In particular, if we introduce the short-hand notations
\begin{equation}
L_j:= \ell_Y(J_j)
\quad \text{and} \quad
\tau_j:= \ell_Y(K_j)\label{eq:nota_T_tau_poly}
\end{equation}
then we can apply Proposition
\ref{prp:length_g_lambda} to any polygonal curve $\Gamma$, and obtain that $  \ell_Y(\Gamma) =
Q_m(\vec{L},\vec{\tau})$ for the length-vectors $\vec{L} = (L_j)_{j \in \Z_m}$ and
$\vec{\tau}=(\tau_j)_{j \in \Z_m}$.

% More precisely, we have
% $\tilde{K}_{j-1}= [o(K_{j-1}), o(\tilde{J}_{j}]$, and similarly
% $\tilde{K}_{j+1}= [t(\tilde{J}_{j}), t(K_{j+1})]$.
 
%there is a unique geodesic segment going from $K_{j-1}$ to $K_{j+1}$, orthogonal to $K_{j-1}$ and $K_{j+1}$, homotopic to $(K_{j-1}, J_{j}, K_j, J_{j+1}, K_{j+1})$ with endpoints gliding along $K_{j-1}$ and $K_{j+1}$.
%We denote this segment by $\tilde{J}_{j}$ and 

\subsubsection{Neutralization and lengths}
\label{s:neutralizing_length}

Let us now examine the geometric impact of neutralizing cells in a polygonal curve $\Gamma$.

First, we observe that if $\Gamma'$ is obtained by neutralization of a cell $K_j$ in a polygonal
curve~$\Gamma$, then, due to taking geodesic representatives, the new cell $\tilde{K}_{j-1}$ is
strictly included in ${K}_{j-1}$, and the new cell $\tilde{K}_{j+1}$ in ${K}_{j+1}$ (see Figure
\ref{fig:neutralization_geod}). This justifies the name ``new cells'' introduced in
Definition~\ref{d:der_neutr}. This is also the case when neutralizing several cells simultanously.

We extend the notation \eqref{eq:nota_T_tau_poly} to the definitions introduced in \S
\ref{s:neutralizing_one} and \ref{s:neutralizing_I} related to new cells and bridges, so that:
\begin{itemize}
\item $\tilde{L}_j$, $\tilde \tau_{j-1}$ and $\tilde{\tau}_{j+1}$ respectively denote the lengths of
  the new bridge $\tilde{J}_j$ and the new cells $\tilde{K}_{j-1}$ and $\tilde{K}_{j+1}$ obtained
  when we neutralize the cell $K_j$;
\item $\tilde{L}_I$, $\tilde\tau_{a-1}$ and $\tilde\tau_{b+1}$ respectively denote the lengths of
  the new bridge $\tilde{J}_I$ and the new cells $\tilde{K}_{a-1}$ and $\tilde{K}_{b+1}$ obtained
  when we neutralize simultanously~$(K_j)_{j \in I}$ for $I = \{a, \ldots, b\}$.
\end{itemize}

\begin{figure}[h!]
     \centering
   \begin{subfigure}[b]{0.5\textwidth}
    \centering
     \includegraphics{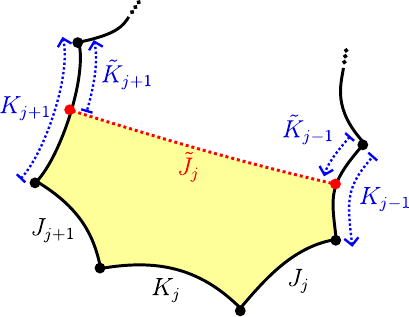}
    \caption{One cell.}
    \label{fig:neutralization_geod_one}
  \end{subfigure}%
   \begin{subfigure}[b]{0.5\textwidth}
    \centering
     \includegraphics{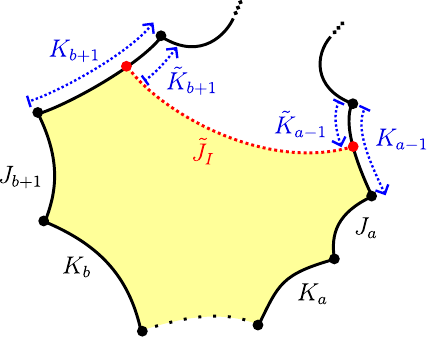}
    \caption{Several cells.}
    \label{fig:neutralization_geod_I}
  \end{subfigure}%
  \caption{Geodesic representative of a polygonal curve and of a neutralization.}
  \label{fig:neutralization_geod}
\end{figure}%

Importantly, if $\Gamma'$ is a polygonal curve derived from $\Gamma$, because $\Gamma'$ is homotopic
to $\Gamma$, we have 
\begin{equation*}
  Q_{|\Gamma|}(\vec{L},\vec{\tau})
  = \ell_Y(\Gamma)
  = \ell_Y(\Gamma')
  = Q_{|\Gamma'|}(\vec{L}', \vec{\tau}')
\end{equation*}
where $\vec{L}', \vec{\tau}'$ are the length parameters of $\Gamma'$.  As a consequence, we obtain
multiple distinct rewritings of the length of the same closed geodesic, depending on which
polygonal curve we use, $\Gamma$, or any derived polygonal curve $\Gamma'$.  Our goal in the
following will be to neutralize enough cells so that the expression of
the length is well-behaved and can be easily understood.

We  make  the following  observation.

\begin{lem}
  \label{lem:obviously}
  Let $\Gamma$ be a polygonal curve of combinatorial length $m$, and $I=\{a, \ldots, b\}$ be an
  interval of $\Z_m$. Then,
\begin{equation}\label{e:obviously}
  \begin{split}
    \tilde L_I
    & =\fn(L_{a}, \ldots, L_{b+1}, \tau_{a}, \ldots, \tau_b) \\
    \tilde \tau_{a-1}
    & =\tau_{a-1}- \fn(L_{a}, \ldots, L_{b+1}, \tau_{a}, \ldots, \tau_b)\\
    \tilde \tau_{b+1}
    & = \tau_{b+1} - \fn(L_{a}, \ldots, L_{b+1}, \tau_{a}, \ldots, \tau_b).
  \end{split}
\end{equation}
\end{lem}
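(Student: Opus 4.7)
The plan is to lift a sufficiently long arc of $\Gamma$ to the universal cover $\mathbb{H}^2$ and reduce the claim to a local geometric statement: the lengths $\tilde L_I$, $\tilde\tau_{a-1}$, $\tilde\tau_{b+1}$ are determined only by the geometry of the chain of bridges and cells indexed by the neutralized interval $I$.

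First, I would fix lifts $\tilde J_{a}, \tilde K_a, \tilde J_{a+1}, \ldots, \tilde K_b, \tilde J_{b+1}$ in $\mathbb{H}^2$ of the bridges and cells indexed by $I \cup \{a, b+1\}$, chosen so that the endpoints match up along appropriate lifts $\tilde\beta_{a-1}, \tilde\beta_a, \ldots, \tilde\beta_{b+1}$ of components of $\beta$. By construction, each $\tilde J_j$ is the orthogeodesic from $\tilde\beta_{j-1}$ to $\tilde\beta_j$ of length $L_j$, and each $\tilde K_j$ is a segment of $\tilde\beta_j$ of algebraic length $\tau_j$. Together, these pieces of data determine the relative position of $\tilde\beta_{a-1}$ and $\tilde\beta_{b+1}$ in $\mathbb{H}^2$ up to an overall isometry, entirely as a function of $L_a, \ldots, L_{b+1}$ and $\tau_a, \ldots, \tau_b$.

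Next, I would observe that the geodesic representative $\tilde J_I$ of the new bridge is the orthogeodesic from $\tilde\beta_{a-1}$ to $\tilde\beta_{b+1}$ (the endpoints of the concatenation glide to the feet of this orthogeodesic, exactly as in \S \ref{s:geod_rep_pc}). Since the pair $(\tilde\beta_{a-1}, \tilde\beta_{b+1})$ depends, modulo the ambient isometry group of $\mathbb{H}^2$, only on $L_a, \ldots, L_{b+1}$ and $\tau_a, \ldots, \tau_b$, so does the length $\tilde L_I$ of this orthogeodesic. This establishes the first identity of \eqref{e:obviously}.

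For the second and third identities, I would compare, along the geodesic $\tilde\beta_{a-1}$, the original endpoint of $\tilde K_{a-1}$ (which coincides with the foot of $\tilde J_a$) with the new endpoint (the foot of $\tilde J_I$ on $\tilde\beta_{a-1}$). The algebraic distance between these two points on $\tilde\beta_{a-1}$ is again a quantity computed entirely inside the lifted configuration described above, hence a function of $L_a, \ldots, L_{b+1}, \tau_a, \ldots, \tau_b$. Since $\tilde K_{a-1}$ is a subsegment of $K_{a-1}$ sharing the endpoint on the side of $J_{a-1}$, the signed length $\tilde\tau_{a-1}$ equals $\tau_{a-1}$ minus this quantity, yielding the second identity; the argument for $\tilde\tau_{b+1}$ on $\tilde\beta_{b+1}$ is symmetric. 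There is no genuine obstacle here beyond carefully tracking orientations and confirming that the foot of $\tilde J_I$ lies on the correct side of the original foot of $\tilde J_a$ (resp.\ $\tilde J_{b+1}$), which follows from the fact noted in \S \ref{s:neutralizing_length} that the new cells are strictly contained in the old ones.
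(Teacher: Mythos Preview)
Your proposal is correct and takes essentially the same approach as the paper: both arguments recognize that the new lengths $\tilde L_I$, $\tau_{a-1}-\tilde\tau_{a-1}$, $\tau_{b+1}-\tilde\tau_{b+1}$ are determined by the local right-angled configuration formed by $J_a, K_a, \ldots, K_b, J_{b+1}$ and the roof $\tilde J_I$. The only cosmetic difference is that the paper packages this as an application of Lemma~\ref{l:fn_polygon} (three consecutive sides of a right-angled polygon are functions of the remaining $2m-3$ sides), while you argue it directly by lifting the chain to $\mathbb{H}^2$ and using that the relative position of $\tilde\beta_{a-1}$ and $\tilde\beta_{b+1}$ is determined up to isometry by the given data.
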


This is a straightforward consequence of applying the following elementary lemma to the highlighted
right-angled polygon in Figure \ref{fig:neutralization_geod}.
  \begin{lem}\label{l:fn_polygon} Let $m \geq 3$.
    In any right-angled convex polygon $(J_j, K_j)_{j \in \Z_m}$, any three consecutive lengths can be
    written as functions of the $2m-3$ others:
    $$(\ell(K_j), \ell(J_j), \ell(K_{j+1}))
    =\fn \big((\ell(J_{k}))_{k \in \Z_m \setminus \{j\}}, (\ell(K_{k}))_{k \in \Z_m \setminus \{j,j+1\}}\big).$$
  \end{lem}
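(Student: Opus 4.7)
The plan is to prove the lemma by a direct geometric construction in the hyperbolic plane $\IH^2$. By cyclic relabelling I may assume the three consecutive unknown lengths are those of $K_1, J_1, K_2$, so the given data is the chain of $2m-3$ known lengths $\ell(J_2), \ell(K_3), \ell(J_3), \ldots, \ell(K_m), \ell(J_m)$ traced around the polygon from one neighbour of $K_2$ around to the other neighbour of $K_1$. I would begin by placing the polygon in $\IH^2$: fix the vertex $V := K_2 \cap J_2$ at a reference basepoint with $J_2$ in a reference direction, then lay out $J_2, K_3, J_3, \ldots, K_m, J_m$ successively using the prescribed lengths and the right-angle condition at each vertex. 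This determines (real-analytically in the $2m-3$ inputs) all intermediate vertices, the final vertex $V' := J_m \cap K_1$, and the arrival direction of $J_m$ at $V'$.

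The key step is then to identify the three missing sides via a common-perpendicular argument. By Definition~\ref{d:pc} and the choice of geodesic representatives in \S\ref{s:geod_rep_pc}, the cell $K_2$ is contained in the unique complete geodesic $\gamma_2$ through $V$ perpendicular to $J_2$ on the interior side of the polygon, and similarly $K_1$ is contained in the unique complete geodesic $\gamma_1$ through $V'$ perpendicular to $J_m$ on the interior side; both $\gamma_1$ and $\gamma_2$ are thus fully determined by the input data. The bridge $J_1$ meets $\gamma_1$ and $\gamma_2$ perpendicularly at its two endpoints, so it is a common perpendicular of $\gamma_1$ and $\gamma_2$. Since a genuine right-angled convex polygon with the prescribed lengths exists by hypothesis and satisfies $\ell(J_1) > 0$, the geodesics $\gamma_1$ and $\gamma_2$ must be ultraparallel in $\IH^2$, and their common perpendicular is therefore unique. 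Reading off its feet $F_1 \in \gamma_1$ and $F_2 \in \gamma_2$ yields $\ell(K_1)$ and $\ell(K_2)$ as the hyperbolic distances from $V'$ and $V$ to $F_1$ and $F_2$, respectively, and $\ell(J_1)$ as the distance between $F_1$ and $F_2$; all three quantities depend real-analytically on the $2m-3$ input lengths, giving the desired function $\fn$.

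The main subtlety I expect is verifying that the reconstructed configuration is in fact the given convex right-angled polygon rather than a spurious alternative solution (e.g.~with wrong sign of a length or wrong side of $\gamma_1$, $\gamma_2$). This is automatic under the standing hypothesis: the polygon's existence already furnishes a common perpendicular with the correct feet and orientations, and uniqueness of the common perpendicular of two ultraparallel geodesics in $\IH^2$ forces agreement, leaving no sign ambiguity or extraneous branch to eliminate. The argument applies uniformly for all $m \geq 3$, including in particular the hexagonal base case needed when $\#I = 1$ in the application to Lemma~\ref{lem:obviously}.
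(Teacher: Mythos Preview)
Your argument is correct and takes a more direct route than the paper's. The paper proceeds by induction on $m$: the base case $m=3$ is the classical right-angled hexagon formula (Buser, Theorem~2.4.1), and the inductive step cuts off a hexagon via an auxiliary common perpendicular to reduce to the case $m-1$. You instead treat all $m\geq 3$ uniformly by laying out the $2m-3$ known sides in $\IH^2$ and closing the figure with a single common perpendicular between the two terminal geodesics $\gamma_1,\gamma_2$. Both proofs rest on the same underlying fact (uniqueness of the common perpendicular of two ultraparallel geodesics), but yours avoids the induction and exhibits the real-analytic dependence in one stroke, whereas the paper's version has the virtue of reducing everything to a single citable identity. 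One cosmetic point: your chain $J_2,K_3,J_3,\ldots,K_m,J_m$ implicitly reads the polygon in the cyclic order $K_1,J_1,K_2,J_2,\ldots$, so that the three unknowns are genuinely consecutive; this is the reverse of the convention in Definition~\ref{d:pc}, which is harmless for the lemma itself but worth aligning if you integrate the argument into the paper.
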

  \begin{proof}[Proof of Lemma \ref{l:fn_polygon}]
    When $m=3$, this is a direct consequence of the basic formulas for convex right-angled hexagons (see
    e.g. \cite[Theorem 2.4.1 (i-ii)]{buser1992}). For $m > 3$, we obtain the result by induction,
    introducing a right-angled hexagon at each step.
  \end{proof}

\subsection{Favorable regions for studying the length of a polygonal curve}

We are now ready to study the variations of the function 
\begin{align} \label{e:Ff}
  F_m(\vec{L}, \vec{\tau})
  =\frac12  \sum_{\alpha  \in \{0, 1\}^{m}}
  (-1)^{|\alpha|}
  \hyp_{-\partial \alpha} \div{\vec{L}} e^{ -\alpha \cdot \vec{\tau}}.
  \end{align}
  We observe that the first term, corresponding to $\alpha=(0, \ldots, 0)$, has the coefficient
  $$\hyp_{-1} \div{\vec{L}} =\prod_{j=1}^{m} \sinh \div{L_j}.$$
  We shall identify a ``favourable'' region of the Teichm\"uller space where this term dominates all
  others, so that $\cosh(\ell_Y(\Gamma)/2)$ is comparable to
  $\exp \paren*{\frac12(\sum_{j=1}^m \tau_j)} \prod_{j=1}^{m} \sinh (L_j/2)$.

\subsubsection{Height of a cell}
\label{sec:height-cell}

First, we introduce a geometric quantity associated to each cell, called its \emph{height}. We will
see in \S \ref{s:favorable} how this is the relevant quantity one needs to introduce to describe the
variations of the function $F_m$.

\begin{defa}\label{d:height}
  Let $\Gamma$ be a polygonal curve of combinatorial length $m$, and $j \in \Z_m$. The \emph{height
    of the cell $K_j$} is defined as
  \begin{equation}\label{e:defZ}
    Z^\Gamma_{K_j} :=
    \tanh(L_j)\tanh(L_{j+1}) \, e^{\tau_j}.
  \end{equation}
  We also use the short-hand notation $Z_j:= Z^\Gamma_{K_j}$, when convenient.
\end{defa}

Note that, if $m=1$, the height is defined as $Z_1=\tanh^2 (L_1)e^{\tau_1}$. If $m=0$, the polygonal
curve has no cell, so there is no notion of height.

We prove the following bounds on the height $Z_j$ of a cell.

\begin{lem}
  \label{lem:height}
  For $m \geq 2$,  $1 \leq  Z_{j} \leq e^{\tau_j}$ and  upon neutralizing the cell $K_j$, we have
\begin{equation}
  \label{e:Tj_vs_Zj}
  \tilde{L}_j \leq \log Z_j + L_j + L_{j+1}.
\end{equation}  
\end{lem}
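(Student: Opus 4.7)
The upper bound $Z_j \leq e^{\tau_j}$ is immediate from $\tanh L_j, \tanh L_{j+1} \leq 1$. For the two remaining statements the plan is to derive both from the standard right-angled hexagon formula, applied in the universal cover. First, lift a neighborhood of the geodesic representative $\bar K_j$ to $\mathbb{H}^2$ so that $\bar K_j$ sits on a lift $\tilde \beta_0$ of the component of $\beta$ containing it; the orthogeodesic bridges $\bar J_j, \bar J_{j+1}$ then terminate on two further lifts $\tilde \beta_1, \tilde \beta_2$ of components of $\beta$. Since $\beta$ is simple, any two distinct lifts are disjoint in $\mathbb{H}^2$, and the right-turn convention of Definition~\ref{d:pc} places both $\tilde \beta_1, \tilde \beta_2$ on the same side of $\tilde \beta_0$. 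The simplicity of $\Gamma$ further rules out a nested configuration---otherwise $\bar J_{j+1}$ would have to cross $\tilde \beta_1$, producing a self-intersection of $\Gamma$---leaving the generic configuration in which $\tilde \beta_0, \tilde \beta_1, \tilde \beta_2$ bound a right-angled hexagon whose three ``perpendicular'' sides are $\bar J_j$, $\bar J_{j+1}$ and the common perpendicular $\tilde J_j$ of $\tilde \beta_1, \tilde \beta_2$. The hexagon identity \cite[Theorem~2.4.1(i)]{buser1992} then yields
\[
\cosh \tilde L_j \;=\; \sinh L_j \sinh L_{j+1} \cosh \tau_j \;-\; \cosh L_j \cosh L_{j+1},
\]
identifying $\tilde L_j$ with the length of the neutralized bridge.

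To derive $Z_j \geq 1$, combine the previous formula with the positivity $\cosh \tilde L_j \geq 1$: rearranging gives $\sinh L_j \sinh L_{j+1} \cosh \tau_j \geq 1 + \cosh L_j \cosh L_{j+1}$, and dividing by $\cosh L_j \cosh L_{j+1}$ produces $\tanh L_j \tanh L_{j+1} \cosh \tau_j \geq 1 + (\cosh L_j \cosh L_{j+1})^{-1} \geq 1$. Since $e^{\tau_j} \geq \cosh \tau_j$, this gives $Z_j \geq 1 + (\cosh L_j \cosh L_{j+1})^{-1} \geq 1$.

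For the inequality $\tilde L_j \leq \log Z_j + L_j + L_{j+1}$, the strategy is to bound $e^{\tilde L_j} \leq 2\cosh \tilde L_j$ (valid since $\sinh \tilde L_j \leq \cosh \tilde L_j$) and substitute the hexagon formula:
\[
2\cosh \tilde L_j \;=\; \sinh L_j \sinh L_{j+1}(e^{\tau_j}+e^{-\tau_j}) \;-\; 2\cosh L_j \cosh L_{j+1} \;\leq\; \sinh L_j \sinh L_{j+1}\, e^{\tau_j},
\]
where the last step drops the non-positive remainder $\sinh L_j \sinh L_{j+1}\, e^{-\tau_j} - 2\cosh L_j \cosh L_{j+1}$, non-positive because $\sinh \leq \cosh$ and $e^{-\tau_j} \leq 1$. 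Writing $\sinh L_j \sinh L_{j+1} = \tanh L_j \tanh L_{j+1}\cdot \cosh L_j \cosh L_{j+1}$ and then using $\cosh L \leq e^L$ converts this to $e^{\tilde L_j} \leq Z_j\, e^{L_j+L_{j+1}}$, and the claim follows by taking logarithms.

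The hardest step in this plan is the geometric setup at the beginning: verifying that the three lifts $\tilde \beta_0, \tilde \beta_1, \tilde \beta_2$ truly sit in the non-nested (hexagon) configuration, so that the standard right-angled hexagon identity applies with the correct sign. Once this is settled via the simplicity-of-$\Gamma$ argument sketched above, the remaining manipulations are elementary hyperbolic trigonometry.
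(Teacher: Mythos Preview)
Your proof is correct and follows essentially the same approach as the paper: both derive everything from the right-angled hexagon identity $\cosh \tilde L_j = \sinh L_j \sinh L_{j+1} \cosh \tau_j - \cosh L_j \cosh L_{j+1}$, first extracting $Z_j \geq 1$ from the positivity of the left-hand side, and then bounding $\tilde L_j$ by elementary manipulations (the paper rewrites the identity as $(\cosh\tau_j - 1)\sinh L_j \sinh L_{j+1} = \cosh\tilde L_j + \cosh(L_j - L_{j+1})$ before estimating, whereas you split $2\cosh\tau_j = e^{\tau_j}+e^{-\tau_j}$ directly---the arithmetic is equivalent). Your extra paragraph justifying the hexagon configuration via simplicity of $\Gamma$ is not in the paper's proof, which takes the configuration as already established by the neutralization construction and its accompanying figure; your argument is a reasonable standalone justification but not strictly needed given the surrounding setup.
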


\begin{proof}
  The upper bound $Z_{j} \leq e^{\tau_j}$ is a direct consequence of the definition. The two other
  bounds can be obtained from the trigonometric formula for right-angled hexagons, see
  e.g. \cite[Theorem 2.4.1 (i)]{buser1992}, which yield
  \begin{align}
    \label{eq:hex_Z}
    \cosh(\tilde{L}_j) = 
    \sinh(L_j) \sinh(L_{j+1}) \cosh(\tau_j) -  \cosh(L_j)\cosh(L_{j+1}).
  \end{align}
  Indeed, $\cosh(\tilde{L}_j) \geq 0$ together with \eqref{eq:hex_Z} imply that
  \begin{align*}
     \tanh(L_j) \tanh(L_{j+1}) \cosh(\tau_j) \geq  1
  \end{align*}
  and hence $Z_j \geq 1$.  Then, we rewrite \eqref{eq:hex_Z} as
    \begin{equation*}
      (\cosh (\tau_j) - 1) \sinh (L_j) \sinh (L_{j+1})
      = \cosh (\tilde{L}_j) + \cosh(L_j-L_{j+1}) \geq \cosh (\tilde{L}_j).
    \end{equation*}
    Since $\cosh (\tau_j) - 1 = 2 \sinh^2 \div{\tau_j} \leq e^{\tau_j}/2$, this implies that
    \begin{equation*}
       2\cosh (\tilde{L}_j) 
      \leq e^{\tau_j} \sinh (L_j) \sinh (L_{j+1})
      = Z_j \cosh (L_j) \cosh (L_{j+1}) 
    \end{equation*}
    which yields the result since $e^x/2 \leq \cosh(x) \leq e^x$.
\end{proof}

We notice that \eqref{e:Tj_vs_Zj} means that the height $Z_j$ of a cell $K_j$ can be used to bound
the length $\tilde{L}_j$ of the roof over it, under the additional assumption that the lengths of
the adjacent bridges are bounded. More generally, the height of the cell $K_j$ can be used to bound
the length of the orthogeodesic $W_j$ constructed below and represented in Figure \ref{fig:Wj}.

\begin{defa}\label{d:Zj}
  For $j \in \Z_m$, we define the orthogeodesic $W_j$ by:
  \begin{itemize}
  \item $W_j=K_j$ if $L_j\geq 1$ and $L_{j+1}\geq 1$;
  \item $W_j$ is the orthogeodesic between $K_{j-1}$ and $J_{j+1}$ if $L_j< 1$ and $L_{j+1}\geq 1$;
  \item $W_j$ is the orthogeodesic between $J_j$ and $K_{j+1}$ if $L_j\geq 1$ and $L_{j+1}< 1$;
\item $W_j=\tilde J_j$ otherwise, i.e. if $L_j < 1$ and $L_{j+1} < 1$.
\end{itemize}
\end{defa}

\begin{figure}[h!]
  \centering
   \begin{subfigure}[b]{0.33\textwidth}
    \centering
     \includegraphics{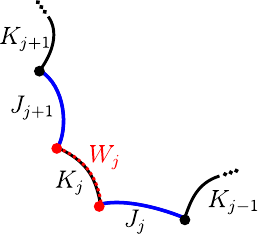}
    \caption{$L_j, L_{j+1} \geq 1$.}
    \label{fig:Wj1}
  \end{subfigure}%
   \begin{subfigure}[b]{0.33\textwidth}
    \centering
     \includegraphics{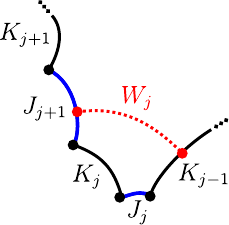}
    \caption{$L_j < 1$ and $L_{j+1} \geq 1$.}
    \label{fig:Wj2}
  \end{subfigure}%
   \begin{subfigure}[b]{0.33\textwidth}
    \centering
     \includegraphics{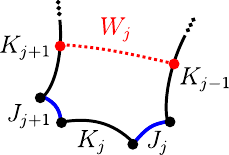}
    \caption{$L_j < 1$ and $L_{j+1} < 1$.}
    \label{fig:Wj3}
  \end{subfigure}%
  \caption{The orthogeodesic $W_j$ depending on the values of $L_j$ and $L_{j+1}$.}
    \label{fig:Wj}
  \end{figure}%

  We then have the following bound.
  \begin{lem}  \label{l:WZ}
    For any $j$, $\ell(W_j)\leq \log(Z_{j})+2$.
  \end{lem}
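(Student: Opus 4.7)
I would prove the bound case by case, following the four-fold definition of $W_j$, using only elementary trigonometry of right-angled hyperbolic polygons. The first and last cases are essentially immediate, the two asymmetric cases are symmetric to each other and both reduce to the standard pentagon identity.

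\emph{The two easy cases.} When $L_j, L_{j+1}\geq 1$, we have $W_j=K_j$, hence $\ell(W_j)=\tau_j$, and the target inequality reduces to $\tanh(L_j)\tanh(L_{j+1})\geq e^{-2}$. This follows from monotonicity of $\tanh$ together with the numerical fact $\tanh(1)^2\approx 0.58 > e^{-2}$. When $L_j, L_{j+1}< 1$, we have $W_j=\tilde J_j$, and Lemma \ref{lem:height} already gives $\ell(\tilde J_j)\leq \log Z_j + L_j + L_{j+1} < \log Z_j + 2$.

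\emph{The asymmetric cases.} By symmetry it suffices to handle $L_j<1$ and $L_{j+1}\geq 1$, where $W_j$ is the orthogeodesic from $K_{j-1}$ to $J_{j+1}$. The key step is to identify the convex right-angled pentagon bounded by $W_j$ together with the portion of $J_{j+1}$ from the foot of $W_j$ to $K_j$, the cell $K_j$ of length $\tau_j$, the bridge $J_j$ of length $L_j$, and the portion of $K_{j-1}$ from the foot of $J_j$ to the foot of $W_j$. Writing its five consecutive side-lengths as $(\ell(W_j), L_{j+1}', \tau_j, L_j, \tau_{j-1}')$ and applying the standard identity for right-angled pentagons, which states that the $\cosh$ of any side equals the product of the $\sinh$'s of the two non-adjacent sides, one obtains
\begin{equation*}
\cosh(\ell(W_j)) = \sinh(\tau_j)\sinh(L_j).
\end{equation*}
Bounding $\cosh(x)\geq e^x/2$ then yields $\ell(W_j)\leq \log(2\sinh(\tau_j)\sinh(L_j))\leq \tau_j + \log\sinh(L_j)$. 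Since $\log Z_j = \tau_j + \log\tanh(L_j) + \log\tanh(L_{j+1})$, the target inequality reduces to
\begin{equation*}
\log\cosh(L_j)\leq \log\tanh(L_{j+1}) + 2,
\end{equation*}
which I would check using $\cosh(L_j)<\cosh(1)$ (since $L_j<1$) and $\tanh(L_{j+1})\geq\tanh(1)$ (since $L_{j+1}\geq 1$), together with the numerical verification $\cosh(1)\leq e^2\tanh(1)$.

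\emph{Main obstacle.} The only delicate point is justifying that the configuration in the asymmetric case really forms a convex right-angled pentagon, so that Buser's pentagon identity (Theorem 2.3.4 in \cite{buser1992}) applies. Concretely, one must check that the foot of the orthogeodesic $W_j$ on $J_{j+1}$ lies strictly between the two endpoints of $J_{j+1}$, and likewise that its foot on $K_{j-1}$ lies beyond the endpoint of $J_j$ in the appropriate direction, so that $L_{j+1}'$ and $\tau_{j-1}'$ are genuinely positive and the polygon is nondegenerate and convex. This can be established by a Useful-Remark-style argument (Remark \ref{r:useful}) applied to the three aligned lifts of $\beta$ containing $K_{j-1}, K_j, K_{j+1}$ and the orthogeodesic lifts of the bridges, comparing the position of the foot of $W_j$ to the feet of $J_j$ and $J_{j+1}$.
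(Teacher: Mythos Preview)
Your proof is correct and follows essentially the same approach as the paper: the four-case split, the immediate argument in the two symmetric cases, and the pentagon identity $\cosh\ell(W_j)=\sinh L_j\sinh\tau_j$ in the asymmetric case, followed by a numerical check. Your numerical verification is in fact more explicit than the paper's, which just quotes $\log(2/\sinh(1))<1$ without spelling out the chain of inequalities. One small remark on your ``main obstacle'': the pentagon formula depends only on the common perpendicular between the \emph{full} geodesics through $K_{j-1}$ and $J_{j+1}$, not on whether its feet land inside the segments, so the real issue is the disjointness of those two geodesics (equivalently $\sinh L_j\sinh\tau_j>1$) rather than the position of the feet; the paper does not address this either.
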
 
  \begin{proof}
    If both $L_j$ and $L_{j+1}$ are smaller than one, then this follows from
    \eqref{e:Tj_vs_Zj}. When both $L_j$ and $L_{j+1}$ are greater or equal to $1$, it is a
    straightforward consequence of the definition and the fact that $\log(1/\tanh(1)) < 1$. In the
    last cases, say $L_j < 1$ and $L_{j+1} \geq 1$, we use the hyperbolic formulas for right-angles
    pentagons (see \cite[Theorem 2.3.4 (i)]{buser1992}) to write
    $\cosh \ell(W_j) = \sinh L_j \sinh \tau_j$, which once again yields the claim since
    $\log(2/\sinh(1)) <1$.
  \end{proof}

\subsubsection{Favourable regions and estimates on $F_m$}
\label{s:favorable}

We are ready to use our notion of height of a cell to identify a condition upon which the
leading term of $F_m$ is the term $\alpha = (0, \ldots, 0)$.

\begin{lem}\label{c:important} 
  For any polygonal curve $\Gamma$ of combinatorial length $m \geq 1$, any $Y \in
  \mathcal{T}_{\g,\n}^*$, any multi-index $\alpha \in \{0, 1\}^m$ distinct from $(0, \ldots, 0)$,
  \begin{align}\label{e:negli}
    \frac{\hyp_{-\partial \alpha} \div{\vec{L}}}{\hyp_{-1}\div{\vec{L}}} e^{ -\alpha \cdot \vec\tau}
    \leq \prod_{j \in \alpha} \frac{4}{Z_{K_j}^\Gamma}. 
  \end{align}
\end{lem}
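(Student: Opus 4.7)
The plan is to exploit the cyclic structure of $\alpha$. Let $S := \{j \in \Z_m : \alpha_j = 1\}$, which is nonempty by hypothesis, and let $T := \{j \in \Z_m : \alpha_{j-1} \neq \alpha_j\}$ be the set of transition indices. Reading the definition of $\partial\alpha$, one has $(\partial\alpha)_j = 1$ exactly when $j \notin T$, so only the indices $j \in T$ contribute a factor $\cosh(L_j/2)$ in the numerator versus $\sinh(L_j/2)$ in the denominator. Hence
\begin{equation*}
\frac{\hyp_{-\partial\alpha}\div{\vec{L}}}{\hyp_{-1}\div{\vec{L}}} \;=\; \prod_{j \in T} \coth(L_j/2).
\end{equation*}
Substituting $Z_{K_j}^\Gamma = \tanh(L_j)\tanh(L_{j+1})\,e^{\tau_j}$ and cancelling the factors $e^{-\tau_j}$ with $j \in S$ that appear on both sides, the claim reduces to
\begin{equation*}
\prod_{j \in T} \coth(L_j/2) \;\leq\; \prod_{j \in S} \frac{4}{\tanh(L_j)\tanh(L_{j+1})}.
\end{equation*}

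The next step is the elementary pointwise bound $\coth(L/2) \leq 2/\tanh(L)$ for $L > 0$, which follows from the double-angle identity $\tanh(L) = 2\tanh(L/2)/(1+\tanh^2(L/2))$, yielding $1/\tanh(L) = \tfrac12(\coth(L/2) + \tanh(L/2)) \geq \tfrac12 \coth(L/2)$.

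The final step is a bookkeeping argument on cyclic intervals. If $S = \Z_m$ (the constant case $\alpha \equiv 1$) then $T = \emptyset$, the left-hand side equals $1$, and the right-hand side is $\prod_{j} 4/\tanh(L_j)^2 \geq 1$ since each $\tanh(L_j) \leq 1 \leq 2$. Otherwise, decompose $S$ into its maximal cyclic intervals; each such interval $[a, b]$ contributes exactly two indices to $T$, namely $a$ and $b+1$. The contribution of one interval $[a,b]$ to the left-hand side is
\begin{equation*}
\coth(L_a/2)\,\coth(L_{b+1}/2) \;\leq\; \frac{4}{\tanh(L_a)\tanh(L_{b+1})},
\end{equation*}
while its contribution to the right-hand side is
\begin{equation*}
\prod_{j=a}^{b} \frac{4}{\tanh(L_j)\tanh(L_{j+1})} \;=\; \frac{4^{\,b-a+1}}{\tanh(L_a)\tanh(L_{b+1})\prod_{j=a+1}^{b}\tanh(L_j)^2}.
\end{equation*}
Since $\tanh(L_j)^2 \leq 1 \leq 4$, the quotient of right over left is at least $4^{b-a}/\prod_{j=a+1}^{b}\tanh(L_j)^2 \geq 1$. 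Multiplying over all cyclic intervals of $S$ yields the desired bound.

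The only mildly delicate point is identifying how transitions pair up with intervals (i.e., recognising that $|T| = 2\cdot\#\{\text{intervals}\}$ when $S \neq \Z_m$); once this is set up, the estimate $\coth(L/2) \leq 2/\tanh(L)$ combined with $\tanh \leq 1$ makes the argument entirely elementary, and sharp enough (within the constants tolerated by the factor $4$) for the applications to follow.
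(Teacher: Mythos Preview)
Your proof is correct and follows essentially the same approach as the paper: decompose $\{j:\alpha_j=1\}$ into maximal cyclic intervals, identify the ratio $\hyp_{-\partial\alpha}/\hyp_{-1}$ as a product of $\coth(L_j/2)$ over transition indices, apply the elementary bound $\coth(L/2)\leq 2/\tanh(L)$, and absorb the remaining factors using $\tanh\leq 1$. Your treatment is in fact slightly more explicit than the paper's in that you separately handle the case $S=\Z_m$ (all ones), which the paper's interval decomposition glosses over.
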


In particular, if all the $Z_j$ go to $+\infty$, then for all $\alpha\not=(0,\ldots, 0)$,
   $$ \frac{\hyp_{-\partial \alpha} \div{\vec{L}}}{\hyp_{-1}\div{\vec{L}}} e^{ -\alpha \cdot \vec\tau} = o(1).$$
   We can then deduce that $F_m(\vec{L}, \vec\tau) \sim \hyp_{-1}(\vec{L}/2)$ and
   $ \cosh (\ell_Y(\Gamma)/2) \sim \hyp_{-1}\div{\vec{L}} \exp\paren{\frac12(\sum_{j=1}^m \tau_j)}$, as
   quantified in the following statement.

   \begin{cor}
     \label{c:Z_F_m}
     Let $\Gamma$ be a polygonal curve of combinatorial length $m \geq 1$. For a real number
     $a \geq (m+3) \log 2$, if we assume
     \begin{equation*}
       \forall j \in \Z_m, \quad Z_{K_j}^\Gamma \geq e^{a}
     \end{equation*}
     then we have
     \begin{align}\label{e:lowerF}
       F_m(\vec{L}, \vec\tau) \geq \frac14 \hyp_{-1}\div{\vec{L}}
       \quad \text{and} \quad
       \cosh\div{\ell_Y(\Gamma)} \geq \frac{e^{ma/2}}{2^{m+2}}.
     \end{align}
\end{cor}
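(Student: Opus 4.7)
The plan is to isolate the $\alpha=(0,\ldots,0)$ term in the alternating sum \eqref{e:Ff} defining $F_m$, and to control the remaining terms uniformly using Lemma \ref{c:important}. For $\alpha=(0,\ldots,0)$ one has $\partial\alpha=(1,\ldots,1)$ and $\alpha\cdot\vec\tau=0$, producing the contribution $\tfrac{1}{2}\hyp_{-1}(\vec L/2)=\tfrac{1}{2}\prod_j\sinh(L_j/2)$. For every other $\alpha$, writing $|\alpha|$ for the number of indices with $\alpha_j=1$, Lemma \ref{c:important} together with the hypothesis $Z_{K_j}^\Gamma\geq e^a$ gives
\[
\hyp_{-\partial\alpha}(\vec L/2)\,e^{-\alpha\cdot\vec\tau}
\leq \hyp_{-1}(\vec L/2)\prod_{j:\,\alpha_j=1}\frac{4}{Z_{K_j}^\Gamma}
\leq \hyp_{-1}(\vec L/2)\,(4e^{-a})^{|\alpha|}.
\]

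Next I would sum these estimates using the binomial theorem:
\[
\Bigl|F_m(\vec L,\vec\tau)-\tfrac{1}{2}\hyp_{-1}(\vec L/2)\Bigr|
\leq \tfrac{1}{2}\hyp_{-1}(\vec L/2)\sum_{k=1}^{m}\binom{m}{k}(4e^{-a})^k
= \tfrac{1}{2}\hyp_{-1}(\vec L/2)\bigl[(1+4e^{-a})^m-1\bigr].
\]
Under the hypothesis $a\geq(m+3)\log 2$ one has $4e^{-a}\leq 2^{-(m+1)}$. Then $(1+4e^{-a})^m\leq \exp(m\cdot 4e^{-a})\leq \exp(m/2^{m+1})$, and since $m/2^{m+1}\leq 1/2$ for every $m\geq 1$ the elementary bound $e^x-1\leq 2x$ on $[0,1]$ yields $(1+4e^{-a})^m-1\leq m/2^m\leq 1/2$. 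Consequently $F_m(\vec L,\vec\tau)\geq \tfrac{1}{4}\hyp_{-1}(\vec L/2)$, which is the first assertion.

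For the second inequality I would use the identity \eqref{e:Fn}, rewritten as $\cosh(\ell_Y(\Gamma)/2)=F_m(\vec L,\vec\tau)\exp\bigl(\tfrac{1}{2}\sum_j\tau_j\bigr)$. From Definition \ref{d:height}, $e^{\tau_j}=Z_{K_j}^\Gamma/(\tanh L_j\tanh L_{j+1})$, so telescoping the two factors $\tanh L_j$ and $\tanh L_{j+1}$ appearing in consecutive cells gives
\[
\exp\Bigl(\tfrac{1}{2}\sum_{j}\tau_j\Bigr)
=\prod_{j}\frac{Z_{K_j}^{\Gamma\,1/2}}{\tanh L_j}.
\]
Combining with the lower bound on $F_m$ yields
\[
\cosh\div{\ell_Y(\Gamma)}\geq \frac{1}{4}\prod_{j}\frac{\sinh(L_j/2)}{\tanh L_j}\cdot\prod_{j}Z_{K_j}^{\Gamma\,1/2}
=\frac{1}{4}\prod_{j}\frac{\cosh L_j}{2\cosh(L_j/2)}\cdot\prod_{j}Z_{K_j}^{\Gamma\,1/2}.
\]
Using $\cosh L_j=2\cosh^2(L_j/2)-1\geq\cosh^2(L_j/2)$, each factor $\cosh L_j/(2\cosh(L_j/2))\geq \tfrac{1}{2}\cosh(L_j/2)\geq \tfrac{1}{2}$, so the right-hand side is at least $\tfrac{1}{2^{m+2}}\prod_j Z_{K_j}^{\Gamma\,1/2}\geq e^{ma/2}/2^{m+2}$.

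The proof is essentially mechanical once Lemma \ref{c:important} is in hand; the only delicate point is the numerical verification that the choice $a\geq(m+3)\log 2$ suffices to make $(1+4e^{-a})^m-1\leq 1/2$, which dictates the constant $m+3$.
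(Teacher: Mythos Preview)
Your proof is correct and follows essentially the same approach as the paper's: isolate the $\alpha=(0,\dots,0)$ term in $F_m$, bound the remaining terms via Lemma~\ref{c:important}, and then combine $F_m \geq \tfrac14\hyp_{-1}(\vec L/2)$ with the definition of $Z_{K_j}^\Gamma$ to get the lower bound on $\cosh(\ell_Y(\Gamma)/2)$. The only difference is cosmetic: for the first inequality the paper bounds each nonzero-$\alpha$ term uniformly by $4e^{-a}\leq 2^{-m-1}$ and sums over at most $2^m$ terms, whereas you keep track of the exponent $|\alpha|$ and sum via the binomial theorem (requiring a small numerical check); for the second inequality the paper uses the one-line bound $\sinh(x/2)\geq \tfrac12\tanh x$ where you go through $\sinh(L_j/2)/\tanh L_j = \cosh L_j/(2\cosh(L_j/2))$, which amounts to the same thing.
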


This motivates the following definitions.

\begin{nota}
  \label{nota:a_Z}
  In the following, we let $a = a(r) = \max((2r+3) \log(2), 2 \log(8 \cosh (3/2)))$, where, recall,
  $r$ is the number of intersections of our generalized eight.  We also fix two smooth functions
  $\fcell{0}, \fcell{\infty}$ on $\R$, with the property that $\fcell{0}$ is supported in
  $[-e^{a+1}, e^{a+1}]$, $\fcell{0}\equiv 1$ on $[-e^a, e^a]$, and
  $\fcell{0}+\fcell{\infty} \equiv 1$.
\end{nota}

\begin{defa} \label{r:favour} Let $\Gamma$ be a polygonal curve of combinatorial length $m \leq
  2r$. We call $\Gamma$-{\em{favourable region}} the region of Teichm\"uller space
  $$\{Y \in \cT_{\g,\n}^* : \quad \forall j \in \Z_m, Z_{K_j}^{\Gamma}>e^a \}.$$
\end{defa}

\begin{rem} \label{rem:triv_bound_ellgamma}
  In particular, using Corollary \ref{c:Z_F_m} and the fact that $e^{a/2} \geq 8 \cosh 3/2$, we
  obtain that, for any polygonal curve $\Gamma$, if $Y$ is in the $\Gamma$-favorable region,
  then $\ell_Y(\Gamma) \geq 3$.
\end{rem}

\begin{proof}[Proof of Lemma \ref{c:important}]
  Let $m\geq 2$. For $\alpha \in \{0, 1\}^{m}$ such that $\alpha \neq (0, \ldots, 0)$, let us write
  $\{j \in \Z_m : \alpha_j=1\}$ as a disjoint union of intervals,
  \begin{align*}
    \{j \in \Z_m : \alpha_j=1 \}= \bigsqcup_{i=1}^l \{k_i, \ldots, l_i\},
  \end{align*}
  so that $\alpha_j=1$ for $k_i \leq j \leq l_i$ but $\alpha_{k_i-1}=0=\alpha_{l_i+1}$.  Then, by
  definition, $(\delta \alpha)_j = -1$ when $j= k_i$ or $l_i+1$ and $+1$ otherwise. 
 It follows that
  \begin{align*}
    \frac{\hyp_{-\partial \alpha} \div{\vec{L}}}{\hyp_{-1}\div{\vec{\vec{L}}}} \, e^{ -\alpha \cdot \tau}
   & = \prod_{i=1}^l \coth \div{L_{k_i}} \coth \div{L_{l_i+1}}
    \prod_{k_i \leq j \leq l_i} e^{-\tau_j}.
  \end{align*}
  We note that, for all $x \geq 0$, we have $\coth(x/2) \leq 2 \coth(x)$. Therefore,
  \begin{equation*}
    \frac{\hyp_{-\partial \alpha} \div{\vec{L}}}{\hyp_{-1}\div{\vec{L}}} \, e^{ -\alpha \cdot \tau}
    \leq \prod_{i=1}^l 4 \coth (L_{k_i}) \coth (L_{l_i+1})
    \prod_{k_i \leq j \leq l_i} e^{-\tau_j}.
  \end{equation*}
  We then  inject $1 \leq \prod_{k_i < j \leq l_i} 4\coth^2(L_j)$ in
  this equation, and obtain
  \begin{equation*}
    \frac{\hyp_{-\partial \alpha} \div{\vec{L}}}{\hyp_{-1}\div{\vec{L}}} \, e^{ -\alpha \cdot \tau}
    \leq \prod_{i=1}^l 
    \prod_{k_i \leq j \leq l_i} 4 \coth (L_{j}) \coth (L_{j+1}) e^{-\tau_j}
    = \prod_{j \in \alpha} \frac{4}{Z_j}
  \end{equation*}
  which is our claim.
  \end{proof}

  \begin{proof}[Proof of Corollary \ref{c:Z_F_m}]
    First we observe that, by hypothesis, for any $j$, $4/Z_j \leq 4e^{-a} \leq 2^{-m-1}$ which is
    smaller than $1$. As a consequence, for any $\alpha \neq (0, \ldots, 0)$, we have
    $\prod_{j \in \alpha} 4/Z_j \leq 2^{-m-1}$.  To then deduce the
    bound~\eqref{e:lowerF}, we write the triangle inequality:
\begin{align*}
  F_m(\vec{L}, \vec{\tau})
  \geq \frac12  \hyp_{-1} \div{\vec{L}} \Big[1-\sum_{\substack{\alpha  \in \{0, 1\}^{m} \\ \alpha\not=(0, \ldots, 0)}}
  \frac{\hyp_{-\partial \alpha} \div{\vec{L}}}{\hyp_{-1} \div{L}} e^{ -\alpha \cdot \tau}\Big]
  \geq \frac{1}{4} \hyp_{-1} \div{\vec{L}}
\end{align*}
 because the sum above is bounded above by
$\sum_{\substack{\alpha\in \{0,1\}^m}} 2^{-m-1} = 1/2$.
 There only remains to prove the last claim, which can be obtained by writing
 \begin{align*}
   \cosh\div{\ell_Y(\Gamma)}
   &=  e^{\sum_{j=1}^m \tau_j/2}  F_m(\vec{L}, \vec\tau) 
     \geq \frac14  e^{ \sum_{j=1}^m \tau_j/2} \prod_{j=1}^m \sinh \div{L_j}
 \end{align*}
 thanks to the bound on $F_m$. Then, since $\sinh(x/2)\geq \tanh(x)/2$ for all $x\geq 0$,
 \begin{equation*}
   \cosh\div{\ell_Y(\Gamma)}
    \geq \frac{1}{2^{m+2}} e^{ \sum_{j=1}^m \tau_j/2} \prod_{j=1}^m \tanh(L_j)
    = \frac{1}{2^{m+2}} \prod_{j=1}^m Z_j^{1/2}
    \geq \frac{e^{ma/2}}{2^{m+2}}.
\end{equation*}
\end{proof}

%\begin{rem}
%  \label{rem:Q1}
 % In the case $m=1$, we can adapt Lemma \ref{c:important} and Corollary \ref{c:Z_F_m}, now noting that
 % \begin{align}\label{e:Q1}
  %  \cosh \div{\ell_Y(\Gamma)}
  %  = \sinh \div{L} \sinh \div{\tau}
  %  =   \frac12 \sinh \div{L} e^{\frac{\tau}2} - \frac 12 \sinh \div{L}e^{\frac{-\tau}2}
%  \end{align}
%  and hence 
 % \begin{align*}
 %   F_1(L, \tau)= \frac12 \sinh \div{L}(1-e^{-\tau}),
%  \end{align*}
%  which satisfies $F_1(L,\tau) \geq \frac 14 \sinh \div{L}$
 % provided $Z = \tanh^2\div{L} e^\tau$ is large enough.
%\end{rem}

\subsubsection{Derivatives of $F_m$}
\label{sec:derivatives-f_m}

Let us now estimate the logarithmic derivatives of the function $F_m$ within the favorable region.

\begin{prp} \label{p:derF} Let $\Gamma=(J_j, K_j)_{j \in \Z_m}$ be a polygonal curve of
  combinatorial length $m \geq 1$.  If the metric $Y$ is in the $\Gamma$-favourable region, then for
  any subset $\pi \subseteq \Z_m$,
 \begin{align*}
   \Big| \prod_{j \in \pi} \frac{\partial}{\partial \tau_j} \big( \log F_m\big) (\vec{L}, \vec\tau) \Big|
   \leq  \fn(m)\, \prod_{j\in \pi}  \frac{1}{Z_{j}}.
 \end{align*}
 \end{prp}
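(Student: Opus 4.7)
The plan is to factor out the leading (i.e.\ $\alpha=(0,\ldots,0)$) term from the expansion \eqref{e:Ff} of $F_m$ and control the remainder using Lemma \ref{c:important}. Concretely, I would write
\begin{equation*}
  F_m(\vec L,\vec\tau) \;=\; \tfrac{1}{2}\, \hyp_{-\mathbf 1}(\vec L/2)\,(1+R(\vec L,\vec\tau)),
  \qquad
  R(\vec L,\vec\tau)\;=\;\sum_{\alpha\neq 0}(-1)^{|\alpha|}\,\frac{\hyp_{-\partial\alpha}(\vec L/2)}{\hyp_{-\mathbf 1}(\vec L/2)}\,e^{-\alpha\cdot\vec\tau}.
\end{equation*}
Since the factor $\tfrac12\hyp_{-\mathbf 1}(\vec L/2)$ does not depend on $\vec\tau$, one has the crucial simplification
\begin{equation*}
   \prod_{j\in\pi}\partial_{\tau_j}\log F_m \;=\; \prod_{j\in\pi}\partial_{\tau_j}\log(1+R).
\end{equation*}

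The first step is to estimate all mixed partial derivatives of $R$. Differentiating term by term,
\begin{equation*}
  \prod_{j\in\pi}\partial_{\tau_j} R \;=\; (-1)^{|\pi|}\sum_{\alpha\supseteq\pi}(-1)^{|\alpha|}\,\frac{\hyp_{-\partial\alpha}(\vec L/2)}{\hyp_{-\mathbf 1}(\vec L/2)}\,e^{-\alpha\cdot\vec\tau},
\end{equation*}
and Lemma \ref{c:important} bounds each term by $\prod_{j\in\alpha}4/Z_j$. Summing $\alpha\supseteq\pi$ factorises as $\prod_{j\in\pi}(4/Z_j)\prod_{j\notin\pi}(1+4/Z_j)$. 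The favourability condition $Z_j\geq e^a$ with $a\geq (m+3)\log 2$ makes $4/Z_j\leq 2^{-m-1}$, so the second product is bounded by a constant depending only on $m$. In particular, $|R|\leq(1+2^{-m-1})^m-1<1/2$, so $1+R\in[1/2,3/2]$; and for $\pi\neq\emptyset$,
\begin{equation*}
  \Bigl|\prod_{j\in\pi}\partial_{\tau_j}R\Bigr| \;\leq\; \fn(m)\,\prod_{j\in\pi}\frac{1}{Z_j}.
\end{equation*}

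The second step is to propagate this bound through the logarithm by the multivariate Faà di Bruno formula: for $\pi\neq\emptyset$,
\begin{equation*}
  \prod_{j\in\pi}\partial_{\tau_j}\log(1+R) \;=\; \sum_{P\in\mathcal{P}(\pi)} \frac{(-1)^{|P|-1}(|P|-1)!}{(1+R)^{|P|}}\;\prod_{B\in P}\prod_{j\in B}\partial_{\tau_j}R,
\end{equation*}
where $\mathcal{P}(\pi)$ is the set of set-partitions of $\pi$. Using $|1+R|^{-1}\leq 2$ and the bound from the previous step on each block $B$, every partition $P$ contributes at most $(|P|-1)!\,2^{|P|}\,\fn(m)^{|P|}\prod_{j\in\pi}(1/Z_j)$, since the products $\prod_{B\in P}\prod_{j\in B}(1/Z_j)$ telescope to $\prod_{j\in\pi}(1/Z_j)$. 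Finally $|\mathcal{P}(\pi)|$ is the Bell number $B_{|\pi|}\leq B_m$, yielding an overall constant $\fn(m)$ and the claimed inequality. The case $\pi=\emptyset$ is trivial.

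No step is really an obstacle here; the only delicate point is to use the favourability hypothesis at the right moment, namely to ensure that $|R|$ stays bounded away from $-1$ so that $\log(1+R)$ and all its derivatives behave tamely. The combinatorial bookkeeping in Faà di Bruno is routine once the ``telescoping'' observation $\prod_{B\in P}\prod_{j\in B}(1/Z_j)=\prod_{j\in\pi}(1/Z_j)$ is made.
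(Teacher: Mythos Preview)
Your proof is correct and follows essentially the same approach as the paper: both use the Fa\`a di Bruno expansion of $\partial^\pi\log F_m$ as a sum over set-partitions of $\pi$ of products $\prod_{B}\partial^{B}F_m/F_m$, bound each factor using Lemma~\ref{c:important}, and invoke the favourability hypothesis (via Corollary~\ref{c:Z_F_m}) to control the denominator. Your initial factorisation $F_m=\tfrac12\hyp_{-\mathbf 1}(\vec L/2)(1+R)$ is a minor cosmetic variant that makes the $\tau$-independence of the leading term explicit, but the substance is identical.
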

 
 \begin{proof}
   Let $k= \# \pi$.  The partial derivative $\partial^\pi \log F_m$ is a linear combination (with
   universal coefficients depending only on $k$) of fractions of the form
   $\prod_{j=1}^k (\partial^{\pi_j}F_m/F_m)$ where the non-empty subsets $\pi_1, \ldots, \pi_k$ of
   $\Z_m$ form a partition of $\pi$. We then write for each $j \in \{1, \ldots, k\}$
   \begin{align*}
     \frac{\partial^{\pi_j}F_m}{F_m}
      =  \sum_{\alpha \in \{0, 1\}^m} (-1)^{|\alpha|} \frac{  
       \hyp_{-\partial \alpha} \div{\vec{L}}  \partial^{\pi_j}(e^{-\alpha \cdot \vec\tau})}{F_m}.
   \end{align*}
   To bound the denominator, we use \eqref{e:lowerF}, which states that
   $ F_m(\vec{L}, \vec\tau) \geq \frac14 \hyp_{-1}(\vec{L}/2)$. For the numerator, we note that
   $ \partial^{\pi_j}(e^{-\alpha \cdot\vec\tau})=0$ unless $\alpha_i=1$ for all $i\in \pi_j$, and in the
   latter case we have
   $ \partial^{\pi_j}(e^{-\alpha \cdot\vec\tau})=(-1)^{\#\pi_j}e^{-\alpha\cdot\vec\tau}$. In particular, the
   term $\alpha = (0, \ldots, 0)$ does not contribute to the sum. Then, 
   \begin{align*}
     \left|\frac{ \partial^{\pi_j}F_m}{F_m} \right|
     \leq 4 \sum_{\substack{\alpha \in \{0,1\}^m\\ \alpha \neq (0, \ldots, 0)}}
     \frac{ \hyp_{-\partial \alpha} \div{\vec{L}} e^{-\alpha \cdot \vec\tau}}{ \hyp_{-1}\div{\vec{L}}}
     \leq 2^{3m+2} \prod_{i\in \pi_j} \frac{1}{Z_{i}}
   \end{align*}
   by Lemma \ref{c:important}, using the fact that there are at most $2^m$ terms in the sum. 
%If $P_j$ contains intervals of lengths $\geq 2$ this bound is a bit rough, however one cannot do better if $P_j$ contains only intervals of size $1$ (in particular, if $P_j$ has one element).
Taking the product over $j=1, \ldots, k$ yields
$$\prod_{j=1}^k \left| \frac{\partial^{\pi_j}F_m}{F_m} \right|
\leq 2^{(3m+2)k} \prod_{i\in \pi} \frac{1}{Z_{i}}.$$ Summing over all partitions
$\pi_1, \ldots, \pi_k$ of $\pi$, and noticing that $k\leq m$, yields the announced bound. 
 \end{proof}

\subsubsection{Decoration for one polygonal curve}
\label{sec:decor-one-polyg}

We recall that our aim in this section is to provide a well-described formula for the length of a
polygonal curve $\Gamma$.  Proposition~\ref{p:derF} tells us that this is achieved as soon as our
metric $Y$ lies in the $\Gamma$-favorable region. If this is not the case, then we shall replace
$\Gamma$ by a derived polygonal curve $\Gamma'$ obtained by neutralizing a cell $K$ such that
$Z_K^{\Gamma} \leq e^{a}$, and hope that $Y$ now falls in the $\Gamma'$-favourable region.  Since
the combinatorial length strictly decreases under derivation, we can iterate this process finitely
many times until we reach a favourable region for a polygonal curve $\Gamma''$ derived from
$\Gamma$. Since $\Gamma''$ is homotopic to $\Gamma$, this yields a well-behaved formula for the
length $\ell_Y(\Gamma)$ in terms of the lengths of the cells and bridges of $\Gamma''$.

The \emph{decorations}, defined below, will serve to enumerate all possible cases in this recursive
process. Later on (\S \ref{s:partition}), decorations will be used to construct a partition of unity
on the Teichm\"uller space: a decoration $\infty$ on a cell $K$ will mean that we consider the
region of Teichm\"uller space where the height $Z^{\Gamma}_K$ is high, and on the opposite the
decoration $0$ will mean that we consider the region where the height $Z^{\Gamma}_K$ is low.

Let us provide the general definition of a decoration for a polygonal curve $\Gamma$.
\begin{defa}\label{d:deco}
  A decorated polygonal curve $(\Gamma, d)$ is a polygonal curve $\Gamma$ of combinatorial length
  $|\Gamma|\geq 1$, together with a map $d: \Cell(\Gamma)\To \{0, \infty\}$.  A cell
  $K\in \Cell(\Gamma)$ is called \emph{favourable} if $d(K)=\infty$ and \emph{unfavourable} if
  $d(K)=0$. A decorated polygonal curve $(\Gamma, d)$ is called {\em{favourable}} if all cells are
  favourable.

  A decorated polygonal curve $(\Gamma', d')$ is said to be {\em{derived from}} $(\Gamma, d)$ if
  $\Gamma'$ is derived from $\Gamma$ by neutralizing a cell $K\in \Cell(\Gamma)$ such that
  $d(K)=0$. In addition, we ask that the map $d'$ coincides with $d$ on
  $\Cell(\Gamma')\cap \Cell(\Gamma)$.
\end{defa}

The following definition allows us to consider the iterative derivation process described above.

\begin{defa} \label{d:exhaust}Let $\Gamma$ be a polygonal curve.
  An \emph{exhaustion} for $\Gamma$ is a sequence of decorated polygonal curves $(\mathbf{\Gamma},
  \mathbf{d})=(\Gamma^{k}, d^{k})_{1 \leq k \leq t}$, such that
\begin{itemize}
\item $\Gamma^{1}=\Gamma$;
\item for every $k<t$, $(\Gamma^{k}, d^{k})$ is not favourable, and $(\Gamma^{k+1}, d^{k+1})$ is
  derived from $(\Gamma^{k}, d^{k})$;
\item for $k=t$, $(\Gamma^{t}, d^{t})$ is either favourable or of combinatorial length $=0$.
\end{itemize}
\end{defa}

\begin{rem}
  In particular, any polygonal curve of combinatorial length $0$ is left untouched by the exhaustion
  process. It follows that, if we apply this process to a polygonal curve $\beta_\lambda$ with
  $\lambda \in \Lambdabeta$ (i.e. a component of $\beta$), then nothing happens.
\end{rem}

\begin{nota}
  \label{nota:term_ex}
  For an exhaustion $(\mathbf{\Gamma}, \mathbf{d}) = (\Gamma^k,d^k)_{1 \leq k \leq t}$ of a
  polygonal curve $\Gamma$, we denote as $(\Gamma^\infty,d^\infty) = (\Gamma^t,d^t)$ the decorated
  polygonal curve at the terminal step of the exhaustion.
\end{nota}

\subsection{Families of polygonal curves and simultanous decorations}
\label{s:simultanous}

The integral \eqref{e:int_ell}, our main focus, simultanously involves all the lengths
$(\ell_Y(\Gamma_\lambda))_{\lambda\in\Lambda}$ through the function $\bbbone_{\domain}$. We will
therefore need to consider simultaneously all the polygonal curves
$(\Gamma_\lambda)_{\lambda\in\Lambda}$. This motivates the need to study families of polygonal
curves, which we do in this section.

In the following, our families are indexed by our set $\Lambda$ introduced in \S
\ref{s:ppdecomponumb} to index our pair of pants decomposition, but the precise structure of
$\Lambda$ does not matter. The one thing that is important is that we have ordered the set $\Lambda$
with the order relation $\lambda_1<\lambda_2$ if $\step(\lambda_1)<\step(\lambda_2)$, where
$\step(\lambda)$ is the step of the construction of the pair of pants decomposition at which
$\Gamma_\lambda$ appears. The considerations below can be applied to any partially ordered set
$\Lambda$.

\subsubsection{Compatibility of polygonal curves}

When considering families of polygonal curves, we will need to understand how the different
polygonal curves in our family intersect one another. Without any hypothesis, two polygonal curves
can have quite wild intersections and combinatorics. The following notion of compatibility will be
key to restrict the possible behaviours and describe our families of polygonal curves.

\begin{defa}[Compatibility of two polygonal curves]
  \label{r:compa}
  Let $\Gamma_1$, $\Gamma_2$ be two polygonal curves based on $\beta$. We say that $\Gamma_1$ and
  $\Gamma_2$ are \emph{compatible} if:
\begin{itemize}
\item if two cells $K^1 \in \Cell(\Gamma_1)$, $K^2 \in \Cell(\Gamma_2)$ lie in the same component of
  $\beta$ {\em{and have the same orientation}}, then they are either disjoint or one is included in
  the other;
\item two bridges $J^1 \in \Br(\Gamma_1)$, $J^2 \in \Br(\Gamma_2)$ are either equal or
  disjoint (in particular, they cannot have an endpoint in common unless they are equal).
\end{itemize}
\end{defa}

As a consequence of our definition of polygonal curves, for any cells $K^1, K^2$ within one
component $\beta$ and oriented the same way, the bridges adjacent to $K^1$ and $K^2$ all lie on
the same side of that component of~$\beta$.
The compatibility condition then implies that any two compatible polygonal curves $\Gamma_1$ and
$\Gamma_2$ are homotopic to simple curves \emph{not intersecting one another}.

\begin{defa}[Inclusion of cells]
  \label{d:inclusion}
  Let $\Gamma_1$, $\Gamma_2$ be two compatible polygonal curves.  Let $K^1 \in \Cell(\Gamma_1)$ and
  $K^2 \in \Cell(\Gamma_2)$ be two cells of $\Gamma_1$, $\Gamma_2$ respectively. We say that the
  cell $K^1$ is \emph{included} in the cell $K^2$, and write $K^1 \subseteq K^2$, if $K^1$ is a subset
  of $K^2$ and their orientations are the same. We will say that the two cells \emph{coincide}, and
  write $K^1=K^2$, if $K^1\subseteq K^2$ and $K^2\subseteq K^1$. In this case, we say that $\Gamma_1$
  and $\Gamma_2$ have \emph{a cell in common}. We say that $K^1$ is strictly included in $K^2$, and
  write $K^1 \subsetneq K^2$, if $K^1 \subseteq K^2$ and $K_1 \neq K_2$.
\end{defa}

Because of the second item in Definition \ref{r:compa}, if $K^1=K^2$, then the bridges adjacent 
to this common cell  are equal.

\begin{rem} The reason we insist on only comparing cells with the same orientation is that we want
  the notion of inclusion to be purely topological, that is, independent on the choice of a
  hyperbolic metric.  If $K^1\subseteq K^2$ for topological polygonal curves $\Gamma_1, \Gamma_2$,
  then, by the Useful Remark \ref{r:useful}, we have that the geodesic representatives $\bar{K}^1$
  and $\bar{K}^2$ satisfy $\bar{K}^1 \subseteq \bar{K}^2$ in the sense of the inclusion of sets.
\end{rem}

\begin{defa}\label{r:compat} We say a family of polygonal
  curves $(\Gamma_\lambda)_{\lambda\in\Lambda}$ form a \emph{compatible family}~if
  \begin{itemize}
  \item these polygonal curves are pairwise compatible;
  \item for $K^1\in \Cell(\Gamma_{\lambda_1})$ and $K^2\in \Cell(\Gamma_{\lambda_2})$, if
    $K^1 \subsetneq K^2$, then $\lambda_2<\lambda_1$.
 \end{itemize} 
\end{defa}

\begin{exa}
  By construction, the family $(\Gamma_\lambda)_{\lambda\in\Lambda}$ of boundary components of our
  pair of pants decomposition is a compatible family of polygonal curves.
\end{exa}

\begin{nota}
  \label{nota:height_fam}
  For $\Gamma = (\Gamma_\lambda)_{\lambda \in \Lambda}$ a compatible family of polygonal curves, we
  let $$\Cell(\Gamma)=\bigcup_{\lambda\in\Lambda} \Cell(\Gamma_\lambda),$$ where the union runs over
  the $\lambda\in \Lambda$ such that $|\Gamma_\lambda|\geq 1$. For $K\in \Cell(\Gamma)$, if
  $\lambda\in \Lambda$ is such that $K\in \Cell(\Gamma_\lambda)$, then we denote by $\tilde J(K)$
  the roof over~$K$ in $\Gamma_\lambda$ and $Z_K^{\Gamma}:= Z_K^{\Gamma_\lambda}$ the height of $K$.
  We call \emph{complexity} of $\Gamma$, and denote $|\Gamma|$, its number of cells.
\end{nota}
Thanks to the compatibility condition, $\tilde{J}(K)$ and $Z_K^\Gamma$ do not depend on the choice of a
$\lambda \in \Lambda$ for which $K \in \Cell(\Gamma_\lambda)$.  The following property will allow us to
compare heights within a compatible family. It says that the height is (almost) increasing for the
inclusion relation.

\begin{prp}\label{p:6} Consider a compatible family $\Gamma=(\Gamma_\lambda)_{\lambda\in\Lambda}$. Let
  $K, K'\in \Cell(\Gamma)$, such that $K'\subseteq K$. Then
  \begin{align}\label{e:6}
    Z_{K}^{\Gamma}\geq e^{-2} Z_{K'}^{\Gamma}.
  \end{align} 
\end{prp}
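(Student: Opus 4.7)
My plan is to reduce the problem to hyperbolic trigonometry in $\mathbb{H}^2$ by lifting the configuration to the universal cover. The trivial case $K=K'$ follows immediately from the second item of Definition~\ref{r:compa}: two coinciding cells force their adjacent bridges to coincide as well, giving equality of all four bridge lengths and of $\tau$, hence $Z_K^\Gamma = Z_{K'}^\Gamma$. From now on I assume $K'\subsetneq K$. I fix a lift $\tilde\beta_0$ of the component of $\beta$ containing both cells, so that $K$ and $K'$ lift to nested segments $\tilde K'\subseteq \tilde K$ on $\tilde\beta_0$ of the same orientation, and the four adjacent bridges lift to orthogeodesic segments $\tilde J_1, \tilde J_2, \tilde J_1', \tilde J_2'$ perpendicular to $\tilde\beta_0$ at the four endpoints of $\tilde K,\tilde K'$, all on the same side. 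Each ends on a geodesic $\tilde\gamma_\bullet$ which is itself a lift of some component of the simple multi-curve~$\beta$.

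The key topological observation is that $\tilde\gamma_1$ and $\tilde\gamma_1'$ are disjoint in $\mathbb{H}^2$: lifts of distinct components of the simple multi-curve $\beta$ are pairwise disjoint, and distinct lifts of the same component are also disjoint. They cannot coincide either, since uniqueness of the common perpendicular from $\tilde\beta_0$ would force $\tilde J_1=\tilde J_1'$; hence $\tilde\gamma_1,\tilde\gamma_1'$ are ultraparallel or asymptotic, and the same holds for $\tilde\gamma_2,\tilde\gamma_2'$. Setting $d_L := \Dist(o(\tilde K), o(\tilde K'))\geq 0$ and $d_R := \Dist(t(\tilde K'), t(\tilde K))\geq 0$, we have $d_L + d_R = \tau-\tau'$. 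I then apply the classical right-angled hexagon formula \cite[Theorem 2.4.1(i)]{buser1992} to the hexagon formed by $\tilde J_1$, the $\tilde\beta_0$-arc of length $d_L$, $\tilde J_1'$, an arc of $\tilde\gamma_1'$, the common perpendicular $H_L$ of $\tilde\gamma_1$ and $\tilde\gamma_1'$ (interpreted as a limit in the asymptotic case), and an arc of $\tilde\gamma_1$:
\begin{equation*}
  \cosh \ell(H_L) = \sinh L_1\sinh L_1'\cosh d_L - \cosh L_1\cosh L_1'.
\end{equation*}
Using $\cosh \ell(H_L)\geq 1$ and $e^{d_L}\geq \cosh d_L$ yields $e^{d_L}\tanh L_1\tanh L_1'\geq 1$, and symmetrically on the right we obtain $e^{d_R}\tanh L_2\tanh L_2'\geq 1$. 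Multiplying and using $d_L+d_R=\tau-\tau'$ gives
\begin{equation*}
  \frac{Z_K^\Gamma}{Z_{K'}^\Gamma}
  = e^{\tau-\tau'}\,\frac{\tanh L_1\tanh L_2}{\tanh L_1'\tanh L_2'}
  \geq \frac{1}{\tanh^2 L_1'\,\tanh^2 L_2'}\geq 1,
\end{equation*}
which is in fact strictly stronger than the claimed bound \eqref{e:6}.

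The subcases where $o(K)=o(K')$ or $t(K)=t(K')$ are handled identically: by compatibility the corresponding bridge must then coincide ($J_1=J_1'$ or $J_2=J_2'$), so one of $d_L,d_R$ vanishes and that side contributes trivially. The main obstacle is verifying that the hexagon closes up as a \emph{convex} right-angled hexagon (rather than a self-intersecting one) so that the formula applies in its stated form; this requires checking that $H_L$ lies on the appropriate side of the tops of $\tilde J_1,\tilde J_1'$, which follows from applying the Useful Remark~\ref{r:useful} to the configuration $(\tilde\gamma_1, \tilde J_1, \tilde J_1', \tilde\gamma_1')$, and the degenerate asymptotic case is recovered by continuity. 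The comfortable margin between the proven bound $Z_K^\Gamma\geq Z_{K'}^\Gamma$ and the stated bound $Z_K^\Gamma\geq e^{-2}Z_{K'}^\Gamma$ provides ample slack to absorb any small loss one might incur in a uniform verification of these orientation conditions across all configurations.
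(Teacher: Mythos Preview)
Your argument is correct and uses the same geometric setup as the paper --- lift to $\mathbb{H}^2$, observe that the target geodesics $\tilde\gamma_1\neq\tilde\gamma_1'$ by the no-quadrilateral argument, and form the right-angled hexagon on $\tilde\beta_0,\tilde\gamma_1,\tilde\gamma_1'$ and their three common perpendiculars --- but you extract a different identity from it. The paper cuts the hexagon into a pentagon and uses $\sinh t_+\sinh L_+\geq 1$, which involves only the \emph{outer} bridge length; relating $\sinh L_+$ to $\tanh L_+$ then forces a case split on $L_+\lessgtr 1$ and costs the factor $e^{-2}$. You instead apply the opposite-side hexagon formula, which involves \emph{both} bridge lengths and gives $e^{d_L}\tanh L_1\tanh L_1'\geq 1$ directly; multiplying left and right then cancels the $\tanh L_i'$ factors in $Z_{K'}$ exactly, yielding the sharper $Z_K^\Gamma\geq Z_{K'}^\Gamma$ with no cases. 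Your invocation of the Useful Remark for convexity is not quite the right tool, but convexity is genuine and easy: since bridges meet $\beta$ only at their endpoints, neither $\tilde\gamma_1$ nor $\tilde\gamma_1'$ separates the other from $\tilde\beta_0$, so none of the three geodesics separates the other two and the three common perpendiculars bound a convex hexagon. (The asymptotic case you hedge against does not occur: distinct axes of hyperbolic deck transformations cannot share an endpoint at infinity.)
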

 
\begin{proof}
  Let us lift the polygonal curve to the hyperbolic plane. Let $K$ and $K'$ denote two lifts of our
  cells, such that $K' \subseteq K$. Let $J_-$, $J_+$ denote lifts of the bridges arriving on $K$ and
  coming out of $K$, and similarly $J_-'$, $J_+'$ for $K'$. By definition of the inclusion of cells,
  all of those bridges lie on the same side of~$K$. We denote as $L_\pm$ and $L_\pm'$ the lengths of
  these bridges, and $\tau$, $\tau'$ the lengths of the cells $K$, $K'$.

  Let $t_\pm$ denote the distance between the origins of $J_\pm$ and $J_\pm'$ along $K$.  We shall
  only prove the property when $t_+, t_- > 0$, i.e. the bridges $J_\pm$ and $J_\pm'$ are disjoint --
  the proof is simpler when one or both of these lengths is equal to $0$.

  Let $\tilde{\beta}_+$, $\tilde{\beta}_+'$ denote the lifts of the multicurve $\beta$ on which the
  bridges $J_+$, $J_+'$ terminate. Since there is no right-angled quadrilateral in $\IH$, these two
  lifts must differ. Then, completing the picture with the orthogeodesic between $\beta_+$ and
  $\beta_+'$, we obtain a convex right-angled hexagon, see Figure \ref{fig:A6j}.

    \begin{figure}[h!]
     \includegraphics{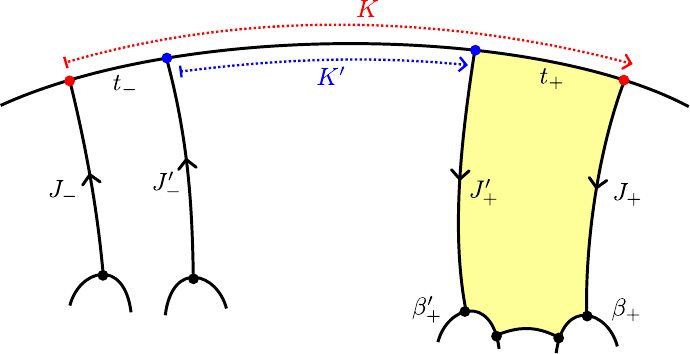}
     \caption{Illustration of the proof of Proposition \ref{p:6}.}
     \label{fig:A6j}
  \end{figure}%

  In a convex right-angled hexagon, the lengths $\ell_1, \ell_2$ of any two consecutive edges must
  satisfy $\sinh(\ell_1) \sinh(\ell_2) \geq 1$. Indeed, one can cut the hexagon along one of its
  orthogeodesics to obtain a right-angled pentagon with two consecutive edges of lengths
  $\ell_1, \ell_2$, which then must satisfy the aforementioned inequality by \cite[Therorem
  2.3.4(i)]{buser1992}.

  It then follows that $\sinh (t_+) \sinh(L_+) \geq 1$.  If $L_+ \leq 1$, then
  $\sinh(L_+) \geq \tanh(L_+)/e$ and we hence obtain
  \begin{equation}
    \label{eq:bound_comp_bridge_+}
    e^{t_+} \tanh (L_+) \geq 1/e.
  \end{equation}
  Otherwise, $e^{t_+} \geq 1$ and $\tanh(L_+) \geq \tanh(1) \geq 1/e$, so the inequality
  \eqref{eq:bound_comp_bridge_+} also holds. By symmetry, \eqref{eq:bound_comp_bridge_+} remains
  true replacing all $+$ by $-$.
  
  Then, we write $\tau = \tau' + t_+ + t_-$, and hence
  \begin{align*}
    Z_K^\Gamma
    = \tanh(L_+) \tanh(L_-) \, e^{\tau'+t_++t_-} 
    \geq \frac{e^{\tau'}}{e^2}
    \geq e^{-2} \tanh(L_+') \tanh(L_-') e^{\tau'} = e^{-2} Z_{K'}^\Gamma
  \end{align*}
  which was our claim.
  \end{proof}

\subsubsection{Neutralization in a compatible family}
\label{sec:neutr-comp-family}

In \S \ref{s:neutralizing_one}, we defined a notion of neutralization of a cell $K$ for a single
polygonal curve.  Now, we define and discuss the notion of neutralization of a cell $K$ in a
compatible family of polygonal curves $\Gamma= (\Gamma_\lambda)_{\lambda\in\Lambda}$.

  \begin{defa}\label{d:deriv2} 
    Let $\Gamma= (\Gamma_\lambda)_{\lambda\in\Lambda}$ be a family of polygonal curves. Let
    $K\in \Cell(\Gamma)$.

    For every $\lambda \in \Lambda$, we let $\Gamma'_\lambda$ be the polygonal curve derived from
    $\Gamma_\lambda$ by simultaneously neutralizing all cells $K'\in \Cell(\Gamma_\lambda)$ such that
    $K'\subseteq K$. We say that the family $\Gamma' = (\Gamma'_\lambda)_{\lambda \in \Lambda}$ is \emph{derived
      from $\Gamma$ by neutralizing $K$}.
  \end{defa}

  We show, using the Useful Remark, that the notion of compatibility is stable by derivation.
 
  \begin{prp} \label{p:compatible} If $\Gamma$ is a compatible family and $\Gamma'$ is derived from
    $\Gamma$ by neutralizing a cell, then $\Gamma'$ is a compatible family.
\end{prp}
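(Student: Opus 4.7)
The plan is to unpack, for each $\lambda\in\Lambda$, how the passage from $\Gamma_\lambda$ to $\Gamma'_\lambda$ modifies cells and bridges, and then check the three conditions — the two bullet points of Definition~\ref{r:compa} applied pairwise to $\lambda,\lambda'\in\Lambda$, and the ordering condition of Definition~\ref{r:compat}. Write $K\in\Cell(\Gamma_{\lambda_K})$ for the neutralized cell (any choice of $\lambda_K$ with this property; note $K\in\Cell(\Gamma_{\lambda_K})$ is independent of the choice). By Definition~\ref{d:deriv2} the cells $K'\in\Cell(\Gamma_\lambda)$ with $K'\subseteq K$ group into maximal intervals along $\Gamma_\lambda$, and each such interval $I$ gets collapsed to a single roof $\tilde J_{\lambda,I}$. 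Consequently the bridges of $\Gamma'_\lambda$ fall into two classes — bridges of $\Gamma_\lambda$ not adjacent to any $K'\subseteq K$, and new roofs $\tilde J_{\lambda,I}$ — and its cells likewise split into unchanged cells of $\Gamma_\lambda$ and ``new'' cells $\tilde K'_{\mathrm{adj}}\subsetneq K'_{\mathrm{adj}}$ obtained by shrinking the cells adjacent to the endpoints of each collapsed interval.

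I would first handle the cell condition. An unchanged cell of $\Gamma'_\lambda$ is just a cell of $\Gamma_\lambda$, and a new cell is strictly contained in a cell of $\Gamma_\lambda$; so any potential inclusion/overlap of cells of $\Gamma'_\lambda$ and $\Gamma'_{\lambda'}$ (on the same component of $\beta$ with the same orientation) refines an inclusion/overlap of cells of $\Gamma_\lambda$ and $\Gamma_{\lambda'}$. The hypothesis that $\Gamma$ is a compatible family immediately gives either disjointness or nested inclusion of the enclosing cells, hence the same for their subsets (using that the shrinking is done on the interior of the underlying segment, so the subsegments inherit the dichotomy). For the ordering condition, suppose $K^1\subsetneq K^2$ with $K^i\in\Cell(\Gamma'_{\lambda_i})$; trace $K^i$ back to an enclosing cell $\widehat K^i\in\Cell(\Gamma_{\lambda_i})$. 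Either $\widehat K^1\subsetneq \widehat K^2$ — in which case the hypothesis already gives $\step(\lambda_2)<\step(\lambda_1)$ — or $\widehat K^1$ and $\widehat K^2$ coincide, in which case the shrinkings forced $K^1\subsetneq K^2$, which would require $\widehat K^1$ to be adjacent to a neutralized sub-cell of $K$ in $\Gamma_{\lambda_1}$ but $\widehat K^2$ not to be adjacent to one in $\Gamma_{\lambda_2}$; examining the definitions shows this is already ruled out by the ordering in $\Gamma$ combined with $\lambda_1\not<\lambda_2$.

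The bridge condition is the delicate one. Two unchanged bridges are either equal or disjoint by hypothesis. A new roof $\tilde J_{\lambda,I}$ versus an unchanged bridge $J'$ of $\Gamma_{\lambda'}$: lift everything to $\IH^2$ and use Remark~\ref{r:useful} (the Useful Remark) applied with $\gamma_0$ a lift of the component of $\beta$ carrying the left endpoint of the interval $I$, $\gamma_1,\gamma_2$ two adequate lifts, and $p_1,p_2$ paths tracing respectively the concatenation $J_{I,\mathrm{start}}\smallbullet K_{I,1}\smallbullet\ldots\smallbullet J_{I,\mathrm{end}}$ (whose orthogeodesic representative is $\tilde J_{\lambda,I}$) and the bridge $J'$; the compatibility hypothesis between $\Gamma_\lambda$ and $\Gamma_{\lambda'}$ places $J'$ outside the region swept out by the concatenation, and the Useful Remark then forces $\tilde J_{\lambda,I}$ outside it as well. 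Two new roofs $\tilde J_{\lambda,I}$ and $\tilde J_{\lambda',I'}$: either $I$ and $I'$ contain a common cell $K'\in\Cell(\Gamma_\lambda)\cap\Cell(\Gamma_{\lambda'})$ with $K'\subseteq K$, in which case bullet~$2$ of Definition~\ref{r:compa} applied to $\Gamma_\lambda,\Gamma_{\lambda'}$ forces the bridges adjacent to $K'$ to coincide in both curves, and iterating along the maximal intervals yields $I=I'$ and $\tilde J_{\lambda,I}=\tilde J_{\lambda',I'}$; otherwise the intervals are disjoint in the cyclic sense and the same Useful-Remark argument as above shows that the two orthogeodesic representatives are disjoint.

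The main obstacle is the bridge/roof compatibility: a new roof is only defined via an orthogeodesic representative, so concluding disjointness with another bridge or another roof inevitably requires a genuine geometric input, and the Useful Remark (combined with the observation that roofs are homotopic with gliding endpoints to concatenations of old bridges and cells) is exactly the tool tailored for this. Once that geometric step is in place, the remaining verifications are essentially combinatorial and follow from the structural description of $\Gamma'_\lambda$ above.
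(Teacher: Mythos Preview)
Your overall strategy — trace new cells/bridges back to old ones and invoke the Useful Remark for the orthogeodesic roofs — matches the paper's, but there is a genuine gap in your cell argument. You claim that if the enclosing old cells $\widehat K^1,\widehat K^2$ are nested, then ``the subsegments inherit the dichotomy''. This is false in general: if $\widehat K^1\subseteq\widehat K^2$ and $\tilde K^i\subseteq\widehat K^i$ are arbitrary subintervals, nothing prevents $\tilde K^1$ and $\tilde K^2$ from overlapping without either containing the other. The endpoints of the new cells are exactly the basepoints of the new roofs, so controlling the cell inclusion \emph{is} controlling where the roofs land — the two conditions cannot be decoupled the way you try to.

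The paper does not separate cells from bridges. It lifts both curves to $\IH^2$, finds the first index $m$ (going right) and last index $n$ (going left) where $\Gamma_{\lambda_1}$ and $\Gamma_{\lambda_2}$ diverge, and uses compatibility of $\Gamma$ to conclude that one lift lies entirely on one side of the other. Then for each new roof $\tilde J^i$ (the orthogeodesic from $K_0^i$ to some $K_{j_i}^i$), it splits on whether $j_i$ is before or after the divergence index; in both cases the Useful Remark pins down the relative position of the roof basepoints on $K_0$. This simultaneously yields $\tilde K^2\subseteq\tilde K^1$ (cell nesting) and disjointness/equality of the adjacent roofs — and the ordering condition comes for free since the strict inclusion direction is inherited from the divergence at index $m$. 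Your roof-versus-roof case also misses this: sharing a common neutralized cell does not force $I=I'$, only that the two curves agree up to the divergence index, after which the same case split on $j_2$ vs.\ $m$ is needed.
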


\begin{proof}
  Let $\tilde{K}^1, \tilde{K}^2 \in \Cell(\Gamma')$ be two cells with non-empty intersection and oriented
  the same way. We wish to prove that either $\tilde{K}^1 \subseteq \tilde{K}^2$ or
  $\tilde{K}^2\subseteq \tilde{K}^1$, and that the bridges adjacent to these cells are equal or disjoint.

  For $i = 1, 2$, let $\lambda_i \in \Lambda$ denote an index for which
  $\tilde{K}^i \in \Gamma'_{\lambda_i}$. By definition, $\Gamma'_{\lambda_i}$ is derived from $\Gamma_{\lambda_i}$
  by neutralizing some cells, and therefore the cell $\tilde{K}^i$ is contained in a cell $K^i$ of
  $\Gamma_{\lambda_i}$. 

  By the compatibility condition, two lifts of $\Gamma_{\lambda_1}$ and $\Gamma_{\lambda_2}$ to $\IH^2$ cannot
  intersect transversally. Consider two such lifts, say $(J_j^1, K_j^1)_{j\in \Z}$ and
  $(J^2_j, K^2_j)_{j\in \Z}$, chosen so that $K_0^1$ and $K_0^2$ are respective lifts of $K^1$ and
  $K^2$ and $K_0^1 \cap K_0^2 \neq \emptyset$.

  Let us consider the index
  \begin{equation*}
    m =\inf\{j\geq 0 :   t(K^1_j) \not=  t(K^2_j)\},
  \end{equation*}
  i.e. the first place where the two polygonal curves $\Gamma_{\lambda_1}$ and $\Gamma_{\lambda_2}$ start
  disagreeing after meeting along $K_0^1$ and $K_0^2$ (see Figure \ref{fig:compatible}).
  First, we observe that if $m = +\infty$, then $\Gamma_{\lambda_1}= \Gamma_{\lambda_2}$ and thus
  $\Gamma'_{\lambda_1}= \Gamma'_{\lambda_2}$, which is enough to conclude.

  If now $m < \infty$, then $J^1_{j}=J^2_{j}$ for all $j \in \{1, \ldots, m\}$. Thus, if $m>0$, then
  $K^1_{m}$ and $K^2_{m}$ have the same origin and, in particular, $K^1_m$ and $K_m^2$ are two
  distinct cells, with an intersection and oriented the same way. This is obvisouly also true by
  hypothesis if $m=0$. Now, because $\Gamma_{\lambda_1}$ and $\Gamma_{\lambda_2}$ are compatible, we
  either have $K_m^2 \subsetneq K_m^1$ and $\lambda_1 < \lambda_2$, or $K_m^1 \subsetneq K_m^2$ and
  $\lambda_2 < \lambda_1$ -- let us assume w.l.o.g. that we are in the first situation.

  If we now consider the index
  \begin{equation*}
      n=\sup\{j\leq 0,  o(K^1_j)\not=  o(K_j^2)\},
  \end{equation*}
  since we assumed that $m < + \infty$, $\Gamma_{\lambda_1} \neq \Gamma_{\lambda_2}$, and hence
  $n > - \infty$. By the same reasoning as above, the cells $K_n^1$ and $K_n^2$ are distinct, with
  an intersection, and oriented the same way. Since we have now specified that
  $\lambda_1 < \lambda_2$, we must have $K_n^2 \subsetneq K_n^1$.
 
  \begin{figure}[h!]
    \includegraphics{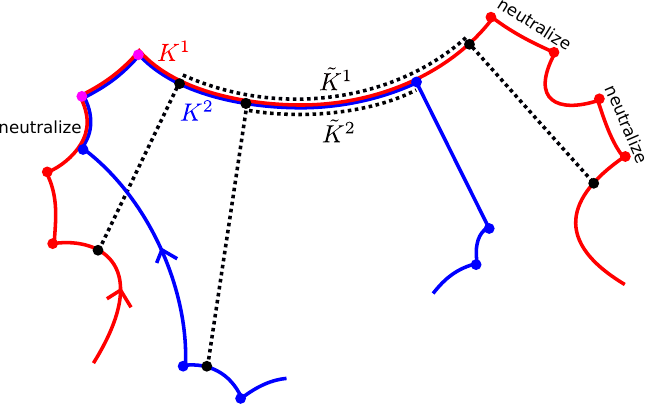}
    \caption{Example of two compatible polygonal curves under a neutralization procedure. The full
      lines are the initial polygonal curves. The dotted lines are the new cells $\tilde{K}^1$ and
      $\tilde{K}^2$ that we need to compare, and the dashed lines the new bridges. On this example,
      we have $m=0$ and $n=-1$.}
    \label{fig:compatible}
  \end{figure}%

  Because two lifts of $\Gamma_{\lambda_1}$ and $\Gamma_{\lambda_2}$ cannot cross transversally, our lift of
  $\Gamma_{\lambda_2}$ then lies entirely on the right of our lift of $\Gamma_{\lambda_1}$, as represented in
  Figure \ref{fig:compatible}.

  Now, for $i = 1, 2$, let $\tilde{J}^i$ denote the bridge coming out of $\tilde{K}^i$ in
  $\Gamma_{\lambda_i}'$. By definition of the neutralization procedure, the bridge $\tilde{J}^i$ is the
  orthogeodesic between the cell $K^i_0$ and a cell $K_{j_i}^i$ for an index $j_i \geq 0$, and the
  cells $K_j^i$ for $0 < j <j_i$ are neutralized. There are now two cases to consider.
  \begin{itemize}
  \item If $j_2 \geq m$, we can use the Useful Remark \ref{r:useful} to deduce that the origin of
    $\tilde{J}^2$ lies on the right of the origin of $\tilde{J}^1$ on $K_0^1$, and the bridges
    $\tilde{J}^1$ and $\tilde{J}^2$ are equal or do not cross, which is exactly our claim.
  \item Otherwise, $j_2 < m$, or, in other words, the two polygonal curves $\Gamma_{\lambda_1}$ and
    $\Gamma_{\lambda_2}$ coincide up until the index $j_2$. By definition of the neutralization for a
    family of curve, if a cell $K^2_{j}$ of $\Gamma_{\lambda_2}$ is contained in a cell $K_j^1$ of
    $\Gamma_{\lambda_1}$ and if $K_j^1$ is neutralized, then $K^2_{j}$ must also be
    neutralized. Therefore, $j_1 \leq j_2$, and we can once again use the Useful Remark to conclude.
  \end{itemize}
  The same reasoning also allows to prove that the origin $o(\tilde K^2)$ lies on the left of
  $o(\tilde K^1)$, and the bridges arriving on $\tilde{K}^1$ and $\tilde{K}^2$ are equal or disjoint.
\end{proof}

 \subsubsection{Simultaneous decoration}
\label{sec:simult-decor}
We are now ready to extend the notion of decoration introduced in \S \ref{sec:decor-one-polyg} to
families of polygonal curves.
 
\begin{defa}\label{d:deco2} Let $\Gamma=(\Gamma_\lambda)_{\lambda\in\Lambda}$ be a compatible family of
  polygonal curves.  A \emph{(simultaneous) decoration} of $\Gamma$ is a map
  $d: \Cell(\Gamma)\To \{0, \infty\}$ such that any cell common to several $\Gamma_\lambda$ bears only one
  decoration.
\end{defa}

\begin{defa}
  Let $(\Gamma,d)$ be a decorated family. A cell $K\in \Cell(\Gamma)$ is called \emph{unfavourable}
  (for the decoration $d$) if $d(K)=0$ or if $K\subseteq K'$ with $d(K')=0$. It is called
  \emph{favourable} otherwise.  A \emph{maximal unfavourable cell} is a cell which is unfavourable
  and maximal for inclusion.

  A simultaneous decoration $d$ of the family $\Gamma$ is called {\em{favourable}} if all its cells are
  favourable. 
\end{defa}

By convention, recall that polygonal curves of combinatorial length $0$ have no cells. As a
consequence, if $\Gamma = (\Gamma_\lambda)_{\lambda \in \Lambda}$ contains only polygonal curves
such that $|\Gamma_\lambda|=0$, then $\Cell(\Gamma) = \emptyset$ and in particular $\Gamma$ is
favorable.

\begin{defa}
  A decorated family $(\Gamma', d')$ is said to be {\em{derived from}} $(\Gamma, d)$ if $\Gamma'$ is derived from
  $\Gamma$ by neutralizing a maximal unfavourable cell and the map $d'$ coincides with $d$ on all cells
  in common between $\Gamma$ and $\Gamma'$.
\end{defa}

\begin{defa} \label{d:exhaust2}Let $\Gamma=(\Gamma_\lambda)_{\lambda\in\Lambda}$ be a compatible family of
  polygonal curves.  An \emph{exhaustion} of $\Gamma$ is a sequence of decorated families
  $(\mathbf{\Gamma}, \mathbf{d})=(\Gamma^{k}, d^{k})_{1 \leq k \leq t}$, where:
  \begin{itemize}
  \item $\Gamma^{1} =\Gamma$;
  \item for every $1 \leq k \leq t$, $\Gamma^{k}=(\Gamma^{k}_\lambda)_{\lambda\in\Lambda}$ is a
    compatible family of polygonal curves;
  \item for every $1 \leq k<t$, $(\Gamma^{k}, d^{k})$ is not favourable, and
    $(\Gamma^{k+1}, d^{k+1})$ is derived from $(\Gamma^{k}, d^{k})$;
\item for $k=t$, $(\Gamma^{t} , d^{t})$ is favourable.
\end{itemize}
We let $\Cell(\mathbf{\Gamma})=\bigcup_{k=1}^t \Cell(\Gamma^k)$.  We denote as
$|\mathbf{\Gamma}| = \# \Cell(\mathbf{\Gamma})$ the \emph{complexity} of the
exhaustion~$\mathbf{\Gamma}$.  If $K\in \Cell(\mathbf{\Gamma})$, we denote $\mathbf{d}(K)= d^k (K)$
for any $k$ such that $K\in\Cell(\Gamma^k)$.
\end{defa} 
Note that, by construction, the value of the decoration $\mathbf{d}(K) = d^k(K)$ is independent from
the choice of an index $k$ such that $K \in \Cell(\Gamma^k)$. This is also true of the roof over a
cell $\tilde{J}(K)$ and height of a cell $Z_K^{\mathbf{\Gamma}}$ introduced in Notation \ref{nota:height_fam}.

\begin{nota}[Terminal cells] \label{d:terminal} Let
  $(\mathbf{\Gamma}, \mathbf{d})=( \Gamma^{k}, d^{k})_{1 \leq k \leq t}$ be a sequence of decorated
  families of polygonal curves. We denote as $(\Gamma^\infty,d^\infty)$ the decorated polygonal
  curve $(\Gamma^t,d^t)$. The cells of the last family $\Gamma^{\infty}$ are called the {\em{terminal
      cells}} of $(\mathbf{\Gamma}, \mathbf{d})$. The set of terminal cells associated with
  $(\mathbf{\Gamma}, \mathbf{d})$ is denoted $\Term({\mathbf{\Gamma}}, {\mathbf{d}})$, in other
  words
  $$\Term({\mathbf{\Gamma}}, {\mathbf{d}}):= \Cell(\Gamma^{\infty}).$$
\end{nota}

\subsection{Another partition of unity on Teichm\"uller space}\label{s:partition}
\label{s:supercutoff}

Let us now focus on our compatible family of polygonal curves
$\Gamma=(\Gamma_\lambda)_{\lambda \in \Lambda}$ defined as the boundary components of the pair of
pants decomposition introduced in \S \ref{s:ppdecompo}.

We recall that we have fixed a cutoff value $a = a(r)$ and associated cutoff functions
$\fcell{0}, \fcell{\infty}$ such that $\fcell{0} + \fcell{\infty}=1$ in Notation \ref{nota:a_Z}.

\begin{nota}
  Let $({\mathbf{\Gamma}}, {\mathbf{d}}) = (\Gamma^k,d^k)_{1 \leq k \leq t}$ be an exhaustion of the
  family $(\Gamma_\lambda)_{\lambda\in\Lambda}$. For a hyperbolic metric
  $Y \in \mathcal{T}_{\g,\n}^*$, we let
  \begin{align}\label{e:THEtestfn}
    \Psi_{{\mathbf{\Gamma}}, {\mathbf{d}}}(Y)
    :=  \prod_{K\in \Cell(\mathbf{\Gamma})}\fcell{\mathbf{d}(K)}({Z^\mathbf{\Gamma}_K}).
  \end{align}
\end{nota}
The function $\Psi_{{\mathbf{\Gamma}}, {\mathbf{d}}}$ is a cut-off function on the Teichm\"uller space
$\cT^*_{\g, \n}$. It achieves our purpose described throught this section. Indeed, if $Y
\in \cT_{\g,\n}^*$ lies in the support of $\Psi_{\mathbf{\Gamma},\mathbf{d}}$, then for any cell $K \in
\Cell(\mathbf{\Gamma})$,
\begin{itemize}
\item if $\mathbf{d}(K) = 0$, then $Z_K^{\mathbf{\Gamma}} \leq e^{a+1}$;
\item if $\mathbf{d}(K) = \infty$, then $Z_K^{\mathbf{\Gamma}} \geq e^a$.
\end{itemize}

We can extract from the family of functions
$\Psi_{{\mathbf{\Gamma}}, {\mathbf{d}}}$ with $(\mathbf{\Gamma}, \mathbf{d})$ an exhaustion a
partition of unity on Teichm\"uller space~:

\begin{prp}\label{p:partition}
There exists a family of exhaustions $\mathfrak{E}$ of the family $(\Gamma_\lambda)_{\lambda\in\Lambda}$ such that
\begin{align}\label{e:partition}
  \forall Y \in \mathcal{T}_{\g,\n}^*, \quad 
  \sum_{({\mathbf{\Gamma}},  {\mathbf{d}})\in \mathfrak{E}}\Psi_{{\mathbf{\Gamma}},  {\mathbf{d}}}(Y) = 1. 
\end{align}
\end{prp}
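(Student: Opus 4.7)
The plan is to construct $\mathfrak{E}$ by a recursive branching process, using the fact that $\fcell{0}+\fcell{\infty}\equiv 1$ to split Teichm\"uller space at each step, and to exploit the fact that the complexity $|\Gamma^k|$ of the compatible family strictly decreases under neutralization, so that the recursion terminates.

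More precisely, I would start from $\Gamma^{1}:=(\Gamma_\lambda)_{\lambda\in\Lambda}$ and write the trivial identity
\begin{equation*}
1 \;=\; \prod_{K\in\Cell(\Gamma^1)}\bigl(\fcell{0}(Z_K^{\Gamma^1})+\fcell{\infty}(Z_K^{\Gamma^1})\bigr) \;=\; \sum_{d^1:\Cell(\Gamma^1)\to\{0,\infty\}}\;\prod_{K\in\Cell(\Gamma^1)}\fcell{d^1(K)}(Z_K^{\Gamma^1}).
\end{equation*}
For every decoration $d^1$ such that $(\Gamma^1,d^1)$ is favourable, I would declare $((\Gamma^1,d^1))$ to be a single-step exhaustion in $\mathfrak{E}$; its contribution to the sum is exactly $\Psi_{(\Gamma^1),(d^1)}(Y)$. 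For every other $d^1$, by non-favourability there exists at least one unfavourable cell; fix once and for all a canonical rule (e.g. via an arbitrary total ordering on cells, compatible with the order on $\Lambda$) that selects a maximal unfavourable cell $K^\ast(\Gamma^1,d^1)$, and derive $\Gamma^2$ from $\Gamma^1$ by neutralizing $K^\ast$, as in Definition \ref{d:deriv2}. By Proposition \ref{p:compatible}, $\Gamma^2$ remains a compatible family. The decoration $d^1$ transfers to $\Cell(\Gamma^2)\cap\Cell(\Gamma^1)$, and I would multiply the corresponding term by another trivial identity
\begin{equation*}
1 \;=\; \prod_{K\in\Cell(\Gamma^2)\setminus\Cell(\Gamma^1)}\bigl(\fcell{0}(Z_K^{\Gamma^2})+\fcell{\infty}(Z_K^{\Gamma^2})\bigr),
\end{equation*}
which after expansion produces, for each choice of values of the new cells, a complete decoration $d^2$ of $\Gamma^2$ extending $d^1$.

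I would then iterate the same dichotomy on $(\Gamma^2,d^2)$, and so on. At each step the number of cells of the compatible family strictly drops (neutralization of a cell $K^\ast$ in $\Gamma_{\lambda_0}$ removes $K^\ast$ itself and any $K'\subseteq K^\ast$ in the other $\Gamma_\lambda$'s, while introducing only the strictly smaller ``new cells'' adjacent to the neutralized portion), so the recursion necessarily terminates after finitely many steps in a favourable decorated family $(\Gamma^\infty,d^\infty)$. Concatenating the sequence of derivations produces an exhaustion $(\mathbf{\Gamma},\mathbf{d})=(\Gamma^k,d^k)_{1\le k\le t}$ in the sense of Definition \ref{d:exhaust2}, and the collection of all these exhaustions, indexed by the tree of choices, forms the desired set $\mathfrak{E}$.

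Finally, to obtain \eqref{e:partition}, I would observe that each multiplication by $\fcell{0}+\fcell{\infty}=1$ preserves the identity, so after finitely many branching steps we have written $1$ as a finite sum of products
\begin{equation*}
\prod_{k=1}^{t}\;\prod_{K\in\Cell(\Gamma^k)\setminus\Cell(\Gamma^{k-1})}\fcell{d^k(K)}(Z_K^{\Gamma^k}) \;=\; \prod_{K\in\Cell(\mathbf{\Gamma})}\fcell{\mathbf{d}(K)}(Z_K^{\mathbf{\Gamma}}) \;=\; \Psi_{\mathbf{\Gamma},\mathbf{d}}(Y),
\end{equation*}
summed over $(\mathbf{\Gamma},\mathbf{d})\in\mathfrak{E}$, where the first equality uses that the height $Z_K^{\Gamma^k}$ and decoration $\mathbf{d}(K)$ are well defined independently of the index $k$ thanks to the compatibility (cf.\ Notation \ref{nota:height_fam}). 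The only mildly delicate point is to make sure the recursive selection of the maximal unfavourable cell $K^\ast$ is deterministic, so that the branches of the tree do not overlap and each $(\mathbf{\Gamma},\mathbf{d})$ is produced exactly once; this is handled by fixing the arbitrary canonical ordering mentioned above, and is the only real bookkeeping subtlety in the argument.
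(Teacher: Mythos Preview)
Your proposal is correct and follows exactly the approach the paper has in mind: the paper in fact omits the proof entirely, describing it only as ``a tedious but surpriseless recursive procedure on the number of cells of the family of polygonal curves $(\Gamma_\lambda)_{\lambda\in\Lambda}$'', and your recursive branching via $\fcell{0}+\fcell{\infty}\equiv 1$ together with a deterministic choice of maximal unfavourable cell is precisely that procedure. The only point worth tightening is the termination measure: the quantity that manifestly strictly decreases is $\sum_{\lambda}|\Gamma_\lambda^k|$ rather than $|\Gamma^k|=\#\Cell(\Gamma^k)$ (since neutralization may introduce new cells to the union even as it shrinks each individual $\Gamma_\lambda$), but this is a cosmetic fix.
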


We omit the proof, a tedious but surpriseless recursive procedure on the number of cells of the
family of polygonal curves $(\Gamma_\lambda)_{\lambda \in \Lambda}$.

 \subsection{Neutral variables associated with $({\mathbf{\Gamma}}, {\mathbf{d}})$.} \label{e:neutr-active}

 Let us fix an exhaustion $({\mathbf{\Gamma}}, {\mathbf{d}}) = (\Gamma^k, d^k)_{1 \leq k \leq t}$ of the
 family $(\Gamma_\lambda)_{\lambda \in \Lambda}$ of boundary components of our pair of pants
 decomposition of $\mathbf{S}$. On the support of $\Psi_{{\mathbf{\Gamma}}, {\mathbf{d}}}$, we shall
 enlarge the set of neutral variables originally defined in \S \ref{s:act_ntr}, and hence diminish
 the number of variables with respect to which we need to apply the operator $\cL$ under the
 integral \eqref{e:int_ell}.

 Amongst the terminal cells of $(\mathbf{\Gamma},\mathbf{d})$, some are particularly important.
\begin{nota}\label{nota:termbc}
  We denote as $\TermBC \subseteq \Term(\mathbf{\Gamma},\mathbf{d})$ the set of cells of
  $(\Gamma_\lambda^\infty)_{\lambda \in \LambdaBCFN}$. We call these cells \emph{boundary terminal cells}.
\end{nota}

We observe that, by definition of the neutralization procedure, any $K \in \TermBC$ is included in a
cell of the initial polygonal curve $(\Gamma_\lambda^1)_{\lambda \in \LambdaBCFN}$. The cells of
this family are amongst the segments $(\cK_q)_{q \in \Theta}$ defined in \S \ref{s:tau}. We also
note that each segment $(\cK_{q})_{q \in \Theta}$ can contain at most one boundary terminal cell.

 We now recall that, in \S \ref{s:tau}, we have defined for each $q \in \Theta$ a set
 $\Theta_\tau(q) \subseteq \Theta$ such that we can write
 the length $\tau_q$ of $\cK_q$ as $\tau_q = \sum_{q' \in \Theta_\tau(q)} \theta_{q'}$.

 \begin{defa}
   \label{defa:surv}
   Let $q \in \Theta$.  We call $\cK_q$ a \emph{surviving boundary segment} if $\cK_q$ contains a
   boundary terminal cell. In that case, we let
   \begin{align}\label{e:bigZ}Z_{{\mathbf{\Gamma}}, {\mathbf{d}}}(\cK_q):=Z_K^{{\mathbf{\Gamma}}}
   \end{align}
   where $K$ is the unique boundary terminal cell contained in $\cK_q$.  We denote as $\Thetasurv$
   the set of indices $q \in \Theta$ such that $\cK_q$ is a surviving boundary segment and
   $\tausurv = (\tau_q)_{q \in \Thetasurv}$.
 \end{defa}

 \begin{nota}
   \label{nota:neutral_cell}
   We define additional {\em{neutral variables}} associated to the exhaustion
   $(\mathbf{\Gamma}, \mathbf{d})$, 
   \begin{align}\label{e:NGd}
     \ThetaNe({\mathbf{\Gamma}}, {\mathbf{d}})
     := \bigcup_{q \in \Theta \setminus \Thetasurv} \Theta_\tau(q).
   \end{align}
   For a vector $\vec{\theta} \in \R^\Theta$, we denote as
   $\thetaNe({\mathbf{\Gamma}}, {\mathbf{d}}) = (\theta_{q})_{q \in \ThetaNe({\mathbf{\Gamma}}, {\mathbf{d}})}
   \in \R^{\ThetaNe(\mathbf{\Gamma},\mathbf{d})}$ the restriction of the vector~$\vec\theta$ to the
   indices in $\ThetaNe({\mathbf{\Gamma}}, {\mathbf{d}})$. 
 \end{nota}

 \begin{rem}
   \label{rem:ntr_comb_0}
   If a boundary component $\lambda\in\LambdaBC$ is reduced in the exhaustion procedure to a
   component of combinatorial length $0$, i.e. if $|\Gamma^\infty_\lambda|=0$, then all variables
   $\theta_q$ with $\lambda(q) = \lambda$ are neutral variables.
 \end{rem}

 Lemma \ref{lem:obviously} allows us to draw the following observations as to the variables
 appearing in a terminal polygonal curve.
 \begin{lem}
   \label{lem:expre_neutr_exh}
   Let $(\mathbf{\Gamma},\mathbf{d})$ be an exhaustion of $(\Gamma_\lambda)_{\lambda \in
     \Lambda}$. Then,
   \begin{itemize}
   \item for any $q \in \Thetasurv$, if $K$ is the unique boundary terminal cell contained in $\cK_q$,
     \begin{align}\label{e:relationK}
       \tau_{q}=\ell(K)
       + \fn(\vec{L},  \thetaNe({\mathbf{\Gamma}}, {\mathbf{d}}));
     \end{align}
   \item any bridge $J \in \Br(\Gamma^\infty)$ of the terminal family of polygonal curves
     $\Gamma^\infty$ satisfies
     \begin{align}\label{e:relationJ}\ell(J)
       =\fn(\vec{L},  \thetaNe({\mathbf{\Gamma}}, {\mathbf{d}})).
     \end{align}   
\end{itemize}
\end{lem}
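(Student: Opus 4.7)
The natural approach is an induction on the step $k$ of the exhaustion $({\mathbf{\Gamma}},{\mathbf{d}})=(\Gamma^k,d^k)_{1\le k\le t}$. Writing $\thetaNe := \thetaNe({\mathbf{\Gamma}},{\mathbf{d}})$ for brevity, I would establish the following strengthened inductive statement at each $k$: every bridge $J\in\Br(\Gamma^k)$ satisfies $\ell(J)\in\fn(\vec L,\thetaNe)$, and every cell $K'\in\Cell(\Gamma^k_\lambda)$ satisfies $\ell(K_0')-\ell(K')\in\fn(\vec L,\thetaNe)$, where $K_0'$ is the unique initial cell of $\Gamma^1_\lambda$ containing $K'$. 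Uniqueness of $K_0'$ comes from the fact that neutralization only shrinks cells inherited from $\Gamma^1_\lambda$, never splitting them. The base case $k=1$ is immediate, since bridges of $\Gamma^1$ are the bars $B_j$ of length $L_j\in\vec L$ and $K_0'=K'$. Applying the inductive statement at $k=t$ gives \eqref{e:relationJ} for every terminal bridge, and for $q\in\Thetasurv$ it gives \eqref{e:relationK} on taking $K'=K$ the boundary terminal cell inside $\cK_q=K_0'$.

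For the inductive step, passing from $k$ to $k+1$ by neutralizing a maximal unfavourable cell $K^k\in\Cell(\Gamma^k)$ triggers, on each $\Gamma^k_\lambda$, the simultaneous neutralization of an interval $I_\lambda$ of cells contained in $K^k$. Lemma \ref{lem:obviously} is the engine of the proof: it expresses the length of the new roof $\tilde J_{I_\lambda}$ and the amounts by which the two flanking cells of $\Gamma^{k+1}_\lambda$ shorten, as universal functions of the bridge lengths $L_a,\ldots,L_{b+1}$ and the cell lengths $\tau_a,\ldots,\tau_b$ of the neutralized interval. The inductive hypothesis places the bridge contributions in $\fn(\vec L,\thetaNe)$ and rewrites each $\tau_j=\ell(K_j)$ as $\ell(K_{j,0})+\fn(\vec L,\thetaNe)$; the whole induction therefore reduces to the single assertion that $\ell(K_{j,0})\in\fn(\thetaNe)$ for every initial cell $K_{j,0}$ that contains a cell neutralized at some step.

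The main obstacle lies precisely in this last reduction, which is the only place where the definition \eqref{e:NGd} of $\thetaNe$ is genuinely used. When $K_{j,0}=\cK_q$ is the initial cell of a boundary polygonal curve ($\lambda\in\LambdaBCFN$), the non-splitting property makes $K_j$ the \emph{unique} current cell of $\Gamma^k_\lambda$ sitting inside $\cK_q$; its neutralization therefore leaves no cell of $\Gamma^{k+1}_\lambda$ inside $\cK_q$, and hence no boundary terminal cell either. This forces $q\notin\Thetasurv$, so $\tau_q=\sum_{q'\in\Theta_\tau(q)}\theta_{q'}$ is by definition a sum of neutral variables. For an inner initial cell $K_{j,0}=\cK_{qq'}$ with $\lambda\in\LambdainFN$, one decomposes $\cK_{qq'}$ along $\beta$ into the subsegments $(\cK_{q_i})_i$ coming from its simple portions, and applies the previous argument on each boundary component adjacent to $\cK_{qq'}$: the part of $\cK_{qq'}$ outside $K_j$ is absorbed by previous neutralizations and handled by the inductive hypothesis, while the part inside $K_j\subseteq K^k$ forces the relevant $q_i\notin\Thetasurv$ exactly as before, placing every $\theta_{q''}$ with $q''\in\Theta_\tau(qq')$ in $\thetaNe$. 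The delicate point, and the one requiring the most care to write out, is checking that every simple portion of $\cK_{qq'}$ is indeed caught by this dichotomy; this relies on the construction of the boundary components $\Gamma_\lambda$ ($\lambda\in\LambdaBC$) as one-sided regular neighbourhoods of $\mathbf{c}$, which ensures that the segments $(\cK_q)_{q\in\Theta}$ tile $\beta$ on the appropriate side.
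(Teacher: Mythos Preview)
Your approach is exactly what the paper has in mind: the paper's entire ``proof'' is the single sentence that Lemma~\ref{lem:obviously} allows one to draw these observations, and you have supplied the natural induction on the exhaustion step that makes this precise. Your treatment of the boundary case $\lambda\in\LambdaBCFN$ is correct and complete.

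The one place where your sketch is thinner than it needs to be is the inner case $\lambda\in\LambdainFN$. You want each $\theta_{q''}$ with $q''\in\Theta_\tau(qq')$ to lie in $\thetaNe$, and for this you need every boundary segment $\cK_{q_i}\subseteq\cK_{qq'}$ (same sign) to satisfy $q_i\notin\Thetasurv$. Your dichotomy ``inside/outside $K_j$'' does not quite deliver this: the inductive hypothesis on $\ell(\cK_{qq'})-\ell(K_j)$ controls a total length, not individual $\theta_{q''}$'s, and for the ``inside'' part you are implicitly using that the current boundary cell $K'_i\in\Cell(\Gamma^k_{\lambda_i})$ sitting in $\cK_{q_i}$ is still nested inside $K_j$, hence inside $K^k$, and therefore gets neutralized at this step. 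That nesting is true at step~$1$ (since $\cK_{q_i}\subseteq\cK_{qq'}$), but its persistence through the exhaustion is not automatic from the statement of Proposition~\ref{p:compatible} alone---compatibility allows nested cells to become disjoint. What rules this out is the ordering condition in Definition~\ref{r:compat} (preserved by Proposition~\ref{p:compatible}): since $\lambda<\lambda_i$, one can never have $K_j\subsetneq K'_i$, and the Useful Remark argument in the proof of Proposition~\ref{p:compatible} then shows the two cells cannot become disjoint either. Once you invoke this explicitly, the inner case closes as you describe.
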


\subsection{Structure and derivatives of the function
  $ \Psi_{{\mathbf{\Gamma}}, {\mathbf{d}}}(Y)$}

Later on, we will need to be able to estimate the derivatives of the function $
\Psi_{{\mathbf{\Gamma}}, {\mathbf{d}}}(Y)$ with respect to certain variables. 
We prove the following.

\begin{prp}\label{prp:der_psi}
  We can rewrite $\Psi_{\mathbf{\Gamma},\mathbf{d}}(Y)$ under the form
  \begin{equation*}
    \Psi_{\mathbf{\Gamma},\mathbf{d}}(Y)
    = \tilde{\Psi}_{\mathbf{\Gamma},\mathbf{d}}(\vec\tau, \vec{L}, \thetaNe(\mathbf{\Gamma}, \mathbf{d}))
  \end{equation*}
  for a smooth function
  $\tilde{\Psi}_{\mathbf{\Gamma},\mathbf{d}} : \R_{>0}^{\Theta}\times \R_{>0}^r \times
  \R^{\ThetaNe(\mathbf{\Gamma},\mathbf{d})} \rightarrow \R$ satisfying, for any set
  $\tilde{\pi} \subseteq \Theta$,
  \begin{equation*}
    \Big|
    \frac{\partial^{|\tilde{\pi}|} \tilde{\Psi}_{\mathbf{\Gamma},\mathbf{d}}}
    {\prod_{q \in \tilde{\pi}}\partial \tau_q}\Big|
    \leq \fn(|\mathbf{\Gamma}|)
    \Big(\max_{0 \leq k \leq |\tilde{\pi}|}
    \sup_{x \geq 0}
    \big( |\bfcell{0}^{(k)}(x)|
   + |\bfcell{\infty}^{(k)}(x)| \big)  \Big)^{|\mathbf{\Gamma}|}
 \end{equation*}
  where $\bfcell{d}(x):= \fcell{d}(e^x)$ for $d \in \{0, \infty\}$.
\end{prp}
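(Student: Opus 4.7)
The starting point is the definition $\Psi_{\mathbf{\Gamma},\mathbf{d}}(Y) = \prod_{K \in \Cell(\mathbf{\Gamma})} \fcell{\mathbf{d}(K)}(Z_K^{\mathbf{\Gamma}})$. The plan is to first establish a structural formula for each height, namely that
\begin{equation*}
Z_K^{\mathbf{\Gamma}} = A_K(\vec L, \thetaNe(\mathbf{\Gamma},\mathbf{d})) \cdot \exp\!\Big(\sum_{q \in \Theta} c_K^q \tau_q\Big)
\end{equation*}
for some positive function $A_K$ and binary coefficients $c_K^q \in \{0,1\}$, and then to deduce the derivative bounds via the chain rule and Leibniz.

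To obtain the structural formula, I would argue by induction on the exhaustion $(\Gamma^k, d^k)_{k=1,\ldots,t}$. Recall that $Z_K^{\mathbf{\Gamma}} = \tanh(\ell(J_-))\tanh(\ell(J_+))\,e^{\ell(K)}$ where $J_\pm$ are the bridges adjacent to $K$ in whichever intermediate polygonal curve contains $K$. At the base level, cells of $\Gamma^1$ are among the segments $\cK_q$ (for boundary components) or $\cK_{qq'}$ (for internal ones), so $\ell(K)$ is (a sum of) the corresponding $\tau$-variables, while bridges of $\Gamma^1$ are the original bars of length $L_j$. Each time a cell is neutralized, Lemma~\ref{lem:obviously} shows that the new bridge length and the modifications to the lengths of adjacent surviving cells are functions of $\vec L$ and of the neutralized $\tau$'s, which by construction of $\ThetaNe(\mathbf{\Gamma},\mathbf{d})$ are themselves sums of $\theta$'s lying in $\thetaNe$. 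Combined with Lemma~\ref{lem:expre_neutr_exh} as the terminal step, this propagates the decomposition through every stage of the exhaustion and ensures that only $\vec L$ and $\thetaNe$ enter the ``non-$\vec\tau$'' part $A_K$, with each cell length $\ell(K)$ retaining a linear $\vec\tau$-contribution of the required binary type.

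Given this structural formula, taking a logarithm yields $\log Z_K^{\mathbf{\Gamma}} = \log A_K + \sum_q c_K^q \tau_q$, which is \emph{affine} in $\vec\tau$. Substituting $\bfcell{d}(u) := \fcell{d}(e^u)$, we may write $\fcell{\mathbf{d}(K)}(Z_K^{\mathbf{\Gamma}}) = \bfcell{\mathbf{d}(K)}(\log Z_K^{\mathbf{\Gamma}})$, and by the chain rule applied to an affine function,
\begin{equation*}
\prod_{q \in \tilde\pi_K}\frac{\partial}{\partial \tau_q} \, \bfcell{\mathbf{d}(K)}(\log Z_K^{\mathbf{\Gamma}}) \,=\, \bfcell{\mathbf{d}(K)}^{(|\tilde\pi_K|)}(\log Z_K^{\mathbf{\Gamma}}) \prod_{q \in \tilde\pi_K} c_K^q,
\end{equation*}
which is bounded in absolute value by $\|\bfcell{\mathbf{d}(K)}^{(|\tilde\pi_K|)}\|_{\infty}$ thanks to $|c_K^q|\leq 1$. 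A general Leibniz expansion of $\partial^{\tilde\pi}\tilde\Psi_{\mathbf{\Gamma},\mathbf{d}}$ then produces a sum, indexed by partitions of $\tilde\pi$ among the cells of $\Cell(\mathbf{\Gamma})$, of products of the above quantities. The number of such partitions is at most $|\mathbf{\Gamma}|^{|\tilde\pi|}$ (a function of $|\mathbf{\Gamma}|$ since $|\tilde\pi|\leq |\Theta|$ is bounded by the topology), and each product has at most $|\mathbf{\Gamma}|$ non-trivial factors, each controlled by $\max_{k\leq |\tilde\pi|}\sup(|\bfcell{0}^{(k)}| + |\bfcell{\infty}^{(k)}|)$, delivering the stated bound.

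The main obstacle lies in part one: the inductive bookkeeping through the exhaustion, in particular verifying that no $\tau$-variable with $q \notin \Thetasurv$ ever appears in $\ell(K)$ with a non-zero coefficient for a surviving cell $K$, and verifying the $\{0,1\}$-character of the coefficients $c_K^q$. The combinatorial compatibility (Proposition~\ref{p:compatible}) and the fact that neutralization only adds terms depending on \emph{previously neutralized} variables into $A_K$ are the key geometric inputs that make this bookkeeping go through cleanly; once done, the analytic part is a one-line application of chain rule plus Leibniz.
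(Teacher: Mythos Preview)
Your approach is correct and coincides with the paper's proof. The only cosmetic difference is that, to establish the affine structure of $\log Z_K^{\mathbf{\Gamma}}$ in $\vec\tau$, the paper argues directly rather than by induction on the exhaustion: it observes that any cell $K\in\Cell(\mathbf{\Gamma})$ is contained in some $K^1\in\Cell(\Gamma^1)$, writes $\ell_Y(K)=\sum_{q\in\Theta_\Gamma(K)}\tau_q-\ell_Y(I_K)$ for the removed intervals $I_K$, and then checks (via the same mechanism as Lemma~\ref{lem:expre_neutr_exh}) that $T_K-\ell_Y(I_K)$ is a function of $(\vec L,\thetaNe(\mathbf{\Gamma},\mathbf{d}))$ only. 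This is exactly your decomposition $\log Z_K=\log A_K+\sum_q c_K^q\tau_q$ with $c_K^q=\mathbf{1}_{q\in\Theta_\Gamma(K)}$; the chain-rule/Leibniz conclusion is identical.
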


Recall that the quantity $|\mathbf{\Gamma}|$ is the complexity of $\mathbf{\Gamma}$, i.e. its number
of cells, which is bounded above by $2 r \# \Lambda \leq 6r^2$.

\begin{proof}
  Let us first make a useful observation.  For any cell $K \in \Cell(\mathbf{\Gamma})$, there exists
  $K^1 \in \Cell(\Gamma^1)$ containing $K$. As a consequence, $K = K^1 \setminus I_K$ for a
  union of intervals $I_K$, corresponding to some intervals that were removed in the neutralization
  procedure.  Recalling that the cells of $\Gamma^1$ are amongst the collection of segments
  $(\cK_{qq'})_{q,q'\in \Theta}$ introduced in \S \ref{s:tau}, we obtain that there exists a set of
  indices $\Theta_\Gamma(K) \subseteq \Theta$ such that
 \begin{equation}
   \label{eq:thetaK}
   \ell_Y(K)=\sum_{q\in \Theta_\Gamma(K)}\tau_q- \ell_Y(I_K).
 \end{equation}

 We now write 
 \begin{align}\label{e:uKTK}
   \Psi_{{\mathbf{\Gamma}}, {\mathbf{d}}}(Y)
   = \prod_{K\in \Cell(\mathbf{\Gamma})} \bfcell{\mathbf{d}(K)}(\ell_Y(K) + T_K)
 \end{align}
 where for $K \in \Cell(\mathbf{\Gamma})$,
 \begin{itemize}
 \item $\ell_Y(K)$ is the length of the cell $K$;
 \item if $J_\pm(K)$ denote the two bridges adjacent to $K$, then
   $$T_K=\log \tanh (\ell_Y(J_-(K)))+ \log \tanh (\ell_Y(J_+(K))).$$
 \end{itemize}
 By \eqref{eq:thetaK} this is equal to
 \begin{align}\label{e:uKTK}
   \Psi_{{\mathbf{\Gamma}}, {\mathbf{d}}}(Y)
   = \prod_{K\in \Cell(\mathbf{\Gamma})} \bfcell{\mathbf{d}(K)}
   \Big(\sum_{q\in \Theta_\Gamma(K)}\tau_q- \ell_Y(I_K) + T_K\Big).
 \end{align}
 We prove, by the same method as Lemma \ref{lem:expre_neutr_exh}, that for all
 $K \in \Cell(\mathbf{\Gamma})$,
 $$T_K-\ell_Y(I_K) = \fn(\vec{L},\thetaNe(\mathbf{\Gamma},\mathbf{d})).$$
 Calling $f_K$ this function, define
 \begin{equation*}
   \tilde{\Psi}_{\mathbf{\Gamma},\mathbf{d}}(\vec{\tau}, \vec{L},
   \thetaNe(\mathbf{\Gamma},\mathbf{d}))
   := \prod_{K \in \Cell(\mathbf{\Gamma})}
   \bfcell{\mathbf{d}} \Big( \sum_{q \in \Theta_\Gamma(K)} \tau_q + f_K(\vec{L},\thetaNe(\mathbf{\Gamma},\mathbf{d})) \Big).
 \end{equation*}
 The upper bound on the derivatives is a simple application of the chain rule.
\end{proof} 

\section{Proof of the main theorem for generalized eights}
\label{s:proof}
We are now ready to prove the main result, Theorem \ref{t:main}, for generalized eights.

\subsection{Writing as a pseudo-convolution}\label{s:theintegral}

We recall that, in \S \ref{s:noncross}, we have reduced the problem to the study of the integral
\eqref{e:int_ell} 
\begin{align*} 
  \mathbf{Int}(\ell)=
  \int_{h(\thetaAc)=\ell}
  \varphi_0(\thetaAc)
  \prod_{q \in \ThetaAc} \theta_q^{\rK_q} e^{\theta_q}
  \,  \frac{  \d \thetaAc}{\d \ell}
\end{align*}
with the function $h$ equal to the length of $\mathbf{c}$ in the coordinates $(\vec{L}, \vec{\theta})$,
see \eqref{e:spec_h}, and the density~$\varphi_0$ given in \eqref{e:spec_phi}. Under this form, the
integral $\mathbf{Int}(\ell)$ is already written as a pseudo-convolution of Friedman--Ramanujan
functions. However, as seen in \S \ref{s:problems}, we cannot directly apply Theorem
\ref{t:intermediate} (or its variants) to it to conclude to Theorem \ref{t:main}.

We shall therefore input the considerations of \S \ref{s:lengthc} and \ref{s:lengthboundary}.  More
precisely, let us use the partitions of unity $(\Psi_{\xi, \cQ})_{\xi \in \Xi, \cQ \subseteq I(\xi)}$
and $(\Psi_{\mathbf{\Gamma}, \mathbf{d}})_{(\mathbf{\Gamma},\mathbf{d}) \in \mathfrak{E}}$ defined in \S
\ref{s:cutoffQ} and \ref{s:partition} to subdivide the integral into
\begin{align*}
  {\mathbf{Int}}(\ell)=\sum_{(\xi, \cQ), (\mathbf{\Gamma}, \mathbf{d})}
  {\mathbf{Int}}_{(\xi, \cQ), (\mathbf{\Gamma}, \mathbf{d})}(\ell),
\end{align*}
where $\xi$ ranges over $\Xi$, $\cQ$ over the subsets of $I(\xi)$, $(\mathbf{\Gamma}, \mathbf{d})$
over the set of exhaustions $\mathfrak{E}$ constructed in Proposition \ref{p:partition}, and
\begin{equation*}
  {\mathbf{Int}}_{(\xi, \cQ), (\mathbf{\Gamma}, \mathbf{d})}(\ell)
  =   \int_{h(\thetaAc)=\ell}
  \varphi_0(\thetaAc) \Psi_{ \xi, \cQ}(\vec{L}, \vec{\theta})
  \Psi_{\mathbf{\Gamma}, \mathbf{d}}(\vec{L}, \vec{\theta})
  \prod_{q \in \ThetaAc} \theta_q^{\rK_q} e^{\theta_q}
  \,  \frac{  \d \thetaAc}{\d \ell}.
\end{equation*}

We now further enlarge the set of neutral parameters.

\begin{nota}
  We define the full set of neutral parameters as 
  $$\ThetaNe'=\ThetaNe\cup \ThetaNe(\xi, \cQ)\cup \ThetaNe(\mathbf{\Gamma}, \mathbf{d}),$$
  where $\ThetaNe(\xi, \cQ)$ and $\ThetaNe(\mathbf{\Gamma}, \mathbf{d})$ are defined in
  \eqref{e:neutralxiQ} and \eqref{e:NGd} respectively. The \emph{active variables} are
  $\thetaAc' := (\theta_q)_{q\in\ThetaAc'}$ with $\ThetaAc' = \Theta \setminus \ThetaNe'$. We denote
  $\thetaAc^{\sim} = (\tilde{\theta}_q)_{q \in \ThetaAc'}$ the set of modified $\theta$ parameters
  introduced in Notation \ref{nota:erase_lengths}.
\end{nota}
From now on, all the \emph{neutral variables} $\thetaNe' := (\theta_q)_{q\in \ThetaNe'}$ will be
frozen, and any function which depends only on the neutral variables may be treated as a
constant. Our variables will be the $\thetaAc^\sim$, which are always $\geq \log(2)$, and hence
circumvent the problems raised by crossings.  We saw in Lemma \ref{lem:new_var_erase} that
$\tilde \theta_q$ and $\theta_q$ just differ by a constant (that is, a function of neutral
variables).

We shall therefore consider the restriction of the integral
${\mathbf{Int}}_{(\xi, \cQ), (\mathbf{\Gamma}, \mathbf{d})}$ defined as
\begin{equation}
  \label{e:mainfocusbis} 
  {\mathbf{Int}}'_{(\xi, \cQ), (\mathbf{\Gamma}, \mathbf{d})}(\ell)
  =   \int_{h(\thetaAc^\sim)=\ell}
  \varphi_0(\thetaAc) \Psi_{ \xi, \cQ}(\vec{L}, \vec{\theta})
  \Psi_{\mathbf{\Gamma}, \mathbf{d}}(\vec{L}, \vec{\theta})
  \prod_{q \in \ThetaAc'} \theta_q^{\rK_q} e^{\theta_q}
  \,  \frac{  \d \thetaAc^\sim}{\d \ell}.
\end{equation}
Note that the only modification is that we have removed the functions
$\theta_q^{\rK_q} e^{\theta_q}$ for $q\in \ThetaNe' \setminus \ThetaNe$ (now constant), and
restricted the integral to the variables $\thetaAc^\sim$, which are identical to $\thetaAc'$, up to
an additive function of the neutral parameters.

We can now see the integral ${\mathbf{Int}}'_{(\xi, \cQ), (\mathbf{\Gamma}, \mathbf{d})}(\ell)$ as the
$(h, \varphi)$-convolutions of the functions $\theta_q^{\rK_q} e^{\theta_q}$, indexed by
$q\in \ThetaAc'$ of cardinal $\nac = \# \ThetaAc'$, where
\begin{itemize}
\item the function $h$ is now given by \eqref{e:newc}, which reads
 \begin{align*}
   h(\thetaAc^\sim)
   =\ell_Y({\mathbf{c}})
   = \sum_{i=1}^{\cc} M_{n'_i}\Big( (\eps \tilde L_q)_{q\in I(\xi^i)}, (\tilde \theta_q)_{q\in I(\xi^i)}\Big)
 \end{align*}
 where $n_i' := \# I(\xi^i)$ is the number of non-erased $\theta$ parameters in the $i$-th component
 $\mathbf{c}_i$, and $\tilde{L}_q, \tilde{\theta}_q$ are the modified parameters introduced in \S
 \ref{s:newexp} after erasing;
\item the function $\varphi$  given by
   \begin{align}\label{e:varphi}
     \varphi(\thetaAc^\sim)
     = \Psi_{ \xi, \cQ}(\vec{L}, \vec{\theta}) \,
     \Psi_{\mathbf{\Gamma}, \mathbf{d}}(\vec{L}, \vec{\theta}) \,
     \bbbone_{\domain}(\vec{L}, \vec{\theta})
     \prod_{\substack{\lambda \in V_+^\beta}} \fz(y_\lambda)^2
     \prod_{\lambda\in \VFN\setminus \WBCFN}
     \Big(\log F_{m_\lambda}(\vec{L}^\lambda, \vec\tau^\lambda) \Big)^{d_\lambda}.
  \end{align}
\end{itemize}

% Thanks to the relation \eqref{e:chasles2}, $\tilde \theta_q$ is just a translate of $\theta_q$ by a ``constant'' function, i.e. a function of the neutral variables. 
% In the purely non-crossing case, $I(\xi^i)=\Theta^i$, $\tilde \theta_q=  \theta_q$ and $\tilde L_q=L_q$.
%   On the support of $\Psi_{ \xi, \cQ}$, all variables $(\tilde \theta_q)_{q\ThetaAc'}$ are
%   greater than $\log 2$.

In the next two sections, we show that the functions $h$ and $\varphi$ have the structure described
in the second variant of Theorem \ref{t:intermediate}, presented in~\S \ref{s:variant}.

\subsection{Properties of the function $h$} \label{s:checkingh}

By definition, on the support of the function $\Psi_{\xi,\cQ}$, we have
$\tilde{\theta}_q \geq \log(2)$ for every $q \in \ThetaAc'$. We can therefore apply Proposition
\ref{p:mainclass}, Corollary \ref{r:argcosh} and Remark~\ref{r:partition} and prove that the
function $h$, seen as a function of the variables $\thetaAc^\sim$, belongs in
the class $\cE_{\nac}^{(\log 2)}$ introduced in Definition \ref{def:cE}.
We can then deduce that the function $h$ is of class $\cC^\infty$ on the support of
$\Psi_{\xi,\cQ}$, and that on this set,
\begin{itemize}
\item for every $q\in \ThetaAc'$,  
\begin{align*} 
  |\partial_q h(\thetaAc^\sim)-1| \leq C e^{- \tilde \theta_q};
\end{align*}
\item for every $\pi\subseteq \ThetaAc'$ such that $|\pi|\geq 2$, 
\begin{align*} 
  | \partial^\pi h(\thetaAc^\sim)| \leq C e^{-\sum_{q\in \pi} \tilde \theta_q},
\end{align*}
\end{itemize}
where $C=C(\log 2,n)$ is the constant from Proposition \ref{p:mainclass}. As a consequence, the
function $h$ satisfies the hypothesis \textbf{($h$)} defined in Notation \ref{nota:assum_1} with
respect to the variables $\thetaAc^\sim$, with the constant $C$.

\subsection{Derivatives of the function $\varphi$}
\label{s:checkingphi}

We shall rewrite the function $\varphi$ under the special form required in \eqref{e:special}, i.e.
split it into a product $\varphi=\phi\times \psi$ where
\begin{align*} 
  \psi(\thetaAc^\sim)
  =  \Psi_{ \xi, \cQ}(\vec{L}, \vec{\theta}) \,  \bbbone_{\domain} (\vec{L}, \vec{\theta})  
  \prod_{\substack{\lambda \in V_+^\beta}} \fz(y_\lambda)^2
\end{align*} 
and 
\begin{align*}   
  \phi(\tausurv)
  =   \Psi_{\mathbf{\Gamma}, \mathbf{d}}(\vec{L}, \vec{\theta})
  \prod_{\lambda\in \VFN \setminus \WBCFN}
  \Big(\log F_{m_\lambda}(\vec{L}^\lambda, \vec\tau^\lambda)  \Big)^{d_\lambda}
\end{align*}
where we recall that $\tausurv$ is defined in Definition \ref{defa:surv}.
To see that $\phi$ may be expressed as a function of the parameters $\tausurv$ (once we have set the
neutral parameters as constants), we refer to Lemma~\ref{lem:expre_neutr_exh}.  We prove the
following.
\begin{prp}
  The functions $\psi$ and $\phi$ are $\cC^\infty$ on the support of
  $\Psi_{\mathbf{\Gamma},\mathbf{d}}$. Furthermore,
  \begin{itemize}
  \item {for any $\pi \subseteq \ThetaAc'$,
   $|\partial^\pi \psi|
      \leq \fn(r)  e^{- \sum_{q \in \pi} \tilde\theta_q}$;
    }
  \item for any $\tilde \pi \subseteq \Thetasurv$, if
    $d := \sum_{\lambda \in V^\Gamma \setminus \WBCFN} d_\lambda$,
    \begin{align*} \Big| \frac{\partial^{|\tilde \pi|} \phi}{\prod_{q \in \tilde \pi}\partial
        \tau_q} \Big| \leq \fn(r, d) \, \ell_Y(\mathbf{c})^{d} \prod_{\cK \in \tilde \pi}
      \frac1{Z^{1/2}_{\mathbf{\Gamma}, \mathbf{d}}(\cK)}.
    \end{align*} 
  \end{itemize}  
\end{prp}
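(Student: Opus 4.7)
The plan is to treat $\psi$ and $\phi$ separately, in each case combining the Leibniz rule with the estimates established in the two preceding sections, and to verify that the piecewise-defined factors create no smoothness issue on the support of $\Psi_{\mathbf{\Gamma},\mathbf{d}}$.

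For $\psi = \Psi_{\xi,\cQ}\cdot \bbbone_{\domain}\cdot \prod_{\lambda \in \Vbeta_+}\fz(y_\lambda)^2$, I first check that on the joint support of $\Psi_{\xi,\cQ}\cdot\Psi_{\mathbf{\Gamma},\mathbf{d}}$ the coordinates $(\vec L,\vec\theta)$ remain in the interior of $\domain$: by Proposition \ref{p:super} this reduces to $y_\lambda>0$ for every $\lambda\in\Lambda$, which for $\lambda\in \LambdaFN$ follows from Remark \ref{rem:triv_bound_ellgamma} applied to the favorable terminal curve $\Gamma_\lambda^\infty$ (whose length equals $y_\lambda$), and for $\lambda\in\Lambdabeta$ from the valid-metric constraint $Y\in\cT^*_{\g,\n}$. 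Thus $\bbbone_{\domain}\equiv 1$ on this support and disappears from the differentiation. The cutoff $\Psi_{\xi,\cQ}$ is a tensor product of one-variable factors $\fcr{\cdot}(\tilde\theta_q)$ with uniformly bounded derivatives; since on the support each $\tilde\theta_q$ is confined either to $[\log 2,+\infty)$ or to $[0,2\log 2]$, the quantity $e^{-\tilde\theta_q}$ is bounded both above and below, yielding immediately $|\partial^\pi\Psi_{\xi,\cQ}|\leq \fn(r)\prod_{q\in\pi}e^{-\tilde\theta_q}$. For the last factor, the crucial property is that $\fz^{(k)}$ is supported in $[0,1]$ for every $k\geq 1$, so $|\fz^{(k)}(y)|\leq C_k e^{1-y}$; each derivative in $\tilde\theta_q$ acts on some $\fz(y_\lambda)^2$ with $q\in\Theta_t(\lambda)$, producing such a factor evaluated at $y_\lambda$, and since $y_\lambda\geq\tilde\theta_q$ up to a function of neutral variables, one obtains the required decay $e^{-\tilde\theta_q}$. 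A Leibniz expansion in the three factors then gives the announced bound on $\partial^\pi\psi$.

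For $\phi = \Psi_{\mathbf{\Gamma},\mathbf{d}}\cdot\prod_{\lambda\in\VFN\setminus\WBCFN}(\log F_{m(\lambda)}(\vec L^\lambda,\vec\tau^\lambda))^{d_\lambda}$, the key step is to rewrite each $\log F_{m(\lambda)}$ in terms of the favorable terminal polygonal curve $\Gamma_\lambda^\infty$. Writing the identity $\cosh(\ell_Y(\Gamma_\lambda)/2)=F_m\exp(\tfrac12\sum_j\tau_j)$ for both $\Gamma_\lambda$ and $\Gamma_\lambda^\infty$ yields
\begin{equation*}
\log F_{m(\lambda)}(\vec L^\lambda,\vec\tau^\lambda)
=\log F_{m(\lambda)^\infty}(\vec L^{\lambda,\infty},\vec\tau^{\lambda,\infty})
+\tfrac12\Bigl(\sum_j\tau_j^{\lambda,\infty}-\sum_j\tau_j^\lambda\Bigr).
\end{equation*}
The affine correction has identically vanishing partial derivatives of order $\geq 2$ in the $\tau_q$ and is bounded in absolute value by an affine function of $\ell_Y(\mathbf c)$ through Proposition \ref{p:apriori1}. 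For the main term, Proposition \ref{p:derF} applies to $\Gamma_\lambda^\infty$ because every terminal cell is favorable by the definition of an exhaustion, so $Z^{\Gamma_\lambda^\infty}_K\geq e^a$ on the support. Lemma \ref{lem:expre_neutr_exh} gives $\partial/\partial\tau_q=\partial/\partial\ell(K_q)$ where $K_q\in\TermBC$ is the unique terminal cell contained in $\cK_q$, and Definition \ref{defa:surv} identifies $Z^{\Gamma_\lambda^\infty}_{K_q}$ with $Z_{\mathbf{\Gamma},\mathbf{d}}(\cK_q)\geq 1$, so the bound $1/Z\leq 1/Z^{1/2}$ is legitimate. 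A full Leibniz expansion of $\partial^{\tilde\pi}\phi$ then combines this estimate with the bound on $\partial^{\pi_0}\Psi_{\mathbf{\Gamma},\mathbf{d}}$ from Proposition \ref{prp:der_psi} and the elementary bound $|\log F_{m(\lambda)}|\leq C(\ell_Y(\mathbf c)+1)$ for the surviving powers $(\log F_{m(\lambda)})^{d_\lambda-k}$, producing the announced estimate with the overall factor $\ell_Y(\mathbf c)^d$.

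The main technical obstacle will be the combinatorial bookkeeping in the Leibniz rule: a given cell $\cK_q$ with $q\in\Thetasurv$ may sit as a cell of two distinct boundary curves $\Gamma_\lambda$ and $\Gamma_{\lambda'}$, so $\partial/\partial\tau_q$ acts simultaneously on both factors $\log F_{m(\lambda)}$ and $\log F_{m(\lambda')}$. After distributing, the bound still holds because $Z_{\mathbf{\Gamma},\mathbf{d}}(\cK_q)$ is intrinsic to the cell and the two sides respect the compatibility structure of \S\ref{s:simultanous}. The verification of the smoothness of $\psi$ through $\bbbone_{\domain}\equiv 1$, though conceptually elementary, is the other delicate point and requires combining the explicit lower bounds on $\tilde\theta_q$ from $\Psi_{\xi,\cQ}$ with the uniform lower bound on $\ell_Y(\Gamma_\lambda^\infty)$ from Corollary \ref{c:Z_F_m}.
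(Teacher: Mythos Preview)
Your overall strategy is the paper's, but one step is genuinely missing. In the Leibniz expansion of $\partial^{\tilde\pi}\phi$, when a subset $\pi_0\subseteq\tilde\pi$ of the derivatives hits $\Psi_{\mathbf{\Gamma},\mathbf{d}}$, Proposition~\ref{prp:der_psi} only yields $|\partial^{\pi_0}\Psi_{\mathbf{\Gamma},\mathbf{d}}|\leq\fn(r)$, while the $\log F$ factors contribute at best $\prod_{q\in\tilde\pi\setminus\pi_0}1/Z^{1/2}_{\mathbf{\Gamma},\mathbf{d}}(\cK_q)$. Since each such factor is $\leq 1$, the resulting bound does \emph{not} imply the claimed one with the full product over $\tilde\pi$. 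The paper recovers the missing factors by a support argument: $\partial_{\tau_q}\Psi_{\mathbf{\Gamma},\mathbf{d}}$ can be nonzero only where some cell height lies in the transition region $[e^a,e^{a+1}]$, and Proposition~\ref{p:6} then forces $Z_{\mathbf{\Gamma},\mathbf{d}}(\cK_q)\leq e^{a+3}$, so $\prod_{q\in\pi_0}1/Z^{1/2}_{\mathbf{\Gamma},\mathbf{d}}(\cK_q)\geq\fn(r)^{-1}$ and can be inserted for free.

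Two smaller points also need attention. You assert that the affine correction has vanishing derivatives only of order $\geq 2$; if its first derivative in some $\tau_{q_0}$ were a nonzero constant, then $\partial_{\tau_{q_0}}\log F_{m(\lambda)}$ would carry an $O(1)$ term and the decay $1/Z^{1/2}$ would fail. In fact this first derivative vanishes as well: for surviving $q_0$ with $\cK_{q_0}$ a cell of $\Gamma_\lambda$, both sums contribute $\partial_{\tau_{q_0}}=1$ (directly for $\sum_j\tau_j^\lambda$, via Lemma~\ref{lem:expre_neutr_exh} for $\sum_j\tau_j^{\lambda,\infty}$), and they cancel; but you must check this. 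Your $\fz(y_\lambda)$ argument via ``$y_\lambda\geq\tilde\theta_q$ up to neutral'' does not give a uniform $\fn(r)$ bound, since the neutral offset depends on the frozen variables; the paper instead shows that for active $q\in\Theta_\ell(\lambda)$ with $\lambda\in\Vbeta_+$ there is a surviving $\cK_{q'}$ with $q\in\Theta_\tau(q')$ (else $q\in\ThetaNe(\mathbf{\Gamma},\mathbf{d})$), whence $y_\lambda\geq\tau_{q'}\geq a>1$ and $\fz'(y_\lambda)=0$ identically. Finally, each $\cK_q$ is a cell of exactly one $\Gamma_\lambda$ with $\lambda\in\LambdaBCFN$, so the concern in your last paragraph is unfounded.
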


These inequalities play the role of the derivative controls in Assumption
\textbf{($\varphi$-CE-form)}, see Notation \ref{nota:CEform}.

\begin{proof}
  Let us first establish the smoothness of $\psi$ and $\phi$.  

  We study the discontinuity of the indicator function $ \bbbone_{\domain}$. By Remark
  \ref{rem:ntr_comb_0}, if $\lambda\in\Lambda$ satisfies $|\Gamma^\infty_\lambda|=0$, then
  $\Theta_\ell(\lambda) \subseteq \ThetaNe'$. In other words, if $\lambda \in \Lambda$ satisfies
  $q\in \Theta_\ell(\lambda)$ for an active parameter $q\in \ThetaAc'$, then we must have
  $|\Gamma^\infty_\lambda|\geq 1$. In that scenario, by definition of the exhaustion, all the cells
  $K\in \Cell(\Gamma^\infty_\lambda)$ must be favorable. Then, on the support of
  $\Psi_{\mathbf{\Gamma}, \mathbf{d}}$, by definition, we must have $Z^{\Gamma}_K\geq e^a$ for all
  cells in $\Gamma_\lambda^\infty$; in other words, $Y$ is in the $\Gamma_\lambda^\infty$-favorable
  region.  By Remark \ref{rem:triv_bound_ellgamma}, we obtain that $\ell_Y(\Gamma_\lambda) \geq
  3$. Combined with Proposition \ref{p:super}, this yields that $\Psi_{\mathbf{\Gamma}, \mathbf{d}}$
  vanishes near the discontinuities of the indicator function $\bbbone_{\domain}$. In other words,
  the derivatives of $ \bbbone_{\domain}$ with respect to $\thetaAc^\sim$ are identically equal to
  zero on the support of $\Psi_{\mathbf{\Gamma}, \mathbf{d}}$.

  Let us now examine the regularity of $\fz(y_\lambda)$ for $\lambda \in \Vbeta_+$. Let
  $q \in \ThetaAc'$ with $q\in \Theta_\ell(\lambda)$ (otherwise $\fz(y_\lambda)$ does not depend on
  $\theta_q$). Then there also exists $q'$ such that $q\in \Theta_\tau(q')$: otherwise,
  $\beta_\lambda$ would not be reached by any of the bars $B_1, \ldots, B_r$, contradicting the fact
  that the multi-loop~$\mathbf{c}$ was assumed to be connected, and not a simple curve. Necessarily,
  the corresponding $\cK_{q'}$ must be a surviving boundary segment (otherwise, we would have
  $q\in \ThetaNe(\mathbf{\Gamma}, \mathbf{d})$ by definition of the set of neutral variables). This
  implies that $e^{\tau_{q'}} \geq Z_{{\mathbf{\Gamma}}, {\mathbf{d}}}(\cK_{q'})\geq e^a$.  Because
  $\beta_\lambda$ contains the segment $\cK_{q'}$, we then have $y_j\geq \tau_{q'}\geq a >1$. But
  the function $\fz$ is assumed to be constant on $[1, +\infty)$. Thus, we obtain that the
  derivatives of $\fz(y_\lambda)$ with respect to $\thetaAc^\sim$ are identically equal to zero on the
  support of $\Psi_{\mathbf{\Gamma}, \mathbf{d}}$.

  The smoothness of the functions
  $\Psi_{\mathbf{\Gamma},\mathbf{d}}$, $\Psi_{\xi,\cQ}$ and
  $\log(F_{m_\lambda}(\vec{L}^\lambda,\vec\tau^\lambda))$ are immediate consequences of their
  definitions.
  To estimate the derivatives $\partial^{\tilde \pi}\phi$ we use the Leibniz rules. The derivatives
  of $\phi$ will contain derivatives of $ \log F_{m_\lambda}(\vec{L}^\lambda, \tau^\lambda)$ and
  derivatives of $\Psi_{{\mathbf{\Gamma}}, {\mathbf{d}}}$. We have all tools to estimate both.

  By Lemma \ref{lem:expre_neutr_exh}, the lengths of terminal cells of
  $({\mathbf{\Gamma}}, {\mathbf{d}})$ coincide (up to additive ``constants'') with the components of
  $\tausurv = (\tau_q)_{q \in \Thetasurv}$. We may therefore reformulate Proposition~\ref{p:derF} as
  follows: on the support of the test function $\Psi_{{\mathbf{\Gamma}}, {\mathbf{d}}}$, for every
  $\lambda\in \LambdaBCFN$, for every set $\tilde \pi$ of surviving boundary segments with
  $|\tilde \pi|\geq 1$, we have
\begin{align*}
  \Big| \frac{\partial^{\tilde \pi}
  \log F_{m_\lambda}(\vec{L}^\lambda, \vec{\tau}^\lambda)}
  {\prod_{q \in \tilde \pi}\partial \tau_q} \Big| \leq  
  \fn(m_\lambda)\, \prod_{q \in \tilde \pi} \frac1{Z_{\mathbf{\Gamma}, \mathbf{d}}(\cK_q)}.
\end{align*}
 As a consequence,
 \begin{align*}
   \Bigg| \frac{\partial^{\tilde \pi}
   \Big(\big( \log F_{m_\lambda}(\vec{L}^\lambda, \vec{\tau}^\lambda)\big)^{d_\lambda}\Big)}
   {\prod_{q \in \tilde \pi}\partial \tau_q} \Bigg| 
   \leq   \fn(m_\lambda)\,
   d_\lambda^{|\tilde \pi|} \big| \log F_{m_\lambda}(\vec{L}^\lambda, \vec{\tau}^\lambda) \big|^{d_\lambda-|\tilde \pi|}
   \prod_{q \in \tilde \pi} \frac1{Z_{\mathbf{\Gamma}, \mathbf{d}}(\cK_q)}.
%\\  \nonumber &\leq   K(m_\lambda)
 %d_\lambda^{|\tilde \pi|}  \ell^{d_\lambda-|\tilde \pi|}
%\prod_{\cK \in \tilde \pi} \frac1{Z_{(\mathbf{\Gamma}, \mathbf{d})}(\cK)}.
\end{align*}
Since $m_\lambda$ is always less than $2r$, we can write $\max_{m_\lambda \leq
  2r}\fn(m_\lambda)=\fn(r)$.
By definition of $F_{m_\lambda}$,
$$  \big|\log    F_{m_\lambda}(\vec{L}^\lambda, \vec{\tau}^\lambda)\big|
\leq Q_{m_\lambda}(\vec{L}^\lambda,\vec{\tau}^\lambda)
= \ell_Y(\Gamma_\lambda)
\leq  \ell_Y(\mathbf{c})$$
by Proposition \ref{p:apriori1}.
We then deduce that
\begin{align}
  \label{e:bound_log_F_in_proof}
  \Bigg| \frac{\partial^{\tilde \pi}
  \Big(\big( \log F_{m_\lambda}(\vec{L}^\lambda, \vec{\tau}^\lambda)\big)^{d_\lambda}\Big)}
  {\prod_{q \in \tilde \pi}\partial \tau_q} \Bigg| 
  \leq   \fn(r, d_\lambda) \, \ell_Y(\mathbf{c})^{d_\lambda}
  \prod_{q \in \tilde \pi} \frac1{Z_{\mathbf{\Gamma}, \mathbf{d}}^{1/2}(\cK_q)}
  % \\  \nonumber &\leq   K(m_\lambda)
  % d_\lambda^{|\tilde \pi|}  \ell^{d_\lambda-|\tilde \pi|}
  % \prod_{\cK \in \tilde \pi} \frac1{Z_{(\mathbf{G}, \mathbf{d})}(\cK)}.
\end{align}
using the trivial but later useful observation that
$ 1/ Z_{\mathbf{\Gamma}, \mathbf{d}}(\cK_q) \leq 1/Z^{1/2}_{\mathbf{\Gamma}, \mathbf{d}}(\cK_q)$.

Let us turn to the derivatives of $\Psi_{{\mathbf{\Gamma}}, {\mathbf{d}}}$.  It follows from
Proposition \ref{prp:der_psi}, the bound $|\mathbf{\Gamma}| \leq 6r^2$ and the fact that our
partition functions $\fcell{0}$, $\fcell{\infty}$ are fixed by Notation \ref{nota:a_Z}, that
\begin{equation*}
   \Big|\frac{\partial^{\tilde{\pi}} \Psi_{{\mathbf{\Gamma}}, {\mathbf{d}}} }
   {\prod_{q\in \tilde{\pi}}\partial \tau_q} \Big| \leq \fn(r).
\end{equation*}
We then harmonize this upper bound with \eqref{e:bound_log_F_in_proof} by artificially introducing a
product on the right-hand side. In order to do so, we note that, if $q \in \tilde{\pi}$ is a
surviving boundary segment, then it contains a terminal cell $K$. On the support of the derivative
above, $Z_K^{\Gamma^\infty} \leq e^{a+1}$ which, by Proposition \ref{p:6}, implies that
$Z_{\mathbf{\Gamma},\mathbf{d}}(\cK_q) \leq e^{a+3}$. We obtain
 \begin{align}\label{e:derPsi}
   \Big|\frac{\partial^{\tilde{\pi}} \Psi_{{\mathbf{\Gamma}}, {\mathbf{d}}} }
   {\prod_{q\in \tilde{\pi}}\partial \tau_q}\Big|
   \leq \fn(r)  \prod_{\cK \in \tilde{\pi}} \frac{1}{Z^{1/2}_{\mathbf{\Gamma}, \mathbf{d}}(\cK)}.
 \end{align}

 { The bounds on the derivatives of $\psi$ are trivial consequences of the Leibniz
   rules together with the previous observations on the derivatives of $\bbbone_{\domain}$ and
   $\fz(y_\lambda)$. The only derivatives appearing are the derivatives
   $\partial^\pi\Psi_{\xi,\cQ}$, which are bounded by $\fn(\# \cQ, |\pi|) \leq \fn(r)$, and vanish
   for $\tilde{\theta}_q \geq 2 \log(2)$, and hence satisfy
 \begin{equation*}
   |\partial^\pi\Psi_{\xi,\cQ}|
   \leq  \fn(r) 4^{|\pi|} \prod_{q \in \pi}e^{-\tilde{\theta}_q}
   \leq   \fn(r) \prod_{q \in \pi}e^{-\tilde{\theta}_q}
 \end{equation*}
 which leads to the claim.}
\end{proof}    

\subsection{Comparison estimate}
\label{sec:comparison-estimate-apply}

Now, the last thing we need to check is the hypothesis \eqref{e:comparison} in Assumption
\textbf{($\varphi$-CE-form)}, i.e., the comparison estimate. It will take the form of the following
inequality. 
       
\begin{prp} \label{p:comparison}
  On the support of $\Psi_{\mathbf{\Gamma},\mathbf{d}}$, we have
  \begin{align*}
    \sum_{\lambda\in \LambdaBC\setminus W} \ell_Y(\Gamma_\lambda)
    \leq
    \ell_Y(\mathbf{c})
        + \sum_{q\not\in \Theta_\tau(\tilde{\pi})}\tilde \theta_{q}
    + \sum_{q\in \tilde \pi}( \log Z_{\mathbf{\Gamma}, \mathbf{d}}(\cK_q) +2)
    +  \fn(r).
  \end{align*} 
\end{prp}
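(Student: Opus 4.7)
The plan is to start from the basic comparison estimate and successively replace simple-portion contributions by the $\tilde\theta$- and $\log Z$-quantities appearing on the right-hand side, absorbing lost pieces into the additive $\fn(r)$ constant. Since $\mathbf{c}$ is a generalized eight, no component of $\mathbf{S}\setminus\cN$ (for $\cN$ a regular neighbourhood of $\mathbf{c}$) is contractible; hence every simple portion of $\mathbf{c}$ is unshielded. Using that $\ell_Y(\Gamma_\lambda)\geq 0$ for all $\lambda$ and applying Proposition~\ref{p:comparison1}, I get
\begin{equation*}
\sum_{\lambda\in\LambdaBC\setminus W}\ell_Y(\Gamma_\lambda)\leq\sum_{\lambda\in\LambdaBC}\ell_Y(\Gamma_\lambda)\leq \ell_Y(\mathbf{c})+\sum_{q\in\Theta}\theta_q,
\end{equation*}
identifying the unshielded simple portions with the $(\cI_q)_{q\in\Theta}$, each of length $\theta_q$.

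Next, summing the erasing inequality \eqref{e:funnier} over the components of $\mathbf{c}$ yields $\sum_{q\in\Theta}\theta_q\leq\sum_{q\in I(\xi)}\tilde\theta_q$. I would split this second sum according to the partition $\Theta=\ThetaNe(\mathbf{\Gamma},\mathbf{d})\sqcup\bigsqcup_{q\in\Thetasurv}\Theta_\tau(q)$. Indices lying in $\ThetaNe(\xi,\cQ)$ or $\ThetaNe(\mathbf{\Gamma},\mathbf{d})$ contribute bounded terms on the support of the partition of unity $\Psi_{\xi,\cQ}\Psi_{\mathbf{\Gamma},\mathbf{d}}$ (absorbed in $\fn(r)$), while the remaining active $\tilde\theta_{q'}$ group naturally by the surviving boundary segment $\cK_q$ (with $q\in\Thetasurv$) containing them. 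The groups with $q\in\Thetasurv\setminus\tilde\pi$ yield exactly the middle term $\sum_{q'\notin\Theta_\tau(\tilde\pi)}\tilde\theta_{q'}$ of the claim, up to further neutral corrections.

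For $q\in\tilde\pi$ the remaining partial sum $\sum_{q'\in\Theta_\tau(q)\cap\cQ}\tilde\theta_{q'}$ is equal, modulo neutral variables, to $\tau_q=\ell(\cK_q)$, and it must be bounded by $\log Z_{\mathbf{\Gamma},\mathbf{d}}(\cK_q)+2+\fn(r)$. The terminal cell $K\subset\cK_q$ is favourable, with height $Z^{\Gamma}_K=\tanh(L_j)\tanh(L_{j+1})e^{\ell(K)}\geq e^a$. The main obstacle is that one cannot bound $\ell(K)$ directly by $\log Z^{\Gamma}_K+O(1)$: when one of the adjacent bridges $L_j$ is short, $-\log\tanh(L_j)$ is large and is not controlled by $\vec L$ and $\thetaNe$ alone. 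This is precisely what the orthogeodesic $W_j$ of Definition~\ref{d:Zj} is designed for; by Lemma~\ref{l:WZ}, $\ell(W_j)\leq\log Z^{\Gamma}_K+2$ uniformly. Passing to the piecewise-geodesic representative of $\Gamma_\lambda$ in which, for each $q\in\tilde\pi$, the local configuration $(J_j,K,J_{j+1})$ is replaced by $W_j$ together with appropriately truncated adjacent bridges, produces a homotopic representative whose length is bounded by $\sum_{q\in\tilde\pi}(\log Z_{\mathbf{\Gamma},\mathbf{d}}(\cK_q)+2)+\fn(r)$, where the residual bridge contributions depend only on $(\vec L,\thetaNe(\mathbf{\Gamma},\mathbf{d}))$ by Lemma~\ref{lem:expre_neutr_exh}. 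Carrying out this local substitution simultaneously across all $q\in\tilde\pi$, while verifying that the resulting closed curve is still a valid representative of $\Gamma_\lambda^\infty$, is the technical heart of the proof; combining with the previous steps then yields the desired inequality.
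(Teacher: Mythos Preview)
Your starting inequality is not what Proposition~\ref{p:comparison1} says, and this gap cannot be repaired within your framework. In that proposition the ``simple portions'' are subsegments of the \emph{geodesic representative} of $\mathbf c$, and for a generalized eight they are all unshielded and their total length is exactly $\ell_Y(\mathbf c)$. So Proposition~\ref{p:comparison1} only yields the trivial bound $\sum_{\lambda\in\LambdaBC}\ell_Y(\Gamma_\lambda)\le 2\ell_Y(\mathbf c)$. Your identification of these simple-portion lengths with the $\theta_q$ is incorrect: the $\theta_q$ are signed lengths of the segments $\overline{\cI}_q$ lying on the multi-curve $\beta$, not on the geodesic $\mathbf c$, and $\sum_{q\in\Theta}\theta_q=\sum_{\lambda\in\Lambdabeta}\ell_Y(\beta_\lambda)$ is generally much smaller than $\ell_Y(\mathbf c)$. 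The inequality $\sum_\lambda x_\lambda\le\ell_Y(\mathbf c)+\sum_q\theta_q$ you take as a starting point is, up to the $\log Z$ refinement, essentially the \emph{conclusion} of the proposition and requires the full machinery of \S\ref{s:horrible}.

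Secondly, your claim that indices in $\ThetaNe(\mathbf\Gamma,\mathbf d)$ contribute bounded terms on the support of $\Psi_{\mathbf\Gamma,\mathbf d}$ is false: what is bounded there is the height $Z^{\mathbf\Gamma}_K$ of neutralized cells, and since $Z_K=\tanh(L_j)\tanh(L_{j+1})e^{\tau_j}\le e^{\tau_j}$ one only gets $\tau_j\ge\log Z_K$, not the reverse. The paper's proof is organised very differently: it constructs a near-minimal representative $\mathbf c^{\mathrm f}$ made of the $\cI'_q$ and geodesic ``junctions'', proves the lower bound $\ell_Y(\mathbf c)\ge\sum_\lambda\ell_Y(\beta_\lambda)+\sum_{p\in\Junc(\cQ)}\ell(p)-\fn(r)$ via a staircase inequality, then bounds each $\ell_Y(\Gamma_\lambda)$ from above by a representative made from those same junctions plus cell-segments in $\beta$, and finally controls the cell contribution by a double-point count on $\beta$ (Lemma~\ref{l:shield} and Proposition~\ref{prp:bound_D}). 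The junction lengths cancel between the two bounds, which is precisely what produces the gain of one copy of $\ell_Y(\mathbf c)$ over the trivial estimate; your approach has no mechanism for this cancellation.
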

We can rewrite this inequality under the form \eqref{e:comparison} in terms of the variables
$\thetaAc^\sim$ with the comparison functions
\begin{align*}
  \log Z_q& := \log Z_{\mathbf{\Gamma}, \mathbf{d}}(\cK_q) +2\\
  \cH(\thetaAc^\sim)
          &:= \frac 12 \Big(\sum_{\lambda\in \LambdaBC\setminus W} \ell_Y(\Gamma_\lambda) -  \fn(r) \Big).
\end{align*}      
The proof of the proposition can be found in \S \ref{s:horrible}, see Proposition
\ref{p:boundGlambda}. We first show how it allows to end the proof of our main theorem.

\subsection{Applying the operator $\cL$} 
We are now ready to apply the operator $\cL$ to the integral~\eqref{e:mainfocusbis}, in order to
prove that the result is a Friedman-Ramanujan remainder (an element of $\FRrem_w$). By Proposition
\ref{p:charFR}, this implies that the desired function  is a
Friedman-Ramanujan function, which is our claim. In order to write down the terms we obtain when we
apply the operator $\cL$, we will use Proposition \ref{l:yet2}, the second variant of
our stability result.

The integration variable $x_1, \ldots, x_n$ are the active parameters $\tilde\theta_q$ with
$q \in \ThetaAc'$ (and in particular $n = \nac = \# \ThetaAc'$), and the linear forms
$\tau_1, \ldots, \tau_m$ are the components of the vector $\tausurv = (\tau_q)_{q \in \Thetasurv}$,
given by \eqref{e:tautheta2}. The functions $h, \phi$ and $\psi$ are those described in \S
\ref{s:checkingh} and \S\ref{s:checkingphi}, and the functions $f_1, \ldots, f_n$ are the functions
$\tilde\theta_q \mapsto \theta_q^{\rK_q} e^{\theta_q}$ for $q \in \ThetaAc'$.

The previous discussion proves that, as a function of the variables $\thetaAc^\sim$, the functions
$h$, $f_1, \ldots, f_n$ and $\varphi$ satisfy Assumptions \textbf{$(h)$}, \textbf{$(f)$} and
\textbf{($\varphi$-CE-form)} from Notation \ref{nota:assum_1} and \ref{nota:CEform} (provided we
prove the comparison estimate). The constant $\rK$ from Assumption \textbf{($f$)} is
\begin{align}\label{e:choice_K}
  \rK=\sum_{q\in \ThetaAc'}(\rK_q+1)
%  \leq \sum_j(\rK_j -1) +2r-\# \ThetaNe'
\end{align}
which,  by \eqref{eq:sum_K}, is equal to
\begin{equation}
  \label{eq:sum_K_proof}
  \rK   = \sum_{q\in \ThetaAc'}\rK_q + \nac
  \leq \sum_{j \in V}(\rK_j-1) + \nac
  \leq \sum_{j \in V}(\rK_j-1) + 2r.
\end{equation}
We note that the functions $\theta_q^{\rK_q}e^{\theta_q}$ only have a principal term here (so we
could take $\rN_q=0$), but because we assume that $\rK_q \leq \rN_q$ in Assumption \textbf{$(f)$},
we take $\rN_q=\rK_q$ for all $q \in \ThetaAc'$.

We can then apply Proposition \ref{l:yet2}. We obtain that applying $\cL^\rK$ to
\eqref{e:mainfocusbis} yields a sum of terms bounded by
\begin{align}
  \label{e:partialint}
  \fn(r,\rK)
  \prod_{j \in V}\|\tilde{g}_j\|_{\cF^{\rK_j,\rK_j}}
   (1+ \ell)^{\rK} e^{\frac{\ell}2}
  & \int
    \bbbone_{\domain} (\vec{L}, \vec{\theta})
    \prod_{\lambda \in \Vbeta_+}  \fz(y_\lambda)^2
        \prod_{q \in \ThetaAc'} e^{\theta_q}
    \prod_{\lambda\in \LambdaBC\setminus W} e^{-y_\lambda/2}
 \frac{\d \thetaAc^\sim}{\d\ell}.
\end{align}
% \begin{align}
% \label{e:partialint} 
%   \begin{split}
%     \int_{h(\thetaAc^\sim)=\ell}
%     \bbbone_{\domain} (\vec{L}, \vec{\theta})
%     & \prod_{\lambda \in \Vbeta_+}  \fz(y_\lambda)^2
%           \prod_{t\leq \tilde n}  (1+ \theta_{v_t})^{\rK_{v_t}} e^{  \theta_{v_t}}
%       \prod_{q\not\in \{v_1, \ldots, v_{\tilde n}\}}    \theta_q^{\rK_q}e^{\theta_q}
%      \\
%     & \exp \Big(\frac12\sum_{j\in W_1} y_j
%       -\frac12  \sum_{\lambda\in \oLambdaBC\setminus W_2} y_\lambda \Big)
%       \frac{\d \thetaAc^\sim}{\d\ell}
%   \end{split}
% \end{align}
% with a factor $\fn(r, (\rK_j)) (1+\ell )^{k_1} e^{\ell/2}$
As a consequence, our goal will be achieved if we prove that the integral in \eqref{e:partialint} is
polynomially bounded, after integration with respect to all the frozen variables $\thetaNe'$.

All the variables that were frozen are recorded in \eqref{e:takenout} and just after
\eqref{e:mainfocusbis}. Re-integrating them in the calculation, we are lead to consider the product
of $\fn(r,\rK) \prod_{j \in V}\|\tilde{g}_j\|_{\cF^{\rK_j,\rK_j}}$ with
\begin{align*}
  (1+ \ell)^{\rK} e^{\frac{\ell}2}
  \int_{h(\vec{L},\vec{\theta})=\ell}
  \bbbone_{\domain} (\vec{L}, \vec{\theta})
  &
    \prod_{\lambda \in \Vbeta_+}  \fz(y_\lambda)^2
    P(\vec{L},\vec{\theta}) 
    T_{V,\mathrm{m}}(\vec{L},\vec{\theta})
    \prod_{i=1}^r \sinh (L_i)\frac{\d^r \vec{L} \d^{2r} \vec\theta}{\d\ell}.
\end{align*}
where
\begin{equation*}
  P(\vec{L},\vec{\theta}) 
  = \prod_{q \in \Theta_\ell(V_+) \cap \ThetaNe} \theta_q^{\rK_q}
  \prod_{q \in \ThetaNe' \setminus \ThetaNe} \theta_q^{\rK_q}
  \prod_{q \in \ThetaAc} e^{\theta_q}
  \prod_{\lambda \in W} r_\lambda(\vec{L},\vec{\theta})
  \prod_{\lambda\in \LambdaBC\setminus W} e^{-y_\lambda/2}.
\end{equation*}
% \begin{equation}
%   \begin{split}
%     \int_{\ell({\mathbf{c}})=\ell}
%     \bbbone_{\domain} (\vec{L}, \vec{\theta})  
%       \prod_{\substack{1 \leq j \leq N \\ j \in V}}  \fz(y_j)^2
%     \prod_{j\in W_1}  r_j(y_j)
%     \prod_{\lambda\in W_2} U_\lambda(\vec{L}, (\theta_q)_{q\in \ind(\lambda)})
%     \prod_{t\leq \tilde n}  (1+ \theta_{v_t})^{\rK_{v_t}} e^{  \theta_{v_t}}\\
%     \prod_{\substack{q\not\in \{v_1, \ldots, v_{\tilde n}\} \\ \text{or } q \in \ThetaNe'\setminus \ThetaNe}}
%     \theta_q^{\rK_q}e^{\theta_q}
%     \exp \Big(\frac12\sum_{j\in W_1} y_j-\frac12  \sum_{\lambda\in \oLambdaBC\setminus W_2} y_\lambda\Big)
%       T_V(\vec{L}, \vec{\theta})
%       \hyp_-(\vec{L})\frac{ \d^r \vec{L} \d^{2r} \vec{\theta}}{\d \ell}.
%   \end{split}
% \end{equation}
% \todo{missing the neutral variables?}
Recall that we have assumed that we have the decay
$$|r_\lambda(\vec{L},\vec{\theta})| \leq
\begin{cases}
  (1+y_\lambda)^{\rN_\lambda-1}e^{y_\lambda/2} & (\lambda \in W^\beta) \\
  (1+y_\lambda)^{\rN_\lambda-1}e^{-y_\lambda/2} & (\lambda \in \WBCFN).
\end{cases}
$$ 
We also remember that $\sum_{q \in \ThetaAc}{\theta_q} \leq \sum_{\lambda \in \Vbeta_+ \setminus \Wbeta}
y_\lambda = \sum_{\lambda \in \Vbeta \setminus W^\beta} y_\lambda + \sum_{\lambda \in \Lambdainbeta} y_\lambda$. It follows that
\begin{align*}
  P(\vec{L},\vec\theta)
  &  \leq (1+\ell)^{\rN'}
    \prod_{\lambda \in \Vbeta \setminus \Wbeta} e^{y_\lambda}    
    \prod_{\lambda \in \Lambdainbeta} e^{y_\lambda}    
    \prod_{\lambda \in \Wbeta} e^{y_\lambda/2}
    \prod_{\lambda \in \WBCFN} e^{-y_\lambda/2}
    \prod_{\lambda\in \LambdaBC\setminus W} e^{-y_\lambda/2} 
\end{align*}
for $\rN' \leq \sum_{\lambda \in \ThetaNe'} \rN_\lambda$.
Therefore, since $(\Vbeta \setminus \Wbeta) \sqcup (\VFN \setminus \WBCFN) \subseteq \LambdaBC \setminus W$,
\begin{align*}
  P(\vec{L},\vec\theta)
  &  \leq (1+\ell)^{\rN'} \prod_{\lambda \in \Vbeta} e^{y_\lambda/2}
    \prod_{\lambda \in \Lambdainbeta} e^{y_\lambda}
    \prod_{\lambda\in \VFN} e^{-y_\lambda/2}.
\end{align*}

Putting everything together, we have proved that the integral \eqref{e:mainfocusbis} is bounded by a
sum of terms of the form
\begin{equation*}
  \int_{h(\vec{L},\vec{\theta})=\ell}
  \bbbone_{\domain} (\vec{L}, \vec{\theta})
  \frac{\prod_{\lambda \in \Vbeta} \fz(y_\lambda)e^{y_\lambda/2}
  \prod_{\lambda \in \Lambdainbeta} \fz^2(y_\lambda)e^{y_\lambda}}
    {\prod_{\lambda\in \VFN} e^{y_\lambda/2}}
    T_{V,\mathrm{m}}(\vec{L},\vec{\theta})
    \prod_{i=1}^r \sinh (L_i)\frac{\d^r \vec{L} \d^{2r} \vec\theta}{\d\ell},
  \end{equation*}
  multiplied by $  \fn(r,\rK)
  \prod_{j \in V}\|\tilde{g}_j\|_{\cF^{\rK_j,\rK_j}}   (1+\ell)^{\rK+\rN'} e^{\frac \ell 2}$. 
We shall now bound this by a quantity for which we can use our change of variables, Corollary
\ref{c:maindet-dirac}, backwards. Indeed, we note that $\fz(y)e^{y/2} \leq c \sinh(y/2)$ for a
universal constant $c$, and hence the quantity above is smaller than
\begin{align*}
  & \fn(r)
    \int_{h(\vec{L},\vec{\theta})=\ell}
    \bbbone_{\domain} (\vec{L}, \vec{\theta})
    \frac{\prod_{\lambda \in \Vbeta} \sinh \div{y_\lambda}
    \prod_{\lambda \in \Lambdainbeta} \sinh^2 \div{y_\lambda} }
    {\prod_{\lambda\in \VFN}\sinh \div{y_\lambda}}
    T_{V,\mathrm{m}}(\vec{L},\vec{\theta})
    \prod_{i=1}^r \sinh (L_i)\frac{\d^r \vec{L} \d^{2r} \vec\theta}{\d\ell} \\
  & = \fn(r)
    \int_{h(\vec{L},\vec{\theta})=\ell}
    \prod_{j=1}^k x_{\vi}\delta(x_{\vi}-x_{\vi'})
    \frac{\d \Volwp[\g,\n](Y)}{\d\ell}.   \end{align*}
  We have shown that this quantity is polynomially bounded in the weak sense in Corollary
  \ref{c:pvolume}, giving a bound
\begin{align*} \int_{\ell_Y({\mathbf{c}})\leq \ell}  
  \prod_{j=1}^k x_{\vi} \delta(x_{\vi} - x_{\vi'})
   \d \mathrm{Vol}^{\mathrm{WP}}_{g_{\mathbf{S}}, n_{\mathbf{S}}}(\x,Y)  \leq
  (3 \ell)^{3 \chi(\mathbf{S})}.
 \end{align*}

 As a conclusion, for $\rK = \sum_{j \in V} (\rK_j -1)+2r$ we have shown that
$$\cL^{\rK} \cJ \in \cR^{\rN}$$
where $\rN=\rK+\rN'+3\chi(\Sf)+1 \leq\sum_{j \in V}\rN_j+3\chi(\Sf)+1$, implying that $\cJ$ belongs to
$\cF_w^{\rK, \rN}$. Furthermore we have shown that
\begin{align}\label{e:multi_bound}
\norm{\cJ}_{\cF_w^{\rK, \rN}} \leq \fn(\g, \n, \rK, \rN)\, \prod_{j\in V} \norm{\tilde g_j}_{\cF^{\rK_j, \rN_j}}.
\end{align}
This proves Theorem \ref{t:main}.

\subsection{Theorem \ref{t:main-kappa}}

As a step towards the version of \S \ref{s:variantLC}, we add a small modification to Theorem \ref{t:main}. The only difference is that we impose the lengths of some components $\mathbf{c}_j$ to stay bounded. 
  
 \begin{thm}\label{t:main-kappa}
   With the notations of Theorem \ref{t:main}, let $W$ be a subset of $\{1, \ldots, \cc\}$ and
   $\kappa=(\kappa_j)_{j\in W}$ be positive real numbers.  Then, the function
   $\cJ=\cJ_{\mathbf{T},V, \mathrm{m}, f, W, \kappa}$ defined by
\begin{align}\label{e:mainint-kappa}
  \cJ( \ell )
  =   \int_{\ell_Y({\mathbf{c}})=\ell}
  \prod_{j\in W}\bbbone_{[0, \kappa_j]}(\ell_Y(\mathbf{c}_j))
  \prod_{j\in V}  f_j(x_j)
                \prod_{j=1}^k x_{\vi} \delta(x_{\vi} - x_{\vi'})
                \frac{ \d \mathrm{Vol}^{\mathrm{WP}}_{g_{\mathbf{S}}, n_{\mathbf{S}}}(\x, Y) }{\d\ell}
\end{align}
is a Friedman--Ramanujan function in the weak sense.  More precisely, let $\rho[{\mathbf{T}}, W]$ be
the number of unshielded simple portions of the multi-loop $\mathbf{c}$ that are not in
$(\mathbf{c}_j)_{j\in W}$, 
\begin{equation*}
  \rK= \sum_{j\in V\setminus W} (\rK_j -1) +\rho[{\mathbf{T}}, W]
  \quad \textrm{and} \quad
  \rN=\sum_{j \in V}\rN_j + 3 \chi(\mathbf{S}) +1.
\end{equation*}
Then $\cJ\in \cF_w^{\rK, \rN}$, and
\begin{align}\label{e:multi_bound}
  \norm{\cJ}_{\cF_w^{\rK, \rN}}
  \leq \fn(\g, \n, \rK, \rN)\,
  e^{\frac12\sum_{j\in W} \kappa_j} \prod_{j\in V} \norm{\tilde g_j}_{\cF^{\rK_j, \rN_j}}.
\end{align}
  \end{thm}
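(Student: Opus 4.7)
I would adapt the proof of Theorem \ref{t:main} by treating the bounded components $\mathbf{c}_j$ for $j\in W$ as additional neutral data. First, I would carry out the reduction of \S \ref{s:noncross} to a pseudo-convolution form \eqref{e:mainfocusbis}, absorbing the cutoff $\prod_{j\in W}\bbbone_{[0,\kappa_j]}(\ell_Y(\mathbf{c}_j))$ into the weight $\varphi_0$. The indicator functions are not smooth, but this is handled exactly as for $\bbbone_\domain$ in \S \ref{s:checkingphi}: on the support of the partition-of-unity piece $\Psi_{\mathbf{\Gamma},\mathbf{d}}$ we have $\ell_Y(\Gamma_\lambda)\geq 3$ and the geometric information guarantees that the cutoffs are smooth in the region that matters; alternatively, one may mollify each $\bbbone_{[0,\kappa_j]}$ with a fixed smooth cutoff whose derivatives do not depend on $\kappa_j$, which only affects multiplicative constants absorbed in $\fn(\g,\n,\rK,\rN)$.

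The next step is to recognise that, by Proposition \ref{p:apriori1}, on the support of $\bbbone_{[0,\kappa_j]}(\ell_Y(\mathbf{c}_j))$ every parameter $\theta_q$ with $q\in\Theta^j$ satisfies $|\theta_q|\leq 2\kappa_j$, and the bar-lengths $L_q$ with $q\in\Theta^j$ are bounded by $\kappa_j$. I would therefore enlarge the neutral index set of \S \ref{s:theintegral} to
\begin{equation*}
\ThetaNe'' := \ThetaNe' \cup \bigcup_{j\in W}\Theta^j,
\end{equation*}
and work with the reduced active set $\ThetaAc''=\Theta\setminus\ThetaNe''$. Because each $\theta_q$ with $q\in\Theta^j$, $j\in W$, now lies in a compact interval of size $\O{\kappa_j}$, we never need to apply the operator $\cL$ in these directions, and integrating them out produces at worst a factor $e^{\frac12\sum_{j\in W}\kappa_j}$ (together with polynomial factors in $\kappa_j$ that are absorbed in $\fn(\g,\n,\rK,\rN)$). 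Correspondingly the Friedman--Ramanujan exponent drops to $\rK=\sum_{j\in V\setminus W}(\rK_j-1)+\rho[\mathbf{T},W]$: the terms $\rK_j-1$ for $j\in W$ are not needed because the associated functions $f_j$ can be replaced by their sup-norms on $[0,\kappa_j]$, and $\rho[\mathbf{T},W]$ replaces $\rho[\mathbf{T}]$ because only the unshielded simple portions of components \emph{outside} $W$ remain active.

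The analytic machinery of \S \ref{s:GC}, together with Theorem \ref{t:thebigone'} (the variant of \S \ref{s:variant}), then applies verbatim, \emph{provided} we have the appropriate comparison estimate. The key point is to upgrade Proposition \ref{p:comparison} to
\begin{equation*}
\sum_{\lambda\in\LambdaBC\setminus W}\ell_Y(\Gamma_\lambda)\leq\ell_Y(\mathbf{c})+\sum_{j\in W}\kappa_j+\sum_{q\notin\Theta_\tau(\tilde\pi)}\tilde\theta_q+\sum_{q\in\tilde\pi}(\log Z_{\mathbf{\Gamma},\mathbf{d}}(\cK_q)+2)+\fn(r).
\end{equation*}
This follows from the argument of \S \ref{s:horrible} applied to the multi-loop $\mathbf{c}$, observing that the unshielded simple portions carried by $\mathbf{c}_j$ with $j\in W$ have total length $\leq 2\ell_Y(\mathbf{c}_j)\leq 2\kappa_j$ (by Proposition \ref{p:comparison1}), so they can be estimated by a constant $\kappa_j$ instead of being matched with active parameters on the right-hand side.

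The main obstacle I foresee is a careful revisit of the comparison estimate of \S \ref{s:horrible}. Concretely, one must identify the precise places in that proof where unshielded simple portions of $\mathbf{c}_j$ for $j\in W$ contribute, and verify that replacing their lengths by $\kappa_j$ preserves all the intermediate inequalities. Once this is in place, plugging the new comparison estimate into the final step of \S \ref{s:proof} (via Proposition \ref{l:yet2}) yields $\cJ\in\cF_w^{\rK,\rN}$ with the announced norm bound, the factor $e^{\frac12\sum_{j\in W}\kappa_j}$ being precisely the cost of freezing the bounded components.
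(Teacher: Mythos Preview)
Your proposal is correct and follows essentially the same approach as the paper: enlarge the neutral set to $\ThetaNe'(W)=\ThetaNe'\cup\{q\in\Theta:\comp(q)\in W\}$, then modify Proposition~\ref{p:comparison} by adding the term $\sum_{j\in W}\kappa_j$, which produces the factor $e^{\frac12\sum_{j\in W}\kappa_j}$. One small simplification: your concern about the smoothness of $\bbbone_{[0,\kappa_j]}(\ell_Y(\mathbf{c}_j))$ is unnecessary, since once every $q\in\Theta^j$ with $j\in W$ is declared neutral, $\ell_Y(\mathbf{c}_j)$ depends only on frozen variables and the indicator is just a constant factor in the pseudo-convolution---no mollification or analogue of the $\bbbone_\domain$ argument is needed.
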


  \begin{proof}
    The proof of Theorem \ref{t:main-kappa} follows exactly the same lines as the proof of Theorem
    \ref{t:main}, except that the set of neutral variables is enlarged to
    $\ThetaNe'(W):= \ThetaNe'\cup \{q\in \Theta, \comp(q)\in W\}$ (with the notation $\comp(q)$ from
    \S \ref{s:relabel_Theta}, indicating to which component $c_i$ the index $q$ belongs). The set of
    active variables is therefore reduced to $\Theta \setminus \ThetaNe'(W)$.

Proposition \ref{p:comparison} is modified as follows:
  \begin{align*}
    \sum_{\lambda\in \LambdaBC\setminus W} \ell_Y(\Gamma_\lambda)
    \leq \ell_Y(\mathbf{c})
    + \sum_{q\notin \Theta_\tau(\tilde{\pi})}\tilde \theta_{q}
    + \sum_{q\in \tilde \pi}( \log Z_{\mathbf{\Gamma}, \mathbf{d}}(\cK_q) +2)
    + \sum_{j\in W} \kappa_j
    +   \fn(r)
      \end{align*} 
and this last term is responsible for the extra factor $e^{\frac12 \sum_{j\in W}\kappa_j}$ in the quantitative statement of Theorem \ref{t:main-kappa}.
  \end{proof}

\begin{rem}Note that $g$, the genus of the large surface in which $\Sf$ is embedded at the beginning
  of the paper, does not appear in Theorems \ref{t:main} and \ref{t:main-kappa}.
However, a notable point is that the numbers $(\kappa_j)_{j \in W}$
 may be taken to be functions of $g$. This will happen in the application to Theorem~\ref{t:dream}, where we have constraints such as $\ell_Y(\mathbf{c}_j)\leq \kappa_j$ with $\kappa_j=\kappa \log g$, with $\kappa>0$ arbitrarily small.
  \end{rem}

  \begin{rem}
    Another point is that the new exponents $\rK$, $\rN$ are bounded above by the ones obtained in
    the initial result, Theorem \ref{t:main}. In particular, the property also holds with the
    exponents from Theorem \ref{t:main}.
  \end{rem}

%%%Local Variables: 
%%% mode: latex
%%% TeX-master: "main"
%%% End: 

\section{Proof of the crucial comparison estimate (Proposition \ref{p:comparison})}
\label{s:horrible}

The proof of the comparison estimate, Proposition \ref{p:comparison}, is divided in two parts: a
lower bound on $\ell_Y(\mathbf{c})$ (presented in \S \ref{s:lblt}) and an upper bound on
$\sum_{\lambda\in \LambdaBC \setminus W^\Gamma} \ell_Y(\Gamma_\lambda)$ (in \S \ref{s:ubsl}).

\subsection{Lower bound on $\ell_Y(\mathbf{c})$}
\label{s:lblt}

Let us first prove a lower bound on the length of $\mathbf{c}$. We first prove a general lemma, and
then proceed to the proof in \S \ref{s:lbl}.

\subsubsection{A lemma}
\label{s:iterated}

Let us first prove a comparison inequality in a simple case.
Let $\gamma_1, \ldots, \gamma_M$ be $M$ complete oriented geodesics in the hyperbolic plane, assumed
to be pairwise aligned, with $\gamma_{k+1}$ on the right of $\gamma_k$ for all $k$. Let us define
the following notations.
\begin{itemize}
\item For $1 \leq k \leq M-1$, $H_{k, k+1}$ is the orthogeodesic segment from $\gamma_k$ to
  $\gamma_{k+1}$, of length denoted as $\theta_k$, with endpoints $G_k \in \gamma_k$ and
  $D_k \in \gamma_{k+1}$.
  % (and call $H^\inftD_{k, k+1}$ the full orthogeodesic).
\item $D_0$ and $G_{M}$ are two arbitrary points on $\gamma_1$ and $\gamma_M$ respectively.
\item For $1 \leq k \leq M$, we denote as $L_k$ the distance between $D_{k-1}$ and $G_k$.
\end{itemize}
We then prove the following.
 
\begin{lem}
  We have
  \begin{align} \label{ineq:staircase}
    e^{\dist(D_0, G_M)}
    \geq \frac{1}{4^{M-1}}
    \exp \Big(\sum_{k=1}^M L_k\Big) \prod_{k=1}^{M-1} E_k(\theta_k)
  \end{align}
  where the functions $E_k$ are defined by
  \begin{equation*}
    E_k(\theta) =
    \begin{cases}
      \frac{1}{2} \exp(\theta) & \text{if the geodesic from } D_0 \text{ to } G_M \text{ crosses } H_{k,
        k+1} \\
      \cosh \theta- 1= 2 \sinh^2\div{\theta} & \text{otherwise.}
    \end{cases}
  \end{equation*}
\end{lem}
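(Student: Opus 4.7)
I would proceed by induction on $M$. The case $M=1$ reduces to $e^{L_1}\ge e^{L_1}$, which is trivial. For $M=2$, I would invoke the standard hyperbolic identity for two points on aligned geodesics: with $D_0\in\gamma_1$ at distance $L_1$ from the foot $G_1$ of $H_{12}$ and $G_2\in\gamma_2$ at distance $L_2$ from the foot $D_1$,
$$
\cosh(\dist(D_0,G_2)) \;=\; \cosh(\theta_1)\cosh(L_1)\cosh(L_2) \;+\; \varepsilon\,\sinh(L_1)\sinh(L_2),
$$
with $\varepsilon=+1$ if $D_0G_2$ crosses $H_{12}$ and $\varepsilon=-1$ otherwise. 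In the crossing case, the bound $\cosh x\ge e^x/2$ gives $\cosh(\dist)\ge \tfrac18 e^{\theta_1+L_1+L_2}=\tfrac14 e^{L_1+L_2}E_1(\theta_1)$. In the non-crossing case, I would write $\cosh\theta_1=1+2\sinh^2(\theta_1/2)$ to rearrange the right-hand side as $\cosh(L_1-L_2)+2\sinh^2(\theta_1/2)\cosh(L_1)\cosh(L_2)$ and discard the first nonnegative term, obtaining $\cosh(\dist)\ge \tfrac14 e^{L_1+L_2}\cdot 2\sinh^2(\theta_1/2)=\tfrac14 e^{L_1+L_2}E_1(\theta_1)$. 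In either case $e^{\dist}\ge\cosh(\dist)$ yields the claim for $M=2$.

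For the inductive step with $M\ge 3$, the crucial geometric fact I need is that the geodesic $g=D_0G_M$ must cross $\gamma_{M-1}$. This follows from transitivity of ``on the right of'' for pairwise-aligned oriented geodesics: since $\gamma_{k+1}$ lies in the open right half-plane of $\gamma_k$ for each $k$, together with pairwise alignment, induction on $k$ shows that $\gamma_j$ lies in the open right half-plane of $\gamma_k$ whenever $j>k$. Hence $G_M$ lies in the open right half-plane of $\gamma_{M-1}$ and $D_0$ in the open left, forcing $g$ to cross $\gamma_{M-1}$ at a unique point $P$. I would then split $\dist(D_0,G_M)=\dist(D_0,P)+\dist(P,G_M)$, apply the inductive hypothesis to the first summand (using $\gamma_1,\ldots,\gamma_{M-1}$ with endpoints $D_0$ and $P$ and last length $L_{M-1}':=\dist(D_{M-2},P)$), and apply the already-proven $M=2$ case to the second summand (with $\gamma_{M-1},\gamma_M$, endpoints $P$ and $G_M$, and lengths $\dist(P,G_{M-1})$ and $L_M$). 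Multiplying the two bounds and using the triangle inequality $L_{M-1}'+\dist(P,G_{M-1})\ge L_{M-1}$ along $\gamma_{M-1}$ assembles the exponent $\sum_{k=1}^M L_k$ and the factor $1/4^{M-1}=(1/4^{M-2})\cdot(1/4)$.

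The main technical obstacle is the bookkeeping that shows the $E_k(\theta_k)$ factor appropriate for $D_0G_M$ is exactly the one contributed by whichever piece of the split crosses $H_{k,k+1}$, for every $k$ simultaneously. For $k\le M-2$, the orthogeodesic $H_{k,k+1}$ has both endpoints on $\gamma_k,\gamma_{k+1}$, which lie in the open left half-plane of $\gamma_{M-1}$; since any two distinct hyperbolic geodesics meet in at most one point, $H_{k,k+1}$ cannot cross $\gamma_{M-1}$, so its whole segment stays in the open left half-plane. On the other hand $PG_M$ lies in the closed right half-plane of $\gamma_{M-1}$, touching $\gamma_{M-1}$ only at $P$, so $PG_M$ is disjoint from $H_{k,k+1}$ and $D_0G_M$ crosses $H_{k,k+1}$ iff $D_0P$ does. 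A symmetric argument handles $k=M-1$: $H_{M-1,M}$ stays in the closed right half-plane of $\gamma_{M-1}$ while $D_0P$ stays in the closed left, so $D_0G_M$ crosses $H_{M-1,M}$ iff $PG_M$ does. With these two matches, the $E_k$ factors produced by the inductive hypothesis (for $k\le M-2$) and the $M=2$ case (for $k=M-1$) recombine exactly into $\prod_{k=1}^{M-1}E_k(\theta_k)$, completing the induction.
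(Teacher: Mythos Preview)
Your proof is correct and follows the same inductive strategy as the paper: establish $M=2$ from the birectangle formula, then split the geodesic $D_0G_M$ at its intersection $P$ with $\gamma_{M-1}$ and combine the inductive hypothesis with the $M=2$ case via the triangle inequality along $\gamma_{M-1}$. You are in fact more careful than the paper in checking that the crossing conditions defining the $E_k$ for the subsegments match those for $D_0G_M$; one small slip is that for $k=M-2$ the endpoint $D_{M-2}$ of $H_{M-2,M-1}$ lies \emph{on} $\gamma_{M-1}$ rather than in its open left half-plane, but since $H_{M-2,M-1}$ meets $\gamma_{M-1}$ orthogonally there, the segment minus that endpoint still lies in the open left half-plane and your conclusion is unaffected.
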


\begin{proof}
  The proof relies on the following classic formula for hyperbolic birectangles, see \cite[Equation
  (2.3.2)]{buser1992}. For a geodesic quadrilateral with consecutives sides $\rho_1, t, \rho_2, d$
  with right angles between $\rho_1, t$ and between $t, \rho_2$, we have :
  \begin{align}
    \cosh d =\cosh \rho_1 \cosh \rho_2 \cosh t- \sinh \rho_1 \sinh \rho_2.
    \label{e:birect}
  \end{align}
  We then obtain that:
  \begin{itemize}
  \item if $\rho_1 \rho_2\geq 0$, 
    \begin{align}
      e^{d} \geq \cosh d
      \geq \cosh \rho_1 \cosh \rho_2 (\cosh t- 1)
      \geq \frac{1}{4} e^{\rho_1+\rho_2} (\cosh t- 1);
      % =2 \cosh\rho_1 \cosh \rho_2 \sinh^2\div{t}
      \label{ineq:birect}
    \end{align}
    % If $t\geq \log 2$ then $\cosh t- 1\geq \frac{e^t}8$ and thus
    % \begin{align}
    %   e^{|d|} \geq \frac1{2^5} e^{|\rho_1|+|\rho_2|+|t|}
    %   % =2 \cosh\rho_1 \cosh \rho_2 \sinh^2\div{t}
    %   \label{ineq:birect1}.
    % \end{align}
  \item and if $\rho_1 \rho_2\leq 0$, 
    \begin{align}
      e^{|d|} \geq \cosh d \geq \cosh \rho_1 \cosh \rho_2 \cosh t \geq
      \frac1{2^3} e^{|\rho_1|+|\rho_2|+|t|}.
      \label{ineq:birect-cross2}
    \end{align}
  \end{itemize}
  
  Let us now prove the result by induction on $M \geq 2$. For $M=2$, this is a direct consequence of
  \eqref{ineq:birect} and \eqref{ineq:birect-cross2} applied to the quadrilateral of vertices $D_0$,
  $G_1$, $D_1$, $G_2$.

  For $M>2$, assume the result known for $M-1$ geodesics, and let us pass to rank $M$. We observe
  that the geodesic from $D_0$ to $G_M$ has to cross successively
  $\gamma_2, \ldots, \gamma_{M-1}$. Call $\tilde G_{M-1}$ its intersection point with
  $\gamma_{M-1}$. The induction assumption at rank $M-1$ tells us that
\begin{align*}  
  e^{\dist(D_0, \tilde G_{M-1})}
  \geq \frac{1}{4^{M-2}} \exp \Big(\sum_{k=1}^{M-2} L_k + \dist(D_{M-2},\tilde G_{M-1})\Big)
  \prod_{k=1}^{M-2} E_k(\theta_k).
\end{align*}
By the case $M=2$,
\begin{align*} 
e^{\dist( \tilde G_{M-1}, G_M)} \geq  \frac{1}{4} \exp(\dist(\tilde G_{M-1}, G_{M-1})+L_M) \, E_{M-1}(\theta_{M-1}).
\end{align*}
The result at rank $M$ follows by putting together the two previous inequalities, given that we have the triangular inequality
$$ L_{M-1}=\dist(D_{M-2}, G_{M-1}) \leq  \dist(D_{M-2},\tilde G_{M-1}) + \dist(\tilde G_{M-1}, G_{M-1}),$$
and the equality
$$ \dist(D_0, G_M) = \dist(D_0, \tilde G_{M-1})+ \dist( \tilde G_{M-1}, G_M).$$
\end{proof}

We will actually use the following variant.

\begin{lem}
  Let $\cQ =\{\psi_1<\psi_2<\ldots <\psi_j\} \subseteq \{1, \ldots, M-1\}$. If we assume that
  $\theta_k\geq \log 2$ for all $k\in \cQ$, then if we define $\psi_0 := 0$ and $\psi_{j+1} := M$,
  \begin{align} \label{ineq:staircase'}
    \dist(D_0, G_M)
    \geq 
    \sum_{k \in \cQ} \theta_k
    + \sum_{k=0}^{j} \dist(D_{\psi_k}, G_{\psi_{k+1}}) 
     - 5 (M-1)\log 2.
\end{align}
\end{lem}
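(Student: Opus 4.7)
My plan is to decompose the geodesic segment $[D_0, G_M]$ using its intersection points with the intermediate geodesics, then apply the birectangle formula at each index $k \in \cQ$ to extract the $\theta_k$ contribution, and patch the pieces together by the triangle inequality. Since $\gamma_1, \ldots, \gamma_M$ are pairwise aligned with $\gamma_{k+1}$ strictly on the right of $\gamma_k$, the geodesic segment from $D_0 \in \gamma_1$ to $G_M \in \gamma_M$ crosses every $\gamma_k$ for $2 \leq k \leq M-1$; call these crossings $X_k$ and set $X_1 := D_0$, $X_M := G_M$. Then $\dist(D_0, G_M) = \sum_{k=1}^{M-1} \dist(X_k, X_{k+1})$, and I lower-bound each summand separately depending on whether $k \in \cQ$ or not.

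For $k \in \cQ$, I would apply the birectangle formula \eqref{e:birect} to the right-angled quadrilateral $X_k, G_k, D_k, X_{k+1}$ (right angles at $G_k$ and $D_k$ since $H_{k,k+1}$ is orthogonal to both $\gamma_k$ and $\gamma_{k+1}$). Writing $\rho_k := \Dist(X_k, G_k)$ and $\rho_k' := \Dist(D_k, X_{k+1})$, both sign cases of the formula, i.e.\ \eqref{ineq:birect} and \eqref{ineq:birect-cross2}, yield
\[
\dist(X_k, X_{k+1}) \geq |\rho_k| + |\rho_k'| + \theta_k - 5\log 2,
\]
using the hypothesis $\theta_k \geq \log 2$ together with the identity $\cosh\theta - 1 = \tfrac{e^\theta}{2}(1-e^{-\theta})^2$, which gives $\cosh\theta-1 \geq e^\theta/8$ for $\theta \geq \log 2$. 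For $k \notin \cQ$, setting $\psi_0 := 0$ and $\psi_{j+1} := M$, I would telescope the collinear contributions along $[D_0, G_M]$,
\[
\sum_{k=\psi_i+1}^{\psi_{i+1}-1} \dist(X_k, X_{k+1}) = \dist(X_{\psi_i+1}, X_{\psi_{i+1}}),
\]
and apply the triangle inequality to the four points $D_{\psi_i}, X_{\psi_i+1} \in \gamma_{\psi_i+1}$ and $X_{\psi_{i+1}}, G_{\psi_{i+1}} \in \gamma_{\psi_{i+1}}$, getting
\[
\dist(X_{\psi_i+1}, X_{\psi_{i+1}}) \geq \dist(D_{\psi_i}, G_{\psi_{i+1}}) - |\rho_{\psi_i}'| - |\rho_{\psi_{i+1}}|.
\]

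Summing both families of inequalities over all indices, the terms $\sum_{k\in\cQ}(|\rho_k|+|\rho_k'|)$ coming from the birectangle bounds cancel exactly against $\sum_{i=0}^{j}(|\rho_{\psi_i}'|+|\rho_{\psi_{i+1}}|)$ coming from the triangle inequalities, the boundary contributions vanishing because $X_1 = D_0$ and $X_M = G_M$ force $\rho_0' = \rho_M = 0$. What remains is
\[
\dist(D_0, G_M) \geq \sum_{k \in \cQ} \theta_k + \sum_{i=0}^{j} \dist(D_{\psi_i}, G_{\psi_{i+1}}) - 5j\log 2,
\]
which yields \eqref{ineq:staircase'} since $j \leq M-1$. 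I do not foresee a serious obstacle: the points of care are verifying that the same constant $5\log 2$ works in both sign regimes of the birectangle formula, and bookkeeping the cancellation of $|\rho|$-terms across $\cQ$ and its complement (which hinges on the fact that the boundary algebraic distances $\rho_0'$, $\rho_M$ vanish by the choice of $X_1, X_M$).
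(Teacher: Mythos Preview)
Your proof is correct and follows essentially the same approach as the paper: both use the birectangle inequalities \eqref{ineq:birect}--\eqref{ineq:birect-cross2} at each index $k \in \cQ$ (the hypothesis $\theta_k \geq \log 2$ gives $\cosh\theta_k - 1 \geq e^{\theta_k}/8$, hence the loss $5\log 2$ per index) together with the triangle inequality for the junction pieces. The paper phrases this as a repetition of the inductive argument behind \eqref{ineq:staircase}, whereas you unroll that induction by introducing all crossing points $X_k$ at once; the content is the same, and your bookkeeping of the $|\rho|$-cancellations is correct.
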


\begin{proof}
  The proof is the same as above, now noting that the hypothesis on $\cQ$ implies that
  $E_k(\theta_k)\geq \frac{1}8e^{\theta_k}$ for all $k \in \cQ$.
\end{proof}

\subsubsection{The lower bound}

\label{s:lbl} Recall the partition of unity
$\Psi_{\xi, \cQ}$ defined in Definition \ref{defa:part_unity_cross}, indexed by elements
$\xi \in \Xi$ and subsets $\cQ$ of the set of parameters $I (\xi) \subseteq \Theta$ (which
intuitively corresponds to the set of ``nice'' variables $\theta_q$, i.e. the ones we do not erase).
We have the decomposition $I (\xi)=\bigsqcup_{i=1}^{\cc} I(\xi^i)$ depending on the index of the
component $\mathbf{c}$ we consider.  Consider $\tilde I(\xi^i) \subseteq \Z$ the periodic extension of
$I(\xi^i)$, and let $\varphi^i :\Z \rightarrow \Z$ be an increasing numbering of it:
\begin{equation*}
  \tilde I(\xi^i)=\{\varphi_k^i, k\in \Z\}, \qquad
  \varphi_k^i<\varphi_{k+1}^i.
\end{equation*}
Denoting $n_{\xi^i}=\# I(\xi^i)$, we have the periodicity property that
$\varphi_{k+n_{\xi^i}}^i=\varphi_{k}^i+n_i$.

We also recall some notation related to the lifts of the geometric situation to the hyperbolic plane
introduced in \S \ref{s:lift}. For a component $i \in \{1, \ldots, \cc\}$,
$(\tilde \beta(q))_{q\in \mathbf{\Theta}_i}$ is a sequence of lifts of geodesics in the family
$\beta$, which satisfies the following periodicity property: $\tilde \beta(q+n_i)$ is the image of
$\tilde \beta(q)$ by the hyperbolic translation of axis $\tilde {\mathbf{c}}_i$ and of length
$\ell_Y({\mathbf{c}}_i)$.

Last, recall the intervals $\cI'_q =[G_q, D_q]$ introduced in Notation \ref{nota:GD}.

\begin{nota}
  \label{nota:fix_Q}
  Throughout this section, we fix $\xi \in \Xi$ and $\cQ \subseteq I(\xi)$. For
  $i \in \{1, \ldots, \cc\}$, let $\cQ_i:=\cQ \cap I(\xi^i) \subseteq \Theta^i \simeq \Z_{n_i}$. We
  pick a numbering of $\cQ_i$
  \begin{equation*}
    \cQ_i = \{ \psi_1^i < \ldots < \psi_{j_i}^i \} \subseteq \{1, \ldots, n_i\}.
  \end{equation*}
  Let $\tilde{\cQ}^i = \{ \psi_{k}^i \}_{k \in \Z} \subseteq \mathbf{\Theta}^i \simeq \Z$ denote its
  cyclic extension.
\end{nota}

Note that, since $\cQ^i \subseteq I(\xi^i)$, the sequence $(\psi_k^i)_{k \in \Z}$ is an extraction
of $(\varphi_k^i)_{k \in \Z}$.

 \begin{defa}
   { We call \emph{junction} any geodesic segment $[D_{\psi_k^i}, G_{\psi_{k+1}^i}]$, with
     $i \in \{1, \ldots, \cc\}$ and $k \in \{1, \ldots, j_i\}$.  We denote $\Junc(\cQ)$ the set of
     all junctions. Two segments of $\Junc(\cQ)$ will be said to be \emph{disjoint} if they
     intersect only transversally.  }
 \end{defa}
 Remark that certain junctions are bars (if $\psi_{k+1}^i= \psi_k^i+1$).
\begin{prp}
  For any $Y$ in the support of $\Psi_{\xi,\cQ}$,
  \begin{align} \label{e:uppppper}
    \ell_Y({\mathbf{c}})
    \geq \sum_{\lambda \in \Lambdabeta} \ell_Y(\beta_\lambda)
    + \sum_{p \in \Junc(\cQ)} \ell(p) -12 \, r \log 2 .
  \end{align}  
\end{prp}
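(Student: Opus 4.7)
The plan is to apply the staircase inequality \eqref{ineq:staircase'} component by component. Fix $i \in \{1, \ldots, \cc\}$ and lift $\mathbf{c}_i$ to its geodesic axis $\tilde{\mathbf{c}}_i$ in $\IH^2$ with covering translation $T_i$ of length $\ell_Y(\mathbf{c}_i)$. By Lemma \ref{lem:beta_al} the lifts $(\tilde\beta(q))_{q\in\mathbf{\Theta}^i}$ are pairwise aligned and $T_i$-equivariant, $T_i\tilde\beta(q) = \tilde\beta(q+n_i)$, so they form an admissible input for the staircase inequality. I would pick a reference point $D_0 \in \tilde\beta(0)$ and, for a large integer $N$, set $G_{Nn_i} := T_i^N D_0 \in \tilde\beta(Nn_i)$, then apply the staircase inequality to the chain $\tilde\beta(0), \ldots, \tilde\beta(Nn_i)$ with index subset $\tilde\cQ^i \cap \{1, \ldots, Nn_i-1\}$. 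The hypothesis $\theta_k \geq \log 2$ on $\cQ$ holds automatically on the support of $\Psi_{\xi,\cQ}$ since it is essentially the defining condition of the cut-off $\fcr{\infty}(\tilde\theta_q)$.

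The key estimate on the left-hand side is the uniform-in-$N$ upper bound $\dist(D_0, T_i^N D_0) \leq N\ell_Y(\mathbf{c}_i) + C_i$, which follows from the hyperbolic-plane identity
\[
\cosh \dist(D_0, T_i^N D_0) = \cosh(N\ell_Y(\mathbf{c}_i)) \cosh^2 d - \sinh^2 d
\]
with $d = \dist(D_0, \tilde{\mathbf{c}}_i)$. The right-hand side of the staircase inequality, by $T_i$-equivariance, decomposes into $N$ identical per-period contributions plus boundary corrections of size $O(1)$. Dividing by $N$ and letting $N \to \infty$ yields the per-period bound
\[
\ell_Y(\mathbf{c}_i) \geq \sum_{q \in \cQ_i} \theta_q + \sum_{p \in \Junc(\cQ) \cap \,\tilde{\mathbf{c}}_i} \ell(p) - 5 n_i \log 2,
\]
where $\cQ_i := \cQ \cap \Theta^i$.

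Summing over $i = 1, \ldots, \cc$ gives an analogue of the announced inequality, but with $\sum_{q \in \cQ}\theta_q$ in place of $\sum_{\lambda}\ell_Y(\beta_\lambda) = \sum_{q \in \Theta}\theta_q$. The discrepancy $\sum_{q \in \Theta \setminus \cQ}\theta_q$ is controlled on the support of $\Psi_{\xi,\cQ}$: for $q \in I(\xi) \setminus \cQ$ the cut-off $\fcr{0}(\tilde\theta_q)$ forces $|\tilde\theta_q| \leq 2\log 2$, and for erased indices $q \notin I(\xi)$ the contribution is handled by \eqref{e:funnier}. Since $|\Theta \setminus \cQ| \leq |\Theta| = 2r$ and $\sum_i n_i = 2r$, the total additive error combines the $-5 \sum_i n_i \log 2 = -10r \log 2$ term with an $O(r \log 2)$ correction, staying within the announced $-12 r \log 2$.

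The main obstacle is the careful bookkeeping required to match the abstract ``staircase'' variables (in the notation of \S \ref{s:iterated}) with the paper's parametrisation: in particular, one must verify that the chord lengths $\dist(D_{\psi_k}, G_{\psi_{k+1}})$ appearing in \eqref{ineq:staircase'} correspond exactly to the junction lengths $\ell(p)$ for $p \in \Junc(\cQ)$, and control the $O(1)$ boundary errors uniformly as $N \to \infty$. A secondary concern is the degenerate case $\cQ_i = \emptyset$, where the staircase bound is vacuous but the target inequality still holds trivially because \eqref{e:funnier} forces $\sum_{q \in \Theta^i}\theta_q \leq 2 n_i \log 2$ in that situation.
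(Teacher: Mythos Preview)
There is a genuine gap: you are feeding the wrong family of geodesics into the staircase inequality. The lemma in \S\ref{s:iterated} requires the geodesics $\gamma_1,\ldots,\gamma_M$ to be monotonically nested (``$\gamma_{k+1}$ on the right of $\gamma_k$''), but the lifts $\tilde\beta(q)$ do not satisfy this: by Lemma~\ref{lem:beta_al}, $\tilde\beta(q+1)$ lies on the \emph{left} of $\tilde\beta(q)$ when $\sign(q)=+$ and on the \emph{right} when $\sign(q)=-$, so the family zigzags at every U-turn. Moreover, in your setup the common perpendicular $H_{k,k+1}$ between $\tilde\beta(k)$ and $\tilde\beta(k+1)$ is the bar $\tilde B_{k,k+1}$, so the ``staircase $\theta_k$'' becomes a bar length $L$, not a $\tilde\theta$; the hypothesis ``$\theta_k\geq\log 2$ for $k\in\cQ$'' is then unrelated to the support of $\Psi_{\xi,\cQ}$, and the sum $\sum_{k\in\cQ}\theta_k$ you obtain is a sum of bar lengths rather than the desired $\sum_\lambda\ell_Y(\beta_\lambda)$.

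The paper's key move is to take instead $\gamma_k$ to be the complete geodesic extending the bar $\tilde B_{\varphi_{k-1}^i,\varphi_k^i}$ between consecutive non-erased indices. These extended bars \emph{are} monotonically nested (this is precisely what the erasing procedure and property~(a) of Proposition~\ref{p:pluscross} guarantee: $\tilde\theta_{\varphi_k}\geq 0$ means the feet of $\gamma_k,\gamma_{k+1}$ on $\tilde\beta_{\varphi_k}$ are correctly ordered). The common perpendicular between $\gamma_k$ and $\gamma_{k+1}$ then lies on $\tilde\beta_{\varphi_k}$ with length exactly $\tilde\theta_{\varphi_k^i}$, so the hypothesis of \eqref{ineq:staircase'} for indices in $\cQ$ is precisely the support condition of $\fcr{\infty}$. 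Finally, since the axis $\tilde{\mathbf c}_i$ crosses every $\gamma_k$ and $\gamma_{n_{\xi^i}+1}=T_i\gamma_1$, the two crossing points are related by $T_i$ and their distance is $\ell_Y(\mathbf c_i)$ exactly, so no $N\to\infty$ averaging is needed.
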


\begin{rem} \label{r:junction} The proof consists, for each $i \in \{1, \ldots, \cc\}$, in
  constructing a representative $\mathbf{c}_{i}^{\mathrm{f}}$ of the homotopy class of
  $\mathbf{c}_i$, made of the concatenation of the intervals $\cI'_q$ and of junctions. Inequality
  \eqref{e:uppppper} means that
  $\mathbf{c}^{\mathrm{f}}=(\mathbf{c}^{\mathrm{f}}_1, \ldots, \mathbf{c}^{\mathrm{f}}_{\cc})$
  almost minimizes the length in its homotopy class.
 \end{rem}

 \begin{proof}
   Let $Y$ be a metric on the support of $\Psi_{\xi,\cQ}$.
  We prove this property component by component and then sum the different bounds. Let us therefore
  fix $i \in \{1, \ldots, \cc\}$. 

  Denote by $\gamma_k$ the complete orthogeodesic between $\tilde \beta_{\varphi_{k-1}^i}$ and
  $\tilde \beta_{\varphi_k^i}$: it contains the geodesic segment
  {$\tilde{B}_{\varphi_{k-1}^i \varphi_k^i}$.} By construction of the set $I(\xi^i)$, all these
  geodesics are parallel, and can be oriented so that $\gamma_{k+1}$ is on the right of
  $\gamma_{k}$.  We are in the geometric situation described in \S \ref{s:iterated}, with $M=n_{\xi^i}+1$.
  The distance between $\gamma_k$ and $\gamma_{k+1}$ is $\tilde \theta_{\varphi_k^i}$. The
  orthogeodesic segment denoted by $H_{k, k+1}$ in \S \ref{s:iterated} is
  $[G_{\varphi_k^i}, D_{\varphi_k^i}]$.

  The infinite geodesic $\tilde {\mathbf{c}}_i$ intersects successively all geodesics
  $\gamma_k$. Calling $D_0$ its intersection point with $\gamma_1$ and $G_{n_{\xi^i}+1}$ its
  intersection with $\gamma_{n_{\xi^i}+1}$, we have $\ell_Y({\mathbf{c}}_i)=\dist(D_0, G_{n_{\xi^i}+1})$.
  We can apply inequality \eqref{ineq:staircase'} to the set of indices $\psi_k^i$, which applies
  here because all the $\tilde \theta_{\psi_k^i}$ are greater than $\log 2$ on the support of
  $\Psi_{\xi, \cQ}$.  We obtain that
  \begin{align} \label{e:uppperone}
    \ell_Y({\mathbf{c}}_i) \geq  
    \sum_{q\in \cQ^i} \tilde \theta_q
    + \sum_{k=1}^{j_i} \dist (D_{\psi_k^i}, G_{\psi_{k+1}^i})
    -5 n_i\log 2.
  \end{align}  

  By summing over $i \in \{1, \ldots, \cc\}$ and using
  $\ell_Y({\mathbf{c}}) =\sum_{i=1}^{\cc} \ell_Y({\mathbf{c}}_i)$, we obtain:
   \begin{align*}
     \ell_Y({\mathbf{c}}) 
     \geq \sum_{q\in \cQ}\tilde \theta_q
     +\sum_{i=1}^{\cc} \sum_{k=1}^{j_i} \dist (D_{\psi_k^i}, G_{\psi_{k+1}^i})
      -10 r \log 2.
  \end{align*}  
  Because the $\tilde \theta_q$ are all smaller than $2\log 2$ for $q\not\in \cQ$, we may write
  \begin{align*}  \sum_{q\in \cQ}\tilde \theta_q &\geq \sum_{q\in I(\xi)}\tilde \theta_q -2r \log 2
  \\ &\geq  \sum_{q\in \Theta} \theta_q -2r \log 2= \sum_{\lambda \in \Lambdabeta} \ell_Y(\beta_\lambda) -2r \log 2
    \end{align*}
    where we used \eqref {e:funnier} in the second line. This is the claim. Note that the right-hand
    side represents the total length of the representative
    $\mathbf{c}^{\mathrm{f}}=(\mathbf{c}_1^{\mathrm{f}}, \ldots, \mathbf{c}_{\cc}^{\mathrm{f}})$
    mentioned in Remark \ref{r:junction}.
\end{proof}
%   We fix a component
% $i\in \{1, \ldots, \cc\}$. We shall prove a lower bound on $\ell_Y({\mathbf{c}}_i)$, Lemma
% \ref{l:ci}.  We introduce the notation  and
% $\tilde \cQ_i \subseteq {\mathbf{\Theta}}^i \simeq\Z$ the corresponding $n_i\Z$-invariant set.

  % Similarly, let $\psi:\Z\To \Z $ be an increasing numbering of the elements of $\tilde\cQ^1$:
  %  \begin{equation*}
  % \tilde\cQ^1=\{\psi(j), j\in \Z\}
  % \end{equation*}

\subsubsection{The polygons $\Pol(q)$}
\label{sec:polygons-polq}

We observe that, for any $1 \leq i \leq \cc$ and $1 \leq k \leq j_i$, the junction
$J_k^i = [D_{\psi_k^{i}}, G_{\psi_{k+1}^i}]$ leaves from $\tilde \beta_{\psi_k^{i}}$ and arrives at
$\tilde \beta_{\psi_{k+1}^{i}}$. It is homotopic with gliding endpoints to the concatenation
\begin{align*}
  \left(\smallbullet_{q=\psi_k^{i}+1}^{\psi_{k+1}^{i}-1} \tilde B_{q-1, q}\smallbullet \tilde\cI_{q} \right)\smallbullet \tilde B_{\psi_{k+1}^{i}-1, \psi_{k+1}^{i}}.
\end{align*}
Examining the points at infinity of the infinite geodesic containing a lift of $J_k^i$, and using
the fact that it has to cross both $\tilde \beta_{\psi_k^{i}}$ and $\tilde \beta_{\psi_{k+1}^{i}}$,
we make the following two observations.
\begin{itemize}
\item The junction $J_k^i$ crosses all the geodesics $\tilde \beta_{q}$ with $q \in \{\psi_k^{i}+1,
  \ldots, \psi_{k+1}^{i}\}$, assuming $q$ is a crossing parameter.
\item The junction $J_k^i$ crosses all the bars $\tilde B_{q}$ with
  $q \in \{\psi_k^{i}+1,..., \psi_{k+1}^{i}\}$, assuming both $q-1$ and $q$ are U-turn parameters.
\end{itemize}
We deduce the following.

\begin{lem}
  Let $q \in \Theta$, and let $1 \leq i \leq \cc$ and $1 \leq k \leq j_i$ for which
  $q \in \{\psi_k^{i}+1,..., \psi_{k+1}^{i}\}$. There exists a contractible geodesic polygon
  $\Pol(q)$, which is bordered, on one side by the full bar $\tilde B_{q}$, on other sides, by
  subsegments of $J_k^i$, $\tilde \beta_{q-1}$, $\tilde \beta_{q}$ and possibly
  $\tilde \beta_{q-2}$, $\tilde \beta_{q+1}$, $\tilde B_{q-1}$ and $\tilde B_{q+1}$, and so that
  none of the geodesics $\tilde \beta_{q'}$ or the bars $\tilde B_{q'}$ meet the interior of
  $\Pol(q)$.
\end{lem}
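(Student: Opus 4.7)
The plan is to construct $\Pol(q)$ explicitly as a geodesic polygon in $\IH^2$ whose closure is bounded by $\tilde B_q$, a single subsegment of $J_k^i$, and short pieces of the two loop-lifts $\tilde\beta_{q-1}$ and $\tilde\beta_q$, augmented in certain U-turn cases by short pieces of the adjacent lifts $\tilde\beta_{q\pm 2}$ and bars $\tilde B_{q\pm 1}$. The emptiness of $\inter(\Pol(q))$ will then follow from the alignment lemma (Lemma~\ref{lem:beta_al}) and the Useful Remark (Remark~\ref{r:useful}), which together forbid any other lift in the family from penetrating a region already bordered by consecutive aligned lifts and orthogeodesics.

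First, I would set up the local picture. The bar $\tilde B_q$ has endpoints $z_-\in\tilde\beta_{q-1}$ and $z_+\in\tilde\beta_q$. Using the two observations in the paragraph preceding the lemma, and distinguishing the nature of the parameters $q-1, q, q+1$ (crossing versus U-turn), I would identify on $J_k^i$ two distinguished ``entry/exit points'' $p_-, p_+$ that lie respectively on or just off $\tilde\beta_{q-1}$ and $\tilde\beta_q$. Concretely:
\begin{itemize}
\item If $q$ is a crossing index, $J_k^i$ crosses $\tilde\beta_q$ at a point $p_+$; if moreover $q-1$ is a crossing index, it crosses $\tilde\beta_{q-1}$ at $p_-$, and $\Pol(q)$ is simply the geodesic quadrilateral with vertices $z_-, z_+, p_+, p_-$ bounded by $\tilde B_q$, a piece of $\tilde\beta_q$ from $z_+$ to $p_+$, the subsegment of $J_k^i$ from $p_+$ to $p_-$, and a piece of $\tilde\beta_{q-1}$ from $p_-$ to $z_-$.
\item If $q$ (resp. $q-1$) is a U-turn index, then $J_k^i$ may fail to meet $\tilde\beta_q$ (resp.\ $\tilde\beta_{q-1}$); in that case the second observation guarantees that $J_k^i$ meets the adjacent bar $\tilde B_{q+1}$ (resp.\ $\tilde B_{q-1}$) transversally at a point $p_+'$ (resp.\ $p_-'$), and we take as extra sides the relevant short pieces of $\tilde B_{q+1}$ and of $\tilde\beta_{q+1}$ (resp.\ of $\tilde B_{q-1}$ and $\tilde\beta_{q-2}$) to close up around $\tilde B_q$.
\end{itemize}
In every case the resulting region is a finite geodesic polygon because each side is either a segment of an orthogeodesic bar or a segment of an aligned lift $\tilde\beta_{q'}$, and it is contractible as a convex (in fact, right-angled at most vertices) region in $\IH^2$.

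Next, I would verify that no other $\tilde\beta_{q'}$ or $\tilde B_{q'}$ enters $\inter(\Pol(q))$. The argument is the same in each case: by Lemma~\ref{lem:beta_al} the lifts $\tilde\beta_{q'}$ are pairwise disjoint and aligned in the order dictated by $q'$, so any $\tilde\beta_{q'}$ with $q'\notin \{q-2,q-1,q,q+1\}$ lies entirely on the far side of one of the bounding geodesics of $\Pol(q)$. For the bars, the Useful Remark applied to $J_k^i$ (viewed as a shortest geodesic joining $\tilde\beta_{\psi_k^i}$ to $\tilde\beta_{\psi_{k+1}^i}$ among curves visiting the prescribed side) ensures that no bar $\tilde B_{q'}$ with $q'\neq q$ can be squeezed between $\tilde B_q$ and the relevant subsegment of $J_k^i$ without violating the alignment or forcing an extra intersection; so such bars must also lie outside $\Pol(q)$. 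Finally, the contractibility of $\Pol(q)$ follows from the fact that the bounding curves are finite geodesic segments in $\IH^2$ that together form a simple closed polygon, which bounds a disk by the Jordan curve theorem.

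The main obstacle I expect is the bookkeeping in the mixed U-turn/crossing subcases, particularly when $q$ itself is a U-turn index and the ``natural'' side of $\tilde\beta_q$ is not directly visited by $J_k^i$: one must carefully identify which of $\tilde B_{q+1}, \tilde\beta_{q+1}$ provides the correct closing sides, and then invoke the Useful Remark on two subpaths of $J_k^i$ rather than one to rule out other lifts. Once this case analysis is laid out cleanly, the rest of the verification is a routine application of the disjointness of aligned lifts and the minimizing property of orthogeodesics.
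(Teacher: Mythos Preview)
Your approach matches the paper's: a case analysis on whether $q-1$ and $q$ are crossing or U-turn indices, closing up the polygon along the junction $J_k^i$ and the adjacent lifts/bars as needed. Two points where your plan drifts from what actually happens, however, are worth flagging.

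First, the sub-case enumeration in the U-turn situations is finer than you sketch. It is not just a matter of whether $J_k^i$ meets $\tilde\beta_q$ or instead the bar $\tilde B_{q+1}$: one must distinguish whether $J_k^i$ first meets $\tilde B_{q+1}$ or $\tilde\beta_{q+1}$, and, independently, whether $J_k^i$ intersects the bar $\tilde B_q$ itself. Your appeal to the ``second observation'' to force an intersection with $\tilde B_{q+1}$ is not quite right, since that observation only applies when both $q$ and $q+1$ are U-turn parameters; in general the closing side may come from either $\tilde B_{q+1}$ or $\tilde\beta_{q+1}$.

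Second, your assertion that $\Pol(q)$ is convex (``right-angled at most vertices'') is false in general and should be dropped. When $J_k^i$ crosses $\tilde B_q$, the region $\Pol(q)$ is actually the union of two disjoint polygons, one on each side of $\tilde B_q$; contractibility in this degenerate sense still suffices for the later applications, but the convexity claim would mislead you in the emptiness verification. Once these sub-cases are laid out, the rest of your plan (using disjointness of the aligned lifts from Lemma~\ref{lem:beta_al} to exclude other $\tilde\beta_{q'}$, and the positioning of bars relative to $J_k^i$ to exclude other $\tilde B_{q'}$) goes through.
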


\begin{proof}
  The construction is made by exhaustion of all cases, depending on whether $q-1$ and $q$ are U-turn
  parameters or crossing parameters.
  \begin{itemize}
  \item If $q-1$ and $q$ are crossing parameters, we simply consider the polygon with sides
    delimited by $\tilde B_q$, and subsegments of $\tilde{\beta}_q$, $\tilde{\beta}_{q-1}$ and
    $J_k^i$, as represented by the shaded area in Figure \ref{fig:polq_cc}.
    \begin{figure}[h]
      \centering
      \includegraphics[scale=0.9]{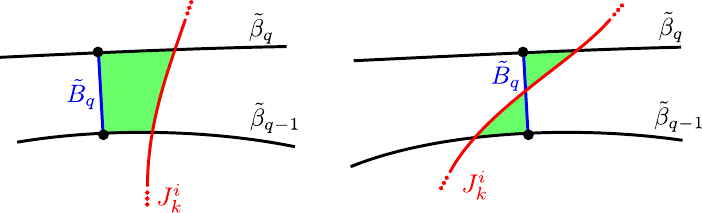}
      \caption{The polygon $\Pol(q)$ when $q-1$ and $q$ are crossing parameters.}
      \label{fig:polq_cc}
    \end{figure}
  \item If $q-1$ is a crossing parameter and $q$ is a U-turn, then there are four cases to consider,
    depending on whether $J_k^i$ intersects $\tilde{B}_q$ or not, and whether it first intersects
    $\tilde{B}_{q+1}$ or~$\tilde{\beta}_{q+1}$. See Figure \ref{fig:polq_uc}.
        \begin{figure}[h]
      \centering
      \includegraphics[scale=0.9]{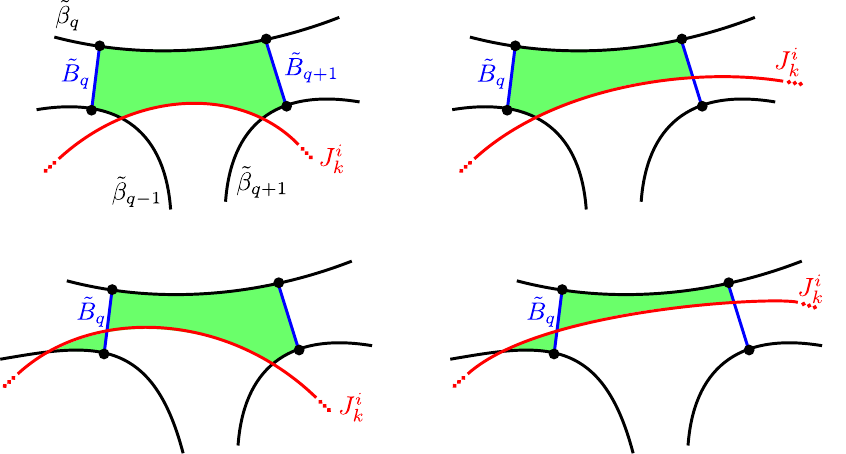}
      \caption{The polygon $\Pol(q)$ in the mixed case.}
      \label{fig:polq_uc}
    \end{figure}
  \item The symmetric case, when $q-1$ is a U-turn parameter and $q$ a crossing parameter.
  \item In the case when both $q-1$ and $1$ are U-turn parameters, there are also four cases to
    consider, depending on whether $J_k^i$ first intersects $\tilde{\beta}_{q-2}$ or
    $\tilde{B}_{q-1}$, and $\tilde{\beta}_{q+1}$ or $\tilde{B}_{q+1}$. See Figure \ref{fig:polq_uu}.
    \begin{figure}[h]
      \centering
      \includegraphics[scale=0.9]{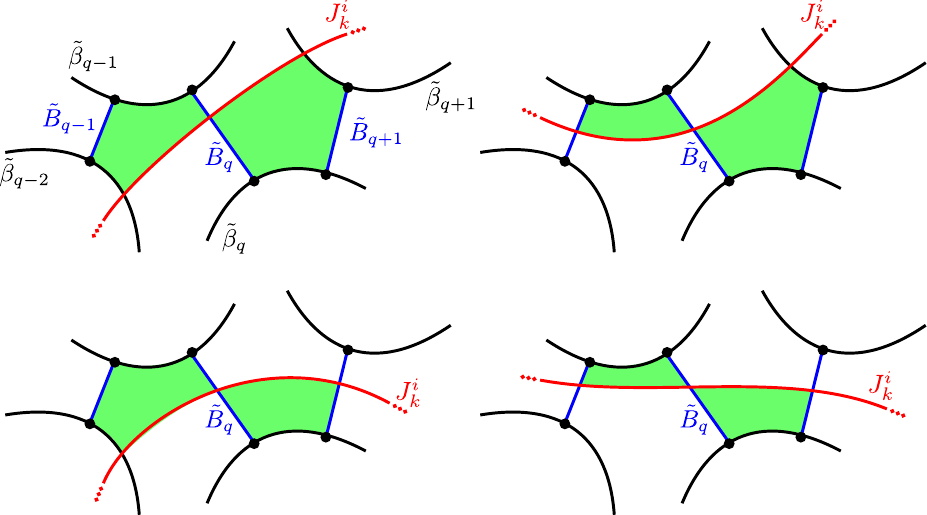}
      \caption{The polygon $\Pol(q)$ when $q-1$ and $q$ are U-turn parameters.}
      \label{fig:polq_uu}
    \end{figure}  \end{itemize}
\end{proof}

\begin{rem}
  In the case the junction $J_k^i$ intersects the bar $\tilde B_{q}$, the polygon $\Pol(q)$ is not
  convex but rather made of two disjoint true polygons $\Pol(q)=\Pol^1(q)\sqcup \Pol^2(q)$, one at
  the left of the bar $\tilde B_{q}$, the other one at its right. It can also happen that the
  polygon $\Pol(q)$ is degenerate, which happens if and only if the junction $J_k^i$ coincides with
  the bar $\tilde B_{q}$.
\end{rem}

Because of the presence of these contractible polygons $\Pol(q)$, all the bars $\tilde B_{q}$ (or
their projections $\overline{B}_q$ to $\Sf$), for $q \in \{\psi_k^{i}+1,..., \psi_{k+1}^{i}\}$, are
said to be \emph{superseded} by the polygon $\Pol(q)$. Each bar in the family $\overline{B}$ is
superseded twice: if $\iota : \Theta \rightarrow \Theta$ is the involution switching signs, then
$\overline{B}_q$ is superseded by $\Pol(q)$ and $\overline{B}_{\iota q}$ by $\Pol(\iota q)$.
  %Examples of respective situations of the polygons
  %$\Pol(q)$ and $\Pol(\iota q)$ are shown in Figure \ref{fig:polygons}.

\begin{rem}
  \label{rem:seg_shield}
  In the special case when $q\in \{ \psi_k^{i}+1, \ldots, \psi_{k+1}^{i}-1\}$ is a U-turn parameter,
  then the junction $J_k^i$ does not intersect $\tilde \beta_q$, and the entirety of the simple
  portion $\tilde{\cI}_q$ is included in the boundary of $\Pol(q)$. We say that $\overline{\cI}_q$
  is \emph{shielded} by the junction $J_k^i$: there is a contractible component of
  $\Sf\setminus \mathbf{c}^{\mathrm{f}}$ containing $\overline{\cI}_q$ in its boundary.
\end{rem}

\subsection{Upper bound on $\ell_Y(\Gamma_\lambda)$}  \label{s:ubsl}

The upper bound will be related to neutralizations within the family of polygonal curves
$(\Gamma_\lambda)_{\lambda \in \Lambda}$. We invite the reader to remind themselves of the notations
introduced in \S \ref{s:lengthboundary}, and in particular the notion of height of a cell
$Z^{\mathbf{\Gamma}}_K$ from Notation \ref{nota:height_fam}, the conventions related to simultanous
decorations from \S \ref{sec:simult-decor}, and the notion of surviving boundary segment
$\cK_q$, $q \in \Thetasurv$, and associated height $Z_{{\mathbf{\Gamma}}, {\mathbf{d}}}(\cK_q)$,
introduced in Definition \ref{defa:surv}.

\subsubsection{A useful lemma}

The following lemma, illustrated in Figure \ref{fig:variational}, is an obvious consequence of the
variational characterization of geodesics and orthogeodesics. Here we use Lemma~\ref{l:WZ}, which
allows us to compare certain orthogedesics with the height function introduced in Definition
\ref{e:defZ}.

\begin{prp} \label{p:simulbridge2}Let $\Gamma=(J_j, K_j)_{j \in \Z_m}$ be a polygonal curve. Let
  $I=\{a, \ldots, b\}$ be an interval of $\Z_m$ of length $\geq 2$.
  % Define $\bar I=[a, b+1]$.
  For $x, y$ be two points in the roof $\tilde J_{I}$ over $(K_i)_{i \in I}$, denote as
  $\gamma_{xy}$ the subsegment of $\tilde J_{I}$ between $x$ and $y$. Let
  $\gamma =\gamma_1 \smallbullet \kappa_1 \smallbullet \gamma_2 \smallbullet \cdots \smallbullet
  \kappa_n\smallbullet \gamma_{n+1}$ (with $n\geq 1$) be a path joining $x$ and $y$, homotopic to
  $\gamma_{xy}$, such that:
  \begin{itemize}
  \item for $1 \leq i \leq n+1$, $\gamma_i$ is a piecewise geodesic path;
  \item for $1 \leq i \leq n$, $\kappa_i$ is a geodesic segment contained in a segment $K_{m_i}$
    for a $m_i\in I$.
  \end{itemize}
  Then, for any  arbitrary subset $\pi$ of $\Z_m$,
  \begin{align}\label{e:xy_bis}\ell(\gamma_{xy})\leq \sum_{\substack{1\leq i \leq n \\ \kappa_i\notin \pi}}
    \ell(\kappa_i) +\sum_{i=1}^{n+1} \ell(\gamma_i) + \sum_{j\in I \cap \pi} (\log Z^\Gamma_{K_j}+2)
    % +  \sum_{i\in I\cap \cO}  \ell(\tilde J_i)
    ,\end{align}
  where $\kappa_i\notin \pi$ means that the segment $K_{m_i}$ containing $\kappa_i$ has index $m_i\not\in \pi$.
\end{prp}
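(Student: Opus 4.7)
The heart of the argument is the variational characterization of orthogeodesics. Since $\tilde J_I$ arises as a bridge of the polygonal curve obtained from $\Gamma$ by simultaneously neutralizing the cells $(K_i)_{i \in I}$, once we pass to the geodesic representative it is an orthogeodesic segment meeting $\beta$ perpendicularly at its two endpoints. In particular $\tilde J_I$ is a geodesic segment, so its subsegment $\gamma_{xy}$ is the unique length-minimizer in its fixed-endpoint homotopy class:
\begin{equation*}
  \ell(\gamma_{xy}) \;\leq\; \ell(\tilde\gamma)
\end{equation*}
for any piecewise smooth path $\tilde\gamma$ from $x$ to $y$ homotopic to $\gamma_{xy}$ relative to its endpoints.

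The strategy is then to produce such a competitor $\tilde\gamma$ by surgery on $\gamma$. The segments $\gamma_i$ are kept intact, as are the $\kappa_i$'s with $m_i \notin \pi$, and will contribute the first two sums on the right-hand side of \eqref{e:xy_bis}. What must be replaced is the collection of excursions of $\gamma$ into each cell $K_j$ with $j \in I \cap \pi$: these are the $\kappa_i$'s with $m_i = j$ together with the pieces of $\gamma$ bridging them. For each such $j$, the plan is to contract all these excursions into one short ``crossing'' of $K_j$, realized by the orthogeodesic $W_j$ of Definition~\ref{d:Zj}. By Lemma~\ref{l:WZ}, $\ell(W_j) \leq \log Z^\Gamma_{K_j} + 2$, which is exactly the bound needed for the third sum.

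The main obstacle is the bookkeeping required to make sure that the surgery preserves the homotopy class (with fixed endpoints $x,y$) of $\gamma_{xy}$. The key geometric input is that, on each side of $K_j$, the adjacent bridges $J_j$, $J_{j+1}$ meet $K_j$ orthogonally, so the four cases in the definition of $W_j$ exactly correspond to the possible ways a homotopic replacement can be routed between the two sides of $K_j$; the ``sliding'' of endpoints along $\beta$ allowed in such replacements follows from the definition of polygonal curves and the compatibility conditions already exploited in the proof of Proposition~\ref{p:compatible}. Once the surgery is carried out for each $j \in I \cap \pi$, one obtains a path $\tilde\gamma$ homotopic to $\gamma_{xy}$ whose length is bounded by
\begin{equation*}
  \sum_{i=1}^{n+1}\ell(\gamma_i) \;+\; \sum_{\kappa_i\notin\pi}\ell(\kappa_i) \;+\; \sum_{j\in I\cap\pi}(\log Z^\Gamma_{K_j}+2),
\end{equation*}
and the variational inequality above yields \eqref{e:xy_bis}. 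This is precisely the ``obvious consequence of the variational characterization'' advertised before the statement, the nontrivial input being Lemma~\ref{l:WZ} which converts geometric lengths of shortcut orthogeodesics into logarithms of cell heights.
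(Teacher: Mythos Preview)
Your approach is essentially the same as the paper's: use the variational characterization of $\gamma_{xy}$ as a geodesic segment, perform surgery on $\gamma$ by replacing each $\kappa_i$ with $m_i \in \pi$ by (a portion of) the orthogeodesic $W_{m_i}$, and invoke Lemma~\ref{l:WZ} to bound $\ell(W_j) \le \log Z^\Gamma_{K_j}+2$. The paper's proof is terser about the homotopy bookkeeping you flag, but the mechanism is identical.
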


\begin{figure}[h!]
  \includegraphics{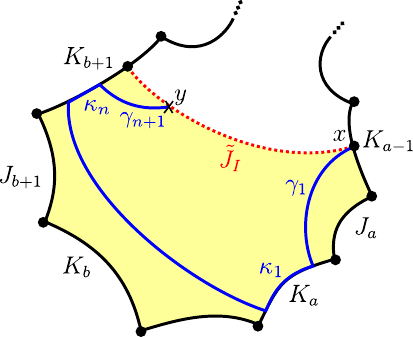}
  \caption{Illustration of the length inequality in a neutralized polygonal curve.}
  \label{fig:variational}
\end{figure}%

In practice, $\gamma$ will be simple (which implies that the indices $m_i$ are monotonically
ordered), and the paths $\gamma_i$ will not intersect the cells $(K_j)_{j \in\Z_m}$ except at their
endpoints.

\begin{rem}
  If $x$ is the origin of $\tilde J_{I}$, then we can replace in this upper bound $\gamma_1$ by any
  piecewise geodesic path~$\gamma'_1$, starting anywhere on $K_{a-1}$ and ending at the origin of
  $\kappa_1$. A similar remark holds if $y$ is the endpoint of $\tilde J_{I}$ lying in $K_{b+1}$.

  The same result also holds with $I =\Z_m$, $\tilde J_{I}$ replaced by $\Gamma$, and
  $\gamma=\gamma_1 \smallbullet \kappa_1 \smallbullet \gamma_2 \smallbullet \cdots \smallbullet \kappa_n$ a
  piecewise geodesic curve (hence closed), freely homotopic to $\Gamma$ (the only difference being
  that there is the same number of $\kappa_i$ and $\gamma_i$, i.e. \eqref{e:xy_bis} now runs for
  $1 \leq i \leq n$ for both sums).
\end{rem}

% Comparing \eqref{e:xy_bis} to \eqref{e:xy}, we see that for the segments $k_i$ included in $K_j$
% with $j\in \pi$, we have preferred to use $(\log Z^\Gamma_{K_j}+2)$ instead of $\ell(k_i)$ in the
% upper bound. 

\begin{proof}
  First we observe that, under the hypotheses of the statement, by the variational definition of the
  geodesic,
  \begin{align}\label{e:xy}\ell(\gamma_{xy})\leq \sum_{i=1}^n \ell(\kappa_i) +\sum_{i=1}^{n+1}
    \ell(\gamma_i).\end{align}
  We then observe that the path $\gamma$ has to intersect each of the segments $W_{j_i}$ defined
  in Definition~\ref{d:Zj}. More precisely, there is a portion of $\gamma$ containing $\kappa_i$ and homotopic to a portion of $W_{j_i}$.
  Thus, we replace $\gamma$ by a piecewise geodesic path in the same homotopy class, where each segment
  $\kappa_i$ such that $m_i\in \pi$ is suppressed and replaced by a portion of $W_{m_i}$. The result then follows from the minimizing property of $\gamma_{xy}$
  together with Lemma \ref{l:WZ}.
\end{proof}

\subsubsection{New representants of $\Gamma_\lambda$} In the spirit of the previous length
inequality, we construct a new representant to each free homotopy class $\Gamma_\lambda$, which
approximates its length well.

\begin{nota}
  Throughout the rest of this section, we fix an exhaustion
  $(\mathbf{\Gamma}, \mathbf{d})=(\Gamma^{k}, d^{k})_{1 \leq k \leq  t}$ of the family of
  polygonal curves $\Gamma=(\Gamma_\lambda)_{\lambda\in\Lambda}$.   For
  $\lambda\in \Lambda$, write
  $$\Gamma^{k}_\lambda=(J_j^{\lambda, k}, K_j^{\lambda, k})_{j \in \Z_{n_\lambda^k}}.$$
  For $k=t$ the terminal polygonal curve, we simply denote
  $\Gamma^{\infty}_\lambda=(J_j^\lambda, K_j^\lambda)_{j \in \Z_{n_\lambda}}$.
\end{nota}

Remember that we also fixed a $\xi \in \Xi$ and $\cQ \subseteq I(\xi)$, see Notation \ref{nota:fix_Q}.

In the following we focus on the elements $\lambda \in \LambdaBCFN$, that is to say, boundary
curves.  Let us call $\gamma_\lambda$ the closed geodesic homotopic to $\Gamma^1_\lambda$.  For
$k \in \{1,\ldots, t\}$ there exists a closed annular region $\rA_\lambda^k$ bordered, on one side
by $\gamma_{\lambda}$, on the other side by the polygonal curve $\Gamma^{k}_\lambda$.
%on peut eventuellement le dire bien avant.
Note that we have the inclusion $\rA_\lambda^k\subseteq \rA_\lambda^{k-1}$ for $k \geq 2$.
% Denote  $\rA^*(k, \lambda)= \rA(k, \lambda)\setminus \rA(k+1, \lambda)$: $\rA^*(k, \lambda)$ consists of a finite number
%of contractible right-angled polygons, made of the regions that disappear when certain cells of $\Gamma^{k}_\lambda$ are neutralized to get $\Gamma^{k+1}_\lambda$. By definition, the sets $\rA^*(k, \lambda)$ are disjoint.

The following lemma is an immediate consequence of our construction.

\begin{lem}
  For $k \in \{1, \ldots, t\}$, we can find a simple, piecewise geodesic representative
  $\tilde\Gamma_\lambda^k$ of the free homotopy class $\Gamma_\lambda$, contained in the annulus
  $\rA_\lambda^k$, written as a cyclic concatenation
  \begin{align}
    \tilde\Gamma_\lambda^k
    =\gamma_1^{\lambda,k} \smallbullet \kappa_1^{\lambda,k}  \smallbullet
    \ldots \smallbullet \gamma^{\lambda,k}_{s_\lambda^k} \smallbullet \kappa^{\lambda,k}_{s_\lambda^k}
    % \smallbullet \gamma_{s(k)+1}^k
  \end{align}
  where:
  \begin{itemize}
  \item $\gamma_j^{\lambda,k}$ is piecewise geodesic, made of portions of junctions and bridges of
    $\Gamma^k_\lambda$;
      \item $\kappa_j^{\lambda,k}$ is a geodesic segment contained in one of the cells
    of $\Gamma_{\lambda}^k$.
  \end{itemize}
  The representative $\tilde\Gamma_\lambda^k$ is unique if we ask that the annulus
  $\tilde\rA_\lambda^k$ bordered by $\gamma_{\lambda}$ and by $\tilde\Gamma_\lambda^k$ does not
  contain any junction in its interior.
\end{lem}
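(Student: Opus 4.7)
The plan is to construct $\tilde\Gamma_\lambda^k$ as the inner boundary of a maximal sub-annulus of $\rA_\lambda^k$ that avoids all junctions, and then verify that this boundary has the claimed piecewise-geodesic structure.

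First, I would consider the configuration of junctions of $\Junc(\cQ)$ that meet the annulus $\rA_\lambda^k$. Each such junction is a geodesic segment with endpoints on $\beta$, so its intersection with $\rA_\lambda^k$ is a disjoint union of simple geodesic sub-arcs whose endpoints lie either on $\Gamma_\lambda^k$ (more precisely, on the bridges or cells of $\Gamma_\lambda^k$) or on $\beta$-portions bounding $\rA_\lambda^k$. I would define $\tilde\rA_\lambda^k$ as the connected component of $\rA_\lambda^k \setminus \bigcup_{p\in \Junc(\cQ)} p$ that contains $\gamma_\lambda$ in its boundary. Since $\gamma_\lambda$ is a closed geodesic homotopic to $\Gamma_\lambda$, this component is itself an open annular region, and its outer boundary $\tilde\Gamma_\lambda^k$ is a simple closed curve freely homotopic to $\gamma_\lambda$, contained in $\rA_\lambda^k$, and containing no junction in the interior of $\tilde\rA_\lambda^k$ by construction. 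This directly gives the uniqueness condition.

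Next, I would analyze the local structure of $\tilde\Gamma_\lambda^k$. As one travels around $\tilde\Gamma_\lambda^k$, the boundary alternates between two types of pieces: portions running along a junction or along a bridge of $\Gamma_\lambda^k$ (these are the $\gamma_j^{\lambda,k}$, which are piecewise geodesic since both bridges and junctions are orthogeodesics or geodesic segments), and pieces crossing through the interior of $\rA_\lambda^k$. The transitions from one piece to the next occur precisely when $\tilde\Gamma_\lambda^k$ needs to go around the endpoint of a junction (where the junction meets a cell of $\Gamma_\lambda^k$) or switch from one junction to a bridge. Between such transition points, the locally length-minimising representative in the correct homotopy class is a single geodesic segment, which must lie inside a cell $K_j^{\lambda,k}$ (since that is the only place where $\tilde\Gamma_\lambda^k$ can cross between regions separated by bridges). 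These geodesic segments are the $\kappa_j^{\lambda,k}$.

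Then I would verify simplicity. Since $\tilde\Gamma_\lambda^k$ is the inner boundary of an annular region it is automatically embedded, so simplicity is free. For the piecewise-geodesic property and the fact that the $\kappa_j^{\lambda,k}$ lie in cells, the key observation is that by Proposition \ref{p:compatible} the family $\Gamma^k$ stays compatible throughout the exhaustion, so junctions and bridges do not have pathological mutual intersections, and the description above is consistent.

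The main obstacle I expect is controlling the combinatorics of the transitions: a priori, a junction can cross several cells and several bridges of $\Gamma_\lambda^k$ inside $\rA_\lambda^k$, and different junctions may cross each other, so one must argue carefully that the inner boundary $\tilde\Gamma_\lambda^k$ always decomposes into the two advertised types of pieces and that uniqueness forces the representative to be exactly the inner boundary of $\tilde\rA_\lambda^k$ rather than any other homotopic piecewise geodesic curve. This is handled by invoking the Useful Remark (Remark \ref{r:useful}) to rule out unnecessary detours, and by using the variational characterisation of geodesic segments (as in the statement of Proposition \ref{p:simulbridge2}) to force each $\kappa_j^{\lambda,k}$ to be the unique geodesic arc realising its homotopy class with endpoints gliding on the adjacent $\gamma_{j}^{\lambda,k}$ pieces.
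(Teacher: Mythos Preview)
Your overall construction --- taking the connected component of $\rA_\lambda^k$ minus the junctions that contains $\gamma_\lambda$, and declaring $\tilde\Gamma_\lambda^k$ to be its outer boundary --- is correct and is exactly the implicit construction the paper has in mind (the paper says only that the lemma is ``an immediate consequence of our construction'' and gives no further proof). Uniqueness under the stated condition follows at once from this description.

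However, your account of the decomposition is confused in a way that overcomplicates the argument. You describe the $\kappa_j^{\lambda,k}$ as ``pieces crossing through the interior of $\rA_\lambda^k$'' and then invoke a variational principle to identify them as geodesic arcs. This is wrong: the outer boundary of your junction-free component consists entirely of pieces of the original boundary $\Gamma_\lambda^k$ (which is already a concatenation of bridges and cells) together with pieces of junctions. Nothing crosses the interior of the annulus. The $\kappa_j^{\lambda,k}$ are simply the sub-segments of the cells $K_j^{\lambda,k}$ that remain on this outer boundary; they are automatically geodesic because cells lie on the multi-geodesic $\beta$. The $\gamma_j^{\lambda,k}$ are then the complementary arcs, made of pieces of bridges and pieces of junctions. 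No length-minimisation or appeal to Proposition~\ref{p:simulbridge2} is needed at this stage; the decomposition is purely combinatorial once you have the outer boundary in hand.
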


In particular, $\gamma_j^{\lambda,k}$ does not intersect the cells of $\Gamma_\lambda^{k}$ except at
its endpoints.

For $k=t$, we write $\gamma_j^\lambda = \gamma_j^{\lambda,t}$ and
$\kappa_j^\lambda=\kappa_j^{\lambda,t}$ for $1 \leq j \leq s_\lambda = s_\lambda^t$.
% \begin{nota}
%   The segments $(\kappa_j^{\lambda,k})_{1 \leq j \leq s_\lambda^k}$ are called \emph{residual segments} (of
%   generation $k$). We denote by $j_1^{\lambda,k}, \ldots, j_{m_\lambda^k}^{\lambda,k}$ the portions of bridges
%   $ J_1^{\lambda, k}, ..., J_{n_\lambda^k}^{\lambda, k}$ contained in
%   $\bigcup_{j=1}^{s_\lambda^k}\gamma_j^{\lambda,k}$.
% \end{nota}
% \begin{rem}
%   For $k=1$, the bridges $J_j^{\lambda, 1}$ coincide with the bars $\overline{B}_q$, and therefore
%   $j_1^{\lambda,1}, \ldots, j_{m_\lambda^1}^{\lambda,1}$ are subsegments of these bars.
% \end{rem}
% Denote $\kappa_j^{*, k}:= \kappa_j^k \cap \rA^*(k, \lambda)$, $j_m^{*, k}:= j_m^k\cap \rA^*(k, \lambda)$, $\gamma_m^{*, k}:= \gamma_m^k \cap \rA^*(k, \lambda)$.

\subsubsection{Length inequalities: junctions}

We prove the following comparison between the lengths of the paths
$(\gamma_j^{\lambda,1})_{1 \leq j \leq s_\lambda^1}$ and the total length of all junctions.

\begin{lem}
  \label{lem:junc}
  We have that
  $$ \sum_{\lambda\in \LambdaBCFN} \sum_{j=1}^{s_\lambda^1} \ell(\gamma_j^{\lambda,1})
\leq \sum_{p \in \Junc(\cQ)} \ell(p). $$
\end{lem}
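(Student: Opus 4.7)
My plan is to prove the inequality by a geometric disjointness argument: I would identify each non-cell piece $\gamma_j^{\lambda,1}$ with a subsegment of a junction, and then show that across all pairs $(\lambda,j)$ these subsegments occupy each junction $p$ with total length at most $\ell(p)$.

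First, I would exploit the defining property of $\tilde\Gamma_\lambda^1$, namely that the inner annulus $\tilde\rA_\lambda^1$ (between $\gamma_\lambda$ and $\tilde\Gamma_\lambda^1$) contains no junction in its interior. This property forces $\tilde\Gamma_\lambda^1$ to lie along the $\gamma_\lambda$-facing boundary of the piecewise geodesic multi-loop $\mathbf{c}^{\mathrm{f}}$ from Remark \ref{r:junction}, whose non-$\beta$ part consists precisely of the junctions in $\Junc(\cQ)$. Thus each $\gamma_j^{\lambda,1}$ can be written, after a length-preserving modification, as a concatenation of subsegments of junctions. Bar portions inherited from bridges of $\Gamma_\lambda^1$ can be swapped for the corresponding junction portions bounding the polygons $\Pol(q)$ using the variational inequality of Proposition \ref{p:simulbridge2}, so no length is lost by restricting attention to junction subsegments.

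Second, I would establish disjointness of the collection $\{\gamma_j^{\lambda,1}\}_{\lambda,j}$ as subsets of $\mathbf{S}$. Since the polygonal curves $(\Gamma_\lambda^1)_{\lambda\in\LambdaBCFN}$ are pairwise compatible (Definition \ref{r:compat}) and the boundary components $\gamma_\lambda$ are pairwise disjoint, the annuli $\rA_\lambda^1$ can be realized as pairwise disjoint subsurfaces, making the curves $\tilde\Gamma_\lambda^1$ pairwise disjoint simple closed curves. Within a single $\tilde\Gamma_\lambda^1$, different $\gamma_j^{\lambda,1}$ pieces are separated by cell pieces on $\beta$ and hence are also disjoint. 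Any two subsegments lying on the same junction $p$ (coming from different $\lambda$'s or different $j$'s) must therefore occupy disjoint sub-arcs of $p$. Summing lengths on each junction and then over $\Junc(\cQ)$ yields the desired bound.

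The main obstacle is handling the situation where both sides of a junction $p$ face annular regions associated with distinct boundaries $\lambda_1,\lambda_2\in\LambdaBCFN$, so that both $\tilde\Gamma_{\lambda_1}^1$ and $\tilde\Gamma_{\lambda_2}^1$ trace parts of $p$ simultaneously. The disjointness of these two simple closed curves forces the two usages of $p$ to be along complementary sub-arcs, but verifying this rigorously requires a careful local analysis (in a tubular neighbourhood of $p$) of how each $\tilde\Gamma_{\lambda_i}^1$ enters and exits a subsegment of $p$ via its endpoints on $\beta$, relying on the generalized-eight hypothesis to control the combinatorial structure of $\mathbf{S}\setminus\mathbf{c}^{\mathrm{f}}$.
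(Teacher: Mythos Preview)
Your overall strategy is right, and close to the paper's, but the crucial step is left as an ``obstacle'' rather than resolved. The issue is the bar portions $b_l^\lambda$ of the $\gamma_j^{\lambda,1}$: your disjointness argument via the annuli $\rA_\lambda^1$ works for the junction subsegments $j_l^\lambda$ (these lie in the \emph{interiors} of the annuli, which are indeed disjoint), but it fails for the bar portions. Each bar $\overline B_q$ lies on $\Gamma_\lambda^1$ for \emph{two} boundary indices $\lambda,\lambda'$, i.e.\ on the common boundary of $\rA_\lambda^1$ and $\rA_{\lambda'}^1$; both $\tilde\Gamma_\lambda^1$ and $\tilde\Gamma_{\lambda'}^1$ can run along overlapping sub-arcs of this bar, so they are not disjoint as point sets. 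Hence ``swap bars for junction pieces bounding $\Pol(q)$ and then invoke disjointness'' does not work as stated: the naive swap could assign the \emph{same} junction arc to two different $b_l^\lambda$. (Also, Proposition~\ref{p:simulbridge2} is about roofs over cells in the neutralization procedure; it is not the right tool for the bar-to-junction swap.)

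The paper's fix is precisely a construction that makes the swapped pieces disjoint. For each bar segment $b_l^\lambda\subset\overline B_q$, one looks at the \emph{intersection} $\Pol(q)\cap\Pol(\iota q)$ rather than a single polygon $\Pol(q)$: this intersection is a polygon having $b_l^\lambda$ as one side, with the remaining sides made of junction arcs that lie in the interior of $\Pol(q)$ or $\Pol(\iota q)$. Defining $P(b_l^\lambda)$ to be those junction arcs, one gets both the length inequality $\ell(b_l^\lambda)\le \ell(P(b_l^\lambda))$ (by the variational property applied inside the intersection polygon) and, crucially, the disjointness of all $P(b_l^\lambda)$ from one another and from every $j_{l'}^{\lambda'}$ (because the $P(b_l^\lambda)$ sit strictly inside one of the polygons, away from the outermost junctions that the $\tilde\Gamma_{\lambda'}^1$ actually follow). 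This intersection step is the missing idea in your proposal; without it, the last paragraph's ``careful local analysis'' remains the whole proof.
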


\begin{proof}
  We observe that, for $\lambda \in \LambdaBCFN$, the union of paths
  $\bigcup_{1 \leq j \leq s_\lambda^1} \gamma_j^{\lambda,1}$ is composed of disjoint subsegments of
  junctions, that we call $(j_l^\lambda)_{1 \leq l \leq M_\lambda}$, and subsegments of the bridges of
  $\Gamma^1$, which we denote as $(b_{l}^{\lambda})_{1 \leq l \leq m_\lambda}$. Since the bridges of
  $\Gamma^1$ are bars $\overline{B}_q$, the paths $(b_{l}^{\lambda})_{1 \leq l \leq m_\lambda}$ are
  subsegments of bars.

  We claim that to each segment $b_l^\lambda$ we can associate a subset $P(b_l^\lambda)$ satisfying
  the following properties:
  \begin{itemize}
  \item $P(b_l^\lambda)$ is included in $\bigcup_{p \in \Junc(\cQ)} p$ and disjoint from all
    $(j^{\lambda'}_{l'})_{\lambda', 1 \leq l' \leq M_{\lambda'}}$;
  \item $\ell(b_l^\lambda)\leq \ell(  P(b_l^\lambda) )$;
  \item the $P(b_l^\lambda)$ are mutually disjoint for distinct $(l, \lambda)$.
  \end{itemize}
  Indeed, $b_l^\lambda$ is included in the bar $\overline{B}_q$ for an index $q$. This means that
  $b_l^\lambda$ is one of the boundary segments of the polygon $\Pol(q)\cup \Pol(\iota q)$. The two
  polygons $\Pol(q)$ and $\Pol(\iota q)$ share a side, namely $\overline{B}_q$. Because their union
  still contains the subset $b_l^\lambda$ of $ \overline{B}_q$ in its boundary, their intersection
  $\Pol(q)\cap \Pol(\iota q)$ must be a polygon, the boundary of which consists of the union of:
\begin{itemize}
\item the entire segment $b_l^\lambda$ itself;
\item a subset of $\bigcup_{p \in \Junc(\cQ)} p$ made of pieces of boundary of $\Pol(q)$ in the
  interior of $\Pol(\iota q)$, or pieces of boundary of $\Pol(\iota q)$ in the interior of
  $\Pol(q)$;
\item and, perharps, some pieces of $\beta_{\lambda(q)}$ or $\beta_{\lambda(q-1)}$, orthogonal to
  $b_l^\lambda$.
\end{itemize}
We define $P(b_l^\lambda)$ to be the set of points in the boundary of $\Pol(q)\cap \Pol(\iota q)$
which belong in $\bigcup_{p \in \Junc(\cQ)} p$.  By an enumuration of all cases, a few of which are
illustrated in Figure \ref{fig:junc}, we check that this set satisfies the length inequality, using
either the variational characterization of geodesics or orthogeodesics.

\begin{figure}[h]
  \centering
  \includegraphics{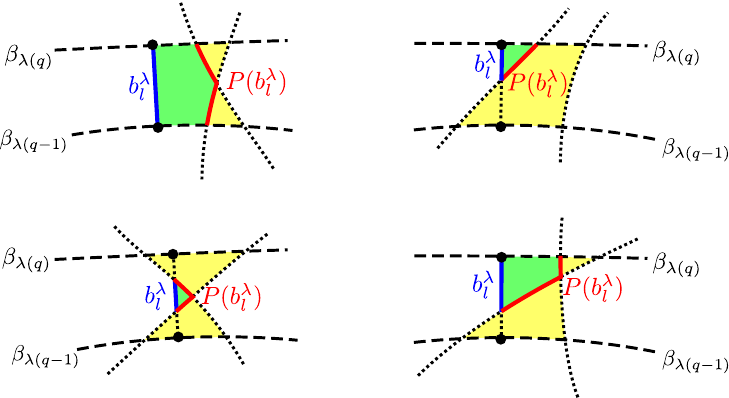}
  \caption{The construction of $P(b_l^\lambda)$ in all cases where $q$ and $\iota(q)$ are crossing
    parameters. The overall shaded area is the union of $\Pol(q)$ and $\Pol(\iota(q))$, with the
    intersection represented in green. The dotted lines are the irrelevant portions of the junctions
  and bars, and the dashed lines the multi-curve $\beta$.}
  \label{fig:junc}
\end{figure}

   We can then write
   \begin{align*} \sum_{\lambda\in \LambdaBCFN}
     \sum_{j=1}^{s_\lambda^1} \ell(\gamma_j^{\lambda,1})
     &= \sum_{\substack{\lambda \in \LambdaBCFN \\ 1 \leq l \leq M_\lambda}} \ell(j^\lambda_l )
       + \sum_{\substack{\lambda \in \LambdaBCFN \\ 1 \leq l \leq m_\lambda}} \ell(b_{l}^{\lambda}) \\
     &\leq \ell \Big( \bigsqcup_{\substack{\lambda \in \LambdaBCFN \\ 1 \leq l \leq M_\lambda}} j_l^\lambda
     \sqcup
     \bigsqcup_{\substack{\lambda \in \LambdaBCFN \\ 1 \leq l \leq m_\lambda}} P(b_l^\lambda)\Big)
     \leq \sum_{p \in \Junc(\cQ)} \ell(p)
   \end{align*}
   because the  $j^\lambda_l$ and $P(b_l^\lambda)$ are disjoint subsegments of the set of junctions.
 \end{proof}

 \subsubsection{Length inequalities and double points}

 Let us now prove some length inequalities related to the portions of cells
 $(\kappa_j^\lambda)_{\lambda, j}$.  We observe that the sets
 $(\beta_\lambda)_{\lambda \in \LambdaBCbeta}$ and
 $(\kappa_j^{\lambda})_{\lambda\in \LambdaBCFN, 1 \leq j \leq s_\lambda}$ are included in $\beta$,
 and that each point of $\beta$ belongs to at most two of them. In particular,
 \begin{equation}
   \label{eq:ineq_bf_double}
   \sum_{\lambda \in \LambdaBCbeta}\ell_Y(\beta_\lambda)
   +\sum_{\lambda\in \LambdaBCFN} \sum_{1\leq j \leq s_\lambda} \ell(\kappa_j^\lambda)  \leq 2 \ell_Y(\beta).
 \end{equation}

 This is also true if we restrict the left-hand-side to smaller sets.
 \begin{nota}
   In the rest of this section, we shall consider arbitrary subsets $V^\Gamma\subseteq \LambdaBCFN$,
   $V^\beta\subseteq \LambdaBCbeta$, $\pi \subseteq \Thetasurv$. We let
   $V = V^\beta \sqcup V^\Gamma$. We write $\kappa_j^\lambda \notin \pi$ iff the cell containing
   $\kappa_j^\lambda$ is not in $\pi$. We denote
   $\Theta_\tau(\pi):=\bigcup_{q\in\pi} \Theta_\tau(q)$ the set of indices corresponding to the
   segments $\cK_{q}$ that are in $\pi$.
 \end{nota}
 Plugging in these subsets, \eqref{eq:ineq_bf_double} directly implies
 \begin{equation}
   \label{eq:ineq_bf_double_rest}
   \sum_{\lambda \in V^\beta}\ell_Y(\beta_\lambda)
   +\sum_{\lambda\in V^\Gamma}
   \sum_{\substack{1\leq j \leq s_\lambda \\ \kappa_j^\lambda \notin \pi}} \ell(\kappa_j^\lambda)
   \leq 2 \ell_Y(\beta).
 \end{equation}
 The aim of this subsection is to improve this inequality by determining the points which are
 counted twice.

 \begin{defa}
   We call a point in $\beta$ \emph{simple} if it satisfies either of these conditions:
   \begin{itemize}
   \item it belongs to $\bigcup_{\lambda \in \Lambdainbeta}\beta_\lambda$ and to at most one
     interval $\kappa_j^\lambda$ with $\lambda\in V^\Gamma$ and
     $\kappa_j^\lambda \notin \pi$;
   \item it belongs to $\bigcup_{\lambda\in V^\beta}\beta_\lambda$ and to none of the intervals
     $\kappa_j^\lambda$ with $\lambda\in V^\Gamma$ and $\kappa_j^\lambda \notin \pi$;
   \item or it belongs to $\bigcup_{\lambda \in \LambdaBCbeta\setminus V^\beta}\beta_\lambda$.
   \end{itemize}
   Other points are called \emph{double}. We call $D$ the set of double points.
 \end{defa}

 Obviously, if $|D|$ stands for the 1-dimensional Lebesgue measure of $D$ as a subset of $\beta$,
  \begin{equation}
   \label{eq:ineq_bf_double_rest_D}
   \sum_{\lambda \in V^\beta}\ell_Y(\beta_\lambda)
   +\sum_{\lambda\in V^\Gamma}
   \sum_{\substack{1\leq j \leq s_\lambda \\ \kappa_j^\lambda \notin \pi}} \ell(\kappa_j^\lambda)
   \leq \ell_Y(\beta) + |D|.
 \end{equation}
 We now prove a bound on $|D|$.

 \begin{prp}
   \label{prp:bound_D}
   We have that
   \begin{equation*}
     |D|\leq
     \sum_{\substack{q \in \cQ \cap \Theta_\ell(V) \\ q \notin \Theta_\tau(\pi)}} \tilde \theta_q.
   \end{equation*}
 \end{prp}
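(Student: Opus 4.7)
The plan is to associate each double point $p \in D$ with an index $q \in I(\xi)$ such that $p \in \cI'_q$ and $q$ satisfies all three restrictions in the sum, then conclude via the pointwise bound $|D \cap \cI'_q| \leq \tilde\theta_q$. Using the (a.e.~disjoint) decomposition $\beta = \bigsqcup_{q \in I(\xi)} \cI'_q$, one reduces the proposition to the claim that $D \cap \cI'_q \neq \emptyset$ forces $q \in \cQ \cap \Theta_\ell(V)$ and $q \notin \Theta_\tau(\pi)$.

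Fix therefore $p \in D \cap \cI'_q$ and let $\cI_{q_0} \subseteq \cI'_q$ be the unique simple portion containing $p$, so that $\lambda := \lambda(q_0) = \lambda(q) \in \Lambdabeta$. I would establish $q \in \Theta_\ell(V)$ by case analysis on the definition of a double point. In case (b), $\lambda \in V^\beta$ gives directly $q \in \Theta_t(V^\beta) = \Theta_\ell(V^\beta) \subseteq \Theta_\ell(V)$. In case (a), $p$ lies in two cells $K^1, K^2 \notin \pi$ of polygonal curves $\Gamma_{\lambda_1}^\infty, \Gamma_{\lambda_2}^\infty$ with $\lambda_1,\lambda_2 \in V^\Gamma$; each $K^i$ is a subset of an original cell $\cK_{q_1^i}$ of $\Gamma_{\lambda_i}^1$, and since a simple portion $\cI_{q_0}$ is either disjoint from or entirely contained in any cell of $\Gamma^1$, the incidence $p \in \cI_{q_0} \cap \cK_{q_1^i}$ forces $q_0 \in \Theta_\tau(q_1^i) \subseteq \Theta_\ell(V^\Gamma)$; this condition transfers from $q_0$ to $q$ because $\cI_{q_0}$ and $\cI'_q$ lie on the same side of $\beta_\lambda$ and are incident to the same family of cells.

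For $q \notin \Theta_\tau(\pi)$, the crucial observation is that a simple portion $\cI_{q_0} \subseteq \beta_\lambda$ can be contained in at most two cells of $\Gamma^1$'s—one on each side of $\beta_\lambda$ when $\lambda \in \Lambdainbeta$, and only on the interior side when $\lambda \in \LambdaBCbeta$—thanks to the compatibility condition of Definition \ref{r:compat}. In case (b), the unique such cell $\cK_{q_1}$ has $q_1 \notin \pi$; in case (a), both possible containing cells $\cK_{q_1^1}, \cK_{q_1^2}$ have indices $q_1^i \notin \pi$. In either case, no $\cK_{q'}$ with $q' \in \pi$ can contain $\cI_{q_0}$, which gives $q_0 \notin \Theta_\tau(\pi)$, and the same conclusion for $q$ since $\cI_{q_0} \subseteq \cI'_q$.

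The main obstacle is the verification that $q \in \cQ$ rather than merely $q \in I(\xi)$. On the support of $\Psi_{\xi, \cQ}$, an index $q \in I(\xi) \setminus \cQ$ has $\tilde\theta_q \leq 2\log 2$, so the naive argument via $|\cI'_q|$ only yields the weaker inequality with a residual error bounded by $4 r \log 2$. The hard step is to use the joint support of $\Psi_{\xi, \cQ}$ and $\Psi_{\mathbf{\Gamma}, \mathbf{d}}$ to show that such short simple portions cannot carry double points: this would rely on a geometric comparison between the lower bound $Z_K^{\mathbf{\Gamma}} \geq e^a$ imposed on favourable cells by $\Psi_{\mathbf{\Gamma}, \mathbf{d}}$ and the length of neighbouring $\cI_{q_0}$'s, obstructing the cell-overlap required for a double point. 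Should this refined argument fail, the constant error is harmless as it is absorbed into the $\fn(r)$ term of Proposition \ref{p:comparison}; the claimed bound then follows by summing $|D \cap \cI'_q| \leq \tilde\theta_q$ over the restricted index set.
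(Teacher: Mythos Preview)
Your overall strategy—associating each double point with an index $q$ and bounding $|D \cap \cI'_q| \leq \tilde\theta_q$—matches the paper's, but the setup contains a genuine error. The decomposition $\beta = \bigsqcup_{q \in I(\xi)} \cI'_q$ is false whenever erasing occurs: from equation~\eqref{e:chasles} and property~(b) of Proposition~\ref{p:pluscross}, one has $\cI'_q = [G_q, D_q] \subseteq \overline{\cI}_q$, the inclusion being strict when $\nu_q + \mu_q > 0$, and the erased simple portions $\overline{\cI}_{q'}$ with $q' \notin I(\xi)$ are not covered by any $\cI'_q$. (In the purely non-crossing case of \S\ref{s:purelync} your decomposition does hold, since then $\cI'_q = \overline{\cI}_q$ and $I(\xi) = \Theta$; perhaps this is the case you had in mind.) A consequence is that each $\cI'_q$ sits inside a \emph{single} simple portion $\overline{\cI}_q$, so your auxiliary index $q_0$ necessarily equals $q$ and the ``transfer from $q_0$ to $q$'' is moot.

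The missing step—showing that double points actually lie in $\bigsqcup_{q \in I(\xi)} [G_q, D_q]$—is precisely Lemma~\ref{l:shield}(b)--(c), and the further restriction $q \in \cQ$ is item~(d). All three rely on the same geometric idea, which you do not invoke: the junctions built in \S\ref{sec:polygons-polq} \emph{shield} the complementary parts of $\beta$ via the contractible polygons $\Pol(q)$ (see in particular Remark~\ref{rem:seg_shield} for~(d)). Your proposed route to $q \in \cQ$ through the height bound $Z_K^{\mathbf{\Gamma}} \geq e^a$ on favourable cells does not work—those bounds constrain cells of the terminal polygonal curves $\Gamma^\infty_\lambda$, not the lengths $\tilde\theta_q$. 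Your fallback (absorbing a $4r\log 2$ error into $\fn(r)$) is acceptable for~(d), but no such cheap fallback exists for~(b) and~(c): the total length of $\beta \setminus \bigsqcup_{q \in I(\xi)} \cI'_q$ is not bounded in terms of $r$ alone. Once the correct localisation is in place and $q_0 = q$, your case analysis for $q \in \Theta_\ell(V)$ and $q \notin \Theta_\tau(\pi)$ is essentially correct and corresponds to items~(a),~(a') of Lemma~\ref{l:shield}.
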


 The proof essentially revolves around the following lemma.

\begin{lem} \label{l:shield} All the following points are simple:
\begin{itemize}
\item[(a)] points in $\cK_q$ where $\cK_q$ is a cell of $\Gamma_\lambda$ with $\lambda\notin V^\Gamma$
  or $ \kappa_l^\lambda\in \pi$;
\item[(a')] points in $\bigcup_{\lambda\notin V^\beta}\beta_\lambda$;
\item[(b)] for $q\in I(\xi)$, points in $\overline{\cI}_q \setminus [G_q, D_q]$;
\item[(c)] points in $\overline{\cI}_q$ where $q\not \in I(\xi)$;
\item[(d)] for $q\in I(\xi)\setminus \cQ$, all points in $\overline{\cI}_q$;
\end{itemize}
\end{lem}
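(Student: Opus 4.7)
The plan is to verify each of the five items by analyzing, for a given $x\in\beta$, the set of objects in $\{\beta_{\lambda'} : \lambda'\in V^\beta\} \cup \{\kappa_j^\lambda : \lambda\in V^\Gamma,\ \kappa_j^\lambda\notin\pi\}$ that contain $x$, and matching the outcome against one of the three bullets in the definition of \emph{simple}. The central geometric fact I would use throughout is that a point $x\in\beta$ lies in a unique component $\beta_{\lambda(x)}$, and sits in at most two cells $\kappa_j^\lambda$ overall: at most one per side of the local picture of $\beta$, with the reminder that when $\lambda(x)\in\LambdaBCbeta$ one of these sides lies outside $\Sf$.

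Items (a) and (a') are reductions-by-one-layer. For (a), $x\in\cK_q$ where $\cK_q$ is a cell of $\Gamma_\lambda$ with either $\lambda\notin V^\Gamma$, or $\kappa_l^\lambda\in\pi$: in either case the corresponding $(\lambda,l)$ pair is excluded from the summation, so only the $\kappa$-contribution from the opposite side of $\beta$ remains, giving the correct count for simplicity. For (a'), the contribution from $\beta_{\lambda(x)}$ itself is excluded ($\lambda(x)\notin V^\beta$), so $x$ lands in either the second or third bullet of the definition depending on whether $\lambda(x)\in\Lambdainbeta$ or $\lambda(x)\in\LambdaBCbeta\setminus V^\beta$; the $\Lambdainbeta$ subcase then combines with the observation of the previous sentence to control the $\kappa$-count.

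Items (b), (c), (d) all concern points of the simple portions $\overline{\cI}_q$, and I would treat them using the shielding polygons $\Pol(q)$ built in \S\ref{sec:polygons-polq} and the "new representative" $\mathbf{c}^{\mathrm{f}}$ of Remark~\ref{r:junction}. For (c), $q\notin I(\xi)$ means $\overline{\cI}_q$ has been fully erased; it lies in the interior of $\Pol(q)\cup\Pol(\iota q)$, is thus shielded from every $\tilde\Gamma_\lambda^1$ with $\lambda\in\LambdaBCFN$, hence contains no $\kappa_j^\lambda$ point, and $x$ is simple. For (b), the portions $\overline{\cI}_q\setminus[G_q,D_q]$ are exactly the ends of a simple portion that are chopped off when passing from $\overline{\cI}_q$ to $\cI'_q$, and these ends sit in the same shielded region, so the same argument applies. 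For (d), $q\in I(\xi)\setminus\cQ$ means $\tilde\theta_q\leq 2\log 2$ on the support of $\Psi_{\xi,\cQ}$ and, combining with the choice of $\cQ$, the simple portion $\overline{\cI}_q$ is shielded in exactly the sense of Remark~\ref{rem:seg_shield} on both sides, again precluding any $\kappa_j^\lambda$-cover.

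The main obstacle I foresee is the bookkeeping in cases (b) and (d): one must show that even though the junctions of $\mathbf{c}^{\mathrm{f}}$ may pass close to $\overline{\cI}_q$, they never actually visit the relevant subsegment on either side, and that in turn no cell of any $\Gamma_\lambda^\infty$ with $\lambda\in V^\Gamma$ intersects it. This will require an enumeration of the local configurations at a crossing/U-turn (as in the construction of $\Pol(q)$), parallel to the case analysis already carried out in Figures~\ref{fig:polq_cc}--\ref{fig:polq_uu}. Once these shielding statements are established, the conclusion in each of the five items reduces to a direct verification against the bullet points of the definition of \emph{simple}.
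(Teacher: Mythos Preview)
Your treatment of items (a) and (a') matches the paper's (which simply calls them ``obvious''), and your overall strategy for (b)--(d), namely using the shielding polygons $\Pol(\cdot)$ and the junctions of $\mathbf{c}^{\mathrm{f}}$, is the right one. However, there are two concrete errors in the execution.

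For (c), you claim that $\overline{\cI}_q$ lies in the interior of $\Pol(q)\cup\Pol(\iota q)$. This is not correct: the polygons $\Pol(q)$ and $\Pol(\iota q)$ share the \emph{bar} $\overline{B}_q$ as a common side, and together cover a neighbourhood of that bar, not of the simple portion $\overline{\cI}_q$. The simple portion $\overline{\cI}_q\subset\beta_{\lambda(q)}$ runs from the terminus of $\overline{B}_q$ to the origin of $\overline{B}_{\sigma q}$, so the relevant polygons are $\Pol(q)$ and $\Pol(\sigma q)$ (i.e.\ $\Pol(q+1)$ in the periodic labelling), both of which meet $\tilde\beta_q$. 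The paper's argument is precisely this: since $q\notin I(\xi)$ is necessarily a crossing index, the junction crosses $\beta_{\lambda(q)}$ transversally at some point $x_q$, which is a vertex of both $\Pol(q)$ and $\Pol(q+1)$; the two intervals $[x_q,t(\overline{\cI}_q)]$ and $[x_q,o(\overline{\cI}_{q+1})]$ together cover $\overline{\cI}_q$ and consist of simple points.

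For (d), invoking Remark~\ref{rem:seg_shield} only works when $q$ is a U-turn index (that remark is stated explicitly for U-turn parameters). But $I(\xi)\setminus\cQ$ can also contain crossing indices, and for those the junction again crosses $\beta_{\lambda(q)}$ transversally, so the argument reduces to that of item (c). You must split (d) into the two subcases $q\in\cU$ and $q\in\cC$ and treat them separately; this is what the paper does. Once (c) is corrected and (d) is split, your handling of (b) becomes straightforward: for $q\in\cQ$ there are junctions terminating at $G_q$ and $D_q$, which shield the two sub-intervals $\overline{\cI}_q\setminus[G_q,D_q]$.
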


\begin{proof}[Proof of Lemma \ref{l:shield}]
  Items (a), (a') are obvious. 
  
  Item (b) is also straightforward, because there are two junctions arriving at $G_q$ and $D_q$,
  which are responsible for the points in $\overline{\cI}_q \setminus [G_q, D_q]$ being simple.

  For item (c), if $q\not \in I(\xi)$ then there is a junction that crosses $\beta_{\lambda(q)}$
  transversally, say at a point $x_q$, which is a vertex both of $\Pol(q)$ and of $\Pol(q+1)$. The
  two intervals $[x_q, o(\overline{\cI}_{q+1})]$ and $[x_q, t(\overline{\cI}_q)]$ consist of simple
  points (see Figure \ref{fig:cross_shield}), and their union contains $\overline{\cI}_q$.

  \begin{figure}[h]
    \centering
    \includegraphics{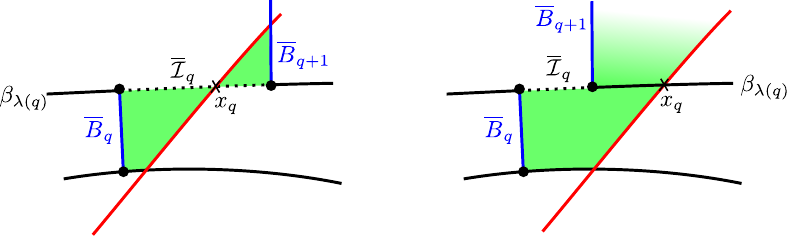}
    \caption{Illustration of case (c) in the proof of Lemma \ref{l:shield}. The two highlighted
      polygonals are $\Pol(q)$ and $\Pol(q+1)$. The dashed path is $\overline{\cI}_q$.}
    \label{fig:cross_shield}
  \end{figure}

  Regarding item (d), if $q\in (I(\xi)\setminus \cQ)\cap \cC$, there is again a junction that
  crosses $\beta_{\lambda(q)}$ transversally, and we can argue as for Item (c). If
  $q\in (I(\xi)\setminus \cQ)\cap \cU$ we use Remark \ref{rem:seg_shield}, showing that the
  points in $\overline{\cI}_q $ are simple.
\end{proof}

\begin{proof}[Proof of Proposition \ref{prp:bound_D}]
  We write $\beta=\bigcup_{q\in \Theta} \overline{\cI}_q$, where the geodesic segments are just
  defined as subsets of $\beta$, that is to say, we forget about their orientations. These intervals
  may overlap: this is the case if some of the $\theta_q$ were negative, or were positive but very
  large. By Lemma \ref{l:shield}, the only possibly double points are contained in segments
  $[G_q, D_q]$ where $q\in \cQ \cap \Theta_\ell(V)$ with $q \notin \Theta_\tau(\pi)$. This
  implies the claim if we recall that $\tilde \theta_q$ is the length of $[G_q, D_q]$.
\end{proof}

\subsubsection{The upper bound} We are now ready to state and prove our upper bound.

\begin{prp}\label{p:boundGlambda}
  Let $\xi \in \Xi$, $\cQ \subseteq I(\xi)$, and $(\mathbf{\Gamma},\mathbf{d})$ be an exhaustion of
  $(\Gamma_\lambda)_{\lambda \in \Lambda}$. For any arbitrary sets $V \subseteq \LambdaBC$ and
  $\pi \subseteq \Thetasurv$, on the support of $\Psi_{\mathbf{\Gamma}, \mathbf{d}}$, we have
  \begin{align*}
    \sum_{\lambda\in V}\ell_Y(\Gamma_\lambda) 
    \leq \ell_Y({\mathbf{c}}) +
    \sum_{\substack{q \in \cQ \cap \Theta_\ell(V) \\ q \notin \Theta_\tau(\pi)}}\tilde \theta_q
    +\sum_{q\in \pi} (\log Z_{\mathbf{\Gamma}, \mathbf{d}}(\cK_q) +2) 
    + \fn(r)
  \end{align*}
  where $\Theta_\tau(\pi) = \bigcup_{q \in \pi} \Theta_\tau(q)$.
\end{prp}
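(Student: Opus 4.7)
The proof assembles three ingredients established earlier in the section: (i) the lower bound \eqref{e:uppppper} on $\ell_Y(\mathbf{c})$ derived in \S \ref{s:lbl}; (ii) the variational length inequality of Proposition \ref{p:simulbridge2}, which allows replacing cells of a polygonal curve by short-cuts of length $\log Z + 2$; and (iii) the double-point analysis of Proposition \ref{prp:bound_D}. First split $V = V^\beta \sqcup V^\Gamma$ with $V^\beta = V \cap \LambdaBCbeta$ and $V^\Gamma = V \cap \LambdaBCFN$; for $\lambda \in V^\beta$, the contribution $\ell_Y(\Gamma_\lambda) = \ell_Y(\beta_\lambda)$ will be absorbed directly into $\ell_Y(\beta)$ in the final step.

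The central step is, for each $\lambda \in V^\Gamma$, to apply Proposition \ref{p:simulbridge2} (closed curve version) to the terminal polygonal curve $\Gamma^\infty_\lambda$, whose cells are precisely the boundary terminal cells, with the subset $\pi' = \{K' \in \Cell(\Gamma^\infty_\lambda) : K' \subseteq \cK_q \text{ for some } q \in \pi\}$ of cells to be virtually short-cut, and a piecewise-geodesic representative $\gamma$ of $\Gamma_\lambda$ whose $\kappa_j$ pieces lie inside terminal cells and whose $\gamma_j$ pieces inherit from the initial decomposition into junctions. This yields
$$\ell_Y(\Gamma_\lambda) \leq \sum_{j: \kappa_j \notin \pi'} \ell(\kappa_j) + \sum_j \ell(\gamma_j) + \sum_{K' \in \pi'}\bigl(\log Z^{\Gamma^\infty}_{K'} + 2\bigr),$$
and by the definition of the surviving-segment height, $Z^{\Gamma^\infty}_{K'} = Z_{\mathbf{\Gamma}, \mathbf{d}}(\cK_q)$ for the unique $q \in \pi$ with $K' \subseteq \cK_q$, producing exactly the third term of the target inequality.

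Summing over $\lambda \in V^\Gamma$ and incorporating the $V^\beta$ contributions, Lemma \ref{lem:junc} bounds the junction portions of $\sum_j \ell(\gamma_j)$ by $\sum_{p \in \Junc(\cQ)} \ell(p)$, up to an additive constant $\fn(r)$ absorbing the contributions of the (bounded) extensions of the $\gamma_j$'s through terminal bridges; the latter is controlled by iterating Proposition \ref{p:simulbridge2} through the exhaustion, since every cell neutralized along the way is unfavourable, hence satisfies $Z \leq e^{a+1}$ on the support of $\Psi_{\mathbf{\Gamma}, \mathbf{d}}$. For the cell-segment contributions, the inequality \eqref{eq:ineq_bf_double_rest_D} combined with Proposition \ref{prp:bound_D} yields
$$\sum_{\lambda \in V^\beta} \ell_Y(\beta_\lambda) + \sum_{\lambda \in V^\Gamma} \sum_{j:\, \kappa_j \notin \pi'} \ell(\kappa_j) \leq \ell_Y(\beta) + \sum_{q \in \cQ \cap \Theta_\ell(V),\; q \notin \Theta_\tau(\pi)} \tilde\theta_q.$$
Finally, combining the two bounds with \eqref{e:uppppper} in the form $\ell_Y(\beta) + \sum_{p \in \Junc(\cQ)} \ell(p) \leq \ell_Y(\mathbf{c}) + 12 r \log 2$ yields the claim, with the total $\fn(r)$ absorbing $12r\log 2$ and the constants from the recursive exhaustion.

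The main technical obstacle lies in reconciling the initial and terminal structures: cells and junctions naturally appear at the initial stage (where the lower bound on $\ell_Y(\mathbf{c})$ is phrased, and where Lemma \ref{lem:junc} and Proposition \ref{prp:bound_D} live), while the correct heights $Z_{\mathbf{\Gamma}, \mathbf{d}}(\cK_q)$ only appear at the terminal stage. Handling this cleanly requires careful bookkeeping of how an initial cell $\cK_q$ evolves through the exhaustion into a terminal cell $K' \subseteq \cK_q$, using Proposition \ref{p:6} to compare heights across inclusions and exploiting the fact that intermediate neutralised cells are unfavourable, so each exhaustion step costs only an additive constant depending on $r$.
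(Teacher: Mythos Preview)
Your proposal is correct and follows essentially the same argument as the paper: apply Proposition \ref{p:simulbridge2} to the terminal polygonal curve $\Gamma^\infty_\lambda$ to extract the $\log Z_{\mathbf{\Gamma},\mathbf{d}}(\cK_q)+2$ terms, iterate backwards through the exhaustion (each neutralized cell contributing $\leq a+3$ via Proposition \ref{p:6}, and the total count being $\fn(r)$), then invoke Lemma \ref{lem:junc} for the junction pieces, \eqref{eq:ineq_bf_double_rest_D} with Proposition \ref{prp:bound_D} for the cell pieces, and close with \eqref{e:uppppper}. The paper makes the backward iteration more explicit by introducing the intermediate representatives $\tilde\Gamma_\lambda^k$ and the sets $\cZ_\lambda^k$ of cells neutralized at step $k+1$, writing the telescoping inequality $\sum_j \ell(\gamma_j^{\lambda,k}) \leq \sum_j \ell(\gamma_j^{\lambda,k-1}) + \sum_{K\in\cZ_\lambda^{k-1}}(\log Z_K^{\mathbf{\Gamma}}+2)$, but this is exactly the ``iterating Proposition \ref{p:simulbridge2} through the exhaustion'' that you sketch.
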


\begin{proof}
  Denote $\Vbeta = V \cap \Lambdabeta$ and $\VFN=V \cap \LambdaFN$.
  For $\lambda \in \LambdaBCFN$ and $k \in \{1, \ldots, t-1\}$, we define
  $$\cS_\lambda^k=\{K\in \Cell(\Gamma_\lambda^k) : \exists K'\in \Cell(\Gamma^{k+1}), K'\subseteq K\},$$
  the set of cells of $\Gamma_\lambda^k$ that ``survive'' at step $k+1$ of the exhaustion. Denote
  $\cZ_\lambda^k=\Cell(\Gamma_\lambda^k)\setminus \cS_\lambda^k $ the set of cells of
  $\Gamma_\lambda^k$ that are neutralized at step $k+1$.
  By repeated application of Proposition~\ref{p:simulbridge2}, we obtain that, for any $Y$ in the
  support of $\Psi_{\mathbf{\Gamma}, \mathbf{d}}$,
  \begin{align*}
    \ell_Y(\Gamma_\lambda)=\ell(\gamma_\lambda)
    \leq \sum_{j=1}^{s_\lambda} \ell(\gamma_j^\lambda)
    +\sum_{\substack{1 \leq j \leq s_\lambda \\ \kappa_j^\lambda \notin \pi}}
     \ell(\kappa_j^\lambda)
     +\sum_{q \in \pi \cap \Theta_\ell(\lambda)} (\log Z_{\mathbf{\Gamma}, \mathbf{d}}(\cK_q) +2)
  \end{align*}
  and, for $k \in \{2, \ldots, t\}$,
  \begin{align*} 
    \sum_{j=1}^{s_\lambda^k} \ell(\gamma_j^{\lambda,k})
    \leq  \sum_{j=1}^{s_\lambda^{k-1}} \ell(\gamma_j^{\lambda,k-1})
    +\sum_{K\in \cZ_\lambda^{k-1}} (\log Z^{\mathbf{\Gamma}}_K   +2).
 \end{align*}
 Put together, this yields
 \begin{align*}
   \ell_Y(\Gamma_\lambda)\leq & \sum_{j=1}^{s_\lambda^1} \ell(\gamma_j^{\lambda,1})
   + \sum_{\substack{1 \leq j \leq s_\lambda \\ \kappa_j^{\lambda} \notin
   \pi}} \ell(\kappa_j^\lambda) \\
  & +\sum_{q\in \pi \cap \Theta_\ell(\lambda)} (\log Z_{{\mathbf{\Gamma}}, {\mathbf{d}}}(\cK_q)  +2) 
   +\sum_{k=1}^{t-1}\sum_{K\in \cZ_\lambda^{k}} (\log Z^{\mathbf{\Gamma}}_K +2).
   \end{align*}

   Let us now use the decoration to further estimate this length.  If $k\leq t-1$, then for any cell
   $K\in \cZ_\lambda^{k}$, there is a cell $K'$ of $\Gamma^k$, containing $K$, and bearing the
   decoration $0$ for the exhaustion $(\mathbf{\Gamma}, \mathbf{d})$. As a result, by definition of
   the test function $\Psi_{\mathbf{\Gamma}, \mathbf{d}}$, on its support, we have
   $\log Z^\mathbf{\Gamma}_{K'}\leq a+1$. Proposition \ref{p:6} then implies
   $\log Z^\mathbf{\Gamma}_K\leq a+3$.
   We can then deduce from the previous bound, after summation over $\lambda \in V^\Gamma$,
   that 
   \begin{align*}
     \sum_{\lambda\in V^\Gamma}\ell_Y(\Gamma_\lambda)
     \leq & \sum_{\substack{\lambda\in V^\Gamma \\ 1 \leq j \leq s_\lambda^1}} \ell(\gamma_j^{\lambda,1})
       + \sum_{\lambda\in V^\Gamma} \sum_{\substack{1 \leq j \leq s_\lambda \\ \kappa_j^{\lambda} \notin
     \pi}} \ell(\kappa_j^\lambda) 
    \\ & +\sum_{q\in \pi} (\log Z_{\mathbf{\Gamma}, \mathbf{d}}(\cK_q) +2) 
       + \sum_{\substack{1 \leq k \leq t-1 \\ \lambda\in V^\Gamma \\ K\in \cZ_\lambda^{k}}} (a+9).
   \end{align*}
   In the last sum, the number of terms in the triple sum
   $\sum_{k=1}^{t-1} \sum_{\lambda\in V^\Gamma}\sum_{K\in \cZ_\lambda^{k}}$ is less than the total
   number of cells of all $\Gamma_\lambda$ with $\lambda\in V^\Gamma$; this is less than $4r$.
   
   We now use Lemma \ref{lem:junc}, equation \eqref{eq:ineq_bf_double_rest_D} and Proposition
   \ref{prp:bound_D} to bound the sum over $\gamma_j^\lambda$ and $\kappa_j^\lambda$. We obtain that
   \begin{align*}
     & \sum_{\lambda \in V} \ell_Y(\Gamma_\lambda)
       =  
       \sum_{\lambda\in V^\Gamma}\ell_Y(\Gamma_\lambda)
     + \sum_{\lambda \in V^\beta} \ell_Y(\beta_\lambda) \\
    & \leq \sum_{p \in \Junc(\cQ)} \ell(p)
     + \sum_{\lambda \in \Lambda^\beta}\ell_Y(\beta_\lambda)
     +     \sum_{\substack{q \in \cQ \cap \Theta_\ell(V) \\ q \notin \Theta_\tau(\pi)}}\tilde \theta_q
     +\sum_{q\in \pi} (\log Z_{\mathbf{\Gamma}, \mathbf{d}}(\cK_q) +2) 
       + \fn(r).
   \end{align*}
   To conclude, we use the lower bound proven at the beginning of this section, \eqref{e:uppppper}.    
 \end{proof}

%%%Local Variables: 
%%% mode: latex
%%% TeX-master: "main"
%%% End: 

\section{Arbitrary loop topologies}
\label{s:othercases}
We now indicate how to prove Theorem \ref{t:main} when the multi-loop ${\mathbf{c}}$ is not a generalized eight.
In this case, the representatives of the homotopy class of ${\mathbf{c}}$ in minimal position may not be isotopic to one another. We choose arbitrarily one such representative -- for instance, by putting an auxiliary hyperbolic metric on ${\mathbf{S}}$ and choosing the geodesic representative of ${\mathbf{c}}$ (possibly perturbing it to get rid of multiple intersections).

Let us denote by $\cN \subseteq \Sf$ a regular neighbourhood of ${\mathbf{c}}$. The pair
$(\cN, {\mathbf{c}})$ is a generalized eight, and we may apply the construction of the pairs of
pants decomposition of \S \ref{s:ppdecompo}. We keep the same notation
$(\Gamma_{\lambda})_{\lambda\in \Lambda}$ for the pair of pants decomposition of $\cN$, with the
indices $\LambdaBC = \LambdaBCbeta \sqcup \LambdaBCFN$ corresponding to the boundary components of
$\cN$, split into the ones which are, or not, components of the simple multi-loop $\beta$.

In $\mathbf{S}$, some of the curves $(\Gamma_{\lambda})_{\lambda \in \LambdaBC}$ may be
contractible. % We denote by $\LambdaBC = \LambdaC \sqcup \LambdaNC$ the indices corresponding to
% contractible v.c. non-contractible components $(\Gamma_\lambda)_{\lambda \in \LambdaBC}$.  We define
% the intersections
% \begin{align*}
%   & \LambdaCbeta := \LambdaC \cap \LambdaBCbeta
%   & \LambdaCFN := \LambdaC \cap \LambdaBCFN \\
%   & \LambdaNCbeta := \LambdaNC \cap \LambdaBCbeta
%   & \LambdaNCFN := \LambdaNC \cap \LambdaBCFN,
% \end{align*}
% i.e., for instance, $\LambdaCbeta$ is the set of components of $\beta$ which are boundary components
% of $\cN$ contractible in $\mathbf{S}$.
 Proposition \ref{p:comparison1} tells us that the existence of contractible $\beta_\lambda$ and
$\Gamma_{\lambda}$ should help to prove Theorem \ref{t:main}, by making comparison estimates easier.
On the other hand, contractible boundary components cause two new difficulties.
\begin{itemize}
\item While it is relatively easy to enumerate topological types of generalized eights, it seems
  daunting to make a list of all topological types of multi-loops. We sidestep this difficulty by
  only ever working with a fixed topological type, in this article. In the applications to spectral
  gaps of random hyperbolic surfaces \cite{Expo}, the discussion involves all possible topological
  types appearing in the Selberg trace formula, but we condition the Weil--Petersson measure on the
  set of tangle-free surfaces, in order to drastically reduce the number of local topological types
  to consider.
\item When we vary the metric $Y$ on $\mathbf{S}$ and consider the piecewise geodesic representative
  of~$\mathbf{c}$, as done in \S \ref{s:straightening2}, the orthogeodesics
  $(\overline{B}_q)_{q \in \Theta}$ may be ill-defined. This happens if $B_q$ is a \emph{problematic
    bar} in the sense of Definition \ref{d:problem}.
\end{itemize}

 \subsection{Diagrams and the reconstitution procedure}
 \label{s:reconstitution}

 We recall the notion of diagram $\mathbf{D}$ representing the multi-loop $\mathbf{c}$ introduced in
 Definition \ref{defa:diagram}. A diagram is the data of the simple multi-loop $\beta$ and the bars
 $B = (B_1, \ldots, B_r)$.

 \subsubsection{Simple portions of diagrams}
 
 We extend the notions of simple portions, introduced in Definition~\ref{d:double_fill} for
 multi-loops, to diagrams as follows: a \emph{simple portion} of $\mathbf{D}$ is a maximal
 subsegment of~$\beta$ that does not meet the endpoints of the bars $B$. These simple portions may
 be \emph{shielded} or \emph{unshielded}, following once again Definition \ref{d:double_fill}.  The
 (un)shielded simple portion of~${\mathbf{c}}$ are in natural bijection with (un)shielded simple
 portion of $\bD$.

 We do not apply this terminology to the bars $B_k$, because they do not play the same role as the
 curves $\beta_i$: indeed, when we describe the isotopy class of $\mathbf{c}$ by following its
 diagram, all the bars are traversed twice, whilst each simple portion of $\beta$ only once.

 \subsubsection{Reconstitution procedure}
\label{sec:reconst-proc}

Given a diagram $\mathbf{D}=(\beta, B)$, we may reconstitute a multi-loop~$\mathbf{c}$ stemming from
$\mathbf{D}$ by the following procedure, guided by the \emph{orientation rules} in~\S
\ref{s:orientation_bars}.

The endpoints of the bars cut $\beta$ into $2r$ non-oriented segments $\cI_j$.
Pick a starting segment $\cI_{j_0}$ and pick arbitrarily one of the two possible orientations of
this segment. There a unique closed path starting with $\cI_{j_0}$, obtained by concatenating
alternatively bars $B_k$ and segments $\cI_j$, and respecting the orientation
rules. If this path goes through all segments $\cI_j$, then we obtain a loop (with one component)
stemming from $\mathbf{D}$. If not, select another segment $\cI_j$ that has not been traversed yet,
and repeat the procedure until exhaustion of all segments $\cI_j$.

Starting from a diagram $\mathbf{D}$ with $r$ bars, and letting
$\Theta_{\mathbf{D}}:=\{1, \ldots, r\}\times \{\pm\}$, applying the reconstitution procedure gives
us an orientation of each segment $\cI_j$ and their labelling by elements of $\Theta_{\mathbf{D}}$,
as well as two oriented copies $(B_1^\pm, \ldots, B_r^\pm)$ of each bar, labelled by
$\Theta_{\mathbf{D}}$.

If we start with a multi-loop $\mathbf{c}$ and obtain a diagram $\mathbf{D}$ by opening the
intersections (either way), the reconstitution procedure will give us back a multi-loop isotopic to
$\mathbf{c}$ (modulo orientation of each component).

The reader may check that a different choice of the original orientation of $\cI_{j_0}$ will just
reverse the orientation of the corresponding component of $\mathbf{c}$.  This means that if a
multi-loop stemming from $\mathbf{D}$ has $\cc$ components, then the reconstitution procedure can
lead to $2^{\cc}$ multi-loops stemming from $\mathbf{D}$, all having the same geometric image, and
differing only by orientation of each component.

%There are some caveats if we want to go in the other direction. If we start from $\mathbf{D}$ and apply the reconstitution procedure, a multi-loop $\mathbf{c}$ stemming from $\mathbf{D}$ may not be in minimal position. Calling $\mathbf{c}^{min}$ the representative of the homotopy class of $\mathbf{c}$ with minimal intersection number, $\mathbf{D}$ does \emph{not} necessarily represent $\mathbf{c}^{min}$. More generally,
%two multi-loops that are homotopic without being isotopic are associated with distinct diagrams.
% Notice also that even when $\mathbf{D}$ represents $\mathbf{c}$, the opening intersections, \emph{but not necessarily the first way}.

It seems difficult to determine when two diagrams lead to homotopic multi-loops; we do not address
this problem in this paper. In the case of generalized eights, this problem does not occur and we can
state the following:
\begin{rem}\label{p:D8}
  Let $\bD$ be a diagram. Assume that ${\mathbf{ S}}\setminus \bD$ has no connected component which
  is a disk. Then a multi-loop stemming from $\bD$ is a generalized eight and,
%and is in minimal position.
  as a consequence,~$\bD$ represents a generalized eight; all generalized eights leading to the
  diagram $\bD$ are isotopic (modulo orientation of each component).
\end{rem} 

Note that each component of a multi-loop $\mathbf{c}$ obtained by the reconstitution procedure may
be written by a concatenation such as the one we have obtained in \S \ref{sec:cycle-decomp-rewr},
see \eqref{e:decompo-i}.

\subsubsection{Extended diagrams}

We can extend the notation of diagram, discarding the condition that the bars do not self-intersect:
\begin{defa}
  An extended diagram $\mathbf{D}$ in $\mathbf{S}$ is the data (modulo isotopy) of a collection of
  simple disjoint loops $\beta$, and of a collection of bars $B=(B_1, \ldots, B_r)$, defined as
  closed segments with distinct endpoints lying in $\beta$, and disjoint from $\beta$ otherwise.
\end{defa} 

The reconstitution procedure of \S \ref{sec:reconst-proc} also applies to an extended diagram
$\mathbf{D}$. \emph{Simple portions} of an extended diagram are defined as for usual diagrams. A
simple portion of $\mathbf{D}$ is said to be shielded (in $\Sf$) if it lies in the boundary of a
contractible component of $\Sf\setminus \mathbf{D}$.

\begin{defa}\label{d:minimality}
  Let $\mathbf{D}=(\beta, B)$ be an extended diagram in $\mathbf{S}$, and let
  $\mathbf{c}=(\mathbf{c}_1, \ldots, \mathbf{c}_\cc)$ be a multi-loop obtained from $\mathbf{D}$ by
  the reconstitution procedure. We say that $\mathbf{D}$ \emph{satisfies the minimality condition}
  if:
\begin{itemize}
\item the components $\mathbf{c}_i$ are non-contractible;
\item if $\tilde{\mathbf{c}}_i$ is a lift of a component of $\mathbf{c}$ in the universal cover of
  $\Sf$, then $\tilde{\mathbf{c}}_i$ has no self-intersection;
\item any two lifts $\tilde{\mathbf{c}}_i$, $\tilde{\mathbf{c}}_j$ of components of $\mathbf{c}$ in
  the universal cover of $\Sf$ can intersect at most once (with the convention that a shared bar
  counts for 1 intersection point).
\end{itemize}
\end{defa}

Remark that if we start with a multi-loop $\mathbf{c}$ in minimal position, as we always do, then
any diagram~$\mathbf{D}$ representing it automatically satisfies the minimality condition; so this
is a natural condition to ask for.
 
\begin{defa}\label{d:problem} A bar of an extended diagram is \emph{problematic} if it satisfies
  either of the following two conditions:
\begin{itemize}
\item one of its endpoints lies on a contractible loop $\beta_i$;
\item its two endpoints lie on one same loop $\beta_i$, and the bar is homotopically trivial with
  endpoints gliding along $\beta_i$.
\end{itemize}
\end{defa} 

\begin{defa}\label{d:cylind} A bar of an extended diagram is \emph{cylindrical} if it runs between $\beta_i$ and $\beta_j$ with $i\not=j$ and $\beta_i,\beta_j$ are non-contractible, in the same homotopy class.
\end{defa} 
Cylindrical bars count among the non-problematic bars.

\begin{defa}\label{d:good_ext}
An extended diagram $\mathbf{D}$ is said to be \emph{good} if it has no problematic bar and if it satisfies the minimality condition. 
\end{defa}

\subsection{An auxiliary generalized eight: the skeleton} 
\label{s:skeleton}

The forthcoming Proposition \ref{p:remove} says that, by opening intersections from a multi-loop
${\mathbf{ c}}$, we can create a generalized eight that has length \emph{smaller} than
${\mathbf{ c}}$ for any hyperbolic metric, \emph{and still fills ${\mathbf{ S}}$} (or a surface
obtained by removing cylinders from ${\mathbf{ S}}$).  Such an auxilliary loop is called a
\emph{skeleton}; simple examples are represented in Figure \ref{fig:non-isotopic}.

\subsubsection{Removing one disk}
\label{sec:removing-one-disk}

We first prove the following lemma. 
   
\begin{lem}\label{l:G8}
  Let $\bD$ be a diagram filling ${\mathbf{ S}}$. Assume that at least one of the connected
  components of ${\mathbf{ S}}\setminus \bD$ is homeomorphic to a disk.  Then we can remove one of
  the bars of $\mathbf{D}$ so that the resulting diagram $\bD'$
%\item is still admissible,
  fills a surface ${\mathbf{ S}}'$ obtained by removing cylinders from $\mathbf{S}$.
%Since $\bD'$ has exactly one bar less than $\bD$, this is equivalent to the fact that 
\end{lem}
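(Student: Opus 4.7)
The plan is to locate a disk face $D$ of $\mathbf{S}\setminus\mathbf{D}$ whose boundary contains a bar, then remove such a bar and analyze the topology of the merged face.

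First I would argue that one can choose a disk face $D$ whose boundary $\partial D$ contains at least one bar $B_k$. If every disk face were bounded only by pieces of $\beta$, then since $\beta$ is simple, the boundary of each such disk would be a single contractible component $\beta_i$ of $\beta$. The filling hypothesis combined with the assumption that $\mathbf{S}$ has negative Euler characteristic rules out the case where such a $\beta_i$ carries no bars at all: the face opposite $D$ across $\beta_i$ would have to be either another disk, forcing $\mathbf{S}\simeq S^2$, or an annular face with $\beta_i$ as one of its boundary circles, which cannot be an annulus bordered by $\partial\mathbf{S}$ since $\beta_i\not\subseteq\partial\mathbf{S}$. In the remaining degenerate case, where $\beta_i$ carries bars on its non-$D$ side, the main argument below can instead be applied using a bar adjacent to $\beta_i$.

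Once $D$ and a bar $B_k\subseteq\partial D$ are fixed, I would set $\mathbf{D}' = \mathbf{D}\setminus B_k$ and analyze the faces of $\mathbf{S}\setminus\mathbf{D}'$. Every face not adjacent to $B_k$ is unchanged; the two faces on either side of $B_k$ (one of which is $D$) merge into a single new face $\tilde F$. Letting $F$ denote the other face (possibly $F=D$), the filling hypothesis forces $F$ to be either a disk or an annular face bordered by $\partial\mathbf{S}$. The plan is to split into three cases:
\begin{itemize}
\item if $F$ is a disk distinct from $D$, then $\tilde F$ is a disk and $\mathbf{D}'$ fills $\mathbf{S}' = \mathbf{S}$;
\item if $F$ is an annular face, then $\tilde F$ is an annular face bordered by the same component of $\partial\mathbf{S}$, and again $\mathbf{D}'$ fills $\mathbf{S}' = \mathbf{S}$;
\item if $F=D$, then $\tilde F$ is an annulus in the interior of $\mathbf{S}$; declaring $\tilde F$ to be the cylinder removed, $\mathbf{S}' = \mathbf{S}\setminus\tilde F$ is a valid choice and $\mathbf{D}'$ fills $\mathbf{S}'$.
\end{itemize}

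The main obstacle will be verifying that in the case $F=D$ the merged face $\tilde F$ is indeed an annulus, rather than a M\"obius band or some more complicated surface. Here I would exploit orientability of $\mathbf{S}$: parametrizing $\partial\overline{D}$ by a circle $S^1$ and using the immersion $S^1\to\mathbf{S}$ describing the boundary walk of $D$, the bar $B_k$ has two disjoint preimage arcs in $S^1$. Orientability forces the induced identification of these two arcs to reverse the cyclic orientation of $S^1$, and quotienting the closed disk $\overline{D}$ by such a boundary identification yields an annulus whose two boundary circles are the two complementary arcs on $S^1$, as required.
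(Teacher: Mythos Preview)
Your three-bullet case analysis for a bar $B_k$ bordering a disk face is sound and coincides with the paper's argument in the case when no component of $\beta$ is contractible. The gap is in your first paragraph. In the ``degenerate case'' where every disk face is bounded by a contractible loop $\beta_i$ with all bars on the non-$D$ side, you claim the main argument applies to a bar $B_k$ adjacent to $\beta_i$; but that argument requires $B_k$ to border a disk face, and under your standing hypothesis neither face adjacent to such a $B_k$ is a disk --- both are annular. Concretely, take $\mathbf{S}$ a pair of pants, $\beta$ a single contractible circle $\beta_0$ bounding a disk $D$, and two bars $B_1,B_2$ outside $D$ with all four endpoints on $\beta_0$, placed so that the three outer faces are annuli (one per boundary component of $\mathbf{S}$). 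This diagram fills $\mathbf{S}$, its unique disk face $D$ has no bar on its boundary, and deleting either bar merges two of the annular faces into a three-holed-sphere face. No excision of cylinders repairs this: such a face has Euler characteristic $-1$, while any disjoint union of disks and annuli obtained from it by removing open annuli would have non-negative Euler characteristic.

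The paper handles exactly this situation by a different operation: when some $\beta_i$ is contractible it removes a bar touching $\beta_i$ \emph{the second way} (re-routing $\beta$ through the bar rather than merely deleting it), which absorbs the disk $D$ into the new $\beta$. Only when every $\beta_i$ is non-contractible --- so that every disk face automatically carries a bar on its boundary --- does the paper delete a bar the first way as you do.
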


We recall that we have defined the ways to remove a bar in a diagram in \S \ref{s:removing}.  By
construction, ${\mathbf{ S}}\setminus \bD'$ has exactly one less embedded disk than
${\mathbf{ S}}\setminus \bD$ among its connected components.

\begin{proof}
  If one of the loops $\beta_i$ are contractible, it borders a disk $D_i$.  We remove a bar $B_k$
  touching $\beta_i$ (and another components $\beta_j$) in the second way, to obtain $\bD'$. The
  filled surface ${\mathbf{ S}}'$ has same Euler characteristic as ${\mathbf{ S}}$.  If $B_k$ lies
  inside $D_i$, then $\beta_j$ is also contractible and obviously ${\mathbf{ S}}' ={\mathbf{ S}}$.
  Otherwise, the bar $B_k$ is outside the disk $D_i$. We have ${\mathbf{ S}}' ={\mathbf{ S}}$ except
  in one particular case, which occurs when $\beta_i=\beta_j$. In this case, ${\mathbf{ S}}' $ is
  obtained from ${\mathbf{ S}}$ by removing a cylinder.

Assume now that none of the $\beta_i$ are contractible. Consider a contractible connected component $D$ of ${\mathbf{ S}}\setminus \bD$: pick a bar $B_k$ in the boundary of this contractible component and remove it the first way. Either ${\mathbf{ S}}' ={\mathbf{ S}}$, or ${\mathbf{ S}}' $ is obtained from ${\mathbf{ S}}$ by removing a cylinder (this can only happen if the two sides of $B_k$ touch the open disk $D$).
\end{proof}

Examples of both cases appearing in this lemma can be found in Figure \ref{fig:non-isotopic}.

  \begin{figure}[h!]
     \includegraphics[height=9cm]{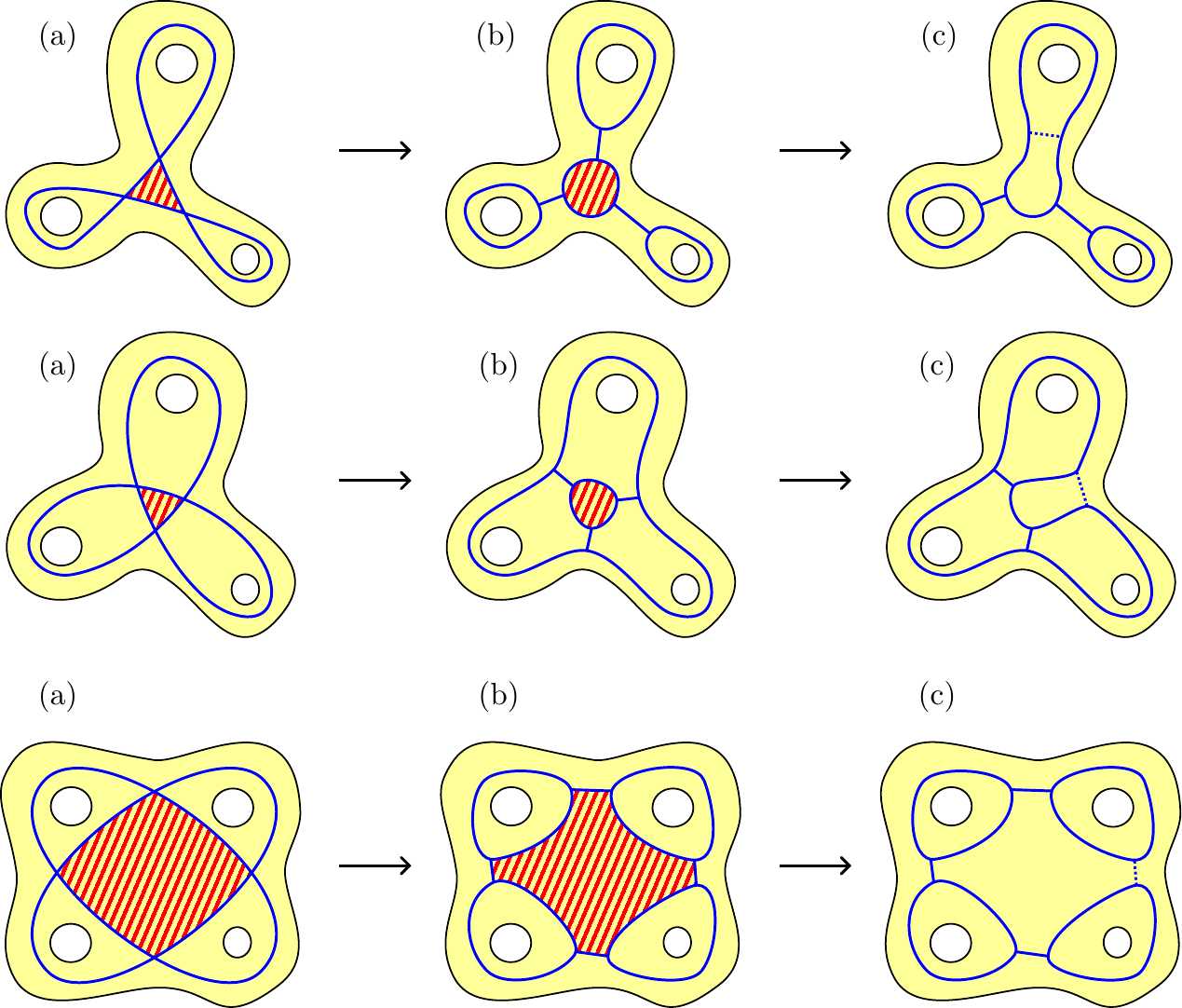}
  \caption{Three examples of (a) loops, (b) diagrams $D^{(1)}$ and (c) skeletons (shown in solid lines). The diagram $D^{(2)}$ is obtained by reincorporating the dashed bar to the skeleton. The two first examples are homotopic, non-isotopic loops.}    
    \label{fig:non-isotopic}
  \end{figure}

\subsubsection{Skeleton $\bD'$}
\label{sec:skeleton-bd}

By iteration of the process, and using Remark \ref{p:D8}, we obtain:
\begin{cor}\label{c:G8}
  For any diagram $\bD$ filling ${\mathbf{S}}$, we can remove bars so that the resulting
  diagram~$\bD'$ fills a surface ${\mathbf{S}}'$ obtained by removing cylinders from $\mathbf{S}$,
  and so that no connected components of ${\mathbf{ S}}'\setminus \bD'$ is contractible.  The
  diagram $\bD'$ represents a generalized eight ${\mathbf{ c}}'$.
\end{cor}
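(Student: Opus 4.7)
The plan is to prove Corollary \ref{c:G8} by induction on the number $N(\bD)$ of connected components of $\mathbf{S}\setminus\bD$ that are homeomorphic to a disk. The base case $N(\bD)=0$ is immediate: $\bD$ itself has no contractible complementary region, the surface $\mathbf{S}'=\mathbf{S}$ is obtained from $\mathbf{S}$ by removing zero cylinders, and by Remark \ref{p:D8} any multi-loop stemming from $\bD$ is a generalized eight, so we can set $\bD'=\bD$ and $\mathbf{c}'$ to be any such multi-loop.

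For the inductive step, assume the statement holds for all diagrams $\bD$ with $N(\bD)<n$ on any filled surface, and consider $\bD$ filling $\mathbf{S}$ with $N(\bD)=n\ge 1$. First I would apply Lemma \ref{l:G8} to $\bD$ to obtain a diagram $\tilde{\bD}$ which fills a surface $\tilde{\mathbf{S}}$ obtained from $\mathbf{S}$ by removing a (possibly empty) collection of cylinders, and such that $\tilde{\bD}$ has one fewer contractible complementary disk than $\bD$, i.e.\ $N(\tilde{\bD})=n-1$. Here one has to be careful with bookkeeping: the statement of Lemma \ref{l:G8} says that \emph{one} disk disappears, but in principle the surgery could turn another previously non-contractible region into a contractible one; however, inspection of the proof of Lemma \ref{l:G8} shows that either $\tilde{\mathbf{S}}=\mathbf{S}$ (so the topology of the complementary regions of the remaining bars is unchanged outside a single disk that gets merged with another region) or $\tilde{\mathbf{S}}$ is obtained from $\mathbf{S}$ by excising a cylinder (precisely to account for a region that would otherwise become a cylinder attached to $\tilde{\bD}$), so in both cases $N(\tilde{\bD})=N(\bD)-1$.

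Then I would apply the inductive hypothesis to $\tilde{\bD}$ inside $\tilde{\mathbf{S}}$, obtaining a diagram $\bD'$ which fills a surface $\mathbf{S}'$ obtained from $\tilde{\mathbf{S}}$ (and hence from $\mathbf{S}$) by removing cylinders, such that no connected component of $\mathbf{S}'\setminus\bD'$ is contractible. Finally, by Remark \ref{p:D8}, $\bD'$ represents a generalized eight $\mathbf{c}'$ obtained via the reconstitution procedure of \S\ref{s:reconstitution}, and any two such multi-loops are isotopic up to orientation of components.

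The main obstacle, which is really the only nontrivial point, is verifying that each application of Lemma \ref{l:G8} strictly decreases $N$ and does not secretly create new contractible regions in the filled subsurface. This is a careful topological check that has to be done by enumerating the cases in the proof of Lemma \ref{l:G8}: (i) removing a bar $B_k$ adjacent to a contractible loop $\beta_i$ in the second way, distinguishing whether $B_k$ lies inside the disk bounded by $\beta_i$ or outside, and (ii) removing a bar on the boundary of a contractible complementary disk $D$ in the first way. In both situations the cylinder that must potentially be excised from $\mathbf{S}$ to obtain $\tilde{\mathbf{S}}$ is precisely the region that would otherwise contribute a spurious new contractible complement, so after passing to $\tilde{\mathbf{S}}$ the count $N$ drops by exactly one.
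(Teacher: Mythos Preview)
Your proof is correct and follows essentially the same approach as the paper: iterate Lemma~\ref{l:G8} until no contractible complementary regions remain, then invoke Remark~\ref{p:D8}. The only difference is that you phrase it as a formal induction on $N(\bD)$ and worry explicitly about whether a single application of Lemma~\ref{l:G8} might create new disks; the paper simply asserts right after Lemma~\ref{l:G8} that ``by construction, $\mathbf{S}\setminus\bD'$ has exactly one less embedded disk than $\mathbf{S}\setminus\bD$'', so your careful case check is already implicitly packaged into that statement and is not an additional obstacle.
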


\begin{defa}
  \label{def:skeleton}
  Let ${\mathbf{ c}}$ be a multi-loop with $r$ self-intersections, and $\bD^{(1)}=(\beta, B)$ be the
  diagram obtained by opening all the intersections of ${\mathbf{ c}}$ the first way.  We call
  \emph{skeleton} of ${\mathbf{ c}}$ any diagram $\bD' = (\beta', B')$ obtained from $\bD^{(1)}$ by
  application of Corollary \ref{c:G8}.  We denote as $r' \leq r$ its the number of bars and
  $\Theta'=\{1, \ldots,r'\}\times\{\pm\}$.  We also refer to the generalized eight ${\mathbf{ c}}'$
  stemming from $\bD'$, obtained by removing intersections from ${\mathbf{ c}}$, as a skeleton of
  $\mathbf{c}$.
\end{defa}

\begin{rem}
  We have $\chi(\Sf)=\chi(\Sf')=r'$.
\end{rem}

\begin{rem} \label{e:simple} 
A skeleton is not uniquely defined, because it depends on the order in which we choose the intersection points to remove when iterating Lemma \ref{l:G8}. It suffices for us to know that a skeleton exists.
The forthcoming discussion is completely dependent on the choice of a skeleton.
\end{rem}

\begin{rem}
  \label{rem:disco_skeleton}
  Even when $\mathbf{S}$ is connected, the surface ${\mathbf{ S}}'$ may be disconnected, and some
  connected components may be cylinders containing a component of ${\mathbf{ c}}'$ which is a simple
  curve. This happens, in particular, when $\beta_i$ and $\beta_j^{\pm 1}$ are non-contractible
  curves bordering a cylinder containing a third curve $\beta_k$, in which case $\beta_k$ becomes a
  separate connected component of $\mathbf{S}'$ once all its adjacent bars are removed the first
  way. 
\end{rem}

Note however that if a bar of $\bD$ joins two non-contractible curves $\beta_i$, and is not in the
boundary of a contractible component of ${\mathbf{ S}}\setminus \bD$, then this bar stays in each
skeleton $\bD'$.  A skeleton $\bD'$ never contains the cylindrical bars and $\Sf'$ may have more
connected components than $\Sf$, including cylinders (filled by a simple curve).

The multi-curve $\beta'$ corresponding to a skeleton ${\mathbf{ c}}'$ is homotopic to the collection
of non-contractible components of $\beta$ in $\bD^{(1)}$. In particular, its number of components is
smaller.

\subsubsection{Length inequality and skeleton}
\label{sec:length-inequality-skeleton}

We are now ready to prove our length inequality.

\begin{prp}\label{p:remove}
  Let ${\mathbf{c}}$ be a multi-loop in minimal position, filling ${\mathbf{ S}}$. Let
  ${\mathbf{ c}}'$ be a skeleton of~${\mathbf{c}}$. Then, for any hyperbolic Riemannian metric $Y$
  on ${\mathbf{ S}}$,
\begin{align}
\ell_Y({\mathbf{ c}}')\leq \ell_Y({\mathbf{ c}}).
\end{align}
\end{prp}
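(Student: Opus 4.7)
The plan is to reduce the statement to a one-step inequality by induction on the number of intersections removed. By Corollary \ref{c:G8} and the definition of a skeleton (Definition \ref{def:skeleton}), $\mathbf{c}'$ is obtained from $\mathbf{c}$ by iteratively removing intersection points, each removal being performed either the first or the second way in the sense of \S \ref{s:removing}. It therefore suffices to establish the one-step inequality: if $\mathbf{c}''$ denotes the multi-loop obtained from $\mathbf{c}$ by removing a single intersection point $a_k$ in either way, then $\ell_Y(\mathbf{c}'') \leq \ell_Y(\mathbf{c})$. Once this is proven, iterating along the sequence of intersection removals that produces $\mathbf{c}'$ from $\mathbf{c}$ yields the proposition.

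To prove the one-step inequality, fix $\epsilon>0$ and consider a smooth piecewise-geodesic representative $\mathbf{c}_g=(\mathbf{c}_{g,1},\ldots,\mathbf{c}_{g,\cc})$ of $\mathbf{c}$, where each non-contractible component is realized by its closed geodesic and each contractible component is collapsed to a point. By construction, the total length of $\mathbf{c}_g$ equals $\ell_Y(\mathbf{c})$. Since $\mathbf{c}$ is in minimal position, $\mathbf{c}_g$ can be taken so that its local picture at $a_k$ consists of two transverse smooth arcs $I_k, I'_k$ inside a small disk $D$ around $a_k$, matching the local picture of $\mathbf{c}$ at that intersection. Inside $D$, I would then modify $\mathbf{c}_g$ by replacing the two crossing arcs $I_k\cup I'_k$ by two disjoint smooth arcs joining the four endpoints of $(I_k\cup I'_k)\cap\partial D$ in the non-crossing manner prescribed by the chosen way (first or second) of removing $a_k$. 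By choosing these two new arcs to track $I_k\cup I'_k$ outside of an arbitrarily small neighbourhood of $a_k$, their total length can be made at most $\ell(I_k)+\ell(I'_k)+\epsilon$, so the modified piecewise-smooth multi-loop $\tilde{\mathbf{c}}$ has $\ell(\tilde{\mathbf{c}})\leq \ell_Y(\mathbf{c})+\epsilon$. By design, $\tilde{\mathbf{c}}$ is a representative of the free homotopy class of $\mathbf{c}''$, and each of its components dominates the length of the geodesic representative of the corresponding component of $\mathbf{c}''$ (with contractible components contributing $0$, as in \S \ref{nota:length}). Summing over components gives $\ell_Y(\mathbf{c}'')\leq \ell(\tilde{\mathbf{c}})\leq \ell_Y(\mathbf{c})+\epsilon$, and letting $\epsilon\to 0$ concludes.

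The main delicate point is ensuring the local picture at $a_k$ is controlled: one needs the intersection $a_k$ to persist as a single transverse double point in $\mathbf{c}_g$, so that the local surgery above is well-defined and faithfully implements the topological operation described in \S \ref{s:removing}. This uses the classical fact that on a hyperbolic surface, the geodesic representatives of a multi-loop in minimal position are themselves in minimal position, so the isotopy class of $\mathbf{c}$ and that of $\mathbf{c}_g$ have the same pattern of intersections and the surgery transports between them unambiguously. The bookkeeping of components under the removal --- a self-intersection may either split a component into two pieces or keep it as one, and separate-component intersections may merge two components into one, possibly creating contractible pieces along the way --- is then routine, since $\ell_Y$ is by definition additive over components and assigns length $0$ to contractible ones.
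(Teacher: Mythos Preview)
Your argument has a genuine gap at the ``delicate point'' you yourself flag. You assert that because both $\mathbf{c}$ and its geodesic representative $\mathbf{c}_g$ are in minimal position, ``the surgery transports between them unambiguously''. This is false in general: two homotopic multi-loops in minimal position need only be related by an isotopy followed by a sequence of Reidemeister III moves (this is the Graaf--Schrijver result cited in the paper, and Figure~\ref{fig:ex_op_2} gives an explicit example of non-isotopic minimal representatives). Under a Reidemeister III move, there is no canonical bijection of intersection points that respects the homotopy type of the outcome of removing a given intersection in a prescribed way. The paper handles exactly this issue via Lemma~\ref{lem:reid_remove}, which shows that if $c_1'$ is obtained from $c_1$ by removing some intersections and $c_2$ differs from $c_1$ by a single Reidemeister III move, then some (possibly different) set of removals on $c_2$ yields a multi-loop homotopic to $c_1'$. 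Iterating this lemma along the chain of Reidemeister moves from $\mathbf{c}$ to $\gamma$ produces a $\gamma'$ homotopic to $\mathbf{c}'$ obtained from $\gamma$ by local switchings, whence the length inequality.

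There is also a secondary problem with your inductive scheme. After removing one intersection from $\mathbf{c}$, the resulting multi-loop $\mathbf{c}''$ need not be in minimal position (for instance, removing one crossing in a Reidemeister III triangle can create a bigon). Since your one-step argument uses minimal position of the input to justify the transport of surgery to the geodesic representative, the induction does not close. The paper avoids this by performing all removals at once on the geodesic representative $\gamma$, so minimal position is only needed for $\mathbf{c}$ and $\gamma$, not for the intermediate loops.
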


The proof relies on the following lemma.

\begin{lem}\label{lem:reid_remove}
  Let $c_1$ and $c_2$ be two multi-loops differing by one third Reidemeister move. Let $c_1'$ be
  obtained from $c_1$ by removing some intersections (in the first or the second way). Then there
  exists $c'_2$, homotopic to $c_1'$, obtained from $c_2$ by removing some intersections.
\end{lem}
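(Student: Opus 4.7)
The plan is to localize the Reidemeister III move to a small disk $D \subset \Sf$ containing the three intersections it involves; call them $p_{\alpha\beta}, p_{\alpha\gamma}, p_{\beta\gamma}$, one per pair of the three strands $\alpha,\beta,\gamma$ participating in the move. Outside of $D$, the multi-loops $c_1$ and $c_2$ are identical, so I would define $c_2'$ outside $D$ by applying to $c_2$ exactly the same removal operations as those used to obtain $c_1'$ from $c_1$ at intersections lying outside $D$. The whole problem then reduces to matching the removal operations performed on the three intersections inside $D$.

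Inside $D$, each of the three intersections $p_{ij}$ is either kept, removed the first way, or removed the second way, giving $3^3=27$ local configurations for $c_1'$ (and symmetrically for $c_2'$). Since $D$ is a disk, the homotopy class rel~$\partial D$ of the resulting diagram is determined entirely by how it pairs the six boundary points of $D$ and by the remaining internal crossings. The task therefore becomes: for each of the $27$ removal patterns applied to the three intersections of $c_1$ in $D$, exhibit a removal pattern on the three corresponding intersections of $c_2$ in $D$ producing the same boundary pairing and residual crossing data.

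I would treat the no-removal case first: there $c_1' \cap D$ and $c_2' \cap D$ differ precisely by the Reidemeister III move itself, so the empty removal on $c_2$ suffices. For the remaining cases, at least one intersection is smoothed, say $p_{\alpha\beta}$; after such a smoothing, the local crossing of $\alpha$ and $\beta$ disappears, and the Reidemeister III move, which is nothing but sliding $\gamma$ across $p_{\alpha\beta}$, becomes an isotopy. One then checks that applying the same smoothing at $p_{\alpha\beta}$ in $c_2$ and making matching choices at $p_{\alpha\gamma}, p_{\beta\gamma}$ (keep, way 1, or way 2) yields homotopic local pictures. The cyclic $\mathbb{Z}/3$-symmetry of the Reidemeister III configuration under permutation of $\{\alpha,\beta,\gamma\}$ reduces the essentially distinct cases significantly.

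The main obstacle is the combinatorial case analysis: while each individual case is elementary (reducing to an isotopy in a disk with at most two residual crossings), presenting it cleanly requires some care. A good strategy is to tabulate, for each subset of smoothed intersections and each choice of smoothing way, the resulting boundary pairing of $\partial D$ and the residual crossings, and to verify that both sides of the Reidemeister III move yield the same data under the same choice of removals. The key conceptual simplification is that any single smoothing inside $D$ decouples $\gamma$ from the pair $(\alpha,\beta)$ at one of the three intersections, which is exactly what trivializes the strand-slide defining the Reidemeister III move.
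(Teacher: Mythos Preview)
Your approach is correct and coincides with the paper's: the authors also prove this by direct inspection of the finitely many local configurations inside the disk of the Reidemeister~III move, illustrating two representative cases in a figure. Your write-up is more explicit about how to organize the check (localization, boundary-pairing bookkeeping, use of the $\mathbb{Z}/3$-symmetry), but the underlying argument is the same finite case analysis.
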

\begin{proof}[Proof of Lemma \ref{lem:reid_remove}]
  This can be checked by simple inspection of a finite number of cases. Two cases are shown in
  Figure \ref{fig:Reidemeister}.
  \begin{figure}[h!]
     \includegraphics[height=7cm]{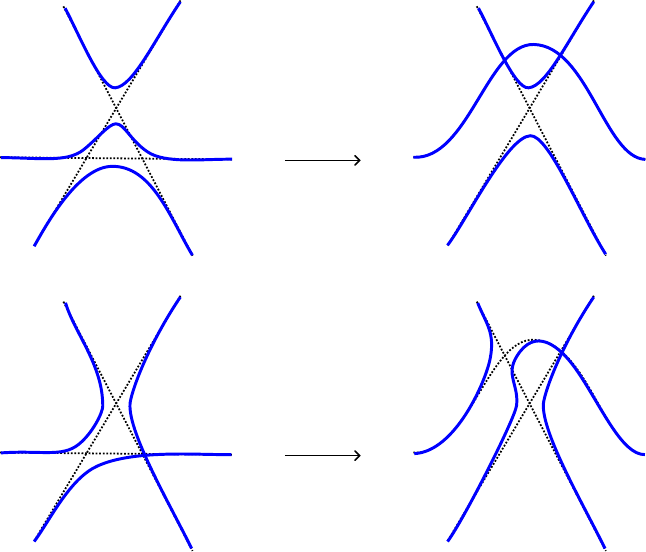}
     \caption{Illustration of two cases arising in the proof of Lemma \ref{lem:reid_remove}.}
    \label{fig:Reidemeister}
  \end{figure}
\end{proof}

\begin{proof}[Proof of Proposition \ref{p:remove}]
  A skeleton ${\mathbf{ c}}'$ is obtained from ${\mathbf{ c}}$ by removing some intersections.  Let
  $Y$ be a hyperbolic Riemannian metric $Y$ on ${\mathbf{ S}}$ and $\gamma$ be the geodesic
  representative of ${\mathbf{ c}}$. By definition, $\ell_Y({\mathbf{ c}})=\ell(\gamma)$. Since
  $\gamma$ is a geodesic, $\gamma$ is in minimal position: by the results of \cite{graaf1997} it is
  isotopic to ${\mathbf{ c}}$ up to a finite number of third Reidemeister moves.

  For each Reidemeister move, we shall use Lemma \ref{lem:reid_remove}. We obtain a multi-loop
  $\gamma'$ homotopic to the skeleton ${\mathbf{c}}'$, obtained from $\gamma$ by opening some
  intersections.  In the homotopy class of $\gamma'$, there is a representative $\gamma_\varepsilon$
  which just differs from $\gamma$ in an $\varepsilon$-neighbourhood of the intersections (with
  $\varepsilon>0$ arbitrarily small) and such that $\ell(\gamma_\varepsilon)<\ell(\gamma)$. More
  precisely, $\gamma_\varepsilon$ may be obtained from local ``switchings'', described as follows:
  take embedded $\varepsilon$-disks $D_1, \ldots, D_r$ around each of the intersection points
  $a_1, \ldots, a_r$, such that $D_i\cap \gamma$ is the union of two simple geodesic segments
  intersecting once, say $[x_1, x_3]\cup [x_2, x_4]$. The boundary $\partial D_i \cap \gamma$
  consists of the four cyclically ordered points $x_1, x_2, x_3, x_4$, and the multi-loop
  $\gamma_\eps$ with removed intersections is such that
  $D_i\cap\gamma_\eps= [x_1, x_2]\cup [x_2, x_3]$. Outside the disks $D_i$, we have
  $\gamma_\eps =\gamma$. By the triangle inequality,
  $\ell([x_1, x_2]\cup [x_2, x_3]) < \ell ([x_1, x_3]\cup [x_2, x_4])$. Equivalently,
  $\ell(\gamma_\varepsilon)<\ell(\gamma)$.  To prove the proposition, we combine the two
  inequalities $\ell_Y({\mathbf{ c}}')\leq \ell(\gamma_\varepsilon)$, and
  $\ell(\gamma_\varepsilon)<\ell(\gamma)=\ell_Y({\mathbf{ c}})$.
 \end{proof}

\subsection{Good representative of $\mathbf{c}$}

The goal of this section is to construct a new representative $\dopt$ of ${\mathbf{c}}$ which is a
good extended diagram: recall that this means a diagram without problematic bars (Definition
\ref{d:problem}) and satisfying the minimality condition (Definition \ref{d:minimality}).

%If two paths with the same endpoints are homotopic, we say that \emph{they do not intersect in the universal cover} if they have lifts to the universal cover that share the same endpoints and do not intersect, apart from
%the endpoints.

\subsubsection{Diagrams $\bD^{(2)}$ and $\bD''$}
\label{sec:diagram-bd2}

From the construction of ${\mathbf{ c}}'$, we see that there exists a diagram $\bD^{(2)}$ on~$\Sf$,
representing ${\mathbf{ c}}$, obtained by:
\begin{itemize}
\item gluing to some cylinders to $\Sf'$ to obtain $\Sf$; 
\item adding a collection of bars to the diagram $\bD'$ in $\Sf$.
\end{itemize}

\begin{rem}\label{r:D2} The diagram $\bD^{(2)}$ is in general different from $\bD^{(1)}$, because
  $\bD^{(1)}$ was obtained by opening all intersections the first way, whereas $\bD^{(2)}$ is
  obtained from ${\mathbf{ c}}$ by opening the second way certain intersections (running between 
 contractible components of $\beta$).  
\end{rem}

The added bars may be problematic or non-problematic (cylindrical or not).  Since the components of
$\beta'$ in $\bD^{(2)}$ are non-contractible, the problematic bars of $\bD^{(2)}$ are only of one
kind: they are homotopically trivial bars, running from a component of $\beta'$ to itself.

Recall that the (un)shielded simple portions of ${\mathbf{ c}}$ are in natural bijection with
(un)shielded simple portions of $\bD^{(1)}$, and of $\bD^{(2)}$.  A simple portion of $\bD'$ can be
written as a disjoint union of simple portion of $\bD^{(2)}$, only one of which may be
unshielded. Since $\bD'$ is a generalized eight, it has exactly $2\chi(\Sf)$ simple portions. We
deduce the following.

\begin{lem}
  $\bD^{(1)}$ and $\bD^{(2)}$ have at most $2\chi(\Sf)$ unshielded simple portions.
\end{lem}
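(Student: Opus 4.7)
My plan is to treat this lemma as a direct corollary of the three facts assembled in the paragraph preceding its statement; the argument is purely combinatorial.

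First, I would reduce to $\bD^{(2)}$. Since the unshielded simple portions of $\mathbf{c}$ are in natural bijection with those of both $\bD^{(1)}$ and $\bD^{(2)}$, it suffices to prove the bound for $\bD^{(2)}$.

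Next, I would compare $\bD^{(2)}$ to the skeleton $\bD'$. Each simple portion of $\bD'$ is subdivided by the bars added when passing from $\bD'$ to $\bD^{(2)}$ into a concatenation of consecutive simple portions of $\bD^{(2)}$, and the preceding paragraph asserts that at most one piece per such decomposition can be unshielded. Summing over the simple portions of $\bD'$ therefore yields
\begin{equation*}
\#\{\text{unshielded simple portions of } \bD^{(2)}\} \leq \#\{\text{simple portions of } \bD'\}.
\end{equation*}

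Finally, I would count the simple portions of $\bD'$. The skeleton represents a generalized eight $\mathbf{c}'$ filling $\Sf'$, so by Lemma \ref{r:GE} its number of self-intersections equals $\chi(\Sf')$, and its $2\chi(\Sf')$ bar-endpoints cut $\beta'$ into exactly $2\chi(\Sf')$ simple portions. Since $\Sf'$ is obtained from $\Sf$ by removing cylinders (of vanishing Euler characteristic), $\chi(\Sf') = \chi(\Sf)$, giving a total of $2\chi(\Sf)$ and hence the announced bound.

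The main substantive step is the assertion, borrowed from the text, that at most one unshielded piece arises per simple portion of $\bD'$. I would justify it by dichotomising on the two possible types of added bar: a problematic bar has one side cobordering a contractible disk of $\Sf\setminus \bD^{(2)}$, shielding every simple portion adjacent to that side; a cylindrical bar has both adjacent sides lying against the cylinders reglued to $\Sf'$, where a similar local inspection of contractible components of the complement applies. Walking along one simple portion of $\bD'$ and keeping track of which side lies inside a contractible component after each crossing of an added bar shows that unshielded pieces can only appear at one extremity of the subdivision. This local bookkeeping is the only non-formal step; the rest is an accounting of the three bijections.
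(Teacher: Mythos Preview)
Your proof is correct and follows essentially the same approach as the paper: the three-step argument (bijection with $\bD^{(1)}$, decomposition of each simple portion of $\bD'$ into simple portions of $\bD^{(2)}$ with at most one unshielded, and the count $2\chi(\Sf')=2\chi(\Sf)$ via Lemma~\ref{r:GE}) is exactly what the paper records in the paragraph immediately preceding the lemma. Your final paragraph adds a sketch justification for the ``at most one unshielded'' claim that the paper simply asserts without proof.
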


\begin{defa}
  A simple portion of $\bD'$ which contains only shielded simple portions of $\bD^{(2)}$ will be
  called \emph{a priori neutral}.
\end{defa}

We introduce a last auxilliary diagram.
\begin{nota}\label{d:D''}
 Let $\bD''$ be the diagram obtained by removing from $\bD^{(2)}$ its problematic bars.
\end{nota}

\begin{lem} $\bD''$ is a good diagram filling $\Sf$.
\end{lem}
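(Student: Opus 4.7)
The plan is to verify directly each of the three defining properties of a good extended diagram filling $\Sf$ (Definitions \ref{d:problem}, \ref{d:minimality}, \ref{d:good_ext}), relying on the special nature of the problematic bars of $\bD^{(2)}$ identified in \S \ref{sec:diagram-bd2}.

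First, I would observe that $\bD''$ has no problematic bar by construction, since we removed precisely those bars from $\bD^{(2)}$ that are problematic. Note in particular that the bars of the skeleton $\bD'$ are not problematic: $\bD'$ represents a generalized eight in $\Sf'$ (Corollary \ref{c:G8}, Remark \ref{p:D8}) and its components of $\beta'$ are non-contractible, so none of its bars fall in either case of Definition \ref{d:problem}. Hence $\bD''$ contains $\bD'$ as a subdiagram, plus the added non-problematic bars of $\bD^{(2)}$.

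Second, I would establish the filling property by an inductive removal argument. A problematic bar $B$ of $\bD^{(2)}$ runs from a non-contractible component $\beta_i$ of $\beta'$ to itself and is homotopically trivial with gliding endpoints along $\beta_i$. Thus $B$, together with an arc of $\beta_i$, bounds an embedded disk $D_B\subset \Sf$. The key geometric observation is that the interior of $D_B$ cannot meet any non-contractible component of $\beta'$, so that any portion of $\bD^{(2)}$ inside $D_B$ consists of bars between $\beta_i$ and itself, all of which are themselves problematic. Removing $B$ then merges two regions of $\Sf\setminus \bD^{(2)}$ that together form a subset of $D_B$, producing either a disk or a cylinder bordering $\Sf$. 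Iterating this removal over all problematic bars preserves the property that every connected component of the complement of the diagram is a disk or a cylinder bordered by a component of $\partial\Sf$, so $\bD''$ fills $\Sf$.

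Third, for the minimality condition of Definition \ref{d:minimality}, I would use the fact that $\mathbf{c}$ itself is in minimal position. A multi-loop $\mathbf{c}''$ stemming from $\bD''$ is obtained from $\mathbf{c}$ by removing the intersections corresponding to the problematic bars: each such removal is a local operation inside the disk $D_B$ above, which cannot create a contractible component (since $\mathbf{c}'$ is a generalized eight filling $\Sf'$ and survives inside $\mathbf{c}''$, by Proposition \ref{p:remove} and the construction of the skeleton), nor a self-intersecting lift, nor a pair of lifts meeting more than once — introducing any of these would contradict the minimality of $\mathbf{c}$, because one could then perform the inverse local move (reinserting the intersection inside $D_B$) on $\mathbf{c}''$ to obtain a representative of $\mathbf{c}$ with fewer transverse intersections.

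The main obstacle will be the detailed case analysis in the filling step: specifically, pinning down exactly what $\bD^{(2)}$ looks like inside each disk $D_B$, so as to justify cleanly that problematic bars are stable under the removal operation and that the resulting regions are either disks or boundary-cylinders. This is largely a combinatorial bookkeeping exercise on the local picture near $\beta_i$, but it is where the argument becomes technical; the minimality step, by contrast, reduces to a short contradiction argument once the filling step is in place.
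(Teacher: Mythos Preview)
The paper states this lemma without proof, so there is no ``paper's approach'' to compare against. Your three-part outline (no problematic bars; filling; minimality) is the right structure, and the arguments for the first two are essentially sound. One small correction in the filling step: the two regions merged upon deleting $B$ do \emph{not} ``together form a subset of $D_B$''; rather, one of them (the one on the $D_B$-side of $B$) is a disk contained in $D_B$, while the other lies outside $D_B$ and is a disk or a boundary annulus by the inductive hypothesis. Gluing a disk along an arc to either of these preserves the disk/boundary-annulus property, so your conclusion survives.

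The minimality step has a genuine gap. Your contradiction --- reinsert the removed crossings to exhibit a representative of $\mathbf{c}$ with fewer intersections --- does not work as written: to exploit a failure of minimality for $\mathbf{c}''$ you would first homotope $\mathbf{c}''$ to reduce its self-intersection number, and only then reinsert; but after that homotopy the strands are no longer in position for the local reinsertion inside $D_B$ to recover the homotopy class of $\mathbf{c}$. More fundamentally, smoothing a crossing of a multi-loop in minimal position \emph{can} create a bigon in general (a triangle becomes a bigon when one vertex is smoothed), so a valid argument must use the special structure of the problematic bars, not merely the minimality of $\mathbf{c}$. A workable line is to argue in the universal cover: lifts of $\beta'$ are disjoint bi-infinite lines, and because every bar of $\bD''$ is non-problematic, each lifted bar joins two \emph{distinct} such lines; combined with your observation that inside every maximal disk $D_B$ the diagram $\bD''$ reduces to the single simple arc $\alpha_B\subset\beta_i$, one can check that a lifted component of $\mathbf{c}''$ never revisits a line it has already left on the same side, and that two lifts can share at most one lifted bar. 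Your appeal to ``$\mathbf{c}'$ survives inside $\mathbf{c}''$'' for non-contractibility is too vague to do this work on its own.
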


\subsubsection{Diagram $\dopt$}

We now prove the following.
 
 \begin{thm}\label{t:opt}
   Let $\mathbf{c}$ be a multi-loop in minimal position. Assume that $\mathbf{c}$ is connected and
   not double-filling.  Then there exists good extended diagram $\dopt$, containing all
   the non-problematic bars of $\mathbf{D}^{(2)}$, such that $\mathbf{c}$ is homotopic to a
   multi-loop stemming from $\dopt$.  \end{thm}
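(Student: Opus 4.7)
The plan is to construct $\dopt$ by a local surgery at each problematic bar of $\bD^{(2)}$, exploiting the flexibility offered by the fact that $\dopt$ is allowed to be an \emph{extended} diagram (so its bars may mutually intersect, whereas the $\beta$-components must still be simple and disjoint). Because every component of $\beta$ in $\bD^{(2)}$ is non-contractible (inherited from the skeleton $\bD'$), every problematic bar $B$ of $\bD^{(2)}$ is of type (ii) in Definition \ref{d:problem}: both endpoints lie on a single component $\beta_i$ of $\beta'$ and $B$ is homotopically trivial with gliding endpoints along $\beta_i$. Consequently $B$ together with a subsegment of $\beta_i$ bounds an embedded disk $D\subset\Sf$ whose interior is disjoint from the rest of $\beta$.

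First I would classify each such problematic bar $B$ by the combinatorial content of its enclosing disk $D$: which other (non-problematic) bars of $\bD^{(2)}$ have an endpoint inside $D$, and how the representative of $\mathbf{c}$ enters and leaves $D$. Because $\mathbf{c}$ fills $\Sf$, is in minimal position, and is not double-filling, the arcs of $\mathbf{c}$ trapped inside $D$ cannot themselves form contractible sub-loops; so $D$ must contain endpoints of non-problematic bars that carry $\mathbf{c}$ out of $D$ through the $\beta_i$-boundary of $D$.

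Next I would define $\dopt$ to consist of all the non-problematic bars of $\bD^{(2)}$ together with, for each problematic bar $B$, a replacement bar $\tilde B$ whose endpoints coincide with those of $B$ on $\beta_i$ but whose interior is routed outside $D$ along a non-trivial path with gliding endpoints. Such a $\tilde B$ exists because $\beta_i$ is essential and $\Sf\setminus\beta_i$ is not simply connected, and it may freely cross other bars of $\dopt$ since $\dopt$ is allowed to be extended. The reconstitution procedure of \S \ref{s:reconstitution} applied to $\dopt$ then yields a multi-loop; one checks that this multi-loop is homotopic to $\mathbf{c}$ by sliding each $\tilde B$ back to the corresponding $B$ across the disk $D$, a move which can be realized as a finite sequence of third Reidemeister moves and therefore preserves free homotopy classes by the theorem of Graaf and Schrijver \cite{graaf1997}.

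The hardest step will be to verify that $\dopt$ is good in the sense of Definition \ref{d:good_ext}, and more precisely that it satisfies the minimality condition of Definition \ref{d:minimality}. Non-contractibility of the components of the stemming multi-loop is inherited from $\mathbf{c}$. The more delicate requirement is that any lift of a component to the universal cover has no self-intersection, and any two lifts intersect at most once. I would attempt to achieve this by selecting each replacement $\tilde B$ to be a length-minimizing representative (for an auxiliary hyperbolic metric on $\Sf$) in its homotopy class with gliding endpoints, and then iteratively performing Reidemeister-II type reductions on pairs $(\tilde B,\tilde B')$ of replacement bars whose lifts bound a bigon with other portions of $\dopt$; finiteness of the procedure would follow from a bar-counting argument, and the resulting extended diagram would retain all non-problematic bars of $\bD^{(2)}$ while admitting no new problematic bars, because all surviving bars would either be original non-problematic bars of $\bD^{(2)}$ or be homotopically non-trivial by construction.
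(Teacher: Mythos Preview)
The central gap is in your homotopy step. You propose replacing each problematic bar $B$ by an arc $\tilde B$ with the \emph{same endpoints} on $\beta_i$ but lying in a \emph{different} homotopy class with gliding endpoints (non-trivial instead of trivial), and then claim the reconstituted multi-loop is homotopic to $\mathbf{c}$ by ``sliding $\tilde B$ back to $B$ across the disk $D$''. That slide does not exist: by design $\tilde B$ is not homotopic to $B$ with gliding endpoints along $\beta_i$, so no isotopy and no sequence of Reidemeister~III moves takes the new diagram to the old one. In the reconstitution each bar is traversed twice, so the multi-loop stemming from your $\dopt$ differs from $\mathbf{c}$ by inserting, at two places along the loop, the non-trivial class of $\tilde B\cdot B^{-1}$; the resulting free homotopy class is in general different from that of $\mathbf{c}$. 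Your appeal to Graaf--Schrijver is misplaced: their theorem says that homotopic multi-loops in minimal position are related by isotopies and Reidemeister~III, but it cannot manufacture a homotopy between loops that are not homotopic to begin with. (Your minimality verification via ad hoc bigon reductions is also only a sketch, but this is secondary to the homotopy failure.)

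The paper's argument is structurally different and avoids this trap by \emph{not} keeping the endpoints of the problematic bar. It runs by induction on the number of problematic bars: given one such bar $B_q$ bounding a disk $D$, one first passes to an auxiliary subsurface $\Sf_\circ\subset\Sf$ in which the contractible region adjacent to $B_q$ is left unfilled, so that $B_q$ is non-problematic in $\Sf_\circ$, and applies the induction hypothesis there to obtain a good diagram $\dopt_\circ$. Back in $\Sf$, the only problematic bar of $\dopt_\circ$ is $B_q$ itself. Its replacement $B_{\mathrm{new}}$ is built by lifting the two components of $\mathbf{c}^\circ$ through $B_q$ and $B_{\iota q}$ to the universal cover, following them until the first index $m$ where the associated lifts $\tilde\beta_m^1$ and $\tilde\beta_m^2$ of components of $\beta$ disagree, and taking $B_{\mathrm{new}}$ to be the segment joining $t(\tilde B_m^1)$ to $t(\tilde B_m^2)$. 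The reconstituted multi-loop is homotopic to $\mathbf{c}$ precisely because the two lifted paths coincide up to index $m$, so the replacement amounts to a genuine homotopy across a contractible region; and $B_{\mathrm{new}}$ is non-problematic because its endpoints lie on distinct lifts. Minimality is inherited from $\dopt_\circ$ and the specific placement of $B_{\mathrm{new}}$, rather than obtained by a separate cleanup procedure.
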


 \begin{rem}
 We may assume w.l.o.g. that $\mathbf{c}$ is connected. If the surface $\mathbf{S}$ filled by $\mathbf{c}$ is not connected, the previous theorem applies in each connected component of $\mathbf{S}$.
 \end{rem}

 An example of a loop which is not a generalized eight, and a good extended diagram for this loop,
 is represented in Figure \ref{fig:1example}.
 
  \begin{figure}[h!]
    \centering
      \begin{subfigure}[b]{0.5\textwidth}
    \centering
    \includegraphics{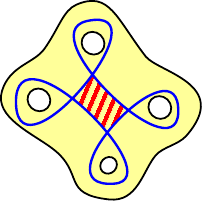}
    \caption{The loop $\mathbf{c}$.}
    \label{fig:ex_opt_1}
  \end{subfigure}%
      \begin{subfigure}[b]{0.5\textwidth}
    \centering
    \includegraphics{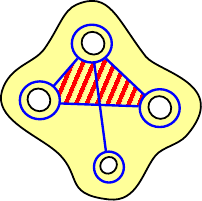}
    \caption{The diagram $\dopt$.}
    \label{fig:ex_opt_2}
  \end{subfigure}%
     \caption{Example of Theorem \ref{t:opt}.}
     \label{fig:1example}
   \end{figure}
 
 \begin{proof}
   We prove the result by induction on the total number $n$ of problematic bars of
   $\mathbf{D}^{(2)}$.  The case $n=0$ is obvious as we can take $\dopt= \mathbf{D}^{(2)}$.

   % Our proof will also show that
 %\begin{itemize}
% \item the bars of $\mathbf{D}^{opt}$ do not intersect $\beta'_1, \ldots, \beta'_{N'}$;
% \item if two bars of $\mathbf{D}^{opt}$ arriving at cannot intersect;
%\item $;
% \item the bars of $\mathbf{D}^{opt}$ that are not bars of $\mathbf{D}^{''}$ have the following property: if such a bar $B$ goes from $\beta_i'$ to $\beta_j'$, there are two m.s.p. of $\mathbf{D}^{''}$, say $\cI_i=[x_i, y_i]\subset \beta'_i$ and $\cI_j=[x_j, y_j]\subset \beta'_j$ such that $o(B)\in \cI_i$ and $t(B)\in \cI_j$, all the bars of $\mathbf{D}^{opt}$ arriving in $[x_i, o(B)]$ and in $[x_j, t(B)]$ arrive there on the same side, and finally $B$ is homotopic (without intersecting it in the universal cover) to a simple surve
%  $$ [o(B), x_i]\smallbullet B'_{a_i} \smallbullet \gamma \smallbullet B'_{a_j} \smallbullet [x_j, t(B)]$$
% where $B'_{a_i} $ is the bar of $\mathbf{D}^{''}$ leaving from $x_i$, $B'_{a_j} $ is the bar of $\mathbf{D}^{''}$ arriving at $x_j$, and $\gamma$ itself is made of a concatenation of at least one m.s.p. of $\mathbf{D}^{''}$ and some bars of $\mathbf{D}^{''}$
%(remark that the m.s.p. of $\mathbf{D}^{''}$ involved in the last part of the sentence are necessarily shielded in $\mathbf{D}^{opt}$).
 % \item when applying the reconstitution procedure to $\mathbf{D}^{opt}$, we never find a homotopically trivial path from $\beta'_i$ to itself .
%\end{itemize}
%These three facts will be helpful to go from $n$ to $n+1$ in the induction.  
 
   Assume that $n\geq 1$, and that the expected property is known until rank
   $n-1$.  By hypothesis, the diagram $\mathbf{D}^{(2)}$ contains a problematic bar
   $B_q$. Because the bar is problematic regardless of its orientation, we assume w.l.o.g. that
   $\sign(q)= +$.  By construction of $\bD^{(2)}$, the bar
   $B_q$ goes from one component
   $\beta'_\lambda$ to itself and is homotopically trivial with gliding endpoints. Denote by
   $D$ the topological disc bordered by $B_q$ and
   $\beta'_\lambda$.  One of the simple portions adjacent to
   $B_q$ is shielded. To fix ideas, we assume it is $\cI_q$.

   A representative $\cop$ of the homotopy class of $\mathbf{c}$ may be obtained by applying the
   reconstitution procedure, and allows to write the homotopy class of $\textbf{c}$ as an
   alternating concatenation of bars and simple portions.  Denote $i_1=\comp(q)$ and
   $i_2=\comp(\iota q)$ so that the bars $B_q$ and $B_{\iota q}$ belong to the components
   $\cop_{i_1}$ and $\cop_{i_2}$ of $\cop$ respectively.

   Recall that $B_{\iota q}$ is the same bar as $B_q$ with either the same or the reverse
   orientation. We treat the case where it has reverse orientation, the other case being similar
   modulo changes of notation. The local situation near the bar $B_q$ is shown on Figure
   \ref{fig:loc_sit}. % Another example where a multi-loop can lead to
   % such a situation is pictured in Figure \ref{fig:opt_proc}.
   
 % In \eqref{e:decompo-ibis}, there exists a component $i\in \{1, \ldots, \cc\}$ and $\qq\in \Theta^i$
 % such that 
 %(we may also assume that
% $\cI_{\tau^{-1}{\qq}}$ is unshielded, if this was not possible, $\mathbf{c}$ would be double-filling).  

 % Assume (without loss of generality) that $j=1, i=1$.
 %  This means that the bar $B({\qq})$ belongs to the component $\cop_1$
 % and goes from $\beta'_1$ to itself. 
 % There is another index, say $j_o$, such that the bar $B(\iota {\qq})$ belongs to the component $\cop_{j_o}$. 
 
  \begin{figure}[h!]
    \centering
    \begin{subfigure}[b]{0.5\textwidth}
      \centering
      \includegraphics{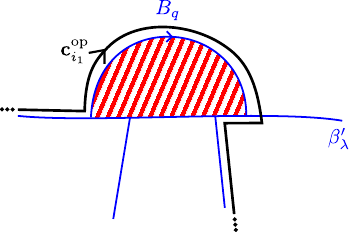}
      \caption{$B_q$ and $\cop_{i_1}$.}
    \label{fig:loc_sit_1}
  \end{subfigure}%
    \begin{subfigure}[b]{0.5\textwidth}
      \centering
      \includegraphics{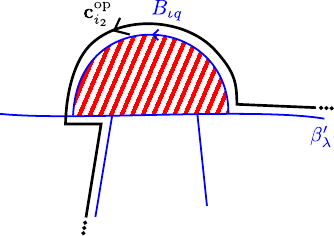}
      \caption{$B_{\iota q}$ and $\cop_{i_2}$.}
    \label{fig:loc_sit_2}
  \end{subfigure}%
  \caption{The local situation near the bar $B_q$.}
     \label{fig:loc_sit}
   \end{figure}
   
   %   \begin{figure}[h!]
   %   \centering
   %   \includegraphics[height=6cm]{Figures_opt_procedure}
   %   \caption{An example with three equivalent pictures. In the surface $\Sf_{\circ}$, the dashed curve is non-contractible, whereas it is contractible in $\Sf$.}
   %   \label{fig:opt_proc}
   % \end{figure}
 
   We introduce a new surface $ \Sf_{\circ}\subseteq \Sf$ by taking a regular neighbourhood $\cN$ of
   $\mathbf{D}^{(2)}$ in $\Sf$ and gluing to $\cN$ all connected components of
   $\Sf\setminus \mathbf{D}^{(2)}$ that are contractible, except the connected component contained
   in the disc $D$ and touching the bar $B_q$. %See Figure \ref{fig:opt_proc}.
      
   In the diagram $\mathbf{D}^{(2)}$, now viewed as a diagram on $\Sf_{\circ}$, the bar $B_q$ is
   non-problematic. We can therefore apply the induction hypothesis at rank $n-1$ to the multi-loop
   $\mathbf{c}$ on $\Sf_\circ$.  We obtain a good extended diagram $\dopt_\circ$ on $\Sf_\circ$,
   containing all non-problematic bars of $(\Sf_\circ,\bD^{(2)})$ (and in particular $B_q$), and
   such that $\mathbf{c}$ is homotopic on $\Sf_\circ$ to a multi-loop stemming from $\dopt_\circ$.

   In $\Sf_{\circ}$ the diagram $\dopt_{\circ}$ is good, and in $\Sf$ it has exactly one problematic
   bar, namely $B_q$. We shall define the desired diagram $\dopt$ by replacing $B_{q}$ by a new
   non-problematic bar $B_{\mathrm{new}}$.

   % We denote by $\Theta_{\circ}:=\{1, \ldots, r_\circ\} \times \{\pm\}$ the parameter set which
   % serves to index the simple portions of $ \dopt_{\circ}$.
   % We may assume, without loss of
   % generality, that the bars are numbered so that $B_q$ is associated to $q_\circ=(1,+)$.
   Applying the reconstitution procedure to $\dopt_{\circ}$, for each $1 \leq i \leq \cc$, we obtain
   a representative $\mathbf{c}_i^\circ$ of the homotopy class of ${\mathbf{c}}_i$ as a
   concatenation of bars and simple portions of $\dopt_\circ$.  The construction will consist in
   replacing the two components ${\mathbf{c}}^{\circ}_{i_1}$ and ${\mathbf{c}}^{\circ}_{i_2}$,
   containing $B_q$ and $B_{\iota q}$ respectively, by new representatives
   ${\mathbf{c}}^{\mathrm{opt}}_{i_1}$ and ${\mathbf{c}}^{\mathrm{opt}}_{i_2}$. The other components
   $({\mathbf{c}}^{\circ}_i)_{i \neq i_1, i_2}$ will stay untouched.

   Lifting the situation to the hyperbolic plane, for $l \in \{1,2\}$, we can write the lift
   $\tilde{\mathbf{c}}_{i_l}^\circ$ of $\mathbf{c}_{i_l}^\circ$ as an infinite concatenation
   $\smallbullet_{k \in \Z} \tilde{p}_k^l$ where, for each $k$, $\tilde{p}_k^l$ is the concatenation
   of lifts $\tilde{B}_{k}^l$ and $\tilde{\cI}_k^l$ of a bar and a simple portion of
   $\dopt_\circ$. We pick the indices so that $\tilde{p}^1_0$ and and $\tilde{p}_0^2$ contain a lift
   of $B_q$ and $B_{\iota q}$ respectively.

   Denote by $\tilde \beta_k^l$ the complete geodesic on $\IH^2$ containing $\tilde \cI^l_k$. We have
   $\tilde \beta_0^1=\tilde \beta_0^{2}$ and we call $m>0$ the smallest integer such that
   $\tilde \beta_m^1\not=\tilde \beta_m^{2}$. By the minimality condition for
   $ {\mathbf{D}}^{\mathrm{opt}}_{\circ}$, all $k \in \{0, \ldots, m-1\}$ are crossing indices.  We define
   the bar $\tilde B_{\mathrm{new}}$ on $\IH^2$ as the segment going from $t(\tilde B^1_m)$ to
   $t(\tilde B^{2}_m)$, and let $B_{\mathrm{new}}$ be its projection to $\Sf$. See Figure
   \ref{fig:Bnew}. The desired diagram $\dopt$ is defined from $\dopt_{\circ}$ by removing $B_q$ and
   adding the non-problematic bar $B_{\mathrm{new}}$.

\begin{figure}[h!]
     \centering
     \includegraphics[height=7cm]{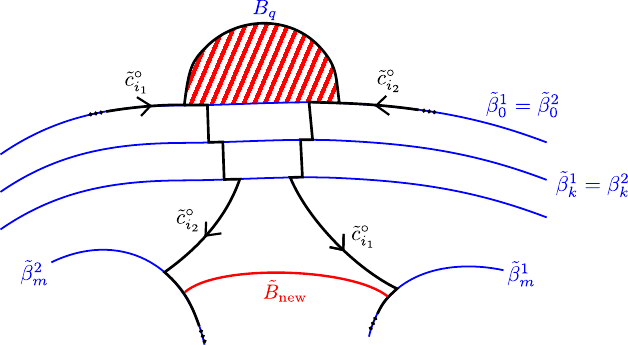}
     \caption{The new bar $B_{\mathrm{new}}$ replacing $B_q$.}
     \label{fig:Bnew}
   \end{figure}
 \end{proof}
 
 % In Figure \ref{fig:example_new} we show the new bars in our example of Figure \ref{fig:opt_proc}.

 % \begin{figure}[h!]
 %     \centering
 %     \includegraphics[height=5cm]{Figures_example_new}
 %     \caption{Continuation of example of Figure \ref{fig:opt_proc}.}
 %     \label{fig:example_new}
 %   \end{figure}
 
 \begin{rem} At each induction step the number of unshielded simple portions of $\dopt$ in $\Sf$
   stays the same as for the original $ \mathbf{D}^{(2)}$, hence it is smaller that $2\chi(\Sf)$.
 \end{rem}

 \subsection{Sketch of proof for general topologies}

 We now explain how to prove Theorem \ref{t:main} for general multi-loops $\mathbf{c}$.
  
 \subsubsection{Coordinates on Teichm\"uller spaces} 
\label{s:apriorintr}
  
 Since the diagram $\mathbf{D}'$ represents a generalized eight~$\mathbf{c}'$ in $\Sf'$, we can use
 it to define coordinates $(\vec{L}, \vec{\theta}) \in \R_{\geq 0}^{r'} \times \R^{\Theta'}$ on
 $\cT^*_{\Gg, \nn}$.

 The surface $\Sf$ is obtained by gluing cylinders to $\Sf'$. This gives rise to coordinates
 $(\vec{L}, \vec{\theta}, \vec{\alpha})$ on $\cT^*_{\g, \n}$, obtained by adding some twist
 coordinates $\vec{\alpha}$ along the boundary of these cylinders, and getting rid of an equivalent
 number of parameters $\theta_q$ that have become redundant (corresponding to components of $\beta'$
 homotopic in $\Sf$). Note that $\cT^*_{\Gg, \nn}$ and $\cT^*_{\g, \n}$ have the same dimension,
 that is, $3 \chi(\Sf)$.

 Recall that certain simple portions of $\mathbf{c}'$ are declared to be \emph{a priori
   neutral}. The corresponding parameters $\theta_q$ are \emph{a priori neutral} coordinates. A
 priori neutral parameters, as well as all the twist parameters $\vec{\alpha}$, will be neutral
 parameters throughout this discussion.

 \begin{rem}
   If $\mathbf{c}$ is double-filling, all parameters $(\theta_q)_{q \in \Theta}$ are a priori neutral.
 \end{rem}
 
\subsubsection{Expression of $\ell_Y(\mathbf{c})$} 
Denote by $\dopt=(\beta^{\mathrm{opt}},B^{\mathrm{opt}})$ the good extended diagram constructed in
Theorem~\ref{t:opt}.  The reconstitution procedure, applied to the diagram $\dopt$, leads to a
representative of the homotopy class of ${\mathbf{c}}$ in the form of an alternance of bars and
segments similar to \eqref{e:decompo-ibis}.
 
Since none of the bars of $\dopt$ is homotopically trivial with gliding endpoints, we can replace
the writing \eqref{e:decompo-ibis} by a geodesic representative, as described in \S
\ref{s:straightening2}.  Indeed, we pick the representative of the metric $Y \in \cT_{\g,\n}^*$ such
that $\beta'$ is a multi-geodesic, and glide the bars $B^{\mathrm{opt}}_q$ to their orthogeodesic
representatives $\overline B_q$, and then take a representative of $\mathbf{c}$ of the
form~\eqref{e:decompo-ig}. In the special case of multiple components $\beta'_i$ in the same
homotopy class, their geodesic representatives are collapsed to the same curve, and the cylindrical
bars $\overline B_q$ running between them are reduced to a point.  Theorem
\ref{prop:form_l_gamma_try} then provides an expression for the length of each component
${\mathbf{c}}_i$ with $1 \leq i \leq \cc$, with the bars and segments corresponding to those of
$\dopt$.

\subsubsection{Rest of the proof}

Once the expression for $\ell_Y(\mathbf{c})$ is obtained, the proof is similar to the one for
generalized eights:
\begin{enumerate}
\item we get rid of negative crossing parameters using the erasing procedure from \S \ref{s:lengthc};
\item we write the integral in the new coordinates, with an additional set of a priori neutral
  coordinates defined in \S \ref{s:apriorintr} (note that each non-neutral coordinate $\theta$
  coincides with the length of an unshielded simple portion of $\dopt$, up to addition of a function
  of neutral variables);
\item we apply our results about generalized convolutions in the active variables
  $\ThetaAc'' \subseteq \Theta$;
\item we conclude with a comparison estimate, which is easier to prove now that some lengths
  $\ell(\Gamma_\lambda)$ (corresponding to contractible components) vanish.
\end{enumerate}

\begin{rem}\label{r:opti_K} 
  The choice of $\rK$ in Theorem \ref{t:main} is still given by equation \eqref{eq:sum_K_proof}, but
  now the the a-priori-neutral parameters have been removed from the new set of active parameter
  $\ThetaAc''$. Hence,
 \begin{align}
   \rK=\sum_{q\in \ThetaAc''}(\rK_q+1)
   \leq \sum_{j \in V'}(\rK_j -1) + \rho[\mathbf{T}]
\end{align} 
where the sum $V' \subseteq V$ is the set of $\lambda \in \LambdaBCbeta \cap V$ such that
$\beta_\lambda$ contains at least an unshielded simple portion of $\mathbf{c}$, as mentioned in
Remark \ref{rem:improve}. In particular, we can take $\rK=0$ if $ \rho[\mathbf{T}]=0$.
\end{rem}

%%%Local Variables: 
%%% mode: latex
%%% TeX-master: "main"
%%% End: 

\section{Variant of the main theorem for lc-surfaces}
\label{s:variantLC}
Our core motivation for proving Theorem \ref{t:main} is the spectral gap problem, i.e. proving
that $\lambda_1$ is close to $1/4$ for typical hyperbolic surfaces of high genus. We have explained
the link between these questions, and presented the arguments leading to our spectral gap result
(Theorem \ref{t:dream}), in \cite{Expo}. In the proof, we need to restrict the Weil--Petersson
measure to the set of tangle-free surfaces.  The presence of an indicator function in the restricted
integral causes problems, that are overcome by the introduction of a so-called \emph{Moebius
  function}, constructed in~\cite{Moebius}. The $\mathbf{T}$-averages $\av[\mathbf{T}]{F}$ of a test
function $F$, defined in \eqref{e:avT}, are then replaced by expressions of the form
  \begin{align} \label{e:nu-ave-intro}  
   \avN[\ftype]{\mu \otimes F}
   :=   \Ewpo \Bigg[ \sum_{\substack{ \curv \in \mathcal{G}(X) \\\tau\in \cS(X)\\ (\curv, \tau)\sim {\ftype}}} 
 \mu(\tau) F( \ell(\curv) ) \Bigg]
 \end{align}
 where:
 \begin{itemize}
 \item $\mu$ is the Moebius function constructed in \cite{Moebius}, defined on the ``extended moduli
   space'' of metric spaces made of disjoint unions of circles and hyperbolic surfaces with
   boundary;
 \item $\curv$ is a geodesic loop of length $\ell(\curv)$;
 \item $\tau$ is a subset of $X$ with various connected components, which may be either surfaces with geodesic boundary, or non-contractible simple closed geodesics; 
 \item $\ftype$ is now a local topological type of pairs $(\curv, \tau)$.
  \end{itemize}

  In the following, a \emph{test function} $F$ is a measurable function $F : \R_{\geq 0} \rightarrow
  \R$ which is bounded and compactly supported.
  
  The precise definition of $\mu$ and of the ``extended moduli space'' is given in \cite[\S
  2]{Moebius} and partially recalled below.  We also recall the new notion of local topological type
  adapted to pairs $(\curv, \tau)$, and finally explain how to extend Theorem \ref{t:main} to the
  average \eqref{e:nu-ave-intro}.  This provides the required technical generalization to complete
  the proof of Theorem \ref{t:dream}.

\subsection{c-surfaces}

Let us recall the notion of c-surface, developed in more detail in \cite[\S 2]{Moebius}. In a nutshell, a
c-surface is the disjoint union of a surface and of simple curves.
   
Let $\N_0 = \Z_{\geq 0}$ be the set of non-negative integers.  We order the pairs $(g, n)\in \N_0^2$
by the order relation: $(g, n)\leq (g', n')$ if $2g+n-2 < 2g'+n'-2$, or if $2g+n-2 = 2g'+n'-2$ and
$g\leq g'$.  For $j\geq 1$, denote by ${\mathfrak{S}}^{(j)}$ the set of $2j$-tuples
$(\vg, \vn)=((g_1, n_1), \ldots, (g_j, n_j))\in (\N_0^2)^j$, satisfying $(g_i,n_i)=(0,2)$ or
$2g_i-2+n_i> 0$ for $1 \leq i \leq j$, and $(g_i, n_i)\leq (g_{i+1}, n_{i+1})$ for $1 \leq i < j$.
 
   \begin{defa} A \emph{c-surface of signature
     $(\vg, \vn)\in {\mathfrak{S}}^{(j)}$} is a non-empty topological space $\Sf$ with $j$ connected components
     $\Sf= \bigsqcup_{i=1}^j \tau_i$ labelled from $1$ to $j$, where:
     \begin{itemize}
     \item if $(g_i, n_i)=(0, 2)$, then $\tau_i$ is a $1$-dimensional oriented manifold
       diffeomorphic to a circle;
     \item if $2g_i+n_i-2>0$, then $\tau_i$ is a compact $2$-dimensional orientable manifold with
       (or without) boundary, of signature $(g_i, n_i)$, with boundary components numbered by
       $\{1, \ldots, n_i\}$.
     \end{itemize}
   \end{defa}

   \begin{nota} The c-surface $\Sf$ can be decomposed as $\Sf=(c, \sigma)$,
     where $c =(c_1, \ldots, c_{j_1}) $ and $\sigma =(\sigma_1, \ldots, \sigma_{j_2})$ are,
     respectively, collections of $1$-dimensional oriented manifolds and $2$-dimensional oriented
     compact manifolds with boundary ($j=j_1+j_2$ is the total number of connected components).  We
     shall denote by $\mathfrak{c}(\Sf)=j_1$ the number of 1d components, and $\chi(\Sf)$ the total
     absolute Euler characteristic, that is
     $$\chi(\Sf)=\sum_{i=1}^{j_2} \chi(\sigma_i)
     \quad \text{where} \quad \chi(\sigma_i)=2g_{j_1+i}+n_{j_1+i}-2.$$ We say that $\Sf$ is purely
     2-dimensional if $\mathfrak{c}(\Sf)=0$, and purely 1-dimensional if $\chi(\Sf)=0$.
   \end{nota}

   \begin{nota}[Boundary of a c-surface] \label{r:order}
     For $\Sf=(\tau_1, \ldots, \tau_j)=(c_1, \ldots, c_{j_1}, \sigma_1, \ldots, \sigma_{j_2})$, if
     $\partial \sigma_i$ is the boundary of the surface $\sigma_i$,
     we let $\partial \Sf$ be the (usual) boundary of $\Sf$
     \begin{equation*}
       \partial \Sf = \Big(\bigsqcup_{i=1}^{j_1}  c_i \Big)
       \sqcup \Big( \bigsqcup_{i=1}^{j_2} \partial \sigma_i \Big).
     \end{equation*}
     The total number of boundary components of $\sigma$ is defined as
     $$n_{\sigma} := \sum_{i=1}^{j_2} n_{j_1+i}.$$
   \end{nota}
   
   \begin{defa}[Sub-c-surface] \label{d:subsurface} If $\Sf$ is a c-surface, we call
     {\em{sub-c-surface}} of $\Sf$ a subset $\Sf'\subseteq \Sf$ which (for the induced topology and
     differential structure) is a c-surface, and such that $\partial \Sf'$ is a multi-curve
     in~$\Sf$, i.e. its components are non-contractible and homotopically distinct.
 \end{defa}

 \begin{nota}[Geodesic representative] \label{d:geo_rep}Let $\Sf$ be a c-surface of signature
   $(\vg, \vn)$ and $\Sf' = (c',\sigma')$ be a sub-c-surface of~$\Sf$.  Assume $\Sf$ is endowed with
   a metric $ Z\in \cT^*_{\vg, \vn}$, the Teichm\"uller space of Riemannian metrics on $\Sf$ that
   are hyperbolic on the 2-dimensional components. Then $\Sf'$ is isotopic to a geodesic
   sub-c-surface $\Sf'_Z=(c'_Z, \sigma'_Z)$: we mean by this a sub-c-surface with geodesic boundary,
   that we call the {\em{geodesic representative}} of $\Sf'$ in $Z$.

   We denote by $\cS(Z)$ the set of geodesic sub-c-surfaces of $\Sf$ endowed with the metric~$Z$.
\end{nota}

In \cite[\S 2]{Moebius}, we also defined the extended moduli space $\mathbf{M}$, which is -- roughly speaking -- the moduli space of all possible Riemannian structures on all
possible c-surfaces, with hyperbolic metrics on the 2d-components. In the context of Definition \ref{d:geo_rep}, $\Sf'_Z$ may be seen as a representative of an element of $\mathbf{M}$.

The following notation will also be useful.

   \begin{nota}
     For $\Sf$ as in Notation \ref{r:order}, if now $\tilde{\partial} c_i:=c_i\times \{\pm\}$
     (interpreted as the two boundary curves of a cylinder containing $c_i$), we let
     $$\tilde{\partial} \Sf=\Big(\bigsqcup_{i=1}^{j_1} \tilde\partial c_i \Big)
     \sqcup \Big( \bigsqcup_{i=1}^{j_2} \partial \sigma_i \Big).$$
     The components of $\tilde{\partial} \Sf$ are labelled by the set
     $$(\{1, \ldots, j_1\} \times \{\pm\}) \sqcup \{(i, l), 1 \leq i \leq j_2, 1 \leq l \leq  n_i\}.$$
     When needed, we can put a total lexicographic order on this set.  We let
     $n_{\Sf}:= 2j + n_{\sigma}$. Notice that, listing all the boundary components in the natural
     order, we can also identify
     \begin{equation*}
       \tilde{\partial} \Sf = \tilde{\partial}c \sqcup \{1, \ldots, n_\sigma\} = (\{1, \ldots, j_1) \times \{\pm\}) \sqcup \{1, \ldots, n_\sigma\}.
     \end{equation*}
 \end{nota}

\subsection{lc-surfaces and their local topological types}
\label{s:lcsurface}

We enrich the notion of local topological type, to a notion designed to sort out expressions such as
\eqref{e:nu-ave-intro}. The topological object to consider is now {$(\Sf, \curv, \tau)$,
  where $\curv$ is a loop and $\tau$ is a c-surface, which together fill $\Sf$}. Such a pair will
be called a lc-surface. We define these objects in \cite[Appendix A]{Moebius}.
  
   \begin{nota}\label{n:FTlc} To any  $(\vg, \vn)\in {\mathfrak{S}}^{(j)}$, $j\geq 0$,
     we associate a \emph{fixed} smooth oriented c-surface~$\Sf$ of signature $(\vg, \vn)$. The data
     of $(\vg, \vn)$, or equivalently of $\Sf$, is called a \emph{c-filling type}.
\end{nota}

\begin{defa}
  \label{def:local_type}
  A \emph{local lc-surface} is a triplet $(\Sf,\curv, \tau)$, where $\Sf$ is a c-filling type,
  $\curv$ is a loop in $\Sf$, $\tau$ is a sub-c-surface of $\Sf$, and the pair $(\curv, \tau)$ fills
  $\Sf$.

  Two local lc-surfaces $(\Sf,\curv, \tau)$ and $(\Sf',\curv', \tau')$ are said to be \emph{locally
    equivalent} if $\Sf=\Sf'$ (i.e.
  $(\mathbf{g}_\Sf,\mathbf{n}_\Sf) = (\mathbf{g}_{\Sf'},\mathbf{n}_{\Sf'})$), and there exists a
  positive homeomorphism $\psi : \Sf \rightarrow \Sf$, possibly permuting the boundary components
  and connected components of $\Sf$, such that $\psi \circ \curv$ is freely homotopic to $\curv'$
  and $\psi(\tau)=\tau'$. We ask that the homotopy between $\psi \circ \curv$ and $\curv'$ preserves
  the orientation, and that $\psi$ respects the numbering of boundary components of $\tau$ and
  $\tau'$. 
\end{defa}

This defines an equivalence relation $\eq$ on local lc-surfaces. Equivalence classes for this
relation are denoted as $\eqc{\Sf, \curv, \tau}$ and called \emph{local topological types} of
lc-surfaces.

\begin{nota} \label{n:SS} If $(\Sf,\curv, \tau)$ is a local lc-surface, we will denote by
  $\Sb \subseteq \Sf$ the c-surface filled by the multi-loop $(\curv, \partial \tau)$, and $\Sinn$
  its complement in $\Sf$.
\end{nota}
Necessarily, the 1d-components of $\tau$ which lie in the 2d-components of $\Sf$ must intersect
$\curv$, otherwise $\tau\cup \curv$ would not fill $\Sf$. As a consequence, $\Sb$ contains the 1d
part of $\tau$, and $\Sinn$ is contained in the 2d-part of $\tau$.
 
  \begin{defa}
    Let $\ftype = \eqc{\Sf, \curv, \tau}$ be a local topological type.  For a pair
    $(\gamma, \tau_0)$ formed by a loop~$\gamma$ and a sub-c-surface $\tau_0$ on a hyperbolic
    surface~$X$, we denote as $S(\gamma, \tau_0)$ the surface filled by $(\gamma, \tau_0)$ in $X$
    (defined in details in \cite[Definition 4.1]{Ours1}). Then, $(\gamma, \tau_0)$ is said to
    \emph{belong to the local topological type}~$\eqc{\Sf, \curv, \tau}$ if there exists a positive
    homeomorphism $\phi : S(\gamma, \tau_0) \rightarrow \Sf$ such that the loops $\phi ( \gamma)$
    and $\curv$ are freely homotopic in~$\Sf$, and such that $\phi(\tau_0)=\tau$. In that case, we
    write $(\gamma, \tau_0) \sim \ftype$.
\end{defa}

\begin{nota}
  For any $j \geq 1$, there is an natural bijection from the set of c-surfaces with
  $\mathfrak{c}(\Sf)=0$ to the set of c-surfaces with $\mathfrak{c}(\Sf)=j$, which consists in
  adding $j$ 1-dimensional connected components: we denote it by $\rho_j$.

  Similarly, if ${\ftype}=\eqc{{{\Sf, \curv, \tau}}}$ is a local topological type of lc-surface such
  that $\mathfrak{c}(\Sf)=0$, we denote by $\rho_j{\ftype}$ the local topological type obtained by
  adding $j$ 1-dimensional connected components to the local topological type $\ftype$. If $\ftype$
  fills $\Sf$, then $\rho_j\ftype$ fills $\rho_j\Sf$.
 \end{nota}

\subsection{Volume functions for local topological types of lc-surfaces} 

We now study expressions such as \eqref{e:nu-ave-intro} and state analogues of Theorem
\ref{con:FR_type}. As explained in \cite{Expo}, to prove our spectral gap result, Theorem
\ref{t:dream}, we can restrict attention to subsurfaces of $X$ that disconnect the hyperbolic
surface $X$ into a bounded number of components. This is formalised as follows.

For an integer $Q$, we denote as ${\MC}_X(Q)$ the set of multi-curves which separate $X$ into at
most $Q$ connected components, following \cite[Notation A.2]{Ours1}.  Let us define
$\cS_Q(X)\subset \cS(X)$ as the set of geodesic sub-c-surfaces of~$X$, with a 1d-part belonging to
${\MC}_X(Q)$. For any test function $F$, any bounded {measurable} function $\nu$ on the extended
moduli space $\mathbf{M}$, we let
 \begin{align}  \label{e:total}
   \avN[\ftype]{\nu \otimes F}
   :=   \Ewpo \Bigg[ \sum_{\substack{ \gamma \in \mathcal{G}(X) \\\tau\in \cS_Q(X)\\ (\gamma, \tau)\sim {\ftype}}} 
 \nu(\tau) F( \ell(\gamma) ) \Bigg].
 \end{align}

 \begin{nota}\label{n:nu-requirements}
   In the following, the function $\nu$ on $\mathbf{M}$ will be assumed to satisfy the following
   properties for some positive real numbers $\kappa, \tfL$ with $\kappa < \tfL$.
  \begin{itemize}
  \item If $\tau =(c_1, \ldots, c_{j_1},\sigma_1, \ldots, \sigma_{j_2})$ is such that
    $\nu(\tau)\not=0$, then
    \begin{itemize}
    \item for $1 \leq i \leq j_1$, $\ell(c_i)\leq \kappa$, where $\ell(c_i)$ is the length of $c_i$
      for the metric $\tau$;
    \item for $1 \leq i \leq j_2$, $\ell(\partial \sigma_i)\leq 9\tfL \chi(\sigma_i)$, where
      $\ell(\partial \sigma_i)$ is the total boundary length of $\sigma_i$.
    \end{itemize}
  \item There exists a bound of the form
    \begin{equation}
      \label{eq:bound_nu}
      |\nu(\tau)| \leq \mathfrak{f}(\mathfrak{c}(\tau), \chi(\tau)).
    \end{equation}
  \end{itemize}
  We assume that $ \mathfrak{f}(j,\chi) \leq \mathfrak{f}(j',\chi')$ if $j\geq j'$ and
  $\chi\leq \chi'$. 
\end{nota}

\begin{rem}
  \label{rem:nu_bound}
  Notice that the above conditions imply that, for any $\tau$ in the support of $\nu$,
  \begin{equation}
    \label{eq:bound_nu_boundary}
    \ell(\partial \tau)\leq \kappa \mathfrak{c}(\tau) + 9\tfL \chi(\tau).
  \end{equation}
  If we further assume that $\tau$ is a sub-c-surface of an ambiant c-surface $\Sf$ with
  $\mathfrak{c}(\Sf)=0$, then  $    \ell(\partial \tau)\leq 12\tfL \chi(\Sf)$ since the maximal
  number of components for a multi-curve in $\Sf$ is $3\chi(\Sf)$.
\end{rem}

\begin{exa} The hypotheses above are satisfied for $\nu= \mu_{\kappa, \tfL}$, the Moebius function
  constructed in \cite{Moebius}, with the bound
  \begin{equation}
    \label{eq:bound_nu_mu}
    \mathfrak{f}(j, \chi)  := \frac{U_1(\chi)}{j!}  e^{\tfL U_2(\chi)}
  \end{equation}
  where $U_1$ and $U_2$ are increasing sequences (for which we do not need an explicit expression).
\end{exa}

 The following generalizes Proposition \ref{prp:existence_density}, with exactly the same proof:
 
\begin{prp} 
  \label{thm:volumen_lc}
  Let ${\ftype}=\eqc{{{\Sf, \curv, \tau}}}$ be a local topological type of lc-surface with
  $\mathfrak{c}(\Sf)=0$.  For any $j\geq 0$, there exists a continuous functions
  $V_{g, Q}^{\nu, {\ftype}, j}$ such that, for any test function $F$, \begin{align*}
    \avN[\rho_j{\ftype}]{\nu \otimes F} = \int_0^{+ \infty} F(\ell)V_{g, Q}^{\nu, {\ftype}, j}(\ell)
    \d \ell. \end{align*}
 \end{prp}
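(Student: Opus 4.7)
The plan is to mimic, almost verbatim, the proof of \cite[Proposition 5.9]{Ours1} (recalled as Proposition \ref{prp:existence_density} above), since the only genuinely new ingredient is the presence of the weight $\nu(\tau)$ and the restriction $\tau\in\cS_Q(X)$. First I would apply Mirzakhani's integration formula to unfold the double sum defining $\avN[\rho_j\ftype]{\nu\otimes F}$. The constraint $(\gamma,\tau)\sim\rho_j\ftype$ means that $(\gamma,\partial\tau)$ fills a sub-c-surface topologically equivalent to $\Sb$, so the sum factorizes as a sum over topological realisations $\mathfrak{R}$ of embeddings of $\Sb$ into $S_g$, combined with an integral over the Teichm\"uller space $\cT^*_{\g_\Sb,\n_\Sb}$. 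For each such realisation, the Weil--Petersson measure on $\cM_g$ decomposes into a product of the Weil--Petersson measure on $\cT^*_{\g_\Sb,\n_\Sb}$ and the Weil--Petersson measure on the Teichm\"uller space of the complement, with boundary lengths matching along $\partial\Sb$.

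Second, I would absorb the factor $\nu(\tau)$ into the integral over $\cT^*_{\g_\Sb,\n_\Sb}$. Indeed, $\nu$ depends only on the isometry type of the metric c-surface $\tau$, and $\tau$ is identified with a concrete sub-c-surface of $\Sb$ once the hyperbolic structure on $\Sb$ is fixed (taking geodesic representatives, as in Notation \ref{d:geo_rep}). Thus $Y\mapsto \nu(\tau_Y)$ is a well-defined bounded measurable function on $\cT^*_{\g_\Sb,\n_\Sb}$. After integrating out the hyperbolic structure on the complement (producing a weight of the form $V_{\mathfrak{R}}(\partial Y)$, a product of Mirzakhani volume polynomials), one obtains
\begin{equation*}
  \avN[\rho_j\ftype]{\nu\otimes F}
  = \frac{1}{m(\rho_j\ftype)V_g}\sum_{\mathfrak{R}\in R_{g,Q}(\Sb,\rho_j\ftype)}
  \int_{\cT^*_{\g_\Sb,\n_\Sb}} \nu(\tau_Y)\, F(\ell_Y(\curv))\, V_{\mathfrak{R}}(\partial Y)\,
  \d\Volwp[\g_\Sb,\n_\Sb](Y),
\end{equation*}
where $R_{g,Q}(\Sb,\rho_j\ftype)$enumerates the topological embeddings of $\Sb$ into $S_g$ compatible with $\rho_j\ftype$ and with complement in at most $Q$ connected components.

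Third, since $Y\mapsto \ell_Y(\curv)$ is a non-constant real-analytic function on $\cT^*_{\g_\Sb,\n_\Sb}$, I would disintegrate each of these integrals on its level sets, as in \S \ref{s:teich}. Writing $F(\ell_Y(\curv))$ as an integral against a $\delta$-distribution and swapping the order of integration yields the announced density
\begin{equation*}
  V_{g,Q}^{\nu,\ftype,j}(\ell)
  = \frac{1}{m(\rho_j\ftype)V_g}\sum_{\mathfrak{R}\in R_{g,Q}(\Sb,\rho_j\ftype)}
  \int_{\ell_Y(\curv)=\ell}\nu(\tau_Y)\, V_{\mathfrak{R}}(\partial Y)\,
  \frac{\d\Volwp[\g_\Sb,\n_\Sb](Y)}{\d\ell}.
\end{equation*}
Continuity of this function in $\ell$ follows from the smoothness of the disintegrated measure (compare with the apriori bounds of \S \ref{s:apriori}) together with the bound \eqref{eq:bound_nu} and Remark~\ref{rem:nu_bound} which make the integrand bounded and compactly dependent on~$\partial Y$.

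The only point where one must pay attention is the finiteness of the sum over $\mathfrak{R}$ and the convergence of each integral. The restriction $\tau\in\cS_Q(X)$, together with the constraint that $\nu$ vanishes unless all boundary lengths of $\tau$ are controlled (Remark~\ref{rem:nu_bound}), bounds the complexity of admissible embeddings $\mathfrak{R}$ and forces $\partial Y$ to remain in a region where $V_{\mathfrak{R}}(\partial Y)$ grows at most polynomially; this is the analogue of the counting argument in \cite[\S 5]{Ours1}. This will be the main --- though mild --- obstacle, and it is handled exactly as in loc.\ cit., so I would refer to that argument rather than reproduce it.
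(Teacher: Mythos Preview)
Your overall strategy --- unfold via Mirzakhani's integration formula, then disintegrate along the level sets of $\ell_Y(\curv)$ --- is exactly what the paper does (it simply cites \cite[Thm.~5.5 and Prop.~5.21]{Ours1} and records the resulting formula \eqref{e:disint_phi_nu}). However, there is a genuine error in how you set up the integration domain.

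You claim that ``$\tau$ is identified with a concrete sub-c-surface of $\Sb$'' and hence that $Y\mapsto\nu(\tau_Y)$ is a function on the Teichm\"uller space of $\Sb$. This is not correct. By Notation~\ref{n:SS}, $\Sb$ is only the part of $\Sf$ filled by $(\curv,\partial\tau)$, and its complement $\Sinn=\Sf\setminus\Sb$ lies in the \emph{interior} of the $2$-dimensional part of $\tau$. The hyperbolic metric on $\tau$, and therefore $\nu(\tau)$, depends on the metric on $\Sinn$, which is not captured by the metric on $\Sb$ alone. When you ``integrate out the hyperbolic structure on the complement'' of $\Sb$ in $S_g$ you are simultaneously integrating over the metric on $\Sinn$; the factor $\nu(\tau_Y)$ cannot be pulled outside, and the complement does not collapse to a product of volume polynomials $V_{\mathfrak{R}}(\partial Y)$.

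The fix is to unfold along $\partial(\rho_j\Sf)$ rather than $\partial\Sb$: one obtains an integral over the lengths $\vec\alpha\in\R_{>0}^j$ of the extra $1$-dimensional components and over the Teichm\"uller space of $\Sf$, on which $\nu(\tau_{(\x,Y)},\vec\alpha)$ is a genuine integrand. Since this integrand is invariant under $\mathrm{MCG}(\Sinn)$, the integral descends to the intermediate space $\cM_\x(\Sf\mid\Sb)$, giving the paper's formula \eqref{e:disint_phi_nu}. (A minor side remark: the sum over realisations $\mathfrak{R}$ is automatically finite for each fixed $g$, so the convergence worry you raise at the end is not the actual obstacle.)
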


 We are now ready to state the Friedman--Ramanujan property that we will prove.
 
 \begin{thm} 
  \label{thm:leviathan_lc}
  Let ${\ftype}=\eqc{{{\Sf, \curv, \tau}}}$ be a local topological type of lc-surface such that
  \mbox{$\mathfrak{c}(\Sf)=0$}. %Let $\chi(\tau)= \chi(\tau)$ be the absolute Euler characteristic of $\tau$.
  For any integers $Q, \ord\geq 0$, any large enough $g$, any $j\leq \log g$, there exists a
  Friedman--Ramanujan function $h_{g, Q, \ord}^{\nu, {\ftype}, j}$ such that, for any test function
  $F$, any $\eta>0$,
  \begin{align*}
   % \label{eq:exist_asympt_type_intro}
    & \Big| \avN[\rho_j{\ftype}]{\nu \otimes F} -
      \frac{1}{g^{\chi(\Sf)}}
      \int_0^{+ \infty} F(\ell)h_{g, Q, \ord}^{\nu, {\ftype}, j}(\ell) \d \ell \Big|
    \\ &
         \leq \fn(Q,\ord,\chi(\Sf),\eta) \, M^{j} \mathfrak{f}(j, \chi(\tau))
         e^{13 \tfL\chi(\Sf)}
         \frac{\norm{ F(\ell) \, e^{(1+\eta)\ell}}_\infty}{g^{\ord+1}}
  \end{align*}
  where $M = \fn(Q,\ord,\n,\kappa)=2^{2(Q+\n)}\tilde a_\ord^{2}(e^{\kappa}-1)$ with the constant
  $\tilde a_\ord$ from \cite[Theorem B.1]{Ours1}. More precisely, there exists
  $\rK = \rK(\ord, \chi(\Sf))$ and $\rN = \rN(\ord, \chi(\Sf))$ such that
  $h_{g, Q, \ord}^{\nu, {\ftype}, j}\in \cF_w^{\rK,\rN}$, and
    \begin{equation}
    \label{eq:bound_FR_norm_lc}\norm{h_{g,Q,\ord}^{\nu, {\ftype}, j}}^w_{\cF^{\rK,\rN}}
    \leq  \fn(Q, \ord, \chi(\Sf))\, 
     \tilde a_\ord^{2j} \mathfrak{f}(j, \chi(\tau)) \, e^{6 \tfL  \chi(\Sf)}.
    \end{equation}
 \end{thm}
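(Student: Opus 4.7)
The proof follows the same blueprint as Theorem \ref{t:main}, but in an extended setup where the ``filling multi-loop'' now comes together with a sub-c-surface carrying the weight $\nu$. The plan is to unfold the expectation via Mirzakhani's integration formula, reduce to an integral of the form covered by Theorem \ref{t:main-kappa}, and track the constants arising from the c-surface contribution.

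\textbf{Step 1: Unfolding.} Let $\Sb$ be the sub-c-surface of $\Sf$ filled by $(\curv, \partial\tau)$ (Notation \ref{n:SS}), and view the tuple $(\curv, c_1, \ldots, c_{j_1})$ as a multi-loop on $\Sb$, where $c_1, \ldots, c_{j_1}$ are the $1$-dimensional components of $\tau$. The remaining components of $\partial\Sb$ are glued either to $2$-dimensional components of $\tau$ (in $\Sinn$) or to the ``exterior'' surface $X \setminus \Sf$. I would first apply a Mirzakhani-type integration formula (extending Proposition~5.9 of \cite{Ours1}) to rewrite $\avN[\rho_j{\ftype}]{\nu \otimes F}$ as an integral over $\cT^*_{\Sb}$ of the product of $F(\ell_Y(\curv))$, indicator functions $\prod_i \bbbone_{\ell_Y(c_i) \leq \kappa}$, a weight coming from the $2$d part of $\tau$, and a weight coming from the exterior. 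Summing over embeddings of $\Sf$ in $X$ produces, for each configuration $\mathfrak{R}$, a factor $V_\mathfrak{R}(\vec{x})/V_g$ on the free boundary components of $\Sb$, and a factor
\[
\Phi_\nu(\vec{y}) := \int_{\cT_\tau(\vec{y})} \nu(\tau)\, d\mathrm{Vol}^{\mathrm{WP}}(\tau)
\]
on the boundary components meeting $\Sinn$, together with Dirac matchings for identified boundaries.

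\textbf{Step 2: Asymptotic expansion of the exterior volume factor.} I would now apply the Mirzakhani--Zograf expansion of the ratio $V_\mathfrak{R}(\vec{x})/V_g$ up to order $\ord$, exactly as recalled in \S\ref{s:intro} following \cite{anantharaman2022, mirzakhani2015, Ours1}. This replaces the exterior contribution by a linear combination of the elementary functions \eqref{e:each_term}: monomials $x_i^{k_i}$, $x_i^{k_i}\cosh(x_i/2)$, $x_i^{k_i}\sinh(x_i/2)$, and Dirac masses, plus a remainder bounded by $\fn(Q,\ord,\chi(\Sf)) g^{-\ord-1}(1+\|\vec{x}\|)^{\alpha} e^{\sum x_i/2}$. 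The bounded cardinality of $R_g(\Sf, \ord)$ and the degree bound on the monomials follow as in \cite[Proposition 5.21]{Ours1}.

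\textbf{Step 3: Application of Theorem \ref{t:main-kappa}.} Each term in the expansion is of the form handled by Theorem \ref{t:main-kappa}, with:
\begin{itemize}
\item the multi-loop $\mathbf{c}=(\curv, c_1, \ldots, c_{j_1})$ on $\Sb$;
\item $W = \{2, \ldots, j_1+1\}$ being the indices of the $c_i$, with $\kappa_i=\kappa$;
\item the functions $f_j$ on free boundary components given by the monomials $x^k \hyp_\pm(x/2)$ times $\Phi_\nu$-factors coming from $\tau$;
\item Dirac masses for the matchings on glued boundaries.
\end{itemize}
The difficulty is that $\Phi_\nu(\vec{y})$ is not a product of univariate functions, but the hypothesis $|\nu(\tau)| \leq \mathfrak{f}(\mathfrak{c}(\tau),\chi(\tau))$ together with the bound $\ell(\partial\sigma_i) \leq 9\tfL\chi(\sigma_i)$ give $|\Phi_\nu(\vec{y})| \leq \mathfrak{f}(j,\chi(\tau)) \cdot V_\tau(\vec{y}) \cdot \prod_i \bbbone_{\sum_l y_{i,l} \leq 9\tfL\chi(\sigma_i)}$. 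Expanding $V_\tau(\vec{y})$ in monomials and absorbing the indicator functions via the exponential bound $e^{9\tfL\chi(\Sf)/2}$ reduces this factor to a manageable product form. Theorem \ref{t:main-kappa} then yields that the density $h_{g,Q,\ord}^{\nu,\ftype,j}$ lies in $\cF_w^{\rK,\rN}$ with the stated norm bound.

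\textbf{Step 4: Constants and the main obstacle.} Tracking the constants is the hard part of the argument. The factor $\mathfrak{f}(j,\chi(\tau))$ enters once from the bound on $\nu$; the factor $e^{\kappa j/2}$ from Theorem \ref{t:main-kappa} combined with $j \leq \log g$ is absorbed into the prefactor $M^j$ of the main estimate; the factor $\tilde a_\ord^{2j}$ accumulates from applying the Mirzakhani--Zograf expansion along each of the $j$ newly glued cylinders in the $\rho_j$-construction (each contributing a constant $\tilde a_\ord$ twice, once per boundary); the factor $e^{6\tfL\chi(\Sf)}$ (resp.\ $e^{13\tfL\chi(\Sf)}$) bounds the contribution of $V_\tau(\vec y) \leq \fn(\chi(\Sf)) e^{\tfL \cdot O(\chi(\Sf))}$ evaluated on the constrained region $\ell(\partial\tau) \leq 9\tfL\chi(\Sf)$ (cf.\ Remark \ref{rem:nu_bound}), combined with the extra $e^{(1+\eta)\ell}$ room on the test function. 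The combinatorial factor $2^{2(Q+\n)}$ in $M$ reflects the bounded number of embedding configurations in $R_g(\Sf,\ord)$ with $\mathfrak{c}(\Sinn) \leq Q$. The main obstacle is ensuring that the $j$-dependence of every constant is exactly of the form $M^j$ (or $\tilde a_\ord^{2j}$), which requires the factorized structure of $\Phi_\nu$ via $V_\tau$ and a careful treatment of the $j!$ that would naively appear from ordering the $j$ cylinders --- this is cancelled by the $1/j!$ present in $\mathfrak{f}$ via \eqref{eq:bound_nu_mu}.
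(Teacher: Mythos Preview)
Your blueprint is sound through Step 2, but Step 3 has a genuine gap. First, the multi-loop $(\curv, c_1, \ldots, c_{j_1})$ you propose does not fill $\Sb$: by definition $\Sb$ is filled by $(\curv, \partial\tau)$, and $\partial\tau$ also contains the boundaries $\partial\sigma_i$ of the $2$-dimensional pieces of $\tau$, which you omit --- so the hypotheses of Theorem \ref{t:main-kappa} are not met. Second, and more seriously, your handling of $\Phi_\nu$ cannot yield the Friedman--Ramanujan property: the bound $|\Phi_\nu(\vec y)| \leq \mathfrak{f}(j,\chi(\tau))\, V_\tau(\vec y)\,\prod_i \bbbone_{\cdots}$ is only an inequality, and ``expanding $V_\tau$ in monomials'' gives a product form only for this \emph{upper bound}. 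The FR property is an exact statement about the principal part of $h$, not a size estimate; a bound on $|\Phi_\nu|$ suffices for the remainder term but not for identifying the FR function itself. For a general bounded measurable $\nu$, nothing forces $\Phi_\nu$ to factor as a product of univariate FR functions, so the integrand is not of the shape Theorem \ref{t:main-kappa} requires.

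The paper's fix is to include all of $\partial\tau$ in the multi-loop --- working with $(\curv, \partial\tau, c_1, \ldots, c_j)$ --- and to put every component of $\partial\tau$ and every $c_i$ in the set $W$ of Theorem \ref{t:main-kappa} (with $\sum_{i \in \partial\tau} \kappa_i \leq 12\tfL\chi(\Sf)$ from the support of $\nu$, and $\kappa_i = \kappa$ for the $c_i$). One then re-runs the \emph{proof} of Theorem \ref{t:main-kappa} after enlarging the set of a priori neutral variables to include every $\theta_q$ whose simple portion $\cI_q$ lies in the interior of the $2$d part of $\tau$ (such portions never reach $\partial\Sf$). With this choice the weight $\nu(\tau_Y, \vec\alpha)$ depends only on neutral variables and is bounded by $\mathfrak{f}(j, \chi(\tau))$, so it is simply absorbed into the bounded weight $\varphi$ of the pseudo-convolution --- no factorization of $\nu$ is ever needed. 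Two small corrections to Step 4: the factor $2^{2j(Q+\n)}$ counts partitions of $\tilde\partial(\rho_j\Sf)$ into at most $Q+\n$ parts (Lemmas \ref{lem:u_vol_lc} and \ref{lem:limit_rank}), not embedding configurations; and there is no ``naive $j!$'' to cancel, since the $c_i$ are labelled --- the $1/j!$ in $\mathfrak{f}$ is just what makes the series over $j$ converge in \S\ref{s:wrapup}.
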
 

 It is essential in this statement that the exponents and constants are independent from the number
 of 1d components $j$ and the specific topology of $(\curv,\tau)$ amongst pairs filling $\Sf$.

 % In applications, we will take $\nu = \mu_{\kappa, \tfL}$ the Moebius function, and sum this result
 % for all values $j \geq 0$, using the following statement.
 % \begin{cor}
 %     \label{cor:leviathan_lc_sum}
 %     Let ${\ftype}=\eqc{{{\Sf, \curv, \tau}}}$ be a local topological type of lc-surface such that
 %     \mbox{$\mathfrak{c}(\Sf)=0$}. %Let $\chi(\tau)= \chi(\tau)$ be the absolute Euler characteristic of $\tau$.
 %     For any integers $Q, \ord\geq 0$, any large enough $g$, there exists a Friedman--Ramanujan
 %     function $h_{g, Q, \ord}^{\nu, {\ftype}}$ such that, for any test function $F$, any $\eta>0$,
 %  \begin{align*}
 %   % \label{eq:exist_asympt_type_intro}
 %    & \Big| \sum_{j=0}^\infty\avN[\rho_j{\ftype}]{\nu \otimes F} -
 %      \frac{1}{g^{\chi(\Sf)}}
 %      \int_0^{+ \infty} F(\ell)h_{g, Q, \ord}^{\nu, {\ftype}}(\ell) \d \ell \Big|
 %    \\ &
 %         \leq \fn(Q,\ord,\chi(\Sf),\eta,\kappa)  e^{(13 \chi(\Sf) + U_2(\chi(\Sf))) \tfL}
 %         \frac{\norm{ F(\ell) \, e^{(1+\eta)\ell}}_\infty}{g^{\ord+1}}
 %  \end{align*}
 %  and there exists $\rK = \rK(\ord, \chi(\Sf))$ and $\rN = \rN(\ord, \chi(\Sf))$ such that
 %  $h_{g, Q, \ord}^{\nu, {\ftype}}\in \cF_w^{\rK,\rN}$, and
 %    \begin{equation}
 %    \label{eq:bound_FR_norm_lc}\norm{h_{g,Q,\ord}^{\nu, {\ftype}}}^w_{\cF^{\rK,\rN}}
 %    \leq  \fn(Q, \ord, \chi(\Sf))\, 
 %      e^{6 \tfL  \chi(\Sf)}.
 %    \end{equation}
 % \end{cor}

\subsection{Writing down the volume functions for lc-local types}
\label{sec:real-fill-type}

Let us now sketch the proof of Theorem \ref{thm:leviathan_lc}, which is very similar to the proof of
Theorem \ref{t:main}.

As before, we start by writing down an explicit expression for the volume function
$V_{g, Q}^{\nu, {\ftype}, j}$. We then use the asymptotic expansion from \cite[Theorem
1.1]{anantharaman2022} to obtain an approximate expression for this volume function, up to errors
decaying like $1/g^{\ord+1}$. The Friedman--Ramanujan property follows from a variant of Theorem
\ref{t:main}. The main new difficulty here is to control the dependency of the estimates on the
integer $j$, the number of 1d-components of our lc-local-type (as we will need to understand this
dependency well for the spectral gap result).

\subsubsection{Realizations of a lc-local type}

In order to write down the volume functions $ V_{g, Q}^{\nu, {\ftype}, j}$ with a formula similar to
\eqref{e:disint_phi}, we need an enumeration of the possible embeddings of the c-filling type $\Sf$
into a surface $S_g$ of genus $g$. We therefore extend from \cite[\S 4.5]{Ours1} the notion of
realization of a filling type into $S_g$.

For the sake of convenience, we treat separately purely 2d and 1d c-filling types, before merging
the two definitions.  Recall from Notation \ref{r:order} that we can put an order on the labels of
the connected components of $\partial \Sf$.

\begin{defa}
  \label{def:realizations}
  Let $\mathbf{S}=(\sigma_1, \ldots, \sigma_j)$ be a purely 2-dimensional c-filling type, and let
  $g \geq 2$. We call \emph{realization of $\mathbf{S}$ in a connected surface $S_g$ of genus $g$}
  any pair $\mathfrak{R} = (\vec{I}, \vec{g})$, where:
  \begin{itemize}
  \item $\vec{I}$ is a partition of $\partial \mathbf{S} $ into $\mathfrak{q} \geq 1$ non-empty sets
    $I_1, \ldots, I_{\mathfrak{q}}$, numbered such that $i \mapsto \min I_i$ is an increasing
    function;
  \item we ask furthermore the partition to be \emph{transitive} in the sense given below;
  \item $\vec{g} = (g_1, \ldots, g_{\mathfrak{q}})$ is a vector of non-negative integers;
  \item if for any $1 \leq i \leq \mathfrak{q}$, we denote $n_i := \# I_i$ and
    $\chi_i := 2g_i-2+n_i$, then $\chi_i > 0$ or $(g_i,n_i)=(0,2)$, and we have:
       \begin{equation}
     \label{eq:add_euler_realization}
     \chi(\Sf) + \sum_{i=1}^{\mathfrak{q}} \chi_i = 2g-2.
   \end{equation}  
 \end{itemize}
 The set of realizations of $\mathbf{S}$ in $S_g$ is denoted as $R_g(\mathbf{S})$.
\end{defa}
Recall that $\partial \mathbf{S} $ comes naturally partitioned into
$\bigsqcup_{i=1}^j \partial \sigma_i$. The partition $\vec{I}$ being \emph{transitive} means that
there does not exist a strict non-empty subset of $\partial \mathbf{S} $ that can be realized simultanously
as a union of sets $(\partial \sigma_i)_{1 \leq i \leq j}$ and as a union of sets
$(I_i)_{1 \leq i \leq \mathfrak{q}}$. 

Realizations are meant to enumerate all possible embeddings of a local type in a closed surface of
genus $g$ (i.e. absolute Euler characteristic $2g-2$), modulo the action of the mapping class group
(\cite[\S 4.5]{Ours1}). The transitivity assumption guarantees that the resulting surface is
\emph{connected}.

For a purely 1d c-filling type, i.e. a multi-curve, we have a similar definition for realizations of
a multicurve in a connected surface $S_{g, n}$ of signature $(g,n)$.

\begin{defa}
  \label{def:realizations-c}
  Let $c=(c_1, \ldots, c_j)$ be a purely 1-dimensional c-filling type. Let $(g,n) \in \N_0^2$ be
  such that $2g-2+n > 0$ or $(g,n)=(0,2)$. We call \emph{realization of $c$ in a connected surface
    $S_{g, n}$ of signature $(g,n)$} any pair $\mathfrak{R} = (\vec{J}, \vec{g})$, where:
  \begin{itemize}
  \item $\vec{J}$ is a partition of $ \{1, \ldots, n\}\sqcup  \tilde{\partial}c $ into
    $\mathfrak{q} \geq 1$ non-empty sets $J_1, \ldots, J_{\mathfrak{q}}$, numbered so that
    $i \mapsto \min J_i$ is an {increasing} function;
  \item we ask furthermore the partition to be \emph{transitive} in the sense given below;
  \item $\vec{g} = (g_1, \ldots, g_{\mathfrak{q}})$ is a vector of non-negative integers;
  \item if for any $1 \leq i \leq \mathfrak{q}$, we denote $n_i := \# J_i$ and
    $\chi_i := 2g_i-2+n_i$, then $\chi_i>0$ or $(g_i,n_i)=(0,2)$, and in the latter case, $J_i$
    must contain exactly one element of $ \{1, \ldots, n\}$ and one element of $\tilde{\partial} c$;
  \item we have:   \begin{equation}
     \label{eq:add_euler_realization}
     \sum_{i=1}^{\mathfrak{q}} \chi_i = 2g-2+n.
   \end{equation}   
 \end{itemize}
 The set of realizations of $(c_1, \ldots, c_j)$ in $S_{g, n}$ is denoted as
 $R^1_{g, n}(j)$.
\end{defa}

We recall that here $\tilde{\partial}c = \{1, \ldots, j\} \times \{\pm\}$ is the set of boundary
curves of the cylinders containing the multi-curve $c$.
Note that the superscript $1$ refers to the fact that we consider realizations of 1d curves. If the
curves, instead of being numbered by $\{1, \ldots, j\}$, are numbered by an abstract set $B$ of
cardinality $j$, we write $R^1_{g, n}(B)$.  Similarly, if the boundary components of $S_{g, n}$ are
indexed by a set $I$ of cardinality $n$, we denote $S_{g, I}$ and $R^1_{g, I}(B)$.

Realizations are meant to enumerate (modulo the action of the mapping class group) all possible
embeddings of a multi-curve with $j$ components into a surface of signature $(g,n)$.  In the fourth
point of the definition, we impose that two curves cannot be in the same homotopy class, but we
authorize the case where one of the curves is homotopic to a boundary component.  In particular, if
$(g, n)=(0, 2)$ we must have $j\leq 1$.

Because we are only interested in \emph{connected} surfaces $S_{g, n}$, we again impose a
transitivity condition, defined as follows.  Remark that
$ \{1, \ldots, n\}\sqcup \tilde{\partial}c$ comes naturally partitioned into
$$ \Big(\bigsqcup_{k=1}^{n}\{k\} \Big) \sqcup \Big(\bigsqcup_{i=1}^j(\{i\}\times \{\pm\})\Big).$$
The partition $\vec{J}$ being \emph{transitive} means that there does not exist a strict non-empty
subset of $ \{1, \ldots, n\}\sqcup \tilde{\partial}c$ that can be realized simultanously as a union
of sets $ \{k\}$ and $\{i\}\times \{\pm\}$ as above, and as a union of sets $J_i$ with
$1 \leq i \leq \mathfrak{q}$.

Remark that the definition makes sense for $j=0$, in this case we must have $\mathfrak{q}=1$.

The definitions above can be extended for realizations of an arbitrary c-filling types
$\Sf=(c, \sigma)$ (having 1d and 2d components $c$ and $\sigma$) into a connected surface $S_g$. For
the purpose of controlling the dependency of volume functions on $j$, we opt for the following
formulation in two steps: we first realize the 2d part $\sigma$ into $S_g$, and in a second step
we realize $c$ in $S_g\setminus \sigma$.

\begin{defa}
  \label{def:realizations-lc}
  Let $\Sf=(c, \sigma)$ be a general c-filling type, separated into its 1d and 2d components
  $c=(c_1, \ldots, c_j)$ and $\sigma$. Let $g \geq 2$. We call \emph{realization of $\Sf$ in a
    connected surface $S_g$ of genus $g$} the data of
  $\mathfrak{R}=(\mathfrak{R}_\sigma, \pi, (\mathfrak{R}_l)_{1 \leq l \leq \mathfrak{q}})$, where
  \begin{itemize}
  \item
    $\mathfrak{R}_\sigma = (\vec{I}, \vec{g})=(I_1, \ldots, I_{\mathfrak{q}}, g_1, \ldots,
    g_{\mathfrak{q}})$ is a realization of $\sigma$ in $S_g$ with $\mathfrak{q}$ components;
  \item $\pi$ is a partition of $\{1, \ldots, j\}$ into (possibly empty) sets
    $\pi_1, \ldots, \pi_{\mathfrak{q}}$;
  \item for each $1 \leq l \leq \mathfrak{q}$,
    $\mathfrak{R}_l= (\vec{J}_l, \vec{g}_l) \in R^1_{g_l, I_l}(\pi_l)$ is a realization of $(c_i)_{i\in \pi_l}$
    in a connected surface $S_l=S_{g_l, n_l}$ of signature $(g_l, n_l)$, where $n_l= \# I_l$.
   \end{itemize}
 The set of realizations of $\mathbf{S}$ in $S_g$ is denoted as $R_g(\mathbf{S})$.
 \end{defa}

For such a realization, the integer $\mathfrak{q}$ corresponds to the number of connected components
of $S_g\setminus \sigma$. If those components are called $S_1, \ldots, S_{\mathfrak{q}}$, then $S_l$
has $n_l$ boundary components indexed by $I_l$. The set $\pi_l$ corresponds to the curves in
$(c_1, \ldots, c_j)$ that lie inside the component $S_l$.

For $1 \leq l \leq \mathfrak{q}$, we denote $\vec{J}_l=(J_l^1, \ldots, J_l^{{\mathfrak{q}}_l})$,
i.e. we let $\mathfrak{q}_l$ be the number of connected components of the surface
$S_l\setminus (\bigcup_{i\in \pi_l} c_i)$ (some of those components may be peripheral annuli). Then
$(J_{l}^i)_{1 \leq i \leq {\mathfrak{q}}_l}$ forms a partition of
$I_l \sqcup (\pi_l\times \{\pm\})$.

The collection $(J_{l}^i)_{\substack{1 \leq l \leq {\mathfrak{q}}, 1 \leq i \leq {\mathfrak{q}}_l}}$
forms a partition of $\tilde{\partial} \Sf = \{1, \ldots, n_\sigma\} \sqcup
\tilde{\partial}c$. Letting $n_l^i = \# J_l ^i$, we have
$$\sum_{i=1}^{{\mathfrak{q}}_l}n_l^i = n_l+2|\pi_l|
\quad \text{and}
\quad \sum_{l=1}^{\mathfrak{q}}\sum_{i=1}^{{\mathfrak{q}}_l}n_l^i=n_\sigma+2j=\n.$$

\begin{nota}
  With the notations of Definition \ref{def:realizations-lc}, for $Q\geq 0$ an integer, we denote by
  $R_{g, Q}(\mathbf{S})$ the set of realizations in $ R_g(\mathbf{S})$ for which
  $c\in \MC_{S_g}(Q)$, that is to say, the 1d part $c$ disconnects $S_g$ into at most $Q$ connected
  components.
\end{nota}

Remark that the number of connected components of $S_g\setminus \Sf$ coincides with
$\sum_{l=1}^{\mathfrak{q}} {\mathfrak{q}}_l$. For a realization in $R_{g, Q}(\mathbf{S})$, by
definition, the surface $S_g\setminus c$ has at most $Q$ connected components. {Each
  connected component of $S_g \setminus \Sf$ is either a connected component of $S_g \setminus c$,
  or shares a boundary with one boundary component of $\sigma$.  Hence,
$$\sum_{l=1}^{\mathfrak{q}} {\mathfrak{q}}_l \leq Q+ n_\sigma.$$}

\subsubsection{Volume function associated to a realization}
\label{sec:volume-funct-rea}

For a realization $\mathfrak{R} \in R_g(\Sf)$, we define the volume function $V_{\mathfrak{R}}(\z)$,
which is a function of $\n=n_\sigma+2j$ variables
$\z=(z_t)_{t\in \tilde{\partial} \Sf}$ defined by
\begin{align*}
  V_{\mathfrak{R}}(\z):= \prod_{l=1}^{{\mathfrak{q}}} \prod_{i=1}^{{\mathfrak{q}}_l} V_{g_l^i,
  n_l^i}(\z_{J_l^i})
  \quad \text{where} \quad
  \z_{J_l^i} = (z_t, t\in J_{l}^i),
  % \prod_{k=1}^j \frac{1}{z_{k, +}}\delta(z_{k, +}-z_{k, -}),
\end{align*}
where we recall that for any $(g,n)$ with $2g-2+n>0$, $V_{g,n}(x_1, \ldots, x_n)$ is the total
volume of the moduli space of surfaces of signature $(g,n)$ with boundary lengths
$x_1, \ldots, x_n$, and
$$V_{0, 2}(x_1, x_2) =\frac1{x_1} \delta(x_1-x_2)$$  where $\delta$ denotes the Dirac mass at $0$.
 
 For $1 \leq l \leq {\mathfrak{q}}$, let
 $$ J_{l}^{\mathrm{cyl}} := \bigcup_{\substack{1 \leq i \leq {\mathfrak{q}}_l\\ \chi_l^i=0}} J_l^i,
 $$
 where $\chi_l^i=2g_n^i-2+n_l^i \geq 0$.  We then define
 $J^{\mathrm{cyl}} := \bigcup_{l=1}^{{\mathfrak{q}}} J_{l}^{\mathrm{cyl}}$. This corresponds to the set of
 boundary components of $\Sf$ that are glued to one another by cylinders in the
 realization. 
  % We let
 %$$\pi'_l=\pi_l\setminus\{i, (i, +) \mbox{ or } (i, -) \in J_{l}^{\# 2} \},$$
 %corresponding to the curves $c_i, i\in \pi_l$ that are not peripheral in $S_l$.
  %We have 
%$$\sum_{\substack{i=1 \\ n_l^i\not=2}}^{q_l}n_l^i= n_l+2\# \pi'_l$$.

\subsubsection{Upper bound on volume functions}
\label{sec:upper-bound-volume}

Let us prove a rough upper bound on the volume functions.

\begin{lem}
  \label{lem:u_vol_lc}
  For a fixed partition $(\vec{J}_l)_{1 \leq l \leq \mathfrak{q}}$ of $\tilde{\partial}{\Sf}$ as
  above, the sum over all corresponding realizations possessing this datum satisfies, for a
  universal constant $C$,
  \begin{align*}\frac{1}{V_g}\sum_{\mathfrak{R}}
    \prod_{t\in \tilde{\partial}\Sf} z_t \,\,V_{\mathfrak{R}}(\z)
    \leq \frac{C}{g^{\chi(\Sf)}}
    \prod_{  t\in  \tilde{\partial}\Sf \setminus J^{\mathrm{cyl}}} 2  \sinh \div{ z_t} 
  \prod_{\substack{\chi_l^i = 0 \\ J_l^i =\{t, t'\}}}  z_t \delta(z_t- z_{t'})
  \end{align*}
  to be interpreted as an inequality between two positive measures.  The number of possible data
  sets $(\vec{J}_l)_{1 \leq l \leq \mathfrak{q}}$ is at most $2^{\n (Q+n_\sigma)}$.
\end{lem}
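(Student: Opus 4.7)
The plan is to combine two standard ingredients: Mirzakhani's pointwise bound on volume polynomials (which controls the dependence of $V_{g,n}(x_1,\ldots,x_n)$ on the boundary lengths) and the Mirzakhani--Zograf asymptotic for ratios $V_{g',n'}/V_g$ (which controls the sum over genus vectors). The factorial structure of $V_{\mathfrak{R}}$ as a product over realization pieces makes both ingredients applicable piece by piece.

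First, I would split each realization piece according to whether $\chi_l^i > 0$ or $\chi_l^i = 0$. For the cylinder pieces, the definition $V_{0,2}(z_t, z_{t'}) = \delta(z_t - z_{t'})/z_t$ combined with the factor $z_t z_{t'}$ immediately produces $z_t\,\delta(z_t - z_{t'})$. For the non-cylinder pieces, I would apply Mirzakhani's upper bound
\begin{equation*}
  \frac{V_{g,n}(x_1,\ldots,x_n)}{V_{g,n}} \leq \prod_{k=1}^n \frac{\sinh(x_k/2)}{x_k/2},
\end{equation*}
so that multiplying by $\prod_k x_k$ gives $\prod_k 2\sinh(x_k/2)$. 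Combining over all pieces, I obtain
\begin{equation*}
  \prod_{t \in \tilde{\partial}\Sf} z_t \cdot V_{\mathfrak{R}}(\z)
  \leq V_{\mathfrak{R}}(0) \prod_{t \in \tilde{\partial}\Sf \setminus J^{\mathrm{cyl}}} 2\sinh(z_t/2)
  \prod_{\substack{\chi_l^i=0\\ J_l^i=\{t,t'\}}} z_t\,\delta(z_t - z_{t'}),
\end{equation*}
where $V_{\mathfrak{R}}(0) := \prod_{l,i:\chi_l^i > 0} V_{g_l^i, n_l^i}$ is the value at zero boundary lengths of the non-cylinder factors.

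The heart of the proof is then bounding $\frac{1}{V_g} \sum_{\vec g} V_{\mathfrak{R}}(0)$ uniformly over the fixed partition data. Using the Mirzakhani--Zograf asymptotic $V_{g,n} \asymp (2g-3+n)!\,(4\pi^2)^{2g-3+n}\,g^{-1/2}$ together with the Euler-characteristic constraint $\sum_{l,i}\chi_l^i = 2g - 2 - \chi(\Sf)$ (from \eqref{eq:add_euler_realization} applied to both the 2d and 1d realizations), a standard Stirling argument gives
\begin{equation*}
  \frac{1}{V_g} \sum_{\vec g} \prod_{l,i:\chi_l^i > 0} V_{g_l^i, n_l^i}
  \leq \frac{C}{g^{\chi(\Sf)}}
\end{equation*}
for a constant $C$ depending only on the combinatorial data $(n_l^i)$; the gain of $g^{-\chi(\Sf)}$ reflects the fact that, among the pieces, one ``dominant'' piece absorbs almost all the genus while the remaining ones contribute factorial losses that amount to $g^{-\chi(\Sf)}$. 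This step is essentially identical to computations appearing in \cite{Ours1,anantharaman2022} and I would invoke those directly.

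For the counting claim, since each of the $\n = n_\sigma + 2j$ boundary components of $\tilde{\partial}\Sf$ is assigned to one of at most $\sum_l \mathfrak{q}_l \leq Q + n_\sigma$ slots $J_l^i$, the number of possible partitions is at most $(Q + n_\sigma)^{\n} \leq 2^{\n(Q+n_\sigma)}$. The main obstacle, which I expect to be mostly bookkeeping rather than genuine difficulty, is ensuring that the constant $C$ produced by the Mirzakhani--Zograf step is uniform in $\vec g$ and the partition data — this requires using an effective (not just asymptotic) version of the Mirzakhani--Zograf bound, as already developed in \cite{anantharaman2022}.
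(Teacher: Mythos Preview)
Your approach is correct and essentially the same as the paper's. Both proofs separate the $\z$-dependence via a sinh bound and then control the remaining sum over genus vectors; the only difference is packaging. The paper invokes \cite[Lemma 24]{nie2023} as a single reference that gives
\[
\sum_{\mathfrak{R}} \prod_{t} z_t\, V_{\mathfrak{R}}(\z)
\leq \prod_{l=1}^{\mathfrak{q}} W_{\chi_l} \prod_{t \notin J^{\mathrm{cyl}}} 2\sinh\!\div{z_t}
\prod_{\chi_l^i=0} z_t\,\delta(z_t-z_{t'}),
\]
with $W_\chi$ equal to $V_{\chi/2+1,0}$ or $V_{(\chi+1)/2,1}$, and then bounds $\prod_l W_{\chi_l} = \cO(V_g/g^{\chi(\Sf)})$. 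Your route via Mirzakhani's pointwise bound followed by a Stirling/Mirzakhani--Zograf estimate on $\sum_{\vec g}\prod V_{g_l^i,n_l^i}$ reaches the same place. The advantage of citing Nie directly is that his inequality comes with universal constants, so the uniformity issue you flag (the $n_l^i$ may be as large as $\sim 2j$, outside the fixed-$n$ regime of the Mirzakhani--Zograf asymptotic) is already handled; your effective-version caveat is exactly right, and Nie's lemma is precisely that effective version. Your counting argument for the number of partitions is identical to the paper's.
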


\begin{proof}
  The choice of $(\vec{J}_l)_{1 \leq l \leq \mathfrak{q}}$ automatically fixes ${\mathfrak{q}}$,
  $\vec{I}$, $\pi$ and $J^{\mathrm{cyl}}$. This reduces to a sum over all possible values of
  $g_1, \ldots, g_{\mathfrak{q}}$ and $\vec{g}_l$. This sum is controlled by \cite[Lemma
  24]{nie2023}:
  \begin{align*}\sum_{\mathfrak{R}}
    \prod_{t\in \tilde{\partial}\Sf} z_t \,\,V_{\mathfrak{R}}(\z)\leq
    \prod_{l=1}^{{\mathfrak{q}}}   W_{\chi_l}
    \prod_{  t\in  \tilde{\partial}\Sf \setminus J^{\mathrm{cyl}}} 2  \sinh \div{ z_t}
      \prod_{\substack{\chi_l^i=0 \\ J_l^i =\{t, t'\}}}  z_t \delta(z_t- z_{t'})
  \end{align*}
  where, for $\chi > 0$,
  \begin{equation*}
    W_\chi =
    \begin{cases}
      V_{\frac{\chi}{2}+1,0} & \text{if } \chi \text{ is even};\\
      V_{\frac{\chi+1}2,1} & \text{otherwise.}
    \end{cases}
  \end{equation*}
  The product $\prod_{l=1}^{{\mathfrak{q}}} W_{\chi_l}$ itself is bounded by
  $\cO(V_g/g^{\chi(\Sf)})$.  Then, $2^{(n_\sigma+2j)(Q+n_\sigma)}$ is an upper bound on the number
  of partitions of a set with $\n = n_\sigma+2j$ elements into at most $Q+n_\sigma$ subsets.
\end{proof}

 \begin{rem}\label{r:constraints} If we want to further impose that $\Sf=(c, \sigma)$ is realised as a sub-c-surface, we have to impose that $\chi_j >0$ and $\chi_l^i>0$ in Definitions \ref{def:realizations-c} and \ref{def:realizations-lc}.
   It is also possible to impose this restriction only on some elements $J_j$ or $J_l^i$ of the
   partitions; that is to say, forbid certain components to be cylinders. Obviously, imposing
   restrictions on the set of allowed realizations does not modify the following discussion.
\end{rem}

\subsection{Proof of our main results}

We now turn to the proof of the main results of this section, Proposition \ref{thm:volumen_lc} and
Theorem \ref{thm:leviathan_lc}.  We omit technical details that are similar to earlier
considerations: we focus on the main new difficulty, which is to control the dependency on the
integer $j$. This requires to used refined estimates, with explicit dependency in the
  constants, as developed in \cite[Appendix B]{Ours1}.
\begin{rem}
  At some point it seems necessary to assume some upper bound, say $j\ll g^{1/2}$. It is not clear
  if this is strictly necessary, or only a consequence of our imperfect optimisation of the
  estimates. In any case, the application to spectral gap results only requires $j\leq \log g$.
\end{rem}

\subsubsection{Expression of the volume function}

Let ${\ftype}=\eqc{{{\Sf, \curv, \tau}}}$ be a local topological type of lc-surface such that
$\mathfrak{c}(\Sf)=0$ (thus, $n_\sigma = \n$). Recall that $\rho_j\ftype$ stands for the local
topological type obtained by adding $j$ 1-dimensional connected components to~$\ftype$. If $\ftype$
fills the c-surface $\Sf$, then $\rho_j\ftype$ fills the c-surface~$\rho_j\Sf$ and the decomposition
of $\rho_j\Sf$ into 1d and 2d-components can be written as $\rho_j\Sf=(c_1, \ldots, c_j, \Sf)$.

We now straightforwardly apply the method developed in \cite[Thm. 5.7 and Prop. 5.22]{Ours1} to
establish Proposition~\ref{thm:volumen_lc}, and see that the volume function
$V_{g, Q}^{\nu, {\ftype}, j}(\ell)$ has the form
\begin{align}\label{e:disint_phi_nu}
  \int_{\substack{\vec{\alpha} \in \R_{>0}^j}}
  \int_{\substack{\x \in \R_{>0}^{\n}}}
  \int_{\substack{Y\in \cM_\x( \Sf |  \Sf_{\partial}) \\ \ell_Y({\curv})=\ell }}
  \phi(\x, \vec{\alpha}) \,
  \nu(\tau_{(\x,Y)}, \vec{\alpha})
  \frac{\d\mathrm{Vol}^{\mathrm{WP}}_{\g, \n}(\x, Y) }{\d\ell} \d \vec\alpha ,\end{align}
where, for $\vec{\alpha}=(\alpha_1,\ldots,\alpha_j) \in \R_{>0}^j$ and $\x \in \R_{>0}^{\n}$,
$$\phi(\x, \alpha_1, \ldots, \alpha_j) =  \frac{1}{\alpha_1 \ldots \alpha_j}
\Phi_{g,Q}^{\ftype,j}(\x, \alpha_1, \alpha_1, \ldots, \alpha_j, \alpha_j)$$
and $\Phi_{g,Q}^{\ftype,j}$ is defined by
\begin{align}\label{e:enumeration}
  \Phi_{g,Q}^{\ftype,j}(\z):=\frac{1}{n(\ftype)}\frac{1}{V_g}
  \prod_{t\in \tilde{\partial}\Sf } z_t \,\sum_{\mathfrak{R} \in R_{g, Q}(\rho_j \Sf)} V_{\mathfrak{R}}(\z).
\end{align}
Recall from Notation \ref{d:geo_rep} that for $Z=(\x,Y)$, $\tau_{Z}\in \cS(Z)$ is the geodesic
representative of $\tau$ for the metric $Z$. We should actually restrict the sum to realizations
$\mathfrak{R}$ such that $(c_1, \ldots, c_j, \tau)$ stands as a sub-c-surface in $S_g$ (see Remark
\ref{r:constraints}).

Careful attention needs to be paid to the domain of integration $ \cM_\x( \Sf | \Sf_{\partial})$.
Recall from Notation \ref{n:SS} that $\Sf=\Sb\sqcup \Sinn$ where $\Sb$ is filled by $\curv$ and
$\partial \tau$.  By hypothesis, the function $Y\mapsto \nu(\tau_{(\x,Y)}, \vec\alpha)$ is invariant
by the mapping class group of $\tau$. Thus the pair
$(\ell_Y(\curv), \nu(\tau_{(\x,Y)}, \vec\alpha))$, initially defined on the Teichm\"uller space
$\cT_\x(\Sf)$, is invariant by $\mathrm{MCG}(\Sinn)$. It follows that the integral takes place on
$ \cM_\x( \Sf | \Sb)$, defined as  the quotient of $\cT_\x(\Sf)$ by $\mathrm{MCG}(\Sinn)$ and the
Dehn twists along the shared boundary components of $\Sb$ and $\Sinn$. The
combinatorial factor $n(\ftype)$ is the number of diffeomorphisms of $\Sb$ stabilising
$(\curv, \partial\tau)$, modulo homotopy.
   
 \subsubsection{Rough upper bounds on the volume function associated to a type}

 Let us prove the following rough upper bound.

 \begin{lem}\label{lem:rough_u_lc}
   Let $\ftype = \eqc{\Sf,\curv,\tau}$ be a local topological type of lc-surface such that
   $\mathfrak{c}(\Sf)=0$.  For any large enough $g$, any $j, Q \geq 0$, any test function $F$, any
   $0 < \eta \leq 1/9$,
   \begin{align*}
     &\Big|\int_{\R_{>0}} F(\ell) V_{g,Q}^{\nu,\ftype,j}(\ell) \d \ell\Big|\\
     &\leq \fn(\chi(\Sf), \eta)
     2^{(\n+2j)(Q+\n)} (e^{\kappa}-1)^j \mathfrak{f}(j, \chi(\tau))
     e^{13 \tfL  \chi(\Sf)} 
     \frac{\norm{ F(\ell) \, e^{(1+\eta)\ell}}_\infty}{g^{\chi(\Sf)}}.
   \end{align*}
 \end{lem}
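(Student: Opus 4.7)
The strategy combines the rough bound on sums of realizations from Lemma \ref{lem:u_vol_lc}, the comparison estimate of Proposition \ref{p:comparison1}, and the polynomial volume bound from Corollary \ref{c:pvolume}, while carefully tracking all contributions. The plan is to start from the integral representation \eqref{e:disint_phi_nu} of $V_{g,Q}^{\nu,\ftype,j}(\ell)$ and apply Lemma \ref{lem:u_vol_lc} to control the sum of realizations defining $\Phi_{g,Q}^{\ftype,j}$. After grouping realizations by their partition data $(\vec{J}_l)_{1 \leq l \leq \mathfrak{q}}$, of which there are at most $2^{(\n+2j)(Q+\n)}$, each group yields an upper bound of the form $(C/g^{\chi(\Sf)})\prod_{t \notin J^{\mathrm{cyl}}} 2\sinh(z_t/2)\prod_{\text{cyl.\ pairs}}z_t\,\delta(z_t-z_{t'})$. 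After dividing by $\alpha_1 \cdots \alpha_j$ to recover $\phi$, I would substitute this into \eqref{e:disint_phi_nu} and use the pointwise bound $|\nu(\tau, \vec\alpha)|\leq \mathfrak{f}(j,\chi(\tau))$ on the support of $\nu$, which constrains each $\alpha_k \leq \kappa$.

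Next, the integration over $\vec\alpha \in [0,\kappa]^j$ should produce the factor $(e^\kappa-1)^j$. Each $\alpha_k$ appears twice in $\z$: after using the delta constraints to eliminate paired integrations and bounding $2\sinh(x/2) \leq e^{x/2}$, the integrand in $\alpha_k$ is controlled by $Ce^{\alpha_k}$ (any $1/\alpha_k$ singularity from $\phi$ being absorbed by the vanishing of $\sinh$ at zero), so that $\int_0^\kappa C e^{\alpha_k}\,\d\alpha_k \leq C(e^\kappa-1)$. For the integral over $(\x, Y)$ with $\ell_Y(\curv)=\ell$, a comparison estimate adapted from Proposition \ref{p:comparison1} applied to the multi-loop $\curv \cup \partial\tau$ filling $\Sf$ gives $\sum_i x_i \leq 2\ell + 2\ell(\partial\tau)$, and combined with $\ell(\partial\tau) \leq 12\tfL\chi(\Sf)$ from Remark \ref{rem:nu_bound}, this yields $\prod_i 2\sinh(x_i/2) \leq e^\ell e^{12\tfL\chi(\Sf)}$, which fits inside the announced factor $e^{13\tfL\chi(\Sf)}$ with a small margin.

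Finally, the residual Weil--Petersson volume of $\{Y : \ell_Y(\curv) = \ell\}$ should be controlled via Corollary \ref{c:pvolume} (or rather its extension to arbitrary multi-loops noted in the remark following its proof), yielding a polynomial growth in $\ell$ of order $3\chi(\Sf)$. Absorbing this polynomial into the exponential weight via the elementary inequality $\ell^p \leq \fn(p,\eta) e^{\eta\ell}$ produces the prefactor $\fn(\chi(\Sf),\eta)e^{(1+\eta)\ell}$, and pulling $\|F(\ell)e^{(1+\eta)\ell}\|_\infty$ out of the integral completes the estimate. The main technical subtlety will be the combinatorial and geometric bookkeeping: distinguishing which of the $\n$ boundary components of $\Sf$ correspond to components of $\partial\tau_{2d}$ (internal to $\Sf$, constrained through the support of $\nu$) versus those on the exterior of $\tau$ (controlled by the comparison estimate), and verifying that the combinatorial factors from the realizations align correctly across the different integration steps.
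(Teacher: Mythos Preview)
Your overall plan matches the paper's proof closely through the first several steps: the use of Lemma~\ref{lem:u_vol_lc}, the bound $|\nu|\leq\mathfrak{f}(j,\chi(\tau))$, the integration over $\vec\alpha\in[0,\kappa]^j$ producing $(e^\kappa-1)^j$, and the comparison estimate $\tfrac12\sum_i x_i\leq\ell_Y(\curv)+\ell_Y(\partial\tau)\leq\ell_Y(\curv)+12\tfL\chi(\Sf)$ are all exactly what the paper does.

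There is, however, a genuine gap in your final step. You propose to bound the residual Weil--Petersson volume of $\{Y:\ell_Y(\curv)=\ell\}$ via Corollary~\ref{c:pvolume}. But Corollary~\ref{c:pvolume} requires the multi-loop to \emph{fill} the surface, and $\curv$ alone does not fill $\Sf$: only the pair $(\curv,\tau)$ does. The integration in \eqref{e:disint_phi_nu} runs over $\cM_\x(\Sf\,|\,\Sb)$, which includes the moduli of the interior piece $\Sinn\subset\tau$; these are entirely unconstrained by $\ell_Y(\curv)$, so the level set $\{\ell_Y(\curv)\leq\ell\}$ does not have polynomially bounded volume in $\ell$. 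Your own earlier comparison step implicitly acknowledges this (you invoke $\curv\cup\partial\tau$ as the filling object), but you then drop $\partial\tau$ when appealing to the volume bound.

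The paper repairs this by exploiting a further feature of the support of $\nu$: because each $2$-dimensional component of $\tau$ is a $(\kappa,\tfL)$-derived tangle, one can construct (as shown in \cite{Moebius}) a multi-loop $\gamma_\tau$ filling $\tau$ with $\ell_Y(\gamma_\tau)\leq 9\tfL\chi(\tau)$. Then $(\curv,\gamma_\tau)$ fills $\Sf$, Corollary~\ref{c:pvolume} applies to give polynomial growth in $\ell_Y(\curv)+\ell_Y(\gamma_\tau)$, and on the support of $\nu$ one absorbs $e^{-\eta\ell_Y(\curv)}\leq e^{9\eta\tfL\chi(\tau)}e^{-\eta(\ell_Y(\curv)+\ell_Y(\gamma_\tau))}$. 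This extra factor $e^{9\eta\tfL\chi(\tau)}$, combined with the hypothesis $\eta\leq 1/9$, is precisely what upgrades the $e^{12\tfL\chi(\Sf)}$ you obtained to the stated $e^{13\tfL\chi(\Sf)}$ --- the ``small margin'' you noticed is not slack but is consumed exactly here.
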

 
 \begin{proof}
   We use the formula for the volume function and the upper bound from
   Lemma~\ref{lem:u_vol_lc}. In order to control the exponential terms, we note that, in each term
   appearing with associated lengths $(z_t)_{t \in \tilde{\partial} \Sf}$, if
   $\x=(x_1, \ldots, x_{\n})$ is the vector of boundary lengths of the surface $\Sf$ filled by
   $(\curv, \tau)$ and $\vec{\alpha}=(\alpha_1, \ldots, \alpha_j)$ the vector of the 1d-component,
   then
   \begin{equation}
     \label{e:fill1} \frac12
      \sum_{t \in \tilde{\partial} \Sf} z_t = \sum_{j=1}^j \alpha_j + \frac 12 \sum_{i=1}^{\n} x_{i}.
    \end{equation}
   By a straightfoward adaptation from Proposition
 \ref{p:comparison1},  for every hyperbolic metric $Y$ on~$\Sf$,
  \begin{align} \label{e:fill}
    \frac 12 \sum_{i=1}^{\n} x_i \leq \ell_Y(\curv)+\ell_Y(\partial \tau)
    \leq \ell_Y(\curv) + 12 \tfL \chi(\Sf)
  \end{align}
  on the support of $\nu$, by Remark \ref{rem:nu_bound}.  As a result, the integral
  $|\int_{\R_{>0}} F(\ell) V_{g,Q}^{\nu,\ftype,j}(\ell) \d \ell|$ is bounded above by
  $2^{(\n+2j)(Q+\n)} e^{12 \tfL \chi(\Sf)} \norm{ F(\ell) \,
      e^{(1+\eta)\ell}}_\infty g^{-\chi(\Sf)}$ times
 \begin{align*}
     \int_{\substack{\vec\alpha, \x >0 \\ Y\in \cM_\x( \Sf |  \Sf_{\partial})}}
      e^{-\eta \ell_Y({\curv})} 
   \prod_{i=1}^j e^{\alpha_i}\bbbone_{[0, \kappa]}(\alpha_i) 
  | \nu(\tau_{(\x,Y)}, \vec\alpha)|
 \d\mathrm{Vol}^{\mathrm{WP}}_{\g,\n}(\x, Y)      \d \vec\alpha.
  \end{align*}
  We now integrate $\vec\alpha$ on $[0,\kappa]^j$, using the upper bound \eqref{n:nu-requirements},
  to bound the previous integral by
 \begin{align*}
    (e^{\kappa}-1)^j \mathfrak{f}(j, \chi(\tau))
   \int_{\x,Y \in \cM_\x( \Sf |  \Sf_{\partial})} e^{-\eta \ell_Y({\curv})} 
                    \d\mathrm{Vol}^{\mathrm{WP}}_{\g,\n}(\x,Y)
 \end{align*}
 where the integral runs over the set of $\x >0$ and $Y\in \cM_\x( \Sf | \Sf_{\partial})$.  We
 conclude by noticing that the integral above is bounded by
 $\fn(\chi(\Sf), \eta) e^{9\eta \tfL \chi(\tau)}$.  Indeed, we showed in \cite{Moebius} that, because
 $\tau$ is a derived tangle, we can construct a multi-loop $\gamma_\tau$, filling $\tau$, of total
 length $\leq 9 \tfL \chi(\tau)$. We then conclude using an adaptation of Corollary \ref{c:pvolume},
 proving that the Weil--Petersson volume of the set
 $$\{(\x, Y), \x > 0, Y\in  \cM_\x( \Sf |  \Sf_{\partial}) , \ell_Y({\curv})
 + \ell_Y(\gamma_\tau)\leq \ell\}$$ grows at most polynomially in $\ell$, because
 $(\curv, \gamma_\tau)$ fills $\Sf$.
\end{proof}

\subsubsection{Preliminary step}

We are now ready to detail the proof of Theorem \ref{thm:leviathan_lc}, which we do in the next few
paragraphs. First, \cite[Lemma B.2]{Ours1} shows that we can restrict attention to realizations of
rank $\leq \ord$ in the sum \eqref{e:enumeration}, modulo an error
$$ \leq \fn(Q,\ord,\chi(\Sf)) \frac{2^{2j(Q+\n)}}{g^{\ord+1}} \exp \Big( \frac 12 \sum_{t \in \tilde{\partial}\Sf}
z_t \Big).$$ Given $Q$, $\ord$ and $\Sf$, the number of such realizations is bounded: we are reduced
to dealing with a finite number of terms in \eqref{e:enumeration}. We therefore pick a realisation
$\mathfrak{R} \in R_{g,Q}(\rho_j\Sf)$.

\subsubsection{First step}

We now use \cite[Theorem B.1]{Ours1} to write an approximate expression (up to errors decaying like
$1/g^{\ord+1}$) of each term
$\frac{1}{V_g} \prod_{t\in \tilde{\partial} \Sf } z_t V_{\mathfrak{R}}(\z)$ in the definition of
$\Phi_{g,Q}^{\ftype,j}(\z)$, see equation \eqref{e:enumeration}. More precisely, the quantity above
is well-approximated by a function $F_{g,\mathfrak{R}}^{(\ord)}(\z) $ of the form
\begin{align}F_{g,\mathfrak{R}}^{(\ord)}(\z)  \label{e:FgR}
  := \sum_{V_+,  V_-, V_0}
  P_{g,\mathfrak{R}}^{(\ord,V_{\pm})}(\z) \prod_{t \in V_+} \cosh
  \div{z_t}
  \prod_{t \in V_-} \sinh \div{z_t}
  \prod_{\substack{\chi_i^l = 0 \\ J_l^i =\{t, t'\}}}  z_t \delta(z_t- z_{t'})
\end{align}
where:
\begin{itemize}
\item $V_+ , V_- ,V_0 $ runs over partitions of the set
  $V := \tilde{\partial} \Sf \setminus J^{\mathrm{cyl}}$;
\item the function $ P_{g,\mathfrak{R}}^{(\ord,V_{\pm})}$ is a polynomial function in the variables
  $(z_t)_{t\in V}$, of total degree in $(z_t)_{t\in V_+\sqcup V_-}$ is $\leq 2\ord$ and partial
  degree with respect to each $(z_t)_{t\in V_0}$ less than a constant~$a_{\ord+1}$ depending only on
  $\ord$.
\end{itemize}
The error made in this approximation can be bounded above by
$$ \fn(Q,\ord,\chi(\Sf)) \, \tilde a_\ord^{2j}\frac{(\|\z\|+1)^{3\ord+1}}{g^{\ord+1}}
\exp \Big( \frac 12 \sum_{t \in \tilde{\partial} \Sf} z_t \Big).$$ By \cite[Theorem B.1]{Ours1} and
\cite[eq. (B.4)]{Ours1}, the coefficients of the polynomials $P_{g,\mathfrak{R}}^{(\ord,V_{\pm})}$ are
bounded by
$$\fn(Q, \ord, \chi(\Sf)) \tilde a_\ord^{\n+2j} \frac{V_{\g,\n}}{V_g} V_{\mathfrak{R}}(\mathbf{0})
\leq \fn(Q, \ord, \chi(\Sf)) \frac{\tilde a_\ord^{2j}}{g^{\chi(\Sf)}}.$$

  \subsubsection{Second step: the Friedman--Ramanujan property}
  
  The leading term in Theorem~\ref{thm:leviathan_lc} is obtained by replacing the function
  $\Phi_{g,Q}^{\ftype,j}(\z)$ by its approximate expression up to errors $1/g^{\ord+1}$. It is thus of
  the form \eqref{e:disint_phi_nu}, with
  $$\phi(\x, \vec\alpha)
  =  \frac{1}{\alpha_1 \ldots \alpha_j}
  \tilde\Phi_{g,Q}^{\ftype,j}(\x, \alpha_1, \alpha_1, \ldots, \alpha_j, \alpha_j) ,$$
   where $\tilde\Phi_{g,Q}^{\ftype,j}(\z)$ is a sum of finitely many terms of the form
 \eqref{e:FgR}. We are left with the question of proving that each function of $\ell$ defined by
   \begin{align}\label{e:dominant-lc}
  \int_{\substack{\alpha, \x >0}} \int_{\substack{Y\in \cM_\x( \Sf |  \Sf_{\partial}) \\ \ell_Y({\curv})=\ell }} \frac{1}{\alpha_1 \ldots \alpha_j} F_{g,\mathfrak{R}}^{(\ord)}(\x, \alpha_1, \alpha_1, \ldots, \alpha_j, \alpha_j) 
      \, \nu(\tau_{(\x,Y)}, \vec\alpha)
     \d \vec\alpha \frac{\d\mathrm{Vol}^{\mathrm{WP}}_{\g,\n}(\x, Y) }{\d\ell}
   \end{align}
   is a Friedman--Ramanujan function, with estimates on its norm.

   The proof of this fact is the same as the proof of Theorem \ref{t:main-kappa}, applied to the
   multi-loop $(\curv, \partial \tau, c_1, \ldots, c_j)$, with the set $W$ corresponding to loops in
   $\partial \tau \cup (c_1, \ldots, c_j)$, $\kappa_i:=\kappa$ for loops in $ (c_1, \ldots, c_j)$,
   and $\sum_{i\in \partial \tau} \kappa_i \leq 12\tfL \chi(\Sf)$.
   
   The set of \emph{a priori neutral} variables is enlarged to include the parameters $q\in \Theta$ such that the simple portion $\cI_q$ is contained in the interior of the 2d-part of $\tau$ (indeed, those simple portions cannot be included 
   in a boundary component of $\Sf$).
   
   Theorem \ref{t:main-kappa} then shows that \eqref{e:dominant-lc} is a Friedman--Ramanujan
   function, with the same parameters as in Remarks \ref{r:firstKN}. Hence, the parameters
   $\rK(\ord, \Sf)= 2\ord + 2\chi(\Sf)$ and $\rN(\ord, \Sf)= \n (a_{\ord+1}+1) + 3 \chi(\Sf)+1$
   satisfy the claim of Theorem \ref{thm:leviathan_lc}.  We obtain an estimate of the
   Friedman--Ramanujan norm of the function \eqref{e:dominant-lc} by:
   \begin{align*}
 \fn(Q, \ord, \chi(\Sf))\, \tilde a_\ord^{2j} \mathfrak{f}(j, \chi(\tau)) e^{6 \tfL\chi(\Sf)},
   \end{align*}
   where the term $ \mathfrak{f}(j, \chi(\tau))$ comes from the requirements on the function $\nu$
   given in Notation \ref{n:nu-requirements}.

  \subsubsection{Third step: estimate of the remainder term}
 
As in the proof of Lemma \ref{lem:rough_u_lc}, 
  using \eqref{e:fill1}, \eqref{e:fill} and the requirements on the function $\nu$ coming from
  Notation \ref{n:nu-requirements}, we see that the remainder term in Theorem \ref{thm:leviathan_lc}
  is bounded by
 \begin{align*}
   &\fn(Q,\ord,\chi(\Sf))
   \frac{\norm{ F(\ell) \, e^{(1+\eta)\ell}}_\infty}{g^{\ord+1}}
   \left(2^{2j(Q+\n)}  +  \tilde a_\ord^{2j}\right) e^{12 \tfL \chi(\Sf)}
     \\
   & \int_{\substack{\alpha, \x >0}}
     \int_{Y\in \cM_\x( \Sf |  \Sf_{\partial}) }
     e^{-\eta\ell_Y({\curv})} 
     \prod_{i=1}^j e^{\alpha_i} \bbbone_{[0, \kappa]}(\alpha_i) 
     |\nu(\tau_Y, \vec\alpha)|
  \d \vec\alpha \d\mathrm{Vol}^{\mathrm{WP}}_{\g,\n}(\x, Y).
 \end{align*}
 As in the proof of Lemma \ref{lem:rough_u_lc}, we conclude by integrating $\vec\alpha$ on
 $[0,\kappa]^j$ and bounding the final integral, introducing a multi-loop filling $\tau$.

%%%Local Variables: 
%%% mode: latex
%%% TeX-master: "main"
%%% End: 

\section{Proof of the spectral gap result}
\label{s:wrapup}

Theorem \ref{thm:leviathan_lc} was the last brick in the proof of Theorem \ref{t:dream}, namely
that,  given $\alpha < 1/2$ and arbitrary small $\epsilon >0$, 
\begin{align} \label{e:dream1} \lim_{g \rightarrow \infty}
  \Pwp{\lambda_1(X) \leq \frac{1}{4}  -\alpha^2- \epsilon} =0.
\end{align}

The full argument leading to Theorem \ref{t:dream} from Theorem \ref{thm:leviathan_lc} is given in
our exposition paper~\cite{Expo}. Here, we outline the main steps.

\subsection{Trace method}

The trace method consists in controlling the expectation of a sum over the set $\geod(X)$ of all
primitive closed geodesics of $X$:
\begin{equation}
  \label{eq:trace_after_TF}
 \pavbtf{F} =  \Ewp{\sum_{\gamma \in \geod(X) } F(\ell(\gamma))
       \, \1{\Atf}(X)}
\end{equation}
where we restricted the Weil--Petersson measure to the set $\Atf$ of all
$(\kappa, \tfL)$-tangle-free surfaces, following the definition of \cite{monk2021a,Moebius}. The
test function $F=F_{m,L}$ will be of the form
\begin{align}\label{e:formF}
F_{m,L}(\ell)= \ell e^{-\ell /2}
  \D^m H_L(\ell) 
\end{align}
where $H_L$ is itself a smooth function whose support is $[-L, L]$, {constructed by dilation of
  the function for $L=1$, i.e. $H_L(\ell) := H_1(\ell/L)$} and $\cD$ is the differential operator
$\frac14- \partial^2$.

The introduction of the operator $\cD$ is one of the keys of our approach, as it will give rise to
cancellations in \eqref{eq:trace_after_TF}, leading to subexponential growth. The parameters
$\kappa$, $\tfL$ and $L$ will be determined later; $\kappa$ is a fixed, arbitrary small number, and
$\tfL=\kappa \log g$; similarly, $L$ will grow logarithmically with the genus $g$. The important
point is that the tangle-free set $\Atf$ is a set of large probability (see \cite[Lemma 4.4]{Expo}),
namely that, for $\kappa < \kappa_0 = \min(2/3, 2 \argsh 1)$,
\begin{equation}
  \label{eq:proba_tf}
    1-\Pwp{\Atf} \leq C_{\mathrm{tf}}(\kappa^2 + g^{\frac 3 2 \kappa-1}).
\end{equation}

From \cite{Expo}, it suffices to show that the expectation \eqref{eq:trace_after_TF} is
$o(e^{(\alpha+\eps)L})$ as $g \rightarrow + \infty$.  More precisely, the Selberg trace formula
together with the Markov inequality allows to write
 \begin{align}
  \Pwp{\delta \leq \lambda_1 \leq \frac 1 4 - \alpha^2 - \epsilon} 
   \leq \frac{C}{e^{(\alpha + \epsilon) L}}
   \Big( \pavbtf{ F_{m,L}} + g \log(g)^2\Big)
   + 
   C_{\mathrm{tf}}(\kappa^2 + g^{\frac 3 2 \kappa-1})\label{e:trace_meth}
 \end{align}
 for all $\delta>0$ (see \cite[Lemma 2.7]{Expo}).  We therefore need to have
 $e^{(\alpha+\eps)L}\geq g^{1+\eta}$ for a $\eta>0$, in order to balance the terms growing linearly
 with $g$ (including, in particular, the topological term from the Selberg trace formula).  From now
 on, we take $L=A\log g$ for an integer $A\geq \alpha+\eps$.

 \subsection{Restriction to a finite number of types}

 We now restrict the number of topological configurations in the average $\pavbtf{F_{m,L}}$ by using
 asymptotic expansions in powers of $1/g$ and the tangle-free hypothesis.  First, letting
 $\chic = 2A +1$, we observe that we can replace the average $ \pavbtf{F_{m,L}} $ by a
 truncated version $\avbtf{F_{m,L}}$, where for a general test function $F$,
   \begin{align*}  
  \avbtf{F}  
   % \label{e:avtbtf}
     :=\Ewp{ \sum_{\type : \chi(\type) \leq \chic}
     \sum_{\substack{\gamma \in \cG(X) \\\gamma\sim \type}} F(\ell(\gamma))
     \, \bbbone_{\Atf}(X)}.
 \end{align*}
  Indeed, the possibility to restrict the sum to local topological types $\type$ of absolute
   Euler characteristic $\chi(\type)\leq \chic$ comes from standard estimates presented in
   \cite[Lemma 2.2 and Proposition A.1]{Ours1}, which allow to write
   \begin{equation}
     \label{eq:prescribe_chic}
   \av{F}
   = \sum_{\type : \chi(\type) \leq \chic} \av[\type]{F}
   + \O[\chic]{\frac{L^{c(\chic)+1}e^{2L}}{g^{\chic}} \|F\|_\infty}
 \end{equation}
 if $F$ is supported on $[-L,L]$, the remainder being $\O[A]{ \|F\|_\infty}$ with $L=A \log g$ and
 $\chic = 2A+1$.

We now decompose this average according to the local types it contains, i.e. we write
\begin{align}  
  \avbtf{F} 
  \label{e:reduce} 
= \sum_{\type \in \mathrm{Loc}_{\chic}^{\kappa,\tfL, L}}\avTbtf{F}
\end{align}
where $\mathrm{Loc}_{\chic}^{\kappa,\tfL, L}$ is the set of local topological types of loops,
filling a surface of Euler characteristic bounded by $\chic$, that can be realized by a closed
geodesic of length $\leq L$ in a $(\kappa, \tfL)$-tangle-free surface, and for
$\type \in \mathrm{Loc}_{\chic}^{\kappa,\tfL, L}$
\begin{align} \label{e:avtbtf}  \avTbtf{F}
  :=\Ewp{\sum_{\substack{\gamma \in \cG(X) \\\gamma\sim \type}} F(\ell(\gamma)) \, \bbbone_{ \Atf}(X)}.
 \end{align}
 We know from \cite{Moebius} that for $\tfL=\kappa \log g$ and $L = A \log g$, 
    \begin{equation}
    \label{eq:bound_number_type_TF}
    \# \mathrm{Loc}_{\chic}^{\kappa,\tfL, L} \leq \fn(\kappa, A,\chic) (\log g)^{n_{\mathrm{typ}}}
\end{equation}
for a constant $n_{\mathrm{typ}} = \fn(\kappa, A, \chic) > 0$ depending only on $\kappa$, $A$ and
$\chic$.

\subsection{Expression of the indicator function using the Moebius function}

Thanks to the result of \cite{Moebius}, we can express the indicator function $\1{\Atf}$ in terms of the Moebius function $\mu_{\kappa, \tfL} $~: for every hyperbolic surface $X$,
\begin{align} \label{e:inversionformula}1- \1{\Atf}(X)= \sum_{\tau\in \cS(X)} \mu_{\kappa, \tfL}
  (\tau)
\end{align}
where the sum runs over all the geodesic sub-c-surfaces of $X$.

This formula will allow to rewrite \eqref{e:reduce} as
\begin{align}\label{e:tensor}
  \sum_{\type \in \mathrm{Loc}_{\chic}^{\kappa,\tfL, L}} \av[{\mathbf{T}}]{F}
  -   \sum_{\substack{\Sf \\ \chi(\Sf)< \chictau}} \sum_{{\ftype}\in \LocTFlc }
  \avN[{\ftype}]{\mu_{\kappa, \tfL} \otimes F}
\end{align}
up to a small error term. The set $\LocTFlc$ now is the set of local topological types $\ftype$ of
lc-surfaces $(\Sf, \curv, \tau)$ such that:
    \begin{itemize}
    \item the loop $\curv$ fills a surface of absolute Euler characteristic $\leq \chic$, and has a
      topology that can be realised in a tangle-free surface with a geodesic having length less than
      $L$;
    \item the absolute Euler characteristic of $\tau$ is less than $\chitau$;
    \item the topology of the pair $(\zeta, \tau)$ is constrained by the fact that it can be
      realised in a hyperbolic surface $Z$, in which $\ell_Z(\zeta)\leq L$ and $\tau$ is a
      $(\kappa, \tfL)$-derived tangle.
 \end{itemize}
 We recall from \eqref{e:total} that
 \begin{align}  
 \label{e:recall_total_mu}
   \avN[\ftype]{\mu_{\kappa, \tfL}  \otimes F}
   :=   \Ewpo \Bigg[ \sum_{\substack{ \gamma \in \geod(X) \\\tau\in \cS_Q(X)\\ (\gamma, \tau)\sim {\ftype}}} 
\mu_{\kappa, \tfL} (\tau) F( \ell(\gamma) ) \Bigg].
 \end{align}

 Let us bound the error made going from \eqref{e:reduce} to \eqref{e:tensor}. We prove the
 following, a generalization of \cite[Lemma 10.11]{Ours1} to the general inclusion-exclusion using
 the Moebius function.

\begin{prp} \label{prp:proba_small_Moebius} Let $\type \in \mathrm{Loc}_{\chic}^{\kappa,\tfL, L}$,
  for $L = A \log g$, $\tfL = \kappa \log g$, $\chitau = 2A+4$, $Q = 2A + 5+6\chitau$. Provided that
  $ \kappa (U_2(\chitau) + 37 \chitau) < 1$, with $U_2$ the sequence in \eqref{eq:bound_nu_mu}, we
  have:
  \begin{align} \label{e:muav} \avTbtf{F} = \avb[\type]{F} -\Ewpo \Bigg[ \sum_{\substack{(c,
        \sigma)\in \cS(X) \\ \chi(\sigma) <\chitau \\ c\in \, {\MC}_X(Q)}} \sum_{\substack{\gamma
        \in \geod(X)\\ \gamma\sim \type}} \mu_{\kappa,\tfL}(c,\sigma) F(\ell(\gamma)) \Bigg] +
    \O[A]{\|F\|_\infty}.
 \end{align}
\end{prp}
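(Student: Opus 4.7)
The starting point is the Moebius inversion formula \eqref{e:inversionformula}, which gives $\bbbone_{\Atf}(X) = 1 - \sum_{\tau \in \cS(X)} \mu_{\kappa,\tfL}(\tau)$. Substituting into the definition \eqref{e:avtbtf} of $\avTbtf{F}$ yields
\begin{equation*}
\avTbtf{F} = \avb[\type]{F} - \Ewpo\left[\sum_{\substack{\gamma \in \geod(X)\\ \gamma \sim \type}} F(\ell(\gamma)) \sum_{\tau \in \cS(X)} \mu_{\kappa,\tfL}(\tau)\right].
\end{equation*}
The plan is to split the internal sum over $\tau = (c, \sigma)$ into three regimes and estimate the contribution of each: (i) the \emph{main} regime $\chi(\sigma) < \chitau$ with $c \in \MC_X(Q)$, which reproduces exactly the middle term of \eqref{e:muav}; (ii) the \emph{large 2d part} regime $\chi(\sigma) \geq \chitau$; (iii) the \emph{large separation} regime $\chi(\sigma) < \chitau$ with $c \notin \MC_X(Q)$. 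It remains to show that the contributions from (ii) and (iii) are each $\O[A]{\norm{F}_\infty}$. A uniform deterministic input is the geodesic count of Lemma \ref{lem:bound_number_closed_geod}: since $\supp F \subseteq [-L, L]$ with $L = A \log g$, we get $\sum_{\gamma \sim \type} |F(\ell(\gamma))| \leq \fn(A) g^{A+1} \log g \cdot \norm{F}_\infty$, so each bad regime must produce an expected Moebius contribution decaying strictly faster than $g^{-(A+1)}$.

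Regime (iii) is the easier one: since $\mu_{\kappa,\tfL}$ is supported on c-surfaces whose 1d part $c$ has all components of length $\leq \kappa$, the condition $c \notin \MC_X(Q)$ forces $X \notin \cB_g^{\kappa,Q}$. I would therefore apply Cauchy--Schwarz against Proposition \ref{l:PBN}, using $1 - \Pwp{\cB_g^{\kappa,Q}} \leq \fn(\kappa,Q) g^{-(Q-1)}$, combined with a second-moment estimate for $\sum_\tau |\mu_{\kappa,\tfL}(\tau)| \bbbone\{\chi(\sigma) < \chitau\}$. This second moment is controlled using the uniform Moebius bound $|\mu_{\kappa,\tfL}(\tau)| \leq \fn(\chitau) g^{\kappa U_2(\chitau)} / \mathfrak{c}(\tau)!$ and the moment bounds on the counting statistic $\cY_{\kappa,Q,\beta}$ from Proposition \ref{p:YN}. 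The choice $Q = 2A + 5 + 6\chitau$ is large enough to make this contribution $\O{\norm{F}_\infty}$ after combining with the $g^{A+1}\log g$ geodesic factor.

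Regime (ii) requires the machinery of \S \ref{s:variantLC}. I would rewrite the expectation as a sum over local topological types $\ftype$ of lc-surfaces realised in tangle-free surfaces with a geodesic representative of length $\leq L$, restricted to 2d part of $\chi \geq \chitau$. For each such $\ftype$ filling a c-surface $\Sf$, Lemma \ref{lem:rough_u_lc} controls $\avN[\ftype]{\mu_{\kappa,\tfL} \otimes F}$ by a product of factors involving $e^{13\tfL \chi(\Sf)}$, $\mathfrak{f}(j, \chi(\tau))$ and $g^{-\chi(\Sf)}$ multiplied by $\norm{F(\ell)e^{(1+\eta)\ell}}_\infty \leq g^{A(1+\eta)}\norm{F}_\infty$. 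Summing over the number $j$ of 1d components using the convergence of the exponential series $\sum_j M^j \mathfrak{f}(j, \chi(\tau))/j!$ (as in \eqref{eq:bound_final_tg}), and over local types using the polynomial-in-$g$ bound \eqref{e:control_loc}, yields an overall power of $g$ whose exponent is dominated by $\kappa(U_2(\chitau) + 37\chitau) + A(1+\eta) - \chitau$. With $\chitau = 2A+4$ and the hypothesis $\kappa(U_2(\chitau) + 37\chitau) < 1$, this exponent is strictly less than $0$ for $\eta$ small enough, delivering the desired $\O[A]{\norm{F}_\infty}$ bound.

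The main obstacle will be the careful bookkeeping of all polynomial-in-$g$ factors (number of local types, which is $g^{10\chitau\chictau \kappa}$, Moebius function growth in $g^{\kappa U_2(\chitau)}$, geodesic counts contributing $g^{A+1}$, and volume ratios $g^{-\chi(\Sf)}$) so that the numerology $\chitau = 2A+4$, $Q = 2A+5+6\chitau$, $\kappa(U_2(\chitau)+37\chitau) < 1$ forces the combined exponent of $g$ to be strictly negative in both bad regimes. The explicit dependence of the exponents on $A$ and $\chitau$ makes the smallness assumption on $\kappa$ essential; it is precisely the mechanism by which the Moebius function (which a priori grows polynomially in $g$) can be absorbed into the remainder.
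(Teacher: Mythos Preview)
Your decomposition into regimes (i)--(iii) and your treatment of regime (iii) are sound and essentially match the paper's bound on the term $E_2$. The gap is in regime (ii).

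You propose to bound $\sum_{\chi(\sigma)\geq\chitau}$ directly by summing Lemma~\ref{lem:rough_u_lc} over all lc-local types with $\chi(\tau)\geq\chitau$. This does not close. First, $\chi(\sigma)$ ranges up to $2g-2$, and every tool you invoke carries an implicit constant $\fn(\chi(\Sf),\cdot)$ that is \emph{not} uniform in $\chi(\Sf)$; summing over $\chi$ up to $2g-2$ is therefore uncontrolled. Second, the Moebius bound involves $e^{\tfL U_2(\chi(\tau))}=g^{\kappa U_2(\chi(\tau))}$, not $g^{\kappa U_2(\chitau)}$ as you write; since $U_2$ is increasing and $\chi(\tau)$ is unbounded in this regime, your exponent calculation is wrong. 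Third, the count \eqref{e:control_loc} on local types applies only to $\LocTFlc$ with $\chi(\Sf)<\chictau$, not to arbitrary lc-types with large $\chi$.

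The paper sidesteps this entirely by an algebraic trick: since derived tangles in the support of $\mu$ are sub-c-surfaces of the maximal tangle $\tau_{\kappa,\tfL}(X)$, and since short multi-curves on $\cB_g^{\kappa,Q}$ are automatically in $\MC_X(Q)$, the \emph{restricted} sum $\sum_{\chi(\sigma)<\chitau,\,c\in\MC_X(Q)}\mu$ coincides with the \emph{full} sum $\sum_\tau\mu=1-\bbbone_{\Atf}$ on the good event $\{\chi(\tau_{\kappa,\tfL}(X))<\chitau\}\cap\cB_g^{\kappa,Q}$. Hence the entire error $\bbbone_{\Atf}-(1-\sum_{\text{restricted}}\mu)$ is supported on the complement of that event, and one only ever needs to bound the \emph{restricted} Moebius sum (with $\chi<\chitau$ fixed) times the small probability of the bad event. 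This is the content of the decomposition $E_1+E_2$ in the paper. Your regime (ii) should be handled not via \S\ref{s:variantLC} but via Proposition~\ref{p:apriori} applied to the event $\{\chi(\tau_{\kappa,\tfL}(X))\geq\chitau\}$, exactly parallel to your (correct) use of Proposition~\ref{l:PBN} for regime (iii).
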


\begin{proof}
  By definition, we have that $\Atf \subset\mathcal{B}_g^{\kappa, Q}$ for any $Q$. Furthermore, if
  $\tau_{\kappa,\tfL}(X)$ denotes the maximal $(\kappa,\tfL)$-derived tangle of $X$, defined in
  \cite{Moebius}, we have $\Atf \subset \{\chi(\tau_{\kappa,\tfL}(X)) < \chitau\}$ because, on
  $\Atf$, the property ``all derived tangles have absolute Euler characteristic less than $\chitau$'' is
  trivially true. We can rewrite these inclusions in the form
  $$\1{\Atf} = \1{\Atf} \1{\mathcal{B}_g^{\kappa, Q}} \1{[0,\chitau)}(\chi(\tau_{\kappa,\tfL}(X)).$$
  The Moebius inversion formula from \cite{Moebius} allows to rewrite the error term in
  \eqref{e:muav} as $E_1+E_2$, where
  \begin{equation}
    \label{eq:E1}
    E_1
    = \Ewp{\1{[\chitau, + \infty)}(\chi(\tau_{\kappa,\tfL}(X)))
      \Bigg(1 -  \sum_{\substack{(c, \sigma)\in \cS(X) \\ \chi(\sigma) <\chitau \\ c\in \,
          {\MC}_X(Q)}} \mu_{\kappa,\tfL}(c,\sigma)\Bigg) \sum_{\substack{\gamma \in \geod(X)\\ \gamma\sim
          \type}}  F(\ell(\gamma))}
  \end{equation}
  and
  \begin{equation}
    \label{eq:E2}
    E_2
    = \Ewp{(\1{\mathcal{B}_g^{\kappa,Q}}(X)-1)
    \1{[0,\chitau)}(\chi(\tau_{\kappa,\tfL}(X)))
    \Bigg(1 -  \sum_{\substack{(c, \sigma)\in \cS(X) \\ \chi(\sigma) <\chitau \\ c\in \,
    {\MC}_X(Q)}} \mu_{\kappa,\tfL}(c,\sigma)\Bigg) \sum_{\substack{\gamma \in \geod(X)\\ \gamma\sim
    \type}}  F(\ell(\gamma))}.
\end{equation}
We shall bound these two terms separately, using the bound \eqref{eq:bound_nu_mu} on the Moebius
function, \cite[Lemma 2.2]{Ours1}, as well as our probabilistic bounds on the events
$\{\chi(\tau_{\kappa,\tfL}(X)) \geq \chitau\}$ and $\mathcal{B}_g^{\kappa, Q}$ proven in
\cite[Appendix A]{Ours1}. This is done in the two following lemmas.

\begin{lem}
  We have
  \begin{equation*}
    |E_1| \leq \fn(Q,\chitau,\kappa)   \|F\|_{\infty}
    \frac{Le^{L+R(36\chitau+U_2(\chitau))}}{g^{\chitau/2-1}}
  \end{equation*}
  and, in particular, if we specify $L=A\log g$, $\tfL=\kappa\log g$, $\chitau=2A+4$ and fix a value
  of $\kappa$ such that $ \kappa < ( U_2(\chitau) + 36 \chitau)^{-1}$, then
  $|E_1| \leq \fn(Q,A) \|F\|_\infty$.
\end{lem}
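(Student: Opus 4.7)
The plan is to bound $|E_1|$ by the triangle inequality
\begin{align*}
|E_1| \leq \Ewp{\1{[\chitau,+\infty)}(\chi(\tau_{\kappa,\tfL}(X)))\,\Big(1+\sum_{(c,\sigma)}|\mu_{\kappa,\tfL}(c,\sigma)|\Big)\sum_{\gamma\sim\type}|F(\ell(\gamma))|},
\end{align*}
where the inner sums are constrained as in the definition of $E_1$, and then to estimate three pieces separately: a deterministic counting bound on the sum over geodesics, a probabilistic bound on the tangle event, and a second-moment bound on the Moebius sum (the latter two combined by Cauchy--Schwarz).

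First I would apply Lemma \ref{lem:bound_number_closed_geod} to the geodesic sum. Since $F$ is supported in $[-L,L]$ and $\chi(X)=2g-2$, equation \eqref{e:basic} gives $\sum_{\gamma\sim\type}|F(\ell(\gamma))|\leq C\,g\,L\,e^L\,\|F\|_\infty$, a pointwise bound which accounts for the factors $L\,e^L$ and $g$ in the numerator of the stated inequality.

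Next I would apply Cauchy--Schwarz to the remaining expectation and treat the two factors independently. For the probability $\Pwp{\chi(\tau_{\kappa,\tfL}(X))\geq\chitau}$, I would use that any such $X$ contains a geodesic sub-c-surface of absolute Euler characteristic $\chitau$ with boundary length bounded by $\kappa\mathfrak{c}+9\tfL\chitau$ (cf.\ \eqref{eq:bound_nu_boundary}); a Mirzakhani integration over the extended moduli space in the spirit of Proposition \ref{p:apriori}, adapted to allow 1d-components as in the proof of Proposition \ref{p:YN}, produces a bound of the shape $\fn(\chitau,\kappa)\,e^{c\tfL\chitau}/g^{\chitau}$ for some absolute constant $c$, whose square root is $\fn(\chitau,\kappa)\,e^{c\tfL\chitau/2}/g^{\chitau/2}$. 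For the second moment of the Moebius sum, the pointwise bound \eqref{eq:bound_nu_mu} gives $|\mu_{\kappa,\tfL}(c,\sigma)|\leq U_1(\chitau)\,e^{\tfL U_2(\chitau)}/\mathfrak{c}(c)!$; expanding the square and bounding the expectation of the resulting sum over pairs of tangles by the techniques of Proposition \ref{p:YN}, while leveraging $c\in\MC_X(Q)$ to keep the number of connected components of $X\setminus c$ bounded by $Q$, produces $\Ewp{\big(1+\sum|\mu|\big)^2}\leq \fn(Q,\chitau,\kappa)\,e^{2\tfL U_2(\chitau)+c'\tfL\chitau}$. The convergence in the number $\mathfrak{c}(c)$ of 1d-components is ensured by the factorials $1/\mathfrak{c}(c)!^2$, exactly as in Proposition \ref{p:YN}.

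Multiplying the three contributions and choosing the absolute constants so that $c/2+c'\leq 36$ yields the claimed bound. The main obstacle will be the second-moment calculation: expanding $(1+\sum|\mu|)^2$ doubles the Moebius terms, and one must show that the cross terms, coming from pairs of tangles $(c_1,\sigma_1)$, $(c_2,\sigma_2)$ whose supports can overlap non-trivially, do not generate a quadratic factor $e^{\tfL\chitau^2}$ but keep the $\chitau$-dependence linear in the exponent. This requires a two-tangle version of the counting estimate in Proposition \ref{p:YN}, which should follow from the same Mirzakhani-type volume estimates (together with \cite[Lemma 24]{nie2023}) provided one carefully tracks the joint combinatorics of the boundary components of $\sigma_1\cup\sigma_2$ and of the multi-curve $c_1\cup c_2$.
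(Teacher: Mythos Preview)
Your opening moves are the same as the paper's: the triangle inequality followed by the deterministic counting bound of Lemma~\ref{lem:bound_number_closed_geod} correctly extracts the factor $g\,L\,e^L\|F\|_\infty$. The divergence comes at the Cauchy--Schwarz step.

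The paper does \emph{not} apply Cauchy--Schwarz to separate $\1{[\chitau,\infty)}(\chi(\tau_{\kappa,\tfL}))$ from the full factor $1+\sum_{(c,\sigma)}|\mu(c,\sigma)|$. Instead it first exploits the multiplicativity $|\mu(c,\sigma)|=|\mu(c)|\,|\mu(\sigma)|$ to write
\[
1+\sum_{(c,\sigma)}|\mu(c,\sigma)| \;\leq\; (1+\cY_{\kappa,Q})\Big(1+\sum_{\sigma}|\mu(\sigma)|\Big),
\]
and then bounds the purely 2d sum $\sum_\sigma|\mu(\sigma)|$ \emph{pointwise}. The crucial observation is that the naive pointwise bound (equation~\eqref{eq:bound_mu_chitau}) carries an unwanted factor $g^{3\chitau}$; to kill it, the paper splits the indicator as $\1{[\chitau,8\chitau)}+\1{[8\chitau,\infty)}$. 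On the first piece the pointwise bound improves to $\fn(\chitau)e^{\tfL U_2(\chitau)}$ (all tangles sit inside the maximal one, which has bounded complexity), while on the second piece the much smaller probability $\Pwp{\chi(\tau)\geq 8\chitau}\lesssim e^{72\tfL\chitau}/g^{8\chitau}$ absorbs the $g^{3\chitau}$. Cauchy--Schwarz is then applied only to separate $(1+\cY_{\kappa,Q})$ from the indicator functions, and the second moment of $\cY_{\kappa,Q}$ is already available from Proposition~\ref{p:YN}.

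Your route requires instead a uniform-in-$g$ bound on $\Ewpo\big[(1+\sum_{(c,\sigma)}|\mu(c,\sigma)|)^2\big]$, and this is where the gap lies. Proposition~\ref{p:YN} controls sums over multi-curves of length $\leq\kappa$ (a \emph{fixed} constant), whereas the 2d tangles $\sigma$ have boundary length up to $9\tfL\chitau\sim\kappa\chitau\log g$; the analogue of $I_\kappa$ in the proof of Proposition~\ref{p:YN} is no longer a fixed constant. More seriously, the pointwise blow-up $\sum_\sigma|\mu(\sigma)|\lesssim g^{3\chitau}e^{\tfL U_2(\chitau)}$ shows that the second moment could a priori carry a factor $g^{6\chitau}$, which would swamp the $g^{-\chitau/2}$ from the probability. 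You correctly flag the two-tangle Mirzakhani estimate as the main obstacle, but proving it (with exponents linear in $\chitau$ and no polynomial-in-$g$ loss) essentially requires rediscovering the splitting trick: one must condition on the size of the maximal tangle to control how many sub-tangles can coexist. The paper's approach sidesteps this entirely.
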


\begin{proof}
  We use the triangle inequality and the deterministic bound from \cite[Lemma 2.2]{Ours1} to obtain
  \begin{equation}
    \label{eq:E1}
    |E_1|
    \leq 10^4 L g e^L \|F\|_\infty
    \Ewp{\1{[\chitau, + \infty)}(\chi(\tau_{\kappa,\tfL}(X)))
      \Bigg(1 +  \sum_{\substack{(c, \sigma)\in \cS(X) \\ \chi(\sigma) <\chitau \\ c\in \,
          {\MC}_X(Q)}} |\mu_{\kappa,\tfL}(c, \sigma)|\Bigg) } .
  \end{equation}

  Let us now bound the contribution of the Moebius function. We first use the fact that, if
  $(c,\sigma)$ with $c=(c_1, \ldots, c_j)$, by \cite{Moebius},
  $$ | \mu_{\kappa,\tfL}(c, \sigma)|= |\mu_{\kappa,\tfL}(c)| |\mu_{\kappa,\tfL}(\sigma)|
  = \frac1{2^j j!}  \bbbone_{[0, \kappa]}(\ell_X^{\mathrm{max}}(c))|\mu_{\kappa,\tfL}(\sigma)|.$$
  Introducing the function $$\cY_{\kappa,Q}(X) = \sum_{j=1}^\infty \frac{1}{j!}
  \sum_{\substack{c \in \MC_X(Q) \\
      \mathfrak{c}(c)=j}} \1{[0,\kappa]}(\ell_X^{\mathrm{max}}(c))$$ as in \cite[\S A.2]{Ours1}, we
  obtain
  \begin{align*}
    1 +
    \sum_{\substack{(c, \sigma)\in \cS(X) \\ \chi(\sigma) < \chitau \\ c\in \, {\MC}_X(Q)}} | \mu(c, \sigma)|
    \leq (1 + \cY_{\kappa,Q})
    \Big( 1+ \sum_{\substack{ \sigma \in \cS(X) \\ \chi(\sigma) < \chitau \\ \mathfrak{c}(\sigma)=0}} |\mu(\sigma)|\Big).
\end{align*}
This allows us to reduce ourselves to bounding the sum of the Moebius function for purely 2d
c-surfaces. This is done in \cite{Moebius}, and we obtain a bound of the form
\begin{equation}
  \label{eq:bound_mu_chitau}
  \sum_{\substack{\sigma\in \cS(X)\\ \chi(\sigma) <\chitau \\ \mathfrak{c}(\sigma)=0}} |\mu(\sigma)|
  \leq \fn(\chitau) g^{3\chitau} e^{\tfL U_2(\chitau)}.
\end{equation}

The factor $g^{3 \chitau}$ in \eqref{eq:bound_mu_chitau} is inconvenient. In order to compensate it,
we split the event $\{\chi(\tau_{\kappa, \tfL}(X))\geq \chitau\}$ depending on the relative position
of $\chi(\tau_{\kappa,\tfL})$ and $8 \chitau$:
  \begin{equation*}
    \1{[\chitau, + \infty)}(\chi(\tau_{\kappa,\tfL}(X)))
    =
    \1{[\chitau, 8 \chitau)}(\chi(\tau_{\kappa,\tfL}(X)))
    + \1{[8\chitau, + \infty)}(\chi(\tau_{\kappa,\tfL}(X))).
  \end{equation*}
  This is helpful because, now, we can improve \eqref{eq:bound_mu_chitau} on the set where
  $\chi(\tau_{\kappa,\tfL}(X)) < 8 \chitau$:
  $$\bbbone_{[\chitau,8\chitau)}(\chi(\tau_{\kappa,\tfL}(X)))
  \sum_{\substack{\sigma\in \cS(X)\\ \chi(\sigma) <\chitau \\ \mathfrak{c}(\sigma)=0}}
  |\mu(\sigma)|  \leq \fn(\chitau) e^{\tfL U_2(\chitau)}.$$
  Altogether, we obtain
  \begin{align*}
    |E_1|
    \leq & \, \fn(\chitau)  \|F\|_\infty  L \, g \, e^{L+\tfL U_2(\chitau)}\\
         & \Ewp{(1+\cY_{\kappa,Q})
           \Big(
           \1{[\chitau,+\infty)}(\chi\big(\tau_{\kappa,\tfL}(X))\big)
           +g^{3\chitau} \1{[8\chitau,+\infty)}(\chi\big(\tau_{\kappa,\tfL}(X))\big)\Big)}.
  \end{align*}
  We then use the Cauchy--Schwarz inequality to bound
  the expectation above by
  \begin{equation*}
    (1+\Ewpo\brac*{\cY^2_{\kappa,Q}}^{1/2}) \,
    \Big(\Pwp{\chi(\tau_{\kappa,\tfL}(X)) \geq \chitau}^{1/2}+
    g^{3 \chitau}\Pwp{\chi(\tau_{\kappa,\tfL}(X)) \geq 8\chitau }^{1/2}\Big)
  \end{equation*}
  which, by \cite[Propositions A.1 and A.5]{Ours1}, is bounded by
  \begin{equation*}
    \fn(Q,\chitau,\kappa) \tfL^{c(\chitau)}
    \Bigg(
    \frac{e^{9\tfL\chitau /2}}{g^{\chitau/2}}
    + g^{3 \chitau}\frac{e^{36\tfL\chitau}}{g^{4\chitau}}
    \Bigg)
    \leq \fn(Q,\chitau,\kappa) \frac{e^{36 \tfL \chitau}}{g^{\chitau/2}}
  \end{equation*}
  which is our claim.
\end{proof}

Now, with exactly the same proof, now using \cite[Proposition A.4]{Ours1} to bound the probability
of the complement of $\cB_g^{\kappa,Q}$, we obtain:

\begin{lem} \label{l:small_proba2}
  We have:
\begin{align*}
  |E_2|  \leq \fn(Q, \chitau) 
  \norm{F}_\infty  \frac{L e^{L+\tfL U_2(\chitau)}}{g^{(Q-1) /2 - 1 - 3 \chitau}}.
  \end{align*}
  In particular, if we specify $L=A\log g$, $\tfL=\kappa\log g$, $Q=2A+5+6\chitau$ and fix $\kappa$ such
  that $\kappa < U_2(\chitau)^{-1}$, then $|E_2| \leq \fn(A, \chitau) \|F\|_\infty$.
\end{lem}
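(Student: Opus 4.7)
The plan is to follow the structure of the preceding lemma bounding $E_1$, but replacing the probabilistic input on the event $\{\chi(\tau_{\kappa,\tfL}(X)) \geq \chitau\}$ by the probabilistic input on the complement of $\mathcal{B}_g^{\kappa, Q}$ given by Proposition \ref{l:PBN}. First, I would apply the triangle inequality to $E_2$, and use Lemma \ref{lem:bound_number_closed_geod} (the deterministic counting bound on closed geodesics of length at most $L$) to obtain
\begin{equation*}
  |E_2| \leq \fn(1) \, L\, g\, e^L\, \|F\|_\infty\, \Ewp{ \big(1-\1{\mathcal{B}_g^{\kappa,Q}}(X)\big) \1{[0,\chitau)}(\chi(\tau_{\kappa,\tfL}(X))) \Big( 1 + \!\!\!\! \sum_{\substack{(c,\sigma)\in\cS(X)\\ \chi(\sigma)<\chitau \\ c\in{\MC}_X(Q)}} \!\!\!\! |\mu_{\kappa,\tfL}(c,\sigma)| \Big)}.
\end{equation*}

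Next, I would exploit the multiplicativity of the Moebius function: writing $|\mu_{\kappa,\tfL}(c,\sigma)| = |\mu_{\kappa,\tfL}(c)| \, |\mu_{\kappa,\tfL}(\sigma)|$ and summing over the 1-dimensional part gives the counting functional $\cY_{\kappa,Q}$ (since each short multi-curve separating into at most $Q$ components contributes $1/(2^j j!)$). Thus the bracket in the expectation is bounded by $(1+\cY_{\kappa,Q})(1 + \sum_{\sigma,\,\mathfrak{c}(\sigma)=0,\,\chi(\sigma)<\chitau}|\mu_{\kappa,\tfL}(\sigma)|)$, and the estimate \eqref{eq:bound_mu_chitau} from the preceding lemma controls the last factor by $\fn(\chitau) g^{3\chitau} e^{\tfL U_2(\chitau)}$. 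So
\begin{equation*}
  |E_2| \leq \fn(\chitau) \, L\, g\, e^{L+\tfL U_2(\chitau)}\, g^{3\chitau} \,\|F\|_\infty\, \Ewp{(1+\cY_{\kappa,Q})\bigl(1-\1{\mathcal{B}_g^{\kappa,Q}}(X)\bigr)}.
\end{equation*}

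Then the key step is to apply the Cauchy--Schwarz inequality to separate $(1+\cY_{\kappa,Q})$ from the indicator. Proposition \ref{p:YN} provides a uniform bound $\Ewpo[\cY_{\kappa,Q}^2] \leq \fn(\kappa,Q)$, and Proposition \ref{l:PBN} yields $\Pwp{(\mathcal{B}_g^{\kappa,Q})^c} \leq \fn(\kappa,Q)/g^{Q-1}$. Combining these,
\begin{equation*}
  \Ewp{(1+\cY_{\kappa,Q})\bigl(1-\1{\mathcal{B}_g^{\kappa,Q}}\bigr)} \leq \bigl(1+\Ewpo[\cY_{\kappa,Q}^2]^{1/2}\bigr) \Pwp{(\mathcal{B}_g^{\kappa,Q})^c}^{1/2} \leq \frac{\fn(\kappa,Q)}{g^{(Q-1)/2}},
\end{equation*}
which plugged back in gives the claimed bound $|E_2|\leq \fn(Q,\chitau)\|F\|_\infty L e^{L+\tfL U_2(\chitau)}/g^{(Q-1)/2-1-3\chitau}$. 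For the second part, substituting $L=A\log g$, $\tfL=\kappa\log g$, $Q=2A+5+6\chitau$ makes the exponent of $g$ in the denominator equal to $A+1$, so that the ratio becomes $A(\log g) g^{\kappa U_2(\chitau) - 1}$, which is bounded uniformly in $g$ precisely when $\kappa U_2(\chitau)<1$. I do not anticipate any significant obstacle: both probabilistic ingredients (Propositions \ref{p:YN} and \ref{l:PBN}) are already in place, and the only technical care is the bookkeeping of exponents in the final parameter substitution.
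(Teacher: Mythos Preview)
Your proposal is correct and matches the paper's approach exactly: the paper simply states that the proof is identical to that of the preceding lemma on $E_1$, replacing the probabilistic bound on $\{\chi(\tau_{\kappa,\tfL}(X))\geq\chitau\}$ by Proposition~\ref{l:PBN} on the complement of $\mathcal{B}_g^{\kappa,Q}$. Your bookkeeping of exponents in the final substitution is also correct (the denominator exponent is $(Q-1)/2-1-3\chitau=A+1$ with the specified $Q$).
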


Combining the two lemmas allows us to conclude.
\end{proof}

The following proposition shows that we can further restrict the Euler characteristic of $\Sf$, the
surface filled by $(\curv, \tau)$:
 \begin{prp}\label{p:chictau}
  If $L=A\log g$ and $\tfL=\kappa\log g$, we may find
 $\chictau=\fn(A, \chitau)$ such that, for all $\kappa< 1/10$,
 \begin{align*} 
   \sum_\Sf \sum_{{\ftype}\in \LocTFlc }   \avN[{\ftype}]{\mu_{\kappa, \tfL} \otimes F}
   =\sum_{\substack{\Sf \\ \chi(\Sf)< \chictau}} \sum_{{\type}\in \LocTFlc }
   \avN[{\ftype}]{\mu_{\kappa, \tfL} \otimes F} +\cO_{A , Q}(1).
 \end{align*}
 \end{prp}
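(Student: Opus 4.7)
The plan is to bound the error term
\[
  E := \sum_{\substack{\Sf:\, \chi(\Sf) \geq \chictau \\ \mathfrak{c}(\Sf)=0}}
  \sum_{\ftype \in \LocTFlc} \avN[\ftype]{\mu_{\kappa, \tfL} \otimes F}
\]
by $\cO_{A, Q}(1)$, by choosing $\chictau = \fn(A, \chitau)$ large enough. I would proceed in three steps.

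First, I would apply the rough upper bound of Lemma \ref{lem:rough_u_lc} to each summand $\avN[\rho_j\ftype]{\mu_{\kappa,\tfL} \otimes F}$, specialised to the Moebius function so that $\mathfrak{f}(j,\chi(\tau)) \leq U_1(\chitau) g^{\kappa U_2(\chitau)}/j!$. Summing over all $j \geq 0$ (the factorial decay $1/j!$ ensures convergence of the series in $j$, as in the proof of Proposition \ref{cor:FR_implies_small_final}), and crucially using the \emph{sharper} boundary-length bound $\ell(\partial \tau) \leq \kappa \mathfrak{c}(\tau) + 9\tfL \chi(\tau) < \kappa j + 9\tfL \chitau$ from \eqref{eq:bound_nu_boundary} in place of the coarse $\ell(\partial \tau) \leq 12\tfL \chi(\Sf)$, I would obtain a per-type estimate of the form
\[
  \sum_{j \geq 0} \big| \avN[\rho_j \ftype]{\mu_{\kappa,\tfL} \otimes F} \big|
  \leq \fn(\chi(\Sf), Q, \chitau, \kappa)\, g^{\,\kappa D(\chitau) - \chi(\Sf)}\, \|F e^{(1+\eta)\ell}\|_\infty,
\]
where $D(\chitau)$ depends only on $\chitau$ (via $U_2$) and not on $\chi(\Sf)$. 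The point of the refinement is to decouple the exponent in $g$ from $\chi(\Sf)$.

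Second, I would bound the number of local topological types $\ftype \in \LocTFlc$ with given filling c-surface $\Sf$ of $\chi(\Sf) = k$. Adapting the counting argument behind \eqref{e:control_loc} from \cite{Moebius} gives a bound of the form $\fn(\kappa, A, k)\, g^{C(A,\chitau)\, k \kappa}$ for an explicit constant $C(A,\chitau)$. The number of c-filling types $\Sf$ with $\mathfrak{c}(\Sf)=0$ and $\chi(\Sf) = k$ is bounded polynomially in $k$ (it counts partitions of $k$ into Euler characteristics of 2d components together with a numbering of boundary components). Combining these, the contribution to $E$ from $\Sf$ with $\chi(\Sf) = k$ is at most
\[
  \fn(k, Q, A, \chitau, \kappa)\, g^{\,k(C(A,\chitau)\kappa - 1)\, + \, \kappa D(\chitau)}\, \|F e^{(1+\eta)\ell}\|_\infty.
\]

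Third, I would sum over $k \geq \chictau$. Under the hypothesis $\kappa < 1/10$, and choosing $\chitau$ (hence $C(A,\chitau)$) together with $\chictau$ appropriately, the exponent $k(C(A,\chitau)\kappa - 1)$ is strictly negative and the series over $k$ converges geometrically to $\cO(g^{\,-\chictau \delta + \kappa D(\chitau)})$ for some $\delta > 0$. Taking $\chictau = \fn(A, \chitau)$ large enough makes this $\cO_{A,Q}(1)$, completing the proof.

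The main obstacle is ensuring that the exponent of $g$ is effectively negative in $k$. With the crude bound $\ell(\partial\tau) \leq 12\tfL \chi(\Sf)$ used in the proof of Lemma \ref{lem:rough_u_lc}, one would pick up an unwanted $g^{13\kappa k}$ which, combined with the type-counting factor $g^{O(\chitau) k \kappa}$, could swamp the $g^{-k}$ decay whenever $\chitau$ is large. The key refinement is therefore to exploit $\chi(\tau) < \chitau$ so that the $\chi(\Sf)$-dependence in the exponential factor of Lemma \ref{lem:rough_u_lc} is replaced by a $\chitau$-dependence only; the remainder of the balancing, controlling $C(A,\chitau)\kappa < 1$, then reduces to choosing $\chictau$ sufficiently large as a function of $A$ and $\chitau$, which is exactly the shape of the conclusion.
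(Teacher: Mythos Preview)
Your direct summation over $k=\chi(\Sf)\geq \chictau$ does not close: the constants you absorb into $\fn(k,Q,A,\chitau,\kappa)$ grow too fast in $k$ to be beaten by $g^{-\delta k}$. The worst offender is the partition count $2^{(\n+2j)(Q+\n)}$ in Lemma~\ref{lem:rough_u_lc}, where $\n=n_\Sf$ can be as large as $k+2$ (take $g_\Sf=0$); after summing over $j$ against $1/j!$ this produces a factor $\exp\big((e^\kappa-1)\,4^{Q+\n}\big)$, doubly exponential in $k$. Your series therefore increases termwise and, since it runs up to $k=2g-2$, is not uniformly bounded in $g$. The refinement you propose for $\ell(\partial\tau)$ is also incomplete: the $1$d components of $\tau$ lying \emph{inside} the $2$d part of $\Sf$ contribute $\kappa\cdot\mathfrak{c}(\tau)$ to the bound in~\eqref{eq:bound_nu_boundary}, and their number is not a priori bounded by $j$ but rather by $\frac32\chi(\Sf)$.

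The ``medley'' the paper has in mind is probabilistic, paralleling Proposition~\ref{p:apriori} and the $E_1$/$E_2$ estimates in Proposition~\ref{prp:proba_small_Moebius}. Rewrite the tail as an expectation and observe that any contributing pair $(\gamma,\tau)$ forces $X$ to contain a subsurface of absolute Euler characteristic $\geq\chictau$ with total boundary length $\leq C(A,\chitau)\log g$: here the $1$d components of $\tau$ inside $\Sf$ are controlled via the collar lemma, since each must intersect $\gamma$ and $\ell(\gamma)\leq L$, so their number is $\leq L/(2w(\kappa))$ with $w(\kappa)$ bounded below for $\kappa<1/10$. By Proposition~\ref{p:apriori} this event has probability $\cO(g^{\,C(A,\chitau)-\chictau})$; on it, bound the full sum over $(\gamma,\tau)$ deterministically via Lemma~\ref{lem:bound_number_closed_geod}, the Moebius estimate~\eqref{eq:bound_mu_chitau} and $\cY_{\kappa,Q}$, then apply Cauchy--Schwarz exactly as for $E_1$. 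Choosing $\chictau$ larger than an explicit function of $A$ and $\chitau$ makes the result $\cO_{A,Q}(1)$.
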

 
 We omit the proof, a medley of all the previous arguments.

 Note that the sum $\sum_\Sf$ in \cref{e:tensor} now runs over all c-surfaces of absolute Euler
 characteristic less than some $\chictau$ to be chosen later (we assume without loss of generality
 that $\chic, \chitau \leq \chictau$). The set of such c-surfaces is infinite, because $\Sf$ could
 contain an arbitrary number of 1d-components (which do not contribute to the Euler characteristic).
 Thus we prefer to rewrite the second sum in \eqref{e:tensor} as
 \begin{align} \label{e:totalj}
   \sum_{\substack{\Sf \\ \chi(\Sf)< \chictau \\ \mathfrak{c}(\Sf)=0}}
   \sum_{{\ftype}\in \LocTFlc}
   \sum_{j=0}^{+\infty}  \avN[{\rho_j\ftype}]{\mu_{\kappa, \tfL} \otimes F},
\end{align} 
where the first sum, running over purely 2d c-surfaces of bounded Euler characteristic, is now
finite.  Recall that the operation $\rho_j$ consists in adding $j$ 1d components to a c-surface.

Now, we showed in \cite{Moebius} that, if $L=A\log g$, $\tfL=\kappa \log g$, $\mathfrak{c}(\Sf)=0$
and $\chi(\Sf)< \chictau$, then
\begin{equation} \label{e:control_loc}
  \# \LocTFlc \leq \fn(\kappa, A, \chictau) g^{10 \chitau\chictau \kappa}.
\end{equation}
Therefore, we shall take $\kappa$ very small so that this polynomial contribution in $g$ does not
interfere with our method.

% \begin{rem}
%   To be more precise, we will show in \S \ref{s:A3} that if we take
%   \begin{align*}
%     & \chic= 2A  +1
%     && Q=2A+5+6\chitau \\
%     & \chitau=2A+4 
%     && \kappa < ( U_2(\chitau) + 37 \chitau)^{-1} \\
%    & \chictau= \fn(A, \chitau) && 
%   \end{align*}
%   where $U_2$ is the sequence introduced to bound the Moebius function (see \eqref{eq:bound_nu_mu})
%   and the function for $\chictau$ is explicit, the difference between \eqref{eq:trace_after_TF} and
%   \eqref{e:tensor} is bounded above by $\fn(A) \# \mathrm{Loc}_{\chic}^{\kappa,\tfL, L} $.
% %  We can even restrict the sum in \eqref{e:totalj} to $j\leq \log g$.
%   \end{rem}

  The important thing here is that we have removed the indicator function $\1{\Atf}$ from the average
 we consider. This allows to use Theorems \ref{thm:FR_type} and \ref{thm:leviathan_lc} that describe
 the special structure of the volume functions associated to $\av[{\mathbf{T}}]{F}$ and
 $\avN[\ftype]{\mu_{\kappa, \tfL} \otimes F}$.

\subsection{Cancellation and the Friedman--Ramanujan hypothesis}

Theorems \ref{thm:FR_type} and \ref{thm:leviathan_lc} can be combined with the integration by parts
phenomenon presented in {\cite[Proposition 3.19]{Ours1}} to obtain the following propositions. The
underlying principle is that $\cD^{m}$ vanishes on $\C_{m-1}[\ell]e^{\ell/2}$, hence the choice of
our test function \eqref{e:formF}.
     
First, Theorem \ref{thm:FR_type} implies the following, related to the average
$\av[{\mathbf{T}}]{F_{m,L}}$.

\begin{prp}[{\cite{Ours1}}]
  \label{cor:FR_implies_small}
  Let $\type=[\Sf, \curve]_{loc}$ be a local topological type with $\chi(\Sf)\leq\chic$. For any
  integer $\ord \geq 0$, there exists constants $c, \rK = \fn(\ord, \chic)$ such that, for any large
  enough $g$, any $m \geq \rK$, any $\eta >0$ and any $L \geq 1$,
  \begin{equation*}
    % \label{e:contribTIBP}
    \av[\type]{F_{m,L}}
    \leq \fn(\ord,\chic,\eta,m)
    \Big(L^{c} + \frac{e^{L(\frac{1}2 +\eta)}}{g^{\ord+1}}\Big).
  \end{equation*}
\end{prp}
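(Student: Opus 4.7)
The plan is to start from Theorem \ref{thm:exist_asympt_type_intro}, which gives an asymptotic expansion
\[
\av[\type]{F_{m,L}} = \sum_{k=0}^\ord \frac{1}{g^k} \int_0^{+\infty} F_{m,L}(\ell)\, f_k^{\type}(\ell)\, \d \ell + \mathrm{Err}_\ord(F_{m,L}),
\]
with $|\mathrm{Err}_\ord(F_{m,L})| \leq (C_{\ord,\Sf}/g^{\ord+1})\,\sup_{\ell \geq 0}\{(\ell+1)^{\alpha_\ord^{\Sf}}|F_{m,L}(\ell)|e^\ell\}$. Since $F_{m,L}$ has support in $[-L,L]$ and $\|\cD^m H_L\|_\infty \leq C_m$ (uniform in $L$, as each $\partial^2$ produces a factor $1/L^2$), the remainder is bounded by $\fn(\ord,\chic,m)L^{\alpha_\ord^\Sf +1}e^{L/2}/g^{\ord+1}$, which fits the second term in the conclusion (with even a factor better than the announced $e^{L(1/2+\eta)}$).

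The heart of the argument is to show that for suitable $m \geq \rK$, each term $\int F_{m,L}(\ell) f_k^\type(\ell)\, \d\ell$ is $\O{L^c}$ polynomially in $L$. I would apply Theorem \ref{thm:FR_type}: for each $0 \leq k \leq \ord$, $f_k^\type \in \cF_w^{\rK_k,\rN_k}$ with $\rK_k,\rN_k$ depending only on $k$ and $\chi(\Sf) \leq \chic$, and with Friedman--Ramanujan norm controlled by $\fn(k,\chic)$. Set $\rK := 1 + \max_{k \leq \ord}\rK_k$. Write $f_k^\type(\ell) = p_k(\ell)e^\ell + r_k(\ell)$, where $\deg p_k < \rK_k$ and $r_k \in \cR^{\rN_k}_w$. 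For the principal term, use $F_{m,L}(\ell) p_k(\ell)e^\ell = \cD^m H_L(\ell) \cdot \ell p_k(\ell)e^{\ell/2}$, and integrate by parts $2m$ times (valid because $H_L$ and all its derivatives vanish at $\pm L$) to obtain
\[
\int F_{m,L}(\ell) p_k(\ell) e^\ell \, \d\ell = \int H_L(\ell)\, \cD^m\big[\ell p_k(\ell) e^{\ell/2}\big]\, \d\ell.
\]
A direct computation shows $\cD(q(\ell) e^{\ell/2}) = -(q''+q')(\ell) e^{\ell/2}$, so $\cD$ strictly decreases the degree of the polynomial coefficient (and in particular kills constants); hence $\cD^m(qe^{\ell/2}) \equiv 0$ whenever $\deg q < m$. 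Applied to $q(\ell) = \ell p_k(\ell)$ with $\deg q \leq \rK_k \leq \rK-1 < m$, this gives a vanishing principal-term contribution as soon as $m \geq \rK$.

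For the remainder, I would integrate by parts once, introducing $R_k(\ell) := \int_0^\ell r_k(s)\,\d s$, which by the weak Friedman--Ramanujan property satisfies $|R_k(\ell)| \leq \fn(k,\chic)\,(1+\ell)^{\rN_k-1} e^{\ell/2}$. The boundary terms vanish at $\ell = L$ (since $\cD^m H_L(L) = 0$) and at $\ell = 0$ (since $R_k(0) = 0$ and $F_{m,L}(0) = 0$), yielding
\[
\Big|\int F_{m,L}(\ell) r_k(\ell)\,\d \ell\Big|
\leq \int_0^L |R_k(\ell)| \cdot |F_{m,L}'(\ell)|\,\d\ell.
\]
A direct estimate on $F_{m,L}'(\ell) = \partial_\ell[\ell e^{-\ell/2}\cD^m H_L(\ell)]$ gives $|F_{m,L}'(\ell)| \leq C_m(1+\ell)e^{-\ell/2}$ on $[0,L]$; hence the last integral is bounded by $\fn(\ord,\chic,m)\int_0^L (1+\ell)^{\rN_k}\,\d\ell \leq \fn(\ord,\chic,m)\,L^c$ for $c := 1 + \max_k \rN_k$. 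Summing over $k \leq \ord$ (with a uniform factor $g^{-k} \leq 1$) yields the first term of the announced bound.

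The main technical obstacle is the cancellation identity $\cD^m(qe^{\ell/2}) = 0$ for $\deg q < m$: this is precisely the mechanism by which the application of $\cD$ transmutes subexponential behaviour (polynomial $\times\ e^{\ell/2}$, from the weak Friedman--Ramanujan remainder) into a genuinely polynomial bound in $L$, and it forces the choice $m \geq \rK$. Its validity hinges on the strict decrease of polynomial degree under $q \mapsto -(q''+q')$, which is immediate but crucial, and on the integration by parts being well-justified (guaranteed by the compact support of $H_L$ and the smoothness of $H_1$ with vanishing derivatives at $\pm 1$).
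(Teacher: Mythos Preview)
Your approach is correct and matches the paper's: both combine the asymptotic expansion (Theorem~\ref{thm:exist_asympt_type_intro}) with the Friedman--Ramanujan property (Theorem~\ref{thm:FR_type}) and the cancellation identity $\cD^m(q e^{\ell/2}) = 0$ for $\deg q < m$, which the paper attributes to \cite[Proposition 3.18]{Ours1}. The only slip is in the principal-term step: the integration by parts on $[0,L]$ produces boundary terms at $\ell=0$ as well as at $\ell=L$, and while those at $L$ vanish (all derivatives of $H_L$ vanish there), those at $0$ do not --- they involve $\partial^a H_L(0) = L^{-a} H_1^{(a)}(0)$ times derivatives of $\ell p_k(\ell)e^{\ell/2}$ at $0$, hence are $O(1)$ uniformly in $L \geq 1$. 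So your displayed identity is not literally correct, but the conclusion (that the principal-term integral is bounded by a constant, hence by $L^c$) survives unchanged once you account for these harmless boundary contributions.
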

 
 Summing over $\type\in\mathrm{Loc}_{\chic}^{\kappa,\tfL, L}$, and using
 \eqref{eq:bound_number_type_TF} to bound the number of terms in the sum, we obtain that for $\tfL =
 \kappa \log g$ and $L=A \log g$, 
 \begin{align}
  \label{c:almost-} 
   \sum_{\type \in \mathrm{Loc}_{\chic}^{\kappa,\tfL, L}}
   \av[{\mathbf{T}}]{F_{m,L}}
   \leq
   \fn(\ord, \kappa, A,\chic, m, \eta) L^{n_{\mathrm{typ}}}
   \Big(L^{c} + \frac{e^{L(\frac{1}2 +\eta)}}{g^{\ord+1}}\Big)
  \end{align} 
  where we recall that $c= \fn(\ord,\chic)$ and $n_{\mathrm{typ}}=\fn(\kappa,A,\chic)$.
  
  We now state a similar property for the average \eqref{e:totalj}, now using Theorem
  \ref{thm:leviathan_lc}.
     
   \begin{prp}
     \label{cor:FR_implies_small_final} 
     Let ${\ftype}\in \LocTFlc$ for a filling type $\Sf$ such that $\mathfrak{c}(\Sf)=0$ and
     $\chi(\Sf) \leq \chictau$. For any integer $\ord \geq 0$, there exists constants
     $c, \rK = \fn(\ord, \chictau)$ such that, for any large enough~$g$, any $m \geq \rK$, any
     $\eta >0$ and any $L \geq 1$, $\tfL=\kappa \log(g)$, $U_3(\chi):=U_2(\chi)+13\chi$,
     \begin{align*} 
       \sum_{j=0}^{+\infty}
       \avN[\rho_j{\ftype}]{\mu_{\kappa, \tfL} \otimes F_{m,L}} 
        \leq \fn(Q,\ord,\chictau,\kappa,m,\eta)
        \, g^{\kappa U_3(\chitau)}
         \left(L^{c} + \frac{e^{L(\frac 1 2  +\eta)}}{g^{\ord+1}}\right).
       \end{align*}
     \end{prp}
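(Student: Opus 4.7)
The plan is to apply Theorem \ref{thm:leviathan_lc} to each term $\avN[\rho_j\ftype]{\mu_{\kappa,\tfL}\otimes F_{m,L}}$ in the sum, then exploit two cancellation/summability mechanisms: the integration-by-parts phenomenon of \cite[Proposition 3.18]{Ours1} which removes the Friedman--Ramanujan principal term when paired with $F_{m,L}$, and the factorial $1/j!$ appearing in the bound $\mathfrak{f}(j,\chi)$ on $\mu_{\kappa,\tfL}$, which makes the sum over $j$ converge regardless of the exponential-in-$j$ constants in Theorem \ref{thm:leviathan_lc}.

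More precisely, first I would apply Theorem \ref{thm:leviathan_lc} for each $j$, so that
\begin{equation*}
\avN[\rho_j\ftype]{\mu_{\kappa,\tfL}\otimes F_{m,L}}
= \frac{1}{g^{\chi(\Sf)}}\int_0^{+\infty} F_{m,L}(\ell)\,h_{g,Q,\ord}^{\mu_{\kappa,\tfL},\ftype,j}(\ell)\,\d\ell + R^{(j)}
\end{equation*}
with $h^{(j)}:=h_{g,Q,\ord}^{\mu_{\kappa,\tfL},\ftype,j}\in\cF_w^{\rK,\rN}$ and $R^{(j)}$ controlled by $M^j\mathfrak{f}(j,\chi(\tau))e^{13\tfL\chi(\Sf)}\|F_{m,L}(\ell)e^{(1+\eta)\ell}\|_\infty/g^{\ord+1}$. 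Choosing $\rK=\rK(\ord,\chictau)$ as in Theorem \ref{thm:leviathan_lc} and $m\geq \rK$, I would then invoke the integration-by-parts argument from \cite[Prop.~3.18]{Ours1}: the elementary identity $\D(e^{\ell/2}q(\ell))=-e^{\ell/2}(q'(\ell)+q''(\ell))$ shows that $\D^m$ annihilates $\C_{m-1}[\ell]\,e^{\ell/2}$, and since the principal term of $h^{(j)}$ has the form $p^{(j)}(\ell)e^\ell$ with $\deg p^{(j)}<\rK\leq m$, the product $\ell e^{-\ell/2}\,p^{(j)}(\ell)e^\ell = e^{\ell/2}\ell p^{(j)}(\ell)$ is killed after transferring $\D^m$ onto it via integration by parts (the boundary terms vanish because $H_L$ has compact support in $[-L,L]$). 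The surviving contribution only involves the Friedman--Ramanujan remainder, which in the weak sense satisfies $\int_0^\ell |r^{(j)}|\leq \|h^{(j)}\|^w_{\cF^{\rK,\rN}}(1+\ell)^{\rN-1}e^{\ell/2}$, and pairing with $F_{m,L}$ (polynomially bounded in $L$ once derivatives of $H_L$ are estimated by scaling) yields a bound of the shape $\fn(\ord,m,\chictau)\,L^c\,\|h^{(j)}\|^w_{\cF^{\rK,\rN}}$ with $c=c(\ord,\chictau)$.

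Next I would sum over $j$. Using $\mathfrak{f}(j,\chi)=U_1(\chi)e^{\tfL U_2(\chi)}/j!$ together with the norm estimate \eqref{eq:bound_FR_norm_lc}, the main-term contributions sum to
\begin{equation*}
\frac{L^c}{g^{\chi(\Sf)}}\fn(Q,\ord,\chictau,m)\,U_1(\chictau)e^{\tfL U_2(\chictau)}e^{6\tfL\chi(\Sf)}\sum_{j\geq 0}\frac{\tilde a_\ord^{2j}}{j!}
\leq \fn(\ldots)\,L^c\,g^{\kappa(U_2(\chictau)+6\chictau)},
\end{equation*}
after substituting $\tfL=\kappa\log g$ and using $\chi(\tau),\chi(\Sf)\leq\chictau$; since $6\leq 13$, this is dominated by $g^{\kappa U_3(\chictau)}L^c$. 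For the remainders, $\sum_j M^j \mathfrak{f}(j,\chi(\tau))= U_1(\chi(\tau))e^{\tfL U_2(\chi(\tau))}e^M$ and $\|F_{m,L}(\ell)e^{(1+\eta)\ell}\|_\infty\leq \fn(m,\eta)e^{L(1/2+\eta)}$ (absorbing the polynomial-in-$L$ factor coming from the factor $\ell$ and derivatives of $H_L$ into a slight enlargement of $\eta$), yielding
\begin{equation*}
\sum_{j\geq 0}|R^{(j)}|\leq \fn(Q,\ord,\chictau,\kappa,m,\eta)\,g^{\kappa U_3(\chictau)}\,\frac{e^{L(\frac 1 2+\eta)}}{g^{\ord+1}}.
\end{equation*}
Adding the two bounds gives exactly the claim.

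The core mechanisms are already packaged in earlier results, so there is no real obstacle: the only thing to check carefully is the bookkeeping of the dependencies on $j$ (where the $1/j!$ from $\mathfrak{f}$ is what ensures the sum is finite) and the combinatorial identity $U_3=U_2+13\chi$ so that both the main term (which only asks for $U_2+6\chi$) and the remainder (which asks for $U_2+13\chi$) fit under a single exponent. The principal-term cancellation requires $m\geq \rK(\ord,\chictau)$, which also fixes the constant $\rK$ in the statement.
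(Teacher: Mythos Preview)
Your proposal is correct and follows essentially the same approach as the paper: apply Theorem \ref{thm:leviathan_lc} term by term, invoke the cancellation argument from \cite[Proposition 3.18]{Ours1} to kill the principal part once $m\geq \rK$, and sum over $j$ using the $1/j!$ in $\mathfrak{f}(j,\chi)$ to absorb the exponential-in-$j$ constants $M^j$ and $\tilde a_\ord^{2j}$. One small bookkeeping slip: you bound $\chi(\tau)\leq\chictau$ and end up with $g^{\kappa U_3(\chictau)}$, whereas the statement asks for $g^{\kappa U_3(\chitau)}$; since $\ftype\in\LocTFlc$ already gives the sharper bound $\chi(\tau)\leq\chitau$, you should use that instead (the $13\chi(\Sf)$ term does require $\chi(\Sf)\leq\chictau$, but the paper absorbs this into the implicit $\fn(\ldots,\chictau,\ldots)$ constant, since only the $U_2(\chitau)$ and $13\chitau$ pieces need to go into the explicit exponent).
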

{
  \begin{proof}
    First, we observe that we can truncate the sum to values $j \leq \log(g)$ due to the decay in
    $j$, as done in \cite[Lemma 10.12]{Ours1}.
       We then use our cancellation argument with the Friedman--Ramanujan function
       $h_{g, Q, \ord}^{\mu_{\kappa,\tfL},\ftype,j}$, and sum the contribution associated to each
       $j$, using the fact that, for the Moebius function $\mu_{\kappa, \tfL}$, $\mathfrak{f}(j,\chi
       ) = U_1(\chi) e^{R U_2(\chi)}/j!$ and hence the constant appearing is smaller than
       \begin{equation}
         \label{eq:bound_final_tg}
         \fn(Q,\ord,\chictau)
         e^{R (U_2(\chitau)+13 \chictau)}
         \sum_{j=0}^{+ \infty} \frac{M^j}{j!}
         = \fn(Q,\ord,\chictau) g^{\kappa U_3(\chitau)}e^M
       \end{equation}
       where $M = \fn(Q,\ord,\chictau,\kappa)$.
     \end{proof}}
   
   Summing over all filling types $\Sf$ such that $\mathfrak{c}(\Sf)=0$ and
   $\chi(\Sf) \leq \chictau$ and all local topological types ${\ftype}\in \LocTFlc$ (the cardinality
   of which is bounded by \eqref{e:control_loc}), we obtain an integer $\rK$ such that, for any $m
   \geq \rK$,
   \begin{align}
     \sum_{\substack{\Sf\\\chi(\Sf) < \chictau \\ \mathfrak{c}(\Sf)=0}}
     \sum_{{\ftype}\in \LocTFlc}
     \sum_{j=0}^{+\infty}   
     \avN[\rho_j{\ftype}]{\mu_{\kappa, \tfL} \otimes  F_{m,L}} 
     \leq \fn(Q,\ord,\chictau,\kappa,m,\eta)
      g^{\kappa U_4(\chictau)}
      \left(L^{c} + \frac{e^{L(\frac 1 2  +\eta)}}{g^{\ord+1}}\right)
   \end{align}
   where $U_4(\chi) := U_2(\chi) + 13 \chi + 10 \chi^2$.
   The important thing here is that the bound is subexponential in $L$: there is a polynomial term
   $L^{c}$, and an exponential term $e^{L(\frac 1 2 +\eta)}$ which is compensated by the factor
   $g^{-(\ord+1)}$.  Balancing these terms, we can conclude the proof of Theorem \ref{t:dream}.

\subsection{Final choice of parameters}

Let $\alpha \in (0,1/2)$ and $0<\eps< \frac14 -\alpha^2$.
We pick our parameters the following way.
\begin{itemize}
\item The order $\ord$ of the expansion is chosen to be an integer so that
  $2 \alpha (\ord+2) > 1$.
\item We take $L=A\log g$ with $A=2(\ord+2)$; this ensures that $e^{L/2}/g^{\ord+1}=g$, to
  match the topological term of the trace formula.
\item $\chic = 2A +1$ to be able to limit the local types of loops filling a surface of Euler
  characteristic $< \chic$ (as prescribed after \eqref{eq:prescribe_chic}).
\item $\chitau=2A+4$ and $Q=2A+5+6\chitau$, as recommended in Proposition
  \ref{prp:proba_small_Moebius}, to reduce the possibilities to derived tangles $\tau$ with
  $\chi(\tau) < \chitau$ with a 1d-component separating at most $Q$ connected components.
\item $\chictau=\fn(A, \chic,\chitau)$ is prescribed in Proposition \ref{p:chictau}, implying that
  we can restrict attention to c-filling types $\Sf$ with $\chi(\Sf)< \chictau$.
\item We can now apply Propositions \ref{cor:FR_implies_small} and \ref{cor:FR_implies_small_final}
  with a parameter $\eta<\eps $.
\item This allows to fix the integer $m$ in the definition of $F_{m,L}$: it is chosen as the maximum
  of the integers given by Propositions \ref{cor:FR_implies_small} and
  \ref{cor:FR_implies_small_final}, for the finite number of c-filling types $\Sf$ with
  $\chi(\Sf) < \chictau$ and $\mathfrak{c}(\Sf)=0$.
\item As announced, we take $\tfL=\kappa \log g$, where we now pick the parameter
  $\kappa$ so that
  \begin{equation*}
    \begin{cases}
      0<\kappa< \min(1/10, 2\argsh 1) \\
      (U_2(\chitau) + 36 \chitau)\kappa < 1 \\
      U_4(\chictau) \kappa < \alpha A -1
    \end{cases}
  \end{equation*}
  as recommended in Proposition \ref{prp:proba_small_Moebius} and to ensure that the upper bound
  obtained in equation \eqref{eq:bound_final_tg} is $\cO(e^{(\alpha+\eta)L})$.
\end{itemize}

Then, for $F_{m,L}(\ell)= \ell \cD^m H_L(\ell)e^{-\ell/2}$, the expected trace
\eqref{eq:trace_after_TF} is $o(e^{(\alpha + \eps)L})$ as $g \rightarrow + \infty$. By the
inequality \eqref{e:trace_meth}, this allows to conclude that
 \begin{align*} 
 \limsup_{g \rightarrow + \infty}  \Pwp{\delta \leq \lambda_1 \leq \frac 1 4 - \alpha^2 - \epsilon} \leq  
   C_{\mathrm{tf}} \kappa^2
 \end{align*}
 {for the universal constant $C_{\mathrm{tf}}$ from \eqref{eq:proba_tf}}.
 Since $\kappa$ can be taken to be arbitrarily small, this concludes the proof of Theorem
 \ref{t:dream}.

%%%Local Variables: 
%%% mode: latex
%%% TeX-master: "main"
%%% End: 

\appendix  
% \section{Discarding sets of extremely small probability}
% \label{s:A3}
% \input{app_cauchy_sch}

\section{List of notations}
\label{s:notations}

\renewcommand{\arraystretch}{2}
\begin{longtable}{lll}
  \toprule 
  $\fn((u_i)_{i \in \cV})$ & Function of the parameters $(u_i)_{i \in \cV}$.
  & Nota \ref{n:fn} \\
  \midrule $S_g$, $g \geq 2$ & Fixed compact surface of genus $g$.
  & \\
  $X \in \mathcal{M}_g$ & A metric $X$ on $S_g$, up to isometry.
  & \\
  $\cG(X)$ & Set of primitive closed geodesics of $X$.
  & Def \ref{def:av_type} \\
  $\ell_X(\gamma)$ & Length of the geodesic representative of $\gamma$ in $X$.
  & Def \ref{def:av_type} \\
  $\Pwpo$, $\Ewpo$ & Weil--Petersson probability measure and expectation.
  & \\
  $\lambda_1(X)$ & First non-trivial eigenvalue of Laplacian.
  & Thm \ref{t:dream} \\
  \midrule $\mathbf{T} = \eqc{\mathbf{S},\mathbf{c}}$ & \makecell[l]{Local topological type
    $\mathbf{T}$, \\ i.e. a surface $\mathbf{S}$ filled by a multi-loop $\mathbf{c}$.}
  & Def \ref{def:av_type} \\
  $\av[\mathbf{T}]{F}$ & Average
  $\Ewpo \Big(\sum_{\gamma \text{ of type } \mathbf{T}} F(\ell_X(\gamma)) \Big)$.
  & Def \ref{def:av_type} \\
  $V_g^{\mathbf{T}}(\ell)$ & Volume function corresponding to the average
  $\av[\mathbf{T}]{\, \cdot \,}$.
  & Prop \ref{prp:existence_density} \\
  $f_k^{\mathbf{T}}(\ell)$ & $k$-th coefficient of the asymptotic expansion of
  $V_g^{\mathbf{T}}(\ell)$.
  & Thm \ref{thm:exist_asympt_type_intro} \\
  \midrule $\FR$ and $\FRrem$ & Friedman--Ramanujan functions and remainders.
  & Def \ref{def:FR}  \\
  $\FR^{\rK}$, $\FR^{\rK,\rN}$, $\FRrem^{\rN}$ & FR functions and remainders of bounded growth.
  & Def \ref{def:FR} \\
  $\FR_w$ and $\FRrem_w$ & Weak forms of $\FR$ and $\FRrem$.
  & Def  \ref{def:weakFR} \\
  $\norm{\cdot}_{\FRrem^\rN}$, $\norm{\cdot}_{\FR^{\rK,\rN}}$ & Norms on those spaces (see also weak
  forms version).
  & Def \ref{d:normFR} \\
  \midrule $(\g,\n)$, $\chi({\mathbf{S}})$, $\partial \mathbf{S}$ & \makecell[l]{Signature of
    $\mathbf{S}$, absolute value of \\ Euler characteristic and boundary of $\mathbf{S}$.}
  & Nota \ref{nota:S} \\
  $\mathcal{T}_{\g,\n}^*$
  & \makecell[l]{Teichm\"uller space of $\Sf$ (with free $>0$ boundary length).}
  & \S \ref{s:teich} \\
  $\mathcal{T}_{\g,\n}(\x)$
  & \makecell[l]{Teichm\"uller space of $\Sf$ (with fixed boundary lengths $\x$).}
  & \S \ref{s:teich} \\
  $\d\mathrm{Vol}^{\mathrm{WP}}_{\g,\n}$ & Weil--Petersson measure on $\cT_{\g,\n}^*$.
  & \S \ref{s:teich} \\
  $\rho[\mathbf{T}]$
  & Number of unshielded simple portions of $\mathbf{T}$.
  & Def \ref{d:double_fill}\\
  \midrule
  $V$ & A subset of $\partial \mathbf{S}$.
  &  Nota \ref{nota:mt} \\
  $\mathrm{m}$ & A matching of the complement of $V$ as $\bigsqcup_j \{\vi, \vi'\}$.
  &  Nota \ref{nota:mt} \\
  $\fz$ & A smooth function vanishing at $0$.
  & Nota \ref{nota:mt} \\
  $\cJ=\cJ_{\mathbf{T}, V, \mathrm{m}, f}$ & The main integral to understand.
  & Eq \eqref{e:mainint} \\
  $f_j$, $\tilde{g}_j$
  & Functions appearing in $\cJ$.
  & Nota \ref{nota:mt} \\
  $\rK_j$, $\rN_j$, $\rK$, $\rN$
  & Friedman--Ramanujan exponents.
  & Nota \ref{nota:mt} \\
  % $T_V(\vec{L},\vec{\theta})$ & Expression of $\prod_j x_{\vi} \delta(x_{\vi}-x_{\vi'})$ in terms of
  % $(\vec{L},\vec{\theta})$.
  % & Coro \ref{c:maindet-dirac} \\
  % \midrule
  \midrule $\cP = \cP_x$, $\cL = \cL_x$ & Operators $\int_0^x$ and $\id - \int_0^x$.
  & \S \ref{s:cL} \\
  \midrule $\mathbf{c} = (\mathbf{c}_1, \ldots, \mathbf{c}_{\cc})$ & A multi-loop filling
  $\mathbf{S}$.
  & Def \ref{def:mult_loop}. \\
  $J$, $o(J)$, $t(J)$ & A segment $J$, its origin and terminus.
  &  Nota \ref{n:origin} \\
  $p \smallbullet q$ & Concatenation of the paths $p$ and $q$.
  & Nota \ref{n:concat} \\
  $\bar{J}$ & Orthogeodesic homotopic to $J$ with gliding endpoints.
  & Rem \ref{rem:beta_geod} \\
  $\IH^2$ & The hyperbolic plane.
  & \\
  $\Dist$ & The algebraic distance along a geodesic.
  & Def \ref{def:alg_d} \\
  $\ell(\gamma)$, $\ell_Y(\gamma)$ & Length of a path v.s. the geodesic in a free homotopy class.
  & \S \ref{nota:length} \\
  \midrule $r$ & Number of intersections of $\mathbf{c}$.
  & Nota \ref{n:r} \\
  $\beta, (B_1, \ldots, B_r)$ & \makecell[ll]{Simple multi-loop and bars obtained \\ when opening
    the intersections of $\mathbf{c}$.}
  & \S \ref{sec:open-inters} \\
  $\mathbf{c}^{\mathrm{op}}$ & The representative of $\mathbf{c}$ obtained after opening.
  & \S \ref{sec:open-inters} \\
  $(\beta_\lambda)_{\lambda \in \Lambdabeta}$ & The components of the multi-loop $\beta$.
  & Rem \ref{rem:lambda_beta} \\
  $B_j^\pm$ & Choice of orientation of the bars.
  & \S \ref{s:orientation_bars} \\
  $\Theta$ & The set of $q = (j, \epsilon) \in \{1, \ldots, r\} \times \{+, -\}$.
  & \S \ref{s:labelling_msp} \\
  $\mathcal{I}_q$, $q \in \Theta$ & Labelling of the simple portions.
  & \S \ref{s:labelling_msp} \\
  $\sign(q)$, $\iota(q)$ & Sign function and sign switching involution.
  & Def \ref{def:sign} \\
  $\lambda(q)$ & The component of $\beta$ in which $B_q$ terminates.
  & Nota \ref{nota:ind} \\
  $\Theta_t(\lambda)$, $\Theta_t(W)$ & The set of $q \in \Theta$ s.t. $\lambda(q) = \lambda$ or
  $\lambda(q) \in W$.
  & Nota \ref{nota:ind} \\
  $\beta_{\lambda,{\mathrm{init}}}$, $\mathcal{I}_{q,\mathrm{init}}$ & Paths on $\mathbf{c}$
  corresponding to $\beta_\lambda$ and $\mathcal{I}_q$.  & Rem \ref{r:init} \\ \midrule $\mathbf{D}$
  & A diagram $(\beta, B)$.
  & Def \ref{defa:diagram} \\
  $\mathbf{D}^{(1)}$ & Diagram obtained by opening all intersections the first way.
  & \S \ref{sec:diagrams} \\
  \midrule $p(q)$ & The concatenation of $B_q$ and $\mathcal{I}_q$.
  & Eq \eqref{e:psegment} \\
  $\sigma$ & The permutation of $\Theta$ allowing to go along $\mathbf{c}$.
  & Eq \eqref{e:decompo-i} \\
  $n_i$ & Length of the cycle of $\sigma$ describing $\mathbf{c}_i$.
  & Eq \eqref{e:decompo-i} \\
  $\Z_n$ & Cyclic set $\Z \diagup n\Z$.
  & \S \ref{sec:cycle-decomp-rewr} \\
  $\comp(q)$ & Label of the component $\mathbf{c}_i$ of $\mathbf{c}$ in which $q$ appears.
  & \S \ref{s:relabel_Theta} \\
  $\Theta^i = (q_k^i)_{k \in \Z_{n_i}}$ & Set of $q \in \Theta$ of component $\comp(q)=i$.
  & \S \ref{s:relabel_Theta} \\
  $\mathbf{\Theta}$, $\mathbf{\Theta}^i$ & Periodic extensions of $\Theta$ and $\Theta^i$.
  & \S \ref{s:periodic_Theta} \\
  \midrule $\mathcal{C}$, $\mathcal{U}$ & Set of crossing v.s. non-crossing indices.
  & Def \ref{def:crossing} \\
  \midrule $\vec{L}, \vec{\theta}$ & Coordinate system on $\mathcal{T}_{\g,\n}^*$.
  & \S \ref{s:coord}\\
  $L(q)$, $\eps L(q)$ & Length of $B_q$ and its signed version.
  & Nota \ref{nota:signL} \\
  \midrule $\mathbf{P}_1, \ldots, \mathbf{P}_r$ & Decomposition of $\mathbf{S}$ in pairs of pants.
  & \S \ref{s:ppdecompo} \\
  \makecell[l]{$\Sigma_0, \ldots, \Sigma_r$ \\ $\mathbf{S}_0, \ldots, \mathbf{S}_r$} & Auxilliary
  families of multi-curves and surfaces.
  & \S \ref{s:ppdecompo} \\
  \midrule $(\Gamma_\lambda)_{\lambda \in \Lambda}$ & Set of boundary components of
  $(\mathbf{P}_1, \ldots, \mathbf{P}_r)$.
  & \S \ref{sec:Lambda_def} \\
  $\LambdaBC$, $\Lambdain$ & Indices $\lambda$ such that $\Gamma_\lambda$ is a boundary / inner
  curve.
  & \S \ref{sec:part_lambda} \\
  $\Lambdabeta$, $\LambdaFN$ & Indices $\lambda$ such that $\Gamma_\lambda$ is / isn't a component
  of $\beta$.
  & \S \ref{sec:part_lambda} \\
  $\LambdaBCbeta$, $\Lambdainbeta$, $\LambdaBCFN$, $\LambdainFN$ & Intersection of the above sets.
  & \S \ref{sec:part_lambda} \\
  $\mathrm{step}(\lambda)$, $\lambda < \lambda'$ & Step when $\Gamma_\lambda$ is constructed, and
  induced order on $\Lambda$.
  & Def \ref{def:order_Lambda} \\
  \midrule $y_\lambda, \alpha_\lambda$ & Fenchel--Nielsen parameters associated to
  $\mathbf{P}_1, \ldots, \mathbf{P}_r$.
  & \S \ref{s:FNc} \\
  $\domain$ & Image of the map $Y \mapsto (\vec{L},\vec{\theta})$.
  & Thm \ref{t:maindet}\\
  $V^\beta$, $V^\Gamma$
  & Intersection of $V$ with $\Lambdabeta$ and $\LambdaFN$.
  & Cor \ref{c:maindet-dirac} \\
  $T_{V,\mathrm{m}}$
  & Expression of a Dirac-distribution in $(\vec{L},\vec{\theta})$.
  & Cor \ref{c:maindet-dirac} \\
  \midrule
  $a^t$, $w^t$, $k^\theta$
 & Matrices in $\mathrm{PSL}_2(\IR)$ representing different moves in $T^1\IH^2$.
  & \S \ref{s:sl2} \\
  $\bar{p}(q) = \overline{B}_q \smallbullet \overline{\cI}_q$
                           & Path associated to $q \in \Theta$.
  & \S \ref{s:lift}\\
  $\tilde{\beta}_q$, $\tilde{B}_q$, $\tilde{\cI}_q$, $\tilde{p}(q)$
                           & Lifts of $\beta_{\lambda(q)}$, $\overline{B}_q$, $\overline{\cI}_q$ and
                             $\bar{p}(q)$ to $\IH^2$, for $q \in \mathbf{\Theta}^i$.
  & \S \ref{s:lift}\\
  $\tilde{B}_{mn}$, $L_{mn}$, $z_{mn}$
                           & Orthogeodesic from $\tilde{\beta}_m$ to $\tilde\beta_n$, length and terminus.
                       & \S \ref{s:lift}\\        
  \midrule
    $\hyp_+$ and $\hyp_-$ & The hyperbolic functions $\cosh$ and $\sinh$.
  & Nota \ref{nota:hypdelta} \\
  $\hyp_\delta(t)$ & The product of the hyperbolic functions $\hyp_{\delta_i}(t_i)$.
  & Nota \ref{nota:hypdelta} \\
  $E_\pm(s)$
                           & Useful matrices.
  & Eq \eqref{e:epm} \\
  $M_n(x,t)$
  & Function used to compute $\ell_Y(\mathbf{c}_i)$ as a function of $\vec{L},\vec{\theta}$.
  & Nota \ref{nota:Mn}\\
  \midrule
  $\cK_{qq'}$, $\overline{\cK}_{qq'}$, $\tau_{qq'}$
                           & Segment between two bars of the same sign, and its length.
  & \S \ref{s:tau} \\
  $\cK_q$, $\tau_q$
                           & Special case where $q'$ is the next bar of the same side.
  & \S \ref{s:tau} \\
  $\Theta_\tau(qq')$
                           & Set of indices needed to write $\tau_{qq'}$ in terms of $\vec\theta$.
  & \S \ref{s:tau} \\
  $\Theta_\tau(W)$
                           & Set of indices needed to write $(\tau_q)_{q \in W}$ in terms of $\vec\theta$.
  & \S \ref{s:tau} \\
  \midrule
  $\bar{p}_k^\lambda$,  $q_k^\lambda$, ${q'}_k^\lambda$
                           & Segments appearing when we describe $\Gamma_\lambda$ and their indices.
  & \S \ref{s:poly_curve_glambda} \\
  $m(\lambda)$
                           & Number of segments in the writing of $\Gamma_\lambda$.
  & \S \ref{s:poly_curve_glambda} \\
  $\vec{L}^\lambda$, $\vec{\tau}^\lambda$
                           & Length parameters appearing in $\Gamma_\lambda$.
  & \S \ref{s:poly_curve_glambda} \\
  $\nabla \alpha$, $\delta \alpha$
                           & Sequences of indices associated to $\alpha$.
  & \makecell[l]{Thm \ref{prp:length_g_lambda}\\ Nota \eqref{nota:QFm}} \\
  $Q_m(x,t)$, $F_m(x,t)$
                           &  Functions used to compute $\ell_Y(\Gamma_\lambda)$ as a function of $\vec{L},\vec{\theta}$.
  & Nota \ref{nota:QFm}\\
  $\Theta_\ell(\lambda)$, $\Theta_\ell(W)$
  & Sets of indices needed to write $\ell_Y(\Gamma_\lambda)$, or $(\ell_Y(\Gamma_\lambda))_{\lambda \in
    W}$.
  & Nota \ref{nota:indlambda} \\
  \midrule
  $V^\beta$, $V^\Gamma$, $V_+^\beta$, $V_+$
                           & Sets of variables of interest in $\Lambda$. 
  & \S \ref{s:ch_var_applied} \\
  $W, W^\beta, W^\Gamma$, $r_\lambda$
                           & Set of variables for which we took a remainder term $r_\lambda$.
  & \S \ref{s:conc_subtit_all} \\
  $d_\lambda$, $\rK_q$
                           & Exponents appearing in the integral $\cJ$.
  & \S \ref{s:conc_subtit_all} \\
  $\ThetaAc$, $\ThetaNe$
                           & Set of active and neutral parameters.
  & Nota \ref{nota:ac_ne} \\
  $\thetaAc$, $\thetaNe$
                           & Corresponding subvectors of $\vec{\theta}$.
  & Nota \ref{nota:ac_ne} \\
  \midrule
  $f_1\star \ldots \star f_n|^h_\varphi$
                           & Pseudo-convolution of $(f_i)_{1 \leq i \leq n}$ with height $h$ and weight $\varphi$.
  & Nota \ref{nota:pc} \\
  $\frac{\d x}{\d \ell}=\frac{\d x_1 \ldots \d x_n}{\d \ell}$
                           & Desintegration of Lebesgue measure on the level-sets of $h$.
  & Eq \eqref{e:density}\\
  $F_j^\rK$, $G_j$, $H_j^t$, $I_j^t$
                           & Auxilliary functions associated to $f_j$.
  & Nota \ref{nota:FGHI} \\
  $\mathbf{Int}_{\pi}$, $B$, $\Phi_j$
                           & \makecell[l]{Integrals we obtain when we apply our operator $\cL$, \\
  and associated quantities.}
                           & Thm \ref{t:thebigone} \\
  $E_n^{(a)}$, $\cE_n^{(a)}$
                           & Functional spaces in which we hope to find $\varphi$ and $h$.
  & Def \ref{def:Ena}, \ref{def:cE} \\
  $\| \cdot \|_{\alpha, a}$
                           & Semi-norms on $E_n^{(a)}$.
  & Def \ref{def:Ena}\\
  \textbf{($h$)}, \textbf{($\varphi$)}, \textbf{($f$)} 
                           &
                             Assumptions on the functions $h$, $\varphi$, $(f_j)_{1 \leq j \leq n}$.
  & Nota \ref{nota:assum_1}\\
  \textbf{($\varphi$-CE)}, $Z_j$
                           & Assumption as a comparison estimate and height function.
  & Nota \ref{nota:phi_CE} \\
  \midrule
  $\overline{B}_{ij}$, $L_{ij}$, $z_{ij}$ &
                                            Orthogeodesics, their lengths and endpoints.
  & \S\ref{s:fam_aligned} \\
  $I_j^\delta$
                           & The crossing intervals.
  & Nota \ref{nota:cross_int} \\
  $\Xi(I_j^\delta)$, $P_{\xi_j^\delta}$, $I_j^\delta(\xi)$
                           & The partitioning associated to the crossing interval $I_j^\delta$.
  & \S \ref{sec:appl-eras-proc} \\
  $\Xi^i$, $P_{\xi_i}$, $I(\xi^i)$, $n_{\xi^i}$
                           & The partitioning associated to $\mathbf{c}_i$.
  & Nota \ref{nota:erasing_i} \\
  $\cI_q'=[G_q,D_q]$, $\tilde{\theta_q}$, $\tilde{L}_q$
                           & The new segments and lengths after erasing.
  & Nota \ref{nota:erase_lengths} \\
  $\Xi$, $P_{\xi}$, $I(\xi)$
                           & Product over all partitions for $1 \leq i \leq \cc$.
  & Nota \ref{sec:product-over-all} \\
  $\fcr{0}$, $\fcr{\infty}$
                           & Functions used to construct partition of unity.
  & Nota \ref{nota:fcr} \\
  $(\Psi_{\xi, \cQ})_{\xi \in \Xi, \cQ \subseteq I(\xi)}$
                           & Partition of unity to deal with crossing parameters.
  & Def \ref{defa:part_unity_cross} \\
  $\ThetaNe(\xi, \cQ)$
                           & Neutral parameters associated to $(\xi,\cQ)$.
  & Nota \ref{nota:neutr_cr} \\
  \midrule
  $\Gamma = (J_j,K_j)_{j \in \Z_m}$
                           & Polygonal curve, its bridges and cells.
  & Def \ref{d:pc} \\
  $\Cell(\Gamma)$, $\Br(\Gamma)$
                           & Cells and bridges of $\Gamma$.
  & Def \ref{def:cellbr} \\
  $|\Gamma|$
                           & Combinatorial length of $\Gamma$.
  & Nota \ref{nota:clg} \\
  $L_j$, $\tau_j$
                           & Length of geodesic representatives of the bridges and cells.
  & \S \ref{s:geod_rep_pc} \\
  $Z_K^\Gamma$
                           & Height of a cell.
  & Def \ref{d:height} \\
  $\fcell{0}$, $\fcell{\infty}$, $a$
                           & Functions used to construct partition of unity, cutoff value.
  & Nota \ref{nota:a_Z} \\
  $(\Gamma,d)$
                           & Decorated polygonal curve.
  & Def \ref{d:deco} \\
  $(\mathbf{\Gamma},\mathbf{d})$
                           & Exhaustion.
  & Def \ref{d:exhaust} \\
  $(\Gamma^\infty,d^\infty)$
                           & Terminal step of the exhaustion.
  & Nota \ref{nota:term_ex} \\
  $\Term({\mathbf{\Gamma}}, {\mathbf{d}})$
                           & Set of terminal cells of the exhaustion.
  & Nota \ref{d:terminal} \\
  $(\Psi_{{\mathbf{\Gamma}}, {\mathbf{d}}})_{(\mathbf{\Gamma}, \mathbf{d}) \in \mathfrak{E}}$
                           & Partition of unity associated to $(\Gamma_\lambda)_{\lambda \in \Lambda}$.
  & \S \ref{s:supercutoff} \\
  $\TermBC$
                           & Set of boundary terminal cells of an exhaustion in $\mathfrak{E}$.
  & Nota \ref{nota:termbc} \\
  $\Thetasurv$, $\tausurv$
                           & Set of surviving boundary segments and their
                             lengths. 
  & Def \ref{defa:surv} \\
  $Z_{{\mathbf{\Gamma}}, {\mathbf{d}}}(\cK_q)$
                           & Height of a surviving boundary segments.
  & Def \ref{defa:surv} \\
  $\ThetaNe({\mathbf{\Gamma}}, {\mathbf{d}})$,
  $\thetaNe({\mathbf{\Gamma}}, {\mathbf{d}})$
                           & Neutral variables associated to $({\mathbf{\Gamma}}, {\mathbf{d}})$ and their vector.
  & Nota \ref{nota:neutral_cell} \\
  \bottomrule
\end{longtable}

\pagebreak

%%%Local Variables: 
%%% mode: latex
%%% TeX-master: "main"
%%% End: 

\bibliographystyle{plain}
\bibliography{bibliography}
 \end{document}